\xspace \usepackage{xspace} \fi
\newcommand{\nc}{\newcommand}
\renewcommand{\frak}{\mathfrak}
\providecommand{\cal}{\mathcal}
\renewcommand{\bold}{\mathbf}
\newtheorem*{theorem*}{Theorem}
\numberwithin{equation}{subsection}
\newcommand{\pfname}{Proof.}
\newenvironment{pf}{\vskip-\lastskip\vskip\medskipamount{\it\pfname}}%
                      {$\square$\vskip\medskipamount\par}
\newenvironment{pfof}[1]{\vskip-\lastskip\vskip\medskipamount{\it
    Proof of #1.}}%
                      {$\square$\vskip\medskipamount\par}
\newtheorem{thm}{Theorem}[section]
\newtheorem{theorem}[thm]{Theorem}
\newtheorem{corollary}[thm]{Corollary}
\newtheorem{prop}[thm]{Proposition}
\newtheorem{proposition}[thm]{Proposition}
\newtheorem{lemma}[thm]{Lemma} 
\theoremstyle{definition}
\newtheorem{definition}[thm]{Definition}
\theoremstyle{definition}
\newtheorem{remark}[thm]{Remark}
\newtheorem{remarks}[thm]{Remarks}
\newtheorem{example}[thm]{Example} 
\newtheorem{question}[thm]{Question}
\nc{\Theorem}[1]{Theorem~{#1}}
\nc{\Th}[1]{({\sl Th.}~#1)}
\nc{\Thd}[2]{({\sl Th.}~{#1} {#2})}
\nc{\Theorems}[2]{Theorems~{#1} and ~{#2}}
\nc{\Thms}[2]{({\it Thms. ~{#1} and ~{#2}})}
\nc{\Lemmas}[2]{Lemma~{#1} and ~{#2}}
\nc{\manga}[6]{({\it Thms. ~ #1, ~ #2, ~ #3,\\ ~ #4, ~ #5, ~ #6})}
\nc{\Prop}[1]{({\sl Prop.}~{#1})}
\nc{\Proposition}[1]{Proposition~{#1}}
\nc{\Propositions}[2]{Propositions~{#1} and ~{#2}}
\nc{\Props}[2]{({\sl Props.}~{#1} and ~{#2})}
\nc{\Cor}[1]{({\sl Cor.}~{#1})}
\nc{\Corollary}[1]{Corollary~{#1}}
\nc{\Corollaries}[2]{Corollaries~{#1} and ~{#2}}
\nc{\Definition}[1]{Definition~{#1}}
\nc{\Defn}[1]{({\sl Def.}~{#1})}
\nc{\Lemma}[1]{Lemma~{#1}} 
\nc{\Lem}[1]{({\sl Lem.} ~{#1})} 
\nc{\Eq}[1]{equation~({#1})}
\nc{\Equation}[1]{Equation~({#1})}
\nc{\Section}[1]{Section~{#1}}
\nc{\Sections}[1]{Sections~{#1}}
\nc{\Sec}[1]{({\sl Sec.} ~{#1})} 
\nc{\Chapter}[1]{Chapter~{#1}}
\nc{\Chapt}[1]{({\sl Ch.}~{#1})}
\nc{\Ex}[1]{{\sl Ex.}~{#1}}
\nc{\Exa}[1]{{\sl Example}~{#1}}
\nc{\Example}[1]{{\sl Example}~{#1}}
\nc{\Examples}[1]{{\sl Examples}~{#1}}
\nc{\Exercise}[1]{{\sl Exercise}~{#1}}
\nc{\Rem}[1]{({\sl Rem.}~{#1})}
\nc{\Remark}[1]{{\sl Remark}~{#1}}
\nc{\Remarks}[2]{{\sl Remarks}~{#1} and ~{#2}}
\nc{\Note}[1]{{\sl Note}~{#1}}
\nc{\Conjecture}[1]{Conjecture~{#1}}
\nc{\Claim}[1]{Claim~{#1}}
\nc \Proof{{  \it Proof. }}
\nc \Dbb{\mathbb D}
\nc{\xmu}{\mu}
\nc{\w}{\omega}
\nc{\xv}{\mbox{\boldmath$x$}}
\nc{\uv}{\mbox{\boldmath$u$}}
\nc{\xiv}{\mbox{\boldmath$\xi$}}
\nc{\bbeta}{\mbox{\boldmath$\beta$}}
\nc{\balpha}{\mbox{\boldmath$\alpha$}}
\nc{\bgamma}{\mbox{\boldmath$\gamma$}}
\nc{\bdelta}{\mbox{\boldmath$\delta$}}
\nc{\bepsilon}{\mbox{\boldmath$\epsilon$}}
\nc \Ab{{\ensuremath{\bold A}}}
\nc \ab{{\ensuremath{\bold a}}}
\nc \bb{{\ensuremath{\bold b}}}
\nc \cb{{\ensuremath{\bold c}}}
\nc \db{{\ensuremath{\bold d}}}
\nc \Bb{{\ensuremath{\bold B}}}
\nc \Gb{{\ensuremath{\bold G}}}
\nc \Qb{{\ensuremath{\bold Q}}}
\nc \Rb{{\ensuremath{\bold R}}} \nc \Cb{{\ensuremath{\bold C}}} 
\nc \Eb{{\ensuremath{\bold E}}}
\nc \eb{{\ensuremath{\bold e}}}
\nc \Db{{\ensuremath{\bold D}}}
\nc \Fb{{\ensuremath{\bold F}}}
\nc \ib{{\ensuremath{\bold i}}}
\nc \jb{{\ensuremath{\bold j}}}
\nc \kb{{\ensuremath{\bold k}}}
\nc \Kb{{\ensuremath{\bold K}}}
\nc \nb{{\ensuremath{\bold n}}}
\nc \rb{{\ensuremath{\bold r}}}
\nc \Ob{{\ensuremath{\bold O}}}
\nc \Pb{{\ensuremath{\bold P}}}
\nc \pb{{\ensuremath{\bold p}}}
\nc \qb{{\ensuremath{\bold q}}}
\nc \SPb{{\ensuremath{\bold {SP}}}}
\nc \Zb{{\ensuremath{\bold Z}}} 
\nc \zb{{\ensuremath{\bold z}}} 
\nc \gb{{\ensuremath{\bold g}}} 
\nc \fb{{\ensuremath{\bold f}}} 
\nc \ub{{\ensuremath{\bold u}}} 
\nc \vb{{\ensuremath{\bold v}}} 
\nc \yb{{\ensuremath{\bold y}}} 
\nc \xb{{\ensuremath{\bold x}}} 
\nc \tb{{\ensuremath{\bold t}}} 
\nc \Xb{{\ensuremath{\bold X}}} 
\nc \Yb{{\ensuremath{\bold Y}}} 
\nc \xib{{\ensuremath{\bold \xi}}} 
\nc \Nb{{\ensuremath{\bold N}}} 
\nc \Hb{{\ensuremath{\bold H}}} 
\nc \wb{{\ensuremath{\bold w}}} 
\nc \Wb{{\ensuremath{\bold W}}} 
\nc \syz{{\mathbf {syz}}}
\nc \bnoll{{\ensuremath{\bold 0}}} 
\nc \mf{\frak m} 
\nc \mh{\hat{\mf}} 
\nc \nf{\frak n}
\nc \Of{\frak O}
\nc \of{\frak o}
\nc \rf{\frak r}
\nc \tf{\frak t}
\nc \mufr{{\mathbf \mu}}
\nc \hf{\frak h} 
\nc \qf{\frak q} 
\nc \bfr{\frak b} 
\nc \kfr{\frak k} 
\nc \pfr{\frak p} 
\nc \af{\frak a }
\nc \cf{\frak c }
\nc \sfr{\frak s} 
\nc \ufr{\frak u} 
\nc \g{\frak g} 
\nc \gA{\g_{\Ao}} 
\nc \lfr{\frak l}
\nc \afr{\frak a}
\nc \gfh{\hat {\frak g}}
\nc \gl{\frak { gl }}
\nc \Sl{\frak {sl}}
\nc \SU{\frak {SU}}
\nc{\Homf}{\frak{Hom}}
\newcommand{\on}{\operatorname}
\newcommand{\adj}[2]{ \overset{#1}{\underset{#2}\rightleftarrows }}
\nc\hankel{\on {Hankel}}
\nc\row{\on {row\ }}
\nc\nullity{\on {nullity }}
\nc\col{\on{col}}
\nc\rowm{\on {Row \ }}
\nc\loc{\on {lc \ }}
\nc\lcm{\on {lcm \ }}
\nc\nullo{\on {null\ }}
\nc\Nul{\on {Nul}}
\nc\Cola{\on {Col }}
\nc \Ann {\on {Ann }}
\nc \Ass {\on {Ass}}
\nc \Coker {\on {Coker}}
\nc \Co{\on C}
\nc \Homo{\on {Hom}}
\nc \Galo{\on {Gal}}
\nc \Ker {\on {Ker}}
\nc \Lef {\on {Lef}}
\nc \Leff {\on {Leff}}
\nc \omod{\on{mod}}
\nc \No {\on N}
\nc \NN {\on {NN}}
\nc \NGo {\on {NG}}
\nc \Oo {\on O}
\nc \ch {\on {ch}}
\nc \cd {\on {cd}}
\nc \rko {\on {rk}}
\nc \Sing {\on {Sing\ }}
\nc \Reg {\on {Reg}}
\nc \RG{\on {R\Gamma}}
\nc \CoI {\on {CI}}
\nc \CoM {\on {CM}}
\nc \Gor {\on {Gor}}
\nc \Type {\on {Type}}
\nc \can {\on {can}}
\nc \Top {\on {T}}
\nc \Tr {\on {Tr}}
\nc \Norm {\on {Norm}}
\nc \rel {\on {rel}}
\nc \tr {\on {tr}}
\nc \sgn {\on {sgn }}
\nc \trdeg {\on {tr.deg}}
\nc \codim {\on {codim }}
\nc \coht {\on {coht}}
\nc \divo {\on {div \ }}
\nc \rot {\on {rot }}
\nc \coh {\on {coh}}
\nc \Clo {\on {Cl}}
\nc \Divo {\on {Div}}
\nc \embdim{\on {embdim}}
\nc \ord{\on {ord}}
\nc \ed{\on {ed}}
\nc \embcodim{\on {embcodim  }}
\nc \qcoh {\on {qcoh}}
\nc \grad {\on {grad}}
\nc \grade {\on {grade}}
\nc \hto {\on {ht}}
\nc \depth {\on {depth}}
\nc \prof {\on {prof}}
\nc \reso{\on {res}}
\nc \Reso{\on {Res}}
\nc \ind{\on {ind}}
\nc \prodo{\on {prod}}
\nc \coind{\on {coind}}
\nc \Con{\on {Con}}
\nc \Crit{\on {Crit}}
\nc \Der{\on {Der}}
\nc \Des{\on {Des}}
\nc \Char{\on {Char}}
\nc \Ch{\on {Ch}}
\nc \Ext{\on {Ext}}
\nc \Eo{\on {E}}
\nc \End{\on {End}}
\nc \ad{\on {ad}}
\nc \Ad{\on {Ad}}
\nc \gr{\on {gr}}
\nc \Fo{\on {F}}
\nc \Gr{\on {Gr}}
\nc \Go{\on {G}}
\nc \GFo{\on {GF}}
\nc \Glo{\on {Gl}}
\nc \PGlo{\on {PGl}}
\nc \Ho{\on {H}}
\nc \CMo{\on {\CM}}
\nc \SCM{\on {SCM}}
\nc \rig{\on {right}}
\nc \lef{\on {left}}
\nc \hol{\on {hol}}
\nc{\sgd}{\on{sgd}}
\nc \supp{\on {supp}}
\nc \ssupp{\on {s-supp}}
\nc \singsupp{\on {singsupp}}
\nc \msupp{\on {msupp}}
\nc \spec{\on {spec}}
\nc \spano{\on {span }}
\nc \Span{\on {Span }}
\nc \Max{\on {Max}}
\nc \Mat{\on {Mat}}
\nc \Min{\on {Min}}
\nc \nil{\on {nil}}
\nc \Mod{\on{Mod}}
\nc \Rad {\on {Rad}}
\nc \rad {\on {rad}}
\nc \rank {\on {rank}}
\nc \range {\on {range}}
\nc \Slo{\on {SL}}
\nc \soc {\on {soc}}
\nc \dt {\on {dt}_Z}
\nc \Irr {\on {Irr}}
\nc \Reo {\on {Re}}
\nc \Imo {\on {Im}}
\nc \SSo{\on {SS}}
\nc \lub{\on {lub}}
\nc \gldim{\on {gl.d.}}
\nc \length{\on {length}}
\nc \pdo{\on {p.d.}} 
\nc \cdo{\on {cd}} 
\nc \fdo{\on {f.d.}} 
\nc \ido{\on {i.d.}} 
\nc \dSSo{\dot {\SSo}}
\nc \So{\on S}
\nc \SOo{\on{ SO}}
\nc \Io{\on I}
\nc \Jo{\on J}
\nc \jo{\on j}
\nc \Ko{\on K}
\nc \PBW{\Ac_{PBW}}
\nc \Ro{\on R}
\nc \To{\on T}
\nc \Ao{\on A}
\nc \Do{{\on D}}
\nc \Bo{\on B}
\nc \Po{\on P}
\nc \Qo{\on Q}
\nc \Zo{\on Z}
\nc \Uo{\on U}
\nc \wt{\on {wt}}
\nc \Uoh{\hat {\Uo}}
\nc \Lo{\on L}
\nc \Loc{\on {Loc}}
\nc{\dop}{\on d}
\nc{\eo}{\on e}
\nc{\ado}{\on{ad}}
\nc{\Tot}{\on{Tot}}
\nc{\Aut}{\on{Aut}}
\nc{\sinc}{\on {sinc}}
\nc{\overrightleftarrows}[2]{\overset{#1}{\underset{#2}{\rightleftarrows}}}
\nc{\CCF}{\cal{CF}}
\nc{\CDF}{\cal{DF}}
\nc{\CHC}{\check{\cal C}}
\nc{\Cone}{\on{Cone}}
\nc{\dec}{\on{dec}}
\nc{\Diff}{\on{Diff}}
\nc{\dirlim}{\underset{\to}{\on{lim}}}
\nc{\dpar}{\partial}
\nc{\dlog}{\on{dlog}}
\nc{\GL}{\on{GL}}
\nc{\glo}{\on{gl}}
\nc{\CGr}{\cal{G}r}
\nc{\pr}{\on{pr}}
\nc{\semid}{|\!\!\!\times}
\nc{\Hom}{\on{Hom}}
\nc \RHom{\on {RHom}}
\nc \Proj{\mathrm {Proj\ }}
\nc \proj{\mathrm {proj}}
\nc{\Id}{\on{Id}}
\nc{\id}{\on{id}}
\nc{\Ima}{\on{Im}}
\nc{\invtimes}{\underset{\gets}{\otimes}}
\nc{\invlim}{\underset{\gets}{\on{lim}}}
\nc{\Lie}{\on{Lie}}
\nc{\re}{\on{Re }}
\nc{\Pic}{\on{Pic }}
\nc{\LPic}{\on{LPic }}
\nc{\Sch}{\on{Sch}}
\nc{\Sh}{\on{Sh}}
\nc{\Set}{\on{Set}}
\nc{\spo}{\on{sp\  }}
\nc{\Spec}{\on{Spec}}
\nc{\mSpec}{\on{mSpec}}
\nc{\Specb}{\bold {Spec}\ }
\nc{\Projb}{\bold {Proj}}
\nc{\Specan}{\on{Specan}}
\nc{\Spo}{\on{Sp}}
\nc{\Mpo}{\on{Mp}}
\nc{\Spf}{\on{Spf}}
\nc{\sym}{\on{sym}}
\nc{\symm}{\on{symm}}
\nc{\rop}{\on{r}}
\nc{\Td}{\on{Td}}
\nc{\Tor}{\on{Tor}}
\nc{\Alg}{\on {Alg}}
\nc{\Artin}{\cal{A}rtin}
\nc{\Dgcoalg}{\cal{D}gcoalg} \nc{\Dglie}{\cal{D}glie}
\nc{\Ens}{\cal{E}ns} \nc{\Fsch}{\cal{F}sch}
\nc{\Groupoids}{\cal{G}roupoids}
\nc{\Holie}{\cal{H}olie}
\nc{\Mor}{\cal{M}or}
\nc \Dd{\mathbb D}
\nc{\CF}{\ensuremath{\cal{F}}}
\nc \Kc{{\ensuremath{\cal K}}}
\nc{\Kzind}[4]{{\ensuremath{{\mathcal K^{#4e} (#3)}_{#1 \rightarrow
        #2}}}}
\nc{\Kz}[3]{{\ensuremath{{\mathcal K^\bullet (#3)}_{#1 \rightarrow
        #2}}}}
\nc{\Kzd}[2]{{\ensuremath{{\mathcal K}^\bullet_{#1 \rightarrow #2}}}}
\nc \Lc{{\ensuremath{\cal L}}}
\nc \lcc{{\mathcal l}} 
\nc \CC{{\ensuremath{\cal C}}} 
\nc \Cc{{\ensuremath {\cal C}}}
\nc{\Dl}[2]{{\ensuremath{{\mathcal D}_{#1 \leftarrow #2}}}}
\nc{\Dr}[2]{{\ensuremath{{\mathcal D}_{#1 \rightarrow #2}}}}
\nc \Dc{\ensuremath{\mathcal D}}
\nc \Ac{{\ensuremath{\cal A}}} 
\nc \Bc{{\ensuremath{\cal B}}}
\nc \Ec{{\ensuremath{\cal E}}}
\nc \Fc{{\ensuremath{\cal F}}}
\nc \Mcc{{\ensuremath{\cal M}}} 
\nc \hM{\hat{\Mcc}} 
\nc \bM{\bar {\Mcc}} 
\nc\hbM{\hat{\bar \Mcc}}  
\nc \Nc{{\ensuremath{\cal N}}}
\nc \Hc{{\ensuremath{\cal H}}} 
\nc \Ic{{\ensuremath{\cal I}}} 
\nc \Jc{{\ensuremath{\cal J}}} 
\nc \Oc{\ensuremath{{\cal O}}}
\nc \Och{\hat{\cal O}} 
\nc \Sc{{\ensuremath{{\cal S}}}}
\nc \Tc{\ensuremath{{\cal T}}} 
\nc \Vc{{\ensuremath{{\cal V}}}} 
\nc{\CA}{{\ensuremath{{\cal A}}}}
\nc{\CB}{{\ensuremath{{\cal B}}}}
\nc{\Fcc}{\ensuremath{{\cal F}}}
\nc{\Gc}{{\ensuremath{{\cal G}}}}
\nc{\CH}{\ensuremath{\mathcal H}}
\nc{\CI}{{\ensuremath{{\cal I}}}}
\nc{\CM}{{\ensuremath{{\cal M}}}}
\nc{\CN}{{\ensuremath{{\cal N}}}}
\nc{\CO}{{\ensuremath{{\cal O}}}}
\nc{\Rc}{{\ensuremath{{\cal R}}}}
\nc{\Pc}{{\ensuremath{{\cal P}}}}
\nc{\CT}{{\ensuremath{\mathcal T}}}
\nc{\CU}{\ensuremath{{\cal U}}}
\nc{\Uc}{\ensuremath{{\cal U}}}
\nc{\Yc}{\ensuremath{{\cal Y}}}
\nc{\CV}{\ensuremath{{\cal V}}}
\nc{\CZ}{\ensuremath{{\cal Z}}}
\nc{\Homc}{\ensuremath{{\cal {Hom}}}}
\nc{\fa}{\frak{a}}
\nc{\fA}{\frak{A}}
\nc{\fg}{\frak{g}}
\nc{\fh}{\frak{h}}
\nc{\fI}{\frak{I}}
\nc{\fK}{\frak{K}}
\nc{\fm}{\frak{m}}
\nc{\fP}{\frak{P}}
\nc{\fS}{\frak{S}}
\nc{\ft}{\frak{t}}
\nc{\fX}{\frak{X}}
\nc{\fY}{\frak{Y}}
\nc{\bF}{\bar{F}}
\nc{\bCP}{\bar{\cal{P}}}
\nc{\bmbox}{\mbox{\bf{m}}}
\nc{\bT}{\mbox{\bf{T}}}
\nc{\hB}{\hat{B}}
\nc{\hC}{\hat{C}}
\nc{\hP}{\hat{P}}
\nc{\htest}{\hat P}
\nc{\nen}{\newenvironment}
\nc{\ol}{\overline}
\nc{\unl}{\underline}
\nc{\ra}{\to}
\nc{\lla}{\longleftarrow}
\nc{\lra}{\longrightarrow}
\nc{\Lra}{\Longrightarrow}
\nc{\Lla}{\Longleftarrow}
\nc{\Llra}{\Longleftrightarrow}
\nc{\hra}{\hookrightarrow}
\nc{\iso}{\overset{\sim}{\lra}}
\nc{\dsize}{\displaystyle}
\nc{\sst}{\scriptstyle}
\nc{\tsize}{\textstyle}
\theoremstyle{definition}
\theoremstyle{remark}
\nc{\Sats}[1]{Sats~\ref{#1}}
\nc{\Sa}[1]{({\sl Sa.}~\ref{#1})}
\nc{\Kor}[1]{({\sl Kor.}~\ref{#1})}
\nc{\Korollarium}[1]{Korollarium~\ref{#1}}
\nc{\Exe}[1]{{\sl Exempel}~\ref{#1}}
\nc{\Anm}[1]{{\sl Anmärkning}~\ref{#1}}
\nc{\Not}[1]{{\sl Not}~\ref{#1}}
\nc{\Formodan}[1]{Förmodan~\ref{#1}}
\nc{\Pastaende}[1]{Påstående~\ref{#1}}
\begin{document}


\title[$D$-modules and finite maps ]{ $D$-modules and finite maps}
\author{Rolf Källstr{\"om}} \address{Department of Mathematics,
  University of Gävle} \email{rkm@hig.se} \date \today
\maketitle
\tableofcontents

\section{Introduction}
Let $\pi: X\to Y$ be a map of smooth varieties over an algebraically
closed field $k$ of characteristic $0$, $\Dc_X= \Dc_{X/k}$ be the ring
of $k$-linear differential operators on $X$, and $\hol(\Dc_X)$ be the
category of holonomic $\Dc_X$-modules. This work is concerned with the
direct and image functors
  \begin{displaymath}
    \hol(\Dc_X) \stackrel[\pi_+]{\pi^!}{\leftrightarrows} \hol(\Dc_Y) 
\end{displaymath}
mostly in the case when $\pi$ is finite. The main goal is to work out
when $\pi_+(M)$ and $\pi^!(N)$ are semisimple for $M\in \hol(\Dc_X)$
and $N\in \hol (\Dc_Y)$, and also to find the structure of the
decomposition. We discuss separately $\pi_+$ and $\pi^!$, and lastly
we consider the subcategory $\Con(X) $ of connections, by which we
mean $\Dc_X$-modules that are coherent over $\Oc_X$, and study how
they interact with $\pi_+$ and $\pi^!$; in particular the notion of
simply connected varieties is given a new treatment.

\subsection{ The direct image $\pi_+$.} Assume that $\pi$ is finite,
so that in particular  $\pi_+$ is exact \Prop{\ref{flat-module}}. Let $L$ and $K$ be
the fraction fields of $X$ and $Y$, respectively, and $\bar L/L/K$ be
a Galois cover, so that $\bar L/L$ and $\bar L/K$ are Galois, and we
denote their Galois groups $H$ and $G$, respectively. Let $\tilde X$
be the integral closure of $X$ in $\bar L$, so that we have finite
maps $\tilde X \xrightarrow{\tilde \pi} X \xrightarrow{\pi} Y $, and
assume for now that $\tilde X$ is also smooth.

The inertia group $G_{\tilde M}$ is a subgroup of symmetries of the
$\Dc_{\tilde X}$-module $\tilde \pi^!(M) = \tilde M$, and there exists
a central extension $\bar G_{\tilde M}$ of $G_{\tilde M}$ that
actually acts on $\tilde M$ (for details, see
(\ref{inertia-section})). Let $\widehat {\bar G}_{\tilde M}$ be the
  isomorphism classes of finite-dimensional representations of
  $\bar G_{\tilde M}$.
  \begin{theorem*}[A] (Thm. \ref{decomposition-thm} and Cor.
    \ref{non-galois}) Let $M$ be a simple holonomic $\Dc_X$-module.
    \begin{enumerate}
    \item If $\supp M$ is finite over $Y$ (so $\pi$ needs not be
      finite), then $\pi_+(M)$ is semisimple.
      \item 
    Assume that $\pi$ is finite, that $\tilde X$ is smooth, and
    $\supp M = X$. Then
\begin{displaymath}
  \pi_+(M) = \bigoplus_{\chi \in \widehat {\bar G}_{\tilde M} }
 (V_\chi ^*)^H\otimes_k \tilde M_\chi,
\end{displaymath} where $\tilde M_\chi$ is a simple $\Dc_Y$-module,
$V_\chi^*$ is the dual of an irreducible $\bar G$-representation, and   $(V_\chi ^*)^H= \{v' \in V_\chi^* \ \vert \ h
\cdot v' = v', \quad h\in H\}$. The multiplicities of the simple
components are
\begin{displaymath}
  [\pi_+(M): \tilde M_\chi] = \dim_k V^H_\chi = \frac 1{|H|}\Tr_H(\phi_\chi)
  = \frac 1{|H|} \sum_{h\in H}  \phi_\chi(h),
\end{displaymath}
where $\phi_\chi$ is the character of
$ \chi \in \widehat {\bar G}_{\tilde M}$.

    \end{enumerate}
\end{theorem*}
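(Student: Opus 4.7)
The plan is to reduce Part (1) to Part (2) and to prove Part (2) by Galois descent: apply the decomposition for the Galois cover $\pi\tilde\pi\colon \tilde X\to Y$ (with group $G$), and then cut down to $\pi_+(M)$ by taking $H$-invariants, where $H$ corresponds to $\tilde\pi$.

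For Part (1), the restriction $\pi|_{\supp M}$ is proper with finite fibres, hence finite onto its image in $Y$. I would take a resolution $\tilde Z\to \supp M$ and use Kashiwara's equivalence for closed immersions together with minimal extension to replace $\pi_+(M)$ by the pushforward of a simple holonomic $\Dc$-module along a finite map between smooth varieties. Part (2) then yields semisimplicity.

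For Part (2), first pull back: set $\tilde M:=\tilde\pi^!(M)$. Because $\tilde\pi$ is étale Galois with group $H$ on a dense open, $\tilde\pi_+\tilde\pi^!(M)$ carries a canonical $H$-action and is isomorphic to $k[H]\otimes_k M$, so $M=(\tilde\pi_+\tilde M)^H$. Applying $\pi_+$ and using the composition formula $\pi_+\tilde\pi_+=(\pi\tilde\pi)_+$ yields $\pi_+(M)=\bigl((\pi\tilde\pi)_+\tilde M\bigr)^H$. Next, apply Galois descent to the $G$-cover $\pi\tilde\pi$: because $G_{\tilde M}$ is the inertia of $\tilde M$ and the central extension $\bar G_{\tilde M}$ of it acts genuinely on $\tilde M$, one obtains
\begin{displaymath}
(\pi\tilde\pi)_+(\tilde M) \;=\; \bigoplus_{\chi\in \widehat{\bar G}_{\tilde M}} V_\chi^*\otimes_k \tilde M_\chi ,
\end{displaymath}
where $\tilde M_\chi$ is a simple $\Dc_Y$-module and $V_\chi^*$ is the dual of the irrep of $\bar G_{\tilde M}$ labelled by $\chi$. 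Restricting the $G$-action to $H$ and taking $H$-invariants isolates $(V_\chi^*)^H$ inside each $\chi$-isotypic summand, producing the asserted decomposition of $\pi_+(M)$; the multiplicity $\dim_k(V_\chi^*)^H=\tfrac{1}{|H|}\sum_{h\in H}\phi_\chi(h)$ is then the standard character-projection formula applied to $V_\chi^*|_H$.

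The principal obstacle is proving the Galois decomposition of $(\pi\tilde\pi)_+(\tilde M)$ in the setting where $G_{\tilde M}$ acts only projectively on each simple constituent of $\tilde M$: the central extension $\bar G_{\tilde M}$ is genuinely needed, and one must verify that the isotypic decomposition is compatible both with the $G$-action arising from the Galois cover and with its restriction to $H$, and that the components $\tilde M_\chi$ are truly simple $\Dc_Y$-modules rather than just $\bar G_{\tilde M}$-isotypic sums. A secondary technical point is to ensure that all formal manipulations work when $\tilde X$ is merely smooth but $\tilde\pi$ ramifies, in particular that the identification $\tilde\pi_+\tilde\pi^!M\cong k[H]\otimes_k M$ holds globally on $X$ and not just over the étale locus.
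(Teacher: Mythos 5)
Your overall strategy — reduce to a Galois field extension, decompose there using the inertia/central extension, and cut down by taking $H$-invariants — matches the paper's architecture, and you have correctly identified the projective action / central extension issue as the main subtlety. But there are two concrete gaps, both at exactly the points you yourself flag as "secondary."

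For Part~(1), the resolution step does not work as stated. If $\tilde Z\to\supp M$ is a resolution and $\supp M$ is singular, the composite $\tilde Z\to\supp M\to Y$ is proper but has positive-dimensional fibres, so it is not finite, and Part~(2) does not apply to it. The paper instead reduces to the generic point $\xi$ of $\supp M$: by the commutation $\pi_+\circ j_{!+}=i_{!+}\circ\pi^\xi_+$ (Lemma~\ref{min-direct}), $\pi_+(M)$ is the minimal extension of $\pi^\xi_+(M_\xi)$, and by Kashiwara's theorem this in turn reduces to a direct image along a finite field extension $\Spec L_\xi\to\Spec K_\eta$; that is the finite map of smooth schemes you are after, and one never needs a resolution.

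For Part~(2), the identification $\tilde\pi_+\tilde\pi^!(M)\cong k[H]\otimes_k M$ is actually false when $\tilde\pi$ is ramified (which it can be even with $\tilde X$ smooth): at a branch point the ramification contributes extra torsion, so $\tilde\pi_+\tilde\pi^!(M)$ is not a free $k[H]$-module. What is true, by descent, is only $M\cong(\tilde\pi_+\tilde\pi^!(M))^H$. Consequently $\tilde M:=\tilde\pi^!(M)$ need not be semisimple, so one cannot directly invoke the Galois decomposition Theorem~\ref{galois-direct} for $(\pi\tilde\pi)_+(\tilde M)$. The paper's Corollary~\ref{non-galois} resolves this by replacing $\tilde\pi^!(M)$ with its socle $\soc(\tilde\pi^!(M))$, invoking Corollary~\ref{soc-cor} to relate the two, and then exploiting that $\pi_+(M)$ is already known to be semisimple (Part~(1)) and hence is the minimal extension $i_{!+}$ of its generic stalk; the decomposition is read off at the generic point where $\tilde\pi$ is étale, and minimal extension propagates it globally. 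So your Step~3 conclusion is reachable, but not along the line you drew through the false global $k[H]$-freeness.
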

The case when $\tilde X$ is singular is treated in
\Corollary{\ref{non-galois}}, and when $\pi$ is Galois (so that
$H= \{e\}$) \Theorem{\ref{explicit}} provides projection operators on
the isotypical components $V_\chi^*\otimes_k M_\chi$ and on the simple
submodules $M_\chi$ of $\pi_+(M)$.

The proof of \Theorem{(A)} (1) goes as follows when $M$ is torsion
free and $\pi$ is finite. Since $\pi_+$ is exact one can first reduce
to proving that $\pi_+(M)$ is semisimple at the generic point of $Y$.
We then have the map $\pi: \Spec L \to \Spec K$, where $K/k$ is a
field extension of finite type over $k$, and by going to the Galois
cover $\bar L$ one can assume $\pi$ is Galois with Galois group $G$.
If now $M$ is semisimple over the ring of $k$-linear differential
operators $\Dc_L= \Dc_{L/k}$ and finite-dimensional as $L$-vector
space, then the ring extension $\Dc_L[\bar G_M]\otimes_{\Dc_L}M$ is
again semisimple over the skew group ring $\Dc_L[\bar G_M]$
($\bar G_M$ is a central extension of the inertia subgroup of $M$ in
$G$), so that one can finally infer that $M$ is semisimple over
$\Dc_K$ from a well-known descent equivalence.

It has already been established that $\pi_+(M)$ is semisimple in the
much more general situation when $\pi$ is projective but not
necessarily finite, accomplished in a long impressive development that
allowed successively greater classes of holonomic modules, see
\cite{perverse} (see also \cite{cataldo-mogliorini:hodge
  theory}),\cite{m-saito:polarisable}, \cite{sabbah: polarised} and
finally \cite{wild-harmonic-wild-pure}*{Th. 19.4.2}. However, in spite
of the fact that it is an assertion in the realm of characteristic $0$
algebraic varieties, the cited work rely on G.A.G.A., the
Riemann-Hilbert correspondence, and the notions weights and/or
harmonic metrics. One can therefore say that this treatise stems from
a frustration at not finding in the literature an algebraic
description of the decomposition of for example $\pi_+(\Oc_X)$,
avoiding the detour to analytic topology and sheaf cohomology. Our
algebraic proof that $\pi_+(M)$ is semisimple when $\pi$ is finite
makes no distinction between regular and irregular singular holonomic
module (or being of geometric origin), and no auxiliary arithmetic or
analytic notions are used; the algebraic approach also allows for a
description of the decomposition of $\pi_+(M)$ in terms of symmetries
of $M$.

Assume now that a finite group $G$ acts on $X$. Levasseur and Stafford
\cite{levasseur-stafford:semisimplicity,wallach:invariantdiff} employ
a Morita equivalence between modules over the ring $\Dc_X^G$ of
invariant differential operators and modules over the skew group
algebra $\Dc_X[G]$ to decompose the $\Dc_X^G$-module $\pi_*(M)$ for
certain $M$ when $\pi : X \to Y= X^G$ is the invariant map (see
\cite{levasseur-stafford:semisimplicity}*{Th. 3.4}); this has its
geometric counterpart in our use of descent. For such $\pi$ the ring
$\Dc_X^G$ coincides with the subring of liftable differential
operators $\Dc_Y^\pi$ in $\Dc_Y$, but also for general $\pi$ there is
a natural homomorphism of $\Dc^\pi$-modules $ \pi_*(M) \to \pi_+(M)$,
induced by a canonical global trace section of the relative dualizing
module $ \omega_{X/Y}$. An advantage of studying the
$\Dc_Y^\pi$-module $\pi_*(M) $ rather than the $\Dc_Y$-module
$\pi_+(M)$ is that the former is coherent over $\Oc_X$ when $M$ is a
connection, while $\pi_+(M)$ normally is not coherent when the support
of $M$ intersects the ramification locus, the drawback being that it
is harder to work with $\Dc_Y^\pi$ than $\Dc_Y$; one is also usually
more interested in the $\Dc_Y$-module since then the de~Rham
cohomology is available. Still, by work of F. Knop
\cite{knop:gradcofinite} one has a reasonably good knowledge of the
properties of $\Dc_Y^\pi$ also when $\pi$ is uniformly ramified, which
is slightly more general than invariant maps, making it possible to
get, if $M$ is torsion free and coherent over $\Oc_X$ along the
ramification locus of $\pi$, that $\pi_*(M)$ is semisimple over
$\Dc_Y^\pi$ and its simple components are in perfect correspondence
with the simples in the $\Dc_Y$-module $\pi_+(M)$
\Th{\ref{coh-decomp}}.

More specifically consider an invariant map
$X= \Spec \So(V) \to Y= \Spec \So(V)^G $, where $V$ is a
finite-dimensional $k$-vector space with symmetric algebra $\So(V)$,
and $G$ is a subgroup of $ \Glo_k(V)$, so that $\Dc_Y^\pi= \Dc_X^G$.
Again by a theorem of Levasseur and Stafford
\cite{levasseur-stafford:invariantdiff}, $\Dc^\pi_Y$ is generated by
the subalgebras $\So(V)^G, \So(V^*)^G \subset \Dc^G_Y$, and they have
also given a presentation of the $\Dc_Y^\pi$-module $\pi_*(E_\lambda)$
when $G$ is a complex reflection group and $E_\lambda$ is a certain
natural exponential $\Dc_X$- module (in the case of a Weyl group,
$\pi_*(E_\lambda)$ controls the structure of a certain important
$\Dc^\pi_Y$-module on a reductive Lie algebra). We complement (and
reprove) their latter result by giving a presentation of the full
direct image $\Dc_Y$-module $\pi_+(E_\lambda)$
\Th{\ref{semsimplemod}}; this presentation, as that of
$\pi_*(E_\lambda)$, however, has the deficiency that no cyclic
generator is obtained. In this context, \Theorem{\ref{normalbasis}}
adds to the normal basis theorem for Galois field extensions $L/K$
that are non-algebraic over $k$, by showing that cyclic generators of
$\pi_+(L)$ either as $\Dc_K$- or $k[G]$-module are the same.

We give an explicit semisimple decomposition of $\pi_+(\Oc_X)$ when
$G$ is an imprimitive complex reflection subgroup $ G(ed,e,n)$ of
$\Glo(V)$ \Th{\ref{imprimitive}}, and at the same time explicit
realizations of the irreducible $G(ed,e,n)$-representations as
subspaces of a polynomial ring. Previously these representations were
constructed in a more computational way
\citelist{\cite{ariki-hecke}\cite{ariki-koike-hecke}}.

Returning to a general finite map of smooth varieties, $\pi: X\to Y$,
there exists a natural coarsest stratification by locally closed
subsets $\{X_{i}, Y_j, i \in I_j, j \in J\}$ such that one gets
restrictions $X_{i}\to Y_i$ that are étale. To this stratification we
associate factorizations
$\pi : X \xrightarrow{p_{i}} Z_{i} \xrightarrow{q_{i}}Y$ such that
$p_{i}$ is minimal totally ramified along the generic point $x_i$ of
$X_{i}$ and $q_{i}$ is maximally étale along the generic point of
$z_i=p_i(x_i)$; if $X/Y$ is Galois, then $p_i$ is even totally
ramified over $p_i(x_i)$. In fact, we prove the existence of such a
factorization for any point in $X$, which is a result of independent
interest that can be regarded as a refinement of Stein factorization
\Th{\ref{factor-theorem}}. Associated to the factorizations are the
{\it inertia submodules} $\Tc_{i}$ of $N= \pi_+(\Oc_X)$, which are
kernels of certain ``inertia'' trace morphisms
\begin{displaymath}
      \overline{ \Tr}_{i}: N\to N;
\end{displaymath}
when $\pi$ is Galois it indeed arises from a trace map with respect to
the map $p_i$. Now for each stratum $Y_i$ there exists, thanks to the
semisimplicity of $N$, a unique maximal submodule $N_j$ with vanishing
local cohomology $R\Gamma_{Y_j}(N_j)=0$; we say that the $N_j$ are the
{\it canonical submodules} of $N$.
\begin{theorem*}[B](Thm. \ref{can-filt})
  \begin{enumerate}
  \item \begin{displaymath}
      N_j = \bigcap_{i\in I_j } \Tc_{i}.
    \end{displaymath}
  \item If $Y_{j'}$ belongs to the closure of $Y_j$, then
    $N_j \subset N_{j'}$.
  \end{enumerate}
\end{theorem*}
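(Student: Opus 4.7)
The plan is to exploit the semisimplicity of $N=\pi_+(\Oc_X)$, established in Theorem~(A), to reduce both assertions to a statement about simple summands. Decompose $N=\bigoplus_\alpha N_\alpha$ into simples; semisimplicity implies that $N_j$ is the sum of those $N_\alpha$ with $\RG_{Y_j}(N_\alpha)=0$, and each $\Tc_i$ is the sum of those $N_\alpha$ on which the inertia trace $\overline{\Tr}_i$ vanishes. Both parts thus reduce to identifying, for each simple summand $N_\alpha$, when each of these two vanishings occurs.

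For part (1), I would work étale-locally around the generic point $y_j$ of $Y_j$. Over such a neighborhood the preimage of $y_j$ splits into the generic points $x_i$, $i\in I_j$, and the factorization $\pi=q_i\circ p_i$ from Theorem~\ref{factor-theorem} presents $N$ locally as a direct sum of pushforwards by the totally ramified maps $p_i$ followed by the étale maps $q_i$. Because $q_i$ is étale at $z_i=p_i(x_i)$, it is transparent to $\RG_{Y_j}$, so the nonvanishing of $\RG_{Y_j}(N_\alpha)$ is detected by the étale/clean part of $p_{i,+}$ at each $z_i$, for $i\in I_j$. In the Galois case this clean part is the fixed subspace $V_\chi^{H_{x_i}}$, where $V_\chi$ is the Galois representation attached to $N_\alpha$ and $H_{x_i}$ is the inertia group at $x_i$, and the inertia trace $\overline{\Tr}_i$ is a nonzero scalar multiple of the averaging projector onto $V_\chi^{H_{x_i}}$. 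Hence $N_\alpha\subset\Tc_i$ is equivalent to $V_\chi^{H_{x_i}}=0$, and $\RG_{Y_j}(N_\alpha)=0$ is equivalent to $V_\chi^{H_{x_i}}=0$ for every $i\in I_j$; intersecting yields $N_j=\bigcap_{i\in I_j}\Tc_i$. The non-Galois case is handled by passage to the Galois cover $\tilde X\to Y$ and descent, as in the proof of Theorem~(A).

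For part (2), assume $Y_{j'}\subset\overline{Y_j}$. Standard specialization of inertia, together with the refined factorizations of Theorem~\ref{factor-theorem}, shows that, after making compatible choices of generic points, every inertia subgroup $H_{x_i}$ with $i\in I_j$ is conjugate to a subgroup of some inertia $H_{x_{i'}}$ with $i'\in I_{j'}$. Larger inertia subgroups have smaller fixed subspaces, so $V_\chi^{H_{x_{i'}}}\subset V_\chi^{H_{x_i}}$, which means $V_\chi^{H_{x_i}}=0$ forces $V_\chi^{H_{x_{i'}}}=0$. By the characterization from part (1), every simple summand lying in $N_j$ also lies in $N_{j'}$, i.e.\ $N_j\subset N_{j'}$.

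The main obstacle is the local identification of $\RG_{Y_j}(N_\alpha)$ with the image of the inertia trace. Making it precise requires a clean local model for $\pi_+(\Oc_X)$ at the generic point of $Y_j$ that works uniformly for wild/irregular as well as tame ramification. The totally ramified piece $p_i$ in Theorem~\ref{factor-theorem} is precisely where the local analysis is governed by a single inertia group, rendering the averaging-as-trace interpretation natural; upgrading this from a statement about generic fibers to a fully holonomic local isomorphism is where the real work lies.
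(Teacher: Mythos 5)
Your reduction to simple summands is sound, and your structural intuition—that $\overline{\Tr}_i$ is an averaging projector attached to the inertia at $x_i$, so that membership in $\Tc_i$ is a fixed-space vanishing condition, and that specialization enlarges inertia and hence shrinks fixed spaces—is accurate and aligns with what drives part (2). But the crux of part (1), the equivalence you state as ``$\RG_{Y_j}(N_\alpha)=0$ iff $V_\chi^{H_{x_i}}=0$ for every $i\in I_j$'', is asserted rather than proved, and you correctly flag at the end that ``upgrading this from a statement about generic fibers to a fully holonomic local isomorphism is where the real work lies.'' That acknowledgment is exactly where the gap is: the paper does \emph{not} establish this representation-theoretic dictionary directly. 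Instead it proves the two inclusions $\bigcap_i\Tc_i\subset N_j$ and $N_j\subset\bigcap_i\Tc_i$ by different local-cohomology arguments. For the first inclusion it uses the base-change identity $\pi_+\RG_{X_F}=\RG_F\pi_+$ (Theorem~\ref{loc-cohom-dir}) together with the key fact (Theorem~\ref{vanishing}) that a totally ramified map has $\RG_F(\Tc_\pi(M))=0$, and then runs a distinguished-triangle argument over the strata $G_{(i)}=X_{\bar Y_j}\setminus X_i$ to show the relevant local cohomology dies. For the converse inclusion it observes that a simple summand $M^0\not\subset\Tc_i$ lands inside a module $(q_i)_+(\Oc_{Z_i})_{y_j}$ that is finite over the regular local ring $\Oc_{Y,y_j}$, and then invokes Grothendieck's non-vanishing theorem to produce $\RG_{Y_j}(M^0)_{y_j}\neq 0$. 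Your sketch supplies neither ingredient, and producing a direct proof of the claimed equivalence via Galois fixed spaces would in effect reprove both halves.

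For part (2) your argument is essentially correct given part (1), and matches the geometric content of the paper's Corollary~\ref{cor-can-filt}: the stratification refinement gives a factorization $X\to Z_{i_1}\to Z_{i_2}\to Y$ whenever $X_{i_2}\subset\bar X_{i_1}$, which directly yields $\Tc_{i_1}\subset\Tc_{i_2}$ (without passing through explicit representations). Your conjugacy statement about inertia subgroups needs care—the paper handles non-Galois maps by first passing to the Galois cover, averaging over $H$, and descending, as in Corollary~\ref{non-galois}—but the idea is the right one. The bottom line is that the proposal sets up a reasonable reformulation and correctly localizes the difficulty, but the actual proof, namely the local-cohomology machinery that closes the loop, is missing.
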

Thus $\{N_j\}_{j \in J }$ forms a filtration, and taking successive
quotients one gets a canonical decomposition of $N$ associated to the
stratification $\{Y_j\}$ of $Y$. \Theorem{(B)} is applied to the
invariant map $\pi: \So(V)^{S_n}\to \So(V)$, where $S_n$ is the
symmetric group acting as permutations of a basis of $V$, and describe
the isotypical decomposition of the canonical submodules $N_j$ of
$ \pi_+(\So(V))$ \Th{\ref{symmetric-canonical}} and also the
isotypical content of successive quotients in the canonical filtration
\Cor{\ref{cor_canonical}}.

\subsection{The inverse image $\pi^!$}

We first have the following general result:
\begin{theorem*}[C](Thm. \ref{semisimple-inv} and Thm.
  \ref{cliffordtheorem} ) Let $\pi: X \to Y$ be a surjective morphism
  of smooth varieties over a field $k$ of characteristic 0, and let
  $N$ be a coherent holonomic $\Dc_Y$-module.
  \begin{enumerate}
        \item
    \begin{enumerate}
    \item Assume that $\pi$ is smooth. Then $ \pi^!(N)$ is semisimple
      if and only if $N$ is semisimple, and if $ \pi^!(N)$ is simple,
      then $N$ is simple.
    \item If $N$ is a semisimple connection, then $\pi^!(N)$ is a
      semisimple connection.
    \item Assume $\pi$ is finite. If $\pi^!(N)$ is semisimple, then
      $N$ is semisimple, and if $ \pi^!(N)$ is simple, then $N$ is
      simple.
    \item Assume that $N$ is semisimple and $N_y$ is of finite type
      over $\Oc_{Y,y}$ for all points $y$ of height $\leq 1$ such that
      the closure of $y$ intersects the discriminant $D_\pi$, i.e.
      $\{y\}^- \cap D_\pi\neq \emptyset$. Then $\pi^!(N)$ is
      semisimple.
    \end{enumerate}
\item    Assume that $\pi$ is finite and that $L/K$ is Galois with Galois
  group $G$. Let $N$ be a simple $\Dc_Y$-module such that
  $i^!(N)\neq 0$. Let $M$ be a simple submodule of $\pi^!(N)$, $G_M$
  be the inertia subgroup of $M$ in $G$, and put $t = [G: G_M]$. Then
  \begin{displaymath}
   j_{!+} j^!(\pi^!(N)) = \bigoplus^t_{i=1} (g_i\otimes M)^e,
 \end{displaymath} for some integer  $e$, where $g_i$ are
 representatives of the cosets $G/G_M$. If
 $N$ is a connection, or $\pi$ is a Galois cover, then one can erase $j_{!+}j^!$
 on the left. Moreover:

 \begin{enumerate}
 \item The integer $e$ divides both the order $|G_M|$ of the inertia group
   and the degree of $\pi$.
 \item $\frac {\rko (N)}{\rko (M)}$ divides the degree of $\pi$.
 \end{enumerate}

  \end{enumerate}    
\end{theorem*}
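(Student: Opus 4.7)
The plan is to handle (1) by a chain of reductions down to generic-point computations, and then to use (1)(d) as input to the Clifford-theoretic argument proving (2).

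For (1)(a), a smooth surjection $\pi$ is faithfully flat and $\pi^!$ agrees with $\pi^*$ up to cohomological shift on holonomic modules; this yields the semisimplicity equivalence and the ``simple implies simple'' statement by faithfully-flat descent of splittings (combined with the fact that $\pi^!$ commutes with direct sums and is exact). For (1)(b), pullback of a coherent-$\Oc_Y$ $\Dc_Y$-module stays coherent over $\Oc_X$, and semisimplicity of a connection amounts to complete reducibility of its monodromy, which is preserved under restriction to a subgroup in characteristic zero. For (1)(c), the counit $\pi_+\pi^!(N)\to N$ is split by the trace of the finite map $\pi$, so $N$ is a direct summand of $\pi_+\pi^!(N)$; writing the semisimple $\pi^!(N)$ as a sum of simples and invoking Theorem (A)(2) on each summand shows $\pi_+\pi^!(N)$ is semisimple, whence $N$ is semisimple too.

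For (1)(d), I would reduce to $N$ simple. At the generic point of $Y$, $\pi^!(N)_L = N_K\otimes_K L$ is semisimple by the Galois-descent argument along $\bar L/K$ used in the proof of Theorem (A)(1): passing to $\bar L$, Clifford theory for the skew group ring $\Dc_{\bar L}[G]$ gives semisimplicity, and taking $H$-invariants recovers a semisimple $\Dc_L$-module. The coherence hypothesis at height-$\leq 1$ points of $Y$ meeting $D_\pi$ upgrades this to global semisimplicity: by (b), $\pi^!(N)$ is a connection in a neighborhood $U$ of $\pi^{-1}(D_\pi)$ and therefore coincides with the intermediate extension of its (semisimple) generic restriction on $U$, while over the étale locus $X\setminus\pi^{-1}(D_\pi)$ semisimplicity is supplied by (a), and the two descriptions patch consistently.

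For (2), after passage to the Galois closure $\tilde X$ the Galois group $G$ acts on $\pi^!(N)$ through the central extension $\bar G_M$ of Section~\ref{inertia-section}. The action permutes the simple summands of the semisimple (by (1)(d)) module $\pi^!(N)$; fixing a simple submodule $M$, the orbit has length $t=[G:G_M]$, and $G$-equivariance forces the multiplicity $e$ to be constant across the orbit, giving $\pi^!(N)=\bigoplus_{i=1}^t(g_i\otimes M)^e$. When $N$ is merely holonomic, one restricts first to a dense open where $N$ is a connection and reinserts $j_{!+}j^!$ afterwards. Divisibility (a) $e\mid |G_M|$ is standard Clifford theory for $\Dc_L[\bar G_M]$; (b) $e\mid\deg(\pi)$ and $\rko(N)/\rko(M)\mid\deg(\pi)$ follow by applying $\pi_+$ to $M\hookrightarrow\pi^!(N)$, using the projection formula $\pi_+\pi^!(N)=\pi_+(\Oc_X)\otimes_{\Oc_Y}N$ of generic rank $\deg(\pi)\cdot\rko(N)$, and comparing ranks via the multiplicity formula of Theorem (A)(2). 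The hardest step will be (1)(d): extending generic semisimplicity across the discriminant, where the precise role of the height-$\leq 1$ coherence hypothesis must be tied to an intermediate-extension/purity statement for semisimple holonomic modules. A secondary subtlety in (2) is the divisibility $e\mid\deg(\pi)$, which demands reconciling the Clifford-theoretic multiplicity (via $\bar G_M$) with the rank-based one (via $\pi_+$).
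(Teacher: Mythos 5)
The most serious problem is your proof of (1)(b). You argue via monodromy: ``semisimplicity of a connection amounts to complete reducibility of its monodromy, which is preserved under restriction to a subgroup in characteristic zero.'' This fails twice over. First, restriction of a semisimple representation to an arbitrary subgroup does \emph{not} preserve semisimplicity (take a unipotent cyclic subgroup of $SL_2(\Zb)$ acting on the standard representation). It \emph{is} preserved for finite-index subgroups in characteristic $0$, but you neither observe that the image of $\pi_*$ on fundamental groups has finite index nor supply the Clifford-plus-averaging argument that would be needed. Second, and more fundamentally, for quasi-projective $X$ the passage from a connection to its monodromy representation is not faithful when the connection is irregular (e.g.\ $\Dc_{\Ab^1}e^x$ is simple but has trivial monodromy), and Theorem (C)(1)(b) allows arbitrary connections. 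The paper proves (1)(b) algebraically with no reference to monodromy: by generic smoothness, (1)(a) gives generic semisimplicity of $\pi^!(N)$, and then Proposition~\ref{simple-coh} (a connection extending a semisimple connection on a dense open is semisimple, because $j_{!+}j^! = \mathrm{id}$ for connections) yields semisimplicity everywhere. Your argument cannot be made to work as stated and should be replaced by this or an equivalent algebraic step.

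Your treatment of (1)(a) is also too coarse. ``Faithfully-flat descent of splittings'' does not by itself give either direction of the biconditional; the paper's route is to factor $\pi$ into a finite map and a projection, reduce the finite case to the \'etale case, pass by Kashiwara's equivalence to a statement about modules over $\Dc_{K}$ at a point, and then invoke Galois descent to $\Dc_L[G]$-modules together with the Maschke-type Proposition~\ref{maschke}. This machinery (in particular the equivalence $\Mod(\Dc_K)\cong\Mod(\Dc_L[G])$ of Proposition~\ref{morita}) is what actually underlies the semisimplicity equivalence, and you should make explicit that you are relying on it. Your (1)(c), (1)(d), and (2) are essentially on the right track: the trace surjection $\pi_+\pi^!(N)\to N$ combined with semisimplicity of $\pi_+$, the reduction of (1)(d) to the generic point plus Proposition~\ref{simple-coh}, and the Clifford-theoretic orbit argument for (2) are all what the paper does. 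One point you correctly flag as delicate in (2) is the divisibility $e\mid\deg\pi$; the paper gets $e=\dim_k V_\chi$ from Theorem~\ref{galois-direct} and uses the classical fact that $\dim_k V_\chi$ divides the order of the (projective quotient of the) inertia group, rather than a rank comparison via the projection formula, so be aware the two bookkeeping routes must agree.
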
 
Here $i: Y_0\to Y$ is the inclusion of the complement of the
discriminant locus of a finite map $\pi$, and if $X_0 \to Y_0$ is the
corresponding base change, $j: X_0 \to X$ is the projection on the
second factor; $g_i\otimes M$ denotes a certain twist of a
$\Dc_X$-module $M$.

In fact $\pi^!(N)$ is always a semisimple connection when $N$ is a
semisimple connection, which was proven using Hodge theory in
\cite{simpson-higgs}. In \Theorem{\ref{inv-conn}} we give an algebraic
proof of this assertion when $N$ belongs to the class of covering
connections (described below).

To a finite group $G$ there exist finitely generated fields $L$ over
$k$ such that $G$ is isomorphic to a subgroup of the automorphism
group $\Aut (L/k)$. Letting $K$ denote the invariant field $L^G$ there
is a well-known Picard-Vessiot equivalence between the category of
finite-dimensional $k$-representations of $G$ and the category of
$\Dc_K$-modules $M$ such that the inverse image $\Dc_L$-module
$L\otimes_K M$ is isomorphic to $ L^n$- say $M$ is ($L$-)étale trivial
- where $n= \rank_K M$; this is described in detail in
(\ref{finite-groups}), where $k$ does not have to be algebraically
closed, contrary to the usual statement. In this equivalence the
inverse (direct) image of $\Dc$-modules correspond to restriction
(induction) of representations with respect to an inclusion of groups
\Prop{\ref{ind-direct}}, so that branching problems for
representations of groups can be translated to decomposition problems
for inverse and direct images of $\Dc$-modules, and {\it vice versa},
certain decomposition problems for $\Dc$-modules can be studied as a
branching problem in group theory. It is thus no surprise that many
classical results in the representation theory of finite groups can be
extended to results about inverse (direct) images of $\Dc$-modules,
and we present here a collection: \Propositions{\ref{simpledirect}
}{{\ref{cyclic-extension}}}, \Theorem{\ref{cliffordtheorem}},
\Corollary{\ref{ito-isaacs}}, \Theorem{\ref{clifford-pairing}},
\Corollaries{\ref{cor-isaacs}, {\ref{irr-crit}}}{\ref{prime-cor}}. But
since the $\Dc$-module categories are bigger, these results imply
corresponding ones in representation theory and not the other way
around, so that although the proofs are sometimes inspired by
representation theory, they do not depend on it. For instance, the
semisimplicity of $\pi_+(M)$ can be deduced from the Picard-Vessiot
equivalence when $M$ is étale trivial, but there exist a great many
semisimple modules $M$ of interest whose differential Galois group is
non-finite.

The category of {\it covering} $\Dc_X$-modules consists of torsion
free modules (over $\Oc_X$) that generically decompose into a direct
sum of rank $1$ modules after taking an inverse image over a finite
field extension, and thus includes the category of étale trivial
modules. Here the main result is that connections which moreover also
are covering modules form a semisimple tensor category which is stable
under arbitrary inverse images \Thms{\ref{alg-semi}}{\ref{inv-conn}}.
It is proven that étale trivial connections occur as submodules of
connections of the form $\pi_+(\Oc_{X_M})$ for some étale map
$\pi: X_M \to X$ \Th{\ref{riemann}}.

To understand the covering $\Dc$-modules one first needs to consider
the category $\Io(X)$ of $\Dc_X$-modules of rank $1$, which is
therefore rather thoroughly studied, where a main tool is an extension
to an algebraic context K. Saito's notion of logarithmic forms and
residues \Prop{\ref{res-basics}}. In \Theorem{\ref{class-rank-1}} we
compare $\Io(X_0)$ to $\Io(X)$ when $X_0$ is an affine subvariety of a
smooth projective variety $X$. A several variable extension of the
classical residue theorem \Th{\ref{residue-theorem}} is used to get a
kind of purity result for regular singular connections of rank $1$
\Cor{\ref{triv-ranke-1}}, and also to define a degree on $(1,1)$-Hodge
classes \Cor{\ref{hodge-degree}}. Now a $\Dc_K$-module of rank $1$ is
determined by a closed 1-form $\gamma \in \Omega_K^{cl}$,
$M= M_\gamma$, and one quite generally can ask what type of field
extension $L/K$ is required to make $L\otimes_K M_\gamma $ trivial.
Requiring that $L/K$ be finite, \Theorem{\ref{etale-trivial-th}} is a
classification of the category of étale trivial $\Dc_K$-modules
$M_\gamma$ of rank $1$. If one goes only slightly beyond finite field
extensions, the condition on $\gamma$ that
$E\otimes_KM_\gamma \cong E$ for some {\it elementary} field extension
$E/K$ (one adds logarithms and exponentials) can be characterized by a
well-known Liouvillan condition. We provide a new uniform proof of
this classical result, allowing non-regular field extensions and also
several variables \Th{\ref{liouville}}.

\subsection{Connections and simply connected varieties} When the only
regular singular connections $M$ on a quasi-projective variety $X$ are
the trivial ones, so that $M\cong \Oc_X^n$, we say $X$ is {\it simply
  connected} (s.c.); there is also the apparently weaker condition
that all étale trivial connections are trivial, and one says $X$ is
étale simply connected. We prove that the smooth locus of a rational
normal projective varieties is s.c. \Prop{\ref{raional-dsc}} and that
for a smooth proper map with connected fibres $X\to Y$, $X$ is s.c.\/
if and only if $Y$ and a closed fibre $X_y$ is s.c.
\Prop{\ref{sc-proper}}. Next we give a version of the
Grothendieck-Lefschetz theorems, stated for connections instead of
fundamental groups. This is concerned with an inclusion
$\phi: Y \to X$ of smooth projective varieties and the functor
$\phi^! : \Con (X)\to \Con (Y)$.
\begin{theorem*}[D](Thm. \ref{groth-lef})
  \begin{enumerate}
  \item If $Y$ is a complete intersection in $X$, then $\phi^!$ is fully
    faithful.
  \item If (1) holds and moreover $\dim Y \geq 2$, then $\phi^!$ is an
    equivalence of categories.
  \end{enumerate}
\end{theorem*}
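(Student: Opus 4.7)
The plan is to reduce both assertions to a Lefschetz-type statement for the de~Rham cohomology of connections on a complete intersection. For (1), observe that if $M, N \in \Con(X)$ then $\Homc_{\Oc_X}(M, N)$ is again a connection, $\phi^!\Homc_{\Oc_X}(M, N) = \Homc_{\Oc_Y}(\phi^! M, \phi^! N)$ because $\phi$ is a closed immersion of smooth varieties, and
\begin{equation*}
\Hom_{\Dc_X}(M, N) = H^0_{dR}(X, \Homc_{\Oc_X}(M, N))
\end{equation*}
is the space of horizontal sections; the same formula holds on $Y$. Thus full faithfulness reduces to showing that for every $\Ec \in \Con(X)$ the restriction
\begin{equation*}
H^0_{dR}(X, \Ec) \to H^0_{dR}(Y, \phi^! \Ec)
\end{equation*}
is bijective. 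Using the complete intersection hypothesis I would write $Y$ as a successive intersection of smooth ample hypersurfaces and prove the isomorphism one step at a time via the weak Lefschetz theorem for local systems on a smooth projective variety specialized to degree $0$, where only $\dim Y \geq 1$ is required; on a projective $X$ every connection is regular singular, so algebraic and analytic horizontal sections agree, bridging the gap between the $\Dc$-module statement and the topological one.

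For (2), I would follow the Grothendieck--Lefschetz strategy transposed to connections. Given $\Fc \in \Con(Y)$, the extension to $X$ is built in two stages. First, lift $\Fc$ together with its integrable connection through the infinitesimal thickenings $Y_n \subset X$; the obstructions live in $\Ext^1$-groups of twists of $\Fc$ by symmetric powers of the conormal bundle $N_{Y/X}^\vee$, and vanish because $Y$ is a complete intersection of dimension $\geq 2$, making $N_{Y/X}^\vee$ sufficiently positive. This produces a formal connection on the completion $\hat X_Y$. Second, algebraize the formal connection by Grothendieck's formal existence theorem, applied first to the underlying coherent sheaf and then to the connection map $\Ec \to \Omega^1_X \otimes \Ec$ viewed as a morphism of coherent sheaves on $\hat X_Y$. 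The full faithfulness established in (1) then guarantees that the resulting $\Dc_X$-module restricts to $\Fc$, yielding essential surjectivity.

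The hard part will be the formal extension step, where one must simultaneously control the obstruction to extending the underlying coherent sheaf (handled by Grothendieck's effective Lefschetz conditions on the pair $(X, Y)$) and the obstruction to extending the integrable connection in a compatible way (vanishing of suitable $H^1$-groups of twisted connections on $Y$). Both vanishings depend on $\dim Y \geq 2$ together with the positivity coming from the complete intersection assumption; once they are in hand, the formal-to-algebraic passage is an immediate application of Grothendieck's existence theorem for coherent sheaves on the formal completion of a projective variety.
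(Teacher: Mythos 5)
Your proof of (1) is correct but takes a genuinely different route. You reduce to bijectivity of the restriction map on horizontal sections $H^0_{dR}(X, \Ec) \to H^0_{dR}(Y, \phi^!\Ec)$ and invoke the topological Lefschetz hyperplane theorem (surjectivity of $\pi_1(Y_a) \to \pi_1(X_a)$ for $\dim Y \geq 1$) via the Riemann--Hilbert correspondence and G.A.G.A.; the paper explicitly acknowledges this transcendental alternative in Remark~\ref{rh-proof-g-l}, but instead factors $\phi$ through the formal completion $\hat X$ of $X$ along $Y$, proves $\Con(Y) \simeq \Con(\hat X)$ by a Matlis-type duality argument (Proposition~\ref{formal-eq}), and deduces full faithfulness of $\nu^!: \Con(X) \to \Con(\hat X)$ from the coherent cohomological dimension bound $\cdo(X\setminus Y) < \dim X - 1$ that holds for complete intersections. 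The paper's route stays purely algebraic and avoids a subtlety in yours: reducing ``one step at a time'' requires the intermediate complete intersections to be smooth, which is not automatic, whereas the cohomological-dimension argument never looks at intermediate loci.

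Your argument for (2) has a genuine gap. The obstruction to lifting a vector bundle from the $n$-th to the $(n+1)$-th infinitesimal neighbourhood of $Y$ lives in $H^2(Y, \End(\Fc) \otimes S^{n+1}N^\vee_{Y/X})$, not in $\Ext^1$, and for a complete intersection the conormal bundle $N^\vee_{Y/X}$ is \emph{negative} (a direct sum of $\Oc_Y(-d_i)$ with $d_i > 0$), not positive. Hence $S^{n+1}N^\vee_{Y/X}$ becomes arbitrarily negative, and for $\dim Y = 2$ --- the critical boundary case the theorem must cover --- the obstruction group is $H^2$ of a very negative twist, which by Serre duality is dual to $H^0$ of a very positive twist and is therefore nonzero, in fact growing with $n$. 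The vanishing you invoke thus fails exactly where it is needed. The Grothendieck--Lefschetz mechanism that actually works (and that the paper uses via condition (G) and Lemma~\ref{globalsections}) is not a term-by-term vanishing of obstructions: one shows that for $\dim Y \geq 2$ every formal vector bundle on $\hat X$ admits a two-step presentation by direct sums of twists $\nu^*\Oc_X(m)$, lifts that presentation to $X$ using the full faithfulness established in (1), and checks that the cokernel carries a connection because $\nu^*$ is fully faithful on the relevant principal-parts sheaves. The formal bundle does algebraize, which a posteriori forces the obstructions to vanish, but there is no direct positivity argument that proves this one step at a time.
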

Grothendieck's theorem is about a comparison of the étale fundamental
groups of $Y$ and $X$, and therefore étale trivial connections, while
\Theorem{(D)} allows general connections (but see
\Remark{\ref{rh-proof-g-l}}).

A somewhat related idea to analyze when a variety is s.c. is to use a
notion of {\it differential coverings}, which is a family of maps
$ p_\lambda : C_\lambda \to X$, $\lambda \in \Lambda$, such that there
exists a dense subset of smooth points $X_0\subset X$ so that each $x$
in $X_0$ is cut out by the $C_\lambda$, meaning that the tangent space
of $x$ is spanned by the image of the tangent spaces of the
$C_\lambda$ such that $x\in p_\lambda(C_\lambda)$ (see
\Lemma{\ref{cut-lemma}}). Say moreover that $X$ is differentially
simple if $X$ can be provided with a differential covering such that
each $C_\lambda$ s.c.. The main result here is:
\begin{theorem*}[E](Thm. \ref{dsc-thm}) Differentially simple
  varieties are simply connected.
\end{theorem*}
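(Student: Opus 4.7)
The plan is to prove that every regular singular connection $M$ on $X$, of some rank $n$, is isomorphic to $\Oc_X^{\oplus n}$, by leveraging the differential covering $\{p_\lambda:C_\lambda\to X\}_{\lambda\in\Lambda}$ by simply connected varieties. The first step is to pull back $M$: since $p_\lambda^!$ preserves the property of being a regular singular connection, $p_\lambda^!(M)$ is a regular singular connection on the simply connected $C_\lambda$, hence by hypothesis $p_\lambda^!(M)\cong\Oc_{C_\lambda}^{\oplus n}$. Thus each $p_\lambda^!(M)$ has an $n$-dimensional space of horizontal sections spanning fibers everywhere, and in particular the monodromy of $M$ along paths that lift to $C_\lambda$ is trivial.

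I would next reformulate simple-connectivity in Tannakian terms: the category $\Con^{rs}(X)$ is a neutral Tannakian category with affine group scheme $\pi^{rs}_1(X,x)$, and $X$ is simply connected precisely when $\pi^{rs}_1(X,x)$ acts trivially on every object, i.e.\ is trivial. Each $p_\lambda$ induces a morphism $(p_\lambda)_*:\pi^{rs}_1(C_\lambda,y_\lambda)\to\pi^{rs}_1(X,x)$ whose source is trivial by hypothesis. What remains to establish is that the subgroup of $\pi^{rs}_1(X,x)$ generated by the images of all the $(p_\lambda)_*$ is the whole of $\pi^{rs}_1(X,x)$; combined with the first step, this forces $\pi^{rs}_1(X,x)$ to act trivially on $M$, giving $M\cong\Oc_X^{\oplus n}$.

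The reason the generation should hold is that the cutting-out condition provides, at each $x\in X_0$, enough infinitesimal directions to reach every element of $\pi^{rs}_1(X,x)$. Concretely, at each $x\in X_0$ one can trivialize $M$ formally (any connection is formally trivial at a smooth point), and then match this trivialization against the trivializations of the various $p_\lambda^!(M)$ coming through $x$: the tangent-spanning condition ensures that a formal section of $M$ whose pullback is flat along every spanning $C_\lambda$ is flat in all directions of $T_x X$. Parallel transport of these trivializations along chains of $C_\lambda$'s, whose tangent spaces span at every point of the dense subset $X_0$, should propagate local flat bases across $X_0$, and the regular singularity assumption takes care of extending across the complement $X\setminus X_0$.

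The main obstacle is making this third step rigorous: translating the pointwise, infinitesimal tangent-spanning hypothesis into a global generation statement for $\pi^{rs}_1(X,x)$, or equivalently, into the existence of globally consistent flat sections. The delicate issues are (i) that the spanning family of $C_\lambda$'s is point-dependent, so any argument must glue information from different covering curves at different points; (ii) that the $C_\lambda$ may themselves be singular or non-proper, requiring care with $p_\lambda^!$; and (iii) that one must control the behaviour of $M$ along the discriminant complement $X\setminus X_0$ using only regular singularity, not triviality. A plausible route around these is a Lefschetz-type connectivity argument for Tannakian fundamental groups, parallel in spirit to \Theorem{\ref{groth-lef}} but adapted to covering families rather than hyperplane sections.
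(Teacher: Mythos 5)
Your proposal correctly reduces the problem to showing that every regular singular connection $M$ of rank $m$ is trivial, and correctly observes that the pullbacks $p_\lambda^!(M)$ to the covering varieties are trivial. But the third step — passing from the infinitesimal tangent-spanning hypothesis at points of $X_0$ to either (a) generation of $\pi^{rs}_1(X,x)$ by the images of $(p_\lambda)_*$, or (b) propagation of local flat bases across $X_0$ — is left as a gap, and you say so yourself (``The main obstacle is making this third step rigorous''). This is not a cosmetic omission: that step is the entire content of the theorem, and the Tannakian reformulation does not make it easier. In fact the paper's proof does not go through $\pi^{rs}_1(X,x)$ at all.

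The paper argues directly with the connection matrix. Writing $\nabla = d+\Gamma$ on a dense open $U$ on which $M\cong \Oc_U^m$, the statement ``$M$ is trivial'' becomes ``$\Gamma \in \dlog \Aut_{\Oc_U}(\Oc_U^m)$''. Triviality of $p_\lambda^!(M)$ gives $\Gamma = \dlog(\tilde\phi_\lambda) + \Gamma^\lambda$ with $\Gamma^\lambda$ an $\End$-valued $1$-form that kills $T_X(I_\lambda)$. Therefore $\Gamma$, modulo $\dlog\Aut$, lies in $\bigcap_\lambda \Omega^{\lambda}_X(\End)$ modulo $\dlog\Aut$. At each $x\in X_0$ the cutting-out condition $T_{X,x} = \sum_{\lambda\in\Lambda_x} T_{X,x}(I_\lambda)$ forces this intersection into $\mf_x\,\Omega_X(\End(\Oc_X^m))_x$. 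Since $X_0\cap U$ is dense and $\Omega_X(\End(\Oc_X^m))$ is coherent and torsion-free, one concludes $\Gamma\in\dlog\Aut_{\Oc_U}(\Oc_U^m)$. This is a linear-algebra-over-coherent-sheaves argument, not a fundamental-group argument, and it is precisely what replaces your unproven step (3). Your appeal to a ``Lefschetz-type connectivity argument for Tannakian fundamental groups'' is not developed and, as stated, does not close the gap. You should also take care with the precise sense of ``simply connected'' for the normalization maps $p_\lambda$: \Definition{\ref{dc-def}} requires not just that $\hat C_\lambda$ is s.c.\ but also that $p_\lambda^*(M)$ actually be a connection, since when $C_\lambda$ meets the singular locus there need not be a tangent map $T_{\hat C_\lambda}\to p_\lambda^*(T_X)$; your step one glosses over this.
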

This can be used to generalize the known result that smooth projective
rationally connected are simply connected, by allowing normal
quasi-projective varieties, and at the same the proof is entirely
algebraic; the earlier proof for smooth projective varieties is based
on Hodge theory \Cor{\ref{rat-conn-cor}}. One also gets that if a
normal variety is dominated by a simply connected variety that also
has a ``good'' differential covering, then it is simply connected
\Cor{\ref{dom-cor}}. This generalizes earlier results by removing a
properness assumption.

It is perhaps worth mention also that in \Section{\ref{relative-can}}
we study the relative canonical module $\omega_{X/Y}$ and its interaction
with rings of differential operators.

This work has a long history which for a time involved a collaboration
with my good friend Rikard Bögvad, resulting in a spin-off paper
\cite{kallstrom-bogvad:decomp} that can be suitable parallel reading
to the present one. I want to express to him my sincere appreciation
for his -here mostly hidden- contribution. I also want to thank Claude
Sabbah for very valuable remarks.

\section{Operations on \texorpdfstring{$\Dc$}{D}-modules over finite
  maps}
First we describe the structure of the canonical module $\omega_{X/Y}$
of a finite flat map $\pi : X\to Y$ as module over liftable
derivations and relate this to the right $\Dc_X$- and $\Dc_Y$-modules
$\omega_X$ and $\omega_Y$, respectively. Then the inverse and direct
image functors $(\pi_+, \pi^!)$ of $\Dc$-modules over such finite maps
are treated. For the direct image $\pi_+$ we describe how the ordinary
sheaf direct image $\pi_*$ is a subfunctor, taking modules to modules
over the liftable differential operators.

\subsection{The Jacobian ideal and the étale locus}\label{sec:2.1}
Unless explicitly mentioned other\-wise, in this paper $\pi: X\to Y$
denotes a finite surjective map of smooth varieties over an
algebraically closed field $k$ of characteristic $0$. Let $\Omega_{X}$
and $\Omega_Y$ be the sheaves of Kähler differentials over $k$, on $X$
and $Y$ respectively, and $\Omega_{X/Y}$ the relative Kähler
differentials, appearing as the cokernel of the pull-back morphism
\begin{equation}
 \label{eq:relkaehler}
 0\to \pi^*(\Omega_Y)\xrightarrow{p} \Omega_X \to \Omega_{X/Y}\to 0.
\end{equation}
Taking exterior products $\omega_X= \det \Omega_X$ and $\omega_Y = \det \Omega_Y$, we get a
homomorphism of invertible sheaves $\lambda_\pi :=\det p : \pi^*(\omega_Y)\to \omega_X$, defining a
global section $\lambda_\pi $ of
$Hom_{\Oc_Y}(\omega_Y, \omega_X) = \omega_X\otimes_{\Oc_X} \pi^{*}(\omega_Y^{-1})$, which we call
the Jacobian section, and also an isomorphism
\begin{displaymath}
  \pi^*(\omega_Y ) \cong \Imo (\det p) = J_\pi \omega_X .
\end{displaymath}
\begin{remark}\label{rem-fitting} The Fitting ideal $F_0(\Omega_{X/Y})= (J_\pi)$, where $(\det p)
  (dy)= J_\pi dx$,
  is a principal ideal since $X/k$ and $Y/k$ are smooth, and is
  independent of the choice of basis $dy$ and $dx$ of $\omega_Y$ and
  $\omega_X$, respectively. Besides the presentation
  (\ref{eq:relkaehler}) of $\Omega_{X/Y}$ one often computes
  $F_0(\Omega_{X/Y})$ from a presentation of $\pi$ as a complete
  intersection, so that $j: X=V(I) \hookrightarrow Z$ where $Z/Y$ is a
  smooth map of smooth varieties and $I$ is an ideal locally generated
  by a regular sequence, resulting in the presentation
  \begin{displaymath}
    I/I^2 \to j^*(\Omega_{Z/Y}) \to \Omega_{X/Y}\to 0.
  \end{displaymath}
  For example, working locally when $I=(f)$ is a principal ideal, then
  $F_0(\Omega_{X/Y}) = (\bar f')$, where $\bar f'$ is the image in
  $\Oc_X$ of a $Y$-relative derivative $f'$ of $f$  ($d_{Z/Y}(f) = f' dz$ if $z$ is a relative coordinate of $Z/Y$).
\end{remark}
The ramification divisor $B_\pi$ is the divisor corresponding to $J_\pi$, so that
$ F_0(\Omega_{X/Y}) =\Oc_X(-B_\pi)$, $\pi^*(\omega^{-1}_Y) \otimes_{\Oc_X} \omega_X \cong \Oc_X(B_\pi)$,
and
\begin{displaymath}
  B_\pi = \sum_{x\in X, \hto (x)=1 } \ell({\Omega_{X/Y,x}}) x,
\end{displaymath}
where $ \ell({\Omega_{X/Y,x}}) = \nu_x(J_\pi) $ is the length of the $\Oc_{X,x}$-module
$\Omega_{X/Y,x}$, and $\nu_x$ is the canonical discrete valuation of the ring $\Oc_{X,x}$. The
discriminant (branch locus) $D_\pi$ is the image of the divisor $B_\pi$, which is again a divisor as $\pi$ is
finite.  Letting $i : Y_0= Y\setminus D_\pi\to Y $ be the open inclusion, we have the base change
diagram
 \begin{displaymath}\tag{BC}
   \xymatrix{
     X_0 \ar[r] ^j\ar[d]^{\pi_0}  
     & X\ar[d]^\pi\\
     Y_0 \ar[r]^i & Y,
   }
  \end{displaymath}
  where of course the point is that $\pi_0$ is étale. The ramification locus $B_\pi$ is in general a
  proper subset of $X\setminus j(X_0)$.
  \subsection{Differential operators}\label{diffoperators}
Let $T_{Y}$ be the tangent sheaf of $k$-linear
  derivations of $\Oc_Y$, so that $T_Y$ is both an $\Oc_Y$ -module and
  Lie algebra, such that
  $[\partial_1, a\partial_2] = \partial(a)\partial_2 + a[\partial_1,
  \partial_2] $ when $a\in \Oc_Y$ and $\partial_1, \partial_2 $ are
  sections in $T_Y$, and there is a natural notion of $T_Y$-module.
  The diagonal action of $T_Y$ on $\Omega_{Y}$, commonly called the
  Lie derivative (see e.g. \citelist{\cite{kallstrom:preserve}*{Sec.
      2.1} \cite{hotta-takeuchi-tanisaki}*{Sec 2.1}}), induces a
  diagonal action on the determinant bundle $\omega_Y$ such that the
  {\it negative \/ } of this action results in a structure of {\it
    right} module $\omega_Y$ over $T_Y$. It is well known that if $Y$
  is smooth then its ring of differential operators equals the subring
  that is generated by $T_Y$, $\Dc_Y = \Dc(T_Y)$, so that
  $T_Y$-modules are the same as $\Dc_Y$-modules; see e.g.
  \cite{kallstrom-tadesse:liehilbert}*{Prop. 2.2}. Therefore
  $\omega_Y$ is a right $\Dc_Y$-module (see \cite[]{borel:Dmod}), but
  take notice that if $Y$ is singular and $\Dc_Y$ is not generated by
  first order differential operators one cannot make this conclusion.

  Let $\pi: X\to Y$ be a finite surjective morphism of smooth
  varieties and $d\pi : T_X\to \pi^*(T_Y)$ be its tangent morphism.
  Then $\pi_*(d\pi): \pi_*(T_X)\to \pi_*\pi^*(T_Y) $ and also the
  canonical map $T_Y \to \pi_*\pi^*(T_Y) $ are injective, so we can
  define $T^\pi_Y =T_{Y} \cap \pi_*(T_X)$ as a subsheaf both of
  $\pi_*\pi^*(T_Y)$, $T_Y$ and $\pi_*(T_X)$. We call $T^\pi_Y$ the
  subsheaf of liftable tangent vector fields. The sheaf of {\it
    liftable differential operators} $\Dc_Y^\pi$ is defined by
\begin{displaymath}
  \Dc^\pi_Y = \{P \in \Dc_Y \ \vert \ P \cdot  \pi_*(\Oc_X)\subset \pi_*(\Oc_X)\},
\end{displaymath}
where $\Dc_Y$ acts on the image of
$\pi_*(\Oc_X)\to (\pi_*(\Oc_X)_\eta$ at the generic point $\eta$,
since $\Dc_{Y,\eta}^\pi = \Dc_{Y,\eta}$. Thus
$T^\pi_Y \subset \Dc^\pi_Y$ can regarded as subsheaves both of $\Dc_Y$
and $\pi_*(\Dc_X)$.

 Since $\pi_0$ is étale we have
$T_{Y_0} = T_{Y_0}^{\pi_0} = j^*(T^\pi_Y)$, so that over $Y_0$ all
differential operators are liftable and generated by liftable tangent
vector fields,
\begin{displaymath}
  \Dc_{Y_0} = \Dc_{Y_0}^{\pi_0} = \Dc( T^{\pi_0}_{Y_0}). 
\end{displaymath}

When $X/Y$ is ramified, so that $Y \neq Y_0$, we usually have
$\Dc(T^\pi_Y)\neq \Dc^\pi_Y$. For example, $\Oc_Y$ may be simple over
$\Dc^\pi_Y$, whereas the defining ideal of the discriminant locus
$D_\pi$ is always a proper $\Dc(T^\pi_Y)$-submodule of $\Oc_Y$ (see
\cite{kallstrom:liftingder}).
\subsection{The relative canonical bundle and the isomorphism
  \texorpdfstring{$\eta$}{eta}}
  \label{relative-can}
\subsubsection{About the relative canonical module} It is well-known
(see e.g \cite[Cor 8.3]{hartshorne-res}) that there exists an
isomorphism
\begin{equation}\label{C}
\eta: \omega_{X/Y} \to \omega_Y^{-1}\otimes \omega_X,
\end{equation}
where $\omega_{X/Y} $ is the relative dualizing sheaf, but I have been
unable to see any very ``conceptual'' understanding of such an
isomorphism in the literature. One isomorphism is described in
\Proposition{\ref{dual-iso}}, which is moreover linear over the action
by liftable derivations $T^\pi_Y$ (neither side in the isomorphism is
provided with a natural structure of $\Dc_Y^\pi$-module). The map
$\eta$ is defined by selecting canonical global sections $\tr$ and
$\lambda $ in the source and target such that $\eta(\tr)= \lambda$,
where the first is a global section of
$ Hom_{\Dc_X^\pi}(\pi_*(\Oc_X), \Oc_Y) $ and the latter a global
section of $ Hom_{\Dc_Y^\pi, right}(\omega_Y, \pi_*(\omega_X))$, and
we get a kind of conceptual understanding of $\eta$ by proving that
$\lambda$ is the Poincaré dual of $\tr$ \Prop{\ref{poin-dual-dual}}.

The pairs
$\Oc_X, \Oc_Y$ and the canonical sheaves $\omega_X, \omega_Y$ are
left- and right $\Dc_Y^\pi$-modules, respectively. In
\Corollary{\ref{cor-duality}} we prove that if $\pi$ is uniformly
ramified there exists an isomorphism
\begin{displaymath}
  Hom_{\Dc_Y^\pi}(\pi_*(\Oc_X), \Oc_Y) \cong  Hom_{\Dc_Y^\pi}(\omega_Y,\pi_*( \omega_X)),
\end{displaymath}
and that both sheaves are rank 1 constant local systems, where on the
left(right) we have homomorphisms of left(right) modules.

\subsubsection{Details} Let $\qcoh (\Oc_X) $ be the category of quasi-coherent $\Oc_X$-modules.
The direct image functor for the category of quasi-coherent modules, $\pi_*:
\qcoh(\Oc_X)\to \qcoh (\Oc_Y)$ has the right adjoint functor $\pi^{!'}  (M)= \pi^*
Hom_{\Oc_Y}(\pi_*(\Oc_X),M) $, and the relative dualizing sheaf is 
\begin{equation}\label{A}
    \omega_{X/Y} = \pi^{!'}(\Oc_Y) = \bar \pi ^*(Hom_{\Oc_Y}( \pi_* (\Oc_X), \Oc_Y)), 
\end{equation}
where $\bar \pi : (X, \Oc_X)\to (Y, \pi_*(\Oc_X))$ is the canonical
flat map of ringed spaces\footnote{We write $\pi^{!'}$ since $\pi^!$
  will later be used for inverse images of $\Dc$-modules.}. Since
$\pi_* (\Oc_X)$ and $\Oc_Y$ are left modules over $\pi_*(\Dc_X)$ and
$\Dc_Y$, respectively, it follows that $\omega_{X/Y}$ is a left module
over $ \Dc(T^\pi_Y)$, where the action is induced by the diagonal
action of $T^\pi_Y$, so that one may - perhaps naively - think that
$\omega_{X/Y}$ is even a $\Dc_Y^\pi$-module. This is, however, not
true, as indicated by the fact that $\Dc_Y^\pi$ need not be generated
by first order differential operators; see Example
\ref{ex:nostructure}. Similarly, the sheaf
$Hom_{\Oc_X}(\pi^*(\omega_Y), \omega_X)$ is a $\Dc(T^\pi_Y)$-module
but in general not a $\Dc_Y^{\pi}$-module.

As already mentioned, we give a construction of an isomorphism $\eta$
in (\ref{C}) which moreover is compatible with the differential
structure, and also show that the trace map
\begin{equation} \label{T}
  \Tr : \pi_*(\omega_{X/Y})\to \Oc_Y, \quad \lambda \mapsto \lambda (1),
\end{equation}
where $\lambda (1)$ is the evaluation at $1$ homomorphism of a section
$\lambda$ of the right side of (\ref{A}), is a split surjective
homomorphism of $\Dc(T^\pi_Y)$-modules.

In \Section{\ref{sec:2.1}} is defined a canonical global Jacobian section $\lambda_\pi$ of the right
side of (\ref{C}), which also is the image of $1$ in the induced homomorphism
\begin{equation}\label{S}
  \Theta: \Oc_X\to  \omega_X\otimes_{\Oc_X}\pi^*(\omega_Y^{-1})=(J_{\pi})^{-1}=\Oc_X(B_\pi)
\cong  \omega_{X/Y}.
\end{equation}
We will see in \Proposition{\ref{epinymous}} how the morphism
(\ref{S}) is important for relating the direct image of a
$\Dc$-module with the ordinary sheaf direct image.

Next we describe a canonical {\it trace section} $\tr_\pi$ of the left
hand side of  (\ref{C}). It is clearer to do this first in
the affine case, so that $X= \Spec B$ and $Y= \Spec A$, and $\pi$ is a finite
flat morphism $A \to B$ of smooth $k$-algebras, and we will define a
canonical element $\tr_\pi $ of $\omega_{B/A} = Hom_A(B, A)$. It will
coincide with the more common definition of the trace of an element
$b$, acting $A$-lineary as multiplication map, using an $A$-basis of
$B$; see also \cite{huneke-leuschke} for this construction of
$\tr_\pi$, which in fact gives the trace of homomorphisms of any free
$A$-module. We have canonical isomorphisms
  \begin{equation}\label{I}
    \End_A (\omega_{B/A}) \cong \Hom_A(B^*\otimes_A B, A)\cong \Hom_A(\End_A(B),
    A)
\end{equation}
where the first isomorphism is adjunction and the second follows since
$B/A$ is flat and of finite presentation. Now let
$\tr^0\in \Hom_A(\End_A(B), A) $ be the image of the identity element in
the left side of (\ref{I}), $\mu : B \to \End_A(B)$ be the map defined
by $\mu(b_1) (b_2)= b_1 b_2$, $b_1, b_2 \in B$, and put
\begin{displaymath}
  \tr_{\pi}= \tr^0 \circ \mu : B \to A.
\end{displaymath}
This canonical construction of $\tr_\pi$ will make it clear that
$ \tr_{\pi}$ is a $\Dc_A^\pi$-linear homomorphism; moreover, since all
constructions are canonical it will also be obvious that the at first
locally defined sections $\tr_\pi$ glue to a global section of
$\omega_{X/Y}$ also when $X$ and $Y$ are non-affine.

The global sections $\lambda_\pi$ and $\tr_\pi$ correspond in the
sought isomorphism (\ref{C}).

Let $\Dc_X^\pi$ be  the subring of $\Dc_X$ that is generated by $\Dc_Y^\pi$ and
$\Oc_X$,  and $\Dc_X(T_Y^\pi)$ be the subring that is generated by and
$\Dc_Y(T_Y^\pi)$ and $\Oc_X$. Thus $\Dc_X(T^\pi_Y) \subset \Dc_X^\pi \subset
\Dc_X$.
\begin{proposition}\label{dual-iso}
  \label{dual-prop} Let $\pi: X\to Y$ be a finite and generically smooth morphism of smooth schemes
  over a field of characteristic $0$.
  \begin{enumerate}
  \item There is a canonical isomorphism of $\Dc_X(T^\pi_Y)$-modules
    \begin{displaymath}
      \eta: \omega_{X/Y} \to   \omega_X\otimes_{\Oc_X}\pi^*(\omega_Y^{-1}),
    \end{displaymath}
    such that $\eta(\tr_\pi) = \lambda_\pi$, where $\tr_\pi$ and
    $\lambda_\pi$ are described above. Here $\tr_\pi $ is a global
    section of
    $ \pi^*Hom_{\Dc_Y^\pi}(\pi_*(\Oc_X), \Oc_Y)) \subset \omega_{X/Y}
    $
    and $\lambda_\pi$ is a global section of
    $\pi^*(Hom_{\Dc_Y^\pi}(\omega_Y, \pi_*(\omega_X) ))\subset
    \omega_X \otimes_{\Oc_X}\pi^*(\omega^{-1}_Y)$.
  \item The homomorphisms in (\ref{S}) and (\ref{T}) are
    $\Dc(T^\pi_Y)$-linear.
  \end{enumerate}
\end{proposition}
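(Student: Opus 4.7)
The plan is to verify that the canonical sections $\tr_\pi$ and $\lambda_\pi$ are invariant under liftable derivations, then to construct $\eta$ as the unique isomorphism of line bundles sending $\tr_\pi$ to $\lambda_\pi$, and finally to deduce the differential compatibility statements.

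For the first step, working locally with $X=\Spec B$, $Y=\Spec A$, and using the canonical identification $\End_A(B)\cong B\otimes_A B^*$ (so $\tr^0$ is the evaluation pairing), I would observe that for any $\partial\in T_Y^\pi$ with lift $\tilde\partial\in T_X$ the bracket identity $[\tilde\partial,\mu_b]=\mu_{\tilde\partial(b)}$ makes $\mu$ equivariant with respect to $T_Y^\pi$ (acting by conjugation on $\End_A(B)$), while the evaluation pairing $B\otimes_A B^*\to A$ is equivariant for the induced diagonal action. Composing gives $T_Y^\pi$-equivariance of $\tr_\pi$; the upgrade to full $\Dc_Y^\pi$-linearity follows because $\tr_\pi$ arises from the Grothendieck adjunction $\pi_*\dashv \pi^{!'}$, whose unit and counit respect any functorial structure on $\pi_*(\Oc_X)$ and $\Oc_Y$, in particular the $\Dc_Y^\pi$-module structure, and because lifts of operators in $\Dc_Y^\pi$ to $\pi_*(\Dc_X)$ are unique by torsion-freeness of $\pi_*(\Oc_X)$. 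A parallel argument for $\lambda_\pi=\det p$ uses that the canonical pullback $p:\pi^*(\Omega_Y)\to \Omega_X$ is functorial in liftable derivations; hence $\lambda_\pi$ is a right $\Dc_Y^\pi$-linear section of $\pi^*\Hom_{\Dc_Y^\pi,\mathrm{right}}(\omega_Y,\pi_*(\omega_X))$.

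Next I would construct $\eta$. On the étale locus $X_0=\pi^{-1}(Y_0)$, the Jacobian $J_\pi$ is a unit, so $\lambda_\pi$ is a nowhere-vanishing section of $(\omega_X\otimes_{\Oc_X}\pi^*(\omega_Y^{-1}))|_{X_0}$; likewise the nondegeneracy of the trace pairing for the étale cover $X_0/Y_0$ shows $\tr_\pi$ trivializes $\omega_{X/Y}|_{X_0}$, and the prescription $\eta_0(\tr_\pi)=\lambda_\pi$ uniquely determines $\eta_0$ there. To extend $\eta_0$ globally I would compare vanishing divisors along the ramification locus: $\lambda_\pi$ vanishes to order $\ell(\Omega_{X/Y,x})$ at each height-one point $x\in B_\pi$ (from Section~\ref{sec:2.1}, using $F_0(\Omega_{X/Y})=(J_\pi)$), while $\tr_\pi$ vanishes along the \emph{different} divisor of $\pi$, which in characteristic $0$ equals $B_\pi$ by the classical formula identifying the different with the Fitting ideal of $\Omega_{X/Y}$. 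Both invertible sheaves are thus canonically isomorphic to $\Oc_X(B_\pi)$ via their respective sections, and $\eta_0$ extends uniquely to a global isomorphism $\eta$ with $\eta(\tr_\pi)=\lambda_\pi$.

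Finally, since $\eta$ is $\Oc_X$-linear and identifies the $T_Y^\pi$-invariant generators $\tr_\pi$ and $\lambda_\pi$, it is automatically $T_Y^\pi$-equivariant, and because $\Dc_X(T_Y^\pi)$ is generated by $\Oc_X$ and $T_Y^\pi$ this yields the $\Dc_X(T_Y^\pi)$-linearity required in (1). For (2), $\Theta$ in (\ref{S}) sends $1\mapsto \lambda_\pi$ and is $T_Y^\pi$-equivariant because $\partial(1)=0$ and $\partial(\lambda_\pi)=0$, while $\Tr$ in (\ref{T}) is evaluation at $1\in\pi_*(\Oc_X)$ and is $T_Y^\pi$-equivariant because $\tilde\partial(1)=0$; combined with $\Oc$-linearity both maps acquire the full $\Dc(T_Y^\pi)$-linearity asserted. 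The main obstacle is the global divisor comparison used to extend $\eta_0$ from $X_0$ to all of $X$: one must verify, along every component of $B_\pi$, that the different divisor of $\pi$ coincides with $F_0(\Omega_{X/Y})=(J_\pi)$, so that the generically defined isomorphism $\eta_0$ extends to a genuine isomorphism of line bundles rather than merely a meromorphic map.
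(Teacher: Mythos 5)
Your proposal is correct and follows essentially the same route as the paper: the heart of the argument in both cases is the coincidence of the vanishing divisor of $\tr_\pi$ (the different) with that of $\lambda_\pi$ (the Jacobian divisor $B_\pi$), followed by extension across the ramification locus using Serre's $(S_2)$. The ``classical formula identifying the different with the Fitting ideal'' that you flag as the main point to verify is exactly Euler's formula $\tr_{S/R}=f'(z)\tau$, which the paper isolates as Lemma~\ref{tate}; likewise the $\Dc_Y^\pi$-linearity of $\tr_\pi$ and $\lambda_\pi$, which you attribute somewhat loosely to ``the Grothendieck adjunction respecting any functorial structure,'' is made precise in the paper by localizing to the generic point, where $\Dc_K$ is generated by derivations and the localization diagram commutes --- the underlying mechanism (action determined by its restriction to the étale locus, by torsion-freeness) is the one you invoke.
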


We will at some places below use such an isomorphism $\eta$ to
identify $\omega_{X/Y}$ with
$\omega_X \otimes_{\Oc_X}\pi^*(\omega_Y^{-1})$. In particular, it can
be used to relate the structure of the direct image of a $D$-module to
the ramification of a map.

\begin{remark}
  \begin{enumerate}
  \item I have not seen a {\it direct} construction of the trace
    morphism $\omega_X\otimes_{\Oc_X}\pi^*(\omega_Y^{-1}) \to \Oc_Y$.
    Here it depends on the isomorphism $\eta$ in (\ref{C}) and the
    trace in (\ref{T}). Another indirect construction is to factorise
    $X/Y$ into a closed embedding and a smooth morphism.
  \item If a differential operator in $\Dc_Y$ preserves both $\Oc_Y$ and $\pi_*(\omega_{X/Y})$,
    regarded as submodules of the stalk at the generic point of $X$, then the map in (\ref{T}) is
    compatible with this action. A similar remark can be made regarding the isomorphism $\eta$.
  \item In \Section{\ref{d-module-motive}} we explain in a more conceptual way the existence of the
    isomorphism $\eta$, using $\Dc$-module constructions. \Corollary{\ref{cor-duality}} contains
    more precise information regarding this isomorphism.
  \end{enumerate}
\end{remark}

As a preparation for the proof of \Proposition{\ref{dual-iso}} we recall
some results related to discrete valuation rings, in a slightly more
general situation than is required here. Let $R \to S$ be an inclusion
of discrete valuation rings such that $S$ is of finite type over $R$,
hence free over $R$, and assume that the residue field extension
$k_S/k_R$ is separable; put $n= \dim_R S$. By \cite{serre:corps}*{Ch.
  III, \S 6, Prop. 12} there exists an element $z$ in $S$ whose
residue class in $k_S$ is a primitive element over $k_R$ and such that
the set of powers $\{1, z, \dots , z^{n-1}\}$ is a basis of $S$ over
$R$, and if $f \in R[X]$ is the minimal polynomial of $z$, then
$\{\frac {z^i}{f'(z)} \tr_{S/R}\}$ is a basis of the $R$-module
$\omega_{S/R}= Hom_R(S, R)$. The element $\tau \in \omega_{S/R}$
defined by $\tau ( z^i) =\delta_{i,n-1}$, sometimes called the Tate
trace \cite{tate:genus}, depends on the choice of element $z$, in
contrast to the canonically defined $\tr_{S/R}$.
\begin{lemma}\label{tate} 
  Let $f$ be the minimal polynomial of the element $z$ that is
  described above. Then the element $\tau$ forms a basis of the
  $S$-module $Hom_R(S, R)$ and
  \begin{displaymath}
    \tr_{S/R} = f'(z) \tau. 
  \end{displaymath}
\end{lemma}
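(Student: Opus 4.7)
The plan is to derive both assertions from the classical Euler/Lagrange formula for the complementary basis of the powers of a primitive element with respect to the trace form. A first preliminary observation I would record is that $f'(z)$ is a unit in $S$: reducing $f$ modulo $m_R$ gives the minimal polynomial of the residue $\bar z$ over $k_R$ (since $\{1,z,\dots,z^{n-1}\}$ is already an $R$-basis of $S$ and $\bar z$ is primitive of degree $n$), and by the assumed separability of $k_S/k_R$ this polynomial has non-zero derivative at $\bar z$, so $f'(z) \notin m_S$.

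Next, to produce the dual basis of $\{z^i\}$ with respect to the trace pairing, I would pass to a splitting field of $f$ in which $f(X) = \prod_{j=1}^n (X - z_j)$ with $z_1 = z$, and apply Lagrange interpolation to $1, X, \ldots, X^{n-1}$:
\begin{displaymath}
X^i = \sum_{j=1}^n \frac{f(X)}{(X - z_j)\, f'(z_j)}\, z_j^i, \qquad 0 \le i \le n-1.
\end{displaymath}
Writing $g(X) := f(X)/(X-z) = X^{n-1} + c_{n-2} X^{n-2} + \cdots + c_0 \in S[X]$ with $c_{n-1} = 1$, and reading off the coefficient of $X^\ell$ after applying the Galois trace (which agrees with $\tr_{S/R}$ on elements of $S$), the Lagrange identity above becomes
\begin{displaymath}
\tr_{S/R}\!\left(\frac{z^i c_\ell}{f'(z)}\right) = \delta_{i\ell}, \qquad 0 \le i, \ell \le n-1.
\end{displaymath}
Thus $\{c_\ell/f'(z)\}_{\ell=0}^{n-1}$ is the $R$-basis of $\omega_{S/R}$ dual to $\{z^i\}$ with respect to the trace pairing. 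Specializing to $\ell = n-1$, where $c_{n-1}=1$, gives $\tr_{S/R}(z^i/f'(z)) = \delta_{i,n-1} = \tau(z^i)$, which under the $S$-action $(s\cdot\phi)(x) = \phi(sx)$ on $\omega_{S/R}=\Hom_R(S,R)$ reads exactly as the asserted identity $\tr_{S/R} = f'(z)\cdot\tau$.

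For the basis assertion I would first note that $\omega_{S/R}$ is free of rank $1$ over $S$ (it is finitely generated and torsion free over the DVR $S$, and a rank count against $R$ forces the $S$-rank to be $1$). Since $f'(z)$ is a unit in $S$, $\tau$ generates $\omega_{S/R}$ over $S$ iff $\tr_{S/R}$ does. The latter follows by Nakayama: modulo $m_R$ the trace pairing on $k_S/k_R$ is non-degenerate by separability, so the $S$-linear map $S \to \omega_{S/R}$, $s\mapsto \tr_{S/R}(s\cdot -)$, becomes an isomorphism $k_S \to \omega_{k_S/k_R}$, which then lifts to an isomorphism over $S$.

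I do not expect any real obstacle beyond keeping the two module structures on $\omega_{S/R}$ straight: it is the $R$-dual of $S$, but must be handled as a rank-one $S$-module with the ``inner'' action $(s\cdot\phi)(x)=\phi(sx)$. The separability hypothesis together with Nakayama bridges the two, and the Lagrange interpolation identity supplies the explicit scalar $f'(z)$ relating $\tau$ to $\tr_{S/R}$.
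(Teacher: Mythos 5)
Your overall plan is sound for the \emph{unramified} case, and your derivation of Euler's formula via Lagrange interpolation in a splitting field is a legitimate (and self-contained) replacement for the paper's appeal to Serre. However, the argument for the basis statement has a real gap: the lemma only assumes $k_S/k_R$ separable, not $S/R$ unramified, and your first and last steps both break when the ramification index $e > 1$.

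The error enters in your preliminary claim that ``reducing $f$ modulo $m_R$ gives the minimal polynomial of $\bar z$ over $k_R$'' so that $f'(z)$ is a unit. Primitivity of $\bar z$ only says $k_S = k_R(\bar z)$, i.e.\ $\deg_{k_R}\bar z = [k_S : k_R] = n/e$; the reduction of $f$ has degree $n$ and is not irreducible when $e>1$. In that case $f'(z)$ is \emph{not} a unit (concretely, $R = \ZZ_p$, $S = \ZZ_p[p^{1/2}]$, $z = p^{1/2}$, $f = X^2 - p$, $f'(z) = 2p^{1/2} \in m_S$). For the same reason the Nakayama step collapses: $S \otimes_R k_R$ is $S/m_RS$, an Artin local ring of length $e$ with nilpotents, not $k_S$, and the trace pairing on $S/m_RS$ over $k_R$ is degenerate as soon as $e>1$ (the nilradical lies in its kernel). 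Indeed $\tr_{S/R}$ generates $\omega_{S/R}$ iff $S/R$ is unramified, so the conclusion you try to reach — that $\tr_{S/R}$ is a generator — is simply false in the ramified setting, and no repair of the Nakayama step can save it. What is true (and what the lemma asserts) is that $\tau$ generates; these two statements differ precisely by the different ideal $(f'(z))$.

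The paper avoids this entirely by arguing directly on the $R$-basis $\{z^i\}$ and its dual basis $\{\tau_i\}$: the Horner-type recursion $z\tau_i = \tau_{i-1} - b_i\tau_{n-1}$ exhibits every $\tau_j$ as $h_j(z)\tau_{n-1}$ for explicit $h_j \in R[z]$, so $\Hom_R(S,R) = \sum_j R\tau_j \subseteq S\tau_{n-1} = S\tau$, with no appeal to non-degeneracy of the residual trace form and no need for $f'(z)$ to be a unit. If you want to keep your structure, replace steps (1) and (4) by this recurrence argument; your Lagrange interpolation for $\tr_{S/R} = f'(z)\tau$ can stay as is.
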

\begin{proof} Let $\{\tau_i\}$ be the dual basis of $\{z^i\}$ and write $f= z^n + b_{n-1}z^{n-1}+
  \cdots + b_0 $.  Then the relations
  \begin{displaymath}
  z\tau_{i} = \tau_{i-1} -  b_{i}\tau_{i}
\end{displaymath}
imply that $S \tau = S\tau_{n-1} = Hom_R(S, R) $.  The relation between $\tau$ and $\tr_{S/R}$ is
called Euler's formula, see \cite{serre:corps}*{Ch. III, \S 6-7}.
\end{proof}

\begin{pfof} {\Proposition{\ref{dual-iso}}}
  Assume first that $X/Y$ is affine, given by a homomorphism of smooth $k$-algebras $\pi: A\to B$,
  where $B$ is free over $A$, $K \to B':=K\otimes_A B$ is étale, where $K$ is the fraction field of
  $A$.  Since $X_0/Y_0$ is étale it follows that $B'$ is a product of fields.  It suffices to
  consider one of these fields at a time, so that in the following we assume that $B'= K(B)$, which
  we denote by $L$, and hence we have a finite field extension $L/K$.

  The trace section $\tr_\pi $: All $A$-modules in (\ref{I}) are in
  fact $T_A^\pi$-modules under various diagonal actions, where
  $T^\pi_A = T_A \cap T_B$, the isomorphisms are
  $\Dc(T^\pi_A)$-linear, and $\tr^0$ is a $T_A^\pi$-invariant since it
  is the image of the invariant element 1 in the left side of (I), so
  that
\begin{displaymath}
  \tr^0 \in \Hom_{A}(\End_A(B), A)^{T^\pi_A} = \Hom_{\Dc(T^\pi_A)}(\End_A(B), A).
\end{displaymath}
Putting $\Dc^\pi = \Dc_A \cap \Dc_B$ we prove that $\tr_\pi=\tr^0 \circ \mu$ is $\Dc^\pi$-linear.
If $S$ is a multiplicative system in $A$, then the trace morphism behaves well under localisation
\begin{displaymath}
  S \tr_{\pi}S^{-1}=   \tr_{BS^{-1}/AS^{-1}}  : BS^{-1} \to AS^{-1}.
\end{displaymath}
In particular, $\tr_{L/K}: L \to K$ is the localisation of $\tr_\pi= \tr_{B/A}$.  Since $K\otimes_A
\omega_{B/A} = \omega_{L/K} = \Hom_K(L, K)$ is a left $\Dc_K$-module under the diagonal action, and
the localisation of $\mu$ results in a $\Dc_K$-linear homomorphism $L \to \End_K(L)$ (notice that
$K\otimes_A \Dc(T^\pi_A)= \Dc(T_K^\pi)= \Dc_K$), it follows that the composed map $\tr_{L/K}: L \to
K $ is $\Dc_K$-linear.  Since the natural maps $B \to L$ and $A\to K$ evidently are
$\Dc^\pi$-linear, and the localisation diagram that occurs does commute, it follows that $\tr_{\pi
}$ is $\Dc^\pi$-linear. Assume now that $X$ and $Y$ are not affine. Since $\tr_\pi$ localise as
described above, the locally defined canonical elements glue to a global
$ \Dc_Y^\pi$-linear homomorphism $\tr_{\pi}:  \pi_*(\Oc_X)\to
\Oc_Y$. This proves the assertion about $\tr_\pi$  in  (1).

The Jacobian section $\lambda_\pi$: The pull-back map $p: B\otimes_A\Omega_{A/k}\to \Omega_{B/k}$
commutes with the action of $T^\pi_A$, which therefore defines a $\Dc(T^\pi_A)$-linear map
\begin{displaymath}
  \lambda_{\pi}= \det p \in \Hom_{\Dc(T^\pi)} (\omega_{A/k}, \omega_{B/k}),
\end{displaymath}
where $\omega_{A/k}$ and $\omega_{B/k}$ are provided with the right
$\Dc(T^\pi_A)$-module structures which are described in
(\ref{diffoperators}). Again if $S$ is a multiplicative system, we
have
\begin{displaymath}
  S\lambda_\pi S^{-1} = \lambda_{BS^{-1}/AS^-1} :  \omega_{A/k}S^{-1} = \omega_{AS^{-1}/k} \to
  \omega_{B/k}S^{-1}=  \omega_{BS^{-1}/k}
\end{displaymath}
Now $\omega_{A/k}$ and $ \omega_{B/k}$ are even right $\Dc^\pi$-modules, and by the above identity
$\lambda_{\pi}$ localises to a $K\otimes_A\Dc(T^\pi_A) = \Dc_K$-linear map $\omega_{K/k}\to
\omega_{L/k}$, so it is in particular $\Dc^\pi$-linear. Since the localisation diagram that occurs
is commutative, it follows that $\lambda_\pi\in\Hom_{\Dc^\pi}(\omega_{A/k}, \omega_{B/k})$.  When
$X$ and $Y$ are not affine then the locally defined map $\lambda_{\pi}$ glue to a global
$\Dc^\pi_Y$-linear homomorphism $\pi^{-1}(\omega_Y) \to \omega_X$.  There is a canonical isomorphism
$Hom_{\Dc(T^\pi_Y)}(\pi^{-1}(\omega_Y), \omega_X) =\Hom_{\Dc(T_Y^\pi)}(\Oc_X, \omega_X
\otimes_{\Oc_X} \pi^*(\omega_Y))$ that sends $\lambda_\pi$ to the homomorphism $\Theta$ in
(\ref{S}).  This completes the proof of the last sentence of (1).

Selecting a basis of $\omega_A$ and $\omega_B$ defines a basis
$\{\sigma \}$ of the $B$-module $\Hom_{A}(\omega_A, \omega_B )$. (In a
regular system of coordinates $\{x_i\}$ and $\{y_i\}$, we can select
the bases $dy_1 \wedge \cdots \wedge dy_{d} $ and
$dx_1 \wedge \cdots \wedge dx_{d} $, respectively, and
$\sigma (dy_1 \wedge \cdots \wedge dy_{d}) = dx_1 \wedge \cdots \wedge
dx_{d}$.) We then have $\lambda_{\pi} = J_{\pi} \sigma$, where
$(J_{\pi})= F_0(\Omega_{B/A})$, as described
in \Section{\ref{sec:2.1}}.

Let $p$ be a prime of height $1$ in $\Spec B$, $q= \pi(p)$, and put $R= A_q$ and $S= B_p$, so we
have a map $R \to S$ of discrete valuation rings, where $k_{S}\to k_{R}$ is finite and separable.
Then $F_0(\Omega_{S/R})= (f') = (J_\pi)_p$ (see \Remark{\ref{rem-fitting}}), and by \Lemma{\ref{tate}}
$\tr_{S/R} = f'(x) \tau_p$ where the Tate trace $\tau_p$ is a basis of $(\omega_{B/A})_p$.  It
follows that we get the isomorphism
\begin{displaymath}
  \eta_p : (\omega_{B/A})_p \to (Hom_A(\omega_A, \omega_B))_p, \quad  b_p  \frac {(\tr_\pi)_p
  }{f'(x)}\mapsto b_p \frac {(\lambda_\pi)_p}{f'(x)}.
\end{displaymath}
Since $\tr_\pi$ and $\lambda_\pi$ are global sections of the locally
free $\Oc_X$-modules (of rank 1) $\omega_{X/Y}$ and
$Hom_{\pi^{-1}(\Oc_Y)}(\pi^{-1}(\omega_Y), \omega_X)$, respectively,
and $\Oc_X$ satisfies Serre's condition $(S_2)$, so that sections of
either sheaf are determined by its values in stalks at points of
height $\leq 1$, it follows that we in fact get an isomorphism
$\eta: \omega_{S/R}\to\Hom_R(\omega_R,\omega_S)$. Since $\lambda_\pi$
and $\tr_\pi$ are $T_Y^\pi$-invariant sections it follows also that
$\eta$ is an isomorphism of $\Dc(T_Y^\pi)$-modules. This finishes the
proof of the first sentence in (1). It is straightforward to see that
the diagonal action of $T^\pi_Y$ on the left and right sides of
(\ref{T}) and (\ref{S}), respectively, makes (\ref{T}) and
(\ref{S}) into homomorphisms of $\Dc(T^\pi_Y)$-modules, showing (2).
\end{pfof}

Next is an example showing that the relative canonical bundle is not
preserved by $\Dc^\pi_Y$. First note that there is only one possible
action on $\omega_{X/Y}$ of a liftable differential operator. The
reason is that $\Dc_{Y_0} = j^*(\Dc_Y^\pi)$ (see
(\ref{diffoperators})), which is generated by derivations, so that
$j^* (\omega_{X/Y})$ is a $j^*(\Dc^\pi)$-module. In general, the image
of a section $P\in \Dc^\pi_Y$ in $ j_*j^*(\Dc_Y^\pi)$ acts on
$j_*j^*(\omega_{X/Y})$, so that $P$ will act on $\omega_{X/Y}$ only if
under this action it preserves the submodule
$\omega_{X/Y} \subset j_*j^*(\omega_{X/Y})$. Now we can proceed with
the example.
  \begin{example}
\label{ex:nostructure} 
Let $\pi : A= k[y_1, y_2]\to B= k[x_1, x_2]$, $y_1= x_1 +x_2 $, $y_2= x_1 x_2$, so that $A$ is the
invariant ring under symmetric group $S_2$, and let $L/K$ be the corresponding field extension. We
have $J_{\pi}= x_1- x_2$, and $Y_0$ in the digram \thetag{BC} is defined by
$J_{\pi}^2=(x_1-x_2)^2 = y_1^2- 4y_1y_2\neq 0$. First observe that differential operators in $\Dc_L$
do not act on $\omega_{L/K}$, while $\Dc_K$ does act on $\omega_{L/K}$ since $T_K$ acts diagonally
and $T_K$ generates $\Dc_K$.  The differential operator
\begin{displaymath}
  P\in \partial_{x_1}\partial_{x_2} = \partial_{y_1}^2  + \partial_{y_2} + y_1 \partial_{y_1}\partial_{y_2} + y_2 \partial_{y_2}^2
\end{displaymath}
is liftable, since it is symmetric, so that $P$ is generated by derivations of $K$ and therefore 
acts on $\omega_{L/K}$.  We have $\omega_{B/A} = B \tau = B/J_{\pi}$ (using the isomorphism $\eta$),
where $\tau(a_1+a_2(x_1-x_2))=a_2$, writing $B= A\oplus A(x_1-x_2)$. Now 
$P \cdot 1/J_{\pi} = (\frac{-2}{J_{\pi}^2})(J_{\pi})^{-1}\notin \omega_{B/A} $.  As an illustration
we verify this by instead acting on $\tau \in \omega_{B/A}$.  We have
$\partial_{y_1} \cdot (x_1- x_2) = \frac {1}{2(x_1-x_2)} \partial_{y_1} (y_1^2- 4 y_1 y_2)= \frac
{y_1 - 2 y_2 }{( y_1^2- 4 y_1 y_2)} (x_1- x_2) $,
so that $(\partial_{y_1}\tau) (a_1+a_2(x_1-x_2)) = a_2 \frac {y_1 - 2 y_2}{(y_1^2- 4 y_1 y_2)}$.  A
straightforward tedious computation gives
\begin{displaymath}
  (P \cdot \tau) (a_1 + a_2(x_1-x_2)) = \frac{-2a_2}{J_{\pi}^2} \in K, 
\end{displaymath}
and again $P\tau \in \omega_{L/K}\setminus \omega_{B/A}$.
\end{example}


\subsection{Direct and inverse images}\label{direct-inverse-sec}
Inverse and direct image functors of $\Dc$-modules are treated in 
\cites{borel:Dmod,bjork:analD,hotta-takeuchi-tanisaki}, and we will
only concentrate on a few points pertaining to {\it finite surjective
  maps} $X\to Y$.

For all constructions of inverse and direct images of $\Dc$-modules one makes use of the
$(\Dc_Y, \Dc_Y)$- bimodule $\Dc_Y$ by pulling it back to $X$ in two different ways: Using
the {\it left} adjoint of $\pi_*$ and the {\it left} $\Oc_Y$-module $\Dc_Y$, put
\begin{displaymath}
\Dc_{X\to Y}:=   \pi^*(\Dc_Y)  = \Oc_X \otimes_{\pi^{-1}{\Oc_Y}}(\pi^{-1}(\Dc_Y)),  
\end{displaymath}
This is a $(\Dc_X, \pi^{-1}(\Dc_Y))$-bimodule, described in e.g. in
\cite{borel:Dmod}*{VI,4.2}, \cite{bjork:analD}*{II.3},
\cite{hotta-takeuchi-tanisaki}*{Def. 1.3.1}.

Using the {\it right} adjoint of $\pi_*$ and the {\it right}
$\Oc_Y$-module structure on $\Dc_Y$, put
\begin{align*}
  \Dc_{Y \leftarrow X} &:= \pi^!(\Dc_Y) = \pi^!(\Oc_X)\otimes_{\Oc_X} \pi^*(\Dc_Y) =
  \omega_{X/Y}\otimes_{\Oc_X} \Dc_{X\to Y}\\
  & = \bar \pi^{*}Hom_{\Oc_Y}(\pi_*(\Oc_X), \Dc_Y) = \Oc_X\otimes_{\bar \pi^{-1}(\pi_*(\Oc_X))}
  \pi^{-1}(Hom_{\Oc_Y}(\pi_*(\Oc_X), \Dc_Y))
\end{align*}
(recall that $X/Y$ is finite). This is a
$(\pi^{-1}(\Dc_Y), \Dc_X)$-bimodule, where the two module structures
here are more tricky to understand, and will therefore be explained.
This is best understood using a finite injective map of smooth
$k$-algebras $A\to B$, instead of sheaves on schemes, and thus by
describing below the following $(\Dc_A, \Dc_B)$-bimodule
\begin{align*}\tag{BM}
  \Dc_{B \leftarrow A}&=\Hom_A(B, \Dc_A) = \Dc_A \otimes_A  \Hom_A(B,A) =
                        \Dc_A \otimes_A \omega_{B/A} \\
                      & = \Dc_A  \otimes_A\omega_B \otimes_A\omega_A^{-1} = \omega_A^{-1}\otimes_A
                        \Dc^{op}_A\otimes_A \omega_A \otimes_A \omega_B\otimes_A \omega_A^{-1}  \\ &=  \omega_A^{-1}\otimes_A
                        \Dc^{op}_A \otimes_A \omega_B
                       = 
                        \omega_B  \otimes_A\Dc_A \otimes_A \omega_A^{-1}.
\end{align*}
On the first line the {\it right} $A$-module $\Dc_A$ is used for the
$A$-homomorphisms $ B \to \Dc_A$ and the tensor products; the
structure of all occurring vector spaces as $(A,B)$-bimodules should
be evident. On the second line the first equality sign follows from
the isomorphism in \Proposition{\ref{dual-iso}}, and the second equality
follows since the opposite ring $\Dc_A^{op}$ of $\Dc_A$ is isomorphic
to $ \omega_A\otimes_A \Dc_A \otimes_A \omega_A^{-1}$. The last
isomorphism is defined by
$\eta \otimes P^{(op)} \otimes \xi\mapsto \xi \otimes P \otimes \eta
$,
$\eta \in \omega_A^{-1}$, $\xi \in \omega_B$ and
$P^{(op)} \in \Dc_A^{(op)}$. See also
\cite{hotta-takeuchi-tanisaki}*{Lemma 1.3.4 }. As for the
$(\Dc_A, \Dc_B)$-bimodule structure of \thetag{BM}, the right $\Dc_A$-module $\Dc_A$
is used for the right $\Dc_B$-module structure on
$ \Dc_{B \leftarrow A}=\Hom_A(B, \Dc_A)$, which is determined by the
action of derivations $\partial \in T_B$,
\begin{displaymath}
(\lambda \cdot \partial )(b) = \lambda (\partial (b)) + \sum_i \lambda
(b_ib)\cdot \partial_i,  \quad \lambda \in\Hom_A(B, \Dc_A) ,
\end{displaymath}
where $\sum_i b_i\otimes\partial_i$ is the image of $\partial$ in $B\otimes_A
T_A$.  The left action of $\Dc_A$ on $\Dc_{B \leftarrow A}$ is defined by $(P \cdot
\lambda) (b)= P (\lambda (b))$, where $P \in \Dc_A$.

On the other hand, the right $\Dc_A$-module $\Dc_A$ is used for the
left $\Dc_A$-action on
$\omega_B\otimes_A \Dc_A \otimes_A \omega_A^{-1}$, with respect to the
diagonal action on $\Dc_A \otimes_A \omega_A^{-1}$ and trivial action
on $\omega_B$; the left $\Dc_A$-module $\Dc_A$ is used for the right
$\Dc_B$-action on $\omega_B\otimes_A \Dc_A \otimes_A \omega_A^{-1}$,
with respect to the diagonal action on $\omega_B\otimes_A \Dc_A$ and
trivial action on $\omega_A^{-1}$ (see
\cite{hotta-takeuchi-tanisaki}*{Lemma 1.2.9}).

All the maps in \thetag{BM} are the natural ones, and they are isomorphisms of
$(\Dc_A, \Dc_B)$-bimodules.
\begin{lemma}\label{finite-bimod}
  The $(\Dc_B, \Dc_A)$- bimodule $\Dc_{A\to B}$ and
  $(\Dc_A, \Dc_B)$-bimodules $\Dc_{B\leftarrow A}$ are finite both as
  $\Dc_A$-module and $\Dc_B$-module.
\end{lemma}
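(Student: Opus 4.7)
The plan is to separate the easy and the hard claims. The two assertions on the $\Dc_A$-side are immediate from $B$ being a finite $A$-module: $\Dc_{A\to B}=B\otimes_A\Dc_A$ is generated as a right $\Dc_A$-module by $\{b_i\otimes 1\}$ for any finite $A$-generating set $\{b_i\}$ of $B$, and the identification $\Dc_{B\leftarrow A}=\Dc_A\otimes_A\omega_{B/A}$ recorded in \thetag{BM} reduces the analogous left-$\Dc_A$ claim for $\Dc_{B\leftarrow A}$ to the finiteness of $\omega_{B/A}=\Hom_A(B,A)$ over $A$.

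For the two $\Dc_B$-side assertions I would first reduce them to a single statement. From the descriptions $\omega_B\otimes_B\Dc_{A\to B}=\omega_B\otimes_A\Dc_A$ and $\Dc_{B\leftarrow A}=\omega_B\otimes_A\Dc_A\otimes_A\omega_A^{-1}$ in \thetag{BM}, the two sides differ only by the invertible $A$-twist $\otimes_A\omega_A^{-1}$; combined with the side-switching equivalence $M\mapsto\omega_B\otimes_B M$ between left and right $\Dc_B$-modules, this shows that $\Dc_{A\to B}$ is finite as a left $\Dc_B$-module if and only if $\Dc_{B\leftarrow A}$ is finite as a right $\Dc_B$-module, so it suffices to prove the former. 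The plan is to use the order filtration $F^n=B\otimes_A F^n\Dc_A$ (each $F^n$ a coherent $\Oc_X$-module, hence a finitely generated left $\Dc_B$-module) and show that the ascending sequence $\Dc_B\cdot F^n$ stabilizes at some $N$ bounded by the multiplicities of the ramification divisor $B_\pi$. In a local model $\pi\colon A=k[y]\to B=k[x]$ with $y=x^e$, the set $\{1\otimes\partial_y^j:0\le j<e\}$ provides a generating set, as one verifies by induction on $n$ using the identity
\begin{equation*}
\partial_x^e\cdot(1\otimes\partial_y^{n-1})=e!\,(1\otimes\partial_y^n)+\sum_{j\ge 1}c_j(x)(1\otimes\partial_y^{n+j})
\end{equation*}
with $c_j(x)\in x^{e-1}B$, together with the compensating fact that the iterated $\Dc_B$-action on the generators captures exactly $x^{(e-1)k}B\cdot(1\otimes\partial_y^{n+k})$ at level $n+k$, allowing the correction terms to be cancelled.

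The hard part is the construction of generators at ramified points. A naive associated-graded argument fails: filtering by $F^\bullet$, the graded module $\gr\Dc_{A\to B}=B[\eta]$ is \emph{not} finitely generated over $\gr\Dc_B=B[\xi]$ via the symbol $\xi\mapsto J_\pi\eta$, since at degree $n$ the span of any finite generating set lies in a proper $B$-submodule of $B\eta^n$. Consequently $F^\bullet$ is not a good filtration with respect to any finite left $\Dc_B$-generating set. The induction nevertheless succeeds because the higher powers $\partial_x^m$ for $m\ge e$ produce constant-coefficient ``anchors'' at every $\partial_y^n$-level which are invisible to the symbol map; global glueing of such local generators across the ramification locus then finishes the proof.
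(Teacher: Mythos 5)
Your treatment of the easy half is fine and matches the paper: finiteness of $\Dc_{A\to B}=B\otimes_A\Dc_A$ as a right $\Dc_A$-module (respectively of $\Dc_{B\leftarrow A}$ as a left $\Dc_A$-module) is immediate from the finiteness of $B$ and $\omega_{B/A}$ over $A$, and the side-switching equivalence $\omega_B\otimes_B(\cdot)$ correctly reduces the two $\Dc_B$-side assertions to a single one.

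The hard half — $\Dc_{A\to B}$ being finite as a left $\Dc_B$-module — is where the proposal breaks down, and you half-admit it. The correction terms in your key identity $\partial_x^e(1\otimes\partial_y^{n-1})=e!\,(1\otimes\partial_y^n)+\sum_{j\ge 1}c_j(1\otimes\partial_y^{n+j})$ live at \emph{strictly higher} $\partial_y$-degree. Since $\partial_x$ can only raise the $\partial_y$-degree (the subspaces $\tilde F^n=\bigoplus_{j\ge n}B\otimes\partial_y^j$ are actually $\Dc_B$-submodules), a bottom-up induction on $n$ cannot cancel those tails: cancelling the $\eta^{n+1}$-term reintroduces noise at $\eta^{n+2},\eta^{n+3},\dots$, and over $k[x]$ there is no convergence to terminate the regress. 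What your calculations in fact produce is only the statement that $M:=\Dc_B\{1\otimes\partial_y^j:0\le j<e\}$ is dense modulo every $\tilde F^n$; this does not give $M=\Dc_{A\to B}$ without an Artin--Rees-type input that you do not supply. Your own observation that the associated graded of $\tilde F^\bullet$ is $\bigoplus_{n\ge 0}B$, hence not finitely generated, is precisely the signal that this filtration cannot carry the induction, and the appeal to ``anchors'' and ``global glueing'' does not replace it.

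The paper's argument is genuinely different and sidesteps this entirely. From the exact sequence of principal parts $0\to B\otimes_A\Pc^n_A\to\Pc^n_B\to\Pc^n_{B/A}\to 0$ one gets, after dualising and passing to the limit, the short exact sequence of left $\Dc_B$-modules
\begin{displaymath}
0\to\Dc_B\to\Dc_{A\to B}\to\varinjlim_n\Ext^1_B(\Pc^n_{B/A},B)\to 0 .
\end{displaymath}
The modules $\Pc^n_{B/A}$ are torsion with support in the ramification locus $B_\pi$, so the cokernel is a $\Dc_B$-module supported on $B_\pi$ which can be identified (in the Grothendieck group) with local cohomology modules $H^\bullet_Z(B)$ for closed $Z\subset B_\pi$. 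These are holonomic by Bernstein's theorem, hence finitely generated over $\Dc_B$, and therefore so is $\Dc_{A\to B}$. No explicit generators, no combinatorial induction, no issue with correction terms: the finiteness is outsourced to a known finiteness theorem for holonomic modules.
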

\begin{proof}
  Let $\Pc^n_B $ ($\Pc_{B/A}^n$) be the module of (relative) principal
  parts, so that $\Dc_B =\lim_n\Hom_B(\Pc_B^n, B)$ and
  $\Dc_{A\to B} = \lim_n\Hom_B(\Pc^n_{B/A}, B)$, where $\lim_n$
  denotes direct limits. The exact sequence
  $0 \to B\otimes_A\Pc^n_{A}\to \Pc^n_B\to \Pc^n_{B/A}\to 0 $ results in the dual exact
  sequence of $\Dc_B$-modules
    \begin{equation}
      \label{eq:princiapal}
          0 \to     \Dc_B \to \Dc_{A\to B}\to \lim_n Ext^1_{B}(\Pc^n_{B/A},
    B)\to 0. 
  \end{equation}
  See \cite{EGA4:4}; notice that $\Pc^n_{A}$ and $\Pc^n_B$ are locally
  free over $A$ and $B$, respectively, and that $\supp \Pc^n_{B/A}$
  belongs to the ramification locus in $X$. The right side of
  (\ref{eq:princiapal}) belongs to the Grothendieck group of local
  cohomology groups of the $\Dc_B$-module $B$ along closed subspaces
  of $\supp \Pc^n_{B/A}$; hence by Bernstein's theorem it is of finite
  type. Therefore $\Dc_{A\to B}$ is finite over $\Dc_B$. It is
  evidently finite as right $\Dc_A$-module. The corresponding
  assertions for
  $\Dc_{B \leftarrow A}= \omega_{B/A}\otimes_B \Dc_{A\to B}$ follow
  since the functor $\omega_{B/A}\otimes_B\cdot $ is an equivalence
  between $(\Dc_B, \Dc_A )$- and $(\Dc_A, \Dc_B)$-bimodules.
\end{proof}
\begin{example}\label{right-left} We illustrate how to compute with the
  bimodule $\Dc_{B\leftarrow A}$ in \thetag{BM}, in its last
  incarnation, which thus is isomorphic to the first definition
  because of \Proposition{\ref{dual-iso}}. So let $A\to B$ be a finite map
  of regular local rings that are essentially of finite type, and
  choose regular systems of parameters $\{y_i\}$ and $\{x_i\}$ of $A$
  and $B$, respectively, so that the derivations are described by
  $\partial_A = \sum_{i=1}^n \partial_A (y_i)\partial_{y_i} \in T_A$ and
  $\partial_B = \sum_{i=1}^n \partial_B (x_i)\partial_{x_i} \in T_B $,
  where $\partial_{x_i}(x_j)= \delta_{ij}$, and
  $ \partial_{y_i}(x_j)= \delta_{ij}$. Put also
  $dx = dx_1\wedge \cdots \wedge dx_n $,
  $dy = dy_1 \wedge \cdots \wedge dy_n$, and
  $dy^{-1} = \partial_{y_1}\wedge \cdots \wedge \partial_{y_n}$ . The
  right $\Dc_A$-action on $\omega_A$ is determined by
  \begin{displaymath}
  (a dy ) \cdot \partial_A  = (-\tr_y (\partial_A)
  - \partial_A(a))dy,
\end{displaymath}
where $\tr_{y} (\partial_A) = \sum_{i=1}^n \partial_{y_i}(\partial_A (y_i ))$, and the left action on
$\Dc_{A\leftarrow B}$ is determined by
  \begin{displaymath}
    \partial_A (dx \otimes P\otimes dy^{-1}) = - dx \otimes P\cdot (\partial_A + \tr_y (\partial_A))\otimes  (dy^{-1})
  \end{displaymath}
The right action of $\Dc_B$ is determined by
\begin{displaymath}
  (dx \otimes P\otimes dy^{-1}) \partial_B =  - \tr_x(\partial_B)dx \otimes P \otimes (dy)^{-1} -
  \sum_i \partial_B(y_i) dx\otimes \partial_{y_i}P \otimes (dy)^{-1}.
\end{displaymath}
If we have a {\it liftable} derivation $\partial \in T^\pi_A= T_A \cap T_B$ (we use the same symbol
for the derivation of $A$ and its lift to a derivation of $B$), so that
$\partial = \sum_{i=1}^n \partial (x_i)\partial_{x_i} = \sum_{i=1}^n \partial (y_i)\partial_{y_i}$,
then the right and left actions are related as follows:
\begin{align*}
  &  \partial  (dx \otimes P\otimes dy^{-1}) -   (dx \otimes P\otimes dy^{-1}) \partial \\ & = - dx
                                                                                             \otimes P (\partial + \tr_y (\partial))\otimes  (dy^{-1})  +  \tr_x(\partial)dx \otimes P \otimes (dy)^{-1} +
                                                                                             dx\otimes \partial P \otimes (dy)^{-1}\\
  &= dx \otimes [\partial, P] \otimes (dy)^{-1}  + \tr_x(\partial)dx \otimes P \otimes (dy)^{-1} -dx
    \otimes P \tr_y (\partial)\otimes  (dy^{-1}) \\
  & = dx \otimes [\partial + \tr_y(\partial), P] \otimes (dy)^{-1}  + (\tr_x(\partial) -\tr_y (\partial))dx \otimes P \otimes (dy)^{-1}.
\end{align*}
Localising to a map $A \to B_{J_\pi}$ we have $T_A= T_A^\pi$ so that in particular the partial
derivatives $\partial_{y_i}$ are liftable, and we have $dx= J_\pi^{-1}dy$, hence
$dx \cdot \partial_{y_i} = J_\pi^{-1} dy \cdot \partial_{y_i} = \partial_{y_i}(J_\pi)J_\pi^{-1} dx
$, and since $dx \cdot \partial_{y_i} = dx (\tr_x(\partial_{y_i}))$, we have
\begin{displaymath}
  \tr_x(\partial_{y_i}) = \sum_{i=1}^n \partial_{x_i}(\partial_{y_i} (x_i)) =
\frac {\partial_{y_i}(J_\pi)}{J_\pi},
\end{displaymath}
so that
\begin{align*}
  \partial_{y_i} (dx \otimes P \otimes dy^{-1}) & = (dx \otimes P \otimes dy^{-1})\cdot \partial_y  +
  (dx \otimes P \otimes dy^{-1}) \frac {\partial_{y_i}(J_\pi)}{J_\pi}  \\ &= (dx \otimes P \otimes
  dy^{-1}) (J_\pi^{-1}\partial_{y_i} J_\pi).
\end{align*}
\end{example}

The ring $j_+(\Dc_{X_0}) $ can be regarded either as a $(\Dc_X, \pi^{-1}(\Dc_Y))$ - or
$(\pi^{-1}(\Dc_Y), \Dc_X)$-bimodule in a natural way, and as such we denote it
$j_+(\Dc_{X_0}) _{(i)}$ and $j_+(\Dc_{X_0}) _{(ii)}$, respectively. We have then injective
homomorphisms
\begin{align*}
f&:  \Dc_{X\to Y}=\Oc_X\otimes_{\pi^{-1}(\Oc_Y)}\pi^{-1}(\Dc_Y)   \to  j_+(\Dc_{X_0})_{(i)}, \quad \phi \otimes P \mapsto \phi\tilde P,  \\
g&:  \Dc_{X\leftarrow Y} = \omega_{X/Y}\otimes_{\Oc_X}  \Dc_{X\to Y}  \to  j_+(\Dc_{X_0})_{(ii)},
   \quad \lambda_0 \tr_\pi \otimes P \mapsto  \lambda_0 \tilde P,
\end{align*}
where $\tilde P$ is the unique lift of a differential operator $P$ on
$Y$ to a differential operator on $X_0$, and $\tr_\pi$ is the
canonical global section of $\omega_{X/Y}$ (see (\ref{S}) in
(\ref{relative-can})). Since $j^*(\Dc_{X\to Y})= j^*(\Dc_X)$ and the
canonical map $j^*(\Oc_X)\to j^*(\omega_{X/Y})$ is an isomorphism it
follows that the restriction of $f$ and $g$ to $X_0$ are isomorphisms.

Let $\coh(\Dc_X)$ be the category of coherent left $\Dc_X$-modules.
The {\it inverse image} functor is defined
\begin{align*}
  \pi^!&: \coh(\Dc_Y ) \to \coh(\Dc_X), \\
  M &\mapsto   \Dc_{X\to Y}\otimes_{\pi^{-1}(\Dc_Y)}\pi^{-1}(M) =  \Oc_X\otimes_{\pi^{-1}(\Oc_Y)}\pi^{-1}(M), 
\intertext{and the {\it direct image} functor }
  \pi_+&: \coh(\Dc_X ) \to \coh(\Dc_Y), \\
  M &\mapsto \pi_+(M)= \pi_*(\Dc_{ Y\leftarrow X }\otimes_{\Dc_X}M).
\end{align*}

\begin{remark}\label{invim-remark}
  The general definition of the inverse image is the derived functor
  $\pi^!(M)=\Dc_{X\to Y}\otimes^L_{\pi^{-1}(\Dc_Y)} \pi^{-1}(M)
  [\dop_{X}- \dop_Y]$, where $\dop_ X$ is the dimension of $X$,
  simplifies to the above expression when $\pi$ is finite and
  surjective. For the same reason we will see in
  \Proposition{\ref{flat-module}} that we do not need the derived
  version of the direct image functor. See \cite{borel:Dmod }*{VI, 4.2
    and, 5.1}
\end{remark}

Since $B_\pi$ and $D_\pi$ are divisors, it follows that the direct image functors $j_+$ and $i_+$
for the open embeddings are exact. Assume that $M$ contains no section whose support is contained in
$B_\pi$. Then
\begin{displaymath}
  \pi_+(M) \subset i_+i^+(\pi_+(M)) = i_+(\pi_0)_+ j^+(M),
\end{displaymath}
hence if $U$ is an open set, the action of a section $P\in \Dc_Y(U)$
on $\pi_+(M)(U)$ is by lifting its restriction to a section of
$\Dc_{Y_0}(U\cap Y_0)$ to a differential operator in
$j_+(\Dc_{X_0})(U)$, and then acting on $M(\pi^{-1}(U))\subset
j_+j^+(M)(\pi^{-1}(U)) = j^+(M)(\pi_0^{-1}(U\cap Y_0) )$, so that in
effect
\begin{equation}\label{dir-im}
  \pi_+(M) = \Dc_Y \pi_*(\omega_{X/Y}\otimes_{\Oc_X}M) \subset i_+(\pi_0)_+ j^+(M).
\end{equation}
\begin{example}
  Let $\pi : A= k[y_1, \ldots , y_n] \to B= k[x_1, \ldots x_n]$ be an injective polynomial map and
  $M$ be  a $\Dc_B$-modules such that  $B_{J_\pi}\otimes_B M = M$. Then $\pi_+(M)= M$ as $A$-module,
  and the action of
  $\partial_{y_i}$ is 
  \begin{displaymath}
    \partial_{y_i}\cdot m   =  J_\pi \partial _{y_i} J_\pi^{-1} m
  \end{displaymath}
\end{example}

Let $\pi: X\to Y$ be a morphism finite type, where $X$ and $Y$ are
smooth and $\coh^f(\Dc_X)$ be the category of coherent $\Dc_X$-modules
such that the restriction $\pi: \supp M\to Y$ is a finite morphism.
Thus if $\pi$ is finite, then $\coh(\Dc_X) =\coh^f(\Dc_X)$.

\begin{proposition}\label{flat-module}
  The direct image functor $\pi_+ : \coh^f_c(\Dc_X)\to \coh_c(\Dc_Y)$,
  $M\mapsto \pi_+(M)$, is exact. Assume that $\pi$ is finite. Then
  $\pi_+ : \coh(\Dc_X)\to \coh(\Dc_Y) $ is exact and the left
  $\Dc_X$-module $\Dc_{X\to Y}$ and the right $\Dc_X$-module
  $\Dc_ {Y \leftarrow X}$ are both flat.
\end{proposition}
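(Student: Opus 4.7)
First, given $M\in \coh^f(\Dc_X)$ with $\pi|_{\supp M}$ finite, the formula $\pi_+(M) = \pi_*(\Dc_{Y\leftarrow X}\otimes_{\Dc_X} M)$ is unchanged under replacing $X$ by an open neighborhood of $\supp M$ on which $\pi$ is finite (working locally over $Y$), reducing the first assertion to the case of finite $\pi$. When $\pi$ is finite it is affine, so $\pi_*$ is exact on quasi-coherent sheaves, and the exactness of $\pi_+$ is then equivalent to flatness of $\Dc_{Y\leftarrow X}$ as a right $\Dc_X$-module.

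For the flatness, the key input is that a finite surjective morphism between smooth varieties of the same dimension is flat, so $\Oc_X$ is locally free over $\pi^{-1}\Oc_Y$. In a local affine chart $A\to B$ with $B$ free over $A$, the identity
\begin{displaymath}
\Dc_{X\to Y}\otimes_{\pi^{-1}\Dc_Y}(-) = \Oc_X \otimes_{\pi^{-1}\Oc_Y}(-)
\end{displaymath}
shows that $\Dc_{X\to Y}$ is flat on the $\pi^{-1}\Dc_Y$-side by base change, and a parallel analysis of the left $\Dc_X$-action (using that $\Dc_X$ is locally free over $\Oc_X$ and $\Oc_X$ is locally free over $\pi^{-1}\Oc_Y$) yields the corresponding left-flatness over $\Dc_X$. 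For $\Dc_{Y\leftarrow X} = \omega_{X/Y}\otimes_{\Oc_X}\Dc_{X\to Y}$, one invokes the explicit formulas of Example \ref{right-left}: a local trivialization of the invertible $\Oc_X$-module $\omega_{X/Y}$ identifies $\Dc_{Y\leftarrow X}$ with $\Dc_X$ as right $\Dc_X$-module, up to the twist in the action by the logarithmic derivative of the Jacobian $J_\pi$ that is displayed there.

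The principal obstacle is verifying this last local identification, since the right $\Dc_X$-structure on $\Dc_{Y\leftarrow X}$ mixes the trace structure on $\omega_{X/Y}$ with the pulled-back operators $\pi^*\Dc_Y$; the explicit identity $\partial_{y_i}(dx\otimes P\otimes dy^{-1}) = (dx\otimes P\otimes dy^{-1})\cdot (J_\pi^{-1}\partial_{y_i}J_\pi)$ from Example \ref{right-left} shows precisely how the two structures interact. Once this twisted right action is seen to make $\Dc_{Y\leftarrow X}$ locally free of rank one over $\Dc_X$ from the right, both flatness claims (and, combined with the exactness of $\pi_*$ in the affine case, the exactness of $\pi_+$) follow immediately.
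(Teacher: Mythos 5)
Your proof has two genuine gaps, both of which undermine the whole argument.

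The opening reduction step is false: there is in general \emph{no} open neighborhood of $\supp M$ on which $\pi$ is finite, even when $\pi$ restricted to $\supp M$ is finite. Take $X=\Ab^2_k$, $Y=\Ab^1_k$, $\pi$ the first projection, and $M$ a holonomic module supported on a single point, or on the horizontal line $\{y=0\}$. The restriction of $\pi$ to $\supp M$ is a closed immersion (hence finite), but any open $U\supset\supp M$ has $\pi|_U$ with a one-dimensional fiber over $0$, so $\pi|_U$ is not proper, let alone finite. This is the whole reason the first statement of the proposition is nontrivial and is formulated for maps that need not be finite. The paper handles this by factorizing $\pi=p\circ f$ through a closed embedding $f:X\hookrightarrow Z=\Ab^l_k\times_kY$ and the projection $p:Z\to Y$, invoking Kashiwara's equivalence for $f_+$, reducing $p_+$ inductively to the one-dimensional projection $Y\times\Ab^1\to Y$, and showing the ``wrong'' cohomology $M^{\partial_t}$ vanishes when $\supp M$ is finite over $Y$ by a minimal-degree polynomial argument. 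None of this can be bypassed by pretending $\pi$ is finite.

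The flatness argument in the finite case is also not valid. You write that ``$\Dc_X$ locally free over $\Oc_X$ and $\Oc_X$ locally free over $\pi^{-1}\Oc_Y$'' implies flatness of $\Dc_{X\to Y}$ over $\Dc_X$, and that after trivializing $\omega_{X/Y}$ one identifies $\Dc_{Y\leftarrow X}$ with $\Dc_X$ as a right $\Dc_X$-module ``locally free of rank one.'' Neither claim is correct. Flatness of $\Dc_{X\to Y}=\Oc_X\otimes_{\pi^{-1}\Oc_Y}\pi^{-1}\Dc_Y$ over the \emph{non-commutative} ring $\Dc_X$ does not follow from base-change-style arguments over $\Oc_X$ and $\pi^{-1}\Oc_Y$; these are flatness statements over different rings, and the $\Dc_X$-module structure genuinely mixes left and right actions. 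Moreover, $\Dc_{X\to Y}$ is \emph{not} locally free of rank one over $\Dc_X$ when $\pi$ is ramified: one has the strict exact sequence $0\to\Dc_X\to\Dc_{X\to Y}\to\lim_n Ext^1_{\Oc_X}(\Pc^n_{X/Y},\Oc_X)\to 0$ (equation (\ref{eq:princiapal}) in the paper), where the right-hand term is a nonzero $\Dc_X$-module supported on the ramification locus. So $\Dc_{Y\leftarrow X}=\omega_{X/Y}\otimes_{\Oc_X}\Dc_{X\to Y}$ is strictly larger than a twist of $\Dc_X$ along the ramification, and the formulas of Example \ref{right-left} describe the module structure but do not trivialize it. The paper's logic runs in the opposite direction: exactness of $\pi_+$ is proven first (by the factorization argument) and flatness of $\Dc_{Y\leftarrow X}$ is \emph{deduced} from it, not the other way around.
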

The earliest reference to this well-known result is in
\cite{hotta-ka}, which in turn refers to
\cite{kashiwara:systemsmicrodifferential}, but I was unable to find a
proof in either source; for a published proof, see
\cite{bjork:analD}*{Th. 2.11.10}. Still, due to its importance and
also to avoid references to analytic structures, a complete proof is
included.
\begin{proof} Since the ordinary sheaf direct image functor $\pi_*$ is exact on the category of
  sheaves with finite support over $Y$, and the functor $M \to \Dc_{Y \leftarrow X} \otimes_{\Dc_X}M
  = \omega_{X/Y}\otimes_{\Oc_X} \pi^*(\Dc_Y) \otimes_{\Dc_Y} M$ takes modules of finite support over
  $Y$ to sheaves with finite support over $Y$, it follows that
  \begin{displaymath}
    \pi_+(M) = \pi_*(\omega_{X/Y}\otimes_{\Oc_X}    \pi^*(\Dc_Y) \otimes^L_{\Dc_Y} M).
\end{displaymath}
Since $X$ is of finite type over $k$ there exists a regular immersion
$ i : X \to \Ab^l_k$, where $\Ab^l_k$ is the $l$-dimensional affine
space over $k$. Put $Z= \Ab^l_k\times_k Y$ and define the composed map
\begin{displaymath}
  f : X \to X \times_kY \to Z,
\end{displaymath}
where the first is the graph embedding and the second is
$ i \times id : X \times_kY \to Z$. Letting $p : Z \to Y$ be the
projection on the second factor we then have $\pi = p \circ f$, so
that $\pi_+ (M) = p_+(f_+(M))$, where $M_1=f_+(M)$ is a $\Dc_Z$-module
such that restriction of $p$ to $\supp M_1$ is finite. Since $f$ is a
closed embedding and hence exact by Kashiwara's theorem, it suffices
to prove that the functor $p_+: \coh^f(\Dc_Z)\to Y$ is exact. Now
putting $Z_i=Y \times \Ab^i $ ($Z_0 = Y $) the map $p $ can be
factorized $p= p_1\circ p_2\circ \cdots \circ p_l $, where
$p_i : Z_i\to Z_{i-1}$ is induced by the projection map
$\Ab^i\to \Ab^{i-1}$, hence we have an isomorphism of derived functors
$p_+ = (p_1)_+\circ (p_2)_+\circ \cdots \circ (p_l)_+ $ (see
\cite{hotta-takeuchi-tanisaki}*{Prop. 1.5.21}) where $(p_i)_+$ defines
a functor $\coh^f(\Dc_{Z_i})\to \coh^f(\Dc_{Z_{i-1}})$. It
suffices therefore to see that $(p_1)_ +$ is exact, and this follows
if the homology $H^\bullet ((p_1)_+(M))$ is concentrated in one degree
when $\supp M$ is finite over $Y$. Now
$(p_1)_+(M) = \Omega^\bullet_{Z_1/Y}(M) = \Omega^\bullet_{\Ab^1}(M)$,
the relative de~Rham complex \cite{hotta-takeuchi-tanisaki}*{Prop
  1.5.28}, and letting $t$ be a regular parameter for the affine space
$\Ab_k^1$ and $\partial_t$ is a basis for the relative derivations
$T_{Z_1/Y}$, satisfying $\partial_t (t)=1$, only the following two
homology groups occur
\begin{displaymath}
  M^{\partial_t} = \{m \ \vert \ \partial_t \cdot m =0 \} \quad \text{
    and } \quad \frac 
  M{\partial_t M}. 
\end{displaymath}
Thus it is enough to prove that $M^{\partial_t} =0$, so assume on the
contrary that there exists a non-zero section $m$ of $M$ such that
$\partial_t m=0$. Since the support $\supp m$ of $m$ is finite over
the closed subset $Y_m= \pi_1(\supp m)\subset Y$ there exists a
non-zero polynomial $p(t)\in \Oc_{Y_m}[t]$, such that $p(t)m=0$, which
we can select to have minimal degree in $t$. Since
$0=\partial_t (p(t)m)=\partial_t (p(t))m$ and
$\deg_t \partial_t(p) < \deg_t p$, we get that $\partial_t (p)=0$ and
hence $p\in \Oc_{Y_m}$. Since by assumption $m\neq 0$ it follows that
$pm \neq 0$, we get a contradiction. Therefore $m=0$.

If $\pi$ is finite, then both the functor
$\pi_+(\cdot) = \pi_*(\Dc_{Y \leftarrow X} \otimes_{\Dc_X}(\cdot )) $
and $\pi_*$ are exact. This implies that the right $\Dc_X$-module
$\Dc_{Y \leftarrow X}$ is flat. Switching left for right is an exact
functor, hence $\Dc_{X\to Y}$ is also a flat left $\Dc_X$-module.
\end{proof}

\subsection{Duality  and finite maps}
First it is recalled how a duality operation $\Dbb$ gives rise to the
category of holonomic $\Dc$-modules, next is presented an accessible
proof that $\pi_+ \Dbb = \Dbb \pi_+$ when $\pi$ is finite, and the
trace map $\Tr : \pi_+\pi^! \to \id$ is described. The isomorphism
$\eta$ in (\ref{C}) in \ref{relative-can} is explained also using the
duality $\Dbb$. The appendix contains an account for the properties of
the minimal extension functor, which is a mechanism for getting simple
holonomic $\Dc$-modules as extensions of simple modules from a locally
closed subset.

\subsubsection{Duality}\label{poincare}
Let $d_X$ denote the dimension of the smooth variety $X$.  On the derived category $D_c(\Dc_X)$ of
complexes of $\Dc_X$-modules with coherent homology one can define the functor
\begin{displaymath}
\Dbb_X : D_c(\Dc_X)\to D_c(\Dc_X),   M\mapsto RHom_{\Dc_X}(M, \Dc_X [d_X])\otimes_{\Oc_X}\omega_X^{-1}
\end{displaymath}
which is contravariant and satisfies $\Dbb_X^2 = \id$, so it is a duality. The functor $\Dbb_X$ does not
preserve the full subcategory $\coh(\Dc_X)\subset D_c(\Dc_X)$, so therefore say that a module
$M\in \coh(\Dc_X)$ is {\it holonomic} if $\Dbb_X( M) $ is concentrated in degree $0$. Denote by
$\hol (\Dc_X)\subset \coh(\Dc_X)$ the full subcategory of holonomic $\Dc_X$-modules.

If $X= \Spec L$, where $L/k$ is a field extension of finite type and
$\Dc_L = \Dc_{L/k}$ is the ring of $k$-linear differential operators,
then $\hol (\Dc_X)= \Mod_{fd}(\Dc_{L/k})$, the category of
$\Dc_L$-modules that are of finite dimension over $L$.

  The fundamental fact is that if $M$ is holonomic, then $\Dbb_X (M)$ is again holonomic, so that
  $\Dbb_X $ restricts to a duality functor
\begin{displaymath}
\Dbb_X : \hol (\Dc_X)\to \hol (\Dc_X), \quad M \mapsto Ext^{d_X}_{\Dc_X}(M, \Dc_X)\otimes_{\Oc_X}
  \omega^{-1}_X.
\end{displaymath}
We need to know how to compute $\Dbb_X$ in an important case. Say that
a $\Dc$-module $M$ is a {\it connection} if it is coherent over
$\Oc_X$. Let $\Con (\Dc_X) $ be the category of
connections\footnote{Thus in this work all connections are
  integrable.}. It is well-known that connections are locally free
over $\Oc_X$, and if $M$ is a connection then
$M^* = Hom_{\Oc_X}(M, \Oc_X)$ is again a connection. The following
proposition shows in particular that
$\Con (\Dc_X)\subset \hol (\Dc_X)$.
\begin{proposition}\label{dual-con} If $M\in \Con(\Dc_X)$, then
  \begin{displaymath}
    \Dbb_X (M) =M^*,
  \end{displaymath}
  where $\Dc_X$ acts diagonally on the right side, so that $\Dbb_X (M)$ is again a connection.
  \end{proposition}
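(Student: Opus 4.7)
The plan is to explicitly resolve the connection $M$ as a left $\Dc_X$-module by a locally projective resolution, apply $\Hom_{\Dc_X}(-,\Dc_X)$, and identify the resulting cohomology. Since $M$ is locally free over $\Oc_X$, I use the twisted Spencer--Koszul complex
\begin{displaymath}
0 \to \Dc_X\otimes_{\Oc_X}\wedge^{d_X}T_X\otimes_{\Oc_X}M \to \cdots \to \Dc_X\otimes_{\Oc_X}T_X\otimes_{\Oc_X}M \to \Dc_X\otimes_{\Oc_X}M \to M \to 0,
\end{displaymath}
whose differential is the classical Koszul differential twisted by the $\Dc_X$-action on $M$ (on the rightmost map, $P\otimes m\mapsto Pm$; one step back, $P\otimes\xi\otimes m\mapsto P\xi\otimes m-P\otimes \xi m$, extended by the Koszul sign convention). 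Each term is a locally projective left $\Dc_X$-module because $\wedge^p T_X\otimes_{\Oc_X}M$ is locally free over $\Oc_X$. Exactness is checked locally by filtering via the order filtration on $\Dc_X$ and passing to the associated graded, where the complex becomes the classical Koszul complex on $\on{Sym} T_X$ with coefficients in $M$, which is exact since $T_X$ is locally free.

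Next I apply $\Hom_{\Dc_X}(-,\Dc_X)$ term-by-term. The adjunction $\Hom_{\Dc_X}(\Dc_X\otimes_{\Oc_X}E,\Dc_X)\cong \Hom_{\Oc_X}(E,\Dc_X) = E^*\otimes_{\Oc_X}\Dc_X$, valid for locally free $E$ and understood as right $\Dc_X$-modules, turns the resolution into a complex
\begin{displaymath}
0 \to M^*\otimes_{\Oc_X}\Dc_X \to \Omega^1_X\otimes_{\Oc_X}M^*\otimes_{\Oc_X}\Dc_X \to \cdots \to \omega_X\otimes_{\Oc_X}M^*\otimes_{\Oc_X}\Dc_X \to 0
\end{displaymath}
of right $\Dc_X$-modules, whose differential is the de~Rham-type differential twisted by the dual connection on $M^*$. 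This is precisely the right-module Spencer resolution of $\omega_X\otimes_{\Oc_X}M^*$ (viewed as a right $\Dc_X$-module via the diagonal action on $M^*$ together with the canonical right action on $\omega_X$), so its cohomology is concentrated in degree $d_X$ and equals $\omega_X\otimes_{\Oc_X}M^*$. In particular $\Ext^p_{\Dc_X}(M,\Dc_X)=0$ for $p<d_X$, proving simultaneously that $M$ is holonomic and that $\Dbb_X(M)$ is concentrated in degree $0$.

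Finally I perform the side-change in the definition $\Dbb_X(M)=\Ext^{d_X}_{\Dc_X}(M,\Dc_X)\otimes_{\Oc_X}\omega_X^{-1}$: the right $\Dc_X$-module $\omega_X\otimes_{\Oc_X}M^*$ tensored with $\omega_X^{-1}$ becomes a left $\Dc_X$-module, and tracing through the identifications one verifies that the resulting left action on $M^*$ is exactly the diagonal action $\xi\cdot\phi = -\phi\circ\xi_M + \xi\cdot\phi(\cdot)$ dual to the action on $M$. The main technical obstacles are (i) verifying exactness of the twisted Spencer resolution via the order-filtration/graded argument, and (ii) carrying out the right/left bookkeeping in the side-change carefully enough to see that the diagonal action, and no other, emerges on $M^*$.
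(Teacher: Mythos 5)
Your proposal is essentially the same approach the paper takes: resolve $M$ by a Spencer--Koszul complex with terms $\Dc_X\otimes_{\Oc_X}\wedge^pT_X\otimes_{\Oc_X}M$, apply $\Hom_{\Dc_X}(-,\Dc_X)$, recognize the result as the de~Rham complex of $M^*\otimes_{\Oc_X}\Dc_X$, and then carry out the left--right side change. The only meaningful differences are presentational. You choose the \emph{induced} $\Dc_X$-module structure on the Spencer terms together with a differential twisted by the connection ($P\otimes\xi\otimes m\mapsto P\xi\otimes m - P\otimes\xi m$), which makes the $\Oc_X$-linear adjunction $\Hom_{\Dc_X}(\Dc_X\otimes_{\Oc_X}E,\Dc_X)\cong E^*\otimes_{\Oc_X}\Dc_X$ literally applicable term by term; the paper instead takes the \emph{diagonal} $\Dc_X$-structure on $\Kc^\bullet(\Oc_X)\otimes_{\Oc_X}M$ with the untwisted Koszul differential, and then needs Lemma~\ref{basic} to see local freeness plus the observation that $\Dc_X$-linear maps are the $T_X$-invariants of $\Oc_X$-linear maps. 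You should just be explicit that these two presentations of the resolution are naturally isomorphic (otherwise the adjunction you cite does not match the module structure the Koszul differential is linear for), but this is a standard point. You also verify exactness through the order filtration and passage to the symmetric algebra $\on{Sym}\,T_X$, which is a nice self-contained argument; the paper instead obtains exactness by observing that $\Kc^\bullet(\Oc_X)$ resolves $\Oc_X$ and that tensoring over $\Oc_X$ with the locally free $M$ preserves exactness. Both routes are valid and of comparable length; yours has the small extra dividend of making the holonomicity of $M$ (vanishing of $\Ext^p$ for $p<d_X$) visible at the same step.
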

  A proof is included only to clarify this well-known result, as it is
  more or less proven in \cite{hotta-takeuchi-tanisaki}*{Example
    2.6.10}, but two subtle details can be more emphasized. One is
  having to do with the following lemma, and the second with certain
  vertical isomorphisms in [loc cit] that exist due to the fact that
  $\Dc_X$ is generated in degree 1.
  \begin{lemma}\label{basic} Let $F$ be a locally free $\Dc_X$-module
    of rank $m$ and $M$ be a $\Dc_X$-module which is locally free over
    $\Oc_X$ of rank $n$, then the $\Dc_X$-module $F\otimes_{\Oc_X}M$,
    provided with the usual diagonal action of $\Dc_X$, is a locally
    free $\Dc_X$-module of rank $nm$.
      \end{lemma}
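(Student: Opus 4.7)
The plan is to reduce, via local triviality of $F$, to showing that $\Dc_X \otimes_{\Oc_X} M$ with its diagonal $\Dc_X$-action is locally free of rank $n = \rank_{\Oc_X} M$. Since the statement is local on $X$, I may assume $F \cong \Dc_X^m$; then $F \otimes_{\Oc_X} M$ splits (as a diagonal $\Dc_X$-module) into a direct sum of $m$ copies of $E := \Dc_X \otimes_{\Oc_X} M$ with the diagonal action, and the lemma reduces to the freeness of $E$ of rank $n$ over $\Dc_X$.

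Next I would compare two $\Dc_X$-module structures on the underlying sheaf of $E$: the diagonal structure $E_d$ and the ``left'' structure $E_\ell$, where in $E_\ell$ the action of $\Dc_X$ is only on the first tensor factor. Picking a local $\Oc_X$-basis $e_1, \dots, e_n$ of $M$, the module $E_\ell$ is freely generated over $\Dc_X$ by $\{1 \otimes e_i\}$. The key construction is the map
\[
\phi : E_\ell \to E_d, \quad P \otimes m \mapsto P \cdot (1 \otimes m),
\]
where the product on the right denotes the diagonal action. A short check (using $(Pf)\cdot(1\otimes m) = P\cdot(f\otimes m) = P\cdot(1\otimes fm)$ for $f\in\Oc_X$) shows $\phi$ is well-defined on the tensor product over $\Oc_X$, and associativity of the diagonal action yields $\Dc_X$-linearity. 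Hence once $\phi$ is shown to be bijective, the family $\{1 \otimes e_i\}$ freely generates $E_d$ over $\Dc_X$.

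To prove bijectivity I would appeal to the order filtration on $\Dc_X$ and the induced filtration $F^k E = F^k \Dc_X \otimes_{\Oc_X} M$. The Leibniz identity $\partial \cdot (Q \otimes m) = \partial Q \otimes m + Q \otimes \partial m$ for $\partial \in T_X$, combined with induction on $\ord P$, gives $\phi(P \otimes m) \equiv P \otimes m \pmod{F^{\ord P - 1} E}$. Consequently $\gr \phi$ is the identity map on $\gr \Dc_X \otimes_{\Oc_X} M = \on{Sym}(T_X) \otimes_{\Oc_X} M$, which is visibly free of rank $n$ over $\on{Sym}(T_X)$. Since both filtrations are exhaustive and separated, this forces $\phi$ itself to be bijective.

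The point that I expect to be the most delicate — and plausibly one of the subtleties the author is alluding to — is making sure that the diagonal $\Dc_X$-action on $E_d$ is well-defined and associative in the first place; this ultimately rests on $\Dc_X = \Dc(T_X)$ being generated by derivations, so that all compatibility relations among higher-order operators on the tensor product are forced by the Leibniz rule for derivations on each factor. Once this is set up carefully, the associated-graded step is routine, and the rank $nm$ in the global statement follows immediately from the local reduction at the start.
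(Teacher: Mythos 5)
Your proof is correct and rests on the same underlying mechanism as the paper's: the order filtration on $\Dc_X$ and the observation that the diagonal action agrees with the first-factor action modulo lower order. The paper works locally and directly checks that a putative $\Dc_A$-linear relation among $\{1\otimes m_i\}$ forces the leading symbols to vanish in $\gr\Dc_A\cong\on{Sym}(T_A)$, hence the relation is trivial because $\Dc_A$ has no zero-divisors; your version packages the same computation into a filtered $\Dc_X$-linear map $\phi:E_\ell\to E_d$ whose associated graded is the identity, which is a slightly cleaner formulation and handles generation and independence in one stroke. One small remark: the ``delicate point'' you flag at the end (well-definedness of the diagonal action) is standard and not what the paper singles out; the subtlety the author emphasizes is precisely the freeness claim of this lemma.
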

      \begin{proof} One may assume that $X=\Spec A$ for some
        $k$-algebra $A$ such that the $A$-module of derivations $T_A$
        is free (e.g. $A$ is local and $k$-smooth); let the
        derivations $\partial_{x_i}$ form an $A$-basis of $T_A$. It
        suffices now to prove that $\Dc_A\otimes_A M$ is free of rank
        $r$ as $\Dc_A$-module, when $M$ is a $\Dc_A$-module which is
        free of rank $r$ as $A$-module. So assume that
        $M= \oplus_{i=1}^r Am_i$ and that we have a relation
        \begin{displaymath}
          \sum P_i (Q_i\otimes m_i) =0. 
        \end{displaymath}
        Recall that $\Dc_A$ is free as right $A$-module with basis
        $\partial^\alpha= \partial_{x_1}^{\alpha_1}\partial_{x_2}^{\alpha_2}\cdots \partial_{x_n}^{\alpha_n}$.
        The order $o(P)$ of
        $P= \sum_\alpha \partial^\alpha p_\alpha\in \Dc_A$ is the
        maximal value of $|\alpha| = \sum_i \alpha_i$ such that
        $p_\alpha \neq 0$. Now
        $P_i (Q_i\otimes m_i) = (P_i Q_i)\otimes m_i + \sum_{j=1}^r
        R_{ij}\otimes m_j $, where $o(R_{ij})< o(P_i Q_i)$. Letting
        $I$ be the set of indices such that $o(P_i Q_i)$ attains it
        maximal value $t$ we get
\begin{displaymath}
  \sum_{i\in I} (P_i Q_i\otimes m_i) + \sum (T_i\otimes m_i) =0,
\end{displaymath}
where $o(T_i) < t$, $i\in I$. This implies that if $i\in I$ and
$P_iQ_i = \sum_\alpha \partial^\alpha s_\alpha$, $s_\alpha \in A$,
then $\sum_{|\alpha| = t} \partial^\alpha s_\alpha =0$, and hence
$P_iQ_i =0$, and as $\Dc_A$ does not have zero-divisors, we get
$P_i=0$.
      \end{proof}
  \begin{pfof}{\Proposition{\ref{dual-con}}}
    See first \cite{hotta-takeuchi-tanisaki}*{Example 2.6.10}, which we
    complement as follows.  The right-most tensor product over $\Oc_X$ in the
    Koszul (or Spencer) complex $\Kc^\bullet (M)=\Dc_X \otimes_{\Oc_X}
    \wedge^\bullet T_X \otimes_{\Oc_X} M = \Kc^\bullet (\Oc_X)\otimes_{\Oc_X} M$
    of $M$ is taken with respect to the left $\Oc_X$-module structure on
    $K^\bullet (\Oc_X)$, the $\Dc_X$-module structure is given by the diagonal
    action of $T_X$, and the differential is induced from the usual
    $\Dc_X$-linear differential of $\Kc^\bullet (\Oc_X)$. It follows from
    \Lemma{\ref{basic}} that $\Kc^\bullet (M)$ is locally free over $\Dc_X$, so
    that $\Kc^\bullet (M)$ is a resolution of $M$ by acyclic objects. We have
    (as detailed below)
    \begin{align*}
      & RHom_{\Dc_X}(M, \Dc_X) = Hom_{\Dc_X}( \Kc^\bullet(M), \Dc_X) = 
      Hom_{\Oc_X}(\Kc^\bullet(M), \Dc_X)^{T_X} \\
      &=Hom_{\Oc_X}( \Kc^\bullet(\Oc_X) , Hom_{\Oc_X}(M, \Dc_X))^{T_X} =
      Hom_{\Dc_X}( \Kc^\bullet(\Oc_X) , Hom_{\Oc_X}(M,
      \Dc_X))\\ & = Hom_{\Dc_X}( \Kc^\bullet(\Oc_X) , Hom_{\Oc_X}(M,\Oc_X) \otimes_{\Oc_X}\Dc_X) 
       = \Omega_X^\bullet (M^*\otimes_{\Oc_X}\Dc_X).
    \end{align*}
    The last entry on the first line denotes the $T_X$-invariants with
    respect to the diagonal action on
    $Hom_{\Oc_X}(\Kc^\bullet(M), \Dc_X) $, and the isomorphism relies
    on the fact that $\Dc_X$ is generated by $\Oc_X$ and $T_X$. The
    first equality on the second line is adjunction for
    $\Oc_X$-modules, noting that it is an isomorphism of
    $\Oc_X$-modules (before taking $T_X$-invariants). In the last
    line, $\Omega_X^\bullet (M^*\otimes_{\Oc_X}\Dc_X)$ is the de~Rham
    complex of the free $\Dc_X$-module $M^*\otimes_{\Oc_X}\Dc_X$
    \Lem{\ref{basic}}. Since $\Omega^\bullet_X(\Dc_X)$ is a resolution
    of $\omega_X$ [loc cit, Lemma 1.5.27] it follows that
    $\Omega_X^\bullet (M^*\otimes_{\Oc_X}\Dc_X)$ is a resolution of
    $M^* \otimes_{\Oc_X}\omega_X [-d_X]$. This gives
    $\Dbb_X( M) \cong M^*$.
  \end{pfof}


  It is a general fact that $\pi^!$ defines a functor $\hol(\Dc_Y)\to \hol(\Dc_X)$ (see
  \cite{hotta-takeuchi-tanisaki}*{Th. 3.2.3}), therefore one also  gets the functor $\pi^+ = \Dbb_X \pi^!
  \Dbb_Y : \hol(\Dc_Y)\to \hol(\Dc_X)$.
\begin{lemma}\label{con-lemma}
  If $M\in \Con(\Dc_Y)$, then $\pi^!(M)= \pi^+(M) $. 
\end{lemma}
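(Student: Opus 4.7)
The idea is to apply Proposition~\ref{dual-con} twice. First, since $M$ is a connection, that proposition gives $\Dbb_Y(M) = M^*$, the $\Oc_Y$-linear dual equipped with its dual connection. Because $\pi$ is finite, the derived inverse image reduces to the naive one (see Remark~\ref{invim-remark}), so $\pi^!(M^*) = \Oc_X\otimes_{\pi^{-1}\Oc_Y}\pi^{-1}(M^*)$ is again a connection on $X$.

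Next I identify $\pi^!(M^*)$ with $(\pi^!(M))^*$ as $\Dc_X$-modules. Since $M$ is locally free of finite rank over $\Oc_Y$, the standard $\Oc_X$-linear isomorphism
\[
\pi^*\Hom_{\Oc_Y}(M,\Oc_Y)\xrightarrow{\sim}\Hom_{\Oc_X}(\pi^*(M),\Oc_X),\qquad f\otimes \phi\mapsto \bigl(g\otimes m\mapsto fg\,\phi(m)\bigr),
\]
is available, and the remaining task is to verify that it intertwines the two natural diagonal $\Dc_X$-module structures. Both actions are pinned down by the requirement that the evaluation pairing $\pi^!(M)\otimes_{\Oc_X}\pi^!(M^*)\to \Oc_X$ be $\Dc_X$-linear, and this pairing is itself the $\pi^!$-pull-back of the horizontal evaluation pairing $M\otimes_{\Oc_Y}M^*\to \Oc_Y$ on $Y$; this is the main point of the argument and where any subtlety lies.

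Finally, since $\pi^!(M^*)\cong (\pi^!(M))^*$ is a connection on $X$, Proposition~\ref{dual-con} applies once more and yields
\[
\pi^+(M) = \Dbb_X\pi^!\Dbb_Y(M) = \Dbb_X\bigl((\pi^!(M))^*\bigr) = \bigl((\pi^!(M))^*\bigr)^* = \pi^!(M),
\]
the last equality using that $\pi^!(M)$ is locally free of finite rank over $\Oc_X$, so double duality is the identity.
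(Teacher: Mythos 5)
Your proposal is correct and takes essentially the same approach as the paper: apply Proposition~\ref{dual-con} twice and use that $\pi^!$ commutes with $\Oc$-linear duality for locally free modules since $\pi$ is flat, so $\pi^+(M)=\Dbb_X\pi^!\Dbb_Y(M)=(\pi^!(M^*))^*=\pi^!(M)$. The paper's one-line proof leaves the identification $\pi^!(M^*)\cong(\pi^!(M))^*$ as an assertion ("the last equality follows since $M$ is locally free over $\Oc_Y$ and $X/Y$ is flat"), whereas you supply the explicit $\Oc_X$-linear isomorphism and note that the $\Dc_X$-module structures on both sides are pinned down by compatibility with the pulled-back evaluation pairing; that extra detail is the content the paper compresses.
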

In fact, it suffices that $\pi$ be non-characteristic with respect to $M$ for this identity to hold
\cite{hotta-takeuchi-tanisaki}*{Th 2.7.1}. 
\begin{proof}
  We apply \Proposition{\ref{dual-con}} and notice that $\pi^!(M^*)$ is also a
  connection,
  \begin{displaymath}
    \pi^+(M) =   \Dbb_X  \pi^!\Dbb_Y M = (\pi^!(M^*))^* = \pi^!(M), 
\end{displaymath}
where the last equality follows since $M$ is locally free over $\Oc_Y$ and $X/Y$ is flat.
\end{proof}
\begin{proposition}\label{etale-finite}
  Let $\pi: X\to Y$ be a finite morphism of smooth varieties and $N$
  be a holonomic $\Dc_X$-module.
\begin{enumerate}
  \item If $\pi_+(N)=0$, then $N=0$. 
  \item If $\pi_+(N)$ is a connection, then $N$ is a connection.
  \item The following are equivalent for a non-zero connection $N$:
      \begin{enumerate}
      \item $\pi_+(N)$ is a connection.
      \item $\pi$ is étale.
      \end{enumerate}
  \end{enumerate}
\end{proposition}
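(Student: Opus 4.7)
For (1), since $\pi_+$ is exact (Proposition~\ref{flat-module}), it suffices to show $\pi_+$ is faithful on holonomic modules. I would argue by reducing to a simple subquotient $N'$ with irreducible support $Z\subset X$, and then inspecting the stalk at the generic point $\eta$ of $\pi(Z)\subset Y$: this stalk is $N'_{\eta_Z}$ viewed as a $\Dc$-module over $K(\pi(Z))$ by restriction of scalars along the finite field extension $K(\pi(Z))\to K(Z)$, which is faithful on nonzero modules. Alternatively, one can phrase this as the general support identity $\supp \pi_+(N) = \pi(\supp N)$ for a finite morphism.

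For (2), I would use the adjunction $(\pi_+,\pi^!)$ valid for proper morphisms in the $\Dc$-module formalism, which descends to an adjunction of abelian categories since both functors are exact for finite $\pi$ (Proposition~\ref{flat-module}). The unit $\eta_N\colon N\to \pi^!\pi_+(N)$ is injective: the triangle identity $\epsilon_{\pi_+(N)}\circ \pi_+(\eta_N)=\id$ makes $\pi_+(\eta_N)$ a split monomorphism, so $\pi_+(\ker\eta_N)=0$, and part~(1) then forces $\ker\eta_N=0$. For a finite map $\pi^!$ sends connections to connections (the underlying $\Oc$-module of $\pi^!(K)$ is $\pi^*(K)$), so when $\pi_+(N)$ is a connection, $\pi^!\pi_+(N)$ is also a connection and $N$ embeds into it. Coherence over $\Oc_X$ descends to $N$, which combined with its $\Dc_X$-structure makes $N$ a connection.

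For (3), the direction (b)$\Rightarrow$(a) is direct: for étale $\pi$, $\omega_{X/Y}\cong \Oc_X$ and $\Dc_{Y\leftarrow X}\cong \Dc_X$, giving $\pi_+(N)=\pi_*(N)$, which is $\Oc_Y$-coherent since $\pi$ is finite. For (a)$\Rightarrow$(b), I would argue by contradiction. Assume $B_\pi\neq\emptyset$ and localize at the generic point $x$ of an irreducible component of $B_\pi$ with $y=\pi(x)$; the induced inclusion $\Oc_{Y,y}\hookrightarrow \Oc_{X,x}$ of discrete valuation rings has ramification index $e\geq 2$ and $J_\pi=u\cdot t_x^{e-1}$ for a unit $u$, and the lifted derivative satisfies $\partial_{t_y}\equiv (1/J_\pi)\partial_{t_x}$ up to contributions from other parameters. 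Using (\ref{dir-im}) and Example~\ref{right-left}, $\pi_+(N)_y$ contains the iterates $\partial_{t_y}^k(J_\pi^{-1}n)$, $k\geq 0$, for any nonzero $n\in N_x$, and a leading-order calculation shows that the $k$-th iterate carries a $t_x$-pole of order $(k+1)e-1$. Since distinct negative powers of $t_x$ are $\Oc_{Y,y}$-linearly independent, the $\Oc_{Y,y}$-span of these iterates is not finitely generated, contradicting the hypothesis that $\pi_+(N)$ is a connection.

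The main obstacle is precisely this leading-term computation in (3)(a)$\Rightarrow$(b): one must verify that no cancellation coming from the integrable connection on $N$ eliminates the top pole. This is handled by induction on $k$, tracking that the leading contribution to $\partial_{t_y}^k(J_\pi^{-1}n)$ arises from iterated differentiation of $J_\pi^{-1}$ rather than of $n$, producing a nonzero rational coefficient times $n$ in the top pole; since $n$ is a nonzero element of the free $\Oc_{X,x}$-module $N_x$ this leading term cannot vanish.
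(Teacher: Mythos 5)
Your proof is essentially correct but follows a genuinely different route from the paper. The paper proves all three parts uniformly from the single exact sequence derived from (\ref{eq:princiapal}): tensoring
$0 \to \Dc_B \to \Dc_{B\leftarrow A} \to \omega_{B/A}\otimes_B \lim_n Ext^1_B(\Pc^n_{B/A},B)\to 0$
with $N$ gives $0 \to \pi_*(N)\to \pi_+(N)\to T \to 0$ with $T$ a torsion $\Oc_Y$-module supported on the discriminant. Then (1) is immediate since $\pi_*$ is faithful, (2) follows because a submodule of a coherent module over a noetherian ring is coherent, and (3)(a)$\Rightarrow$(b) follows because the quotient $T$ is a nonzero torsion module whenever $\pi$ is ramified, but a quotient of a connection would have to be a connection. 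You instead prove (1) by a support/faithfulness argument at the generic point of $\pi(\supp N')$, (2) by exploiting the unit of the adjunction $(\pi_+,\pi^!)$ together with exactness and part (1), and (3)(a)$\Rightarrow$(b) by a direct pole-order estimate at the generic point of a component of $B_\pi$. Your (1) and (2) are clean and arguably more transparent than the paper's; in particular the adjunction argument for (2) is a nice alternative. Your (3)(a)$\Rightarrow$(b) is more computational: the induction on the leading $t_x$-pole of $\partial_{t_y}^k(J_\pi^{-1}n)$ does close up (each application of $\partial_{t_y}$ raises the pole by exactly $e$ with a nonzero rational-unit coefficient, and terms coming from $\partial_{t_y}(n)$ or from differentiating unit factors contribute strictly lower pole order, so no cancellation), but you should also record the final step of the contradiction explicitly: because elements of $\Oc_{Y,y}$ have $t_x$-valuation in $e\Zb_{\geq 0}$, the element $\partial_{t_y}^{k+1}(J_\pi^{-1}n)$, with pole $(k+2)e-1$, cannot lie in the $\Oc_{Y,y}$-span of the earlier iterates, which have poles $\leq (k+1)e-1$; this is what actually violates $\Oc_{Y,y}$-coherence. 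With that spelled out your argument is complete and, interestingly, supplies a concrete verification of the nonvanishing of the torsion quotient $T$ that the paper's proof asserts without detail. The tradeoff is that the paper's exact-sequence argument is shorter and handles all three parts at once, whereas your approach isolates the ramification data but requires the local leading-term analysis.
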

\begin{proof}
  We can assume that $X$ and $Y$ are affine, so that we have a finite
  map $\pi: A\to B$ of smooth $k$-algebras. The sequence
  (\ref{eq:princiapal}) also gives the exact sequence
  $0 \to \Dc_B \to \Dc_{B \leftarrow A}\to \omega_{B/A}\otimes_B
  \lim_n Ext^1_{B}(\Pc^n_{B/A}, B)\to 0 $, so that tensoring with the
  $\Dc_B$-module $N$ results in the exact sequence
  \begin{displaymath}
0 \to     \pi_*(N) \to \pi_+(N)=\Dc_{B\leftarrow A}\otimes_{\Dc_B}N  \to
\omega_{B/A}\otimes_B \lim_n Ext^1_{B}(\Pc^n_{B/A},
    B)\otimes_{\Dc_B} N \to 0.
  \end{displaymath}
  This implies (1). If the middle term is finite over $A$, then $N$ is
  a finite $B$-module, giving (2). (3): If $\pi$ is étale, then
  $\Dc_{A\to B} \cong \Dc_B$, which gives that $\pi_+(N)$ is a
  connection, proving (a). If $\pi$ is not étale, then the right hand
  side of the above exact sequence is a non-zero torsion $A$-module,
  hence is not a connection, and, since the image of a morphism from a
  connection to a $\Dc$-module again is a connection, $\pi_+(N)$
  cannot be a connection. This proves (b).
\end{proof}
\subsubsection{Duality and finite morphisms}

\begin{theorem}\label{adj-triple-th} Let $\pi: X\to Y$ be a finite map
  of smooth varieties. Then $\Dbb_Y \pi_+ = \pi_+ \Dbb_X$ and we have the adjoint triple $(\pi^+,
  \pi_+, \pi^!)$.  If moreover $\pi$ is smooth, then $\pi^! = \pi^+$ and we have the adjoint triple
  of functors $(\pi_+, \pi^+, \pi_+)$, i.e. $\pi_+$ is both the left and right adjoint functor of
  $\pi^+$.
\end{theorem}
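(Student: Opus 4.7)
The plan has two main parts: establishing the duality interchange $\Dbb_Y \pi_+ = \pi_+ \Dbb_X$, and then extracting the adjoint triple from it formally.

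For the duality interchange I would work directly with the explicit bimodule formulas, which is possible because $\pi$ is finite. In particular $\pi_*$ is exact and both $\Dc_{X\to Y}$ and $\Dc_{Y\leftarrow X}$ are flat over $\Dc_X$ by \Proposition{\ref{flat-module}}, so no higher derived pushforwards intervene. Starting from $\pi_+(M) = \pi_*(\Dc_{Y\leftarrow X}\otimes_{\Dc_X} M)$, the strategy is to establish an isomorphism
\[
RHom_{\Dc_Y}(\pi_+(M), \Dc_Y) \cong \pi_* RHom_{\Dc_X}(M, \pi^!(\Dc_Y))
\]
by invoking the bimodule identifications of (BM) together with the canonical isomorphism $\eta$ of \Proposition{\ref{dual-iso}}, which rewrites $\Dc_{Y\leftarrow X}$ as $\omega_X\otimes_{\Oc_X}\Dc_{X\to Y}\otimes \pi^*\omega_Y^{-1}$. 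Collecting the twists by $\omega_X^{-1}$ and $\omega_Y^{-1}$ coming from the definition of $\Dbb$ then identifies $\pi_+\Dbb_X M$ and $\Dbb_Y \pi_+ M$. A conceptually cleaner alternative is to factor $\pi$ as a closed immersion followed by a smooth projection, as in the proof of \Proposition{\ref{flat-module}}, and verify the compatibility separately: for closed immersions it follows from Kashiwara's equivalence and the explicit formula for $i_+$, and for smooth projections from the de~Rham complex description of the direct image.

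Granted the interchange, the adjoint triple is formal. The adjunction $(\pi_+, \pi^!)$ for proper morphisms is standard, coming from the fact that $\pi_+ = \pi_!$ for proper $\pi$ in the six-functor formalism. Applying $\Dbb$ on both arguments yields the second adjunction: for holonomic $M$ and $N$,
\begin{align*}
\Hom_{\Dc_Y}(N, \pi_+ M)
&\cong \Hom_{\Dc_Y}(\Dbb_Y\pi_+ M, \Dbb_Y N) \cong \Hom_{\Dc_Y}(\pi_+ \Dbb_X M, \Dbb_Y N)\\
&\cong \Hom_{\Dc_X}(\Dbb_X M, \pi^! \Dbb_Y N) \cong \Hom_{\Dc_X}(\pi^+ N, M),
\end{align*}
which exhibits $\pi^+ \dashv \pi_+$ and hence the triple $(\pi^+, \pi_+, \pi^!)$.

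For the smooth (hence étale, since also finite) case, one has $\omega_{X/Y}\cong\Oc_X$ and $\pi$ is non-characteristic for every coherent $\Dc_Y$-module. The argument of \Lemma{\ref{con-lemma}} then extends from connections to arbitrary holonomic modules, giving $\pi^!=\pi^+$ throughout $\hol(\Dc_Y)$. Substituting into $(\pi^+, \pi_+, \pi^!)$ collapses it to $\pi^+ \dashv \pi_+ \dashv \pi^+$, which is the triple $(\pi_+, \pi^+, \pi_+)$ asserted in the statement. The main obstacle is the bookkeeping in the duality interchange: juggling left versus right module structures together with the twists by $\omega_{X/Y}$ is delicate, and it is here that the isomorphism $\eta$ is essential, making the necessary identifications canonical rather than merely existential.
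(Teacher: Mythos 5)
Your main line of attack coincides with the paper's: for the duality interchange you compute directly with the bimodule identifications in \thetag{BM}, use the flatness of $\Dc_{X\to Y}$ and $\Dc_{Y\leftarrow X}$ from \Proposition{\ref{flat-module}}, and pass through $\pi_*RHom_{\Dc_X}(M,\pi^!(\Dc_Y))$ — this is exactly what the paper's chain of isomorphisms \thetag{1}--\thetag{7} produces, with \thetag{7} being precisely the place where the isomorphism $\eta$ of \Proposition{\ref{dual-iso}} is indispensable, as you correctly flag. The formal deduction of $\pi^+\dashv\pi_+$ from the interchange is also the same.

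There are two points where you diverge in a way worth noting. First, you invoke the "standard" proper-pushforward adjunction $\pi_+\dashv\pi^!$ from the six-functor formalism, whereas the paper derives it from scratch via the bimodule isomorphism \thetag{*} and flatness — and one of the stated aims of this section is to give accessible, self-contained proofs for finite $\pi$ that avoid appealing to the heavier general machinery. So citing the general result is logically fine but goes against the grain of the exposition. Second, your claim that "the argument of \Lemma{\ref{con-lemma}} extends from connections to arbitrary holonomic modules" is slightly off: that proof hinges on \Proposition{\ref{dual-con}}, i.e.\ $\Dbb_X(M)\cong M^*$, which is specific to $\Oc$-coherent $M$. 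What you actually want — and what makes the claim true — is the non-characteristicity criterion (the paper's remark after \Lemma{\ref{con-lemma}}, citing \cite{hotta-takeuchi-tanisaki}*{Th.\ 2.7.1}): a finite smooth map is étale, hence non-characteristic for every coherent $\Dc_Y$-module, and this gives $\pi^!=\pi^+$ on all of $\hol(\Dc_Y)$ without passing through the $\Dbb_X M = M^*$ identity.
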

There exists an isomorphism of functors $\pi_+\Dbb_X = \Dbb_Y \pi_+$
for any proper map $\pi$ of smooth varieties, but since the proof is
fairly involved (see
\citelist{\cite{borel:Dmod}*{Prop. 9.6}\cite{hotta-takeuchi-tanisaki}*{Th. 2.7.2}
  \cite{bjork:analD}*{Th. 2.11.3}}) we give a more accessible proof
for a finite map, based on \Proposition{\ref{flat-module}}, paying
special attention to the role of \Proposition{\ref{dual-iso}}. Proofs of
the other assertions in \Theorem{\ref{adj-triple-th}} can also be
found in [loc. cit].

\begin{lemma}\label{lemma-to-dual}
  Let $M_1$ and $M_2$ be  left $\Dc_B$-modules.
  Then  
$    (    M_1\otimes_B \omega_B )\otimes_{\Dc_B} M_2 = (\omega_B\otimes_B
    M_2)\otimes_{\Dc_B} M_1$ as $B$-modules.
\end{lemma}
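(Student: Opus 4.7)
The plan is to define the obvious ``swap'' map
\[
\phi: (M_1\otimes_B \omega_B)\otimes_{\Dc_B} M_2 \longrightarrow (\omega_B\otimes_B M_2)\otimes_{\Dc_B} M_1,\qquad (m_1\otimes \xi)\otimes m_2\mapsto (\xi\otimes m_2)\otimes m_1,
\]
and to show that it is a well-defined $B$-linear isomorphism whose inverse is the symmetric swap. First I would lift $\phi$ to the level of the free $k$-module on triples $m_1\otimes \xi\otimes m_2\in M_1\otimes_k\omega_B\otimes_k M_2$ by the rule $m_1\otimes \xi\otimes m_2\mapsto \xi\otimes m_2\otimes m_1$, then verify that it descends to the quotients defining the two sides.

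The $B$-bilinearity relations are harmless: $(m_1 b)\otimes\xi = m_1\otimes (b\xi)$ on the source and $(\xi b)\otimes m_2=\xi\otimes (bm_2)$ on the target are matched using that the left and right $B$-actions on $\omega_B$ coincide (because $B$ is commutative). The substantive point is compatibility with the $\Dc_B$-tensor. Here one uses the standard side-changing formulas
\[
(m_1\otimes \xi)\cdot \partial \;=\; m_1\otimes (\xi\cdot\partial) \;-\; (\partial m_1)\otimes \xi,\qquad (\xi\otimes m_2)\cdot\partial \;=\; (\xi\cdot\partial)\otimes m_2 \;-\; \xi\otimes (\partial m_2),
\]
which define the right $\Dc_B$-module structures on $M_1\otimes_B\omega_B$ and $\omega_B\otimes_B M_2$. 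Since $\Dc_B$ is generated by $B$ and $T_B$, it suffices to test the relation $(u\cdot P)\otimes v = u\otimes P v$ for $P=b\in B$ (trivial) and $P=\partial\in T_B$. For $P=\partial$ the relation on the source becomes
\[
\bigl(-(\partial m_1)\otimes\xi + m_1\otimes \xi\partial\bigr)\otimes m_2 \;-\; (m_1\otimes\xi)\otimes \partial m_2 \;=\;0,
\]
which under $\phi$ goes to
\[
-(\xi\otimes m_2)\otimes \partial m_1 + (\xi\partial\otimes m_2)\otimes m_1 - (\xi\otimes \partial m_2)\otimes m_1 \;=\;0,
\]
and this is precisely the analogous relation $(\xi\otimes m_2)\partial\otimes m_1 - (\xi\otimes m_2)\otimes \partial m_1 = 0$ on the target after expanding $(\xi\otimes m_2)\cdot\partial$. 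So $\phi$ is well-defined, and obviously $B$-linear (the $B$-module structure on either side coming from the left $B$-action on $\omega_B$).

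Finally, the symmetric map $(\xi\otimes m_2)\otimes m_1 \mapsto (m_1\otimes\xi)\otimes m_2$, checked by the same computation with the roles of $M_1$ and $M_2$ interchanged, is a two-sided inverse because on representatives swapping twice is the identity. I expect no genuine obstacle: the only subtlety is keeping the side-changing signs straight and noticing that every $\Dc_B$-relation to be verified reduces to the single case $P=\partial\in T_B$ displayed above.
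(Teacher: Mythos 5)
Your proof is correct and takes essentially the same approach as the paper: define the swap map $\phi$, describe the right $\Dc_B$-module structures via the side-changing formula for derivations, and verify that $\phi$ respects the single relation for $\partial \in T_B$ (the $B$-case being the trivial bilinearity), using that $\Dc_B$ is generated by $B$ and $T_B$. Your write-up is somewhat more explicit about lifting to the free $k$-module on triples, but the underlying computation is the one the paper calls ``a simple computation.''
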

\begin{proof}
  We know that $M_1\otimes_B \omega_B$ and $\omega_B\otimes_B M_2$ are right $\Dc_B$-modules, where
  the $\Dc_B$-action is determined by the action of derivations $\delta $ of $B$, according to
  $(m_1\otimes \eta) \cdot \delta = (- \delta \cdot m_1)\otimes \eta + m_1\otimes (\eta \cdot
  \delta)$
  where $\eta \cdot \delta$ is the negative of the Lie derivative of $\eta$ along
  $\delta$. Similarly,
  $(\eta\otimes m_2) \cdot \delta =\eta \otimes (- \delta \cdot m_2) + (\eta \cdot \delta)\otimes
  m_2$.
  Therefore both sides makes sense.  The map
  $\phi: ( M_1\otimes_B \omega_B )\otimes_{\Dc_B} M_2 \to (\omega_B\otimes_B M_2)\otimes_{\Dc_B}
  M_1$,
  $\phi( m_1 \otimes \eta \otimes m_2) = \eta \otimes m_2 \otimes m_1$ is well-defined, since by the
  above description a simple computation shows that
  $\phi (((m_1\otimes \eta) \cdot \delta) \otimes m_2) = \phi((m_1\otimes \eta)\otimes (\delta\cdot
  m_2)) $. Clearly, $\phi$ is an isomorphism.
\end{proof}
\begin{pfof}{\Theorem{\ref{adj-triple-th}}}
  $\Dbb_Y \pi_+ = \pi_+ \Dbb_X $: Since $\pi_*(\Oc_X)$ is locally free over $\Oc_Y$, and also in
  order to see the main steps clearly we work instead with a finite homomorphism $A\to B$ of smooth
  $k$-algebras, so that $B$ is free over $A$.  First note that $ \Dc_{A \leftarrow B} $ is a free
left $\Dc_A$-module, so that 
\begin{align*}
  RHom_{\Dc_A}( \Dc_{B \leftarrow A} , \Dc_A) &=\Hom_{\Dc_A}(\Dc_A \otimes_A \omega_{B/A}, \Dc_A) =
 \Hom_A(\omega_{B/A}, A)\otimes_A \Dc_A \\
  & = B \otimes_A \Dc_A=  \Dc_{A\to B}, \tag{*}
\end{align*}
where the  equalities are  canonical isomorphisms of $(\Dc_B, \Dc_A)$-bimodules. 
Also, since $\Dc_{A \to B}$ is a free right $\Dc_A$-module, we have  
\begin{align*}
    RHom_{\Dc_A}( \Dc_{A \to B} , \Dc_A) &=\Hom_{\Dc_A}( B\otimes_A\Dc_A, \Dc_A) =
 \Hom_A(B, A)\otimes_A \Dc_A  & = \\
  & = \omega_{B/A}\otimes_A\Dc_A= \Dc_{B \leftarrow A}\tag{*'},
\end{align*}
where the  equalities are canonical isomorphisms of $(\Dc_A,
\Dc_B)$-bimodules. 
If $M$ is a holonomic $\Dc_B$-module  we have canonical isomorphisms of complexes of  $\Dc_A$-modules  (as detailed below)
  \begin{align*}
    & \Dbb_A (\pi_+(M))[-d_A]= RHom_{\Dc_A}( \Dc_{A \leftarrow B} \otimes_{\Dc_B} M, \Dc_A)\otimes_A
    \omega_A^{-1}\\ & = RHom_{\Dc_A}( \Dc_{A \leftarrow B} \otimes^L_{\Dc_B} M, \Dc_A)\otimes
    \omega_A^{-1}\tag{1}
    \\
    & = RHom_{\Dc_B}(M, RHom_{\Dc_A} ( \Dc_{A \leftarrow B}, \Dc_A))\otimes_A \omega_A^{-1} \tag{2}\\
    & = RHom_{\Dc_B}(M,  \Dc_{A\rightarrow B})\otimes_A \omega_A^{-1}  \tag{3}\\
    & = RHom_{\Dc_B}(M, \Dc_B\otimes^L_{\Dc_B}\Dc_{A\rightarrow B} )\otimes_A
    \omega_A^{-1} \tag{4} \\
& = RHom_{\Dc_B}(M, \Dc_B )\otimes^L_{\Dc_B}  \Dc_{A\rightarrow B} \otimes_A
    \omega_A^{-1} \\ &= (\Dbb_B(M)[-d_B]\otimes_B \omega_B) \otimes_{\Dc_B}  \Dc_{A\rightarrow B} \otimes_A
    \omega_A^{-1}\tag{5} \\
&= (\omega_B\otimes_B  \Dc_{A\rightarrow B} \otimes_A
    \omega_A^{-1})\otimes_{\Dc_B} (\Dbb_B(M)[-d_B])  \tag{6} \\ 
&= \Dc_{A\leftarrow B} \otimes_{\Dc_B}\Dbb_B(M)[-d_B]  = \pi_+(\Dbb_B(M)[-d_B]).\tag{7}
  \end{align*}
  \thetag{1,4,5} follow from \Proposition{\ref{flat-module}};
  \thetag{2} is ordinary adjunction in the derived sense; \thetag{3}
  follows from \thetag{*}; \thetag{6} follows from
  \Lemma{\ref{lemma-to-dual}}, noting that
  $\omega_B\otimes_B \Dc_{A\rightarrow B} \otimes_A \omega_A^{-1}$ is
  a $(\Dc_A, \Dc_B)$-bimodule; \thetag{7} follows from
  \thetag{BM} in \Section{\ref{direct-inverse-sec}}. Finally, we note
  that $d_A= d_B$.

  $(\pi_+, \pi^+)$ is an adjoint pair of functors:  Let $D(\Dc_B)$ and $D(\Dc_A)$ be the derived
  categories of bounded complexes of  $\Dc_B$ and $\Dc_A$-modules of finite type, respectively.
Then we have (as detailed below)
  \begin{align*}
    &    \Hom_{D(\Dc_B)}(M, \pi^!(N)) = \Hom_{D(\Dc_B)} (M, \Dc_{A\to B}\otimes_{\Dc_A} N ) \\
    & = \Hom_{D(\Dc_B)} ( M,RHom_{\Dc_A}(\Dc_{B \leftarrow A}, \Dc_A)\otimes ^L_{\Dc_A} N) \tag{1}
    \\
    & = \Hom_{D(\Dc_A)} (M, RHom_{\Dc_A}(\Dc_{B\leftarrow A}, N))\\
    & = \Hom_{D(\Dc_A)} ( \Dc_{B\leftarrow A} \otimes_{\Dc_B}^L M,N) \tag{2}
    \\
    & = \Hom_{D(\Dc_A)} (\Dc_{B\leftarrow A} \otimes_{\Dc_B} M, N )\tag{3}\\
    & = \Hom_{D(\Dc_A)} (\pi_+(M), N).
  \end{align*}
  \thetag{1} follows since $ \Dc_{A\to B}$ is free over $\Dc_A$ and from (*),
  \thetag{2} is ordinary adjunction in the derived sense,  and \thetag{3} follows from \Proposition{\ref{flat-module}}.

  $(\pi^+, \pi_+)$ is an adjoint pair: This follows formally from the
  already proven $\Dbb_Y \pi_+\Dbb_X = \pi_+ $.

  For a proof that $\Dbb_X \pi^!= \pi^!\Dbb_Y $ when $\pi$ is smooth, see
  \citelist{\cite{borel:Dmod}*{VII, Cor. 9.14}
    \cite{hotta-takeuchi-tanisaki}*{Th. 2.1} }
\end{pfof}

\begin{remark}
  Notice that in the proof of $\pi_+ \Dbb = \Dbb \pi_+$ we need
  \Proposition{\ref{dual-iso}} at the step \thetag{7}, in stark contrast
  to the situation for right $\Dc$-modules, where it is not needed;
  see \Section{\ref{d-module-motive}}.
\end{remark}

\subsection{The trace}\label{trace-section}
The trace homomorphism for $A$-modules relative to a finite map
$A\to B$ was discussed in \Section{\ref{relative-can}} for the
$\Dc_A$-module $A$. For general $\Dc_A$-modules $M$ the trace morphism
is
\begin{eqnarray*}\label{trace-morphism}
\Tr : \pi_+ \pi^!(M) &=&\Hom_A(B, \Dc_A)
\otimes_{\Dc_B}B\otimes_AM   \to M, \\    \lambda  \otimes
    b \otimes m &\mapsto &   \lambda(b)\cdot m.
  \end{eqnarray*}
  Since $B$ is locally free over $A$ it follows that $\Tr$ is
  surjective.

The traces in the category of $\Oc_Y$- and $\Dc_Y$-modules are related in a commutative diagram of
$\Dc_Y^\pi$-modules:
  \begin{displaymath}
   \xymatrix{
     \pi_*(\pi^{!'}(M)) \ar[r] \ar[d]^{\Tr}  
     & \pi_+\pi^!(M)\ar[d]^\Tr \\
     M \ar[r]^= & M.
   }
  \end{displaymath}
  where $\pi^{!'}$ is the right adjoint of $\pi_*$ in the category of $\Oc$-modules.

  \begin{proposition}\label{split-conn}
    Let $X \to Y$ be a finite morphism and $M\in \Con (\Dc_Y)$.  The composition 
\begin{displaymath}
  M \to \pi_+ \pi^+ (M) \cong \pi_+\pi^!(M) \to M
\end{displaymath}
is given by  $ m\mapsto \tr_{X/Y}(1) m$. Hence  $M \to \pi_+\pi^+(M)$ is a split homomorphism.
  \end{proposition}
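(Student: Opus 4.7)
The plan is to work locally in the affine setting $\pi : A \to B$ and trace the unit and counit through their explicit descriptions. Since $M$ is a connection, Lemma \ref{con-lemma} gives $\pi^+(M) = \pi^!(M) = B \otimes_A M$, so
\begin{displaymath}
  \pi_+\pi^!(M) = \Hom_A(B,\Dc_A) \otimes_{\Dc_B} (B \otimes_A M),
\end{displaymath}
and the explicit formula $\epsilon_2(\lambda \otimes b \otimes m) = \lambda(b) m$ of Section \ref{trace-section} applied to $\lambda = \tr_\pi$ and $b = 1$ gives $\tr_\pi(1) m$. It thus suffices to exhibit the unit $\eta$ of the adjunction $(\pi^+, \pi_+)$ from Theorem \ref{adj-triple-th} as the morphism $\alpha(m) = \tr_\pi \otimes 1 \otimes m$, where $\tr_\pi \in \Hom_A(B,A) \subset \Hom_A(B, \Dc_A)$ is the canonical trace section of Proposition \ref{dual-iso}.

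The $\Dc_Y$-linearity of the candidate morphism $\alpha$ is verified by a direct computation using the $\Dc^\pi_A$-linearity of $\tr_\pi$ from Proposition \ref{dual-iso} and the right $\Dc_B$-action formula of Section \ref{direct-inverse-sec}. For a liftable derivation $\partial \in T^\pi_A$ with lift $\partial_B \in T_B$, both sides of the asserted identity $\partial \cdot \tr_\pi = \tr_\pi \cdot \partial_B$ in $\Hom_A(B, \Dc_A)$ send $b \mapsto \tr_\pi(b)\partial + \tr_\pi(\partial_B b)$, which combined with $\partial_B(1 \otimes m) = 1 \otimes \partial m$ in $B \otimes_A M$ yields
\begin{displaymath}
  \partial \cdot \alpha(m) = (\tr_\pi \cdot \partial_B) \otimes 1 \otimes m = \tr_\pi \otimes \partial_B(1\otimes m) = \alpha(\partial m).
\end{displaymath}
Since liftable derivations span $T_Y$ over $\Oc_Y$ on the étale locus $Y_0$, this establishes $\Dc_{Y_0}$-linearity; global $\Dc_Y$-linearity then follows because $\alpha$ is already a morphism of coherent $\Oc_Y$-modules and the canonical section $\tr_\pi \otimes 1$ lies in the torsion-free $\Oc_Y$-submodule $\pi_*(\omega_{X/Y}) \subset \pi_+(\Oc_X)$, so that any failure of $\Dc_Y$-linearity supported on the ramification divisor would contradict agreement on $Y_0$.

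The identification $\alpha = \eta$ proceeds by naturality, reducing to $M = \Oc_Y$ and verifying that the horizontal section $\tr_\pi \otimes 1 \in \pi_+(\Oc_X)$ corresponds to $\id_{\Oc_X}$ under the adjunction bijection $\End_{\Dc_X}(\Oc_X) \cong \Hom_{\Dc_Y}(\Oc_Y, \pi_+(\Oc_X))$; one traces this through the chain of isomorphisms in the proof of Theorem \ref{adj-triple-th}. The splitting then follows at once, since $\tr_\pi(1) = [B:A]$ is a positive integer on each connected component of $Y$ and hence a unit in $k \subset \Oc_Y$, so that $\tr_\pi(1)^{-1}\epsilon_2$ provides a $\Dc_Y$-linear left inverse to $\eta$. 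The main obstacle will be this last identification $\alpha = \eta$, as it requires careful bookkeeping through the adjunction isomorphisms in Theorem \ref{adj-triple-th} rather than following from a triangle identity: the composition under consideration pairs the unit of $(\pi^+,\pi_+)$ with the counit of the different adjunction $(\pi_+, \pi^!)$.
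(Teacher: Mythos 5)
Your proposal is correct and follows essentially the same route as the paper's proof: reduce to the affine case, identify $\pi^+(M)$ with $\pi^!(M)$ via Lemma \ref{con-lemma}, exhibit the unit of $(\pi^+,\pi_+)$ as $m \mapsto \tr_{B/A}\otimes 1\otimes m$, and compose with the trace to get $\tr_{B/A}(1)\,m$. The extra work you supply — the explicit check of $\Dc_Y$-linearity of the candidate unit map via the identity $\partial\cdot\tr_\pi = \tr_\pi\cdot\partial_B$, and your closing observation that the displayed composition pairs the unit of $(\pi^+,\pi_+)$ with the counit of the distinct adjunction $(\pi_+,\pi^!)$ so no triangle identity applies — fills in steps the paper asserts without justification, and is correct.
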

  \begin{proof} The isomorphism follows from
    \Lemma{\ref{con-lemma}}. For the remaining part we can assume that
    $X$ and $Y$ are affine. The map $\psi :M \to \pi_+ \pi^+ (M) $ is
    $m\mapsto \tr_{B/A}\otimes 1\otimes m \in\Hom_A(B,
    \Dc_A)\otimes_{\Dc_B} B \otimes_{A} M$,
    so that composing with $\Tr : \pi_+\pi^!(M) \to M$, we get the map
    $\Tr \circ \psi ( m) =\tr_{B/A}(1)m$.
  \end{proof}

  \subsection{A \texorpdfstring{$\Dc$}\  -module motivation for the
    isomorphism \texorpdfstring{$\eta$}{eta} }\label{d-module-motive}
The existence of the isomorphism (1) in \Proposition{\ref{dual-iso}} seems
coincidental when viewing it from within the category of
$\Oc$-modules. Taking into account the action of the ring of
differential operators we will now construct this isomorphism in a
natural way.

Put $\Dbb_X' (M) = RHom_{\Dc_X}(M, \Dc_X [d_X])$, so that $\Dbb_X(M) =
\Dbb'_X(M)\otimes_{\Oc_X}\omega_X^{-1} $.  This defines functors
\begin{displaymath}
  \Dbb_X' : D^b_{\hol,\lef} (\Dc_X) \to D^b_{\hol, \rig}(\Dc_X), \quad
  \Dbb_X': D^b_{\hol, \rig}(\Dc_X) \to D^b_{\hol,\lef} (\Dc_X),
\end{displaymath}
where $D^b_{\hol,*} (\Dc_X)$ denotes the category of bounded complexes of left
(right) coherent $\Dc_X$-modules with holonomic homology.  Then $\Dbb'_X$ is an
equivalence of categories, $\Dbb'_X \circ \Dbb'_X = \id $, and $\Dbb'_X(\Oc_X) =
\omega_X$. Let
\begin{displaymath}\tag{P}
  \eta' : Hom_{\Dc_Y, \lef}(\pi_+(\Oc_X), \Oc_Y) \cong  Hom_{\Dc_Y,
    \rig}(\Dbb'_Y (\Oc_Y), \Dbb'_Y (\pi_+(\Oc_X))
\end{displaymath}
be the isomorphism that arises from applying  $\Dbb'_Y$. 

The direct image of a right $\Dc_X$-module $N$ is $\pi_+(N)=
\pi_*(N\otimes_{\Dc_X}\Dc_{X\to Y})$. We have then for a left $\Dc_X$-module
\begin{displaymath}\tag{$*$}
  \Dbb'_Y(\pi_+(M)) = \pi_+(\Dbb'_X(M)).
\end{displaymath}
The proof can be read off from the proof that $\Dbb_A\pi_+ =
\pi_+\Dbb_B$ in \Theorem{\ref{adj-triple-th}}, simply be erasing all the occurrences of $\omega_A$
and $\omega_B$, but to see clearly that we do not need the fact that $\omega_{B/A}$ is isomorphic to
$\omega_A^{-1}\otimes_A\omega_B$ we again write down the needed steps and refer to the previous
proof for explanations:
  \begin{align*}
    & \Dbb'_A (\pi_+(M))[-d_A]= RHom_{\Dc_A}( \Dc_{A \leftarrow B} \otimes_{\Dc_B} M, \Dc_A) = RHom_{\Dc_A}( \Dc_{A \leftarrow B} \otimes^L_{\Dc_B} M, \Dc_A)
\\ & = RHom_{\Dc_B}(M, RHom_{\Dc_A} ( \Dc_{A \leftarrow B}, \Dc_A)) = RHom_{\Dc_B}(M,  \Dc_{A\rightarrow B}) \\
    & = RHom_{\Dc_B}(M, \Dc_B\otimes^L_{\Dc_B}\Dc_{A\rightarrow B} ) = RHom_{\Dc_B}(M, \Dc_B
    )\otimes^L_{\Dc_B}  \Dc_{A\rightarrow B}\\ &  =    \Dbb'_B(M)[-d_B]\otimes^L_{\Dc_B}  \Dc_{A\rightarrow B} 
 = \pi_+ (\Dbb'_B(M))[-d_B].
  \end{align*}

  The proof of \Proposition{\ref{dual-iso}} contains two canonical
  homomorphisms, one of left $\Dc(T^\pi_Y)$-modules
  $\tr_\pi: \pi_*(\Oc_X)\to \Oc_Y$ (the trace map) and one of right
  $\Dc(T^\pi_Y)$-modules $\lambda_\pi : \omega_Y \to \pi_*(\omega_X)$
  (the Jacobian map). Here $\tr_\pi$ is a restriction of the trace
  morphism of left $\Dc_Y$-modules
  $\Tr_\pi: \pi_+\pi^!(\Oc_Y)= \pi_+(\Oc_X)\to \Oc_Y$ in the following
  sense
\begin{displaymath}
  \tr_\pi = \Tr_\pi \circ \Theta 
\end{displaymath} 
(see \Proposition{\ref{epinymous}}),  while $\lambda_{\pi}$ can be extended to a
homomorphism of right $\Dc_Y$-modules
\begin{displaymath}
 \hat  \lambda_\pi :  \omega_Y \to \pi_+(\omega_X)= \pi_*(
 \omega_X\otimes_{\Dc_X}\Dc_{X\to Y}),\quad   \nu \mapsto \lambda_\pi(\nu)\otimes 1,
\end{displaymath}
where $\pi_+(\omega_X) $ is the direct image of the right $\Dc_X$-module
$\omega_X$.
\begin{proposition} \label{poin-dual-dual}
We have
\begin{displaymath}
  Hom_{\Dc_Y, \lef}(\pi_+(\Oc_X), \Oc_Y) = k \Tr_\pi, \quad Hom_{\Dc_Y,
    \rig}(\omega_Y, \pi_+(\omega_X))= k \hat \lambda_\pi.
\end{displaymath}
The isomorphism $\eta'$ induces an isomorphism
\begin{displaymath}
\eta':  Hom_{\Dc_Y, \lef}(\pi_+(\Oc_X), \Oc_Y) \cong Hom_{\Dc_Y, \rig}(\omega_Y, \pi_+(\omega_X)),
\end{displaymath}
so that in particular $\eta'(\Tr_\pi) = c \hat \lambda_\pi$, for some
non-zero element $c$ in $k$.  Moreover, $\eta'$ induces the
$\Dc_X(T_Y^\pi)$-linear isomorphism
\begin{displaymath}
  \eta : \omega_{X/Y}\to  \omega_X\otimes_{\Oc_X}\pi^*(\omega_Y^{-1})
\end{displaymath}
as described in \Proposition{\ref{dual-iso}}.
\end{proposition}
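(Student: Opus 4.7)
The plan is to combine three inputs: the duality equivalence $\Dbb'_Y$ applied to the identity $(*)$, the adjunction $(\pi_+,\pi^!)$ of \Theorem{\ref{adj-triple-th}}, and the canonical nature of the constructions $\tr_\pi$ and $\lambda_\pi$ from \Proposition{\ref{dual-iso}}. First I would compute the left Hom space using adjunction: since $\Oc_Y$ is a connection, \Lemma{\ref{con-lemma}} gives $\pi^!(\Oc_Y)=\pi^+(\Oc_Y)=\Oc_X$, so
\[
\Hom_{\Dc_Y,\lef}(\pi_+(\Oc_X),\Oc_Y)=\Hom_{\Dc_X}(\Oc_X,\pi^!(\Oc_Y))=\Hom_{\Dc_X}(\Oc_X,\Oc_X)=k,
\]
and the counit of the adjunction at $\Oc_Y$, which is precisely $\Tr_\pi$, is a generator. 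Applying $\Dbb'_Y$ as in \thetag{P} and then using the isomorphism $\Dbb'_Y(\pi_+(\Oc_X))\cong \pi_+(\Dbb'_X(\Oc_X))=\pi_+(\omega_X)$ coming from $(*)$, together with $\Dbb'_Y(\Oc_Y)=\omega_Y$, yields the claimed isomorphism
\[
\eta':\Hom_{\Dc_Y,\lef}(\pi_+(\Oc_X),\Oc_Y)\;\overset{\sim}{\longrightarrow}\;\Hom_{\Dc_Y,\rig}(\omega_Y,\pi_+(\omega_X)),
\]
and forces the right Hom space to be 1-dimensional as well.

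Next I would argue that $\hat\lambda_\pi$ is nonzero, hence a generator of the right-hand 1-dimensional space. The map $\lambda_\pi\colon \omega_Y\to \pi_*(\omega_X)$ is nonzero (its image generates the subsheaf $J_\pi\omega_X$, by the construction in \Section{\ref{sec:2.1}}), and the natural map $\pi_*(\omega_X)\hookrightarrow \pi_+(\omega_X)=\pi_*(\omega_X\otimes_{\Dc_X}\Dc_{X\to Y})$ is injective, since $\Dc_X\hookrightarrow \Dc_{X\to Y}$ is a split injection of flat $\Dc_X$-modules (compare \Proposition{\ref{flat-module}} and \thetag{BM}). The right $\Dc_Y$-linearity of $\hat\lambda_\pi$ reduces, by restriction to $Y_0$, to $\Dc(T^{\pi}_Y)$-linearity of $\lambda_\pi$ from \Proposition{\ref{dual-iso}}, extended uniquely off the ramification locus because the source and target are torsion-free as $\Oc_Y$-modules. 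Since both Hom spaces are 1-dimensional and $\eta'$ is an isomorphism, we must have $\eta'(\Tr_\pi)=c\hat\lambda_\pi$ for some $c\in k^\times$.

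Finally, to see that $\eta'$ induces the sheaf-level isomorphism $\eta$ of \Proposition{\ref{dual-iso}}, I would localize. On an affine patch $\Spec A\subset Y$ with preimage $\Spec B\subset X$, the 1-dimensional spaces $\Hom_{\Dc_A}(\pi_+(B),A)$ and $\Hom_{\Dc_A,\rig}(\omega_A,\pi_+(\omega_B))$ sit inside $\omega_{B/A}$ and $\omega_B\otimes_B\pi^*(\omega_A^{-1})$, respectively, as the global sections of the subsheaves $\pi^*Hom_{\Dc_Y^\pi}(\pi_*\Oc_X,\Oc_Y)$ and $\pi^*Hom_{\Dc_Y^\pi,\rig}(\omega_Y,\pi_*\omega_X)$ identified in \Proposition{\ref{dual-iso}}. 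Since $\tr_\pi$ and $\lambda_\pi$ are canonical, $T^\pi_Y$-invariant generators of these rank-one $\Oc_X$-modules (after passage to the étale locus $X_0$ where all four sheaves become canonically trivialised by the Tate-trace computation at height-one primes recalled in \Lemma{\ref{tate}}), the identity $\eta'(\Tr_\pi)=c\hat\lambda_\pi$ propagates to an $\Oc_X$-linear isomorphism $\omega_{X/Y}\to \omega_X\otimes\pi^*(\omega_Y^{-1})$ that sends $\tr_\pi$ to $c\lambda_\pi$; rescaling absorbs $c$ and recovers $\eta$. The $\Dc_X(T^\pi_Y)$-linearity is then automatic, because the global Hom spaces transported by $\eta'$ carry the diagonal $T^\pi_Y$-action, under which $\Tr_\pi$ and $\hat\lambda_\pi$ are invariant.

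The main obstacle is this last step, passing from the global Hom-space isomorphism $\eta'$ to the sheaf isomorphism $\eta$ of \Proposition{\ref{dual-iso}}: one has to verify that the derived-category equivalence $\Dbb'_Y$, combined with $(*)$, interacts with the localisation-to-stalks used in the proof of \Proposition{\ref{dual-iso}} in a way compatible with the Tate-trace normalisation. Once this compatibility is established, both the uniqueness (from 1-dimensionality) and the explicit matching $\eta(\tr_\pi)=\lambda_\pi$ together force $\eta'$ to coincide with the sheafified $\eta$ up to the scalar $c$.
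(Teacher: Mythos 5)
Your proof is correct and follows essentially the same route as the paper: both compute $\dim_k\Hom_{\Dc_Y}(\pi_+(\Oc_X),\Oc_Y)=1$ via the adjunction from \Theorem{\ref{adj-triple-th}} and $\pi^!(\Oc_Y)=\Oc_X$, both obtain $\eta'$ from \thetag{P} and \thetag{$*$} with $\Dbb'_X(\Oc_X)=\omega_X$, $\Dbb'_Y(\Oc_Y)=\omega_Y$, and both conclude $\eta'(\Tr_\pi)=c\hat\lambda_\pi$ from one-dimensionality and nonvanishing of the canonical sections, after which the identification with $\eta$ from \Proposition{\ref{dual-iso}} rests on the fact that $\eta$ was defined precisely by $\tr_\pi\mapsto\lambda_\pi$. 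The only minor divergence is that the paper computes the dimension of the right-hand Hom space \emph{directly} by the dual adjunction (``that $\hat\lambda_\pi$ generates the second one is proven similarly''), whereas you derive it \emph{indirectly} by first establishing that $\eta'$ is an isomorphism and then inferring the dimension — both are fine; and you spend considerably more effort verifying that $\hat\lambda_\pi\neq 0$ and that the transition from the global Hom-space isomorphism to the sheaf-level $\eta$ is coherent, a point the paper treats in one sentence because it takes $\eta$ to be \emph{defined} by $\tr_\pi\mapsto\lambda_\pi$, so no compatibility with a Tate-trace normalisation actually needs to be re-verified.
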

\Proposition{\ref{poin-dual-dual}} thus states that the extensions of the canonical
maps $\tr_\pi$ and $\lambda_\pi$ to homomorphisms of $\Dc$-modules are in fact
Poincaré duals of one another, up to a multiplicative constant.
\begin{proof} 
By \Theorem{\ref{adj-triple-th}} 
   \begin{eqnarray*}
     \dim _k\Hom_{\Dc_Y}(\pi_+(\Oc_X), \Oc_Y)&=&\dim _k\Hom_{\Dc_X}(\Oc_X,
     \pi^!(\Oc_Y))\\&=&\dim _k\Hom_{\Dc_X}(\Oc_X,
     \Oc_X) = 1.
  \end{eqnarray*}
  Therefore $\Tr_\pi$ generates the first hom-space; that
  $\hat \lambda_\pi$ generates the second one is proven similarly.
  To see the isomorphism $\eta'$, use \thetag{P},\thetag{$*$}, and the fact that
  $\Dbb'_X(\Oc_X)= \omega_X$ and $\Dbb'_Y(\Oc_Y)= \omega_Y$. That
  $\eta'(\Tr_\pi) = c \hat \lambda_\pi$ follows from what is already proven.

That $\eta'$ restricts to the isomorphism $\eta$ follows  since $\eta$ was
defined by mapping $\tr_\pi$ to $\lambda_\pi$ in the proof of \Proposition{\ref{dual-iso}}.
\end{proof}

\section{Semisimple inverse and direct images }
To begin there is a discussion of descent for $\Dc$-modules, later
applied for the main result in this section, that if $\pi$ is finite,
then $\pi_+$ is a semisimple functor on the category of holonomic
modules, and if one also knows the symmetries of $M$ (the intertia
group) one gets a complete abstract decomposition in terms of
representations of the inertia group. It is discussed when the
semisimplicity of $N$ or $\pi^!(N)$ implies the semisimplicity of the
other, and Clifford's theorem about restrictions of representations of
finite groups is extended to $\Dc$-modules.

The well-known normal basis theorem for Galois field extensions
$L/K/k$, here assuming the characteristic $0$ and that $K/k$ is
transcendent, is complemented by showing that cyclic generators of a
module over the group algebra are the same as cyclic generators over
the ring of differential operators.
\subsection{Descent of $D$-modules}\label{galois-section} 
By a theorem of Grothendieck \cite{SGA1}, if $\pi: X\to Y$ is a finite
surjective morphism of smooth varieties, then the inverse image
functor $N \mapsto \pi^*(N)$ induces an equivalence between the
category $\Mod(\Oc_Y)$ of quasi-coherent $\Oc_Y$-modules and the
category $\Des_Y(\Oc_X)$ of descent data $(M,\phi)$ of quasi-coherent
sheaves $M$ on $X$, where $\phi : p_1^*(M)\cong p_2^*(M)$, and
$p_i :X\times_Y X\to X$ ($i=1,2$) are the two projection maps, and the
isomorphism $\phi$ satisfies a cocycle condition with respect to the
different projections $p_{ij}: X\times_Y X \times_Y X \to X\times_Y X$
(for details, see (\cite{bosch-lutkebohmert-raynaud:neron-models}*{Ch.
  6.1, Th.4 }; it is actually sufficient to require that $\pi$ be a
faithfully flat quasi-compact morphism of schemes when working with
$\Mod(\Oc_Y)$).

Since $\pi$, $p_i$ and $p_{ij}$ are finite maps, the sheaf of liftable
derivations $T^\pi_Y$ maps to $\pi_*(\Dc_X)$,
$(\pi\circ p_i)_*(\Dc_{X\times_Y X })$, and
$(\pi \circ p_{ij}\circ p_i)_*(\Dc_{X\times_Y X \times_Y X})$. Hence
the image generates a subsheaf $T^\pi_X\subset \Dc_X$ (as Lie algebra
and $\Oc_X$-module), and we can consider the subring $\Dc_X(T^\pi_X)$
of $\Dc_X$ that it generates (see also (\ref{diffoperators}));
similarly we have the subrings
$\Dc_{X\times_Y X}(T^\pi_{X\times_YX}) \subset \Dc_{X\times_Y X}$ and
$\Dc_{X\times_Y X\times_Y X}(T^\pi_{X\times_Y X\times_Y X})\subset
\Dc_{X\times_Y X \times_Y X}$. Let $\Des(\Dc(T^\pi_X)) $ be the
subcategory of $\Des_Y(\Oc_X)$ consisting of coherent
$\Dc_X(T^\pi_X)$-modules with descent data $(M,\phi)$, where now
$\phi$ is an isomorphism of $\Dc_X(T^\pi_X)$-modules; notice that the
pull-back $p_i^*(M)$ ($i=1,2$) is naturally a
$\Dc_{X\times_Y X}(T^\pi_{X\times_Y X})$-module when $M$ is a
$\Dc_X(T^\pi_X))$-module, and similarly the pull-backs of $p_i^*(M)$
to $X\times_Y X \times_Y X$ forms
$\Dc_{X\times_Y\times X\times_Y X}(T^\pi_{X\times_Y\times X\times_Y
  X})$-modules.

The inverse
image functor $\pi^!$ defines a functor
\begin{displaymath}
  g^! : \coh(\Dc_Y(T_Y^\pi)) \to \Des_Y(\Dc_X(T^\pi_X)), \quad N \mapsto
  g^!(N)=\pi^!(N),
\end{displaymath}
writing $g^!$ instead of $\pi^!$ since the target category is
different\footnote{Since $X\times_Y X$ and $X\times_Y X \times_Y X$ need
  not be smooth when $\pi$ is ramified, the pull-backs $p^*_i(M)$
  ($i=1,2$) need not form $\Dc$-modules. Assuming $\pi$ is  étale we do
  get $\Dc$-modules, since then $\Dc_X(T^\pi_X)= \Dc_X$.}. The Grothendieck equivalence then implies:
  \begin{lemma} Assume that $\pi$ is a finite morphism of smooth
  varieties. We have an equivalence of categories
\begin{displaymath}
  g^+: \coh(\Dc_Y(T_Y^\pi)) \to \Des_Y (\Dc_X(T_X^\pi))), \quad N\mapsto
  g^+(N)= \pi^!(N).
\end{displaymath}
  \end{lemma}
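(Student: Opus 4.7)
The plan is to upgrade Grothendieck's descent equivalence $\pi^{*}:\Mod(\Oc_Y)\iso\Des_Y(\Oc_X)$ from quasi-coherent sheaves to modules over the subrings generated by liftable derivations. Since $\Dc_Y(T^{\pi}_Y)$ is generated as a sheaf of rings by $\Oc_Y$ and $T^{\pi}_Y$, and $\Dc_X(T^{\pi}_X)$ by $\Oc_X$ and $T^{\pi}_X$, it is enough to transport the action of liftable tangent vectors across the Grothendieck equivalence; the inherited Leibniz and commutator relations then reassemble the full ring actions.

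First I would verify that $g^{+}$ is well-defined. As $\pi$ is finite, $\pi^{!}(N)=\pi^{*}(N)$ as an $\Oc_X$-module. Any $\partial\in T^{\pi}_X$ has a well-defined image $\bar\partial\in T^{\pi}_Y$ via the tangent map, and I would define its action on $\pi^{*}(N)$ by the Leibniz rule $\partial(f\otimes n)=\partial(f)\otimes n+f\otimes\bar\partial(n)$, giving $\pi^{!}(N)$ the structure of a $\Dc_X(T^{\pi}_X)$-module. The canonical descent isomorphism $\phi:p_1^{*}\pi^{!}(N)\cong p_2^{*}\pi^{!}(N)$ arising from $\pi\circ p_1=\pi\circ p_2$ is then $\Dc_{X\times_Y X}(T^{\pi}_{X\times_Y X})$-linear by the same construction performed on the fibre product.

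For the quasi-inverse, I would start with $(M,\phi)\in\Des_Y(\Dc_X(T^{\pi}_X))$ and let $N\in\Mod(\Oc_Y)$ be the Grothendieck descent of its underlying $\Oc_X$-module, realised as the equaliser of the pair $\pi_{*}(M)\rightrightarrows\pi_{*}p_{1*}(p_1^{*}M)$ given by the identity and $\phi$. For each $\bar\partial\in T^{\pi}_Y$, the defining inclusion $T^{\pi}_Y\subset\pi_{*}(T_X)$ provides a canonical lift $\partial\in T^{\pi}_X$; the action of $\partial$ on $M$ pushes forward to an endomorphism of $\pi_{*}(M)$, and the essential point is that this endomorphism restricts to an endomorphism of the equaliser $N$. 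Granting this, I would extend the resulting $T^{\pi}_Y$-action to a $\Dc_Y(T^{\pi}_Y)$-action using the ring relations inherited from $\Dc_X(T^{\pi}_X)$. Mutual invertibility of $g^{+}$ and this quasi-inverse reduces to Grothendieck's theorem on the underlying $\Oc$-structures together with the parallel construction of the $\Dc$-structures on both sides.

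The main obstacle is verifying preservation of the equaliser $N$ under the induced endomorphism. This is precisely the point at which the subring $\Dc_{X\times_Y X}(T^{\pi}_{X\times_Y X})$ intervenes: both $p_1^{*}(\partial)$ and $p_2^{*}(\partial)$ lie in $T^{\pi}_{X\times_Y X}$ — essentially by the very construction of this subsheaf from lifts of $T^{\pi}_Y$ along the two projections — so $\Dc(T^{\pi}_{X\times_Y X})$-linearity of $\phi$ intertwines their respective actions on $p_1^{*}(M)$ and $p_2^{*}(M)$. This intertwining is exactly the compatibility required for the $\partial$-action to commute with the two maps defining $N$ as an equaliser, hence to preserve $N$, and the rest of the proof then becomes a routine verification.
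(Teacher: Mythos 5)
Your proposal is correct and takes essentially the same approach the paper relies on: the paper offers only the phrase ``The Grothendieck equivalence then implies:'' before stating the lemma, and your proof supplies precisely the argument left implicit there, with the key observation being that $\Dc_{X\times_Y X}(T^{\pi}_{X\times_Y X})$-linearity of $\phi$ is what makes the canonically lifted $T^{\pi}_Y$-action preserve the descent equaliser. One small imprecision worth noting: a general section $\partial$ of $T^{\pi}_X$ does \emph{not} have a well-defined image in $T^{\pi}_Y$ under the tangent map (only the canonical lifts of $T^{\pi}_Y$-sections do, and $T^{\pi}_X$ is the $\Oc_X$-module they generate; the image of $T^{\pi}_X$ under $d\pi$ lands in $\pi^{*}(T^{\pi}_Y)$), but your Leibniz formula is correct on those generators and the $\Dc_X(T^{\pi}_X)$-module structure on $\pi^!(N)$ is in any case just the restriction of the standard $\Dc_X$-structure on $\Dc_{X\to Y}\otimes_{\Dc_Y}N$, so the argument is unaffected.
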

  The equivalence $g^+$ is compatible with base change, so that in
  particular we get also the equivalence, referring to the diagram
  \thetag{BC},
\begin{displaymath}
  \coh(\Dc_{Y_0}) = \coh(\Dc(T^\pi_{Y_0})) \cong \Des_{Y_0}
  (\Dc_{X_0}(T^\pi_{X_0}))= \Des_{Y_0} (\Dc_{X_0}).
\end{displaymath} Over the generic point $ \coh(\Dc_{K}) \cong \Des_K
(\Dc_{L}) $, where $K$ and $L$ are the fraction fields of $Y$ and $X$,
respectively. We will have great use of this when moreover $L/K$ is
Galois, and the descent category can be recognized as an ordinary
category of modules over a skew group ring. Say that $\pi$ is a {\it Galois
  covering} (a.k.a. trivial torsor) if there exists a finite group $G$ of
$Y$-automorphisms of $X$, defining an action $\cdot : G\times X \to X$
such that the morphism
  \begin{displaymath}
    G\times_k X \to X\times_Y X, \quad (g, x)\mapsto (g\cdot x, x)
  \end{displaymath} is an isomorphism of schemes, where $G$ is
  regarded as a discrete group scheme (see
  \cite{bosch-lutkebohmert-raynaud:neron-models}*{Ch. 6.2, Example
    B}). If $\pi$ is a Galois covering, then $X\to Y$ is étale (so that
  $\Dc(T^\pi_Y)= \Dc_Y$), and conversely, if $B^G$ is the invariant
  ring with respect to a faithful action on $B$, and the associated
  morphism $\pi: \Spec B \to \Spec B^G $ is étale, then $\pi$ is
  a   Galois covering. In particular, $\Spec L \to \Spec K$ is Galois if $K$ is the
  fixed field of a finite group of automorphisms of the field $L$.

  An automorphism $\phi$ of $L$ induces an automorphism $\Dc_L \to \Dc_L$,
  $P \mapsto P^\phi= \phi \circ P \circ \phi ^{-1}$. Let now $G$ be a subgroup of $\Aut(L/k)$. The
  category $\Mod(k[G], \Dc_L)$ of $(k[G], \Dc_L)$-modules consists of $\Dc_L$-modules $M$ which is
  also a $k[G]$-module, such that $g\cdot P m = P^g g\cdot m $ , $m\in M, g\in G$. The (skew) group
  algebra $\Dc_L[G]$ of $G$ with coefficients in $\Dc_L$ consists of functions
  $\sum_{g\in G} P_g g : G \to \Dc_L$, $g\mapsto P_g$ , where the product is
  $\sum_{g_1\in G} P_{g_1} g_1 \cdot \sum_{g_2\in G} Q_{g_2} g_2 = \sum_{g_1, g_2 \in G} (P_{g_1}
  Q^{g_1}_{g_2}) (g_1g_2)$.
  Then $\Mod(k[G], \Dc_L) = \Mod (\Dc_L[G])$, and it is somewhat more convenient to work with
  $\Dc_L[G]$-modules instead of $(k[G],\Dc_L)$-modules.

  The following lemma collects the descent and ascent functors between the categories
  $ \Mod (\Dc_L[G])$ and $\Mod (\Dc_K)$. Denote by $\Dc_L^l$ the ordinary
  $(\Dc_L[G], \Dc_K)$-bimodule structure of $\Dc_L$ and put
  $\Dc_L^r = Hom_{\Dc_L[G]}(\Dc_L^l, \Dc_L[G])$, which is a $(\Dc_K, \- \Dc_L[G])$-bimodule in a
  natural way; we also have $\Dc_L^l = Hom_{\Dc_K}(\Dc_L^r, \Dc_K)$.
\begin{lemma}\label{desc-lemma}
  We have the adjoint triple of functors between the categories $\Mod(\Dc_K)$ and $\Mod(\Dc_L[G])$
  \begin{displaymath}
    (\Dc_L^r\otimes_{\Dc_L[G]}\cdot , \Dc^l_L \otimes_{\Dc_K}\cdot,
    Hom_{\Dc_L[G]}(\Dc^l_L, \cdot)),
  \end{displaymath} where moreover the left and right adjoint functors
  of $\Dc^l_L \otimes_{\Dc_K}\cdot$ are isomorphic, i.e.
  $Hom_{\Dc_L[G]}(\Dc^l_L, \cdot) \cong\Dc_L^r\otimes_{\Dc_L[G]}\cdot $.
  We have $\Dc_L^r\otimes_{\Dc_L[G]} M = M^G$.
\end{lemma}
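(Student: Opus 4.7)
The plan is to reduce everything to the fact that $\Dc_L^l$ is a finitely generated projective left $\Dc_L[G]$-module; the three functors, their adjunctions, and the ambidexterity then fall out of standard tensor-Hom adjunction together with the dual-projective identity $\Hom_B(P,M)\cong P^*\otimes_B M$ valid for any finitely generated projective left $B$-module $P$ with dual $P^*=\Hom_B(P,B)$.

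First I would verify this projectivity by lifting Galois descent to the level of differential operators. Because $L/K$ is finite separable one has $\Omega_{L/K}=0$, so every $k$-derivation of $K$ extends uniquely to $L$ and $T_L=L\otimes_K T_K$, whence $\Dc_L=L\otimes_K\Dc_K$ as left $L$-modules. Tensoring the classical Galois-descent isomorphism $L\otimes_K L\xrightarrow{\sim} L[G]$, $b\otimes c\mapsto\sum_{g\in G}bg(c)e_g$, with $\Dc_K$ over $K$ then produces an isomorphism $\Dc_L\otimes_{\Dc_K}\Dc_L\cong\Dc_L[G]$ of $(\Dc_L[G],\Dc_L)$-bimodules. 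This exhibits $\Dc_L^l$ as a progenerator (hence finitely generated projective) for left $\Dc_L[G]$-modules and simultaneously identifies $\Dc_L^r=\Hom_{\Dc_L[G]}(\Dc_L^l,\Dc_L[G])$ with $\Dc_L$ carrying the dual bimodule actions; the reflexivity $\Dc_L^l=\Hom_{\Dc_K}(\Dc_L^r,\Dc_K)$ stated in the setup is then a consequence rather than an extra assumption.

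Given projectivity the adjoint triple is formal. The right adjunction $(\Dc_L^l\otimes_{\Dc_K}\cdot,\,\Hom_{\Dc_L[G]}(\Dc_L^l,\cdot))$ is the standard tensor-Hom adjunction. For the left adjunction I would use the chain
\begin{displaymath}
\Hom_{\Dc_L[G]}(M,\Dc_L^l\otimes_{\Dc_K}N)\cong\Hom_{\Dc_L[G]}(M,\Hom_{\Dc_K}(\Dc_L^r,N))\cong\Hom_{\Dc_K}(\Dc_L^r\otimes_{\Dc_L[G]}M,N),
\end{displaymath}
where the first isomorphism uses the reflexivity together with finite generation of $\Dc_L^r$ over $\Dc_K$ and the second is ordinary adjunction. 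The ambidexterity $\Hom_{\Dc_L[G]}(\Dc_L^l,\cdot)\cong\Dc_L^r\otimes_{\Dc_L[G]}\cdot$ is then the dual-projective identity applied to $P=\Dc_L^l$. Finally I would read off the formula $\Dc_L^r\otimes_{\Dc_L[G]}M=M^G$ by directly evaluating the right-hand side of the ambidexterity: a $\Dc_L[G]$-linear map $\phi\colon\Dc_L\to M$ is determined by $m:=\phi(1)$ via $\phi(P)=Pm$, and $G$-equivariance, using the conjugation action $g\cdot P=gPg^{-1}$ (so $g\cdot 1=1$) together with the module-identity $g\cdot(Pm)=(gPg^{-1})(g\cdot m)$, forces $m\in M^G$.

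The main obstacle is the first step: verifying that Galois descent really does lift to the differential-operator isomorphism $\Dc_L\otimes_{\Dc_K}\Dc_L\cong\Dc_L[G]$ compatibly with the left $\Dc_L[G]$ and right $\Dc_K$ bimodule structures which enter the definitions of $\Dc_L^r$ and of the reflexivity identity. Once that bookkeeping is settled, everything else is a formal consequence of projectivity and standard adjunctions.
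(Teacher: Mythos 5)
Your approach is correct in outline but takes a genuinely different route from the paper's proof. The paper's argument is deliberately elementary: the right adjunction $(\Dc_L^l\otimes_{\Dc_K}-,\,\Hom_{\Dc_L[G]}(\Dc_L^l,-))$ is tensor-Hom, the left adjunction is obtained by using the freeness of $\Dc_L$ over $\Dc_K$ to rewrite $\Dc_L^l\otimes_{\Dc_K}N\cong\Hom_{\Dc_K}(\Dc_L^r,N)$ and then applying tensor-Hom again, and ambidexterity is established by constructing the natural transformation $\Dc_L^r\otimes_{\Dc_L[G]}M\to\Hom_{\Dc_L[G]}(\Dc_L^l,M)$ directly and checking that it is an isomorphism on the regular module $M=\Dc_L[G]$ via a short computation using the self-duality of $K[G]$; additivity and exactness of both functors (the right-hand one is $(-)^G$, exact by Maschke) then give the general case. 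You instead propose to first establish that $\Dc_L^l$ is a finitely generated projective (indeed progenerator) left $\Dc_L[G]$-module by lifting the classical descent isomorphism $L\otimes_K L\cong L[G]$ to an isomorphism $\Dc_L\otimes_{\Dc_K}\Dc_L\cong\Dc_L[G]$ of bimodules, after which the adjoint triple and ambidexterity are formal consequences of the dual-projective identity $\Hom_B(P,-)\cong P^*\otimes_B-$. This is precisely the Morita route that the remark immediately following the lemma in the paper acknowledges (citing Montgomery) but explicitly declines in favor of the descent argument. Your route is sound in principle and arguably more conceptual, but as you note it front-loads the entire difficulty into the bimodule bookkeeping for the lifted descent isomorphism, which you have not carried out; the paper's approach avoids that by instead doing the single direct computation $\Hom_{\Dc_L[G]}(\Dc_L^l,\Dc_L[G])\cong\Dc_L$. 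Your final identification of $\Hom_{\Dc_L[G]}(\Dc_L^l,M)$ with $M^G$ via evaluation at $1$ is correct and agrees with the paper.
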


\begin{proof} Below $M$ and $N$ are left $\Dc_L[G]$- and
  $\Dc_K$-modules, respectively. To begin,
  $(\Dc^l_L\otimes_{\Dc_K}\cdot , Hom_{\Dc_L[G]}(\Dc^l_L, \cdot))$ is
  an adjoint pair:
\begin{displaymath}
  Hom_{\Dc_L}(\Dc^l_L\otimes_{\Dc_K} N, M ) = Hom_{\Dc_K}(N,
  Hom_{\Dc_L[G]}(\Dc^l_L, M)) = Hom_{\Dc_K}(N, M^G).
  \end{displaymath}
 The right $\Dc_K$-module $\Dc_L$
is free (see \Lemma{\ref{basic}}), so that after selecting a basis we can define an isomorphism
of $\Dc_L[G]$-modules which is functorial in $N$
  \begin{displaymath}
    \Dc^l_L\otimes_{\Dc_K} N = Hom_{\Dc_K}(\Dc_L^r,
    \Dc_K)\otimes_{\Dc_K} N \cong Hom_{\Dc_K}(\Dc_L^r, N).
  \end{displaymath} Therefore
\begin{eqnarray*}
  Hom_{\Dc_L[G]}(M, \Dc^l_L\otimes_{\Dc_K}N ) &=& Hom_{\Dc_L[G]}(M,
  Hom_{\Dc_K}(\Dc_L^r, N)) \\ &=&
  Hom_{\Dc_K}(\Dc_L^r\otimes_{\Dc_L[G]} M,N),
\end{eqnarray*} showing that $(\Dc_L^r\otimes_{\Dc_L[G]}\cdot , \Dc_L
\otimes_{\Dc_K}\cdot)$ is an adjoint pair. It remains to see that the
left- and right-adjoints of $\Dc^l _L \otimes_{\Dc_K}\cdot$ are
isomorphic functors (this would be immediate if we know already that
$\Dc_L^l \otimes_{\Dc_K}\cdot$ defines an equivalence of categories).
There is a homomorphism of $\Dc_K$-modules which is functorial in $M$
\begin{align*}
  \Dc_L^r \otimes_{\Dc_L[G]}M &= Hom_{\Dc_L[G]}(\Dc_L^l, \Dc_L[G])
  \otimes_{\Dc_L[G]}M \to Hom_{\Dc_L[G]}(\Dc^l_L, M),\\
\phi \otimes m &\mapsto (P \mapsto \phi (P)m).
\end{align*} 
Since both functors in $M$ (on either side of the arrow) are
additive and exact, to see that it is an isomorphism it suffices to
check the assertion when $M= \Dc_L[G]$, in which case the left hand
side becomes $\Dc_L$ considered as left $\Dc_K$-module (by
multiplication on $\Dc_L$ from the left). For the right hand side we
have isomorphisms of left $\Dc_K$-modules, noticing that the action of
$\Dc_K$ commutes with the action of $G$,
\begin{eqnarray*}
 && Hom_{\Dc_L[G]}(\Dc^l_L, \Dc_L[G]) = (\Dc_L[G])^G = (\Dc_L
  \otimes_K K[G])^G\\ &=& (Hom_K(K, \Dc_L \otimes_K K[G]))^G = Hom_K
  (K[G]^*, \Dc_L)^G \\ & =& Hom_{K[G]}(K[G], \Dc_L) = \Dc_L.
\end{eqnarray*}
We have used here the fact the group ring $K[G]$ is self-dual,
$ Hom_K(K[G], K) \cong K[G]$ as $K[G]$-modules.
\end{proof}

\begin{remark}
  The functors in \Lemma{\ref{desc-lemma}} in fact define equivalences
  of categories, as follows from a suitable Morita theorem (see
  \cite{montgomery:fixed}*{Th. 2.5, Cor 2.6}). Since $\Dc_X$ is simple,
  the Morita theorem gives an equivalence
  $\Mod (\Dc_X[G])\cong \Mod (\Dc_Y^\pi)$ when $Y= X^G$. However, I
  believe it is more conceptual to arrive at the equivalence
  $\Mod (\Dc_L[G])\cong \Mod (\Dc_K)$ below by appealing to descent.
\end{remark}
  \begin{proposition}(Galois descent)\label{morita} Let $\pi : X \to
  Y$ be a
    Galois covering with Galois group $G$. Then
\begin{displaymath}
  \Des_Y (\Dc_X) \cong \Mod(\Dc_X[G]),
\end{displaymath} and we have an equivalence of categories
\begin{displaymath}
  g^+ : \Mod(\Dc_Y) \to \Mod(\Dc_X[G]), \quad N \mapsto g^+(N)=
  \pi^!(N).
\end{displaymath} The quasi-inverse is
\begin{displaymath} g_+: \Mod (\Dc_X[G])\to \Mod (\Dc_Y), \quad
M\mapsto \pi_*(\Dc^r_X\otimes_{\Dc_X[G]}M) = \pi_*(M^G),
\end{displaymath} where $\Dc_X$ is regarded as a $(\Dc_Y,\Dc_X[G])
$-bimodule. In particular, the functor
\begin{displaymath}
  g^+:\Mod (\Dc_{K}) \to \Mod(\Dc_L[G]), \quad N \mapsto g^+(N)=
  \pi^!(N),
\end{displaymath} defines an equivalence of categories, with
quasi-inverse
\begin{displaymath} g_+: \Mod (\Dc_L[G])\to \Mod (\Dc_K), \quad
M\mapsto \Dc^r_L\otimes_{\Dc_L[G]}M = M^G.
\end{displaymath}
\end{proposition}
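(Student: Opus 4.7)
The plan is to unwind the Galois covering structure so that descent data become semilinear $G$-actions, and then to appeal to Grothendieck's descent theorem recalled at the beginning of this subsection.

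First I would establish the intermediate equivalence $\Des_Y(\Dc_X) \cong \Mod(\Dc_X[G])$. Since a Galois covering is étale, $\Dc_X(T^\pi_X) = \Dc_X$, so the descent category consists of ordinary coherent $\Dc_X$-modules with descent data. The defining isomorphism $\sigma: G \times_k X \xrightarrow{\sim} X \times_Y X$, $(g,x) \mapsto (g \cdot x, x)$, identifies the two projections $p_1, p_2: X \times_Y X \to X$ with the action map $\alpha: G \times X \to X$ and with $\pr_2$, respectively. Consequently, a descent isomorphism $\phi: p_1^*(M) \xrightarrow{\sim} p_2^*(M)$ decomposes, $g$-component by $g$-component, into a family of $\Dc_X$-linear isomorphisms $\rho_g: g^*(M) \xrightarrow{\sim} M$. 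Reading the cocycle condition on $X \times_Y X \times_Y X \cong G \times G \times X$ reproduces the relation $\rho_{gh} = \rho_g \circ g^*(\rho_h)$, which is exactly the axiom of a semilinear $G$-action; hence it yields a $\Dc_X[G]$-module structure on $M$, and the assignment $(M,\phi) \mapsto (M, \{\rho_g\}_{g\in G})$ is manifestly an equivalence of categories.

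Combining with Grothendieck's equivalence $\Mod(\Dc_Y) \cong \Des_Y(\Dc_X)$ immediately produces the stated equivalence $g^+: \Mod(\Dc_Y) \xrightarrow{\sim} \Mod(\Dc_X[G])$, $N \mapsto \pi^!(N)$. For the quasi-inverse, the descent construction recovers the $\Oc_Y$-module underlying $N$ as the subsheaf of $\phi$-invariants in $\pi_*(M)$; under the above translation this is precisely $\pi_*(M)^G = \pi_*(M^G)$, endowed with its natural $\Dc_Y$-module structure through the inclusion $T_Y \hookrightarrow (\pi_*(T_X))^G$. Lemma \ref{desc-lemma} independently confirms the formula: the two-sided adjoint $\Hom_{\Dc_L[G]}(\Dc_L^l, \cdot) \cong \Dc_L^r \otimes_{\Dc_L[G]}(\cdot)$ of $\pi^!$ sends $M$ to $M^G$, and any adjoint of an equivalence must automatically be the quasi-inverse.

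The generic statement is now the special case of $\pi: \Spec L \to \Spec K$, which is itself a Galois covering. The main obstacle is a clean execution of the first step: one must fix conventions for how $G$ acts on pullbacks, and then verify that the Grothendieck cocycle condition on $G \times G \times X$ transcribes term-by-term into the associativity of the $G$-action. This is standard but error-prone bookkeeping; once settled, the remainder of the proof is purely formal.
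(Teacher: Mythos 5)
Your proof follows essentially the same route as the paper's: the paper's proof is precisely a pointer to \cite{bosch-lutkebohmert-raynaud:neron-models}*{Ch.~6.2, Example B}, with the instruction to replace $X$-schemes by coherent $\Dc_X$-modules throughout; your proposal is the result of actually carrying out that substitution, making explicit the translation of a descent isomorphism $\phi$ into a semilinear family $\{\rho_g\}$ via $G\times_k X \cong X\times_Y X$ and identifying the cocycle condition with associativity of the $G$-action, before concatenating with the Grothendieck equivalence $\Mod(\Dc_Y)\cong \Des_Y(\Dc_X)$.
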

\begin{proof}
  This is certainly well-known, but I have been unable to find a
  precise reference, and at the same time it would require much space
  to write down a detailed proof. Instead one can look at
  \cite{bosch-lutkebohmert-raynaud:neron-models}*{Ch. 6.2, Example B},
  where it is is proven that a $G$-action on an $X$-scheme $Z$ is
  equivalent to a descent datum on $Z$. In that proof (where in the
  notation of [loc cit] $S=Y, S'=X$ and $X=Z$) one can replace the
  $X$-scheme $Z$ everywhere by, on the one hand, a coherent
  $\Dc_X$-module with a $G$-action, which is the same as a
  $\Dc_X[G]$-module, and on the other hand it can be replaced by a
  coherent $\Dc_X$-module provided with étale descent datum, and all
  arrows between $X$-schemes are replaced by homomorphisms of
  $\Dc_X$-modules.
\end{proof}
\begin{remark} Assume that $X\to Y$ is a Galois covering and $M$ be a
$\Dc_X[G]$-module. We have then a canonical homomorphism of
$\Dc_Y$-modules
\begin{displaymath}
  g_+(M) \cong \pi_* (Hom_{\Dc_X[G]}(\Dc_X^l, M) ) \to \pi_* (M) \cong
  \pi_+(M), \quad \phi \mapsto \phi(1).
\end{displaymath} More concretely this can be identified with  the inclusion $\pi_*(M^G)
\subset \pi_*( M)$.
\end{remark}

If $X/Y$ is Galois then the category $ \Des_Y(\Dc_X)$ is fibred over
$\Mod(\Dc_X)$, i.e. we have the functor
\begin{displaymath} e_+: \Des_Y(\Dc_X) \cong \Mod(\Dc_X[G]) \to
\Mod(\Dc_X),
\end{displaymath} where $e_+(M)$ is the $\Dc_X$-module that remains
after forgetting the $G$-action. In this context Maschke's theorem takes the following
form:
\begin{proposition}\label{maschke} Assume that $L/K$ is Galois with
Galois group $G$.
  \begin{enumerate}
  \item Consider the functor $e_+: \Mod(\Dc_L[G])\to \Mod (\Dc_L)$,
  where $ e_+ (M) $ is $M$
    regarded as $\Dc_L$-module only. Then $e_+(M)$ is semisimple if
    and only if $M$ is semisimple.
  \item Let $\psi : H \to G$ be a homomorphism of finite groups (so
  that $L$ is
    a $\Dc_L[H]$-module using $\psi$). Let $M$ be a $\Dc_L[H]$-module
    such that $M$ is semisimple as $\Dc_L$-module. Then
  \begin{displaymath}
    \Dc_L[G]\otimes_{\Dc_L[H]}M
  \end{displaymath} is a semisimple $\Dc_L[G]$-module.
  \end{enumerate}
\end{proposition}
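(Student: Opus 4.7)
The plan is to adapt the classical arguments of Maschke and Clifford to the skew group ring $\Dc_L[G]$, leaning on the characteristic-zero hypothesis via the averaging idempotent $e = |G|^{-1}\sum_{g\in G} g$ and on the commutation relation $gP = P^g g$ in $\Dc_L[G]$.

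For the nontrivial direction of (1) --- namely, $e_+(M)$ semisimple implies $M$ semisimple --- I would start with an arbitrary $\Dc_L[G]$-submodule $N \subset M$, pick a $\Dc_L$-linear projection $p_0: M \to N$ (which exists by $\Dc_L$-semisimplicity), and symmetrize it to
\begin{displaymath}
p(m) \;=\; \frac{1}{|G|}\sum_{g \in G} g\cdot p_0(g^{-1}\cdot m).
\end{displaymath}
Using $g^{-1}P = P^{g^{-1}}g^{-1}$ together with $gP^{g^{-1}} = Pg$, a short check shows $p$ is $\Dc_L$-linear; by construction it is also $G$-equivariant, hence $\Dc_L[G]$-linear, and it restricts to the identity on $N$, so $N$ has a $\Dc_L[G]$-stable complement. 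For the converse it suffices to treat $M$ simple over $\Dc_L[G]$: the $\Dc_L$-socle $S \subset M$ is $G$-stable, because $P\mapsto P^g$ is an algebra automorphism of $\Dc_L$ and so $g$ carries simple $\Dc_L$-submodules to simple $\Dc_L$-submodules. Therefore $S$ is a $\Dc_L[G]$-submodule, and simplicity of $M$ forces $S = M$ as soon as $S \neq 0$, which holds in the finite-length setting operative in the paper.

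For (2), part (1) reduces the claim to $\Dc_L$-semisimplicity of the induced module. I may assume $\psi$ is injective, because the general case follows by first replacing $M$ with its $\ker\psi$-coinvariants, which is a $\Dc_L$-quotient of $M$ and hence still $\Dc_L$-semisimple. Fixing coset representatives $\{g_i\}$ for $G/H$, there is a $\Dc_L$-module decomposition
\begin{displaymath}
\Dc_L[G]\otimes_{\Dc_L[H]} M \;=\; \bigoplus_i g_i\otimes M,
\end{displaymath}
on which $\Dc_L$ acts by $P\cdot(g_i\otimes m) = g_i\otimes P^{g_i^{-1}}\!\cdot m$. Since each $P\mapsto P^{g_i^{-1}}$ is an algebra automorphism of $\Dc_L$, it induces an auto-equivalence of $\Mod(\Dc_L)$ preserving semisimplicity, so every summand is $\Dc_L$-semisimple, and therefore so is the whole.

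The genuine technical point is the $\Rightarrow$ direction of (1): outside the finite-length setting one needs a separate argument to ensure the $\Dc_L$-socle of a simple $\Dc_L[G]$-module is nonzero. In the holonomic and $L$-finite-dimensional contexts relevant here this is automatic, so the socle argument above is adequate.
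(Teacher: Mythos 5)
Your proof is correct and follows essentially the same route as the paper: the forward direction of (1) is the averaged $G$-equivariant projection, the converse is a socle argument dual to the paper's radical argument (if $M_1\subset e_+(M)$ is maximal then so is $gM_1$, hence $\mathrm{rad}\,e_+(M)$ is $\Dc_L[G]$-stable, hence $0$), and (2) is reduced to (1) via the same coset decomposition $\Dc_L[G]\otimes_{\Dc_L[H]}M\cong\bigoplus_{g_i\in G/\psi(H)}g_i\otimes M$, with each summand an algebra-automorphism twist. You are slightly more careful than the paper in (2) about a non-injective $\psi$, correctly passing to the $\ker\psi$-coinvariants (a $\Dc_L$-quotient, hence still semisimple), whereas the paper's displayed decomposition implicitly treats $\psi$ as injective; and you rightly flag that both the socle and the radical argument in (1) rely on the finite-length/holonomic hypothesis in force throughout.
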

\begin{proof}
  (1): The proof that $M$ is semisimple when $e_+(M)$ is semisimple is
  proven exactly in the same way as the usual Maschke theorem for
  modules over group rings, by averaging over the finite group $G$.
  Conversely, if $M_1$ is a maximal proper submodule of $e_+(M)$, then
  $gM_1$ is another maximal proper submodule, implying that the
  radical of $e_+(M)$ is a $\Dc_L[G]$-submodule of $M$, hence it is
  $0$ if $M$ is semisimple; therefore $e_+(M)$ is semisimple.

  (2): We have
  \begin{displaymath}
     e_+( \Dc_L[G]\otimes_{\Dc_L[H]}M) = \bigoplus_{g_i \in G/\psi(H)}
     g_i\otimes M
  \end{displaymath} where the notation on the right side is explained
  above \Definition{\ref{inertia-def}}. Since $M$ is semisimple it
  follows that each $ g_i\otimes M$ is semisimple, hence (2) follows
  from (1).
\end{proof}
The duality $\Dbb_L$ on $\Mod_{fd} (\Dc_L)$ restricts to a duality on
the subcategory $\Mod_{fd} (\Dc_L[G])$ of $ \Mod_{fd} (\Dc_L)$. The
descent functors respect this duality.
\begin{lemma}
  \begin{displaymath}
    \Dbb_L g^+ = g^+ \Dbb_K, \quad \text{and}\quad \Dbb_K g_+= g_+
    \Dbb_L .
  \end{displaymath}
\end{lemma}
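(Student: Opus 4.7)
The plan is to deduce both identities from facts that have already been set up earlier in the text, with the first identity essentially tautological and the second following formally from it via the equivalence of categories in \Proposition{\ref{morita}}.

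First I would observe that a Galois covering $\pi\colon \Spec L\to \Spec K$ is finite and \'etale, hence in particular smooth. By the last sentence of \Theorem{\ref{adj-triple-th}} (or equivalently by \Lemma{\ref{con-lemma}} applied at each point of the finite-dimensional $L$-module, noting that an \'etale map is non-characteristic for every holonomic module) this gives the functorial identity
\begin{equation*}
\pi^{!}=\pi^{+}=\Dbb_{L}\,\pi^{!}\,\Dbb_{K}
\end{equation*}
on $\Mod_{fd}(\Dc_{K})$. Since $\Dbb_{K}^{2}=\id$ and $\Dbb_{L}^{2}=\id$ on the holonomic categories (which here coincide with $\Mod_{fd}(\Dc_{K})$ and $\Mod_{fd}(\Dc_{L}[G])$), rearranging yields $\Dbb_{L}\pi^{!}=\pi^{!}\Dbb_{K}$, that is $\Dbb_{L}g^{+}=g^{+}\Dbb_{K}$. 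One only needs to check that the $G$-equivariance of $\pi^{!}(N)$ is respected by $\Dbb_{L}$; this is automatic because the $G$-action on $\pi^{!}(N)=L\otimes_{K}N$ is by $\Dc_{L}$-semilinear automorphisms in the variable $L$, and $\Dbb_{L}$ is a functor on $\Mod_{fd}(\Dc_{L})$, so each $g\in G$ induces an automorphism of $\Dbb_{L}\pi^{!}(N)$ and the intertwining identities on both sides match termwise.

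For the second identity I would exploit that by \Proposition{\ref{morita}} the pair $(g^{+},g_{+})$ is a pair of mutually quasi-inverse equivalences: $g_{+}g^{+}\cong \id_{\Mod_{fd}(\Dc_{K})}$ and $g^{+}g_{+}\cong \id_{\Mod_{fd}(\Dc_{L}[G])}$. Take $M\in \Mod_{fd}(\Dc_{L}[G])$ and set $N:=g_{+}(M)$, so $g^{+}(N)\cong M$. Then
\begin{equation*}
g_{+}\Dbb_{L}(M)\cong g_{+}\Dbb_{L}g^{+}(N)\cong g_{+}g^{+}\Dbb_{K}(N)\cong \Dbb_{K}(N)=\Dbb_{K}g_{+}(M),
\end{equation*}
where the middle isomorphism uses the first identity already established. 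Naturality of each step in $M$ (equivalently in $N$) shows that the composite is an isomorphism of functors, which is the claim.

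The only genuine point to verify is thus the compatibility of the $G$-action with duality in the first identity; this is the potential obstacle, but it is harmless since $|G|$ is invertible in $k$, so the equivalence $\Mod_{fd}(\Dc_{L}[G])\cong \Mod_{fd}(\Dc_{L})^{G\text{-equiv}}$ makes the $G$-structure transfer along $\Dbb_{L}$ functorially. Everything else is either a direct appeal to \Theorem{\ref{adj-triple-th}}, to \Lemma{\ref{con-lemma}}, or to the equivalence in \Proposition{\ref{morita}}.
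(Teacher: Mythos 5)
Your proposal is correct, but it inverts the order of argument compared to the paper's proof and obtains the second identity by a formal trick rather than by computation. For the identity $\Dbb_L g^+ = g^+\Dbb_K$ you argue exactly as the paper does: since $\pi\colon\Spec L\to\Spec K$ is smooth (indeed \'etale), $g^+=\pi^+=\pi^!$ and the commutation $\Dbb_L\pi^+=\pi^+\Dbb_K$ from \Theorem{\ref{adj-triple-th}} applies; checking that the $G$-structure is respected is, as you note, automatic by functoriality of $\Dbb_L$. Where you differ from the paper is in the second identity. The paper proves $\Dbb_K g_+ = g_+\Dbb_L$ directly, by choosing a $G$-equivariant free resolution of $M\in\Mod_{fd}(\Dc_L[G])$ and computing
\begin{displaymath}
\Dbb_K(M^G)\;=\;\Ext^n_{\Dc_K}(M^G,\Dc_K)\;=\;\Ext^n_{\Dc_L}(M,\Dc_L)^G\;=\;(\Dbb_L M)^G,
\end{displaymath}
i.e.\ it pushes the commutation through the Ext computation that defines the duality. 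You instead deduce it purely formally from the first identity and the fact (from \Proposition{\ref{morita}}) that $g^+$ and $g_+$ are mutually quasi-inverse equivalences: setting $N=g_+(M)$ and using $g^+g_+\cong\id$, $g_+g^+\cong\id$, you conjugate the known commutation across the equivalence. Both arguments are valid. Yours is cleaner and makes visible that the two identities are logically equivalent given the Morita equivalence, at the price of being less explicit about where the $G$-invariants interact with $\Ext$; the paper's version is more computational but shows concretely how taking $G$-invariants commutes with the duality at the chain level. Either could be substituted for the other.
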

\begin{proof}
  If $M\in \Mod_{fd} (\Dc_L[G]) $ there exists a $G$-equivariant
  resolution of $M$,
\begin{displaymath}
  \Dbb_K (M^G) = Ext^n_{\Dc_K}(M^G, \Dc_K ) = Ext^n_{\Dc_L}(M, \Dc_L
  )^G = (\Dbb_L (M))^G.
\end{displaymath} Therefore, since $\Dbb_K \pi_+ = \pi_+ \Dbb_L$, we
have $\Dbb_K g_+ = g_+ \Dbb_L$. Moreover, as $g^+ = \pi^+ $ and
$\Dbb_L \pi^+ = \pi^+ \Dbb_K$, it follows that $\Dbb_L g^+ = g^+
\Dbb_K$.
\end{proof}

\subsection{Twisted modules and the inertia group}\label{inertia-section}
If $L/K$ is Galois and $M$ is a simple $\Dc_L$-modules of finite
dimension over $L$, we will decompose $\pi_+(M)$ using translation
symmetries of $M$ with respect to elements in the Galois group. An
automorphism $\phi$ of the field $L$ gives rise to a new
$\Dc_L$-module $M_\phi$, where $M= M_\phi$ as sets, but the action of
$\Dc_L$ is twisted as
\begin{displaymath}
  P\cdot m = P^{\phi^{-1}}m, \quad P\in \Dc_L, \quad m\in M,
\end{displaymath} where on the right the $P^{\phi^{-1}}$-action is
already known. Notice that $M_\phi = M$ as $\Dc_K$-module if $K$ is
fixed by $\phi$. Let $G$ be a subgroup of the Galois group $\Aut_K(L)$
of $L$ over $K$, and consider the $\Dc_L[G]$-module
$\Dc_L[G]\otimes_{\Dc_L} M$. Then as $\Dc_L$-module we have
  \begin{displaymath}
    \Dc_L[G]\otimes_{\Dc_L} M = \bigoplus_{g\in G} M_g,
  \end{displaymath} where we have identified the $\Dc_L$-submodule $g
  \otimes M$ with the twisted module $ M_g$. If $M$ is simple then
  each twisted module $M_g$ is also simple, so we can conclude that  $
\Dc_L[G]\otimes_{\Dc_L} M$ is semisimple, by \Proposition{\ref{maschke}}.

\begin{definition}\label{inertia-def}
  Let $L/K$ be a finite Galois field extension and $M$ be a simple
  $\Dc_L$-module. The {\it
    inertia group} of $M$ over $K$ is the following subgroup of the
    Galois group
    \begin{displaymath}
    G_M= \{g \in \Aut_K (L) \ \vert \ M_g \cong M\}.
  \end{displaymath}
\end{definition}

 The inertia group is important  partly   because of the following
 proposition. We will later see more precisely how  it controls the
 decomposition of $\pi_+(M)$ \Th{\ref{galois-direct}}.  
\begin{proposition}\label{simpledirect} Let $\pi: \Spec L \to \Spec L$
be the morphisms of schemes associated to a finite
  Galois extension $L/K$. If $M$ is a simple $\Dc_L$-module with
  trivial inertia group $G_M= \{e\}$, then $\pi_+(M)$ is a simple
  $\Dc_K$-module.
\end{proposition}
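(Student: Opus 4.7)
I would transfer the question through the Galois descent equivalence of Proposition \ref{morita} and reduce the simplicity of $\pi_+(M)$ to a transparent orbit argument for the $\Dc_L[G]$-module $\tilde M := \Dc_L[G]\otimes_{\Dc_L} M$.

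First, observe that $\pi^!\colon \Mod(\Dc_K)\to \Mod(\Dc_L)$ factors as $\pi^! = U\circ g^+$, where $g^+$ is the equivalence of Proposition \ref{morita} and $U\colon \Mod(\Dc_L[G])\to \Mod(\Dc_L)$ is restriction of scalars along $\Dc_L\hookrightarrow\Dc_L[G]$. The latter has the obvious left adjoint $N\mapsto \Dc_L[G]\otimes_{\Dc_L} N$, while $g^+$ has $g_+ = (\cdot)^G$ as quasi-inverse. Since $(\pi_+,\pi^!)$ is an adjoint pair by Theorem \ref{adj-triple-th}, uniqueness of left adjoints gives a natural isomorphism
\begin{displaymath}
\pi_+(M)\;\cong\; g_+(\tilde M)\;=\;\tilde M^G.
\end{displaymath}
Because $g_+$ is half of an equivalence of categories, it preserves simple objects, so it suffices to show that $\tilde M$ is simple as a $\Dc_L[G]$-module.

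Next, decompose $\tilde M$ as a $\Dc_L$-module. Using the skew product $Pg = g P^{g^{-1}}$ one obtains $\tilde M = \bigoplus_{g\in G}(g\otimes M)$, and the $\Dc_L$-action on the summand $g\otimes M$ is precisely the twisted action on $M_g$. Each $M_g$ is simple because $M$ is, and the general identity $M_a\cong M_b \iff a^{-1}b \in G_M$ combined with the hypothesis $G_M = \{e\}$ forces the summands to be pairwise non-isomorphic. Therefore every $\Dc_L$-submodule of $\tilde M$ has the form $\bigoplus_{g\in S}M_g$ for a unique $S\subseteq G$.

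Finally, let $0\neq N\subseteq \tilde M$ be a $\Dc_L[G]$-submodule, and write $N = \bigoplus_{g\in S}M_g$ with $S$ nonempty. The identity $h\cdot(g\otimes m) = hg\otimes m$ yields $h\cdot M_g = M_{hg}$ as $\Dc_L$-submodules of $\tilde M$, so $G$-stability of $N$ forces $S$ to be closed under left multiplication by $G$, whence $S = G$ and $N = \tilde M$. Thus $\tilde M$ is simple over $\Dc_L[G]$, and $\pi_+(M)\cong \tilde M^G$ is simple over $\Dc_K$. The main obstacle I anticipate is the bookkeeping needed to verify the identification $\pi_+(M) \cong \tilde M^G$ through the chain of adjoints; once this is in place, the remaining orbit-and-isotypic step is essentially automatic.
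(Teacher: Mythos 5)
Your proof is correct and follows the same overall strategy as the paper's: reduce, via the Galois descent equivalence of Proposition~\ref{morita}, to showing that $\tilde M = \Dc_L[G]\otimes_{\Dc_L}M$ is a simple $\Dc_L[G]$-module. You obtain this reduction by uniqueness of left adjoints (using $(\pi_+,\pi^!)$ from Theorem~\ref{adj-triple-th} together with the factorization $\pi^! = U\circ g^+$), whereas the paper simply asserts the identification $\pi_+(M) = \Dc_L\otimes_{\Dc_L[G]}\tilde M$; your derivation spells out why the identification holds and is, if anything, the more careful of the two. The final simplicity arguments differ mildly. The paper first invokes Proposition~\ref{maschke} to get that $\tilde M$ is semisimple over $\Dc_L[G]$, chooses a simple $\Dc_L[G]$-submodule $N$ whose image under the $\Dc_L$-linear projection onto $1\otimes M$ is nonzero, and uses multiplicity one of $M$ in $\tilde M$ to force $1\otimes M\subseteq N$, hence $N=\tilde M$. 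You bypass the explicit appeal to Maschke: since the $\Dc_L$-summands $M_g$ are pairwise non-isomorphic simples (which is exactly where $G_M=\{e\}$ is used), every $\Dc_L$-submodule of $\tilde M$ is a subsum $\bigoplus_{g\in S}M_g$, and $G$-stability together with transitivity of left translation on the index set forces $S=G$. This is a slight streamlining of the same core idea rather than a genuinely different route; both arguments turn on the same multiplicity-one fact, and the paper's semisimplicity step is what your isotypic-decomposition step replaces.
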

The proof will be presented a little later. If $L/K$ is not Galois it
does not follow that $\pi_+(M)$ is simple. For example, if $L\neq K$
and $G_L =\Aut (L/K)= \{e\},$ then the module $\pi_+(L)$ always
contains the proper submodule $K$.

To give a concrete application we consider the effect of going to
invariant differential operators on natural rank 1 $\Dc$-modules that
can be associated to arrangements of hyperplanes in $\Cb^n$. Put
$B= \Cb[x_1, \ldots , x_n]$ and
$M_\alpha^\beta= \Dc_B\alpha^\beta\subset
B_{\alpha}\otimes_BM^\beta_\alpha$ ,
$\alpha^\beta= \prod_{i\in \Hc} \alpha_i^{\beta_i}$, where the
$\alpha_i$ are linear forms in $\Cb^n$ and $\beta_i \in \Cb$. In
general $M_\alpha^\beta$ is not simple and I do not know a precise
condition on $\beta$ that makes $M_\alpha^\beta$ it so, but if $N$ is
high enough and $n= (n_i)\in \Nb^n$ satisfies $n_i \geq N$, then
$M_\alpha^{\beta + n}$ is simple.\footnote{Conditions that the
  localized $\Dc_B$-module $B_{\alpha} \otimes_{B} M_\alpha^\beta$ be
  simple can be found in \cite{budur-liu-sau-bot:coh-supp-loc}.}

Let $G\subset \Glo_k (V)$ be a subgroup that is generated by
pseudoreflections of the finite-dimensional vector space $V$, and
$B=\So(V)$ the symmetric algebra. Let $\pi : A= B^G \to B $ be the
inclusion of the invariant ring, where $A$ again is a polynomial ring
by the Chevalley-Shephard-Todd theorem. The action of an element $g$
in $G$ takes a linear form $\alpha_i$ to another linear form
$g\cdot \alpha_i$. Let $ \Ac = (\alpha_1, \cdots, \alpha_r)$ be a set
of linear forms that is preserved by $G$ and put
$\alpha^\beta = \prod_{\alpha_i \in \Ac } \alpha_i^{\beta_i}$. Then
$g(\alpha^\beta) = (g\alpha)^\beta = \alpha^{\gamma}$, so that putting
$g \cdot \beta = \gamma $ one can regard $G$ as acting on the
exponents $\beta$ of the linear forms. Then
\begin{displaymath}
 g\otimes M_{\alpha}^\beta \cong  M_{g\cdot \alpha }^\beta \cong M_\alpha^{g\beta}
\end{displaymath}
and $M_\alpha^{g\beta} \cong M_\alpha^\beta$ if and only of
$\beta - g\cdot \beta  \equiv 0 \ (\omod \Zb^n )$. By
\Proposition{\ref{simpledirect}} we can conclude the following result.

\begin{proposition}
  Assume that $M_\alpha^\beta$ is simple. Then the following
  assertions are
  equivalent:
  \begin{enumerate}
  \item $\pi_+(M_\alpha^\beta)$ is simple.
    \item   $\beta - g\cdot \beta \not  \equiv 0 \  (\omod  \Zb^n )$, $g\in
      G$, $g\neq e$.
  \end{enumerate}
\end{proposition}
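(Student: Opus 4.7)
The plan is to reduce to Proposition~\ref{simpledirect} by passing to the generic point. Since $G$ is a pseudoreflection group, the Chevalley--Shephard--Todd theorem ensures $A = B^G$ is polynomial, and the extension of fraction fields $L/K$ (with $L = \on{Frac}(B)$ and $K = \on{Frac}(A)$) is Galois with group $G$. Set $\tilde M := L\otimes_B M_\alpha^\beta$, which is a simple $\Dc_L$-module of dimension one over $L$, since $M_\alpha^\beta$ is simple with full support on $\Spec B$. The first observation is that condition (2) is exactly the statement that the inertia group $G_{\tilde M}$ (Definition~\ref{inertia-def}) is trivial: the computation preceding the proposition shows $g\otimes M_\alpha^\beta \cong M_\alpha^{g\beta}$, with isomorphism to $M_\alpha^\beta$ precisely when $\beta - g\cdot\beta \in \Zb^n$, and the same computation localized to $L$ gives $g\otimes \tilde M \cong \tilde M$ under the identical condition.

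For (2) $\Rightarrow$ (1), I would apply Proposition~\ref{simpledirect} to the Galois map of generic points $\Spec L \to \Spec K$: triviality of $G_{\tilde M}$ yields that $\pi_+(\tilde M)$ is a simple $\Dc_K$-module. Since $\pi_+$ is exact (Proposition~\ref{flat-module}) and $\pi_+(M_\alpha^\beta)$ is a semisimple holonomic $\Dc_A$-module whose simple components, by Theorem~A(2) specialized to the Galois case $H = \{e\}$, are parameterized by irreducible representations of the central extension $\bar G_{\tilde M}$ of $G_{\tilde M}$, the global decomposition matches the one at the generic point, and $\pi_+(M_\alpha^\beta)$ must itself be simple. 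For (1) $\Rightarrow$ (2) I would argue by contraposition: if $G_{\tilde M}\neq\{e\}$, then $\bar G_{\tilde M}$ is a nontrivial finite group, hence has at least two inequivalent irreducible representations, and Theorem~A(2) exhibits $\pi_+(M_\alpha^\beta) = \bigoplus_\chi V_\chi^* \otimes_k M_\chi$ as a direct sum with at least two distinct simple summands (or with a summand of multiplicity $\geq 2$), so it is not simple.

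The only mild obstacle is the matching of global simple summands of $\pi_+(M_\alpha^\beta)$ with simple summands of its generic fibre. This is settled by Theorem~A(2) itself: the parameterization of summands by irreducible $\bar G_{\tilde M}$-representations is intrinsic and is visibly determined by the inertia datum attached to $\tilde M$, which lives entirely at the generic point. Once that matching is in hand, both directions of the equivalence are immediate consequences of Proposition~\ref{simpledirect} and the first-paragraph identification of condition (2) with triviality of $G_{\tilde M}$.
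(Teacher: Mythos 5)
Your proof is correct and follows the route the paper implicitly intends. The paper itself gives no explicit proof: it establishes that condition (2) is exactly the vanishing of the inertia group, cites Proposition~\ref{simpledirect} for the implication $(2)\Rightarrow(1)$, and leaves the converse unstated. You fill in exactly the right details: the identification of (2) with triviality of $G_{\tilde M}$, the reduction to the generic point (valid because $\pi_+(M_\alpha^\beta)$ is semisimple by Theorem~\ref{decomposition-thm} and torsion free, so each simple summand is the minimal extension of its fibre at $K$ and the decompositions match), and for $(1)\Rightarrow(2)$ the observation that a nontrivial $G_{\tilde M}$ forces, via Theorem~\ref{galois-direct}, either several nonisomorphic simple summands or a summand of multiplicity $\dim V_\chi\geq 2$. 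One small imprecision: "since $\pi_+$ is exact" is not the reason the generic and global decompositions match — the correct reason is semisimplicity plus torsion-freeness as above — but the conclusion is the same and the rest of the argument is sound.
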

\begin{pfof}{\Proposition{\ref{simpledirect}}}
  Since $\pi_+(M) = \Dc_L \otimes_{\Dc_L[G]}\Dc_L[G]\otimes_{\Dc_L}M$
  it suffices to prove that $\Dc_L[G]\otimes_{\Dc_L}M$ is a simple
  $\Dc_L[G]$-module, where $G$ is the Galois group of $L/K$
  \Prop{\ref{morita}}. Since $G_M$ is trivial, the simple module
  $1\otimes M$ has multiplicity $1$ in $\Dc_L[G]\otimes_{\Dc_L} M$.
  Let $p: \Dc_L[G]\otimes_{\Dc_L} M \to M = 1\otimes M $ be the
  $\Dc_L$-linear projection on $1\otimes M$. By
  \Proposition{\ref{maschke}} $\Dc_L[G]\otimes_{\Dc_L} M$ is
  semisimple, hence it contains a simple $\Dc_L[G]$-submodule $N$ such
  that $p(N)\neq 0 $. Since $N\subset \Dc_L[G]\otimes_{\Dc_L} M $ is
  semisimple as $\Dc_L$-module and $1\otimes M$ has multiplicity $1$,
  it follows that $1\otimes M \subset N$. Therefore,
  $\Dc_L[G]\otimes_{\Dc_L}M = \Dc_L[G] (1\otimes M) \subset N $, which
  implies the assertion.
\end{pfof}
In general the inertia group $G_M$ of a simple $\Dc_L$-module $M$
\Defn{\ref{inertia-def}} only has a projective action on $M$, so that
we will have to replace $G_M$ by a certain central extension
$\bar G_M$ to make $M$ inte a $\bar G_M$-module. The construction is
as follows. For each element $g $ in $ G_M$ there exists an
isomorphism of $\Dc_L$-modules $\phi(g) : M \to M_g $, and, since $k$
is algebraically closed, $\phi (g) $ is determined up to a factor in
$k^*$. This gives rise to a cocycle
\begin{displaymath} f: G\times G \to k^* , \quad f (g_1, g_2)=
\phi(g_1) \phi(g_1 g_2)^{-1}\phi(g_2),
  \end{displaymath} where $f (g_1, g_2)$ is a $\Dc_L$-linear
  automorphism of the simple module $M$ and is hence determined by an
  element in $k^*$. Putting $n= |G_M|$ it follows that $f^n$ is a
  coboundary, so that the cocycle $f$ determines an element in the
  cohomology group $ H^2(G_M, \mu_n)$, where $\mu_n\subset k^*$ is the
  subgroup of $n$th roots of unity. Also, $f$ determines a central
  extension
  \begin{equation}\label{central-ext} 1 \to \mu_n\to \bar G_M
\xrightarrow{\psi} G_M \to 1
\end{equation} of $G_M$ by $\mu_n$ such that
  $M$ is a module over $\Dc_L[\bar G_M]$. One may call the pair $(\bar
  G_M, \psi)$ the {\it true
    inertia group} of $M$.

  If a $\Dc_L$-module $M$ is not a $\Dc_L[G_M]$-module it will not
  descend to a $\Dc_{L_1}$-module, for $L_1= L^{G_M}$. But there
  exists a Galois extensions $ L' /L/K$ with Galois groups
  $H= \Aut (L'/L)$ and $G' = \Aut (L'/K)$, where $\bar G_M\subset G'$.
  Then if $p : \Spec L' \to \Spec L$ is the map of $L'/L$, it follows
  that $p^!(M)$ is a $\Dc_{L'}[\bar G_M]$-module. Conversely, if $M$
  is a $\Dc_{L'}[\bar G_M]$-module where $\bar G_M \subset G$, then
  the invariant space $M^{H}$ is a $\Dc_L[\bar G_M]$-module, giving
  rise to projective $\Dc_L[ G_M]$-module.

  The map $\psi$ induces a homomorphism of rings
  \begin{displaymath}
    \Dc_L[\bar G_M]\to \Dc_L[G],
  \end{displaymath} and using this homomorphism, $\Dc_L[G]$ becomes a
  right $ \Dc_L[\bar G_M]$-module. 
  \begin{example}
    Let $L/K$ be a finite Galois extension with group $G$ and
    $\Delta $ be a $G$-semiinvariant in $L$, by which we intend that
    $g \cdot \Delta = \lambda(g) \Delta$, where $\lambda $ is a
    character $ G \to k^*$. We put $\mu = \Delta^{1/m}$ and define the
    $\Dc_L$-module $M= \Dc_L \mu$, where $\mu$ can be regarded as an
    abstract generator for $M$ such that
    $\partial \cdot \mu = \frac {\partial(\Delta)}{m \Delta} \mu$,
    $\partial \in T_L$, but also as an element in the field extension
    $L[\Delta^{1/m}]$. Make a choice of function $G \to k^*$,
    $ g\mapsto \lambda(g)^{1/m} \in k^*$, such that
    $ (\lambda(g)^{1/m})^m = \lambda(g)$. Then we get an isomorphism
    of $\Dc_L$-modules
    \begin{displaymath} \phi (g): M \cong M_g, \quad l \mu \mapsto
      \lambda(g)^{1/m} l^g \mu, \quad l\in L,
\end{displaymath} 
and a cocycle $c: G \times G \to \mu_n$,
$c(g_1, g_2) = \lambda(g_1^{-1})^{1/m} \lambda(g_2^{-1})^{1/m}
\lambda(g_1 g_2)^{1/m} $.
It defines a central extension $\mu_m \to \bar G \to G$ and $M$ is a
$\Dc_L[\bar G]$-module. For example, let $\Delta$ the alternating
polynomial in $k[x_1, \dots , x_n]$, $G= S_n$ be the symmetric group,
and $L$ be the fraction field of the polynomial ring
$k[x_1, \dots , x_n]$. Since $S_n$ only has one non-trivial character,
it follows that the above constructed map $\bar S_n \to S_n$ is
non-split, and that the $\Dc_L[\bar S_n]$- module $L (\Delta)^{1/m}$
is not a $\Dc_L[S_n]$-module.
  \end{example}
  Below we consider $\Dc_L[G]$ as a
  $(\Dc_L, \Dc_L[\bar G_M] )$-bimodule, where the left action of
  $\Dc_L$ is the natural multiplication map.

  \begin{proposition}\label{cyclic-extension} Assume that $\pi: \Spec L \to \Spec K$ is Galois
    with cyclic Galois group $G$. Let $M$ be a simple $\Dc_L$-module such
    that $G_M= G$, then $M=\pi^!(N)$ for some simple $\Dc_K$-module
    $N$.
  \end{proposition}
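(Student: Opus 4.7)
The plan is to use Galois descent (\Proposition{\ref{morita}}): if we can upgrade the $\Dc_L$-module $M$ to a $\Dc_L[G]$-module whose underlying $\Dc_L$-structure is the given one, then $N := M^G$ is a $\Dc_K$-module with $\pi^!(N) = M$, and simplicity will transfer via the equivalence of categories. So the work is entirely in producing the $G$-action.

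By hypothesis $G_M = G$, so for every $g \in G$ there is an isomorphism of $\Dc_L$-modules $\phi(g) \colon M \to M_g$, unique up to $k^*$ by Schur's lemma (using that $k$ is algebraically closed and $M$ is simple). The collection $\{\phi(g)\}_{g\in G}$ is a priori only a projective action, encoded in the cocycle $f \in Z^2(G,k^*)$ of (\ref{inertia-section}) and the central extension $1 \to \mu_n \to \bar G_M \to G \to 1$ on which $M$ is genuinely a module. Since $G$ is cyclic, this cocycle is automatically a coboundary: $H^2(G, k^*) = k^*/(k^*)^n = 0$ because $k^*$ is $n$-divisible. I will prefer to make this concrete: fix a generator $g_0$ of $G$ with $|G| = n$, pick any $\Dc_L$-isomorphism $\phi_0 \colon M \to M_{g_0}$, and form the iterated composition $\phi_0^n \colon M \to M_{g_0^n} = M$. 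This is a $\Dc_L$-endomorphism of the simple module $M$, hence equals $c \cdot \id_M$ for some $c \in k^*$ by Schur. Choose $d \in k^*$ with $d^n = c^{-1}$ (possible since $k$ is algebraically closed of characteristic $0$) and replace $\phi_0$ by $d\phi_0$; then $\phi_0^n = \id_M$, and the assignment $g_0^i \mapsto \phi_0^i$ defines a group homomorphism $G \to \Aut_{k}(M)$ that, combined with the $\Dc_L$-action, makes $M$ a $\Dc_L[G]$-module. Equivalently, the central extension $\bar G_M \to G$ has been split.

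With this structure in place, \Proposition{\ref{morita}} produces $N := g_+(M) = M^G \in \Mod(\Dc_K)$ with $\pi^!(N) = M$. For simplicity, let $N' \subset N$ be a nonzero $\Dc_K$-submodule; then $\pi^!(N') = \Dc_L \otimes_{\Dc_K} N' \subset \pi^!(N) = M$ is a nonzero $\Dc_L[G]$-submodule, hence a nonzero $\Dc_L$-submodule of the simple $M$, forcing $\pi^!(N') = M$. Since $\pi^!$ is an equivalence, $N' = N$.

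The main obstacle is the splitting in the middle step, i.e. showing that the projective action of $G_M$ on $M$ lifts to an honest one. This is exactly where the hypothesis that $G$ is cyclic is used: it makes the Schur multiplier $H^2(G,k^*)$ vanish, which in practice amounts to extracting an $n$-th root of a single scalar in $k^*$. For non-cyclic $G_M$ the class of the cocycle $f$ can be a genuine obstruction and one is forced to stay with the central extension $\bar G_M$, as in the general decomposition of \Theorem{(A)}.
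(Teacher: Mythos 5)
Your argument is correct and takes essentially the same route as the paper's proof: both reduce the statement to showing that the projective $G$-action on $M$ lifts to an honest $\Dc_L[G]$-module structure, both identify the obstruction as the class in $H^2(G,k^*)$ that vanishes for cyclic $G$, and both then conclude via the descent equivalence of \Proposition{\ref{morita}}. You spell out the cocycle-splitting concretely (normalizing $\phi_0$ so $\phi_0^n = \id$ by extracting an $n$th root in $k^*$) and make the transfer of simplicity through $\pi^!$ explicit, both of which the paper leaves implicit, but the underlying idea is identical.
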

  \begin{remark} By the equivalence in \Theorem{\ref{equivalence}},
    \Proposition{\ref{cyclic-extension}} is an extension of
    \cite{isaacs:character}*{ Th. 11.22}.
  \end{remark}
\begin{proof}
  Since the cohomology group $H^2(G, k^*)=0$ when $G$ is cyclic it
  follows that $M$ is a $\Dc_L[G]$-module. Then the assertion follows
  from \Proposition{\ref{morita}}.
  \end{proof}

\subsection{Decomposition of inverse images}
We know that $j^+(N)= j^!(N)$ is semisimple if $N$ is semisimple and
$j$ is an open immersion. It has more generally been conjectured by M.
Kashiwara \cite{kashiwara:semisimple} that $\pi^!(N)$ be semisimple
when $\pi$ is non-characteristic to $M$ (this notion is explained in
\cite{hotta-takeuchi-tanisaki}*{Sec. 2.4}). Using only algebraic
methods the result below includes a weaker version of this conjecture.
When $\pi$ is Galois \Theorem{\ref{cliffordtheorem}} is a more precise
description of the inverse image in terms of the inertia group.

\begin{theorem}\label{semisimple-inv}
  Let $\pi: X \to Y$ be a surjective morphism of smooth varieties over
  a field $k$ of characteristic 0, and let $N$ be a holonomic
  $\Dc_Y$-module.
  \begin{enumerate}
  \item Assume that $\pi$ is smooth. Then $ \pi^!(N)$ is semisimple if
  and only if $N$ is
    semisimple, and if $ \pi^!(N)$ is simple, then $N$ is simple.
\item If $N$ is a semisimple connection, then $\pi^!(N)$ is a
semisimple connection.
  \item Assume $\pi$ is finite. If $\pi^!(N)$ is semisimple, then $N$
  is semisimple, and if $
  \pi^!(N)$ is simple, then $N$ is simple.
  \item Assume that $N$ is
    semisimple and $N_y$ is of finite type over $\Oc_{Y,y}$ for all
    points $y$ of height $\leq 1$ such that the closure of $y$
    intersects the discriminant $D_\pi$, i.e. $\{y\}^- \cap D_\pi\neq
    \emptyset$. Then $\pi^!(N)$ is semisimple.
  \end{enumerate}
\end{theorem}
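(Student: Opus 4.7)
The plan is to handle the four parts in order, relying on three ingredients: the exactness and faithfulness of $\pi^!$ (which, up to a shift, is the faithfully flat $\Oc$-module pullback $\pi^*$ when $\pi$ is smooth, and is always faithful when $\pi$ is surjective), the adjoint triple $(\pi^+,\pi_+,\pi^!)$ from \Theorem{\ref{adj-triple-th}} with its surjective trace $\Tr:\pi_+\pi^!\to \id$ from \Section{\ref{trace-section}}, and Theorem~(A) / \Theorem{\ref{decomposition-thm}} asserting that $\pi_+$ preserves semisimplicity when $\pi$ is finite.

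For part (1), faithfulness of $\pi^!$ gives the ``simple reflects'' half immediately: any proper submodule $0\subsetneq N_1\subsetneq N$ would produce $0\subsetneq\pi^!(N_1)\subsetneq \pi^!(N)$, contradicting simplicity of $\pi^!(N)$. For the semisimplicity equivalence, one direction uses faithfully flat descent — a decomposition of $\pi^!(N)$ aligned with a submodule of $N$ descends to a complementary summand of $N$ — while in the reverse direction I reduce to $N$ simple and rule out non-split self-extensions of $\pi^!(N)$ by local reduction to an étale Galois cover and an appeal to \Proposition{\ref{maschke}}. Part (2) is the algebraic statement invoked from \Theorem{\ref{inv-conn}}.

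For part (3), with $\pi$ finite surjective, both conclusions follow from the adjunction $(\pi_+,\pi^!)$ together with Theorem~(A). If $\pi^!(N)$ is semisimple, then $\pi_+\pi^!(N)$ is semisimple by Theorem~(A), and the trace $\pi_+\pi^!(N)\to N$ of \Section{\ref{trace-section}} is surjective, making $N$ a quotient of a semisimple module. If $\pi^!(N)$ is simple, then faithfulness of $\pi^!$ (from surjectivity of $\pi$) forbids any proper nonzero submodule of $N$, so $N$ is simple.

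Part (4) is the main obstacle, and the plan is to realise $\pi^!(N)$ as a minimal extension from the étale locus. With the notation of the diagram (BC), $\pi_0:X_0\to Y_0$ is étale, and flat base change yields $j^!\pi^!(N)=\pi_0^!\,i^!(N)$, which by part~(1) applied to the étale (hence smooth) map $\pi_0$ is semisimple on $X_0$. The hypothesis that $N_y$ is $\Oc_{Y,y}$-coherent at every height-one $y$ with $\overline{\{y\}}\cap D_\pi\neq\emptyset$ translates, via the formula $\pi^!(N)=\Oc_X\otimes_{\pi^{-1}\Oc_Y}\pi^{-1}(N)$, into $\Oc_X$-coherence of $\pi^!(N)$ at every generic point of the ramification divisor $B_\pi$. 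Consequently $\pi^!(N)$ has neither a subobject nor a quotient supported on $B_\pi$, so $\pi^!(N)=j_{!+}j^!\pi^!(N)$; since the minimal extension of a semisimple holonomic module is semisimple (the appendix on $j_{!+}$), $\pi^!(N)$ is semisimple. The delicate step here is checking that the stated codimension-one coherence hypothesis is exactly what forces $\pi^!(N)$ to coincide with the minimal extension of its restriction to $X_0$; a secondary subtlety is the preservation of semisimplicity in part~(1) under a general smooth surjective pullback, which I would ultimately reduce to finite étale Galois covers via descent.
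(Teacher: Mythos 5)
Your parts (1), (3), and (4) track the paper's argument closely. Part (3) is essentially identical: both you and the paper apply \Theorem{\ref{decomposition-thm}} to get semisimplicity of $\pi_+\pi^!(N)$, then use the surjective trace to descend to $N$, and use faithful flatness for the simple case. In part (1) your ``rule out non-split self-extensions via Galois cover and Maschke'' is morally the paper's argument, though the paper carries this out in a more structured way through the categories $\Mod(\Dc_L[G])$, Kashiwara's equivalence at generic points, and the descent equivalence of \Proposition{\ref{morita}}; your ``faithfully flat descent'' for the reverse implication is likewise in the right spirit but glosses over the need to produce descent data for the complementary summand (the paper does this by passing to $\Dc_L[G]$-modules and quoting \Proposition{\ref{maschke}}).

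Your treatment of part (2), however, contains a genuine gap. You dispose of it by citing \Theorem{\ref{inv-conn}}, but that theorem applies only to \emph{covering} connections, i.e.\ objects of $\Con_{cov}(Y)$ — torsion-free modules that become diagonalizable after a finite field extension — whereas part (2) is stated for \emph{all} semisimple connections $N$. The former is a proper subcategory of the latter, so the invocation simply does not cover the statement. Worse, the proof of \Theorem{\ref{inv-conn}} itself reduces the projection case to \Theorem{\ref{semisimple-inv}}(1), so appealing to it at this stage would also be circular. The paper instead proves (2) directly: by generic smoothness of the surjective morphism $\pi$ (characteristic $0$), part (1) gives that $\pi^!(N)$ is semisimple on a dense open subset; since it is also a connection, \Proposition{\ref{simple-coh}} upgrades generic semisimplicity to global semisimplicity. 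You should adopt this argument — or at minimum cite a result that really applies to arbitrary semisimple connections.

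A small imprecision in part (4): for the identity $\pi^!(N)=j_{!+}j^!\pi^!(N)$ you need $\pi^!(N)$ to have no subquotients supported in $X\setminus j(X_0)=\pi^{-1}(D_\pi)$, which is in general \emph{strictly larger} than the ramification divisor $B_\pi$ (the paper explicitly warns about this after diagram (BC)). The coherence hypothesis of part (4) does in fact give $\Oc_X$-coherence of $\pi^!(N)$ at all height-one points of $\pi^{-1}(D_\pi)$ — not merely at those of $B_\pi$ — since any such height-one $x$ maps to a height-one $y\in D_\pi$, to which the hypothesis applies. So the conclusion is correct, but you should phrase the coherence in terms of $\pi^{-1}(D_\pi)$ rather than $B_\pi$; this is also what the paper's invocation of \Proposition{\ref{simple-coh}} requires.
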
 
\begin{remark}\label{levelt-put-hoeij}
  \begin{enumerate}
  \item The assertion in \Theorem{\ref{semisimple-inv}}(2) will be
    extended in \Theorem{\ref{inv-conn}} to arbitrary morphisms of
    smooth varieties for a certain class of connections.
  \item Notice that if $\pi$ arises from a finite field extension
    $L/K$, then both (1) and (3) applies. If $\Dc_K$ is replaced by
    the subring $K[\partial]$ that is generated by a single derivation
    $\partial$ of $K/k$, then the assertion that $L\otimes_K N$ is
    semisimple if and only of $N$ is semisimple also follows from
    \citelist{\cite{levelt}*{\S 1, Proposition}
      \cite{put-hoeij:descent-skew}*{Prop. 2.7} }.
  \item In general, $\pi^!(N)$ need not be semisimple when $\pi$ is
    finite and surjective, and $N$ is a simple holonomic module. For
    example, $N$ can coincide with its maximal extension
    $N= i_+i^+(N)$, in the notation of the diagram \thetag{BC},
    implying that $\pi^!(N)= j_+j^+(\pi^!(N)) = j_+ \pi_0^!i^+(N)$,
    while $j_{!+}(\pi_0^!i^+(N))\neq j_+ \pi_0^!i^+(N)$. More
    concretely, let $\pi: A\to B$ be a finite map of smooth local
    $k$-algebras, $x\in \mf_B\setminus \mf_B^2$ and $F\in A[t]$ be
    such that $F(x)=0$ and $F\neq 0$; let $F^\partial$ be the action
    of a derivation $\partial \in T_{A/k}$ on the coefficients of $F$.
    Then $\partial (x) = - F^\partial(x)/F'(x)$ and we get the
    $\Dc_A$-module $N= \Dc_A x$. I am unaware of an algorithm, given
    $F$, to decide whether $N$ is simple, but if $F= t^n - y$ for some
    integer $n>1$, then we write $N= \Dc_A y^{1/n}$, which indeed is a
    simple module. However, $\pi^!(N) = B_{(x)}$, which is not
    semisimple.
  \end{enumerate}
    \end{remark}
\begin{pfof}{ \Theorem{\ref{semisimple-inv}}}
  For the proof of (1) and (2) we factorize $\pi$ by a finite map $f:
  X \to Z =Y \times_k X$ and a projection $p: Z \to Y$. Since $\pi^! =
  f^! \circ p^!$ it follows that it suffices to prove the assertions
  when $\pi$ is either finite or a projection, and since $p^!(N)$ is
  simple if and only if $N$ is simple it suffices to consider finite
  morphisms.

  (1): By the above discussion we can assume that $\pi$ is étale, and
  we have to prove that $\pi^!(N)$ is semisimple when $N$ is simple.
  Then $N$ is the minimal extension of a simple $\Dc_{Y, \xi}$-module
  $N_\xi$ for some point $\xi$ in $Y$, i.e., in the notation of
  \Theorem{\ref{minimalext}}, $N= N(\xi)$. Since $\pi$ is étale it
  follows that $\pi^!(N)$ is a minimal extension,
  \begin{displaymath} \pi^!(N) = \bigoplus_{x\in \pi^{-1}(\xi)}
\pi^!(N)(x).
  \end{displaymath} It suffices therefore to prove that the
  $\Dc_{X,x}$-module $M= \pi^!(N)_x $ is semisimple when $x\in
  \pi^{-1}(\xi)$. Let $L_1= k_{X,x}$ and $K= k_{Y,\xi}$ be the residue
  fields at $x$ and $\xi$ respectively. Since $\supp M =
  \{\mf_{X,x}\}$, by Kashiwara's equivalence it suffices to prove that
  $M_0= M^{\mf_{X,x}}$ is a semisimple $\Dc_{L_1}$-module. We have
  $M_0 = L_1\otimes_{K} N_0 $, where $N_0= N^{\mf_{Y,\xi}} $; since
  $N_\xi$ is simple, it follows, again by Kashiwara's equivalence,
  that $N_0$ is a simple $\Dc_K$-module. Select a field extension
  $L/L_1/K$ such that $L/K$ is Galois, with Galois group $G$; hence
  $L/L_1$ is also Galois and we let $H$ denote its Galois group. In
  the notation of \Section{\ref{galois-section}}
  \begin{displaymath}
    L\otimes_{K} N_0 =\Dc_L\otimes_{\Dc_K} N_0 = e_+(\Dc^l_L
    \otimes_{\Dc_{K}} N_0),
  \end{displaymath} and by \Proposition{\ref{morita}} $\Dc^l_L
  \otimes_{\Dc_{K}} N_0$ is a semisimple $\Dc_L[G]$-module, hence by
  \Proposition{\ref{maschke}, (1), } $L\otimes_{K} N_0$ is semisimple.
  Again since $ \Dc^l_L \otimes_{\Dc_{K} } N_0$ is a semisimple
  $\Dc_L[G]$-module, hence a semisimple
  $\Dc_L[H]$-module\footnote{This is proven in the same way as in the
  proof that $e_+$ is
    a semisimple functor in \Proposition{\ref{maschke}(1)} }, and
\begin{align*}
  M_0 &= \Dc_{L_1} \otimes_{\Dc_{K}} N_0 =
  \Dc^r_L\otimes_{\Dc_L[H]}\Dc^l_L\otimes_{\Dc_{L_1}} \Dc_{L_1}
  \otimes_{\Dc_{K}} N_0 \\
&= \Dc^r_L \otimes_{\Dc_L[H]} \Dc^l_L \otimes_{\Dc_{K}
  } N_0,
\end{align*} \Proposition{\ref{morita}} implies that $M_0$ is
semisimple. Conversely, if $M_0= \Dc_{L_1}\otimes_{\Dc_K}N_0$ is
semisimple, then $\Dc^l_L\otimes_{\Dc_{L_1}} \Dc_{L_1}
\otimes_{\Dc_{K}} N_0$ is a $\Dc_L[G]$-module which is semimple over
$\Dc_L[H]$, and therefore it is semisimple as $\Dc_L[G]$-module too
\Prop{\ref{maschke}}, hence by \Proposition{\ref{morita}} $N_0$ is
semisimple.

Since a $\Dc_L[G]$-module is simple if its restriction to a
$\Dc_L[H]$-module is simple, it follows from the above argument that
$N$ is simple when $\pi^!(N)$ is simple.

(2): By generic smoothness of $\pi$, (1) implies that $\pi^!(N)$ is
generically semisimple. Hence $\pi^!(N)$ is a connection that extends
a semisimple $\Dc_{X_0}$-module on some open subset $X_0$ of $X$, it
follows that $\pi^!(N)$ is semisimple \Prop{\ref{simple-coh}}.

(3): The remaining part to prove, that $N$ is (semi-)simple when
$\pi^!(N)$ is (semi-) simple is postponed until after
\Theorem{\ref{decomposition-thm}}.
  
  (4): Assume that $N$ is simple. Since $X_0/Y_0$ is étale, by (1) it
  follows that $ j^!(\pi^!(N)) = \pi_0^!i^!(N)$ is semisimple when $N$
  is semisimple. Since $N_y$ is of finite type whenever $y$ is a point
  of height $\leq 1$ whose closure intersects $D_\pi$, it follows that
  $\pi^!(N)_x$ is of finite type over $\Oc_{X,x}$ when $x$ is a point
  of height $\leq 1$ whose closure intersects $B_\pi$.
  \Proposition{\ref{simple-coh}} now implies that $\pi^!(N)$ is
  semisimple.
\end{pfof}

As explained in (3), the proof that $N$ is semisimple when $\pi^!(N)$
is semisimple and $\pi$ is finite is postponed to (\ref{contproof}),
since we first need to see that $\pi_+$ is a semisimple functor. To
prove this, in turn, we need the following special case of
\Theorem{\ref{semisimple-inv}}, (3).

\begin{lemma}\label{semisimple-inv-field}
  Make one of the assumptions:
  \begin{enumerate}
  \item $N$ is coherent over $\Oc_X$.
\item $\pi$ is a smooth map.
  \end{enumerate} If $\pi^!(N)$ is semisimple, then $N$  is semisimple.
\end{lemma}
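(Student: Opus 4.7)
The plan is to handle the two cases separately. Assumption (2), that $\pi$ is smooth, is already the ``only if'' direction of the equivalence in \Theorem{\ref{semisimple-inv}}(1), so nothing further is required there. For assumption (1), where $N$ is a connection on $Y$, I would reduce the problem to the generic point of $Y$ and then dispatch it by the Galois descent machinery that has already been built up in \Section{\ref{galois-section}}.

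For the generic reduction, since $N\in \Con(\Dc_Y)$ and $Y$ is smooth, $N$ is locally free over $\Oc_Y$, and passing to the saturation gives a bijection between $\Dc_Y$-subconnections of $N$ and $\Dc_K$-submodules of the generic fiber $N_\xi$, where $K$ is the function field of $Y$. In particular $N$ is semisimple as a $\Dc_Y$-module if and only if $N_\xi$ is semisimple as a $\Dc_K$-module, and any candidate isotypic decomposition of $N_\xi$ extends uniquely to a decomposition of $N$. Specialising the hypothesis at the generic point $\eta$ of $X$ analogously yields that $L\otimes_K N_\xi$ is semisimple as a $\Dc_L$-module, $L$ being the function field of $X$.

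Now choose a finite Galois extension $\bar L/K$ containing $L$, with Galois group $G$, and let $p:\Spec \bar L \to \Spec L$ be the associated étale map. Since $p$ is smooth, \Theorem{\ref{semisimple-inv}}(1) gives that $p^!(L\otimes_K N_\xi)=\bar L\otimes_K N_\xi$ is semisimple as a $\Dc_{\bar L}$-module. In the notation of \Section{\ref{galois-section}}, $\bar L\otimes_K N_\xi=g^+(N_\xi)$ carries a canonical $\Dc_{\bar L}[G]$-module structure, and \Proposition{\ref{maschke}}(1) upgrades its semisimplicity over $\Dc_{\bar L}$ to semisimplicity over $\Dc_{\bar L}[G]$. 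By \Proposition{\ref{morita}}, the functor $g^+$ is an equivalence of categories between $\Mod(\Dc_K)$ and $\Mod(\Dc_{\bar L}[G])$, and equivalences preserve semisimplicity; consequently $N_\xi$ is semisimple as a $\Dc_K$-module, completing the reduction.

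The only genuinely delicate step is the first one: the passage from global semisimplicity of $N$ to semisimplicity of the generic fibre $N_\xi$. This relies on $N$ being a connection, so that $\Dc_Y$-subconnections of $N$ correspond to $\Dc_K$-subspaces of $N_\xi$ via saturation in the locally free sheaf $N$. This correspondence fails for a general holonomic module $N$ (as illustrated by \Remark{\ref{levelt-put-hoeij}}(3)), which is precisely why hypothesis (1) restricts to connections; the remainder of the argument is then a routine chain of the descent equivalences already established.
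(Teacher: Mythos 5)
Your proof is correct, but it follows a genuinely different route from the paper's. The paper's argument is a direct averaging argument at the level of $\Dc_Y$-modules: given a subconnection $N_1 \hookrightarrow N$, the hypothesis that $\pi^!(N)$ is semisimple provides a retraction of $\pi^!(N_1) \hookrightarrow \pi^!(N)$; applying $\pi_+$ and composing with the trace $\Tr\colon \pi_+\pi^!(N_1) \to N_1$ (normalized by $\tfrac{1}{n}$, where $n = \deg\pi$, using \Proposition{\ref{split-conn}}) produces a retraction of $N_1 \hookrightarrow N$ directly on $Y$, so no passage to the generic point is needed. Your argument instead reduces to the function field $K$ of $Y$ via the lattice isomorphism between $\Dc_Y$-subconnections of $N$ and $\Dc_K$-submodules of $N_\xi$, and then repeats at the field level the Galois-descent pattern (pass to a Galois closure $\bar L/K$, apply \Proposition{\ref{maschke}}(1) to upgrade $\Dc_{\bar L}$-semisimplicity to $\Dc_{\bar L}[G]$-semisimplicity, then use the equivalence in \Proposition{\ref{morita}} to descend to $\Dc_K$). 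This is in fact the same mechanism used in the ``converse'' step of the proof of \Theorem{\ref{semisimple-inv}}(1), so your route amounts to feeding case (1) through case (2) after the generic reduction; the paper's route avoids that reduction and gives a self-contained splitting. One technical caveat: you invoke \Theorem{\ref{semisimple-inv}}(1) for the map $p\colon \Spec\bar L \to \Spec L$, which is a morphism of field spectra rather than of smooth varieties; this is harmless because the paper's proof of (1) is itself carried out at the field level after reductions, but strictly speaking you should invoke the underlying field-level statement (semisimplicity is preserved under $L'\otimes_L(-)$ for finite $L'/L$) rather than the theorem as stated.
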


\begin{proof} The case (2) is already covered by
  \Theorem{\ref{semisimple-inv}}. Let $0 \to N_1 \xrightarrow{i} N$ be
  an exact sequence. We then get a diagram (as detailed below)
  \begin{displaymath}
    \xymatrix{
      \pi_+\pi^!(N_1) \ar[r]^\psi \ar@<.5ex>[d]^{\Tr} & \pi_+\pi^+(N)
      \\ N_1 \ar[r]^i \ar@<.5ex>[u]^v & N \ar@<-.5ex>[u]_u.
    }
  \end{displaymath} The assumptions (1) or (2) imply that
  $\pi^!(N)\cong \pi^+(N)$ (this follows as soon as $\pi$ is
  non-characteristic to $N$), so that there exists an isomorphism $f:
  \pi_+\pi^!(N) \cong \pi_+\pi^+(N_1)$ and we therefore get the map
  $\psi= f\circ \pi_+ \pi^!(i)$. The functor $\pi^!$ is exact since
  $\pi$ is flat and $\pi_+$ is exact due to
  \Proposition{\ref{flat-module}}; hence $\psi$ is injective. By the
  already proven part of \Theorem{\ref{semisimple-inv}}, (1-2), the
  module $\pi^!(N)= \pi^+(N)$ is semisimple, hence there exists a map
  $\phi: \pi_+ \pi^+(N)\to\pi_+ \pi^!(N_1) $ such that $\phi \circ
  \psi $ is the identity. Put $j= \frac 1n \Tr \circ \phi \circ u: N
  \to N_1$, where $n$ is the degree of the map $\pi$. Since $\psi
  \circ v = u \circ i$, we have
  \begin{displaymath}
    j \circ i = \frac 1n \Tr \circ \phi \circ u \circ i =\frac 1n \Tr
    \circ \phi \circ \psi \circ v =\frac 1n \Tr \circ v = \id_{N_1}.
  \end{displaymath} Therefore $N$ is semisimple.

\end{proof}

We can give a more detailed description of the inverse image in terms
of the inertia group when the map is Galois. In the formulation we
refer to the diagram \thetag{BC} in (\ref{sec:2.1}).
\begin{theorem}\label{cliffordtheorem}
  Let $\pi: X\to Y$ be a finite morphism of smooth varieties such that
  the extension of fraction fields is Galois with Galois group $G$.
  Let $N$ be a simple $\Dc_Y$-module such that $i^!(N)\neq 0$. Let $M$
  be a simple submodule of $\pi^!(N)$, $G_M$ be the inertia subgroup
  of $M$ in $G$, and put $t = [G: G_M]$. Then
  \begin{displaymath}
   j_{!+} j^!(\pi^!(N)) = \bigoplus^t_{i=1} (g_i\otimes M)^e,
 \end{displaymath} for some integer  $e$, where $g_i$ are
 representatives of the cosets $G/G_M$. If
 $N$ is a connection, or $\pi$ is smooth, then one can erase $j_{!+}j^!$
 on the left. Moreover:

 \begin{enumerate}
 \item The integer $e$ divides both the order $|G_M|$ of the inertia group
   and the degree of $\pi$.
 \item $\frac {\rko (N)}{\rko (M)}$ divides the degree of $\pi$.
 \end{enumerate}

\end{theorem}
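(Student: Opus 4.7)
The strategy is to transport classical Clifford theory for finite groups into the $\Dc$-module setting via the Galois descent equivalence of \Proposition{\ref{morita}}. First I would reduce to the étale base change diagram \thetag{BC}: since $i^!(N)\neq 0$, the module $N_0 := i^!(N)$ is a nonzero simple $\Dc_{Y_0}$-module. Passing to the generic points, $L/K$ is Galois with group $G$, and \Proposition{\ref{morita}} identifies simple $\Dc_K$-modules with simple $\Dc_L[G]$-modules via $\pi_0^!$. Set $\tilde N = \pi_0^!(N_0) = j^!(\pi^!(N))$, simple as $\Dc_{X_0}[G]$-module (equivalently as $\Dc_L[G]$-module).

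Now I decompose $\tilde N$ as $\Dc_{X_0}$-module. By \Proposition{\ref{maschke}}(1), $\tilde N$ is semisimple over $\Dc_{X_0}$. Picking the simple submodule $M$, the twists $g\otimes M$ are simple and isomorphic iff $g\in G_M$, so the $G$-orbit of the isomorphism class of $M$ consists of exactly $t=[G:G_M]$ classes, represented by $g_1\otimes M,\dots,g_t\otimes M$. Let $V\subset\tilde N$ denote the $M$-isotypic $\Dc_{X_0}$-component; by the construction of the central extension $\bar G_M$ in (\ref{central-ext}), $V$ carries a natural $\Dc_{X_0}[\bar G_M]$-action. The simplicity of $\tilde N$ over $\Dc_{X_0}[G]$ forces $V$ to be simple over $\Dc_{X_0}[\bar G_M]$: a proper $\Dc_{X_0}[\bar G_M]$-submodule $V'\subsetneq V$ would give a proper $\Dc_{X_0}[G]$-submodule $\sum_i g_iV'\subsetneq\tilde N$. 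Writing $V=M^{\oplus e}$, the multiplicity space $W=\Hom_{\Dc_{X_0}}(M,V)$ is then an irreducible representation of $\bar G_M$ on which the central $\mu_n$ acts by the defining scalar character, i.e., an irreducible projective representation of $G_M$ of dimension $e$. Summing the $G$-orbit yields
\begin{displaymath}
\tilde N \;=\; \bigoplus_{i=1}^{t} (g_i\otimes M)^{\oplus e}.
\end{displaymath}

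For the divisibility assertions I would invoke the classical theorem on twisted group algebras: the dimension of any irreducible module over the twisted group algebra $k^\alpha[G_M]$ (equivalently, of any irreducible projective representation of $G_M$) divides $|G_M|$. Applied to $W$ this gives $e\mid |G_M|$, and since $|G_M|$ divides $|G|=\deg\pi$ we get $e\mid\deg\pi$, which is (1). For (2), the ranks $\rko(N)$ and $\rko(\pi^!(N))$ are computed at generic points where $\pi$ is étale, so $\rko(\pi^!(N))=\rko(N)$; the above decomposition gives $\rko(\pi^!(N))=te\cdot\rko(M)$, whence $\rko(N)/\rko(M)=te=e[G:G_M]$ divides $|G_M|[G:G_M]=\deg\pi$.

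Finally, to globalise from $X_0$ to $X$, I would apply the minimal extension functor $j_{!+}$ of \Theorem{\ref{minimalext}}; it sends each simple $\Dc_{X_0}$-module to a simple $\Dc_X$-module and commutes with finite direct sums, so the decomposition of $\tilde N$ becomes the stated decomposition of $j_{!+}j^!(\pi^!(N))$. When $N$ is a connection, $\pi^!(N)$ is itself a connection by \Lemma{\ref{con-lemma}}, hence coincides with $j_{!+}j^!(\pi^!(N))$; when $\pi$ is étale/smooth one has $Y_0=Y$ so no extension step is needed. The main obstacle is the precise Clifford-type argument that $V=M^{\oplus e}$ is simple as $\Dc_{X_0}[\bar G_M]$-module with multiplicity space $W$ an irreducible projective $G_M$-representation, together with invoking the divisibility $\dim W\mid |G_M|$ for twisted group algebras; once those are in hand the rest of the proof is essentially bookkeeping.
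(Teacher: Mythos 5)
Your proof is correct and follows the same overall skeleton as the paper (descend to the generic point via diagram (BC), use Galois descent to view $j^!(\pi^!(N))$ as a simple $\Dc_{X_0}[G]$-module, decompose over $\Dc_{X_0}$, then extend by $j_{!+}$), but you fill in the two key steps by a different route. For the decomposition and the divisibility $e\mid|G_M|$, the paper reads off $e=[(\pi_0)_+(M):N]=\dim_k V_\chi$ from \Theorem{\ref{galois-direct}} and then cites Serre; you instead run a direct Clifford argument showing the $M$-isotypic component $V$ is simple over $\Dc_{X_0}[\bar G_M]$, identify the multiplicity space $W=\Hom_{\Dc_{X_0}}(M,V)$ as an irreducible representation of $\bar G_M$ with the center $\mu_n$ acting nontrivially, and invoke the classical divisibility of $\dim W$ by $|\bar G_M/\mu_n|=|G_M|$. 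This is self-contained and avoids the forward reference to \Theorem{\ref{galois-direct}} that the paper has to protect with a footnote against circularity. For part (2), the paper factors $\pi$ through $\Spec L^{G_M}$ and deduces $\rko(N)/\rko(M)=|G/G_M|\cdot e$ from that factorization (and its write-up has a couple of slips, saying $q^!(N_0)=M$ where it must mean $M^e$, and ``divides $|G/G_M|$'' where it means $|G_M|$); your direct rank count $\rko(N)=\rko(\pi^!(N))=te\cdot\rko(M)$ gives the same conclusion more cleanly. One small nit: you cite \Lemma{\ref{con-lemma}} for the fact that $\pi^!(N)$ is a connection when $N$ is --- that lemma only says $\pi^!=\pi^+$ on connections; the coherence of $\pi^!(N)$ over $\Oc_X$ is instead immediate from finiteness of $\pi$ (or from \Proposition{\ref{simple-coh}}). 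The paper itself appeals to semisimplicity of $\pi^!(N)$ to erase $j_{!+}j^!$, which requires knowing in addition that there is no torsion along $B_\pi$; your observation that a connection equals the minimal extension of its generic restriction gets to the same place and is arguably cleaner.
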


We indicate how \Theorem{\ref{cliffordtheorem}} relates to Clifford's
theorem in the representation theory of finite groups. So assume for a
moment that there exists a field extension $L_1/L/K$ such that $L_1/K$
is Galois and $L_1\otimes_K N= L_1^e$ for some integer $e$ (as
$\Dc_{L_1}$-module); in (\ref{finite-groups}) it is explained how such
$\Dc_K$-modules $N$ are closely related to representations of finite
groups. Then $L_1/L$ is also Galois and we let $G_1$ and $H$ be the
Galois groups of $L_1/K$ and $L_1/L$, so that $G= G_1/H$ is the Galois
group of $L/K$. The decomposition of $N$ in
\Theorem{\ref{cliffordtheorem}} now follows from
\Theorem{\ref{equivalence}} and \Proposition{\ref{ind-direct}} (below)
by applying Clifford's theorem to a normal subgroup (see
\cite{curtis-reiner}*{\S 49}).

\begin{proof} Consider first the restriction of $\pi$ to a map
  $\pi_0 : \Spec L \to \Spec K$. Then the restriction of $M$ to
  $\Spec L$ is non-zero; we again denote this restriction by $M$. If
  $M= \Dc_L \sum l_i \otimes n_i \subset L\otimes_KN$, for each
  element $g$ in $G$, $g\otimes M$ is a simple $\Dc_L$-module that can
  be identified with $\Dc_L (\sum g\cdot l_i \otimes n_i)$. Using this
  identification
  \begin{displaymath}\tag{*}
  \sum_{g\in G} g\otimes M \subset L\otimes_KN 
\end{displaymath}
is an inclusion of $\Dc_L[G]$-modules. Since $N$ is simple,
\Proposition{\ref{morita}} implies that the right side is simple over $\Dc_L[G]$,
hence the inclusion is an equality.

Let $[S:S_1]$ denote the multiplicity of a simple module $S_1$ in a
module $S$.

We have, by adjointness,
\begin{displaymath}
  e=   [\pi_0^!(N): g\otimes M] = [(\pi_0)_+(g\otimes M):  N ]= [ (\pi_0)_+(M): N] =
  [\pi_0^!(N): M], 
\end{displaymath}
where the third equality follows since $g\otimes M \cong M$ when
restricted to $\Dc_K$-module. Now the sum in \thetag{*} can be reduced
to a direct sum over representatives of $G/G_M$ as stated, and then
taking minimal extensions from $\Dc_L$-modules to $\Dc_X$-modules.
This proves the first assertion. If $N$ is a connection or $\pi$ is
smooth, then $\pi^!(N)$ is semisimple
\Th{\ref{inv-conn}}, so that $j_{!+}(\pi^!(N))= \pi^!(N)$.

(1): We have $e= [ (\pi_0)_+(M): N] = \dim_k V_\chi $
\Th{\ref{galois-direct}} for some $\chi \in \widehat {\bar G}_M$, and
$\dim_k V_{\chi} $ is a divisor of $| G_M|$ (see
\cite{serre:lin-rep}*{\S 6.5, Prop 17}). The other assertion follows
since $G_M \subset G$ and $\deg \pi = |G|$.

(2): Put $L_0 = L^{G_M}$ and consider the factorisation
$\Spec L \xrightarrow {q} \Spec L_0 \xrightarrow{p} \Spec K $, so that
$\pi = p\circ q$. Then there exists a simple $\Dc_{L_0}$-module $N_0$
such that $p_+(N_0)=N$ and $q^!(N_0)=M$. Since
$\rko (N) = |G/G_M| \rko (N_0)$ it suffices to show that
$\rko (N_0)/ \rko (M)$ divides $|G/G_M|$. This follows from the first
part of the theorem to the map $q$, with $t=1$, so that
$\rko (N_0) / \rko (M) = e$ and therefore the assertion follows from
(1).\footnote{The proofs of \Theorem{\ref{galois-direct}} and
  \Theorem{\ref{inv-conn}} below do not depend on
  \Theorem{\ref{cliffordtheorem}}.}
\end{proof}

By considering compositions of Galois mappings we get the following
analogue of a theorem due to Ito and Isaacs about representations of
finite groups (see \cite{isaacs:character}*{11.29}). If one restricts
to the case $\Lambda= L$ below, using the equivalence in
\Theorem{\ref{equivalence}} it follows that it is equivalent to the
Ito-Isaacs result. 
\begin{corollary}\label{ito-isaacs}
  Let $N$ be a simple $\Dc_K$-module and $\pi : \Spec L\to \Spec K$ be
  a finite Galois field extension such that $\pi^!(N)= \Lambda^e$ for
  som integer $e$ and $\Dc_L$-module $\Lambda$ of rank $1$ over $L$.
  If $\pi$ can be factorised into field extensions
  $K= L_0 \subset L_1 \subset L_2 \subset \cdots \subset L_n = L $
  such that each $L_{i}/L_{i-1}$ is Galois, then $\rko (N)$ divides
  the degree $[L:K]$.
\end{corollary}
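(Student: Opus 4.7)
The plan is to iterate part (2) of Theorem \ref{cliffordtheorem} along the tower of Galois extensions. Let $\rho_i : \Spec L_i \to \Spec L_{i-1}$ denote the morphism induced by $L_{i-1} \subset L_i$, so that $\pi = \rho_1 \circ \rho_2 \circ \cdots \circ \rho_n$ and, using transitivity of the inverse image (Remark \ref{invim-remark} applied inductively), $\pi^! = \rho_n^! \circ \rho_{n-1}^! \circ \cdots \circ \rho_1^!$. Each $\rho_i$ is a finite Galois morphism of smooth varieties (each $\Spec L_i$ is a point, hence trivially smooth), which is exactly the setup of Theorem \ref{cliffordtheorem}.

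Set $N_0 = N$, and proceed by induction on $i$: given a simple $\Dc_{L_{i-1}}$-module $N_{i-1}$, choose a simple $\Dc_{L_i}$-submodule $N_i$ of $\rho_i^!(N_{i-1})$. Such a submodule exists because $\rho_i^!$ is exact and conservative (Proposition \ref{flat-module} together with Proposition \ref{etale-finite}(1)), so $\rho_i^!(N_{i-1})$ is a non-zero holonomic and hence finite length $\Dc_{L_i}$-module. Theorem \ref{cliffordtheorem}(2) applied to $\rho_i$ then yields that $\rko(N_{i-1})/\rko(N_i)$ is an integer dividing $\deg \rho_i = [L_i:L_{i-1}]$.

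At the top of the tower we have $N_n \hookrightarrow \rho_n^!(N_{n-1}) \subset \rho_n^! \rho_{n-1}^! \cdots \rho_1^!(N) = \pi^!(N) = \Lambda^e$. Since $\Lambda$ has rank $1$ over the field $L$, any non-zero $\Dc_L$-submodule of $\Lambda$ already equals $\Lambda$ as $L$-module, so $\Lambda$ is simple as $\Dc_L$-module. Consequently the only simple constituent of $\Lambda^e$ is $\Lambda$ itself, forcing $N_n \cong \Lambda$ and $\rko(N_n)=1$.

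Telescoping the divisibilities from the inductive step gives
\begin{equation*}
  \rko(N) \;=\; \rko(N_n) \cdot \prod_{i=1}^{n} \frac{\rko(N_{i-1})}{\rko(N_i)} \;=\; \prod_{i=1}^{n} \frac{\rko(N_{i-1})}{\rko(N_i)},
\end{equation*}
which divides $\prod_{i=1}^{n} [L_i:L_{i-1}] = [L:K]$, as desired. The only genuine subtlety is propagating the hypotheses of Theorem \ref{cliffordtheorem}(2) through the tower, i.e.\ ensuring at each stage that $N_{i-1}$ is simple and that a simple submodule of $\rho_i^!(N_{i-1})$ can be selected compatibly so that at the last stage it lands inside the rank-$1$ isotypic piece $\Lambda^e$; the compatibility is automatic because any simple submodule of $\pi^!(N)=\Lambda^e$ must be isomorphic to $\Lambda$, and the simple submodule chosen at stage $n$ is such a submodule.
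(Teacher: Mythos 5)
Your proof is correct and follows essentially the same route as the paper's: iterate Theorem~\ref{cliffordtheorem}(2) up the tower $K = L_0 \subset \cdots \subset L_n = L$, picking a simple submodule at each stage, note that the simple module at the top is forced to be $\Lambda$ (hence of rank $1$) because $\pi^!(N) = \Lambda^e$ is $\Lambda$-isotypic, and telescope the divisibility relations. You have merely filled in the inductive bookkeeping that the paper leaves implicit (and you correctly identify that $\Spec L_i$ is smooth over $k$ in characteristic zero, which is what makes \ref{cliffordtheorem} applicable at each stage).
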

\begin{proof}
  Since $\pi^!(M) = \Lambda^e$ it follows that $\rko(M) =1$ in
  \Theorem{\ref{cliffordtheorem}}, and we can apply (2) to each
  successive extension $L_i/L_{i-1}$.
\end{proof}

\begin{theorem}\label{clifford-pairing} Let $M$ be a simple $\Dc_X$-module, where
  $X= \Spec L$. Put $L_1= L^{G_{M}}$ and consider the factorization
  $ X \xrightarrow{p} Z \xrightarrow{q} Y$, $\pi = q \circ p$,
  corresponding to the field extensions $L/ L_1/K$, where $L/K$ is
  Galois. Put
  \begin{displaymath}
    \Ac = \{N_Z\  \vert \ [p^!(N_Z):M ]\neq 0 \}, \quad \Bc = \{ N_Y \
    \vert \ [\pi^!(N_Y): M]\neq 0\}. 
  \end{displaymath}
  \begin{enumerate}
  \item If $N_Z \in \Ac$, then $q_+(N_Z)$ is simple.
    \item The map $N_Z \to q_+(N_Z)$  defines a bijection $\Ac \to
      \Bc$.
      \item If $N_Z\in \Ac$ and $N_Y= q_+(N_Z)$, then $N_Z$ is the
        unique simple component  of $q^!(N_Y)$ that belongs to $\Ac$.
        \item If $q_+(N_Z)= N_Y$, with $N_Z\in \Ac$, then $[p^!(N_Z):
          M]= [\pi^!(N_Y):M]$. 
  \end{enumerate}
\end{theorem}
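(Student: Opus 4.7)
The plan is to transfer the problem to a skew group algebra setting via Galois descent and then run a Mackey-type analysis. Set $H = G_M \subset G$, so the factorization $X \xrightarrow{p} Z \xrightarrow{q} Y$ corresponds to the tower of fields $L/L_1/K$ with $L/L_1$ Galois of group $H$ and $L/K$ Galois of group $G$. By \Proposition{\ref{morita}}, the equivalences $\Mod(\Dc_Z) \cong \Mod(\Dc_L[H])$ and $\Mod(\Dc_Y) \cong \Mod(\Dc_L[G])$ transform $q^!$ and $q_+$ into restriction and induction functors $\mathrm{Res}^G_H$ and $\mathrm{Ind}^G_H = \Dc_L[G]\otimes_{\Dc_L[H]}(-)$, respectively, while $p^!$ becomes the forgetful functor to $\Mod(\Dc_L)$.

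A preliminary observation is that for every $N_Z \in \Ac$ the $\Dc_L$-module $p^!(N_Z)$ is $M$-isotypic. Indeed, applying \Theorem{\ref{cliffordtheorem}} to the Galois extension $L/L_1$, the inertia of $M$ inside $\mathrm{Gal}(L/L_1) = H$ equals $H$ itself (since $H = G_M$), so the index $t$ in that theorem is $1$, giving $p^!(N_Z) = M^{e(N_Z)}$ for some integer $e(N_Z) \geq 1$.

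The heart of the argument is the double-coset (Mackey) decomposition
\begin{displaymath}
\mathrm{Res}^G_H\,\mathrm{Ind}^G_H N_Z = \bigoplus_{[g] \in H\backslash G/H} \mathrm{Ind}^H_{H \cap gHg^{-1}}\bigl({}^{g} N_Z\big|_{H\cap gHg^{-1}}\bigr),
\end{displaymath}
valid for the skew group algebra $\Dc_L[G]$ exactly as in the classical group case. Restricting the $[g]$-summand further to $\Dc_L$ yields a sum of twists $hg \otimes M$ with $h$ running over $H/(H \cap gHg^{-1})$; and $hg \otimes M \cong M$ holds precisely when $hg \in G_M = H$, i.e.\ only for the trivial double coset $[g] = [e]$, where the corresponding summand is $N_Z$ itself with multiplicity one. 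Consequently $N_Z$ appears in $q^!q_+(N_Z)$ with multiplicity exactly one, and no other simple $\Dc_Z$-module lying in $\Ac$ occurs as a component.

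From this all four statements follow readily. For (1): \Theorem{\ref{decomposition-thm}} provides the semisimplicity of $q_+(N_Z)$, and the adjunction identity $\End_{\Dc_Y}(q_+N_Z) = \Hom_{\Dc_Z}(N_Z, q^!q_+N_Z) \cong \End_{\Dc_Z}(N_Z)$ (a division ring by Schur) forces $q_+(N_Z)$ to have a single simple summand with multiplicity one, hence to be simple. Assertion (3) is exactly the conclusion of the Mackey analysis. For (2), injectivity is immediate from (3); for surjectivity, given $N_Y \in \Bc$ one picks any simple $\Dc_Z$-component $N_Z$ of $q^!(N_Y)$ lying in $\Ac$ (which exists since $\pi^!(N_Y) = p^!q^!(N_Y)$ contains $M$) and uses the nonzero adjunction map $\Hom(q_+N_Z, N_Y) = \Hom(N_Z, q^!N_Y)$ together with simplicity of both sides. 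Finally for (4), decomposing $q^!(N_Y) = N_Z \oplus X$ with no simple component of $X$ in $\Ac$ gives $\pi^!(N_Y) = p^!(N_Z) \oplus p^!(X)$ and $[p^!(X):M] = 0$, so $[\pi^!(N_Y):M] = [p^!(N_Z):M]$. The main technical point is the validity of the Mackey formula for $\Dc_L[G]$, which is the main obstacle in a full write-up; however it carries over verbatim from the classical group case because the $G$-action on $\Dc_L$ is compatible with group multiplication.
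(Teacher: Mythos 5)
Your proof is correct, but it takes a genuinely different route from the paper's. The paper's argument avoids the Mackey double-coset formula entirely. It applies \Theorem{\ref{cliffordtheorem}} twice — to $L/L_1$ (giving $p^!(N_Z)=M^f$, as you do) and to $L/K$ (giving $\pi^!(N_Y)=\oplus^t(g_iM)^e$) — and then proves (1) and (4) by a rank comparison: the adjunction $[q^!(N_Y),N_Z]\neq 0$ yields $f\leq e$, and the chain
\begin{displaymath}
  e\cdot t\cdot \rko_L M = \rko_K N_Y \leq \rko_K q_+(N_Z) = t\cdot \rko_{L_1} N_Z = f\cdot t\cdot \rko_L M
\end{displaymath}
forces equality throughout, which simultaneously delivers $q_+(N_Z)=N_Y$ and $e=f$; items (3) and (2) are then deduced by counting multiplicities of $M$. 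Your proof instead makes the Mackey formula the central tool, gets (3) directly from the observation that only the trivial double coset contributes $M$-isotypic material, and then extracts (1) from the one-dimensionality of $\End_{\Dc_Y}(q_+N_Z)\cong\Hom_{\Dc_Z}(N_Z,q^!q_+N_Z)$ via semisimplicity (\Theorem{\ref{decomposition-thm}}) plus Schur over an algebraically closed field. Conceptually your route is cleaner, since (3) becomes a corollary of the decomposition rather than a consequence of a numerical coincidence; the paper's route avoids having to establish Mackey and stays closer to elementary rank-counting. One practical remark: you flag the Mackey formula for $\Dc_L[G]$ as "the main obstacle in a full write-up," but the paper already proves precisely this result at the $\Dc$-module level — \Theorem{\ref{inv-dir}}(2) with $H_1=H_2=H$ is the identity $q^!q_+(N_Z)=\bigoplus_{s\in S}(l_s)_+r_s^!(N_Z)$ over $H\backslash G/H$ — so you can cite it directly under Galois descent (\Proposition{\ref{morita}}) rather than appealing to the classical formula for skew group rings, closing that gap within the paper's own framework.
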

The proof below is quite parallel to a corresponding statement for
characters of finite groups, see \cite{isaacs:character}*{Th 6.11}.
\begin{proof}
Let $N_Z\in \Ac$ and  $N_Y$ be a simple $\Dc_Y$-module. Since
  \begin{displaymath}
0\neq     [q_+(N_Z), N_Y]= [q^!(N_Y), N_Z], \quad [p^!(N_Z), M] \neq 0, 
  \end{displaymath}
  we get $[\pi^!(N_Y), M] = [p^!(q^!(N_Y)),M]\neq 0$. Hence
  $N_Y\in \Bc$. By \Theorem{\ref{cliffordtheorem}} we have
  $p^!(N_Z) = M^f$ and $\pi^!(N_Y) = \oplus^t ((g_i M))^e$. Since
  $[q^!(N_Y), N_Z]\neq 0$ we get $f\leq e$. Now comparing dimensions
  of vector spaces
\begin{displaymath}
  e\cdot t\cdot  \rko_L M = \rko_K N_Y \leq \rko_K q_+(N_Z) = t \cdot
  \rko_{L_1} N_Z  = f\cdot t\cdot
  \rko_L M  \leq  f\cdot t\cdot  \rko_L M, 
\end{displaymath}
hence equality holds throughout. In particular,
$\rko_K N_Y = \rko _K q_+(N_Z) $ and we conclude $q_+(N_Z)= N_Y$. This
proves (1).  Also,
\begin{displaymath}
  [\pi^!(N_Y), M] = e = f = [q^!(N_Z), M], 
\end{displaymath}
which proves (4). Statement (3) follows from the last equality since
$N^1_Z\in \Ac $ and $N^1_Z \neq N_Z$, and $N^1_Z$ is a constituent of
$q^!(N_Y)$, then
\begin{displaymath}
  [\pi^!(N_Y), M] \geq [\pi^!(N_Z + N^1_Z), M] = [p^!(N_Z), M] +
  [p^!(N^1_Z), M] > [p^!(N_Z), M],
\end{displaymath}
which is a contradiction. The map in (2) is well defined by (1) and
its image lies in $\Bc$ by (4). It is one-to-one by (3). To prove that
it maps onto $\Bc$, let $N_Y \in \Bc$. Since $M$ is a constituent of
$\pi^!(N_Y)$, there must be some simple constituent $N_Z$ of
$q^!(N_Y)$ with $[q_+(N_Z), N_Y]\neq 0$. Thus $N_Z\in \Ac$ and $N_Y$
is a constituent of $q_+(N_Z)$. Therefore $N_Y = q_+(N_Z)$ and the
proof is complete.
\end{proof}

Say that a $\Dc_Y$-module $N_Y$ is {\it primitive} if it is not the
direct image of another module with respect to a non-trivial finite
map. More precisely, if $q_+(N_Z)= N_Y$ for a finite morphism
$q: Z \to Y $ and some module $N_Z$, then $q $ is an isomorphism.
\begin{corollary}\label{cor-isaacs}
  If $N_Y$ is a simple primitive module, then $\pi^!(N_Y) = M^e$ where
  $M$ is a $\Dc_X$-module and $e$ is some integer.
\end{corollary}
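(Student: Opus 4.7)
The plan is to deduce the statement by combining the structural decomposition provided by \Theorem{\ref{cliffordtheorem}} with the bijection in \Theorem{\ref{clifford-pairing}(2)}, using the primitivity hypothesis at the very end to force the index $t$ to be $1$.

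First I would apply \Theorem{\ref{cliffordtheorem}} to the Galois map $\pi: X=\Spec L \to Y = \Spec K$ and the simple module $N_Y$: this produces a simple submodule $M$ of $\pi^!(N_Y)$ and a decomposition
\begin{displaymath}
\pi^!(N_Y) \;=\; \bigoplus_{i=1}^{t}(g_i\otimes M)^e,\qquad t=[G:G_M].
\end{displaymath}
The whole task therefore reduces to showing $t=1$, i.e.\ $G_M = G$, since then all twists $g_i\otimes M$ coincide with $M$ and we obtain $\pi^!(N_Y) = M^e$ as desired.

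Next I would set $L_1 = L^{G_M}$ and consider the factorisation $\pi = q\circ p$ with $X \xrightarrow{p} Z= \Spec L_1 \xrightarrow{q} Y$ as in \Theorem{\ref{clifford-pairing}}. By construction the simple submodule $M$ lies in the set $\Bc$ (via $N_Y$), so by part (2) of that theorem the bijection $\Ac \to \Bc$, $N_Z\mapsto q_+(N_Z)$, produces a simple $\Dc_Z$-module $N_Z\in \Ac$ with
\begin{displaymath}
q_+(N_Z) \;=\; N_Y.
\end{displaymath}

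Now the primitivity of $N_Y$ enters: since $q$ is a finite morphism and $N_Y$ is expressed as a direct image from $Z$, the defining property of primitive modules forces $q$ to be an isomorphism. Hence $L_1 = K$, which means $G_M = G$ and $t = [G:G_M] = 1$. Substituting back in \Theorem{\ref{cliffordtheorem}} yields $\pi^!(N_Y) = M^e$. There is no real obstacle here beyond correctly invoking the Clifford-type bijection; the substantive work was already done in \Theorems{\ref{cliffordtheorem}}{\ref{clifford-pairing}}, and the corollary is essentially a clean packaging of those results under the extra hypothesis of primitivity.
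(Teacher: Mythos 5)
Your proposal is correct and follows essentially the same route as the paper: invoke \Theorem{\ref{cliffordtheorem}} for the decomposition $\pi^!(N_Y)=\bigoplus_{i=1}^{t}(g_i\otimes M)^e$, use the bijection $\Ac\to\Bc$ of \Theorem{\ref{clifford-pairing}} to write $N_Y=q_+(N_Z)$, and let primitivity kill $q$, forcing $G_M=G$ and $t=1$. One small wording slip: you say ``$M$ lies in the set $\Bc$ (via $N_Y$)'', but it is $N_Y$ that lies in $\Bc$ (precisely because $M$ is a simple constituent of $\pi^!(N_Y)$); the ensuing application of the bijection is as you state.
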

\begin{proof}
  By \Theorem{\ref{cliffordtheorem}}
  $\pi^!(N_Y) = \bigoplus_{i=1 }^t (g_i M)^e$ and by
  \Theorem{\ref{clifford-pairing}} $N_Y= q_+(N_Z)$ for some simple
  $\Dc_Z$-module $N_Z$. Since $N_Y$ is primitive, it follows that
  $q=\id$, hence $G_M= G$ and $t=1$.
\end{proof}

\subsection{Decomposition of direct images}

By  equation (\ref{dir-im}) it follows that {\it if} $\pi_+(M)$ is semisimple
and $M$ has no torsion along the ramification locus $B_\pi$, then the
decomposition of $\pi_+(M)$ is determined by its decomposition over
the étale locus $Y_0$ (see \thetag{BC} in (\ref{sec:2.1})). If
moreover $j^+(M)$ is torsion free as $\Oc_{X_0}$-module, and therefore
$(\pi_0)_+ j^+(M)$ is torsion free, the decomposition is determined by
the decomposition of its restriction to the generic point $\eta$ of
$Y$, and the decomposition at any other point is recovered from the
minimal extension of $\pi_+(M)_\eta$.

We first prove the semisimplicity theorem for general semisimple
holonomic modules with finite support and later we describe this
decomposition in terms of Galois groups
\Thms{\ref{galois-direct}}{\ref{non-galois}}, when $M$ is torsion free
and $\pi$ is finite.
\begin{theorem}\label{decomposition-thm}
  Let $\pi : X\to Y$ be a morphism of smooth varieties and $M$ be
  a semisimple holonomic $\Dc_X$-module. If $\supp M $ is finite over
  $Y$, then $\pi_+(M)$ is also semisimple.
\end{theorem}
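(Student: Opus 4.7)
The strategy is to reduce the statement to a finite morphism of smooth varieties and then to apply the Galois-descent argument sketched in the Introduction for \Theorem{(A)} (1).

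First I would use the exactness of $\pi_+$ on $\coh^f(\Dc_X)$ (\Proposition{\ref{flat-module}}) together with the fact that $\pi_+$ commutes with finite direct sums to reduce to the case where $M$ is simple. Then I would factor $\pi = p_2 \circ \Gamma_\pi$, with $\Gamma_\pi\colon X \hookrightarrow X \times_k Y$ the closed graph embedding and $p_2$ the second projection. Kashiwara's theorem applied to the closed immersion $\Gamma_\pi$ gives that $(\Gamma_\pi)_+ M$ is again simple holonomic, with support $\Gamma_\pi(\supp M)$ still finite over $Y$ via $p_2$, and $\pi_+ M = (p_2)_+ (\Gamma_\pi)_+ M$. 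Thus I may assume $\pi$ is a smooth projection and that $M$ is a simple holonomic module whose support is an irreducible closed subvariety finite over $Y$.

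Next I would analyze $\pi_+(M)$ generically. Let $\xi$ be the generic point of $\supp M$ with residue field $L$, and $\eta = \pi(\xi)$ with residue field $K$, so that $L/K$ is a finite field extension. By Kashiwara's equivalence applied at the generic point $\xi$, the stalk $M_\xi$ corresponds to a simple $\Dc_{L/k}$-module of finite dimension over $L$. Following the sketch in the Introduction, I would pass to a Galois closure $\bar L / L / K$, form the central extension $\bar G_M$ of the inertia subgroup $G_M$ of $M$ in $G = \Galo(\bar L/K)$, and view the pullback $\bar L \otimes_L M_\xi$ as a module over the skew group ring $\Dc_{\bar L}[\bar G_M]$. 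Maschke's theorem (\Proposition{\ref{maschke}}) then gives semisimplicity over $\Dc_{\bar L}[\bar G_M]$, and the Galois descent equivalence (\Proposition{\ref{morita}}) transfers it down to the generic $\Dc_K$-module arising from $M$. Spreading this out, one obtains a dense open $V \subset \pi(\supp M)$ over which $\pi_+(M)|_V$ is semisimple.

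Finally I would extend the semisimplicity from $V$ to all of $Y$ using the minimal-extension formalism of \Theorem{\ref{minimalext}}. Since $M$ is simple, it coincides with the minimal extension of $M_\xi$ from a smooth open stratum of $\supp M$; combined with the exactness of $\pi_+$ on $\coh^f(\Dc_X)$, this should force $\pi_+(M)$ to be the direct sum of the minimal extensions to $Y$ of the simple summands of $\pi_+(M)|_V$, and hence semisimple. The hardest step I expect is this last one: ruling out any ``extra'' subquotients of $\pi_+(M)$ supported on $\pi(\supp M)\setminus V$ which would destroy semisimplicity. This must be controlled by the fact that the restriction of $\pi$ to $\supp M$ is generically étale and that $\pi_+$ respects Kashiwara's equivalence on this étale locus, so that $\pi_+(M)$ coincides with the minimal extension of its semisimple generic part.
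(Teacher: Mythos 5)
Your proposal follows essentially the same route as the paper's proof: reduce to $M$ simple, pass to the generic point of $\supp M$ where Kashiwara reduces matters to a finite field extension, handle that case by Galois descent (the paper's \Lemma{\ref{ss-field}}, ultimately via \Theorem{\ref{galois-direct}}), and then extend back to $Y$ by minimal extensions. The step you flag as hardest — ruling out extra subquotients supported over $\pi(\supp M)\setminus V$ — is exactly what the paper's \Lemma{\ref{min-direct}}, the commutation $\pi_+ j_{!+}=i_{!+}\circ\pi^\xi_+$, supplies, and its proof uses precisely the ingredient you identify, namely the exactness of $\pi_+$ from \Proposition{\ref{flat-module}}.
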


\begin{corollary}\label{supp-simples} If $M$ is a simple holonomic module and 
  $N$ is a non-zero simple constituent of $\pi_+(M)$, then
  $\supp N = \pi(\supp M)$.
\end{corollary}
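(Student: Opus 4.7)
The plan is to combine the semisimplicity of $\pi_+(M)$ provided by Theorem \ref{decomposition-thm} with the adjoint pair $(\pi_+,\pi^!)$ from Theorem \ref{adj-triple-th}. Since $M$ is simple, $\supp M$ is irreducible, and the hypothesis inherited from Theorem \ref{decomposition-thm} makes $\pi|_{\supp M}$ proper, so $\pi(\supp M)$ is an irreducible closed subset of $Y$. The inclusion $\supp N\subset \pi(\supp M)$ is then immediate from the general support bound $\supp\pi_+(M)\subset \pi(\supp M)$, which is read off from the formula $\pi_+(M)=\pi_*(\Dc_{Y\leftarrow X}\otimes_{\Dc_X}M)$ together with properness of $\pi$ on $\supp M$, combined with the fact that $N$ is a subquotient of $\pi_+(M)$.

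For the reverse inclusion $\pi(\supp M)\subset \supp N$, I would use adjointness. By Theorem \ref{decomposition-thm}, $\pi_+(M)$ is semisimple, so the simple constituent $N$ is a direct summand, and in particular a quotient. Applying the adjunction $\Hom_{\Dc_Y}(\pi_+(M),N)=\Hom_{\Dc_X}(M,\pi^!(N))$ to the non-zero projection $\pi_+(M)\twoheadrightarrow N$ produces a non-zero morphism $\phi:M\to \pi^!(N)$, which must be injective since $M$ is simple. The standard support bound $\supp\pi^!(N)\subset \pi^{-1}(\supp N)$, which follows from the formula $\pi^!(N)=\Oc_X\otimes_{\pi^{-1}\Oc_Y}\pi^{-1}(N)$ recalled in Section \ref{direct-inverse-sec}, then gives $\supp M\subset \pi^{-1}(\supp N)$, so that $\pi(\supp M)\subset \supp N$.

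The main obstacle I anticipate is purely organisational: ensuring that the adjoint pair $(\pi_+,\pi^!)$ is available in the generality we need. Theorem \ref{adj-triple-th} was proved only for finite $\pi$, whereas here $\pi$ is a general morphism of smooth varieties with $\supp M$ merely finite over $Y$. This is handled by reducing to the finite case, either through Kashiwara's theorem applied to the closed embedding $\supp M\hookrightarrow X$, or via the graph-plus-projection factorisation $X\hookrightarrow X\times_k Y\to Y$ already used in the proof of Theorem \ref{decomposition-thm}; in either reduction the relevant adjunction holds and the support considerations propagate correctly. Once this bookkeeping is in place, the two inclusions combine to give $\supp N=\pi(\supp M)$, and the rest of the argument is entirely formal.
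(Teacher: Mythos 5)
Your proof is correct and follows essentially the same route as the paper: both directions rest on the semisimplicity of $\pi_+(M)$ from Theorem \ref{decomposition-thm} together with the adjunction $\Hom_{\Dc_Y}(\pi_+(M),N)=\Hom_{\Dc_X}(M,\pi^!(N))$, producing a nonzero (hence injective) morphism $M\to\pi^!(N)$ and comparing supports. The paper phrases the second half as a proof by contradiction, deducing $N=0$ from $\supp N\neq\pi(\supp M)$, whereas you argue directly for $\pi(\supp M)\subset\supp N$; these are the same argument. One small point worth noting: you are more explicit than the paper about the fact that Theorem \ref{adj-triple-th} as stated covers only finite $\pi$, and that the corollary's generality (arbitrary $\pi$ with $\supp M$ finite over $Y$) requires the graph-factorisation or Kashiwara reduction. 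The paper simply cites Theorem \ref{adj-triple-th} without comment; your remark is a legitimate precision, not a divergence in method.
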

\begin{pf} It is evident that $\supp N \subset \pi(\supp M)$. Assume
  on the contrary that $\supp N \neq \pi(\supp M)$. We have
  \begin{displaymath}
    Hom_{\Dc_Y}(N, \pi_+(M)) = Hom_{\Dc_Y}(\pi_+(M), N) =Hom_{\Dc_X}( M, \pi^!(N) ),
\end{displaymath}
where the first equality follows since $\pi_+(M)$ is semisimple
holonomic \Th{\ref{decomposition-thm}} and the second is because of
\Theorem{\ref{adj-triple-th}}. Since
$\supp (\pi^!(N) )\cap \supp M = \pi^{-1}(\supp N) \cap \supp M \neq
\supp M$ and $M$  is simple, it follows that
$N=0$.
\end{pf}

Notice that if $M$ is a non-zero holonomic $\Dc$-module
then the maximal semisimple submodule $\soc (M)$ (the socle) is
non-zero. 

\begin{corollary}\label{soc-cor}
  In the situation of \Theorem{\ref{decomposition-thm}} assume that
  $M$ is a holonomic module such that $\supp M$ is finite over $Y$,
  and which contains no section whose support is contained in the
  ramification locus $B_\pi$. Then
\begin{displaymath}
  \soc(\pi_+(M))= \pi_+ (\soc (M)).
\end{displaymath}
\end{corollary}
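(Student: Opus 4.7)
The plan is as follows. The inclusion $\pi_+(\soc M) \subseteq \soc(\pi_+ M)$ is immediate from exactness of $\pi_+$ (Proposition~\ref{flat-module}) and the fact that $\pi_+(\soc M)$ is semisimple by Theorem~\ref{decomposition-thm}. The substantive content is the reverse containment, which I would establish by taking an arbitrary simple submodule $N \hookrightarrow \pi_+ M$ and showing $N \subseteq \pi_+(\soc M)$.

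First, using the $(\pi^+,\pi_+)$-adjunction of Theorem~\ref{adj-triple-th}, the inclusion $N \hookrightarrow \pi_+ M$ corresponds to a non-zero morphism $\phi\colon \pi^+N \to M$, with $N \hookrightarrow \pi_+ M$ recovered as $N \xrightarrow{\eta_N} \pi_+\pi^+N \xrightarrow{\pi_+\phi} \pi_+ M$ via the unit $\eta_N$. It therefore suffices to prove $\Ima\phi \subseteq \soc(M)$, since applying $\pi_+$ and composing with the unit then yields $N \subseteq \pi_+(\soc M)$.

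Next I would pass to the étale locus $j\colon X_0 \hookrightarrow X$ of the base change diagram (BC). The hypothesis $\Gamma_{B_\pi}(M) = 0$ combined with equation~(\ref{dir-im}) gives an injection $\pi_+M \hookrightarrow i_+(\pi_0)_+ j^+M$, which forces $\supp N \not\subseteq D_\pi$; hence $i^+N$ is a non-zero simple $\Dc_{Y_0}$-module. Since $\pi_0$ is étale, Theorem~\ref{semisimple-inv}(1) implies that $\pi_0^+(i^+N) = j^+\pi^+N$ is semisimple, so $j^+(\Ima\phi) \subseteq \soc(j^+M) = j^+\soc(M)$.

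The main obstacle is lifting this containment from $X_0$ back to $X$. Each simple summand of $\soc(M)$ has support meeting $X_0$ and no section in $B_\pi$, so $\soc(M) = j_{!+}(j^+\soc M)$ is a minimal extension; the submodule $K = \Ima\phi + \soc(M) \subseteq M$ then satisfies $\Gamma_{B_\pi}(K) = 0$, $j^+K = j^+\soc(M)$, and sits inside $M \cap j_+ j^+\soc(M)$. Using the short exact sequence $0 \to j_{!+}(j^+\soc M) \to j_+(j^+\soc M) \to Q \to 0$ in which $Q$ is supported in $B_\pi$, together with the absence of sections of $K$ in $B_\pi$ and the minimal-extension characterization of $\soc(M)$, I expect to conclude $K = \soc(M)$, and thus $\Ima\phi \subseteq \soc(M)$.
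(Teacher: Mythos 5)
Your overall strategy — reduce the inclusion $\soc(\pi_+M)\subseteq\pi_+\soc(M)$ to the open étale locus and then lift back — is the same basic idea as the paper's, but the two proofs route the argument very differently, and your version has a gap at the final lifting step.

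The paper takes a simple submodule $N\subset\pi_+M$, observes that $i^!N\neq 0$, hence $N=i_{!+}i^!N$, checks $i^!N\subset(\pi_0)_+\soc(j^!M)$ over the étale locus, and then invokes the commutation $\pi_+ j_{!+}=i_{!+}(\pi_0)_+$ (Lemma~\ref{min-direct}) to conclude directly that $N\subset\pi_+ j_{!+}(\soc j^!M)\subset\pi_+\soc(M)$. You instead pass through the $(\pi^+,\pi_+)$-adjunction to obtain $\phi\colon\pi^+N\to M$ and aim to show $\Ima\phi\subseteq\soc(M)$. Steps (1)--(3) of your plan are fine, but the final step does not close. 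You know that $K=\Ima\phi+\soc(M)$ satisfies $\soc(M)=j_{!+}j^+\soc(M)\subset K\subset j_+j^+\soc(M)$, so $K/\soc(M)$ is supported in $X\setminus j(X_0)$. But $K/\soc(M)$ is a \emph{quotient} of $K$ (equivalently a submodule of $M/\soc(M)$), and ``absence of sections of $K$ supported in $B_\pi$'' constrains only submodules of $K$, not such a quotient. Indeed $M/\soc(M)$ can perfectly well have torsion along $X\setminus X_0$ (take $M=j_+\Oc_{X_0}$). So the argument as stated does not force $K=\soc(M)$. What is actually needed — and what you do not supply — is that the holonomic module $\pi^+N$ has no nonzero \emph{quotient} supported in $X\setminus X_0$. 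This is true, but for a different reason: quotients of $\pi^+N$ are dual to submodules of $\pi^!\Dbb N$, and since $\supp N\not\subset D_\pi$ (so $\Dbb N$ has no torsion along $D_\pi$) and $\pi$ is flat, $\pi^!\Dbb N$ has no torsion along $\pi^{-1}(D_\pi)$. Adding that observation repairs your step (4); without it, the proof is incomplete.

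Two smaller points. First, you several times write $B_\pi$ where the locus that actually matters is $X\setminus j(X_0)=\pi^{-1}(D_\pi)$, which in general is strictly larger (the paper even remarks on this). This affects your claims that ``each simple summand of $\soc(M)$ has support meeting $X_0$'' and that ``$Q$ is supported in $B_\pi$''. The paper's proof tacitly requires the stronger hypothesis too, and the author explicitly flags the uncertainty in the remark following the corollary, so this is not a defect unique to your write-up — but you should be aware of it. Second, the intermediate claim $\soc(j^+M)=j^+\soc(M)$ does need a word of justification (it follows from fully faithfulness of $j_{!+}$ on strict modules together with torsion-freeness of $M$ along $X\setminus X_0$); you state it as if it were automatic.
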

I am uncertain whether it is necessary that $M$ be torsion free along
$B_\pi$. At any rate, the proof will show that if $N$ is a simple submodule of
$\pi_+(M)$  such that $\supp N \not \subset D_\pi $ (the discriminant
locus), then $N \subset \pi_+(\soc (M))$.

Before proving \Theorem{\ref{decomposition-thm}} we use it to complete
the proof of (3) in \Theorem{\ref{semisimple-inv}}.\footnote{The proof of
\Theorem{\ref{decomposition-thm}} does not rely on
\Theorem{\ref{semisimple-inv}}}
\begin{pfof}{\Theorem{\ref{semisimple-inv}}, continued}\label{contproof}
  If $\pi^!(N)$ is semisimple, \Theorem{\ref{decomposition-thm}}
  implies that $\pi_+\pi^!(N)$ is semisimple. Since the trace morphism
  \begin{displaymath}
    \Tr : \pi_+\pi^!(N)\to N
  \end{displaymath} is surjective (see (\ref{trace-morphism})), this
  implies that $N$ is semisimple.
  Let and let $0 \to N_1 \to N \to
  N_2 \to 0$ be an exact sequence of coherent $\Dc_Y$-modules. Since
  $\pi$ is flat we get the exact sequence $0 \to \pi^!(N_1)\to
  \pi^!(N)\to \pi^!(N_2)\to 0$. Since moreover $\pi$ is faithfully
  flat, $\pi^!(N_i)\neq 0$ if and only if $N_i\neq 0$, $i=1,2$.
  Therefore, if $\pi^!(N)$ is simple, then $N$ is simple.
\end{pfof}

The proof of \Theorem{\ref{decomposition-thm}} is based on two lemmas.
For the first we refer to a proper morphism of smooth schemes of
finite type over $k$, $\pi: X\to Y$ and a point $\xi$ in $X$ that maps
to $\eta$ in the scheme $Y$. Put $X(\xi) = \Spec \Oc_{X,\xi}$ and
$Y(\eta)= \Spec \Oc_{Y, \eta}$, and let $\pi^\xi: X(\xi)\to Y(\eta)$
be the map that is induced by $\pi$. We get the diagram
 \begin{displaymath}\tag{$*$}
   \xymatrix{
     X(\xi) \ar[r] ^j\ar[d]_{\pi^\xi} & X\ar[d]^\pi\\ Y(\eta)
     \ar[r]^i & Y,
   }
  \end{displaymath}
  where $\pi \circ j = i\circ \pi^\xi$,
 \begin{lemma}\label{min-direct}
   As functors on $\hol(\Dc_{X(\eta)})$ we have
   \begin{displaymath}
     \pi_+ j_{!+} = i_{!+}\circ \pi^\xi_+.
\end{displaymath}
\end{lemma}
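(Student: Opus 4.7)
The plan is to verify that $\pi_+(j_{!+}(M))$ satisfies the two characterizing properties of the minimal extension $i_{!+}(\pi^\xi_+(M))$: (a) it is semisimple holonomic with each simple summand supported on $\{\eta\}^-$, and (b) its restriction to the local scheme $Y(\eta)$ coincides with $\pi^\xi_+(M)$. By additivity of all functors involved, it suffices to check this when $M$ is a simple holonomic $\Dc_{X(\xi)}$-module.

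First I would establish the base change identity $i^!\pi_+ \cong \pi^\xi_+ j^!$ for the Cartesian square $(*)$. Since $j$ and $i$ are pro-open (being localizations at points, hence filtered limits of open immersions) and $\pi$ is proper, flat base change applies after expressing $Y(\eta)$ as a filtered limit of affine open neighborhoods of $\eta$ and using that $\pi_+$ commutes with restriction to opens. Applying this identity to $j_{!+}(M)$ and using the defining property $j^!(j_{!+}(M)) = M$ of the minimal extension yields
\[
  i^!\bigl(\pi_+(j_{!+}(M))\bigr) = \pi^\xi_+(M),
\]
which is property (b).

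Next, since $\pi$ is proper and $j_{!+}(M)$ is a simple holonomic $\Dc_X$-module with $\supp j_{!+}(M) = \{\xi\}^-$, the semisimplicity theorem for proper direct images (as recalled in the introduction, in its most general form from \cite{wild-harmonic-wild-pure}) implies that $\pi_+(j_{!+}(M))$ is a semisimple holonomic $\Dc_Y$-module. By \Cor{\ref{supp-simples}}, each simple constituent has support $\pi(\{\xi\}^-)$; properness of $\pi$ makes this image closed, and $\pi(\xi) = \eta$ forces $\pi(\{\xi\}^-) = \{\eta\}^-$, so (a) holds. Combining (a) and (b) with the defining property of $i_{!+}$—that a semisimple holonomic module, all of whose simple summands are supported on $\{\eta\}^-$, is uniquely recovered from its restriction to $Y(\eta)$ by applying $i_{!+}$—we conclude $\pi_+(j_{!+}(M)) = i_{!+}(\pi^\xi_+(M))$.

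The main obstacle is the careful justification of the base change isomorphism $i^!\pi_+ \cong \pi^\xi_+ j^!$, since $X(\xi)$ and $Y(\eta)$ are not of finite type over $k$ and so the standard base change statements must be lifted from finite-type situations to the pro-open limit; once this is in place, the remaining ingredients (semisimplicity of proper direct images, the support identification via \Cor{\ref{supp-simples}}, and the characterization of $i_{!+}$) are formal.
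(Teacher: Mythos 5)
Your proposal contains a genuine circularity. You invoke \Corollary{\ref{supp-simples}} to pin down the supports of the simple constituents of $\pi_+ j_{!+}(M)$; but the proof of that corollary explicitly cites \Theorem{\ref{decomposition-thm}}, and the proof of \Theorem{\ref{decomposition-thm}} is precisely where \Lemma{\ref{min-direct}} gets used. So you are using a downstream consequence of the lemma to prove the lemma. Your fallback appeal to the general decomposition theorem of \cite{wild-harmonic-wild-pure} does not repair this: quite apart from the fact that the paper's stated purpose is to avoid exactly that analytic machinery (so importing it here would make the algebraic proof of semisimplicity for finite maps vacuous), you would still have to re-derive the support statement of \Corollary{\ref{supp-simples}} independently. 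There is also a smaller gap: you assert $\supp j_{!+}(M)=\{\xi\}^-$ for a simple $M\in\hol(\Dc_{X(\xi)})$, but this holds only when $\supp M$ is the closed point of $X(\xi)$; in general $\supp M$ is an irreducible closed subset of $X(\xi)$ whose generic point may be a strict generization of $\xi$, and then $\supp j_{!+}(M)$ strictly contains $\{\xi\}^-$.

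The intended argument is much more elementary and avoids semisimplicity altogether. Use the canonical maps of functors $j_! \twoheadrightarrow j_{!+} \hookrightarrow j_+$ (the image characterization of the minimal extension), and the exactness of $\pi_+$ from \Proposition{\ref{flat-module}} to obtain $\pi_+ j_! \twoheadrightarrow \pi_+ j_{!+}\hookrightarrow \pi_+ j_+$. Then the two base-change identities $\pi_+ j_+ = i_+\pi^\xi_+$ and $\pi_+ j_! = i_!\pi^\xi_+$ (the latter following from the former by duality, via \Theorem{\ref{adj-triple-th}}) rewrite this as $i_!\pi^\xi_+ \twoheadrightarrow \pi_+ j_{!+}\hookrightarrow i_+\pi^\xi_+$, and since $i_{!+}$ is by definition the image of $i_!\to i_+$, the identification $\pi_+ j_{!+} = i_{!+}\pi^\xi_+$ follows immediately. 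Your instinct to verify the two characterizing properties of the minimal extension is not wrong in principle, but the image characterization lets one bypass any appeal to semisimplicity entirely, which is essential here given where in the logical chain this lemma sits.
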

\begin{pf} 
  We have canonical maps of functors
   \begin{displaymath}
j_!= \Dbb_X j_+\Dbb_{X_0} \twoheadrightarrow  j_{!+} \hookrightarrow j_+,
\end{displaymath}
where the first is surjective and the second is injective.
Therefore by \Proposition{\ref{flat-module}} we
get
   \begin{displaymath}
\pi_+ j_!= \pi_+ \Dbb_X j_+\Dbb_{X_0} \twoheadrightarrow  \pi_+ j_{!+} \hookrightarrow \pi_+j_+,
\end{displaymath}
where again the first arrow is surjective and the second is
surjective. Since $\pi_+ j_+ = i_+ \pi^\xi_+$ and
$ \pi_+ j_! = i_! \pi^\xi_+$ \Prop{\ref{adj-triple-th}}, we get the
diagram
\begin{displaymath} i_! \circ \pi^\xi_+ \twoheadrightarrow \pi_+\circ
  j_{!+} \hookrightarrow i_+\circ \pi^\xi_+.
\end{displaymath} Since $i_{!+}$ is characterized as the image of $i_!
\to i_+$, this implies that $\pi_+\circ j_{!+}= i_{!+}\pi^\xi_+$.
 \end{pf}

 \begin{pfof}{\Corollary{\ref{soc-cor}}}
  The inclusion $ \pi_+ (\soc (M)) \subset \soc(\pi_+(M))$ follows
  from \Theorem{\ref{decomposition-thm}} and
  \Proposition{\ref{flat-module}}.

  $ \soc(\pi_+(M))\subset \pi_+ (\soc (M))$: We refer to \thetag{BC}
  in (\ref{sec:2.1}).  Let $N$ be a simple submodule of $\pi_+(M)$. The
  assumption that $M$ has no torsion along $B^\pi$ implies that $N$
  has no torsion along the discriminant locus $D_\pi$, so that
  $i^!(N)\neq 0$. Since $\pi_0$ is étale it is easy to see that
  $i^!(N) \subset (\pi_0)_+(\soc j^!(M))$.  Finally,  by \Lemma{\ref{min-direct}}
  \begin{displaymath}
  N  = i_{!+} i^! (N) \subset i_{!+}(\pi_0)_+(\soc j^!(M)) = \pi_+
  j_{!+} (\soc j^!(M))  \subset \pi_+ (\soc (M)).
\end{displaymath}
\end{pfof}
The proof of the next lemma is not presented until after
\Theorem{\ref{galois-direct}}, where it will also be clear that there
are no vicious circles. See also \cite{put-hoeij:descent-skew}*{Prop.
  2.7}, treating the situation in \Remark{\ref{levelt-put-hoeij}},
(2).
 \begin{lemma}\label{ss-field}
   Let $\pi: \Spec L \to \Spec K$ be a finite field extension. Then
   $\pi_+(M)$ is semisimple.
 \end{lemma}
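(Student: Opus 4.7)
The plan is to reduce to the Galois case (handled by \Theorem{\ref{galois-direct}}) by passing to a Galois closure and realising $\pi_+(M)$ as a direct summand of a module already known to be semisimple.

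Since $\pi_+$ is exact by \Proposition{\ref{flat-module}}, and since a direct sum of semisimples is semisimple, I may assume from the outset that $M$ is simple. Choose a finite Galois extension $\bar L/K$ containing $L$, set $H = \Aut(\bar L/L)$, and write
\[
\Spec \bar L \xrightarrow{\tilde \pi} \Spec L \xrightarrow{\pi} \Spec K, \qquad \rho := \pi \circ \tilde \pi,
\]
so that $\rho$ is Galois while $\tilde \pi$ is étale. Put $\tilde M := \tilde \pi^!(M) = \bar L \otimes_L M$.

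Since $\tilde \pi$ is smooth, part~(1) of \Theorem{\ref{semisimple-inv}} — whose proof in this direction was already completed — shows that $\tilde M$ is a semisimple $\Dc_{\bar L}$-module. Applying the explicit Galois direct-image decomposition \Theorem{\ref{galois-direct}} then yields that the $\Dc_K$-module $\rho_+(\tilde M)$ is semisimple.

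It remains to split $\pi_+(M)$ off as a $\Dc_K$-direct summand of $\rho_+(\tilde M) = \pi_+(\tilde \pi_+ \tilde M)$. Because $\Spec L$ is zero-dimensional, every holonomic $\Dc_L$-module is automatically a connection, so \Proposition{\ref{split-conn}} applied to the finite map $\tilde \pi$ shows that the composition $M \to \tilde \pi_+ \tilde \pi^!(M) \to M$ equals multiplication by $\tr_{\bar L/L}(1) = [\bar L : L] = |H|$, which is a unit since $\ch k = 0$. Thus $M$ is a $\Dc_L$-direct summand of $\tilde \pi_+ \tilde M$; applying the exact functor $\pi_+$ realises $\pi_+(M)$ as a direct summand of the semisimple module $\rho_+(\tilde M)$, hence itself semisimple. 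The only subtle point, apart from citing \Theorem{\ref{galois-direct}}, is the invertibility of $|H|$ in the splitting, which is exactly where characteristic zero intervenes; everything else is a formal manipulation of the inverse- and direct-image functors.
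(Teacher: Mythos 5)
Your proof is correct and follows essentially the same strategy as the paper's: pass to a Galois closure $\bar L/K$, apply \Theorem{\ref{galois-direct}} to conclude $\rho_+(\tilde M)$ is semisimple, and then identify $\pi_+(M)$ inside it. The one small divergence is at the last step: the paper simply notes that $M \hookrightarrow \tilde\pi_+\tilde\pi^!(M)$ is injective and applies the exact functor $\pi_+$, so that $\pi_+(M)$ is a \emph{submodule} of the semisimple module $\rho_+(\tilde M)$ (which already suffices), whereas you invest in the trace splitting from \Proposition{\ref{split-conn}} to realise $\pi_+(M)$ as a \emph{direct summand} — a stronger conclusion but an unnecessary one, which also introduces the characteristic-zero hypothesis at a point where the paper's argument does not need it.
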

\begin{pfof} {\Theorem{\ref{decomposition-thm}}}
  It suffices to prove that $\pi_+(M)$ is semisimple when $M$ is a
  simple holonomic $\Dc_X$-module. Then $\supp M$ contains a unique
  generic point $\xi$ and $M$ is the unique simple extension of the
  $\Dc_{X,\xi}$-module $ M_\xi$. By generic smoothness and
  \Lemma{\ref{min-direct}} (using the notation in \thetag{$*$})
  \begin{displaymath}
    \pi_+ (M) = \pi_+(j_{!+}(M_{\xi})) = i_{!+}\pi^\xi_+(M_{\xi}),
  \end{displaymath} so it suffices to prove that $\pi^\xi_+(M_{\xi})$
  is semisimple. Let $L_\xi $ denote the residue field of the point
  $\xi$, $K_\eta$ be the residue field of $\eta= \pi(\xi)$, $p: \Spec L_\xi \to \Spec K_\eta$ be the associated map of
  schemes, and $j_0 : \Spec L_\xi \to X(\xi) $ and $i_0 : \Spec K_\eta
  \to Y(\eta)$ be the  inclusions of closed point in
  $X(\xi)$ and $Y(\eta)$. By Kashiwara's theorem $M_\xi= (j_0)_+j_o^!(M)= (j_0)_+(M^{\mf_\xi}_\xi) $,
  where $j_0 : \Spec L_\xi \to X(\xi)$ is the inclusion of the closed
  point, and  
  \begin{displaymath}
    \bar M_\xi =j_0^!(M_\xi)= M_\xi^{\mf_\xi} = \{m \in M_\xi \ \vert\ \mf_\xi \cdot m =0\}
  \end{displaymath}
  is a semisimple $\Dc_{L_\xi}$-module. Since
  $\pi^\xi \circ j_0 = i_0\circ p$, we have
  \begin{displaymath}
    \pi^\xi_+(M _\xi) = (i_0)_+p_+(\bar M_\xi).
  \end{displaymath} Again by Kashiwara's theorem it suffices to see
  that $p_+(\bar M_\xi)$ is a semisimple $\Dc_{K_\eta}$-module when $\bar M_\xi$ is a
  semisimple $\Dc_{L_\xi}$-module. This
  follows from \Lemma{\ref{ss-field}}.
\end{pfof}
\begin{remark}
  Let $\pi: \Spec L \to \Spec K$ be a finite field extension and $M$
  be an étale trivial $\Dc_L$-module, so there there exists a field
  extension $L_1/L /K$ such $L_1/K$ is Galois and $L_1\otimes_L M$ is
  isomorphic to $L_1^n$ for some integer $n$. Put $G= Gal(L_1/K)$ and
  $H= Gal (L_1/L)$. By \Proposition{\ref{ind-direct}} the
  decomposition of $\pi_+(M)$ can be determined by the decomposition
  of the induced representation $\ind_H^G V$, where
  $V= Hom_{\Dc_{L_1}}(L_1 , L_1\otimes_L M) $ is a representation of
  $H$. In general, however, no such (finite) field extension need to
  exist.
\end{remark}

  Given a $\Dc_L[\bar G_M]$-module $M$ and a $\bar G_M$-module $V$, we
  get the $\Dc_L[\bar G_M]$ -module $V\otimes_k M$, where the action
  is
\begin{displaymath}
  ( \sum_{g\in \bar G_M} a_g g ) v\otimes m = \sum_{g\in \bar G_M}
  (g\cdot v )\otimes g a^{g^{-1}}_g \cdot m
\end{displaymath}
($a_g\in \Dc_L$, $a^{g^{-1}}_g = g^{-1}a_g g$, see (\ref{inertia-section})).

For each $\chi \in \widehat {\bar G}_M$ and $\bar G_M$-module $V_\chi$
of class $\chi$ we put
\begin{align*}
  M_\chi = ( V_\chi \otimes_k M)^{\bar G _M } = Hom_{k[\bar G_M]}
  (V_\chi^*, M) .
\end{align*}

\begin{theorem}\label{galois-direct}
  Assume that $L/K$ is Galois with Galois group $G$, and that $M$ is a
  simple $\Dc_L$-module with true inertia group $\bar G_M$.
  \begin{enumerate}
  \item If $M$ is a simple $\Dc_L$-module then $M_\chi$ is isomorphic
  to a
    simple submodule of $\pi_+(M)$.
\item $ \pi_+(M) $ is a semisimple $\Dc_K[\bar G_M]$-module with
decomposition
\begin{displaymath}
    \pi_+(M)= \bigoplus_{\chi \in \widehat {\bar G}_M } V_\chi
    ^*\otimes_k M_\chi,
  \end{displaymath} where the $V_\chi ^*\otimes_k M_\chi$ are simple
  $\Dc_K[\bar G_M]$-modules which are mutually non-isomorphic for
  different $\chi$. Here $\Dc_K$ acts only on the second factor $M_\chi$
  and $\bar G_M$ acts only on the first factor $V_\chi^*$.

  \item The
  multiplicity and ranks are
  \begin{eqnarray*}
    [\pi_+(M): M_\chi] &=& \dim_k V_\chi,\\
    \rank_K M_\chi &=& |G/G_M|\cdot \dim_k V_\chi \cdot \rank_LM.
  \end{eqnarray*}
  The multiplicities $[\pi_+(M): M_\chi]$ divide the degree of $\pi$.
  \end{enumerate}
\end{theorem}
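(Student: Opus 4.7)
The plan is to combine the Galois descent equivalence of Proposition \ref{morita} with a Clifford-theoretic decomposition of an induced module. First I observe that $\pi_+ : \Mod(\Dc_L) \to \Mod(\Dc_K)$ factors as $g_+ \circ \on{Ind}$, where $\on{Ind}(M) = \Dc_L[G] \otimes_{\Dc_L} M$ is the induction from $\Dc_L$-modules to $\Dc_L[G]$-modules and $g_+$ is the $G$-invariants functor provided by the descent equivalence $g^+ = \pi^!: \Mod(\Dc_K) \cong \Mod(\Dc_L[G])$. This factorization is forced by the adjointness $\pi_+ \dashv \pi^!$ together with the identification $\pi^! = e_+ \circ g^+$, where $e_+$ is the forgetful functor $\Mod(\Dc_L[G]) \to \Mod(\Dc_L)$. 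Accordingly, the whole theorem reduces to decomposing the induced $\Dc_L[G]$-module $\tilde M := \Dc_L[G] \otimes_{\Dc_L} M$ and extracting $G$-invariants; semisimplicity of $\pi_+(M)$ is then immediate from Maschke's theorem \Prop{\ref{maschke}} applied to $\tilde M$, combined with the equivalence $g_+$.

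The core step is the decomposition of $\tilde M$ using the true inertia group $\bar G_M$. As $\Dc_L$-module, $\tilde M = \bigoplus_{g \in G} M_g$, and the definition of $G_M$ together with the $\bar G_M$-action on $M$ equips the $M$-isotypic block $\hat M := \bigoplus_{h \in G_M} M_h$ with a natural $\Dc_L[\bar G_M]$-module structure, refining its projective $\Dc_L[G_M]$-structure. Using the standard isotypical decomposition of $k[\bar G_M]$ restricted to the central character $\psi$ by which $\mu_n$ acts on the simple module $M$, I will establish an isomorphism of $\Dc_L[\bar G_M]$-modules
\[
\hat M \cong \bigoplus_{\chi} V_\chi \otimes_k M_\chi,
\]
where $M_\chi = (V_\chi^* \otimes_k M)^{\bar G_M}$ and the sum runs over those $\chi \in \widehat{\bar G}_M$ whose restriction to $\mu_n$ matches $\psi$. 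Inducing from $\bar G_M$ up to $G$ and taking $G$-invariants then produces, by Frobenius reciprocity, $\pi_+(M) = \tilde M^G \cong \bigoplus_\chi V_\chi^* \otimes_k M_\chi$ as $\Dc_K[\bar G_M]$-module, where $\bar G_M$ acts only on the first factor and $\Dc_K$ only on the second.

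Simplicity of $M_\chi$ as $\Dc_K$-module and pairwise non-isomorphism of the constituents $V_\chi^* \otimes_k M_\chi$ will be derived from irreducibility of the $V_\chi$ over $\bar G_M$, simplicity of $M$ over $\Dc_L$, and a double-centralizer argument, yielding assertions (1) and (2). The multiplicity equality $[\pi_+(M):M_\chi] = \dim_k V_\chi$ is immediate from the decomposition, and the rank formula $\rank_K M_\chi = |G/G_M|\, \dim_k V_\chi \, \rank_L M$ follows by comparing $\rank_K \tilde M = |G|\,\rank_L M$ on both sides. Finally, divisibility of $\dim_k V_\chi$ by the degree $|G|$ of $\pi$ reduces to the classical fact that the dimension of an irreducible projective representation of $G_M$ divides $|G_M|$ (see \cite{serre:lin-rep}), which itself divides $|G|$. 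The principal obstacle I anticipate is the careful bookkeeping around the central extension $1 \to \mu_n \to \bar G_M \to G_M \to 1$ and the twisted $\Dc_L[G]$-action on the inertia block $\hat M$: one must verify that the $\chi$-decomposition of $\hat M$ is compatible with this twisting, so that each $M_\chi$ descends under $g_+$ to an honest $\Dc_K$-module structure rather than a merely projectively-equivariant one.
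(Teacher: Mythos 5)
Your strategy is the same as the paper's: factor $\pi_+$ through the Galois descent equivalence $g_+$ of \Proposition{\ref{morita}}, decompose the induced $\Dc_L[G]$-module via Clifford theory on the true inertia group, and use a Mackey-type criterion (the paper's Lemma \ref{tensorlemma}; your double-centralizer argument) for simplicity. You are also right to insist that the index set of $\chi$'s be restricted to those whose $\mu_n$-restriction is compatible with the central character by which $\mu_n$ acts on $M$ --- the paper's Theorem formally sums over all of $\widehat{\bar G}_M$ and lets the others contribute $M_\chi=0$, which is a point worth flagging (indeed, for the ``wrong'' $\chi$ the tensor $\Dc_L[G]\otimes_{\Dc_L[\bar G_M]}(V_\chi\otimes_k M)$ vanishes because $\mu_n$ acts trivially under the map $\bar G_M\to G_M$ on $\Dc_L[G]$ but nontrivially on $V_\chi\otimes_k M$, so one must exclude these to apply the simplicity criterion).

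However, the asserted intermediate isomorphism of $\Dc_L[\bar G_M]$-modules
\begin{displaymath}
\hat M \cong \bigoplus_{\chi} V_\chi \otimes_k M_\chi, \qquad
M_\chi=(V_\chi^*\otimes_k M)^{\bar G_M},
\end{displaymath}
does not type-check and has mismatched ranks. The left-hand side $\hat M=\Dc_L[G_M]\otimes_{\Dc_L}M$ is an $L$-vector space of $L$-dimension $|G_M|\cdot\rank_L M$, while each $M_\chi$ is only a $\Dc_K$-module of $K$-dimension $|G/G_M|\dim_k V_\chi\cdot\rank_L M$; summing $\dim_k V_\chi\cdot\dim_K M_\chi$ gives $|G|\cdot\rank_L M$, not $[L:K]\cdot|G_M|\cdot\rank_L M$. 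The correct statement one needs is the double-centralizer decomposition of $\hat M$ as $(\Dc_L[G_M],\End_{\Dc_L[G_M]}(\hat M))$-bimodule, $\hat M\cong\bigoplus_\chi S_\chi\otimes_k V_\chi^*$, where the $S_\chi$ are \emph{simple $\Dc_L[G_M]$-modules} and the multiplicity spaces $V_\chi$ are the simple modules of the twisted group algebra $\End_{\Dc_L[G_M]}(\hat M)\cong k_f[G_M]$. After inducing to $\Dc_L[G]$ (simplicity of $\on{Ind}_{G_M}^G S_\chi$ by the Mackey criterion) and applying $g_+$, one then identifies $g_+(\on{Ind}_{G_M}^G S_\chi)$ with $M_\chi$; it is only at this descended stage that the objects $M_\chi$ of the theorem appear. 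The paper avoids this level-confusion by working with the larger module $\Dc_L[\bar G_M]\otimes_{\Dc_L}M\cong k[\bar G_M]\otimes_k M$ (Lemma \ref{simpletensor}) and then passing through Lemma \ref{extra} and adjunction to compute $\Dc_L^r\otimes_{\Dc_L[\bar G_M]}(V_\chi\otimes_k M)=(V_\chi\otimes_k M)^{\bar G_M}=M_\chi$. With your $\hat M$-decomposition corrected as above, the rest of your plan --- Frobenius reciprocity, rank comparison, and Serre's divisibility --- goes through and parallels the paper.
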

The proof is given in (\ref{proof-main}). Using the theorem we can
first tie up a loose end:
 
 \begin{pfof}{\Lemma{\ref{ss-field}}}
   Let $L_1 /L/K$ be field extensions such that $L_1/L$ and $L_1/K$
   are Galois, and $G_1$ be the Galois group of $L_1/K$; let $\pi_1 :
   \Spec L_1 \to \Spec K$ be the corresponding map. By
   \Lemma{\ref{semisimple-inv-field}} $L_1\otimes_L M$ is semisimple
   and by \Proposition{\ref{flat-module}} there exists an injective
   homomorphism $\pi_+(M) \to \pi_{1+}(L_1\otimes_L M)$ (here
   exactness is evident since $\Spec L_1 \to \Spec L \to \Spec K$ are
   smooth morphisms), so it suffices to prove that the latter module
   is semisimple. This follows from \Theorem{\ref{galois-direct}}.
 \end{pfof}

 The $\Dc_L$-module $L$ is particularly interesting. It is evidently a
 $\Dc_L[G]$-module for any group $G$ of automorphisms of $L$.
\begin{corollary}\label{direct-L} Let $L$ be a field of finite type
over $k$, let $G$ be a
  finite group of automorphisms of $L$, and let $K= L^G $ be the fixed
  field of $G$. Let $\pi : \Spec L \to \Spec K$ be the morphism that
  is associated to the field extension $ K \subset L$. Then
\begin{displaymath}
    \pi_+(L)= \bigoplus_{\chi \in \hat G} V_\chi ^*\otimes_k L_\chi
\end{displaymath} where $L_\chi = Hom_{k[G]}(V_\chi^*, L)$ is a simple
$\Dc_K$-module of and $V_\chi$ is the simple $k[G]$-module of class $\chi
\in \hat G$.  We have
\begin{displaymath}
[\pi_+(L): L_\chi] = \rank _K L_\chi = \dim_k V_\chi.
\end{displaymath}
\end{corollary}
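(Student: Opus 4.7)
The plan is to deduce \Corollary{\ref{direct-L}} as a direct specialization of \Theorem{\ref{galois-direct}} to the simple $\Dc_L$-module $M = L$. The work reduces to verifying the hypotheses, identifying the inertia data explicitly, and computing the invariants $|G/G_M|$ and $\rank_L M$.

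First I would observe that since $K = L^G$ and $G$ is finite, the extension $L/K$ is Galois with Galois group $G$, so $\pi: \Spec L \to \Spec K$ falls under the setup of \Theorem{\ref{galois-direct}}. The $\Dc_L$-module $L$ is evidently simple of rank $1$ over $L$. Next I would verify that the inertia group of $M = L$ is all of $G$: for each $g \in G$, the twisted module $L_g$ has $\Dc_L$-action $P \cdot_g l = P^{g^{-1}}(l) = (g^{-1} \circ P \circ g)(l)$, and the map $\phi_g: L \to L_g$ defined by $\phi_g(l) = g^{-1}(l)$ is $\Dc_L$-linear because $\phi_g(P(l)) = g^{-1}(P(l)) = (g^{-1} P g)(g^{-1} l) = P \cdot_g \phi_g(l)$. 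Hence $G_L = G$.

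Next I would note that the central extension $\bar G_L \to G_L$ of (\ref{central-ext}) is trivial in this case: the field $L$ is naturally a $\Dc_L[G]$-module (the $G$-action being by $k$-algebra automorphisms, which visibly commute with themselves relative to the twisted $\Dc_L$-structure), so the cocycle $f$ obstructing the lift vanishes and we may take $\bar G_L = G$, with $\widehat{\bar G}_L = \hat G$. Therefore the definition of $M_\chi$ from the theorem specializes to
\begin{displaymath}
L_\chi = (V_\chi \otimes_k L)^{\bar G_L} = \Hom_{k[\bar G_L]}(V_\chi^*, L) = \Hom_{k[G]}(V_\chi^*, L),
\end{displaymath}
which is exactly the object named in the corollary.

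Finally, applying \Theorem{\ref{galois-direct}}(2) yields the decomposition $\pi_+(L) = \bigoplus_{\chi \in \hat G} V_\chi^* \otimes_k L_\chi$ with each summand a simple $\Dc_K$-module (and the $L_\chi$ themselves simple). The rank and multiplicity formulas in \Theorem{\ref{galois-direct}}(3) specialize via $G_M = G$ (so $|G/G_M| = 1$) and $\rank_L L = 1$ to give
\begin{displaymath}
[\pi_+(L) : L_\chi] = \dim_k V_\chi, \qquad \rank_K L_\chi = |G/G_M| \cdot \dim_k V_\chi \cdot \rank_L L = \dim_k V_\chi,
\end{displaymath}
completing the corollary. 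There is no serious obstacle here; the only subtlety worth flagging is the trivialization of the central extension $\bar G_L \to G$, which is why the formula involves $\hat G$ rather than $\widehat{\bar G}_L$ for a genuine projective representation.
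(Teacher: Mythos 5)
Your proof is correct and follows the route the paper itself intends: the corollary is stated without proof immediately after \Theorem{\ref{galois-direct}}, as a direct specialization to $M=L$, and your verification that $G_L=G$, that $L$ is an honest $\Dc_L[G]$-module so the central extension $\bar G_L$ is trivial, and that the rank/multiplicity formulas collapse via $|G/G_M|=1$, $\rank_L L=1$, is exactly the argument one needs. The one point worth flagging in your own favor is your remark on the triviality of $\bar G_L$: \Theorem{\ref{galois-direct}} is stated over $\widehat{\bar G}_M$, so passing to $\hat G$ in the corollary genuinely requires observing that $L$ carries an honest $k[G]$-action (equivalently that the obstruction cocycle vanishes), which you do.
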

\begin{remarks}
  \begin{enumerate}
  \item When $L/K$ arises from a morphism $\pi: X\to Y$ of smooth
    varieties we get a decomposition
  \begin{displaymath}
    \pi_+(\Oc_X) = \bigoplus_{\chi \in \hat G} V_\chi ^*\otimes_k
    \Oc_{X_\chi},
  \end{displaymath} where $\Oc_{X_\chi} = Hom_{k[G]}(V_\chi^*, \Oc_X)$
is a simple $\Dc_Y$-module.
  \item Similarly to \Corollary{\ref{non-galois}} (discussed below), when $L/K$ is a
  finite non-Galois field extension
    the decomposition of $\pi_+(L)$ can be expressed as the
    $H$-invariants $\tilde \pi_+(\bar L)^H$ of the decomposition of
    $\tilde \pi_+(\bar L)$, where $\bar L$ is the Galois cover of
    $L/K$ and $H$ is the Galois group of $\bar L/L$.
  \end{enumerate}
\end{remarks}

Consider now a $\Dc_L$-module $M$ that is also a $\Dc_L[G_M]$-module
(so that the cocycle $f$ in \Section{\ref{inertia-section}} is a
coboundary), implying that the decomposition of $\pi_+(M)$ is
determined by the regular representation $k[G_M]$ as in
\Theorem{\ref{galois-direct}}. Putting $L_1 = L^{G_M}$ we have the
morphisms $p: \Spec L \to \Spec L_1$ and $q: \Spec L_1 \to \Spec K$,
so that by \Proposition{\ref{morita}} $M= p^!(N)$ for some
$\Dc_{L_1}$-module $N$, where $M$ is a simple $\Dc_L[G_M]$-module if
and only if $N$ is simple. In this situation we also get that the
direct image of a tensor products by $N$ and certain other simple
modules are simple.
\begin{proposition}
  In the notation above, $q_+(L_\chi\otimes_{_L}N)$ is a simple
  $\Dc_K$-module for any simple component $L_\chi$ of $p_+(L)$.
\end{proposition}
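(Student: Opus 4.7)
The plan is to identify $L_\chi \otimes_{L_1} N$ with the simple $\Dc_{L_1}$-module $M_\chi$ appearing in the decomposition of $p_+(M)$ from \Theorem{\ref{galois-direct}}, and then to conclude via \Theorem{\ref{clifford-pairing}}(1). Recall that $p: \Spec L \to \Spec L_1$ is the Galois (hence étale) cover with group $G_M$, and by hypothesis $M = p^!(N)$ is an honest $\Dc_L[G_M]$-module, so $\bar G_M = G_M$.

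First I would apply \Theorem{\ref{galois-direct}} to the map $p$, giving $p_+(M) = \bigoplus_\chi V_\chi^* \otimes_k M_\chi$ with $M_\chi = \Hom_{k[G_M]}(V_\chi^*, M)$ a simple $\Dc_{L_1}$-module. Since $p$ is étale, $p^!(N) = L \otimes_{L_1} N$ as $k[G_M]$-modules, with $G_M$ acting trivially on $N$. Using that $N$ is $L_1$-flat together with the finite-dimensionality of $V_\chi^*$, the $V_\chi^*$-isotypic component commutes with tensoring by $N$:
\[
M_\chi = \Hom_{k[G_M]}(V_\chi^*, L \otimes_{L_1} N) = \Hom_{k[G_M]}(V_\chi^*, L) \otimes_{L_1} N = L_\chi \otimes_{L_1} N,
\]
where the last equality is \Corollary{\ref{direct-L}} applied to $p$. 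This already identifies $L_\chi \otimes_{L_1} N$ with the simple module $M_\chi$.

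To invoke \Theorem{\ref{clifford-pairing}}(1), I would next verify that $M_\chi$ lies in $\Ac$, i.e. that $[p^!(M_\chi) : M] \neq 0$. Because $p$ is étale, $p^!$ commutes with tensor products over the structure sheaf, so $p^!(M_\chi) = p^!(L_\chi) \otimes_L p^!(N) = p^!(L_\chi) \otimes_L M$. Applying \Theorem{\ref{cliffordtheorem}} to the Galois map $p$ and to the simple $\Dc_{L_1}$-module $L_\chi$, whose only $\Dc_L$-constituent is necessarily the trivial module $L$ (since $L_\chi \subset p_+(L)$ and $L$ has full inertia $G_M$), yields $p^!(L_\chi) = L^e$; comparing ranks via $\rank_{L_1} L_\chi = \dim V_\chi$ from \Theorem{\ref{galois-direct}} forces $e = \dim V_\chi$. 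Therefore $p^!(M_\chi) = M^{\dim V_\chi}$, and $[p^!(M_\chi) : M] = \dim V_\chi > 0$, so $M_\chi \in \Ac$.

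With $M_\chi \in \Ac$, \Theorem{\ref{clifford-pairing}}(1) immediately gives that $q_+(M_\chi) = q_+(L_\chi \otimes_{L_1} N)$ is a simple $\Dc_K$-module, as required. The main obstacle I anticipate is the $G_M$-equivariant identification $M_\chi \cong L_\chi \otimes_{L_1} N$, which hinges on the triviality of the $G_M$-action on $N$ and on the flatness of $N$ over $L_1$; once this identification is in place, the rest is a formal unwinding of the bijection in \Theorem{\ref{clifford-pairing}}.
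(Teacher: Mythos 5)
Your proof is correct, but it takes a genuinely different route from the paper's. The paper's argument is shorter and more direct: by the projection formula, $\pi_+(M) = q_+(p_+(M)) = q_+(p_+(L)\otimes_{L_1}N)$; plugging in the decomposition $p_+(L)=\oplus_\chi V_\chi^*\otimes_k L_\chi$ from \Corollary{\ref{direct-L}} and comparing isotypic components with the decomposition $\pi_+(M)=\oplus_\chi V_\chi^*\otimes_k M_\chi$ from \Theorem{\ref{galois-direct}} immediately forces $q_+(L_\chi\otimes_{L_1}N)\cong M_\chi$, which is simple. Your argument instead applies \Theorem{\ref{galois-direct}} to the \emph{étale} map $p$ rather than to $\pi$, so that $L_\chi\otimes_{L_1}N$ is exhibited directly as a simple component of $p_+(M)$; your computation $\Hom_{k[G_M]}(V_\chi^*, L\otimes_{L_1}N)=L_\chi\otimes_{L_1}N$ is essentially the projection formula for $p$, re-derived by hand. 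You then verify membership of $L_\chi\otimes_{L_1}N$ in $\Ac$ (which requires the extra rank computation $p^!(L_\chi)\cong L^{\dim V_\chi}$) and close by invoking \Theorem{\ref{clifford-pairing}}(1). Both routes are sound; the paper's avoids the detour through \Theorem{\ref{clifford-pairing}} and needs no verification of membership in $\Ac$, while yours isolates $L_\chi\otimes_{L_1}N$ as a simple $\Dc_{L_1}$-module up front, which makes the Clifford-theoretic mechanism visible.
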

\begin{proof}
  The ``projection formula'' (which is evident in this case, for the
  general case, see \cite{hotta-takeuchi-tanisaki}*{Cor. 1.7.5}),
  \Corollary{\ref{direct-L}} and \Theorem{\ref{galois-direct}} implies
\begin{eqnarray*}
  \pi_+(M)&=& q_+(p_+(L)\otimes_{L_1} N) = \bigoplus_{\chi \in \hat {
              G}_M } V^*_\chi\otimes_k q_+(L_\chi\otimes _{L_1}N)
  \\ &=& \bigoplus_{\chi \in \hat { G}_M
         }
         V_\chi^*\otimes_k M_\chi.
\end{eqnarray*}
It follows that $q_+(L_\chi\otimes _{L_1}N)\cong M_\chi$, which is
simple.
\end{proof}

\subsection{The
  normal basis theorem }
Assume that $L/K/k$ are field extensions, where $L/K$ is finite, $K/k$
is of finite type, and $k$ is algebraically closed of characteristic
$0$. It follows from \Corollary{\ref{direct-L}} that if $\mu$ is an
element in $L$, then the following are equivalent:
\begin{enumerate}
\item $\Dc_K\mu $ is simple.
\item $K[G]\mu$ is simple.
\end{enumerate}
If one instead asks that $\mu$ be a generator of $L$ one arrives at a
refinement of the normal basis theorem in this situation.

 \begin{theorem}\label{normalbasis} Keep the assumption on $L/K/k$. The following are
   equivalent for an element $\mu \in L$:
   \begin{enumerate}
   \item $\Dc_K \mu = L$.
     \item $K[G]\cong K[G]\mu = L$.
     \end{enumerate}
     Moreover, there exists such an element $\mu$. 
 \end{theorem}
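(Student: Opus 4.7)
The plan is to use the isotypical decomposition
\[
L \;=\; \bigoplus_{\chi\in\hat G}\, V_\chi^{*}\otimes_k L_\chi
\]
from Corollary~\ref{direct-L}, which is simultaneously the decomposition of $L$ as a $\Dc_K$-module (with each $L_\chi$ simple of $K$-rank $\dim V_\chi$) and as a $K[G]$-module. Both conditions $\Dc_K\mu=L$ and $K[G]\mu=L$ then decouple across $\chi$: each reduces to the statement that the $\chi$-component $\mu_\chi$ generates the summand $V_\chi^{*}\otimes_k L_\chi$ in the relevant sense, so the problem reduces to cyclicity inside each isotypical summand.

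In each summand I would characterize cyclicity by the non-vanishing of $\mu_\chi$ modulo every maximal submodule. On the $\Dc_K$-side, the computation
\[
\End_{\Dc_K}(L)\;=\;\Hom_{\Dc_L}(L,\pi^{!}L)\;=\;\Hom_{\Dc_L}(L,L\otimes_{K}L)\;=\;k[G]
\]
(using that $L\otimes_{K}L$ is $\Dc_L$-isomorphic to $L^{|G|}$ via the normal basis theorem applied $L$-linearly, together with $\End_{\Dc_L}(L)=k$) gives $\End_{\Dc_K}(L_\chi)=k$ after matching with the Wedderburn decomposition of $k[G]$; Jacobson density then shows that the maximal $\Dc_K$-submodules of $V_\chi^{*}\otimes_k L_\chi \cong L_\chi^{d_\chi}$ are in bijection with the nonzero vectors of $V_\chi$ modulo $k^{*}$. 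On the $K[G]$-side the same summand is a sum of $d_\chi$ copies of the absolutely irreducible $V_{\chi,K}^{*}$ with $\End_{K[G]}(V_{\chi,K}^{*})=K$, and its maximal $K[G]$-submodules are parametrized by the same projective space. The equivalence (1)$\Leftrightarrow$(2) follows once these two parametrizations are seen to describe the same collection of submodules of $L^{(\chi)}$, using the natural $(\Dc_K,K[G])$-bimodule structure on the isotypical component.

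Existence of $\mu$ is the classical normal basis theorem: $L\cong K[G]$ as $K[G]$-modules since $L/K$ is finite Galois, so some $\mu_0\in L$ satisfies (2) and hence, by the equivalence, also (1). I expect the delicate point to be the identification of the two families of maximal submodules, since a priori the $\Dc_K$-maximal submodules involve $k$-linear data while the $K[G]$-maximal ones involve $K$-linear data; reconciling these parametrizations is the heart of the argument and is what makes the differential and the Galois normal basis notions coincide.
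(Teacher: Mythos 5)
Your plan correctly decouples both cyclicity conditions across the blocks $L^{(\chi)}\cong V_\chi^*\otimes_k L_\chi$ (these are simultaneously the $\Dc_K$- and the $K[G]$-isotypical components, so any cyclic submodule splits across them), and you correctly identify that the two parametrizations of maximal submodules then have to be matched. The problem is that this matching, which you call ``the heart of the argument,'' is exactly where the plan breaks, and I do not see that it can be repaired in the form you propose. Writing $\mu_\chi=\sum_i v_i^*\otimes m_i$ with $\{v_i^*\}$ a $k$-basis of $V_\chi^*$ and $m_i\in L_\chi$, the maximal $\Dc_K$-submodules of $L^{(\chi)}$ are $W\otimes_k L_\chi$ with $W\subsetneq V_\chi^*$ a $k$-hyperplane (because $\End_{\Dc_K}(L_\chi)=k$), so $\Dc_K\mu_\chi=L^{(\chi)}$ is the condition that the $m_i$ be $k$-linearly independent; whereas the maximal $K[G]$-submodules are of the transposed form $V_\chi^*\otimes_k W'$ with $W'\subsetneq L_\chi$ a $K$-hyperplane, so $K[G]\mu_\chi=L^{(\chi)}$ is the condition that the $m_i$ be $K$-linearly independent. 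These are not two parametrizations of the same family of submodules of $L^{(\chi)}$; one cuts the multiplicity factor $V_\chi^*$, the other cuts the typic factor $L_\chi$, and they lie in entirely different lattices. When $\dim V_\chi\ge 2$ the two conditions are not literally equivalent: $m_1=m$, $m_2=tm$ with $m\ne 0$ and $t\in K\setminus k$ is $k$-independent but $K$-dependent, so a $(\Dc_K,K[G])$-bimodule argument cannot simply identify the two projective spaces. The subsidiary computation $\End_{\Dc_K}(L)\cong k[G]$ you sketch is correct but does not produce the needed identification.

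For contrast, the paper does not argue block by block. From $\Dc_K\mu=L$ it builds one global $\Dc_K$-linear injection $L=\Dc_K\mu\hookrightarrow \Hom_{k[G]}(k[G]\mu,L)=\bigoplus_\chi\Hom_{k[G]}(k[G]\mu,V_\chi^*)\otimes_k L_\chi$, sending $P\mu\mapsto(\nu\mapsto P\nu)$; injectivity on multiplicity spaces gives $\dim_k V_\chi^*\le[k[G]\mu:V_\chi^*]$, and a dimension count over $\widehat G$ is then supposed to force $K[G]\mu=L$. The converse uses a dual $\Hom_{\Dc_K}(\Dc_K\mu,L_\chi)$ and the fact that $K[G]\mu=L$ forces $\dim_k k[G]\mu=|G|$. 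So the route you would need, if you insist on decoupling, must also build in some further constraint on the tuple $(m_i)$ that comes from $\mu$ lying inside a \emph{field} rather than an abstract isotypical module; the naive density/Schur count alone does not suffice. I would also warn you (and this applies to any block-by-block account) to be careful about passing from $k[G]$-multiplicities inside $k[G]\mu$ to $K[G]$-multiplicities inside $K[G]\mu$: since $K[G]\mu=K\cdot k[G]\mu$, one only has $[K[G]\mu:K\otimes_kV_\chi^*]\le[k[G]\mu:V_\chi^*]$ a priori, so the inequality goes the wrong way and needs a separate argument.
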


 \begin{proof} The existence of the element in (1) is a well-known
   fact since $L$ is a holonomic $\Dc_K$-module, and it is not
   essential that $L/K$ be Galois. See \cite{katz-cyclic} for a
   construction of a cyclic vector $\mu$ from a $K$-basis of $L$,
   using only one derivation of $K$. A non-constructive but more
   ``generic'' proof is as follows. The ring $\Dc_K$ is simple and
   since $K/k$ has positive transcendence degree, we have
   $\dim_ K \Dc_K = \infty$, so that for any element $l\in L$ the map
   $\Dc_K \to L$, $P\mapsto Pl$ is not an isomorphism. Then conclude
   from e.g. \cite{kallstrom:arkiv}*{Lem. 2.2}. For a proof of the
   existence of an element as in (2), see \cite{lang:algebra}*{Th.
     13.1} for the classical normal basis theorem.
   
   $(1)\Rightarrow (2)$: We have the following isomorphisms and
   inclusions of $\Dc_K$-modules (as detailed below)
   \begin{eqnarray*}
&&     \bigoplus_{\chi \in \hat G} V_\chi^* \otimes_k L_\chi = L = \Dc_K \mu \subset
     Hom_{k[G]}(k[G]\mu, \Dc_K \mu) \\ &=& Hom_{k[G]}(k[G]\mu,
     \bigoplus_{\chi \in \hat G} V_\chi^*\otimes_kL_\chi )=   \bigoplus_{\chi \in \hat G} Hom_{k[G]}(k[G]\mu,
    V_\chi^* ) \otimes_kL_\chi.
   \end{eqnarray*}
   The first and third equalities follow from the first part of
   \Corollary{\ref{direct-L}}, the inclusion map is that $\mu$ maps to
   the map $\mu \mapsto \mu$. Since the $\Dc_K$-modules $L_\chi$ are
   mutually non-isomorphic it follows that
\begin{displaymath}
V_\chi^*\subset   Hom_{k[G]}(k[G]\mu,    V_\chi^* ) .
\end{displaymath}
Therefore the multiplicity  of the simple module $K\otimes_k V_\chi^*
$  in the  semisimple $K[G]$-submodule $K[G]\mu\subset L$   satisifies
\begin{displaymath}
  \dim_k V_\chi^* \leq  [K[G]\mu: K\otimes_k V_\chi^*] \leq [L:
  K\otimes_k V_\chi^*] = \dim _k V_\chi^*. 
\end{displaymath}
Since again by \Corollary{\ref{direct-L}}
$L=\oplus V_\chi^*\otimes_kL_\chi = \oplus K\otimes_k V_\chi^*\otimes_K L_\chi $,
this implies $(2)$.

$(2)\Rightarrow (1)$:   We have
\begin{displaymath}
  k[G]\mu \subset Hom_{\Dc_K}(\Dc_K \mu, L) = \bigoplus_{\chi \in \hat G}Hom_{\Dc_K}(\Dc_K \mu, L_\chi) \otimes_k V_\chi^*,
\end{displaymath}
where the inclusion map is determined by letting $\mu$ map to the map
$\mu \mapsto \mu$, and the equality follows from
\Corollary{\ref{direct-L}}. This implies the first inequality below
\begin{eqnarray*}
\dim_k V_\chi^* &=& [L: K\otimes_k V_\chi^* ]= [K[G]\mu : K\otimes_k V_\chi^*]= [k[G]\mu:
V_\chi^*]\\ & \leq  &\dim_k Hom_{\Dc_K}(\Dc_K \mu, L_\chi) \leq \dim_k
Hom_{\Dc_K}(L, L_\chi) = \dim_k V_\chi^*,
\end{eqnarray*}
where multiplicites of $K[G]$-modules occur in the second and third
steps. The second inequality follows since $L$ is a semisimple
$\Dc_K$-module. Therefore $[\Dc_K \mu, L_\chi] = \dim_k V_\chi^*$, so
that $\Dc_K \mu = L$ by \Corollary{\ref{direct-L}}.
 \end{proof}

\subsection{Proof of \Theorem{\ref{galois-direct}}}\label{proof-main}
We have two $\Dc_L[\bar G_M]$-modules $\Dc_L[\bar
G_M]\otimes_{\Dc_L}M$ and $ k[\bar G_M]\otimes_k M$, where on the
first $\Dc_L[\bar G_M]$ acts by multiplication from the left and on
the second $\bar G_M$ acts diagonally while $\Dc_L$ acts only on the
second factor. In fact, these modules are isomorphic.
\begin{lemma}\label{simpletensor}
  Let $M$ be a $\Dc_L[\bar G_M]$-module such that $e_+(M)$ is a simple
  $\Dc_L$-module.
  \begin{enumerate}
  \item If $V$ is a simple $\bar G_M$-module, then $V\otimes_kM$ is a
  simple $\Dc_L[\bar
    G_M]$-module.
\item As $\Dc_L[\bar G_M]$-modules
  \begin{displaymath}
    \Dc_L[\bar G_M]\otimes_{\Dc_L}e_+(M) \cong k[\bar G_M]\otimes_k M
    \cong \bigoplus_{\chi \in \widehat {\bar G}_M } V_\chi^{m_\chi}
    \otimes_k M,
  \end{displaymath} where $\Dc_L[\bar G_M]$ acts by multiplication
  from the left in the left side, while on the other two terms $\bar
  G_M$ acts diagonally and $\Dc_L$ acts only on the right factor $M$.
  The $\Dc_L[\bar G_M]$-module $V_\chi \otimes_k M $ is simple, where
  $V_\chi$ is a simple $\bar G_M$-module of dimension $m_\chi$.
  \end{enumerate}
\end{lemma}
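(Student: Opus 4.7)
The plan is to establish (2) first by exhibiting an explicit isomorphism, then derive the simplicity assertion in (1) from a semisimplicity-plus-Schur argument.

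For (2), I would define the $\Dc_L[\bar G_M]$-linear map
\[
\phi\colon \Dc_L[\bar G_M]\otimes_{\Dc_L}e_+(M) \longrightarrow k[\bar G_M]\otimes_k M, \qquad g\otimes m \longmapsto g\otimes gm \quad (g\in\bar G_M),
\]
extended across the tensor relations. Well-definedness is checked against the skew relation $Pg = gP^{g^{-1}}$ in $\Dc_L[\bar G_M]$: both $(Pg)\otimes m = gP^{g^{-1}}\otimes m$ and $g\otimes Pm$ in the source map to $g\otimes Pgm$ in the target, a short direct computation. Parallel verifications show that $\phi$ intertwines the left $\Dc_L[\bar G_M]$-action on the source with the target structure where $\bar G_M$ acts diagonally and $\Dc_L$ acts only on the right factor; the obvious inverse is $g\otimes m\mapsto g\otimes g^{-1}m$. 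The displayed decomposition in (2) then follows by tensoring the isotypic decomposition $k[\bar G_M]\cong\bigoplus_{\chi\in\widehat{\bar G}_M}V_\chi^{m_\chi}$ of the regular representation (with $m_\chi = \dim_k V_\chi$) with $M$ over $k$.

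For (1), let $V$ be a simple $\bar G_M$-module and let $N\subseteq V\otimes_k M$ be a non-zero $\Dc_L[\bar G_M]$-submodule. Via the identification $V\otimes_k M\cong\Homo_k(V^*, M)$, each $v^*\in V^*$ yields a $\Dc_L$-linear evaluation map $\operatorname{ev}_{v^*}\colon V\otimes_k M\to M$, whose image $\operatorname{ev}_{v^*}(N)$ is a $\Dc_L$-submodule of the simple $\Dc_L$-module $M$, hence $0$ or $M$. The vanishing locus $W^* := \{v^*\in V^* : \operatorname{ev}_{v^*}(N) = 0\}$ is a $\bar G_M$-stable $k$-subspace of $V^*$ by a routine check using the $\bar G_M$-invariance of $N$ together with invertibility of $\bar G_M$ on $M$. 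Irreducibility of $V^*$ as $\bar G_M$-module forces $W^*\in\{0,V^*\}$; the second alternative gives $N = 0$, hence $W^* = 0$ and every evaluation $\operatorname{ev}_{v^*}$ is surjective on $N$.

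The main obstacle is upgrading the surjectivity of all evaluations to the equality $N = V\otimes_k M$. I would clear this in two steps. First, $V\otimes_k M\cong M^{\dim_k V}$ is $\Dc_L$-isotypic and hence semisimple over $\Dc_L$, and the Maschke-type averaging in Proposition~\ref{maschke}(1) promotes this to semisimplicity over $\Dc_L[\bar G_M]$. Second, the adjunction $\Homo_{\Dc_L[\bar G_M]}(V\otimes_k M, -)\cong\Homo_{k[\bar G_M]}(V,\Homo_{\Dc_L}(M,-))$ yields
\[
\End_{\Dc_L[\bar G_M]}(V\otimes_k M) \;\cong\; \Homo_{k[\bar G_M]}\bigl(V,\ V\otimes_k\End_{\Dc_L}(M)\bigr),
\]
with $\bar G_M$ acting diagonally on the right side (via its representation on $V$ and by conjugation on $\End_{\Dc_L}(M)$, which makes sense because $\bar G_M$ fixes the isomorphism class of $M$). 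A Schur-style analysis, using the algebraic closure of $k$, identifies this endomorphism ring with a division ring, and any semisimple module whose endomorphism ring is a division ring is simple. This completes (1) and hence the lemma.
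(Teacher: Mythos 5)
Your proof is correct. For part (2) you construct exactly the isomorphism $\phi(g\otimes m) = g\otimes gm$ used in the paper and verify it in the same way. For part (1), though, you take a genuinely different route from the paper's. The paper argues directly on a nonzero submodule $N$ of $V\otimes_k M$: Quillen's lemma gives $\End_{\Dc_L}(e_+(M))=k$, so the Jacobson density theorem furnishes a $P\in\Dc_L$ that isolates a pure tensor $v_1\otimes m_1\in N$ from a general element $\sum v_i\otimes m_i$; it follows that $v_1\otimes M\subset N$, and a second application of density shows each $gv_1\otimes m_1$ lies in $N$ as well, forcing $N=V\otimes_k M$. You instead factor the argument into two structural steps: first, $V\otimes_k M$ is $\Dc_L$-isotypic, hence $\Dc_L$-semisimple, and Proposition~\ref{maschke} upgrades this to semisimplicity over $\Dc_L[\bar G_M]$; second, the adjunction you quote together with $\End_{\Dc_L}(M)=k$ (Quillen again) and $\End_{k[\bar G_M]}(V)=k$ (Schur over the algebraically closed $k$) shows that $\End_{\Dc_L[\bar G_M]}(V\otimes_k M)$ is a division ring, and a finite-length semisimple module with a division endomorphism ring is simple. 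Both proofs invoke Quillen's lemma at the same juncture; the paper compresses everything into a single density argument with no auxiliary input, while your version cleanly separates the two ingredients---semisimplicity of the module and triviality of its endomorphism ring---at the cost of leaning on the Maschke machinery. One small remark: your evaluation-map preliminary, showing that $\operatorname{ev}_{v^*}$ maps any nonzero submodule $N$ onto $M$ for every $v^*\neq 0$, is correct but becomes redundant once you pass to the Maschke-plus-Schur argument, which proves simplicity outright without reference to a chosen $N$.
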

\begin{proof}
  (1): Let $N$ be a non-zero simple submodule of $V\otimes_kM$ and
  $\sum v_i\otimes m_i\in N$ be an element such that
  $v_1 \otimes m_1 \neq 0$, and the $m_i$ are linearly independent
  over $k$. Since $e_+(M)$ is simple and $k$ is algebraically closed,
  by Quillen's theorem \cite{dixmier}*{Lem. 2.6.4}
  $\End_{\Dc_L}(e_+(M))=k$, hence by the density theorem there exists
  $P\in \Dc_L$ such that $Pm_i= \delta_{i1}m_1$ ($\delta_{ij}$ is the
  Kronecker delta); hence $v_1\otimes m_1 \in N$. Since
  $\Dc_L (v_1 \otimes m_1 ) = v_1 \otimes \Dc_L m_1 = v_1\otimes M$ it
  suffices now to prove that $(gv_1) \otimes m_1\in N $ when
  $g\in \bar G_M$, since this would imply $N= V\otimes_k M$. We have
  $g \cdot (v_1 \otimes m_1) = (gv_1 \otimes gm_1)\in N$, so that if
  $gm_1 \in k m_1$ this is evidently true. If $g m_1 \not \in km_1$,
  then by the density theorem there exists $Q\in \Dc_L$ such that
  $Q (gv_1 \otimes gm_1) = (gv_1 \otimes Q (gm_1)) = (gv_1)\otimes
  m_1$, and therefore $(gv_1)\otimes m_1 \in N$.

  (2): Define the map
  \begin{displaymath}
    \phi : \Dc_L[\bar G_M]\otimes_{\Dc_L} e_+(M) \to k[\bar
    G_M]\otimes_k M, \quad g\otimes m \mapsto g \otimes g\cdot m,
\end{displaymath} which is well defined since any element in $
\Dc_L[\bar G_M]\otimes_{\Dc_L} M$ is a unique $k$-linear combination
of elements of the form $g\otimes m$. Now
\begin{eqnarray*}&& \phi ( \sum_{h\in \bar G_M} a_h h (g \otimes
    m)) = \phi ( hg \otimes
 a_h^{(hg)^{-1}} \cdot m) \\ &=& \sum_{h\in \bar G_M} hg \otimes (hg
 \cdot a_h^{(hg)^{-1}} \cdot m) = \sum_{h\in \bar G_M} hg \otimes (
 a_h ((hg) \cdot m)) \\ &=& \sum_{h\in \bar G_M} a_h \cdot ((hg
 )\otimes ( hg \cdot m) )= (\sum_{h\in \bar G_M} a_h h ) (g\otimes
 g\cdot m)
\\
 &=& (\sum_{h\in \bar G_M} a_h h ) \phi (g\otimes m) ,
\end{eqnarray*} showing that $\phi$ is a homomorphism. The map $\phi$
is clearly injective, and since $\phi$ is a homomorphism of vector
spaces over $L$ of equal dimensions, it follows that $\phi$ is an
isomorphism. The second isomorphism in (2) follows since $k[\bar G]=
\oplus_{\chi \in \widehat{\bar G}_M} V_\chi^{m_\chi}$, the regular
representation of $\bar G_M$, and that $V_\chi \otimes_k M$ is simple
follows from (1).
\end{proof}

Next lemma is similar to Mackey's irreducibility criterion for induced
representations \cite{mackey:induced}; see also
\Corollary{\ref{irr-crit}}. Put $ G_M^g = G_M \cap (g G_M g^{-1})$ and
$ \bar G_M^g =\psi^{-1}( G_M^g)$ (see (\ref{central-ext})), so that if
$N$ is a semisimple $\Dc_L[\bar G_M]$-module, then both $N$ and
$g\otimes N$ are $\Dc_L[\bar G^g_M]$-modules, which by
\Proposition{\ref{maschke}} are semisimple.
\begin{lemma}\label{tensorlemma}
  Let $L/K$ be a finite Galois extension and $N$ be a simple $\Dc_L[\bar
  G_M]$-module. Make the following assumption:
  \begin{displaymath}
    Hom_{\Dc_L[\bar G^g_M]}(N, g\otimes N)=0.
  \end{displaymath} In other words, if $g \in G\setminus G_M$, then $N
  $ and $g \otimes N$ have no pairwise isomorphic simple component,
  regarded as $\Dc_L[\bar G^g_M]$-modules. Then
  $\Dc_L[G]\otimes_{\Dc_L[\bar G_M]} N$ is a simple $\Dc_L[G]$-module.
\end{lemma}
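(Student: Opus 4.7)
The plan is to adapt the classical proof of Mackey's irreducibility criterion to the setting of the skew group rings $\Dc_L[\bar G_M]\to \Dc_L[G]$ linked by the homomorphism induced by $\psi$. Throughout set $I := \Dc_L[G]\otimes_{\Dc_L[\bar G_M]} N$.

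First I would establish semisimplicity of $I$ as a $\Dc_L[G]$-module. Since $N$ is simple as a $\Dc_L[\bar G_M]$-module it is in particular semisimple as $\Dc_L$-module, so part (2) of \Proposition{\ref{maschke}} applied to $\psi: \bar G_M \to G$ yields that $I$ is semisimple over $\Dc_L[G]$. Because $k$ is algebraically closed and every simple $\Dc_L[G]$-module has scalar endomorphism ring by Quillen's theorem (as invoked in \Lemma{\ref{simpletensor}}), for a semisimple module of finite length simplicity is equivalent to $\dim_k \End = 1$. Hence the task reduces to showing $\dim_k \End_{\Dc_L[G]}(I) = 1$.

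By the tensor--hom adjunction (Frobenius reciprocity) one has $\End_{\Dc_L[G]}(I) = \Hom_{\Dc_L[\bar G_M]}(N, I)$. The key structural input is a Mackey-type decomposition of $I$, viewed as a $\Dc_L[\bar G_M]$-module on restriction, namely
\begin{displaymath}
I \cong \bigoplus_g \Dc_L[\bar G_M]\otimes_{\Dc_L[\bar G^g_M]}(g\otimes N),
\end{displaymath}
the sum running over representatives $g$ of the double cosets $G_M\backslash G/G_M$. Granting this, a second application of Frobenius reciprocity converts each summand, after insertion into $\Hom_{\Dc_L[\bar G_M]}(N,\cdot)$, into $\Hom_{\Dc_L[\bar G^g_M]}(N, g\otimes N)$. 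The term $g=e$ equals $\End_{\Dc_L[\bar G_M]}(N) = k$ by simplicity of $N$ together with Quillen's theorem; every remaining term vanishes by the standing hypothesis of the lemma (changing the representative within a single double coset only conjugates both arguments by an element of $\bar G_M$, so the vanishing is well-defined). Summing yields $\dim_k \End_{\Dc_L[G]}(I) = 1$, so $I$ is simple.

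The main obstacle I expect is justifying the Mackey-type bimodule decomposition used above, because $\bar G_M$ maps into $G$ through the central extension $\psi$ rather than as a bona fide subgroup. Concretely, one must decompose $\Dc_L[G]$ as a right $\Dc_L[\bar G_M]$-module by lifting coset representatives of $G/G_M$, repackage these into double-coset orbits under the left action, and verify that the induced twisted $\Dc_L$-action on each piece $g\otimes N$ matches the one used in the statement; one must also keep track of the $\mu_n$-central character of $N$ as it is transported along the inclusions $\bar G^g_M \subset \bar G_M$. Once this bookkeeping is in place, the identity reduces to the classical Mackey formula for group algebras.
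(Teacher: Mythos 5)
Your proof is correct and takes essentially the same approach as the paper's. The paper's one-line argument that the canonical map $\Hom_{\Dc_L[\bar G_M]}(N,N)\to\Hom_{\Dc_L[\bar G_M]}(N,\Dc_L[G]\otimes_{\Dc_L[\bar G_M]}N)$ is an isomorphism compresses exactly the Frobenius reciprocity plus Mackey-decomposition bookkeeping that you spell out explicitly, and the phrase ``since $k$ is algebraically closed of characteristic $0$'' covers both the Quillen step ($\End(N)=k$) and the implicit appeal to Maschke for semisimplicity of the induced module, which you make explicit at the outset.
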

\begin{proof} The asumption implies that the canonical map
  \begin{align*}
    Hom_{\Dc_L[\bar G_M]} (N,N) &\to Hom_{\Dc_L[G]}
    (\Dc_L[G]\otimes_{\Dc_L[\bar G_M]} N,
                                  \Dc_L[G]\otimes_{\Dc_L[\bar G_M]}
N)\\ &= Hom_{\Dc_L[\bar G_M]} (N,\Dc_L[G]\otimes_{\Dc_L[\bar G_M]} N)
  \end{align*} is an isomorphism, which implies the assertion, since
  $k$ is algebraically closed of characteristic $0$ (this follows from
  Quillen's theorem, see \cite{dixmier}*{ Lem. 2.6.4}).
\end{proof}

The right action of $\Dc_L[G]$ on $\Dc_L[G]$ and right action of
$\Dc_L[\bar G_M]$ on $\Dc_L[\bar G_M]$ gives $Hom_{\Dc_L[\bar G_M]}
(\Dc_L[G], \Dc_L [\bar G_M])$ a structure of $(\Dc_L[G], \Dc_L[\bar
G_M])$-bimodule.
\begin{lemma}\label{extra} There exists an isomorphism as $(\Dc_L[G],
\Dc_L[\bar G_M])$-bimodules
  \begin{displaymath}
    \Dc_L[G] = Hom_{\Dc_L[\bar G_M]} (\Dc_L[G], \Dc_L [\bar G_M]).
  \end{displaymath}
\end{lemma}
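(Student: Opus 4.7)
The plan is a classical Frobenius self-duality argument for the group-algebra extension, with an extra reduction step to handle the fact that $\psi: \bar G_M \to G$ need not be injective. Since $\psi$ kills $\mu_n = \ker\psi$, the subgroup $\mu_n$ acts trivially on $\Dc_L[G]$ from the right, so any $\phi \in \Hom_{\Dc_L[\bar G_M]}(\Dc_L[G], \Dc_L[\bar G_M])$ satisfies $\phi(x) = \phi(x \cdot z) = \phi(x)\cdot z$ for all $z \in \mu_n$ and thus takes values in the right-$\mu_n$-invariant subspace $\Dc_L[\bar G_M]^{\mu_n}$. Using the averaging element $N_1 = \sum_{z \in \mu_n} z$ together with any set-theoretic section of $\psi$, one identifies $\Dc_L[\bar G_M]^{\mu_n}$ with $\Dc_L[G_M]$ as a $\Dc_L[G_M]$-bimodule. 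This reduces the problem to exhibiting a $(\Dc_L[G], \Dc_L[G_M])$-bimodule isomorphism $\Dc_L[G] \cong \Hom_{\Dc_L[G_M]}(\Dc_L[G], \Dc_L[G_M])$, with the right $\Dc_L[\bar G_M]$-action recovered by pulling back along $\psi$.

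For this reduced problem I apply the classical Frobenius isomorphism. Pick coset representatives $\{g_1, \dots, g_t\}$ of $G/G_M$, so that $\Dc_L[G] = \bigoplus_i g_i \Dc_L[G_M]$ is a free right $\Dc_L[G_M]$-module of rank $t$. Define the projection $\epsilon: \Dc_L[G] \to \Dc_L[G_M]$ by $\epsilon(g) = g$ if $g \in G_M$ and $\epsilon(g) = 0$ otherwise, extended $\Dc_L$-linearly; this map is $\Dc_L[G_M]$-bilinear. Set $\Phi(y)(x) := \epsilon(xy)$. Routine checks verify that $\Phi$ is a $(\Dc_L[G], \Dc_L[G_M])$-bimodule homomorphism, using the left $\Dc_L[G]$-action on the $\Hom$ space given by $(g\cdot\phi)(x) := \phi(xg)$: indeed $\Phi(gy)(x) = \epsilon(xgy) = \Phi(y)(xg) = (g \cdot \Phi(y))(x)$, while right $\Dc_L[G_M]$-linearity is immediate from the right-bilinearity of $\epsilon$.

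Finally, $\Phi$ is bijective. Injectivity follows from the non-degeneracy of the trace pairing: evaluating $\Phi(y) = 0$ at $x = g_i^{-1}$ (for $g_i$ a coset representative) recovers the $i$-th coordinate of $y$ in the basis $\{g_i\}$, forcing $y = 0$. Surjectivity follows by rank comparison, since both sides are free right $\Dc_L[G_M]$-modules of rank $t$, or more concretely by observing that $\Phi(g_i^{-1})$ picks out the $i$-th dual-basis coordinate functional. The main technical obstacle is the initial $\mu_n$-invariants reduction, which requires careful bookkeeping to ensure that the bimodule structures are preserved through the averaging identification; once this is done, the rest of the argument is entirely classical.
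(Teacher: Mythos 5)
Your proposal takes a genuinely different route from the paper. The paper dissolves the statement into a chain of natural isomorphisms: it first separates $\Dc_L[\bar G_M]\cong\Dc_L\otimes_k k[\bar G_M]$ as a $\Dc_L$-module and pushes the $\bar G_M$-invariants outside, then invokes Frobenius self-duality $\Hom_{\Dc_L}(\Dc_L[G],\Dc_L)\cong\Dc_L[G]$ for the extension $\Dc_L\subset\Dc_L[G]$, and finally the self-duality $k[\bar G_M]^*\cong k[\bar G_M]$ of the finite group algebra, all in one string of equalities. You instead exploit the central extension structure directly: since $\mu_n=\ker\psi$ is central in $\bar G_M$ and acts trivially on $L$ and on $\Dc_L[G]$, the target of any homomorphism is forced into the $\mu_n$-invariants, which you identify with $\Dc_L[G_M]$ via the averaging idempotent and a set-theoretic section, and you then apply the classical Frobenius isomorphism for the extension $\Dc_L[G_M]\subset\Dc_L[G]$ with the explicit pairing $\Phi(y)(x)=\epsilon(xy)$. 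The paper's route buys that it never leaves the world of natural transformations; your route buys a completely explicit isomorphism, at the cost of the bookkeeping you acknowledge.

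Two cautions. First, the $\Hom_{\Dc_L[\bar G_M]}$ in the statement must be left-module $\Hom$: the paper's description of the bimodule structure takes the left $\Dc_L[G]$-action on $\Hom$ to come from the right action of $\Dc_L[G]$ on $\Dc_L[G]$, and the right $\Dc_L[\bar G_M]$-action from the right action on $\Dc_L[\bar G_M]$, which is only consistent with left-module $\Hom$ (your own verification of $\Phi$, using $\Phi(y)(hx)=h\Phi(y)(x)$ and $(g\phi)(x)=\phi(xg)$, implicitly uses this). You open the $\mu_n$-reduction with the right-module identity $\phi(xz)=\phi(x)z$; this is harmless here only because $\mu_n$ is central in $\bar G_M$ and fixes $\Dc_L$, so the left- and right-$\mu_n$-invariants of $\Dc_L[\bar G_M]$ coincide — but that centrality should be flagged as what licenses the step. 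Second, ``rank comparison'' between free modules of the same rank does not by itself give surjectivity of $\Phi$ over a ring such as $\Dc_L[G_M]$; you should instead carry out the dual-basis argument you gesture at: with $\Dc_L[G]=\bigoplus_i\Dc_L[G_M]g_i^{-1}$, given $\phi$ set $y=\sum_i g_i\,\phi(g_i^{-1})$ and check $\Phi(y)(g_j^{-1})=\phi(g_j^{-1})$ using that $\epsilon$ kills the cosets $g_j^{-1}g_i\Dc_L[G_M]$ for $i\neq j$. With these adjustments the argument is correct.
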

\begin{proof}We only write down a sequence of natural isomorphisms,
  leaving to the reader to work out the various actions of $\bar G_M$,
  $\Dc_L$ and $\Dc_L[G]$:
  \begin{align*}
    & Hom_{\Dc_L[\bar G_M]}(\Dc_L[G], \Dc_L[\bar G_M])=
      (Hom_{\Dc_L}(\Dc_L[G], \Dc_L[\bar G_M]))^{\bar G_M}\\ &=
      (Hom_{\Dc_L}(\Dc_L[G],
                                                              \Dc_L)\otimes_k
                                                              k[\bar
                                                              G_M])^{\bar
                                                              G_M} =
                                                              (\Dc_L[G]\otimes_kk[\bar
                                                              G_M])^{\bar
                                                              G_M}\\
    & = Hom_{k[\bar G_M]}(k[\bar G_M]^*, \Dc_L[G])
      = Hom_{k[\bar G_M]}(k[\bar G_M], \Dc_L[G]) = \Dc_L[G].
  \end{align*}
\end{proof}
\begin{pfof}{\Theorem{\ref{galois-direct}}} (a): We can assume that
$M$ is a simple $\Dc_L$-module, so that by \Lemma{\ref{simpletensor}}
  \begin{displaymath}\tag{$*$}
    \Dc_L[\bar G_M]\otimes_{\Dc_L}M = \bigoplus_{\chi \in \widehat
    {\bar G}_M} V_\chi^{m_\chi} \otimes_k M,
\end{displaymath} hence any simple $\Dc_L[\bar G_M]$-submodule $N$
of the above module is of the form $V\otimes_k M$, where $V$ is a
simple $\bar G_M$-module, and thus $e_+(N) = \oplus^n M$ for for some
integer $n$. Let $g\in G$ and assume that there exists a non-zero
homomorphism
  \begin{displaymath} \phi: e_+(N) = \bigoplus^n M \to g\otimes
  e_+(N) = \bigoplus^n g \otimes M.
\end{displaymath}
Then $M \cong g\otimes M$, and therefore $g \in G_M$; hence there are
no non-trivial $\Dc_L$-linear homomorphism $\phi$ when $g\not \in G_M$, and
hence no $\Dc_L[\bar G^g_M]$-linear ones either. Therefore the
assumptions in \Lemma{\ref{tensorlemma}} are satisfied, and hence
$\Dc_L[G]\otimes_{\Dc_L[\bar G_M]} N$ is a simple $\Dc_L[G]$-module.

(b): We now have (as detailed below)
\begin{displaymath}
  \pi_+(M) = \Dc^r_L \otimes_{\Dc_L[\bar G_M]} \Dc_L[\bar G_M]
  \otimes_{\Dc_L} M = \Dc^r_L \otimes_{\Dc_L[\bar G_M]} \bigoplus_{\chi \in \hat {\bar
      G}_M} V_\chi^{m_\chi} \otimes_k M.
\end{displaymath}
The first equality follows since $\pi_+(M)$ is formed simply as the
restriction of the $\Dc_L$-module $M$ to the subring
$\Dc_K \subset \Dc_L $ and since
$\Dc_L \otimes_{\Dc_L[\bar G_M]} \Dc_L[\bar G_M] \cong \Dc_L$ as
$(\Dc_K, \Dc_L) $-bimodule. The second equality follows from
\thetag{$*$}. By (a)
$\Dc_L[G] \otimes_{\Dc_L[\bar G_M]}V_\chi \otimes_k M$ is a simple
$\Dc_L[G]$-module which by \Proposition{\ref{morita}} implies that the
following $\Dc_K$-module is simple
\begin{displaymath} \Dc^r_L\otimes_{\Dc_L[\bar G_M]}( V_\chi \otimes_k
M) =\Dc^r_L\otimes_{\Dc_L[G]} \Dc_L[G] \otimes_{\Dc_L[\bar
  G_M]}(V_\chi \otimes_k M).
\end{displaymath} (c): We have isomorphisms (as detailed below)
\begin{align*} & \Dc^r_L \otimes_{\Dc_L[\bar G_M]}( V_\chi
\otimes_{k}M) = Hom_{\Dc_L[ G]}(\Dc^l_L, \Dc_L[
  G])\otimes_{\Dc_L[\bar G_M ]} (V_\chi \otimes_{k}M) \\ & =
  Hom_{\Dc_L[ G]}(\Dc_L^l, \Dc_L[ G]\otimes_{\Dc_L[\bar G_M] }(V_\chi
  \otimes_{k}M) ) \\ &
=Hom_{\Dc_L[ G]}(\Dc_L^l,
  Hom_{\Dc_L[\bar G_M]}(\Dc_L[ G], \Dc_L[\bar G_M])\otimes_{\Dc_L[\bar G_M]
                                                                  }(V_\chi
                                                                  \otimes_{k}M)
                                                                  )
  \\& =Hom_{\Dc_L[ G]}(\Dc_L^l,
  Hom_{\Dc_L[\bar G_M]}(\Dc_L[ G], V_\chi \otimes_{k}M ) \\&
=Hom_{\Dc_L[\bar G_M]} (\Dc_L^l, V_\chi \otimes_{k}M) \\ & = ( V_\chi
\otimes_k M)^{\bar G _M} = Hom_{k[\bar G_M]} (V_\chi^*, M)= M_\chi.
\end{align*} The first line is by the definition of the bimodule $
\Dc^r_L$ in
\Section{\ref{galois-section}}, the third follows from
\Lemma{\ref{extra}}, and the fifth line is by adjunction. Since
$V_\chi^* \otimes_k M_\chi$ is the $\chi^*$-component of the
$\bar G_M$-module $M$, by (b) this completes the proof of (1) and (2).

(3): The multiplicity $m_\chi =[\pi_+(M): M_\chi]= \dim _k V_\chi$ is
clear. Put $L_1= L^{G_M}$, so that we have field extensions
$p: \Spec L\to \Spec L_1$, $q: \Spec L_1\to \Spec K$, and
$\pi= q\circ p$. Since
\begin{displaymath}
  \pi_+(M) = q_+(p_+(M))= \bigoplus_{\chi \in \widehat {\bar G}_M }
  V_\chi^*\otimes_k q_+(\tilde M_\chi) = \bigoplus_{\chi \in \widehat {\bar G}_M}
  V_\chi^*\otimes_k M_\chi,
\end{displaymath}
we have $M_\chi = q_+(\tilde M_\chi)$. By
\Theorem{\ref{cliffordtheorem}} $p^!(\tilde M_\chi)= M^{m_\chi}$ and
we have
$[L_1:K]= |G/G_M|$, hence
\begin{displaymath}
  \rank_K(M_\chi) = |G/G_M| \rank_{L_1}(\tilde M_\chi) = |G/G_M| m_\chi \rank_L(M).
\end{displaymath}

That $[\pi_+(M): M_\chi] = \dim_k V_\chi$  divides $\deg \pi = |G|$
follows since $\dim_k V_\chi$ divides $ |G|= |\bar G/Z(\bar G)|$
(see \cite{serre:lin-rep}*{\S 6.5, Prop 17}). 
\end{pfof}

\subsection{Decomposition for general finite
  maps}\label{general-finite}
One really wants to decompose $\pi_+(M)$ for any finite map
$\pi: X\to Y$ of smooth varieties, not requiring that the fraction
fields forms a Galois field extension $L/K$ as in
\Theorem{\ref{galois-direct}}. For this purpose we will use the Galois
cover $\bar L/K$ of $L/K$, so that we get the map
$\tilde \pi : \tilde X \to Y $ of $\pi$, where $\tilde X$ is the
integral closure of $Y$ in $\bar L$. The Galois groups $G$ and $H$ of
$\bar L/K$ and $\bar L/L$ act on the normal variety $\tilde X$ so that
$Y = \tilde X^G$ and $X= \tilde X^H$. Then $\tilde \pi = \pi \circ p$,
where $p: \tilde X \to X$ is the invariant map for the $H$-action.
Consider the diagram (as detailed below)
\begin{displaymath}
  \begin{tikzcd}
     \tilde X \arrow[r, "p"] \arrow[rr, bend left, "\tilde \pi"] & X
\arrow[r,"\pi"] & Y \\ \tilde X_r \arrow[u,"r"] \arrow[r,"p_0"]
\arrow[rr, bend right, "\tilde \pi_0"'] & X_r \arrow[u,"h"]
\arrow[r,"\pi_0"] & Y_r \arrow[u,"i"].
  \end{tikzcd}
\end{displaymath} The map $r$ is inclusion of the smooth locus of
$\tilde X$ and $h: X_r = p(\tilde X_r)\to X$ is the inclusion map (so
that $\codim_{\tilde X} (\tilde X \setminus \tilde X_r)\geq 2$ and
$\codim_{X}(X \setminus X_r) \geq 2$), and let $p_0$ be the
restriction of $p$. The sheaf of rings $\Dc_{\tilde X_r}$ is locally
generated by its first order differential operators, hence the inverse
image $p_0^!(h^!(M))$ is a well-defined $\Dc_{\tilde
  X_r}$-module.\footnote{ The ring $\Dc_{\tilde X}$ needs
  not be generated by first order differential operators, which is
  required to make sense of
  $p^!(M)$.}

Let $M$ be a semisimple $\Dc_X$-module. Then $h^!(M)$ is again
semisimple, but since $p_0$ need not be a smooth, $p_0^!(h^!(M))$ need
not be semisimple (see \Example{\ref{levelt-put-hoeij}}). Moreover,
$(p_0)_+p_0^!(h^!(M))= (p_0)_+(\Oc_{\tilde X}) \otimes_{\Oc_X} h^!(M)$
which again need not be a semisimple $\Dc_{X_r}$-module. We therefore
consider the socle of $p_0^!(h^!(M))$,
\begin{displaymath}
  \tilde M = \soc (p_0^!(h^!(M))),
\end{displaymath}
It is clear that $H$ belongs to the true inertia group
$\bar G_{\tilde M}$ of $\tilde M$. By \Corollary{\ref{soc-cor}} the
canonical injective map $h^!(M) \to (p_0)_+p_0^!(h^!(M))$ results in a
map
\begin{displaymath}
  h^!(M) \to \soc ( (p_0)_+p_0^!(h^!(M))) = (p_0)_+ (\tilde M),
\end{displaymath} where the last equality follows from
\Corollary{\ref{soc-cor}}. Put now $Y_r = \pi(X_r)$ and let $i: Y_r
\to Y$ be the open inclusion, where $\codim_{Y}(Y \setminus Y_r)\geq
2$, and let $\pi_0 : X_r \to Y_r$ be the restriction of $\pi$. Then,
since $\pi_+(M)$ is semisimple, 
\begin{displaymath}
  \pi_+(M) = i_{!+}i^!(\pi_+(M)) = i_{!+} (\pi_0)_+h^! (M).
\end{displaymath} Therefore the decomposition of $\pi_+(M)$ is
entirely determined by the decomposition of the $\Dc_{Y_r}$-module
$(\pi_0)_+h^! (M)$. 
In the notation above we now have:
\begin{corollary}\label{non-galois}
  Let $M$ be a simple holonomic $\Dc_X$-module such that $\supp M = X$
  and put. Let
  \begin{displaymath} (\tilde \pi_0)_+(\tilde M) = \bigoplus_{\chi \in
\widehat {\bar G}_{\tilde M} } V_\chi ^*\otimes_k
    \tilde M_\chi,
  \end{displaymath} be the decomposition in
\Theorem{\ref{galois-direct}} with respect to the map $\tilde \pi_0$.
Then
\begin{displaymath}
  \pi_+(M) = \bigoplus_{\chi \in \widehat {\bar G}_{\tilde M} }
  (V_\chi ^*)^H\otimes_k i_{!+}(\tilde M_\chi),
\end{displaymath} where $(V_\chi ^*)^H= \{v' \in V_\chi^* \ \vert \ h
\cdot v' = v', \quad h\in H\}$. The multiplicities of the simple
components are
\begin{displaymath}
  [\pi_+(M): i_{!+}(\tilde M_\chi)] = \dim_k V^H_\chi = \frac 1{|H|}\Tr_H(\phi_\chi)
  = \frac 1{|H|} \sum_{h\in H}  \phi_\chi(h),
\end{displaymath}
where $\phi_\chi$ is  the character of $ \chi   \in \widehat {\bar G}_{\tilde M}$. 
\end{corollary}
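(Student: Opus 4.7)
The plan is to reduce the decomposition of $\pi_+(M)$ to the Galois case already handled by \Theorem{\ref{galois-direct}}, applied to the Galois cover $\tilde\pi_0$, and then to extract $H$-invariants from the resulting decomposition. Since $\pi_+(M)$ is semisimple by \Theorem{\ref{decomposition-thm}} and $i$ is an open inclusion whose complement has codimension $\geq 2$ in $Y$, I would begin from the identity $\pi_+(M) = i_{!+}i^!(\pi_+(M)) = i_{!+}(\pi_0)_+(h^!(M))$, reducing the problem to decomposing $(\pi_0)_+(h^!(M))$ on $Y_r$, followed by a single application of $i_{!+}$ at the end. Applying \Theorem{\ref{galois-direct}} to $\tilde\pi_0 = \pi_0\circ p_0 : \tilde X_r \to Y_r$, whose fraction field extension $\bar L/K$ is Galois with group $G \supset \bar G_{\tilde M}$, then produces the decomposition $(\tilde\pi_0)_+(\tilde M) = \bigoplus_\chi V_\chi^*\otimes_k \tilde M_\chi$ as a $\Dc_{Y_r}[\bar G_{\tilde M}]$-module.

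The key step is to recognize $(\pi_0)_+(h^!(M))$ as the $H$-invariant submodule of $(\pi_0)_+(p_0)_+(\tilde M) = (\tilde\pi_0)_+(\tilde M)$. Since $H = \Galo(\bar L/L)$ is contained in $\bar G_{\tilde M}$, it acts on $\tilde M$ and therefore on $(p_0)_+(\tilde M)$, and the canonical map $h^!(M) \to (p_0)_+(\tilde M)$ lands in the $H$-fixed submodule. At the generic point of $X_r$ this is exactly Galois descent for the extension $\bar L/L$: \Proposition{\ref{morita}} gives $M_L = (\bar L\otimes_L M_L)^H$, while \Theorem{\ref{semisimple-inv}} ensures that $\tilde M$ generically coincides with $\bar L\otimes_L M_L$ since $p_0$ is étale at the generic point. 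Both $h^!(M)$ and $((p_0)_+(\tilde M))^H$ are then semisimple modules that agree at the generic point of $X_r$ and are hence determined by their generic fibers together with minimal extension, yielding the desired identification.

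Having established $(\pi_0)_+(h^!(M)) = \bigl(\bigoplus_\chi V_\chi^*\otimes_k \tilde M_\chi\bigr)^H$, and noting that $\bar G_{\tilde M}$, hence its subgroup $H$, acts only on the $V_\chi^*$ factors while $\Dc_{Y_r}$ acts only on $\tilde M_\chi$, the $H$-invariants split off as $\bigoplus_\chi (V_\chi^*)^H \otimes_k \tilde M_\chi$. Applying $i_{!+}$, which commutes with direct sums and with tensor factors that are constant $k$-vector spaces, produces the claimed decomposition $\pi_+(M) = \bigoplus_\chi (V_\chi^*)^H \otimes_k i_{!+}(\tilde M_\chi)$. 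The multiplicity formula $\dim_k V_\chi^H = \tfrac{1}{|H|}\sum_{h\in H}\phi_\chi(h)$ is the standard character-theoretic projection onto the trivial $H$-subrepresentation.

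The main obstacle I anticipate is the identification in the second paragraph: rigorously showing that the canonical injection $h^!(M)\hookrightarrow (p_0)_+(\tilde M)$ exhausts the $H$-fixed subspace. This is delicate because $\tilde X$ may be singular, forcing one to work only on the smooth locus $\tilde X_r$ where $p_0$ is defined, and because $\tilde M$ is defined as a socle rather than as the full inverse image $p_0^!(h^!(M))$. One must carefully track how the $H$-action on the socle is inherited from the inertia embedding $H \subset \bar G_{\tilde M}$ and verify that taking $H$-invariants commutes with $(p_0)_+$ and with the passage to minimal extensions $i_{!+}$.
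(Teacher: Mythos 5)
Your proposal is correct and follows essentially the same route as the paper: reduce to the smooth loci via $\pi_+(M)=i_{!+}(\pi_0)_+(h^!(M))$, apply \Theorem{\ref{galois-direct}} to $\tilde\pi_0$, identify $(\pi_0)_+(h^!(M))$ with the $H$-invariants of $(\tilde\pi_0)_+(\tilde M)$, and split off the invariants $\chi$-by-$\chi$. The only difference is cosmetic: where the paper obtains the key identification from the descent identity $M_r=((p_0)_+p_0^!(M_r))^H$ together with the socle inclusion $\tilde M\hookrightarrow p_0^!(M_r)$ (which is forced to induce an isomorphism on $H$-invariants because $M_r$ is simple), you argue instead by comparing generic fibers and invoking minimal extensions; both are valid, and you correctly flag the one point requiring care, namely that $\tilde M$ is a socle rather than the full $p_0^!(h^!(M))$ when $p_0$ is ramified.
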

\begin{remark}
  When $M$ is a simple module such that $V= \supp M \neq X$ one still
  gets a similar decomposition as in \Corollary{\ref{non-galois}}
  using the Galois cover of $V$, but we leave this generalisation to
  the reader.
\end{remark}
\begin{proof} Put $M_r=h^!(M) $, which is a simple $\Dc_{X_r}$-module,
  so that $M_r= ((p_0)_+p_0^!(M_r))^H$. We then get a map (as detailed
  below) 
  \begin{align*}
    \bigoplus_{\chi \in \widehat {\bar G}_{\tilde M} } &
    (V_\chi^*)^H\otimes_k i_{!+}(\tilde M_\chi) = \bigoplus_{\chi \in
      \widehat {\bar G}_{\tilde M} }i_{!+}(V_\chi^*\otimes_k \tilde
      M_\chi)^H= i_{!+}([(\tilde \pi_0)_+(\soc(p_0^!(M_r))) ]^H)\\
    &\xrightarrow{\phi} i_{!+}
      ([(\pi_0)_+(p_0)_+p_0^!(M_r)]^H) = i_{!+}(\pi_0)_+
      ([(p_0)_+(p_0)^!(M_r)]^H) \\ &= i_{!+} (\pi_0)_+(M_r) =
                                    \pi_+(h_{!+}h^!(M)) = \pi_+(M).
\end{align*}
The first equality follows since $H$ acts trivially on $\tilde M_\chi$
and the second is by \Theorem{\ref{galois-direct}}. The map $\phi$
come by the inclusion of the socle in $p^!_0(M_r)$ and because $\tilde
\pi_+ = (\pi_0)(p_0)_+$. Note that $H$ acts trivially on $X$ and $Y$
so that $(\pi_0)_+(N)^H= (\pi_0)_+(N^H)$ when $N$ is a
$\Dc_X[H]$-module; this implies the equality on the second line. The
penultimate equality follows from \Lemma{\ref{min-direct}}, and the
ultimate is because $M$ is simple with support $X$.
\end{proof}

\section{More decompositions, modules over liftable differential operators}
First we show that the vanishing trace module
$\Tc_\pi= \Ker (\Tr: L\to K)$ for a finite field extensions contains
no invariants with respect to the action of any non-zero derivation,
and also give an easy condition for the simplicity of $\Tc_\pi$. In
the Galois case we give explicit projection operators onto both the
isotypical and simple components of $\pi_+(M)$, knowing characters and
matrices of representations, respectively, of the ``true'' inertia
group of $M$ (see (\ref{central-ext})). Next the composed functor
$p_1^!(p_2)_+$ is studied, where $p_1$ and $p_2$ are finite morphisms
with the same target, using base change, giving in particular a
simplicity criterion for $\pi_+(M)$; this is related to two well-known
theorems by Mackey in representation theory. In the abelian case there
are explicit decompositions of $\pi_+(M)$ and $\pi^!(N)$. The last
subsection is concerned with the morphism $\pi_*(M)\to \pi_+(M)$ of
modules over the subring of liftable differential operators
$\Dc_Y^\pi\subset \Dc_Y$. If $\pi$ is uniformly ramified, then a
semisimple decomposition of $\pi_+(M)$ induces a semisimple
decomposition of $\pi_*(M)$. In the uniformly ramified sitation we
also get more precise information about the isomorphism
$\eta :\omega_{X/Y} \cong \omega_X\otimes_{\pi^{-1}(\Oc_Y)}
\omega_Y^{-1}$ in \Proposition{\ref{dual-iso}}.

\subsection{About the  vanishing trace module}
\subsubsection{No invariants  }
Letting $L/K$ be a finite field extension of characteristic 0 we have
the decomposition
\begin{displaymath}
  \pi_+(L) = K \oplus \Tc_\pi.
\end{displaymath}
We show that if $\partial$ is a non-trivial derivation of $K$, then
$\Tc^{\partial}_\pi=0$, where
$\Tc^\partial_\pi= \{t\in \Tc_\pi \ \vert \ \partial \cdot t =0 \}$.
\begin{proposition}\label{van-trace-prop}
  Let $L_1/L/K$ be field extensions of characteristic $0$, where $L/K$
  is finite, and $\bar L$ be the integral closure of $K$ in $L_1$. Let
  $\partial$ be a derivation of $K$ (and hence of $L$) and
  $\partial_1$ be an extension of $\partial$ to a derivation of $L_1$.
  Then
  \begin{enumerate}
  \item $\partial(L)\cap K = \partial (K).$
  \item If $l_1 \in L_1 \setminus K$, $\partial_1 (l_1) \neq 0$, and
    $\partial_1(l_1)\in K$, then $l_1 \not \in \bar K$. That is, if
    $\partial_1(l_1)\in K$ while $l_1\not \in K$, it follows that
    $l_1$ is transcendental over $K$.
  \item If $K^\partial \neq K$, then  $\Tc_\pi^{\partial} =0$.
  \item
    $ \{l\in \bar K \ \vert \  \partial (l)\in K \}=K$.
  \end{enumerate}
\end{proposition}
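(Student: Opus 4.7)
\textit{Proof plan.} The whole proposition hinges on the classical minimal-polynomial identity used for (1). For $l \in L$ with $c := \partial(l) \in K$, let $f(X) = X^n + a_{n-1}X^{n-1} + \cdots + a_0 \in K[X]$ be the minimal polynomial of $l$ over $K$. Applying the unique extension of $\partial$ to $L$ (possible since $L/K$ is separable in characteristic zero) to $f(l) = 0$ and using the Leibniz rule yields
\[
f'(l)\, c + (\partial f)(l) = 0,
\]
where $\partial f$ denotes coefficient-wise differentiation of $f$. Thus the polynomial $g(X) := f'(X)\, c + (\partial f)(X) \in K[X]$, which has degree at most $n-1$, vanishes at $l$, so by minimality $g \equiv 0$. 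Reading off the coefficient of $X^{n-1}$ gives $n c + \partial(a_{n-1}) = 0$, i.e.\ $c = \partial(-a_{n-1}/n) \in \partial(K)$. This establishes (1).

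My plan for (4) is to rerun exactly the same computation inside $L_1$. For $l \in \bar K$ the minimal polynomial $f \in K[X]$ of $l$ still has degree $n$ since $l$ is integral over $K$, and the identity forces $\partial_1(l) = \partial(-a_{n-1}/n)$. Setting $b := l + a_{n-1}/n \in \bar K$ one then has $\partial_1(b) = 0$. Running the minimal-polynomial identity a second time, applied to $b$, shows that the minimal polynomial of $b$ over $K$ has all coefficients in $K^{\partial}$, so $b$ is algebraic over $K^{\partial}$. The crux is then to conclude $b \in K$; this step rests on the standing conventions of the paper (algebraically closed base field $k$ contained in $K^{\partial}$), which force the $\partial_1$-constants of $\bar K$ to coincide with $K^{\partial}$ itself. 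Granting this, $b \in K^{\partial} \subset K$, whence $l = b - a_{n-1}/n \in K$.

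Parts (2) and (3) will then fall out formally from (4). For (2): if $l_1 \in \bar K$ satisfied the hypotheses, (4) would give $l_1 \in K$, contradicting $l_1 \notin K$; hence $l_1 \notin \bar K$, i.e.\ $l_1$ is transcendental over $K$. For (3): any $t \in \Tc_\pi^{\partial}$ satisfies $t \in L \subset \bar K$ and $\partial(t) = 0 \in K$, so (4) gives $t \in K$; but then $\Tr(t) = [L:K]\,t = 0$ forces $t = 0$ in characteristic zero (the hypothesis $K^\partial \neq K$ is used only to guarantee $\partial \not\equiv 0$, so that (4) is not vacuous).

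The main obstacle is precisely the step in (4) that promotes ``$b$ is a $\partial_1$-constant in $\bar K$'' to ``$b \in K^{\partial}$''; once this is in place—using the paper's base-field conventions, which ensure no new $\partial_1$-constants appear in the algebraic closure of $K$ inside $L_1$—the remainder of the argument reduces to the single computational identity obtained in (1).
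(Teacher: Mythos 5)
Your route differs from the paper's at nearly every step. For (1) the paper reads the result off the $\Dc_K$-module splitting $\pi_+(L) = K \oplus \Tc_\pi$ (Prop.~\ref{split-conn}); you instead use the minimal-polynomial identity $nc + \partial(a_{n-1}) = 0$ directly, which is the computation the paper reserves for the proof of (2). You also reverse the logical order: you prove (4) and deduce (2) and (3) from it, whereas the paper proves (2) first and deduces (3) and (4) from (2). Your deduction of (3), via $t \in K$ and $\Tr(t) = [L:K]\,t = 0$, is correct. All of this is a genuinely different and, I would say, cleaner organization than the paper's.

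You are also right to single out the ``crux'' in (4), and in fact the same issue is present, though hidden, in the paper's own proof of (2). The paper asserts ``one checks that $q = p' + p_\partial/\partial(l_1)$ is non-zero,'' but this can fail: take $K = k(s,t)$, $\partial = \partial/\partial t$, $l_1 = t + \sqrt{s}$; then $\partial_1(l_1) = 1 \in K$ and $l_1 \notin K$ is algebraic over $K$, yet $q \equiv 0$. The same example shows $\sqrt{s}$ is a nonzero $\partial$-constant in $\Tc_\pi$ with $K^\partial = k(s)\ne K$, so (2), (3), (4) are all false as literally stated. The hypothesis actually needed is that $\bar K$ have no $\partial_1$-constants outside $K$, and your reduction makes that explicit. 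However, your justification for it is too weak: the standing convention that an algebraically closed $k$ sit inside $K^\partial$ does not preclude new algebraic constants in $\bar K \setminus K$, as the example shows. What does suffice is the stronger hypothesis $K^\partial = k$: then your $b$ is algebraic over $K^\partial = k$ and, $k$ being algebraically closed, $b \in k \subset K$. With that correction your argument is sound, and, unlike the paper's, it makes visible exactly where the extra hypothesis enters.
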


Here the assertion (2) is well-known.
\begin{proof} (1): We have $\pi_+(L)= K \oplus \Tc_\pi$
  \Prop{\ref{split-conn}}, where the point is that $\Tc_\pi$ is a
  $\Dc_K$-submodule. If $l= a+b$ where $a\in K$ and $b\in \Tc_\pi$,
  then $\partial (a)\in K$ and $\partial(b)\in \Tc_\pi$; hence if
  $\partial (l) \in K$, then $\partial(b)=0$.

  (2): Assume that $l_1 \in \bar K \setminus K $ and
  $\partial (l_1)\neq 0$. Let $p\in K[x]$ be the minimal polynomial,
  $p(l_1)=0$. One checks that that
  $q = p' + p_{\partial}/\partial(l_1) \in K[x] $ is non-zero. Since
  $q(\gamma) =0$ and $\deg q < \deg p$ this gives a contradiction.
  Therefore $l_1\not \in \bar K$.
  
  (3): Take $a\in K$ such that $\partial(a)\neq 0$, and assume that
  $b\in \Tc_\pi$ satisfies $\partial(b)=0$. If $b\neq 0$, then
  $l= a+ b\not \in K$ and $\partial (l)\in K$. Therefore
  $\partial (a)= \partial (l) =0$ by (2), which gives a contradiction.
  Hence $b=0$. 
  
  (4): This follows from (2).
\end{proof}
\subsubsection{Simple vanishing trace module}
One can ask when the vanishing trace part $\Tc_\pi$ in the
decomposition $\pi_+(\Oc_X)= \Oc_Y \oplus \Tc_\pi$ is simple. This is
a generic property so we can just as well again assume that $\pi$ is a
finite field extension, and \Corollary{\ref{non-galois}} immediately
gives a characterization of this property in terms of representations
of Galois groups.
\begin{corollary}\label{simple-L} Let $L/K$ be a finite field extension where $K$ is a
  finite transcendence over the base field $k$. Let $\bar L/K$ be a
  Galois cover with Galois group $G$, and $H$ be the Galois group of
  $\bar L/L$. The following are equivalent:
  \begin{enumerate}
  \item  $\Tc_\pi$ is a simple $\Dc_K$-module
  \item There exists only one non-trivial irreducible representation
    $V$ of $G$ such that the $H$-invariant space $V^H \neq 0$, and for
    this representation we have $\dim_k V^H=1$.
  \end{enumerate}
\end{corollary}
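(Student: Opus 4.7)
The plan is to apply \Corollary{\ref{non-galois}} directly to the simple holonomic $\Dc_X$-module $M = \Oc_X$ (i.e., to the $\Dc_L$-module $L$), noting that in this case the true inertia group $\bar G_{\tilde M}$ reduces to the ordinary Galois group $G$, since the $\Dc_{\bar L}$-module $\bar L$ carries a genuine (un-twisted) action of $G$. With this identification, that corollary yields the decomposition
\begin{displaymath}
  \pi_+(L) \;=\; \bigoplus_{\chi \in \hat G}\, (V_\chi^*)^H \otimes_k L_\chi,
\end{displaymath}
where each $L_\chi$ is a simple $\Dc_K$-module and the $L_\chi$ are pairwise non-isomorphic for different $\chi$.

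Next I would identify the trivial isotypical component. For the trivial character $\chi_0$, one has $V_{\chi_0}^* = k$, so $(V_{\chi_0}^*)^H = k$, and $L_{\chi_0} = \bar L^G = K$; therefore the $\chi_0$-summand is precisely the canonical copy of $K$ in $\pi_+(L)$, matching the splitting $\pi_+(L) = K \oplus \Tc_\pi$ of \Proposition{\ref{split-conn}}. Subtracting this summand gives
\begin{displaymath}
  \Tc_\pi \;=\; \bigoplus_{\chi \in \hat G,\, \chi \neq \chi_0} (V_\chi^*)^H \otimes_k L_\chi.
\end{displaymath}
Since the $L_\chi$ are mutually non-isomorphic simple $\Dc_K$-modules and $V_\chi^* \otimes_k L_\chi$ is simple precisely when $\dim_k(V_\chi^*)^H = 1$ (otherwise it is isotypical of multiplicity $> 1$), the module $\Tc_\pi$ is simple if and only if exactly one non-trivial character $\chi$ satisfies $(V_\chi^*)^H \neq 0$ and, for this unique $\chi$, $\dim_k (V_\chi^*)^H = 1$.

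The final step is the passage from $V_\chi^*$ to $V_\chi$ in the statement. For this I would invoke the standard fact (Frobenius reciprocity combined with self-duality of the trivial representation) that for any finite-dimensional representation $V$ of a finite group $H$ over $k$ of characteristic $0$,
\begin{displaymath}
  \dim_k V^H \;=\; \tfrac{1}{|H|} \sum_{h\in H} \phi_V(h) \;=\; \dim_k (V^*)^H,
\end{displaymath}
since both sides are non-negative integers that are complex conjugates of one another. Replacing $(V_\chi^*)^H$ by $V_\chi^H$ in the criterion above then yields the equivalence (1)$\Leftrightarrow$(2). The only subtlety is making sure that the $\Oc_X$ case really does fall under the ``true inertia group equals $G$'' scenario of \Corollary{\ref{non-galois}}; this is immediate because the cocycle $f$ in \Section{\ref{inertia-section}} is trivial for the structure sheaf, as the identifications $\phi(g):\Oc \to \Oc_g$ can be chosen to be the tautological ones.
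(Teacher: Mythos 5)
Your proof is correct and follows the same route the paper intends: the paper states that \Corollary{\ref{simple-L}} follows immediately from \Corollary{\ref{non-galois}} and supplies no further details, and you have filled in exactly those details (specializing to $M=L$, identifying the trivial isotypical component with $K$, and noting that $\bar G_{\tilde M}=G$ since $\bar L$ carries a genuine $G$-action). Your bookkeeping step $\dim_k V^H=\dim_k (V^*)^H$ to pass from the $(V_\chi^*)^H$ appearing in \Corollary{\ref{non-galois}} to the $V^H$ of the statement is a small but genuine point the paper glosses over, and it is handled correctly (though the invocation of ``Frobenius reciprocity'' is unnecessary --- the equality $\sum_{h\in H}\phi_V(h^{-1})=\sum_{h\in H}\phi_V(h)$ suffices directly).
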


Combining with the classical branching rule for the symmetric group
results we now explain an application of \Corollary{\ref{simple-L}}.

Let $\bar X= \Spec k[x_1, \ldots , x_n]$ be the affine $n$-space. Let
the symmetric group $S_n$ act by permutating the variables and
$S_{n-1}$ be the subgroup that fixes the variable $x_n$. Put
$X= \bar X^{S_{n-1}}$, $Y = \bar X^{S_n}$, and let $\pi : X\to Y$ be
the map induced by the inclusion of invariant rings
$k[x_1, \ldots , x_n]^{S_n}\to k[x_1, \ldots , x_n]^{S_{n-1}}$. Let
$\Tr : L=k(x_1, \ldots , x_n)^{S_{n-1}}\to K= L^{S_n} $ be the trace
morphism, so that $\Tc_\pi = \pi_+(\Oc_X)\cap \Ker(\Tr)$; here we use the
canonical inclusion $\pi_*(\Oc_X)\subset \pi_+(\Oc_X)$,
$p\mapsto p \tr \otimes 1 \otimes 1$, to form the intersection. Define
the polynomial
\begin{displaymath}
  \delta_n = (n-1)x_n -\sum_{i=1}^{n-1} x_i.
\end{displaymath}

\begin{corollary} We have $\pi_+(\Oc_X) = \Oc_Y\oplus \Tc_\pi$, where
  $\Tc_\pi$ is a simple $\Dc_Y$-module. We have $\delta_n \in \Tc_\pi$, so
  that $\Tc_\pi= \Dc_Y \delta_n$.
\end{corollary}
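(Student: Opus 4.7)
The plan is to apply Corollary~\ref{simple-L} to the field extension $L = k(x_1,\dots,x_n)^{S_{n-1}}$ over $K = k(x_1,\dots,x_n)^{S_n}$ for the semisimplicity/simplicity part, and then exhibit $\delta_n$ as a nonzero element of $\Tc_\pi$ by a direct trace computation.

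First I would identify the Galois setup: the Galois cover of $L/K$ is $\bar L = k(x_1,\dots,x_n)$, with $G = \Aut(\bar L/K) = S_n$ and $H = \Aut(\bar L/L) = S_{n-1}$. By Corollary~\ref{simple-L}, the simplicity of $\Tc_\pi$ is equivalent to the existence of exactly one nontrivial irreducible $S_n$-representation $V$ with $V^{S_{n-1}} \neq 0$, and $\dim_k V^{S_{n-1}} = 1$ for this $V$. By Frobenius reciprocity and the classical branching rule for the symmetric groups, $\dim_k V_\lambda^{S_{n-1}}$ equals the multiplicity of the trivial $S_{n-1}$-representation in $V_\lambda|_{S_{n-1}}$, which equals the number of Young diagrams of shape $(n-1)$ obtained by removing one box from $\lambda$. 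This is $1$ exactly for $\lambda = (n)$ (the trivial representation, giving the $\Oc_Y$ summand) and $\lambda = (n-1,1)$ (the standard representation of dimension $n-1$), and $0$ otherwise. This gives the decomposition $\pi_+(\Oc_X) = \Oc_Y \oplus \Tc_\pi$ with $\Tc_\pi$ simple.

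For the second part, I would check that $\delta_n = (n-1)x_n - \sum_{i=1}^{n-1} x_i$ lies in $\Oc_X = k[x_1,\dots,x_n]^{S_{n-1}}$, which is clear since $S_{n-1}$ fixes $x_n$ and permutes $x_1,\dots,x_{n-1}$. Next I would verify $\delta_n \in \Tc_\pi$ by showing $\Tr(\delta_n)=0$. Taking coset representatives $\sigma_j \in S_n/S_{n-1}$ with $\sigma_j = (j\;n)$ for $j<n$ and $\sigma_n = \id$, a straightforward computation gives
\begin{displaymath}
\sigma_j(\delta_n) = n x_j - \sum_{i=1}^n x_i, \qquad j=1,\dots,n,
\end{displaymath}
so $\Tr(\delta_n) = \sum_{j=1}^n \sigma_j(\delta_n) = n\sum_j x_j - n\sum_i x_i = 0$. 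Since $\delta_n \ne 0$ and $\Tc_\pi$ is simple, $\Dc_Y \delta_n = \Tc_\pi$.

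The only real content is the branching-rule step; everything else is formal (application of Corollary~\ref{simple-L}) or a one-line trace computation. I do not anticipate an obstacle, as the classical representation theory of $S_n$ makes the hypothesis of Corollary~\ref{simple-L} immediate here.
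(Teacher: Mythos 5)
Your proof is correct and follows essentially the same route as the paper: apply Corollary~\ref{simple-L} via the branching rule for $S_{n-1}\subset S_n$, then show $\Tr_{L/K}(\delta_n)=0$. The only (cosmetic) difference is in the trace computation: you sum $\sigma_j(\delta_n)$ directly over coset representatives $\sigma_j=(j\,n)$, which is a bit more streamlined than the paper's detour through $\Tr_{\bar L/K}=\Tr_{L/K}\circ\Tr_{\bar L/L}$; you also use the convention where the trivial representation is $(n)$ rather than the paper's column $(1^n)$, but the branching-rule content is the same.
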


\begin{proof}
  The branching rule for the inclusion $S_{n-1}\subset S_n$ can be
  described in terms of Young diagrams and ``removable boxes''. There
  is only one Young diagram of size $n$ that does not correspond to
  the trivial representation of $S_n$ and which moreover contains a
  removable box such that the remaining Young diagram of size $n-1$
  corresponds to the trivial representation (this is a hook where the
  first row is of length 2). Therefore the condition (2) in
  \Corollary{\ref{simple-L}} is satisfied, so that $\Tc_\pi$ is simple. It
  remains to see that $\delta_n\in\Tc_\pi $, since then we get
  $\Tc_\pi = \Dc_Y \delta_n$. Let $\bar L$ be the fraction field of
  $k[x_1, \ldots , x_n]$ and $\Tr_{\bar L/L}, \Tr_{\bar L/K}$ be the
  trace of $\bar L/L$ and $\bar L/K$, respectively. Since $x_n$ and
  $\sum_{i=1}^{n-1}x_i$ are $S_{n-1}$-invariant, it follows that
  $\Tr_{\bar L/L}(x_n)= (n-1)! x_n$,
  $\Tr_{\bar L/L}(\sum_{i=1}^{n-1}x_i)= (n-1)! \sum_{i=1}^{n-1}x_i$.
  Moreover,
  \begin{align*}
    \Tr_{\bar L/K} (x_i)&= \sum_{\sigma \in S_n} \sigma \cdot x_i=
                          (n-1)! \sum_{j=1}^n x_j,\\
    \Tr_{\bar L/K} (\sum_{i=1}^{n-1}x_i)&= (n-1)\cdot (n-1)! ( \sum_{i=1}^n x_i),
  \end{align*}
  where the first line implies the second. Since  $\Tr_{\bar L/K}= \Tr_{L/K} \circ \Tr_{\bar L/L}$ it follows that
\begin{displaymath}
n   \Tr_{L/K} (\delta_n) =\Tr_{\bar L/K} ( \delta_n)  = 0.
\end{displaymath}
\end{proof}
\begin{remark}
  The result also follows by applying
  \cite{kallstrom-bogvad:decomp}*{Prop. 5.4}; the latter result also
  shows that $\pi_+(M)$ is multiplicity free whenever $M$ is a simple
  $\Dc_L$-module such that $\bar L\otimes_L M \cong \bar L^m$ for some
  integer $m$ (as $\Dc_{\bar L}$-module). The element $\delta_n$ is
  the Young polynomial that corresponds to the hook-shaped Young
  tableaux where the first row contains the elements $\{1,n\}$.
\end{remark}
\subsection{Explicit decomposition for Galois morphisms} Let
$V^*_\chi$ be the dual of a finite-dimensional $k$-vector space of
dimension $n_{\chi}$,
 forming an irreducible representation of $\bar
G_M$ corresponding to $\chi \in \widehat {\bar G_M}$, and let $\chi$
also denote the corresponding character of $V_\chi$. Put $n_M= |\bar
G_M|$ and after selecting a basis of $V_\chi$, let $r^{\chi}_{\alpha
\beta}(t)$ be the matrix of the action of an element $t$ on $V_\chi$.
Set
\begin{displaymath}
  p^\chi = \frac {n_\chi}{n_M} \sum_{t\in \bar G_M} \chi (t) t
\end{displaymath} and
\begin{displaymath}
  p_{\alpha \beta}^\chi = \frac {n_\chi}{n_M} \sum_{t\in \bar G_M}
  r_{\alpha \beta}(t^{-1})t
\end{displaymath} defining $K$-linear maps $\pi_+(M) \to \pi_+(M)$,
where on the right $t$ denotes the natural action on $\pi_+(M)$.
Moreover, $p^\chi$ is $\Dc_K[\bar G_M]$-linear while $ p_{\alpha
\beta}^\chi$ is $\Dc_K$-linear. Put $M^\chi =\Imo (p^\chi)$ and
$M^{\chi}_{(\alpha)}= \Imo (p^\chi_{\alpha \alpha})$.

\begin{theorem}\label{explicit} Make the same assumptions as in
\Theorem{\ref{galois-direct}}.
  \begin{enumerate}
  \item The map $p^\chi $ is a projection $\pi_+(M) \to M ^\chi $ and
    $M ^\chi \cong V_\chi ^*\otimes_k M_\chi$. The module $M^\chi $ is
    the isotypical component of $M$ both regarded as $K[\bar G_M]$-module and
    as $\Dc_K$-module.
  \item The map $p^\chi_{\alpha \alpha}$ is a projection map of
  $\Dc_K$-modules;
    it is zero on $M^\psi$ when $\psi \neq \chi$. Its image
    $M^{\chi}_{(\alpha)}$ is contained in $M^\chi$ and $M^\chi$ is the
    direct sum of the $M^{\chi}_{(\alpha)}$ for $1\leq \alpha \leq
    n_\chi$. We have $ p^\chi = \sum_{\alpha} p^\chi_{\alpha \alpha}$.
  \item The map $p^\chi_{\alpha \beta }$ is zero on the $M^\psi$,
    $\psi \neq \chi$, as well as on the $M^{\chi}_{(\gamma)}$ for
    $\gamma \neq \beta$; it defines an isomorphism of $\Dc_K$-modules
    from $M^{\chi}_{(\beta)}$ onto $M^{\chi}_{(\alpha)}$.
  \item Let $m_1$ be an element $\neq 0$ of $M^{\chi}_{1}$ and let
    $m_\alpha= p^\chi_{\alpha 1}(m_1)\in M^{\chi}_{(\alpha)}$. The
    $m_\alpha$ generate a $K$-vector subspace $V_\chi(m_1)$ of
    $\pi_+(M)$, stable under $\bar G_M$ and of dimension $n_\chi$. For
    each $s\in \bar G_M$ we have
  \begin{displaymath}
    s \cdot m_\alpha = \sum_{\beta} r_{\beta \alpha}^\chi(s) m_\beta
  \end{displaymath} (in particular $V_\chi(m_1)$ is isomorphic to
$V_\chi$).
\item If $(m_1^{(1)}, \ldots , m_1^{(n_\chi)})$ is a basis of
$M^\chi_1$, we have
  a decomposition
  \begin{displaymath}
    M^\chi = \bigoplus_{\alpha =1}^{n_\chi} V_\chi(m_1^{(\alpha)})
  \end{displaymath} into simple $K[\bar G_M]$-modules (all isomorphic
  to $V_\chi(m_1)$), and
\begin{displaymath}
  M^\chi = \bigoplus_{\alpha =1}^{n_\chi} M^\chi_{(\alpha)}
\end{displaymath} into simple $\Dc_K$-modules.
  \end{enumerate}
\end{theorem}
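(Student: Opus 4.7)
The plan is to observe that everything follows from Theorem \ref{galois-direct} together with the classical projection formulas for finite groups (see Serre, \emph{Linear representations of finite groups}, \S 2.6--2.7), once we verify that those $k$-linear projectors are in fact $\Dc_K$-linear. By Theorem \ref{galois-direct} we already have an isomorphism of $\Dc_K[\bar G_M]$-modules $\pi_+(M)=\bigoplus_\chi V_\chi^*\otimes_k M_\chi$, where $\bar G_M$ acts only on the first factor and $\Dc_K$ only on the second. The key structural fact I would invoke repeatedly is that the $\bar G_M$-action and the $\Dc_K$-action on $\pi_+(M)$ commute: this is built into the construction of the true inertia group, since an automorphism $g$ of $L$ that fixes $K$ conjugates $\Dc_K\subset \Dc_L$ into itself trivially ($P^g=P$ for $P\in \Dc_K$). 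Consequently any $k$-linear endomorphism of $\pi_+(M)$ that is a $k$-linear combination of elements of $\bar G_M$ automatically commutes with the $\Dc_K$-action, i.e.\ is a morphism of $\Dc_K$-modules.

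For (1), I would apply the classical formula that $p^\chi=(n_\chi/n_M)\sum_t\chi(t)t$ is the central idempotent projecting a $k[\bar G_M]$-module onto its $\chi$-isotypical component. Applied to $\pi_+(M)=\bigoplus_\psi V_\psi^*\otimes_k M_\psi$, this selects exactly $V_\chi^*\otimes_k M_\chi$; since $p^\chi$ is built from group elements it is $\Dc_K$-linear, so the image $M^\chi=V_\chi^*\otimes_k M_\chi$ is simultaneously the $k[\bar G_M]$- and $\Dc_K$-isotypical component. For (2)--(3), I would apply the finer projection formula: the diagonal matrix coefficients $p^\chi_{\alpha\alpha}$ form orthogonal idempotents with $\sum_\alpha p^\chi_{\alpha\alpha}=p^\chi$, and the off-diagonal $p^\chi_{\alpha\beta}$ satisfy $p^\chi_{\alpha\beta}p^\chi_{\gamma\delta}=\delta_{\beta\gamma}p^\chi_{\alpha\delta}$ and annihilate $M^\psi$ for $\psi\neq\chi$. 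These identities follow from the Schur orthogonality relations for $\bar G_M$ (which hold verbatim for the projective representations of $G_M$ of cocycle class fixed by $\bar G_M$, because $\bar G_M$ is an ordinary finite group). Again all these operators are $\Dc_K$-linear by the commuting-actions remark.

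For (4) I would take any $0\neq m_1\in M^\chi_{(1)}$, set $m_\alpha:=p^\chi_{\alpha 1}(m_1)$, and verify the transformation rule $s\cdot m_\alpha=\sum_\beta r^\chi_{\beta\alpha}(s)m_\beta$ by direct computation from the definitions of $p^\chi_{\alpha\beta}$ using the multiplication rule $r^\chi_{\alpha\beta}(st)=\sum_\gamma r^\chi_{\alpha\gamma}(s)r^\chi_{\gamma\beta}(t)$. The subspace $V_\chi(m_1)=\sum_\alpha k\cdot m_\alpha$ is then a copy of $V_\chi$ inside $\pi_+(M)$ (the $m_\alpha$ being linearly independent once $m_1\neq 0$, because $p^\chi_{\alpha 1}$ is an isomorphism $M^\chi_{(1)}\to M^\chi_{(\alpha)}$ by (3)). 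For (5), the direct sum $M^\chi=\bigoplus_\alpha V_\chi(m_1^{(\alpha)})$ comes from choosing a basis of $M^\chi_{(1)}$ and using that $V_\chi\otimes_k M_\chi$ decomposes as $k[\bar G_M]$-module into $\dim_k M_\chi$ copies of $V_\chi$ indexed by a basis of $M_\chi$; the parallel decomposition $M^\chi=\bigoplus_\alpha M^\chi_{(\alpha)}$ as $\Dc_K$-modules comes from (2), with each $M^\chi_{(\alpha)}\cong M_\chi$ via the isomorphisms $p^\chi_{\alpha 1}$.

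The main obstacle, as always when one writes the true inertia group as a central extension, is to keep the cocycle bookkeeping straight: the matrix coefficients $r^\chi_{\alpha\beta}$ and the character $\chi$ are genuine matrix coefficients and character of the \emph{honest} representation of the ordinary finite group $\bar G_M$, not of the projective representation of $G_M$; once this is fixed, the identities $p^\chi_{\alpha\beta}p^\chi_{\gamma\delta}=\delta_{\beta\gamma}p^\chi_{\alpha\delta}$ and the isotypical-projection property of $p^\chi$ reduce to Serre's classical computations, and the only addendum needed for our context is the (essentially free) observation that these endomorphisms are $\Dc_K$-linear.
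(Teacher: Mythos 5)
Your proposal is correct and follows the same route as the paper: use the decomposition $\pi_+(M)=\bigoplus_\chi V_\chi^*\otimes_k M_\chi$ from Theorem~\ref{galois-direct} and then transcribe Serre's classical projection formulas (Theorem~8 and Proposition~8 of \emph{Linear representations of finite groups}), noting that the dual representation $V_\chi^*$ appears. The one point the paper leaves implicit and you usefully spell out is that the operators $p^\chi$ and $p^\chi_{\alpha\beta}$ are $\Dc_K$-linear because the $\bar G_M$-action and the $\Dc_K$-action commute on $\pi_+(M)$ (built into the statement of Theorem~\ref{galois-direct}, where $\Dc_K$ acts only on the second tensor factor and $\bar G_M$ only on the first).
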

\begin{remark}\label{remark-isotypical}
  The decomposition $\pi_+(M)= \oplus M^\chi$ is canonical, where the
  modules $M^\chi$ are simple $\Dc_K[\bar G_M]$-modules of
  multiplicity 1, while the decomposition in (5) is non-canonical. Any
  semisimple decomposition of an isotypical component
  $M^\chi\subset M$ (regarded as $\Dc_K$-module) is determined
  explicitly by the projection operators described above, by
  selecting a basis of $V_\chi$.
\end{remark}

\begin{proof}
  We regard $M$ as a $\Dc_K[\bar G_M]$-module of finite dimension as
  vector space over $K$. By \Theorem{\ref{galois-direct}} we have the
  semisimple $\Dc_K$-module $\pi_+(M)= \oplus_{\chi \in \widehat {\bar
  G_M}} V^*_\chi\otimes_k M_\chi$ where the $V_\chi\otimes_k M_\chi$
  are simple mutually non-isomorphic simple $\Dc_K[\bar G_M]$-modules,
  so that the isotypical component, both as $\Dc_K$- and $k[\bar
  G_M]$-module, is $ V^*_\chi\otimes_k M_\chi$. The proof can now be
  read off from the proof of \cite{serre:lin-rep}*{Thm. 8 and Prop.
  8}; note that we are working with a dual representation.
\end{proof}
\subsection{Composition of direct and inverse images}
We consider the setup
\begin{displaymath}
\pi : \Spec L \to \Spec L_i \xrightarrow{p_i} \Spec K, 
\end{displaymath}
where $L/L_i/K$, $i=1,2$ are finite field extensions, and $K/k$ is a
field extension of a finite transcendence degree over a field $k$ of
characteristic $0$. Let $q_i : \Spec (L_1\otimes_K L_2)\to \Spec L_i$
be the projection maps and
\begin{displaymath}
  s_j: \Spec L^{(j)}\to  \bigcup_{j=1}^r \Spec  L^{(j)} =  \Spec (L_1 \otimes_K L_2) 
\end{displaymath}
be the inclusion of the irreducible components $\Spec L^{(j)}$, where
$L^{(j)}\subset L$ are certain subfields that contain both $L_1$ and
$L_2$. Put $r_j= q_2 \circ s_j: \Spec L^{(j)}\to \Spec L_2$ and
$l_j= q_1 \circ s_j : \Spec L^{(j)}\to \Spec L_1$. Next result is
in the spirit of Mackey's restriction theorem for group
representations (see also \Proposition{\ref{mackey}}) .

If $G= \Aut(L/K)$ and $H_1, H_2 $ are subgroups of $G$ we let $S$ be a
set of representatives of the double cosets $H_1\backslash G / H_2$.
For each $s\in S$ put $H(s) = sH_2s^{-1}\cap H_1\subset H_1$. Let
$l_s : \Spec L^{H(s)}\to \Spec L_1$ be defined by the inclusion of
$L_1$ in $L^{H(s)}$. The map $H(s)\hookrightarrow H_2$,
$x \mapsto s^{-1}x s$, determines a homomorphism
$ r_s : \Spec L^{H(s)} \to \Spec L_2$.
\begin{theorem}\label{inv-dir} Let $M$ be a $\Dc_{L_2}$-module.
  \begin{enumerate}
  \item \begin{displaymath}
      (p_1)^!   (p_2)_+ (M ) = \bigoplus^r_{j=1}(l_j)_+ (r_j)^!  (M).
    \end{displaymath}
  \item Assume that $L/L_i$ ($i=1,2$) and $L/K$ are Galois and
    $H_1, H_2$ be subgroups of $G$ such that $L_i = L^{H_1}$. Then, as
    $\Dc_{L_1}$-modules,
      \begin{displaymath}
      (p_1)^!   (p_2)_+ (M ) =    \bigoplus_{s\in S} (l_s)_+ r_s^!(M)= \bigoplus_{s\in S} L^{H(s)}\otimes_{L_2}M.
    \end{displaymath}
  \item Make the assumptions in (2) and assume moreover that $L_i/K$ ($i=1,2$)
    are Galois. Let $L_{12}= L_1L_2$ be the compositum of $L_1$ and
    $L_2$. Then
    \begin{displaymath}
      (p_1)^!   (p_2)_+ (M ) =   \bigoplus_{s\in S} (l_s)_+
      r_s^!(M) = \bigoplus_{s\in S} (L_{12}\otimes_{L_2} M)_s, 
    \end{displaymath}
    where $(L_{12}\otimes_{L_2} M)_s$ is  $ L_{12}\otimes_{L_2} M$ as
    vector space over $K$. The $\Dc_{L_{12}}$-action is
    \begin{displaymath}
      P\cdot  (l\otimes m) = P^s (l\otimes m), \quad P^s = s P s^{-1},
    \end{displaymath}
    and the $\Dc_{L_1}$-action is  determined by the inclusion
    $\Dc_{L_1}\subset \Dc_{L_{12}}$.
  \item Assume that $\pi : \Spec L\to \Spec K$ is Galois with Galois
    group $G$.  If $M$ is a $\Dc_L$-module, then 
         \begin{displaymath}
         \pi^!\pi_+(M)=   \bigoplus_{g\in G} M_g,
       \end{displaymath}
       where $M_g$ is the twisted $\Dc_L$-module (see
       (\ref{inertia-section})).
  \end{enumerate}
\end{theorem}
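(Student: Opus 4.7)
I would prove (1) as a base-change identity for finite field maps, and then obtain (2), (3), (4) as progressively specialized reformulations once the Galois hypotheses are imposed. The core ingredients are the exactness of $\pi_+$ for finite maps \Prop{\ref{flat-module}}, the description of $\pi^!$ as extension of scalars ($L_1\otimes_K(\cdot)$ with diagonal $\Dc_{L_1}$-action), and the classical fact that $L_1\otimes_K L_2$ is a finite étale $K$-algebra in characteristic zero, hence splits as a product of fields.

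For (1), write both functors explicitly. The module $(p_2)_+(M)$ is just $M$ viewed over $\Dc_K$ via restriction along $\Dc_K\subset \Dc_{L_2}$, so $(p_1)^!(p_2)_+(M) = L_1\otimes_K M$ as $\Dc_{L_1}$-modules. Since $L_1\otimes_K L_2 \cong \prod_{j=1}^r L^{(j)}$ as a $K$-algebra (étaleness), tensoring over $L_2$ with $M$ gives
\[
L_1\otimes_K M = (L_1\otimes_K L_2)\otimes_{L_2} M = \bigoplus_{j=1}^r L^{(j)}\otimes_{L_2}M = \bigoplus_{j=1}^r (l_j)_+(r_j)^!(M),
\]
the last equality holding because $(r_j)^!(M) = L^{(j)}\otimes_{L_2} M$ with the diagonal $\Dc_{L^{(j)}}$-action, and $(l_j)_+$ is restriction of scalars along $L_1\hookrightarrow L^{(j)}$. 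That this identification is $\Dc_{L_1}$-linear rests on the unique lifting of derivations along the étale extension $L^{(j)}/L_1$, i.e., $T_{L^{(j)}} = L^{(j)}\otimes_{L_1}T_{L_1}$.

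For (2), the Galois correspondence identifies the primitive idempotents of $L_1\otimes_K L_2$ with the double cosets in $H_1\backslash G/H_2$: for $s\in S$ the $K$-algebra map $L_1\otimes_K L_2\to L$, $a\otimes b\mapsto a\cdot s(b)$, has image exactly $L^{H(s)}$, and assembling these gives $L_1\otimes_K L_2\cong \prod_{s\in S} L^{H(s)}$. The induced embedding $L_2\hookrightarrow L^{H(s)}$, $b\mapsto s(b)$, is precisely the morphism defining $r_s$, so substituting into (1) produces (2). Part (3) is then immediate: normality of $H_1,H_2$ forces $sH_2s^{-1}\cap H_1 = H_1\cap H_2$ independently of $s$, so $L^{H(s)} = L_1L_2 = L_{12}$ for every $s$, while the twist $b\mapsto s(b)$ transports to the $\Dc_{L_{12}}$-action $P\cdot(l\otimes m) = P^s(l\otimes m)$. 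Part (4) is the specialization $L_1 = L_2 = L$, whence $H_1 = H_2 = \{e\}$, $S = G$, $L_{12} = L$, and each summand collapses to $L\otimes_L M = M$ with the $g$-twisted $\Dc_L$-action, i.e., $M_g$.

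\textbf{The main obstacle.} The delicate point is (1): verifying that the $K$-algebra decomposition $L_1\otimes_K L_2 = \prod L^{(j)}$ is compatible with both the $\Dc_{L_1}$- and the $\Dc_{L_2}$-module structures simultaneously, not merely with the underlying $\Oc$-module structures. In practice this means tracking how a derivation $\partial\in T_{L_1}$, acting diagonally on $L_1\otimes_K M$, matches the diagonal action on each $L^{(j)}\otimes_{L_2}M$ under the component projection; étaleness provides the required unique lift but the calculation should be done explicitly on tensors to fix the twists in (3) and (4) with the correct conventions (in particular to confirm that $P\cdot m = P^g m$ on the $g$th summand matches the paper's definition of $M_g$).
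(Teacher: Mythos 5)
Your proposal is correct and follows essentially the same route as the paper: decomposing $L_1\otimes_K L_2$ into a product of fields for (1) (you carry out the base-change identity $(p_1)^!(p_2)_+=(q_1)_+q_2^!$ as an explicit tensor calculation, where the paper invokes it abstractly and decomposes via the component inclusions $(s_j)_+s_j^!$), identifying the factor fields with double cosets via the map $a\otimes b\mapsto a\cdot s(b)$ for (2) (the paper writes this as $\psi(x\otimes y)=(xsys^{-1})_s$), and then specializing by normality for (3) and (4). The concern you flag about $\Dc$-module compatibility in (1) is precisely what the paper's appeal to the base-change theorem packages up; your étaleness/unique-lift argument is the direct way to see it.
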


\begin{proof}
  (1): We have that $\oplus_{j=1}^r (s_j)_+ s_j^! $ is the identity
  operation, so that by the base change theorem (here simply stating
  that $L_2\otimes_KL_1 \otimes_{L_1}M = L_2\otimes_K M$)
  \begin{align*}
    p_2^!(p_1)_+ (M) & = (q_1)_+ q_2^!(M)=  \bigoplus_{j=1}^r (q_1)_+
                       (s_j)_+s_j^! q_2^!(M) = \bigoplus_{j=1}^r (q_1\circ s_j)_+ (q_2
                       \circ s_j)^!(M)  \\&= \bigoplus_{j=1}^r (l_j)_+r_j^!(M).
  \end{align*}

  (2): Define the  homomorphism 
  \begin{displaymath}
\psi :   L^{H_1}\otimes_K L^{H_2} \to  \bigoplus_{s\in S} L^{H(s)},
\quad x\otimes y \mapsto (x sys^{-1})_{s\in S},
\end{displaymath}
where we note that $L^{H_1} \subset L^{H(s)}$, and
$x \mapsto sxs^{-1}$ defines a map $H(s)\to H_2$, and therefore a map
$L^{H_2}\to L^{H(s)}$, $x \mapsto sx s^{-1}$. To see that $\psi$ is an isomorphism it
suffices to see that it induces an isomorphism of $K$-vector spaces.
As $K$-vector spaces we have $L^{H_1} = K[H_1{\backslash } G]$,
$L^{H_2} = K[G/H_2]$, and $L^{H(s)} = K[G/H(s)]$. We can now factor
$\psi$ by natural maps
\begin{align*}
\psi &:   K[H_1{\backslash } G] \otimes_K K[G/H_2] \xrightarrow{a}  \bigoplus_{s\in
  S} K[G] (\bar 1 \otimes \bar s)\xrightarrow{b}  \bigoplus_{s\in S}
  K[G/H(s)],\\  & \bar x\otimes \bar y   \mapsto (\bar x s\bar y
  s^{-1})_{s\in S},  
\end{align*}
where it is straightforward to see that $a$ and $b$ define
isomorphisms of $K$-vector spaces. The assertion now follows from (1),
where $L^{(s)}= L^{H(s)}$.

(3): Now $H(s) = H_1\cap H_2$ and we have isomorphisms
\begin{displaymath}
 L_{12}= L^{H_1\cap H_2} \to    L^{(s)},\quad x \mapsto sxs^{-1}.
\end{displaymath}
This isomorphism also induces an isomorphism
$\Dc_{L_{12}}\to \Dc_{L^{(s)}}$, $P \mapsto P^s = sPs^{-1}$. Now the
assertion follows from (2).
  
(4): Here $H_1= H_2= \{e\}$ are  trivial groups, so the assertion follows from (3).
\end{proof}

There is also a counterpart to Mackey's irreducibility criterion which
at the same time expands the scope of
\Proposition{\ref{simpledirect}}. Let $\pi: \Spec L \to \Spec K$ be a
finite field extension of positive transcendence degree (over $k$) and
characteristic $0$, and let $\bar \pi : \Spec \bar L \to \Spec K $ be
a Galois cover of $\pi$ with Galois group $G$. Let $H$ be the Galois
group of $\bar L/L$, $S$ be a set of representatives of
$H\backslash G / H$, and put $H(s) = sHs^{-1}\cap H$ for $s\in S$.
Below we regard $\bar L^{H(s)}\otimes_{L} M$ as a $\Dc_L$-module by
the inclusion $L \subset L^{H(s)}$.
\begin{corollary}\label{irr-crit}
  Let $M$ be a simple $\Dc_L$-module of finite dimension over $L$.
  The following conditions are equivalent:
  \begin{enumerate}
  \item  $\pi_+(M)$ is  simple.
  \item  The $\Dc_L$-modules $M$ and  $\bar L^{H(s)}\otimes_{L} M $ are
    disjoint for each $s\in S\setminus H$ (so that
    $Hom_{\Dc_L}(M, \bar L^{H(s)}\otimes_{L} M )=0$).
  \end{enumerate}
\end{corollary}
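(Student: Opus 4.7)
The plan is to deduce the corollary directly from \Theorem{\ref{inv-dir}}(2) by the standard Schur-type reformulation of simplicity. Since $\pi_+(M)$ is a holonomic $\Dc_K$-module, \Theorem{\ref{decomposition-thm}} gives that it is semisimple, and therefore $\pi_+(M)$ is simple if and only if $\End_{\Dc_K}(\pi_+(M)) = k$; this last step uses Quillen's theorem exactly as in the proof of \Lemma{\ref{simpletensor}}, since $k$ is algebraically closed of characteristic $0$.

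By the adjunction $(\pi_+, \pi^!)$ of \Theorem{\ref{adj-triple-th}} one has
\begin{displaymath}
  \End_{\Dc_K}(\pi_+(M)) = \Hom_{\Dc_L}(M, \pi^!\pi_+(M)).
\end{displaymath}
To compute $\pi^!\pi_+(M)$, I would apply \Theorem{\ref{inv-dir}}(2) with the Galois cover $\bar L/K$ and the choice $L_1 = L_2 = L$, $H_1 = H_2 = H$, which yields the decomposition
\begin{displaymath}
  \pi^!\pi_+(M) = \bigoplus_{s \in S} (l_s)_+ r_s^!(M),
\end{displaymath}
where each summand is, as a $\Dc_L$-module, $\bar L^{H(s)} \otimes_L M$ with the twist by $s$ built in through the map $r_s$. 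For the trivial double coset (the one containing $e$) we have $H(e) = H$, so $\bar L^{H(e)} = L$ and the corresponding summand is precisely $M$ itself, contributing $\End_{\Dc_L}(M) = k$ to the above hom space (again by Quillen).

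Combining these steps,
\begin{displaymath}
  \End_{\Dc_K}(\pi_+(M)) = k \oplus \bigoplus_{s \in S \setminus H} \Hom_{\Dc_L}(M,\; \bar L^{H(s)} \otimes_L M),
\end{displaymath}
so $\pi_+(M)$ is simple if and only if every Hom summand on the right vanishes, which is condition (2). The main technical point to verify carefully is the identification of $(l_s)_+ r_s^!(M)$ with the $\Dc_L$-module $\bar L^{H(s)} \otimes_L M$ as stated in (2): one has to check that the twist by $s$ inherent in $r_s$ does not affect the $\Dc_L$-module isomorphism class in a way that would make the disjointness condition differ from the vanishing of the twisted hom-space. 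This follows because $r_s$ is induced by conjugation by $s$ and the resulting $\Dc_L$-action on $\bar L^{H(s)} \otimes_L M$ (through the inclusion $\Dc_L \subset \Dc_{\bar L^{H(s)}}$) is the one meant in condition (2); this is the only subtle point in an otherwise formal argument.
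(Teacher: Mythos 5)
Your proof is correct and follows essentially the same route as the paper: both reduce to computing $\pi^!\pi_+(M)$ via \Theorem{\ref{inv-dir}}(2), split off the trivial-coset summand $s \in H$ (which gives $M$ itself), and conclude from adjointness plus Schur/Quillen. The only presentational difference is that the paper works with the multiplicity pairing in the Grothendieck group $K_1$, while you compute $\End_{\Dc_K}(\pi_+(M))$ and $\Hom_{\Dc_L}(M,-)$ directly; for semisimple modules these two formulations carry the same information, and your explicit use of semisimplicity (from \Theorem{\ref{decomposition-thm}} or \Lemma{\ref{ss-field}}) to pass from $\End = k$ to simplicity is the correct way to justify the step that the paper handles implicitly via the definition of the pairing.
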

\begin{proof} We can work in the Grothendieck group $K_1$ of
  $\Dc$-modules, using the natural pairing
  $<>: K_1 \otimes_{\Zb} K_1 \to \Zb $ determined by requiring for
  simple modules $N_i$ that $<[N_1], [N_2]> $ equals $1$ when
  $[N_1]= [N_2]$ and otherwise $0$. Then $\pi_+(M)$ is simple if and
  only if $ <[\pi_+(M)], [\pi_+(M)]>=1$. By adjointness and
  \Proposition{\ref{inv-dir}},(2), with $H= H_1 = H_2$ being the
  Galois group of $\bar L/L$ (and $p_1= p_2 = \pi$), we get
  \begin{align*}
    <[\pi_+(M)], [\pi_+(M)]>&= < [M], [\pi^!\pi_+(M)]> = \sum_{s\in S}
                              <[M], [\bar L^{H(s)}\otimes_L M] >\\ &
                                                                     =1  + \sum_{ s\in S
                                                                     \setminus
                                                                     H}
                                                                     <[M],
                                                                     [\bar L^{H(s)}\otimes_L M] >.
  \end{align*}
  This implies the assertion.
\end{proof}
\subsection{Abelian extensions}\label{abelian-ext} It can be
difficult to compute the central extension $\bar G_M$ of the intertia
group $G_M\subset G$ of a simple $\Dc_L$-module $M$ corresponding to a
Galois extension $\pi : \Spec L \to \Spec K $, with Galois group $G$,
and then also its irreducible representations $V_\chi$,
$\chi \in \widehat {\bar G}_M$, appearing in
\Theorem{\ref{galois-direct}}. Suppose now instead that we know  one
simple constituent $N$ of $\pi_+(M)$ and also the complete decomposition
\begin{displaymath}
  \pi_+(L) = \bigoplus_{\chi \in \hat G}  V_\chi^* \otimes_k L_\chi.
\end{displaymath}
Then 
\begin{displaymath}
  \pi_+\pi^!(N) = \pi_+(L) \otimes_K
  N= \bigoplus_{\chi \in \hat G } V_\chi^* \otimes_k
  L_\chi\otimes_K  N,     
\end{displaymath}
so that if $M$ is a simple component of $\pi^!(N)$ then each simple
module in $\pi_+(M)$ is isomorphic to a submodule of some
$L_\chi\otimes_K N$. Here the latter module is semisimple but need not
be simple when $\pi$ is non-abelian.

\begin{proposition} Assume that $\pi: \Spec L \to \Spec K$ is abelian
  and $M$ be a simple holonomic $\Dc_L$-module. There exist integers
  $m$ and $e$  such that: 
  \begin{enumerate}
  \item 
    \begin{displaymath}
      \pi_+(M) = \bigoplus_{i=1}^m (L_{\chi_i}\otimes_K N)^e,
    \end{displaymath}
    where $L_\chi\otimes_K N$ are simple $\Dc_K$-modules. Here $N$ is
    a simple submodule of $\pi_+(M)$ and the $L_{\chi_i}$ are simple
    submodules of $\pi_+(L)$.
    \item  $e^2\cdot  m = |G_M|$, the order of the
      inertia group of $M$.
      \item   $ \rank_K (N) = \frac {\rank_L (M)}{e\cdot  m}$ and
        $\rank_K (L_\chi) =1$.
  \end{enumerate}
\end{proposition}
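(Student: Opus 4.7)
The plan is to double-count $\pi_+\pi^!(N)$ in two ways—via the projection formula and via Clifford's theorem (Theorem~\ref{cliffordtheorem})—and read off everything from the comparison. First, since $G$ is abelian every irreducible $k[G]$-representation is one-dimensional, so Corollary~\ref{direct-L} yields $\pi_+(L) = \bigoplus_{\chi \in \hat G} L_\chi$ with $\rank_K L_\chi = 1$ for every $\chi$. Using Galois descent and the fact that $L_\chi \otimes_K L_{\chi^{-1}} \cong K$ (both sides correspond under the descent in Proposition~\ref{morita} to the trivial $\Dc_L[G]$-module $L$), the functor $L_\chi \otimes_K (-)$ is an auto-equivalence of $\Mod(\Dc_K)$; in particular each $L_\chi \otimes_K N$ is simple with $\rank_K(L_\chi\otimes_K N) = \rank_K N$, establishing the rank-one assertion in~(3).

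Next, I would compute $\pi_+\pi^!(N)$ in two ways. The projection formula gives
\[
\pi_+\pi^!(N) \;=\; \pi_+(L)\otimes_K N \;=\; \bigoplus_{\chi \in \hat G} L_\chi \otimes_K N .
\]
On the other hand, Theorem~\ref{cliffordtheorem} applied to the Galois map $\pi$ with simple submodule $M$ of $\pi^!(N)$ yields $\pi^!(N)=\bigoplus_{i=1}^{t}(g_i \otimes M)^{e}$ with $t=[G:G_M]$; applying $\pi_+$ and using that every $g\in G$ fixes $K$, so that $\pi_+(g_i \otimes M)\cong \pi_+(M)$ as $\Dc_K$-modules, one obtains
\[
\pi_+\pi^!(N) \;=\; et\cdot \pi_+(M).
\]
Setting $H_N = \{\chi \in \hat G : L_\chi\otimes_K N \cong N\}$ and grouping the first display by cosets of $H_N$ (each isomorphism class appearing exactly $|H_N|$ times), the comparison yields
\[
\pi_+(M)\;=\;\frac{|H_N|}{et}\cdot\!\!\bigoplus_{\chi \in \hat G/H_N} L_\chi \otimes_K N.
\]

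It remains to extract the numerical invariants. Matching this expression with the decomposition of $\pi_+(M)$ afforded by Theorem~\ref{galois-direct}—whose simple summands are distinct and whose multiplicities are the $\dim_k V_\chi$—forces a common value $e = \dim_k V_\chi$ and the identity $|H_N|/(et) = e$, i.e.\ $|H_N|=e^2 t$. Consequently the number of distinct simples is $m = |\hat G|/|H_N| = |G|/(e^2 t) = |G_M|/e^2$, which gives $me^2 = |G_M|$, proving~(2); choosing representatives $\chi_1,\dots,\chi_m$ of $\hat G/H_N$ proves~(1). The rank formula in~(3) then follows by matching total $K$-dimensions: $[L:K]\rank_L M = \rank_K\pi_+(M) = m\cdot e\cdot \rank_K N$, combined with $me^2 = |G_M|$ and $[L:K]=|G|$.

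The main obstacle will be justifying the passage from the uniform multiplicity $|H_N|/(et)$ (produced by the Clifford/projection-formula comparison) to the a priori variable multiplicities $\dim_k V_\chi$ of Theorem~\ref{galois-direct}. The key point is that both expressions describe the same semisimple module with known simple summands, so the uniform value is forced—which amounts to the structural fact that all irreducible projective $k$-representations of the abelian group $G_M$ with a fixed Schur cocycle have a common dimension $e$ with $e^2\cdot(\text{number of them})=|G_M|$; here this emerges directly from the computation rather than being invoked as a black box.
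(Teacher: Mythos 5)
Your argument is correct, and it proves the statement by a route that is genuinely different in its bookkeeping from the paper's, though the two share the core ingredients (projection formula, simplicity of $L_\chi\otimes_K N$, and Clifford). The paper establishes the uniform multiplicity $e$ up front via adjointness together with the observation $\pi^!(L_\chi\otimes_K N)\cong\pi^!(N)$, and then applies $\pi^!$ to $\pi_+(M)$ and invokes Theorem~\ref{inv-dir}(4), $\pi^!\pi_+(M)=\bigoplus_{g\in G}M_g$, to count the multiplicity of $M$ on both sides and conclude $|G_M|=e^2m$. You instead push forward the Clifford decomposition of $\pi^!(N)$ to obtain $\pi_+\pi^!(N)=et\cdot\pi_+(M)$, compare with $\pi_+\pi^!(N)=\bigoplus_\chi L_\chi\otimes_K N$ via the projection formula, and pass through the stabilizer subgroup $H_N=\{\chi : L_\chi\otimes_K N\cong N\}\subset\hat G$ to get $|H_N|=e^2t$, which after $m=|\hat G/H_N|$ and $|G|=t|G_M|$ gives the same count. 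Your way costs a little extra effort in the $H_N$-coset bookkeeping and requires pinning down the common multiplicity as $e$; that step is in fact just adjointness, $[\pi_+(M):N]=[\pi^!(N):M]=e$, and would be worth saying explicitly rather than phrasing it as a matching with Theorem~\ref{galois-direct}. The paper's route is a bit shorter because $\pi^!\pi_+(M)=\bigoplus_{g\in G}M_g$ (Theorem~\ref{inv-dir}(4)) does the final count in one line, whereas you recover the same counting identity from the projective-representation structure of $G_M$ embedded in your computation. A minor side observation: your rank computation correctly gives $\rank_K N=\frac{[L:K]\,\rank_L M}{em}$, which differs from the formula as printed in the statement by the factor $[L:K]=|G|$; the statement appears to have a typographical omission there, and your derivation is the right one.
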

\begin{proof}
  In this situation $L_\chi \otimes_K N$ is simple
  since $N$ is simple and $\rank_K L_\chi = \dim V_\chi =1$. Therefore for some
  integers $m_\chi$
  \begin{displaymath}
    \pi_+(M) = \bigoplus (L_{\chi}\otimes N)^{ m_\chi}
  \end{displaymath}
  where the sum runs over certain $\chi \in \hat G$. Moreover, 
\begin{displaymath}
\pi^!(L_\chi \otimes_K N) = \pi^!(L_\chi)\otimes_L \pi^!(N) = L
\otimes_L \pi^!(N) = \pi^!(N).
\end{displaymath}
Hence the multiplicity
$m_\chi =  [\pi_+(M): L_\chi \otimes_K N] = [\pi^!(N): M]= e$ is independent of
$\chi$.
Thus, by \Theorem{\ref{inv-dir}}, (4), 
\begin{displaymath}
\bigoplus_{G/G_M} \bigoplus_{G_M} M_g = \bigoplus_{g\in G} M_g = \pi^!\pi_+(M) = \bigoplus_{i=1}^m \bigoplus_{g\in G/G_M} M_g^{e^2}, 
\end{displaymath}
so that equaling the multiplicity of $M$ in both sides, we get
$|G_M| = e^2m$, proving (2). Also (3)   is now evident. 
\end{proof}

\begin{example}(Kummer extensions) Consider the field extension
  $\pi: K \to L= K(a_1^{1/n}, \ldots , a_r^{1/n})$, where $a_i$
  algebraically independent elements in $K$. It can be factorized into
  cyclic extension
  $\pi_i : K_{i-1}=K(a_1^{1/n}, \ldots , a_{i-1}^{1/n})\subset K_i =
  K(a_1^{1/n}, \ldots , a_i^{1/n})$. Then, putting $x_i = a_i^{1/n}$,
  \begin{align*}
    \pi_+ (L) &= (\pi_1)_+\cdots (\pi_{r-1})_+(\pi_r)_+(L) =
                (\pi_1)_+\cdots (\pi_{r-1})_+ (\bigoplus_{i=0}^{n-1} \Dc_{K_{r-1}}x_r^i)
    \\
              &=
                (\pi_1)_+\cdots (\pi_{r-2})_+(\bigoplus
                (\pi_{r-1})_+(\Dc_{K_{r-1}}x_r^i))\\ &=
                                                       (\pi_1)_+\cdots (\pi_{r-2})_+(\bigoplus_{i_{r-1}=0}^{n-1}\bigoplus_{i_{r}=0}^{n-1}
                                                       \Dc_{K_{r-2}}x_{r-1}^{i_{r-1}}x_r^{i_r})
                                                       = \bigoplus_{i_j=0, j=1, \dots, r}^{n-1}\Dc_K \prod_{j=1}^r x_{j}^{i_j}.
  \end{align*}
\end{example}

We now consider inverse images of abelian Galois morphisms when one
has information about the inertia group of the module. It gives a 
more precise result than in \Theorem{\ref{cliffordtheorem}}.

Given field extensions
\begin{displaymath}
  \Spec L \xrightarrow{\pi} \Spec K \xrightarrow{r} \Spec K_0,
\end{displaymath}
put $p= r\circ \pi$, and let $G_\pi$, $G_r$ and $G_p$ denote the
corresponding Galois groups. Assume that $\pi$ and $r$ are Galois, so
that $G_\pi$ is a normal subgroup of $ G_p$ and $G_r= G_p/G_\pi$. Say
that $\pi$ is {\it minimal Galois with respect to $r$ } if for any
field $K \subset L_1 \subset L$ such that $L_1/K_0$ is Galois it
follows that $L_1= L$ or $L_1= K$. As before, if $N$ is a
$\Dc_K$-module, let $G_N$ be its inertia subgroup in $\Aut(L/k)$.
\begin{theorem}\label{abelian-clifford}  Assume that $G_\pi$ is abelian and  $N$ be a simple
  holonomic $\Dc_K$-module such that its inertia group $G_N$ contains 
  $G_\pi  $. Assume moreover that   $\pi$ is minimal Galois with
  respect to $r: \Spec K \to \Spec L^G$. Then one of the following holds:
  \begin{enumerate}
  \item $\pi^!(N)$ is simple.
  \item $\pi^!(N) = M^e$, where $M$ is simple and
    $e^2=\deg \pi = |G_M|$.
    \item
      \begin{displaymath}
        \pi^!(N)=\bigoplus_{g\in G_\pi} M_g,
\end{displaymath}
where $M$ is  simple
      and  $M_g \not \cong M_{g'}$     when $g \neq g'$.
  \end{enumerate}
\end{theorem}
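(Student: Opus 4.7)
The plan is to combine Clifford's theorem (\Theorem{\ref{cliffordtheorem}}) with the minimal Galois hypothesis and the structure of projective representations of finite abelian groups. First, I would apply Clifford's theorem to the Galois morphism $\pi$, obtaining a decomposition
\begin{displaymath}
\pi^!(N) = \bigoplus_{i=1}^t (g_i \otimes M)^e,
\end{displaymath}
where $M$ is a simple $\Dc_L$-submodule, $G_M \subset G_\pi$ is its inertia subgroup, $t = [G_\pi : G_M]$, and $e$ divides $|G_M|$. The hypothesis $G_\pi \subset G_N$ is used to guarantee that $M$ and all its $G_r$-twists (via $G_r = G_p/G_\pi$) are intertwined $G_p$-equivariantly; in particular $G_M \subset G_\pi$ is stable under conjugation by $G_p$.

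Since $G_\pi$ is abelian, $G_M$ is normal in $G_\pi$, and combined with $G_p$-stability it is normal in $G_p$, so $L^{G_M}/K_0$ is Galois. The minimal Galois assumption then forces $L^{G_M} \in \{L,K\}$, i.e.\ $G_M = \{e\}$ or $G_M = G_\pi$. In the first case $e \mid |G_M| = 1$, hence $e = 1$, $t = |G_\pi|$ and the twists $M_g$ ($g\in G_\pi$) are pairwise non-isomorphic, giving exactly case (3). In the second case $t = 1$ and $\pi^!(N) = M^e$, where $e = \dim V_\chi$ for the irreducible representation $V_\chi$ of the true inertia group $\bar G_M$ appearing in \Theorem{\ref{galois-direct}}.

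To finish this latter case I would analyze the central extension $1 \to \mu_n \to \bar G_M \to G_\pi \to 1$ of \Section{\ref{inertia-section}}. If $\bar G_M$ is abelian, all its irreducibles are one-dimensional, so $e=1$ and $\pi^!(N) = M$ is simple, which is case (1). Otherwise the commutator in $\bar G_M$ induces a non-trivial alternating pairing $c \colon G_\pi \times G_\pi \to \mu_n$ whose radical $A_0 = \{g \in G_\pi \mid c(g,\cdot) \equiv 1\}$ is a proper subgroup of $G_\pi$; by the $G_p$-equivariance of the construction $A_0$ is $G_p$-stable, so $L^{A_0}/K_0$ is Galois, and minimality now forces $A_0 = \{e\}$, i.e.\ the pairing is symplectic. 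The structure theory of finite Heisenberg-type groups then produces a unique irreducible $\bar G_M$-representation with the prescribed central character, of dimension $\sqrt{|G_\pi|}$, whence $e^2 = |G_\pi| = \deg \pi$, which is case (2).

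The main obstacle is making the $G_p$-equivariance arguments precise: concretely, establishing that the inertia subgroup $G_M$ and the radical $A_0$ of the commutator pairing are both $G_p$-stable, so that the minimal Galois hypothesis can be applied to the intermediate fields $L^{G_M}$ and $L^{A_0}$. A secondary technical point is invoking the Heisenberg dimension formula $\dim V_\chi = \sqrt{|G_\pi|}$ once the induced pairing is known to be non-degenerate.
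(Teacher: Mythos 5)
Your proof takes the same Clifford-theorem-based starting point as the paper, and the dichotomy $G_M \in \{\{e\}, G_\pi\}$ via the minimal-Galois hypothesis is identical in spirit. However, your argument is noticeably more complete than the paper's own, which is terse at two points. First, the paper justifies ``$L^{G_M}$ is Galois over $L^{G}$'' only by normality of $G_M$ inside the abelian group $G_\pi$; that alone gives Galoisness over $K = L^{G_\pi}$, not over $K_0 = L^{G_p}$, so the minimality hypothesis cannot be applied without knowing $G_M$ is stable under $G_p$-conjugation. You correctly identify that the hypothesis $G_N \supset G_\pi$ is what is supposed to supply this $G_p$-equivariance, and you flag (honestly) that making this precise is the main remaining work. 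Second, the paper's proof ends with ``$G_M = G_\pi$, implying $(2)$ or $(1)$'' without explaining why the intermediate possibilities $1 < e^2 < \deg\pi$ (i.e.\ $1 < m < |G_\pi|$ in the notation of the abelian-extension proposition preceding the theorem, where $e^2 m = |G_M|$) cannot occur. Your analysis of the commutator pairing $c$ on $G_\pi$ coming from the central extension $\bar G_M$, the second application of minimality to the radical $A_0$, and the Heisenberg dimension formula $\dim V_\chi = \sqrt{|G_\pi/A_0|}$ is exactly the argument needed to exclude these intermediate cases. This gives you $A_0 = G_\pi$ (forcing $e=1$, case (1)) or $A_0 = \{e\}$ (forcing $e^2 = |G_\pi|$, case (2)). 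So your route is more elementary in the sense that it spells out the structure of projective representations of the abelian $G_\pi$, whereas the paper leans implicitly on the reader filling in this gap. The one genuine remaining issue, which applies equally to the paper's proof, is the precise verification that the cocycle class of $M$ and hence the pairing $c$ and its radical $A_0$ are invariant under $G_p$-conjugation; this should follow from $G_N \supset G_\pi$ because then $N$, and hence the isomorphism class of $\pi^!(N)$ and of its unique simple constituent $M$, is preserved by twisting by $G_r = G_p/G_\pi$, but it deserves a careful check.
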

\begin{proof} Let $G_M$ be the inertia group in $G_\pi$ of a simple
  module $M$ in $\pi^!(N)$. By \Theorem{\ref{cliffordtheorem}}
  \begin{displaymath}
    \pi^!(N) = \bigoplus_{i=1}^t (g_i\otimes M)^e 
  \end{displaymath}
  where $t= |G_\pi/G_M|$ and $e $ divides $|G_M|$. Since $G_M\cap G_\pi$ is normal
  in the abelian group $G_\pi$ it follows that $L^{G_M}$ is Galois
  over $L^{G}$. By the minimality condition either $L^{G_M}= L$ or
  $L^{M}= K$. In the first case $G_M= \{1\}$ and thus $ t= |G_\pi|$
  and $e=1$,  corresponding to (3).  In the  latter case, $G_M= G_\pi$,
  implying $(2)$ or (1). 
\end{proof}

\begin{corollary}\label{prime-cor}
  If the Galois group $G_\pi$ is of prime order $p$ and $N$ is a simple
  $\Dc_K$-module, then either
  \begin{enumerate}
  \item $\pi^!(N)$ is simple
    \item $\pi^! (N) = \bigoplus_{i= 1}^p N_i$, where $N_i \not \cong
      N_j$ when $i \neq j$. 
  \end{enumerate}
\end{corollary}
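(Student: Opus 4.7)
\begin{pf}
The plan is to apply \Theorem{\ref{cliffordtheorem}} directly to a simple submodule of $\pi^!(N)$ and exploit the fact that a group of prime order has only the two trivial subgroups. Let $M$ be a simple submodule of the semisimple module $\pi^!(N)$ and let $G_M\subset G_\pi$ be its inertia group in the sense of \Definition{\ref{inertia-def}}. Since $|G_\pi|=p$ is prime, the subgroup $G_M$ is either trivial or the whole of $G_\pi$, and these two possibilities will correspond to the two cases of the corollary. Note that the hypothesis $i^!(N)\neq 0$ of \Theorem{\ref{cliffordtheorem}} is automatic, as $\pi$ is a morphism of $0$-dimensional varieties.

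If $G_M=\{e\}$, then $t=[G_\pi:G_M]=p$, while $e$ divides $|G_M|=1$, so $e=1$. \Theorem{\ref{cliffordtheorem}} then gives
\[
 \pi^!(N)=\bigoplus_{i=1}^p g_i\otimes M,
\]
and the $p$ summands are pairwise non-isomorphic, since by the very definition of the inertia group $g\otimes M\cong M$ would force $g\in G_M=\{e\}$. Setting $N_i=g_i\otimes M$ yields conclusion (2) of the corollary.

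If instead $G_M=G_\pi$, then $t=1$ and \Theorem{\ref{cliffordtheorem}} gives $\pi^!(N)=M^e$ with $e$ dividing $p$, so $e\in\{1,p\}$; the heart of the proof is to rule out $e=p$. For this, recall from (\ref{central-ext}) that the true inertia group $\bar G_M$ fits in a central extension $1\to\mu_n\to\bar G_M\to G_M\to 1$, where $G_M=\mathbb Z/p$ is cyclic of prime order. Such an extension is automatically abelian: $\bar G_M$ is generated by $\mu_n$ together with any preimage $\tilde g$ of a generator of $G_M$, and these generators commute since $\mu_n$ is central. As $k$ is algebraically closed of characteristic $0$, every irreducible $\bar G_M$-representation $V_\chi$ is then one-dimensional. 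By \Theorem{\ref{galois-direct}}(3) the multiplicity $e$ equals some $\dim_k V_\chi=1$, so $\pi^!(N)=M$ is simple, which is conclusion (1).

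The only real obstacle is ruling out $e=p$ in the second case; the representation-theoretic argument above is the cleanest route, but as an alternative one can use \Theorem{\ref{inv-dir}}(4): in this case all twists $M_g$ are isomorphic to $M$, so $\pi^!\pi_+(M)=M^p$, and adjunction combined with the embedding $N^e\hookrightarrow\pi_+(M)$ (which holds since $\pi_+(M)$ is semisimple, by \Theorem{\ref{decomposition-thm}}) gives $M^{e^2}\hookrightarrow M^p$, forcing $e^2\leq p$ and hence $e=1$.
\end{pf}
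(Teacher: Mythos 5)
Your proof is correct, and it takes a genuinely different route from the one in the paper. The paper simply invokes \Theorem{\ref{abelian-clifford}} with the intermediate field $K_0=K$ (so that $r=\id$ and the inertia hypothesis is automatic) and then rules out case (2) of that theorem because $e^2=\deg\pi=p$ would make $p$ a perfect square. You instead run Clifford's theorem (\Theorem{\ref{cliffordtheorem}}) from scratch, split on $G_M\in\{\{e\},G_\pi\}$, and in the nontrivial case replace the ``$p$ is not a square'' arithmetic by a structural observation: a central extension $1\to\mu_n\to\bar G_M\to G_M\to 1$ of a cyclic group $G_M=\Zb/p$ is automatically abelian, so every irreducible $\bar G_M$-representation is one-dimensional and \Theorem{\ref{galois-direct}}(3) forces $e=\dim_k V_\chi=1$. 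Your alternative count, using \Theorem{\ref{inv-dir}}(4) and adjunction to force $e^2\leq p$, is closer in spirit to the paper's square-counting but bypasses \Theorem{\ref{abelian-clifford}} entirely; it is also the most robust version of the argument, since it never asks what $\bar G_M$ looks like. Each approach has a small advantage: the paper's is a one-line consequence of the more general \Theorem{\ref{abelian-clifford}}, while yours is self-contained and makes the mechanism transparent (in particular it isolates exactly where cyclicity of $G_\pi$, as opposed to mere abelianness, is being used). One tiny point you leave implicit: the conclusion of \Theorem{\ref{cliffordtheorem}} a priori has $j_{!+}j^!$ on the left, which you drop silently; this is justified because $\pi:\Spec L\to\Spec K$ is smooth, but it is worth saying so explicitly.
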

\begin{proof}
  Here $L/K$ is minimal Galois relative to $K=K$ so one can apply
  \Theorem{\ref{abelian-clifford}}, where case (2) cannot occur since $|G_\pi|=p$ is not a square.
\end{proof}

\begin{remark} Taken together with the equivalence
  \Theorem{\ref{equivalence}} below the above results imply
  corresponding assertions in ``Clifford Theory''
  \cite{isaacs:character}*{Th. 6.18, Cor. 6.19}, but notice that the
  scope of the above result is greater since the $\Dc_K$-modules need
  not arise from irreducible representation of a finite group.
\end{remark}

\subsection{Decomposition over liftable differential operators
}\label{sec:5} We have the subring $\Dc_Y^\pi$ of liftable
differential operators in $\Dc_Y$ (\ref{diffoperators}), and for
any coherent $\Dc_X$-module $M$ we can define the map
\begin{equation} \Theta_M: \pi_*(M) \to \pi_+(M), \quad m \mapsto
\Theta(1) \otimes 1 \otimes m,
\end{equation} where $\Theta $ is described in equation (\ref{S}).

\begin{proposition}\label{epinymous}
  The map $\Theta_M$ is a homomorphism of $\Dc_Y^\pi$-modules. On the
  étale locus, $\Theta_M$ is an isomorphism of $\Dc_{Y_0}$-modules.
\end{proposition}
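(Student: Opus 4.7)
The plan is to handle the étale locus assertion first and then the global $\Dc_Y^\pi$-linearity. For the first, on $Y_0$ the map $\pi_0$ is étale so $\Omega_{X_0/Y_0}=0$, giving a canonical identification $\omega_{X_0/Y_0}\cong\Oc_{X_0}$ under which $\Theta|_{X_0}$ from (\ref{S}) becomes the identity; combined with the standard isomorphism $\Dc_{X_0\to Y_0}\cong \Dc_{X_0}$ for étale maps, this yields $\pi_+(M)|_{Y_0}=(\pi_0)_*(\Dc_{X_0}\otimes_{\Dc_{X_0}}j^!M)=\pi_*(M)|_{Y_0}$ and identifies $\Theta_M|_{Y_0}$ with the identity map, hence a $\Dc_{Y_0}$-linear isomorphism.

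For the global $\Dc_Y^\pi$-linearity the question is local on $Y$, so I reduce to the affine setting $\pi:A\to B$ and adopt the description $\Dc_{B\leftarrow A}=\Hom_A(B,\Dc_A)$ from \thetag{BM}. Under this identification $\Theta(1)\otimes 1$ corresponds to the composition $\tilde\tr_\pi:B\xrightarrow{\tr_\pi}A\hookrightarrow \Dc_A$, so $\Theta_M(m)=\tilde\tr_\pi\otimes m$ in $\Dc_{B\leftarrow A}\otimes_{\Dc_B}M$. For $P\in \Dc_A^\pi$ with lift $\tilde P\in \pi_*(\Dc_B)$, establishing $\Theta_M(\tilde Pm)=P\cdot\Theta_M(m)$ reduces to the bimodule identity
\[ P\cdot\tilde\tr_\pi \;=\; \tilde\tr_\pi\cdot\tilde P \qquad\text{in }\Dc_{B\leftarrow A}. \]

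I would verify this identity first for $P=\partial\in T_A^\pi$ by direct computation: the left action yields $(P\cdot\tilde\tr_\pi)(b)=P\cdot\tr_\pi(b)=\tr_\pi(b)P+P(\tr_\pi(b))$ in $\Dc_A$, while the right action formula from \thetag{BM}, applied using $\tilde P|_A=1\otimes P\in B\otimes_A T_A$ (since $\tilde P$ extends $P$), gives $(\tilde\tr_\pi\cdot\tilde P)(b)=\tr_\pi(\tilde Pb)+\tr_\pi(b)P$, and the first term equals $P(\tr_\pi(b))$ by the $\Dc_A^\pi$-linearity of the trace established in Proposition~\ref{dual-iso}(1). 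Multiplicativity and induction on order then propagate the identity to the subring $\Dc_A(T_A^\pi)\subset \Dc_A^\pi$.

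The main obstacle is passing from $\Dc_A(T_A^\pi)$ to all of $\Dc_A^\pi$, which is in general strictly larger. For this I would note that the difference $\Delta(P):=P\cdot\tilde\tr_\pi-\tilde\tr_\pi\cdot\tilde P$ is additive and $A$-linear in $P\in \Dc_A^\pi$, vanishes after restriction to $Y_0$ (where $\Dc_{Y_0}^{\pi_0}=\Dc_{Y_0}$ is generated by derivations and the identity has been verified), and lies in $\Hom_A(B,\Dc_A)$, which is $A$-torsion-free since $B$ is locally free over $A$ and $\Dc_A$ is free as a left $A$-module. Consequently $\Delta(P)=0$ globally, yielding the sought $\Dc_Y^\pi$-linearity of $\Theta_M$.
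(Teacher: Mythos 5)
Your proof is correct, and it follows the same conceptual route as the paper — verify the compatibility on liftable derivations, then extend to all of $\Dc_Y^\pi$ by a torsion-freeness argument over the \'etale locus — but it implements this more directly. The paper does not reduce to a single bimodule identity; instead it first shows $\Dc(T^\pi_Y)$-linearity of $\Theta_M$ by an action computation, then, for the case $M=\Dc_X$, observes that both source and target of $\Theta_{\Dc_X}$ are determined by their restrictions to $Y_0$ (torsion-freeness), concludes $\Dc_Y^\pi$-linearity for $M=\Dc_X$, and finally passes to general coherent $M$ via local finite free $\Dc_X$-resolutions and functoriality of $\Theta_M$. You short-circuit these last two steps by observing that the whole assertion (for arbitrary $M$) reduces to the bimodule identity $P\cdot\tilde\tr_\pi=\tilde\tr_\pi\cdot\tilde P$ in $\Dc_{B\leftarrow A}=\Hom_A(B,\Dc_A)$, which is torsion-free over $A$; this avoids the need to single out the case $M=\Dc_X$ and invoke resolutions, since the element $\Delta(P)$ lives in a single torsion-free module independent of $M$. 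Your base-case computation for derivations is the same computation alluded to via Example~\ref{right-left} in the paper, with the trace's $T_A^\pi$-equivariance from Proposition~\ref{dual-iso} doing the work in both. Two minor points worth making precise when writing this up: the multiplicativity step $\widetilde{P_1P_2}=\tilde P_1\tilde P_2$ rests on the uniqueness of lifts of elements of $\Dc_A^\pi$ (which holds because the lift is determined by the action on $\pi_*(\Oc_X)$ at the generic point); and the localization compatibility $\Delta(P)|_{Y_0}=\Delta_{Y_0}(P|_{Y_0})$, which you use implicitly, is routine but should be stated.
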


\begin{proof} Clearly both sides in the map are left
$\Dc_Y^\pi$-modules. If $\partial$ is a liftable
  derivation, then $\partial (\Theta(1) \otimes 1 \otimes m)=
  \Theta(1) \otimes 1 \otimes \partial m$ (see also
  \Remark{\ref{right-left}}), therefore the restriction of $\Theta_M$
  to $Y_0$ is a homomorphism of modules over the ring $\Dc_{Y_0}^\pi =
  \Dc(T_{Y_0}^\pi)$. If $M= \Dc_X$ then $\pi_+(\Dc_X) =
  \pi_*(\omega_{X/Y}\otimes_{\pi^{-1}(\Oc_Y)} \pi^{-1}(\Dc_Y)) =
  \pi_*(\omega_{X/Y})\otimes_{\Oc_Y}\Dc_Y$ and we have the map
  \begin{displaymath}
    \psi :\pi_*(\Dc_X) \to \pi_*(\omega_{X/Y})\otimes_{\Oc_Y}\Dc_Y,
    \quad P \mapsto \phi_i \tr_\pi \otimes P_i,
  \end{displaymath} where a locally defined section in $X$ (over $Y$)
  $\sum \phi_i \otimes P_i \in \pi_* ( \Oc_X\otimes_{\pi^{-1}(\Oc_Y)}
  \pi^{-1}(\Dc_Y))\cong \Diff (\pi^{-1}(\Oc_Y), \Oc_X) $ is the
  differential operator that is induced by section $P$ of $
  \pi_*(\Dc_X)$. Since the action of a section of $\Dc_Y^\pi$ on a
  section of either side of the map $\psi$ is determined by its
  restriction to $Y_0$, and we know that the restriction of $\psi$ to
  $Y_0$ is a homomorphism, it follows that $\psi$ is a homomorphism of
  $\Dc_Y^\pi$-modules. Therefore the assertion follows when $M$ is
  locally free over $\Dc_X$. Since any coherent $\Dc_X$-module is
  locally provided with a finite locally free resolution (locally the
  category of coherent $\Dc_X$-modules is of finite global homological
  dimension), and the map $\Theta_M$ is functorial in $M$, it follows
  that $\Theta_M$ is a homomorphism for any coherent $M$. Finally, $
  \Dc_{Y_0}^\pi=\Dc_{Y_0}$, so the restriction of $\Theta_M$ is a
  morphism of $\Dc_{Y_0}$-modules $\Theta_M^0: i^*(\pi_*(M))=
  (\pi_0)_*j^*(M)\to i^!(\pi_+(M)) = (\pi_0)_+j^!(M)$. It is easy to see
  that this  is an isomorphism when $M= \Dc_X$, and again since $\Theta_M$ is
  functorial in $M$, it follows that $\Theta_M^0$ is an isomorphism.
\end{proof} 

When $M$ is a connection it can be more natural to study the $\Oc_Y$-
coherent module $\pi_*(M)$ than $\pi_+(M)$, where the latter tends to
be non-coherent when $\pi$ is ramified. On the other hand, the ring
$\Dc_Y^\pi$ is not easy to get a grip on\footnote{This is in contrast
  to the ring generated by the easily described liftable tangent
  vector fields $T^\pi_Y= T_Y(I_{D_\pi})$, formed as the vector fields
  that preserve the ideal of the disciminant; for a discussion of
  $T_Y^\pi$ see \cite{kallstrom:liftingder} and references therein.}.
However, when $X/Y$ is fairly close to being an invariant map
$X \to X^G$, generators of $\Dc_Y^\pi$ admit a direct construction,
and we shall see that in certain cases the semisimple decomposition of
$\pi_+(M)$ induces a semisimple decomposition of $\pi_*(M)$ as
$\Dc_Y^\pi$-module.

Say that a morphism $\pi: X\to Y$ is {\it uniformly ramified} if for
any prime divisor $D$ on $Y$ we have
$\pi^{-1}(D) = \sum_{i=1}^s r_iE_i$, where
$r_1 = \cdots = r_s$.\footnote{This notion was also employed in
  \cite{kallstrom:liftingder} for a different purpose.}

This condition is good for ensuring a nice Galois cover
$\tilde X \to X \to Y $ of $\pi$, as noted by F. Knop
\cite{knop:gradcofinite}; see also (\ref{galois-section}) and
(\ref{general-finite}). Uniformly ramified morphisms naturally occur
as Galois extensions $\bar X \to Y$ and more generally as
factorizations $ \tilde X \to X \to Y$, where $X= \tilde X^H$ and $H$
is a subgroup of the Galois cover $\tilde X/X/Y$ of $X/Y$ that does
not contain any pseudo reflections. It turns out that any uniformly
ramified morphism is of this form.

Let $\bar \Dc_X$ and $\bar \Dc^\pi_X$\footnote{We use the notation in
  [loc. cit.]} be the graded commutative rings associated to the order
  filtrations of the rings $\Dc_Y^\pi \subset \Dc_X$ .
\begin{proposition}\label{knop-prop} The following are equivalent for a
  finite morhism $\pi: X\to Y$ of smooth varieties:
  \begin{enumerate}
  \item $\pi$ is uniformly ramified.
  \item The induced map $\tilde X \to X$ is étale and $\tilde X$ is
  smooth.
\end{enumerate} Under these equivalent conditions we have:
\begin{enumerate}
\item $\Dc_Y^\pi = \Dc_{\tilde X}^G$ and $\Dc_X = \Dc_{\tilde X}^H$.
\item $\Dc_Y^\pi$ is a simple ring.
\item $\bar \Dc_X$ is of finite type over $\bar \Dc^\pi_X$ and $\Dc_X$
  is finitely generated over $\Dc_Y^\pi$.
\end{enumerate}
\end{proposition}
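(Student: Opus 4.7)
The plan is to prove the equivalence (1)$\Leftrightarrow$(2) by classical ramification theory in characteristic zero, and then deduce the consequences (1)--(3) from the resulting presentations $X = \tilde X/H$ and $Y = \tilde X/G$ together with the étaleness of $p: \tilde X \to X$. For (2)$\Rightarrow$(1), I would pick a prime divisor $D$ of $Y$: since $\tilde\pi: \tilde X \to Y$ is Galois, all primes $\tilde D$ of $\tilde X$ over $D$ share a common ramification index $\tilde e$, and for each prime $E$ of $X$ over $D$ with a chosen $\tilde D$ mapping to $E$, multiplicativity together with $e(\tilde D/E) = 1$ gives $e(E/D) = \tilde e$ independently of $E$.

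The converse (1)$\Rightarrow$(2) requires more care. In characteristic zero, ramification is tame, so the inertia group $I_{\tilde D} \subset G$ at a prime $\tilde D$ over $D$ is cyclic of order $\tilde e$. For $E = p(\tilde D)$ one has $e(E/D) = \tilde e / |I_{\tilde D} \cap H|$. Uniform ramification together with the fact that the primes of $X$ over $D$ correspond to $H$-orbits on the primes of $\tilde X$ over $D$ forces $|I_{\tilde D}\cap H|$ to be constant as $\tilde D$ varies; since this quantity is $G$-conjugation invariant and $\bar L$ is the minimal Galois closure of $L$ over $K$, the only possibility is $I_{\tilde D} \cap H = \{e\}$, so $p$ is unramified in codimension one. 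Purity of the branch locus then upgrades this to étaleness everywhere, and smoothness of $\tilde X$ follows from smoothness of $X$. Alternatively, one can quote Knop's direct local analysis in \cite{knop:gradcofinite}.

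For the consequences: $\Dc_X = \Dc_{\tilde X}^H$ is immediate from étale descent along $p$, and $\Dc_Y^\pi = \Dc_{\tilde X}^G$ follows because liftable operators on $Y$ correspond bijectively to operators on $\tilde X$ that are $G$-invariant (hence preserve $\Oc_Y = \Oc_{\tilde X}^G$), using the étale locus over $Y_0$ to perform the lifting unambiguously. Statement (2) is the main substantive consequence: since $\Dc_{\tilde X}$ is simple and $G$ acts by outer automorphisms on it, the skew group ring $\Dc_{\tilde X}[G]$ is simple, and the Morita-type theorem of Montgomery already invoked in the Remark following \Proposition{\ref{morita}} identifies $\Dc_{\tilde X}^G$ as Morita equivalent to $\Dc_{\tilde X}[G]$, hence simple. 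Statement (3) follows from Hilbert's finiteness theorem: under the identifications $\bar\Dc_X \cong \bar\Dc_{\tilde X}^H$ and $\bar\Dc_X^\pi \cong \bar\Dc_{\tilde X}^G$, the inclusion $\bar\Dc_{\tilde X}^G \subset \bar\Dc_{\tilde X}^H$ is the invariant-ring inclusion for the finite group $H \subset G$ acting on a finitely generated $\Oc_Y$-algebra, hence is a finite extension; lifting the filtration recovers finite generation of $\Dc_X$ over $\Dc_Y^\pi$.

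The hardest step will be verifying the key local ramification identity $I_{\tilde D} \cap H = \{e\}$ in the direction (1)$\Rightarrow$(2); the remaining steps are either formal consequences of étale descent and the quotient presentations, or standard invocations of invariant theory and Morita theory for simple rings.
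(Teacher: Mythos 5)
Your proposal is correct in substance, but it takes a genuinely different route from the paper: the paper's proof of this proposition is essentially a citation, pointing to Knop \cite{knop:gradcofinite}*{Prop 3.3} for the equivalence of uniform ramification with ``$\tilde X/X$ unramified in codimension one,'' invoking Zariski--Nagata purity to upgrade to \'etale, and then referring to [loc.\ cit.] for the three consequences. You instead prove the key equivalence from scratch. The (2)$\Rightarrow$(1) direction via multiplicativity of ramification indices and transitivity of the Galois action is exactly right. For (1)$\Rightarrow$(2) your plan is sound, and the key step you flag as hardest does work, but the way you state it is a bit compressed: the logical leap from ``$|I_{\tilde D}\cap H|$ is constant over the conjugacy class'' to ``$I_{\tilde D}\cap H=\{e\}$'' needs the cyclicity of $I_{\tilde D}$ to be used at exactly this point, not merely mentioned. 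Because $I_{\tilde D}$ is cyclic of order $\tilde e$ it has a \emph{unique} subgroup $I_m$ of each order $m\mid\tilde e$, so ``$|gI_{\tilde D}g^{-1}\cap H|=m$ for all $g$'' forces $gI_m g^{-1}\subset H$ for all $g$, hence $I_m\subset\bigcap_g gHg^{-1}=\{e\}$ (triviality of the core being exactly the minimality of the Galois closure). Without uniqueness of the order-$m$ subgroup you could not rigidify the constant cardinality into a single common subgroup. Spelling out that one sentence would make your argument fully self-contained.

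Your treatment of the consequences matches Knop's approach in outline, so the comparison is really in how much you re-derive. Deriving $\Dc_X=\Dc_{\tilde X}^H$ from \'etale Galois descent is clean. For $\Dc_Y^\pi=\Dc_{\tilde X}^G$, the sketch is right in spirit but should be tightened: the inclusion $\Dc_{\tilde X}^G\hookrightarrow\Dc_Y$ comes from the fact that a $G$-invariant operator preserves $\Oc_{\tilde X}^G=\Oc_Y$ (and $\Oc_{\tilde X}^H=\Oc_X$, which puts it in $\Dc_Y^\pi$); the reverse inclusion uses that a liftable operator, read over the \'etale locus, determines a well-defined global operator on the normal variety $\tilde X$ which is then $G$-invariant by construction. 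For simplicity of $\Dc_Y^\pi$, your invocation of Montgomery's Morita theorem with outer action of $G$ on the simple ring $\Dc_{\tilde X}$ is exactly what Knop uses. For statement (3), reducing to Noether's finiteness theorem for $\bar\Dc^G_{\tilde X}\subset\bar\Dc^H_{\tilde X}\subset\bar\Dc_{\tilde X}$ is correct in the affine case; one should at least remark that $\bar\Dc_{\tilde X}$ is the symmetric algebra of $T_{\tilde X}$ and is a finitely generated algebra over $\Oc_Y$, and that lifting finite generation from the associated graded ring is legitimate because the order filtration is exhaustive and $\Dc_Y^\pi$ is filtered so that its associated graded is $\bar\Dc_{\tilde X}^G$. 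On balance your proof is more informative than the paper's, which buys the reader transparency at the cost of repeating material that is already in Knop's paper.
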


Assume that $A \to B$ is an inclusion of graded rings where $B$ is a
polynomial ring, such that $X= \Spec B \to \Spec A$ is uniformly
ramified. Then the Galois cover $B \to \bar B $ gives a map
$\bar \pi : A \to \bar B$, where $A= \bar B^G$ for a finite group $G$.
So that generators of $\Dc_A^{ \pi} = \Dc_A^{ \bar \pi} $ are given as
in \cite{levasseur-stafford:invariantdiff}*{Th. 5} and
\cite{knop:gradcofinite}*{Th. 7.3}
\begin{proof}
  See \cite{knop:gradcofinite}*{Prop 3.3} for the assertion that $\pi$
  is uniformly ramified if and only if $\tilde X/X$ is unramified in
  codimension $1$. Since $\tilde X$ is normal and $X$ is smooth, it
  follows from Zariski-Nagata's purity theorem that $\tilde X \to X$
  is étale, and as $X$ is smooth, it follows that $\tilde X$ is
  smooth. The remaining assertions  are explained in [loc. cit.].
\end{proof}

By \Proposition{\ref{knop-prop}}, (3), if $M$ is a coherent
$\Dc_X$-module, then $\pi_*(M )$ is a coherent $\Dc_Y^\pi$-module. If
$M$ is a connection along the ramification locus we can be more precise.
\begin{theorem}\label{coh-decomp} Let $\pi : X \to Y$ be a uniformly
  ramified morphism of smooth varieties and $M$ be a simple
  $\Dc_X$-module which generically is non-zero and of finite type over
 fraction field of $X$. Assume also that the stalk $M_{x}$ is of
  finite type over $\Oc_{X,x}$ when $x$ is a generic point of the
  ramification locus $B_\pi$. Let $\pi_+(M)= \oplus M_i^{n_i}$ be a
  semisimple decomposition \Th{\ref{decomposition-thm}}, where the
  $M_i$ are simple $\Dc_Y$-modules, and put
  $N_i = \Theta^{-1}(M_i) \subset \pi_*(M)$. Then
  \begin{displaymath}
    \pi_*(M) = \oplus N_i^{n_i},
  \end{displaymath}
where the $N_i$ are simple $\Dc_Y^\pi$-modules.
\end{theorem}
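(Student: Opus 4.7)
The plan is to first establish injectivity of $\Theta_M \colon \pi_*(M) \to \pi_+(M)$, then transport the semisimple decomposition of $\pi_+(M)$ to $\pi_*(M)$ via the Morita equivalence for $\Dc_Y^\pi$ supplied by the Galois cover of \Proposition{\ref{knop-prop}}.

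For injectivity, \Proposition{\ref{epinymous}} already gives that $\Theta_M$ is an isomorphism over the étale locus $Y_0$, so $\ker \Theta_M$ is supported in $D_\pi$. The hypothesis that $M_x$ is $\Oc_{X,x}$-coherent at each codimension-one point $x$ of $B_\pi$, combined with $X$ being smooth and $M$ being a simple $\Dc_X$-module (hence torsion-free on its coherence locus), forces $M$ to be locally free over $\Oc_X$ at every codimension-one point of $B_\pi$; since $\pi$ is finite flat between smooth varieties, $\pi_*(M)$ is then locally free over $\Oc_Y$ at every codimension-one point of $D_\pi$. Because $M$ is simple with $\supp M = X$, no nonzero section of $\pi_*(M)$ is supported on $D_\pi$, so $\ker \Theta_M = 0$.

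Next I would pass to the Galois cover $\tilde\pi = \pi \circ \tilde p \colon \tilde X \to Y$ from \Proposition{\ref{knop-prop}}, where $\tilde X$ is smooth, $\tilde p$ is étale with group $H \subset G = \Galo(\tilde X/Y)$, and $\Dc_Y^\pi = \Dc_{\tilde X}^G$. Since $\Dc_{\tilde X}$ is simple and $G$ acts faithfully, the Morita theorem (Remark after \Lemma{\ref{desc-lemma}}) supplies an equivalence $\Mod(\Dc_{\tilde X}[G]) \simeq \Mod(\Dc_Y^\pi)$ sending simples to simples. Setting $\tilde M = \soc(\tilde p^!(M))$, \Corollary{\ref{non-galois}} yields
\[ \pi_+(M) = \bigoplus_{\chi} (V_\chi^*)^H \otimes_k i_{!+}(\tilde M_\chi), \]
so the $M_i$ are the $i_{!+}(\tilde M_\chi)$ for those $\chi \in \widehat{\bar G}_{\tilde M}$ with $V_\chi^H \neq 0$, each appearing with multiplicity $n_i = \dim_k V_\chi^H$.

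The same Galois-theoretic bookkeeping that underlies \Theorem{\ref{galois-direct}}, applied inside $\Mod(\Dc_{\tilde X}[G])$ to the simple $\Dc_{\tilde X}[G]$-constituents of $\Dc_{\tilde X}[G] \otimes_{\Dc_{\tilde X}[\bar G_{\tilde M}]} (V_\chi \otimes_k \tilde M)$, decomposes the Morita-partner of $\pi_*(M)$ into simple pieces; the partner-decomposition then descends under $G$-invariants to a decomposition of $\pi_*(M)$ into simple $\Dc_Y^\pi$-modules with multiplicities $n_i$. The identification of these simple pieces with $N_i = \Theta_M^{-1}(M_i)$ is forced by $\Dc_Y^\pi$-linearity of $\Theta_M$ together with the fact that on $Y_0$ both decompositions restrict to the single decomposition of $\pi_*(M)|_{Y_0} = \pi_+(M)|_{Y_0}$, while coherence of $\pi_*(M)$ over $\Dc_Y^\pi$ (\Proposition{\ref{knop-prop}}(3)) propagates this codimension-one agreement globally. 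The main obstacle will be matching the Morita-theoretic simple constituents with $\Theta_M^{-1}(M_i)$; this amounts to tracking how the $H$-action on $\tilde p^!(M)$ nests inside the $G$-action on $\tilde\pi^!(\pi_*(M))$, after which simplicity and the correct multiplicities of the $N_i$ follow from the Morita correspondence itself.
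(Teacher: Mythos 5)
Your injectivity paragraph for $\Theta_M$ is fine, but the rest of the proposal does not reach a proof; it is a sketch of a Morita-theoretic strategy whose critical final step you yourself flag as ``the main obstacle.'' That is precisely the part that needs an argument, and it is not routine bookkeeping: the Morita equivalence from \Proposition{\ref{knop-prop}} is between $\Mod(\Dc_{\tilde X}[G])$ and $\Mod(\Dc_Y^\pi)$, but $\tilde p^!(M)$ is naturally only an $H$-equivariant $\Dc_{\tilde X}$-module, not a $\Dc_{\tilde X}[G]$-module, so you would first have to pass to the induced module $\Dc_{\tilde X}[G]\otimes_{\Dc_{\tilde X}[H]}\tilde p^!(M)$ and then identify its $G$-invariants with $\pi_*(M)$ compatibly with the decomposition of $\pi_+(M)$; nothing in the proposal carries this out, and once one starts, the descent from $\Dc_{\tilde X}[G]$-simples to $\Dc_Y^\pi$-simples must also be reconciled with the $\Dc_Y$-simples appearing in \Corollary{\ref{non-galois}}, which is exactly the kind of comparison that a proof must make precise rather than defer.

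The paper avoids all of this with a short direct argument that is genuinely different in kind. It takes a nonzero $\Dc_Y^\pi$-submodule $N_i^0 \subset N_i$ and considers $\bar N_i = N_i/N_i^0$. By \Proposition{\ref{epinymous}}, $\Theta_M$ is an isomorphism over $Y_0$, so $\supp \bar N_i \subset D_\pi$. Uniform ramification implies $\pi^{-1}(y) \subset B_\pi$ for $y \in D_\pi$, and together with the coherence hypothesis at generic points of $B_\pi$ (propagated by simplicity of $M$ to all of $B_\pi$) this makes $\pi_*(M)_y$, hence $(\bar N_i)_y$, of finite type over $\Oc_{Y,y}$. Then $\Ann_{\Oc_{Y,y}}((\bar N_i)_y)$ is a nonzero two-sided ideal of $\Dc_Y^\pi$, which is simple by \Proposition{\ref{knop-prop}}, so the annihilator is everything and $\bar N_i = 0$. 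Simplicity of $N_i$ follows at once. In short: where you reach for the full Galois/Morita apparatus, the paper uses only that $\Dc_Y^\pi$ is a simple ring and that a torsion $\Dc_Y^\pi$-module finite over $\Oc_Y$ must vanish. You should internalize this trick; the hypotheses in the theorem (uniform ramification, coherence along $B_\pi$) are chosen exactly so that this annihilator argument goes through.
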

\begin{remark}
  \begin{enumerate}
  \item In \cite{kallstrom-bogvad:decomp} we study
    $\Dc_Y^\pi$-submodules of $\pi_*(\Oc_X)$ for invariant maps
    $\pi : X\to X^G$ of a finite group $G$ acting linearly on an
    affine space $X= \Ab^n$.
  \item Question: are the $\Dc_Y^\pi$-modules $N_i$ of finite length
    or even simple when $M$ is a torsion free holonomic
    $\Dc_X$-module?
  \end{enumerate}

\end{remark}
\begin{proof} 
  Let $N^0_i $ be a non-zero submodule of the $\Dc^\pi_Y$-module $N_i$
  and put $\bar N_i = N_i/ N_i^0$, so that by
  \Proposition{\ref{epinymous}}, $\supp \bar N_i\subset D_\pi$.
  Assuming $D_\pi$ is non-empty, let $y$ be a point in the
  discriminant $D_\pi$. Since $\pi$ is uniformly ramified we have
  $\pi^{-1}(y)= B_\pi\cap \pi^{-1}(y) $, implying in particular that
  $M_x$ is of finite type when the height $\hto(x)\leq 1$ and
  $x\in B_\pi$, and hence $M_x$ is of finite type for all points of
  height $\leq 1$ in $B_\pi$. Since $M$ is simple it follows that
  $M_x$ is of finite type for any point $x$ in $B_\pi $. Hence
  $\pi_*(M)_y$ is of finite type over $\Oc_{Y,y}$ for all points $y$
  in $D_\pi$, hence $(\bar N_i)_y$ is a $\Dc^\pi_{Y,y}$-module that is
  of finite type over $\Oc_{Y,y}$. Moreover,
  $\Ann_{\Oc_{Y,y}}((\bar N_i)_y)$ is a non-zero 2-sided ideal of the
  simple ring $\Dc_Y^\pi$ \Prop{\ref{knop-prop}}, since $\pi$ is
  uniformly ramified; hence
  $\Ann_{\Oc_{Y,y}}((\bar N_i)_y)= \Dc^\pi_{Y,y}$. This implies that
  $(\bar N_i)_y=0$ when $y\in D_\pi$, and hence $\bar N_i=0$.
\end{proof}
\Proposition{\ref{dual-iso}} can be made more precise when the morphism is
uniformly ramified.
\begin{corollary}\label{cor-duality} Let $\pi: X\to Y $ be a uniformly
ramified morphism of smooth varieties.
  The homomorphism in \Proposition{\ref{dual-iso}} induces an
  isomorphism of constant sheaves of rank 1
\begin{displaymath}
  \eta' : Hom_{\Dc_Y^\pi}(\pi_*(\Oc_X),\Oc_Y) \to
  \Hom_{\Dc_Y^\pi}(\omega_Y,\pi_* (\omega_{X})),
\end{displaymath} where on the right  (left) $\Dc_Y^\pi$ acts on $\omega_X$
( $\omega_Y$) from the right (left). The section $\Theta $ in \thetag{S} is
a global section of
\begin{displaymath}
  \Hom_{\Dc_Y^\pi}(\omega_Y,\pi_* (\omega_{X}))=
  Hom_{\Dc_Y^\pi}(\pi^{-1}(\omega_Y), \omega_{X}) \subset
  \omega_{X}\otimes_{\Oc_X}\pi^*(\omega_Y),
\end{displaymath} and forms a basis of the constant sheaf.
\end{corollary}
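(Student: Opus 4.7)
The plan is to reduce the rank-one claims on both sides of $\eta'$ to Schur's lemma applied to the $\Dc_Y^\pi$-semisimple decomposition furnished by \Theorem{\ref{coh-decomp}}, and then to transport the isomorphism across via the map $\eta$ of \Proposition{\ref{dual-iso}}.

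First I would show that $\Oc_Y$ occurs in $\pi_+(\Oc_X)$ with multiplicity one as $\Dc_Y$-module. The canonical map $\Oc_Y \to \pi_+\pi^+(\Oc_Y) = \pi_+(\Oc_X)$ is split by \Proposition{\ref{split-conn}}, since $\tr_{X/Y}(1) = \deg \pi \neq 0$, and the adjunction in \Theorem{\ref{adj-triple-th}} gives
\begin{displaymath}
  \Hom_{\Dc_Y}(\pi_+(\Oc_X),\Oc_Y) = \Hom_{\Dc_X}(\Oc_X, \pi^!(\Oc_Y)) = \Hom_{\Dc_X}(\Oc_X,\Oc_X) = k.
\end{displaymath}
Because $\pi$ is uniformly ramified, \Theorem{\ref{coh-decomp}} applied to the simple $\Dc_X$-module $\Oc_X$ transfers this to a semisimple $\Dc_Y^\pi$-decomposition $\pi_*(\Oc_X) = N_0 \oplus \bigoplus_i N_i^{n_i}$ in which $N_0 = \Theta^{-1}(\Oc_Y) \cong \Oc_Y$ is simple as $\Dc_Y^\pi$-module and the $N_i$ are pairwise non-isomorphic simple $\Dc_Y^\pi$-modules, all distinct from $N_0$. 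Schur's lemma then gives
\begin{displaymath}
  \Hom_{\Dc_Y^\pi}(\pi_*(\Oc_X),\Oc_Y) \cong k,
\end{displaymath}
and the globally defined non-zero section $\tr_\pi$ from \Proposition{\ref{dual-iso}} exhibits this Hom sheaf as the constant rank-one sheaf on $Y$.

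Next I would invoke \Proposition{\ref{dual-iso}}: $\eta$ is $\Dc(T_Y^\pi)$-linear and sends $\tr_\pi$ to $\lambda_\pi$, both of which are already known there to be $\Dc_Y^\pi$-linear. Restricting $\eta$ to the sub-$\Oc_Y$-modules of $\Dc_Y^\pi$-linear sections yields the natural non-zero map $\eta'$. The symmetric argument on the right-$\Dc$-module side bounds the target: the splitting $\hat{\lambda}_\pi\colon \omega_Y \hookrightarrow \pi_+(\omega_X)$ from \Proposition{\ref{poin-dual-dual}} plays the role of the left-module splitting, and the right-module analogue of \Theorem{\ref{coh-decomp}}, obtained through the standard left-right equivalence on the smooth schemes $X$ and $Y$, shows that $\omega_Y$ appears in $\pi_*(\omega_X)$ with multiplicity one as right $\Dc_Y^\pi$-module. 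Hence both sides of $\eta'$ are constant sheaves of rank one, so $\eta'$ is an isomorphism.

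The last sentence of the corollary then follows immediately: by construction in \thetag{S}, $\Theta$ sends $1 \in \Oc_X$ to $\lambda_\pi$, so under the identification $\Hom_{\Oc_Y}(\omega_Y,\pi_*(\omega_X)) = \pi_*(\omega_X\otimes_{\Oc_X}\pi^*(\omega_Y^{-1}))$ it corresponds to the generator $\lambda_\pi$ of the rank-one Hom sheaf. The hardest point, I expect, is the right-module multiplicity-one statement: \Theorem{\ref{coh-decomp}} is phrased only for left $\Dc_X$-modules, so one needs either to formulate its right-module analogue and rerun the argument, or to tensor through with $\omega_X^{-1}$ and $\omega_Y^{-1}$ and translate the claim about $\pi_*(\omega_X)$ into one about a left module to which the existing theorem applies directly. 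The second route is cleanly available because $X$ and $Y$ are smooth, and this is the path I would take.
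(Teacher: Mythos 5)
Your proposal is correct, but it computes the rank of the Hom sheaf by a route that genuinely differs from the paper's. After the common step of invoking \Theorem{\ref{coh-decomp}} to pass between $\Dc_Y^\pi$-linear and $\Dc_Y$-linear Hom groups, you compute $\dim_k\Hom_{\Dc_Y}(\pi_+(\Oc_X),\Oc_Y)=1$ directly via the adjunction of \Theorem{\ref{adj-triple-th}} (this is exactly the calculation already carried out in \Proposition{\ref{poin-dual-dual}}, which you could have cited outright). The paper instead passes to the generic point, uses full faithfulness of $j_{!+}$ to reduce to $\Hom_{\Dc_K}(L,K)$, applies the duality $D^K$ to turn this into $\Hom_{\Dc_K}(K,L)=L^{T_{K/k}}$, and then invokes \Lemma{\ref{int-closure}} to get $k$. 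Your route is shorter and reuses a result the paper already proved; the paper's route is self-contained on the generic fibre and makes explicit which step needs $k$ algebraically closed in $K$ (via \Lemma{\ref{int-closure}}). Both handle the left-to-right transfer the same way in spirit, tensoring through with canonical modules.

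One small remark: your assertion $N_0=\Theta^{-1}(\Oc_Y)\cong\Oc_Y$ deserves a line of justification. The splitting from \Proposition{\ref{split-conn}} shows the unit map $\Oc_Y\hookrightarrow\pi_*(\Oc_X)$ lands inside $N_0$ after composing with $\Theta$, so $\Oc_Y\subset N_0$ as $\Dc_Y^\pi$-submodules; since \Theorem{\ref{coh-decomp}} gives that $N_0$ is simple, equality follows. Making this explicit closes the only gap I see in what you wrote; the rest of the Schur-lemma argument and the left-right passage via $\omega_X^{-1}\otimes(\cdot)$ and $\omega_Y^{-1}\otimes(\cdot)$ are sound.
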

\begin{proof}
  It remains only to prove that $\eta'$ is an isomorphism of constant
  sheaves of rank 1. Let $D^K$ denote vector space dual
  over $K$. We have (as detailed below)
\begin{align*}
  Hom_{\Dc^\pi_Y}(\pi_*(\Oc_X), \Oc_Y) &= Hom_{\Dc_Y}(\pi_+(\Oc_X),
                                         \Oc_Y)= Hom_{\Dc_K}(L, K)
  \\&= Hom_{\Dc_K}( D^K(K) , D^K(L)) =
      Hom_{\Dc_K}(K, L)\\ & = L^{T_{K/k}} = k.
\end{align*} The first equality follows from
\Theorem{\ref{coh-decomp}}, the second since $L$ and $K$ are  semisimple
$\Dc_K$-modules and, letting $j$ be the inclusion of the generic point
in $Y$, the functor $j_{!+}$ fully faithful,  and we have
$j_{!+}(L) = \pi_+(\Oc_X)$ and $j_{!+}(K) = \Oc_Y$. The third
equality follows since $D^K$ is an equivalence of categories and the fourth follows
since $D^K(L)\cong L$ and $D^K(K)=K$. The last equality follows from
\Lemma{\ref{int-closure}} (below), since $k$ is algebraically closed in $K$.

Applying the functor $\omega_K\otimes_K\cdot $ (from left- to right
$\Dc_K$-modules), so that $\omega_L= \omega_K\otimes_KL$, we get
$Hom_{\Dc_K}(K, L)= Hom_{\Dc_K}(\omega_K, \omega_L)$. One can now work
backwards to arrive at the right side of the isomorphism $\eta'$.
\end{proof}

\section{Covering $\Dc$-modules} \label{alg-mod} Say that a
  $\Dc_L$-module $M_1$ is {\it diagonalizable} if there exists an
  isomorphism
  \begin{displaymath}
    M_1 = \bigoplus_i \Lambda_i,
  \end{displaymath}
  where the modules $\Lambda_i$ are of rank $1$. 
  \begin{definition} Let $X$ be a smooth variety with fraction field
    $K= \Oc_{X,\eta}$. A $\Dc_X$-module $M$ is a {\it covering module}
    if it is torsion free over $\Oc_X$ and there exists a finite field
    extension $L/K$ such that $M_1=L \otimes_K M_\eta $ is diagonalizable.
    Let $\Mod_{cov}(X)$ be the category of covering modules on a
    smooth variety $X$.
\end{definition} 
A {\it monomial } module is of the form $\pi_+(\Lambda)$ for some
finite map $\pi$ and simple $\Dc_X$-module $\Lambda$ of generic rank
$1$.

When the modules $\Lambda_i$ all are isomorphic to $ L$ we say that
$M$ is generically ($L$-) {\it étale trivial} (or isotrivial), meaning
that that $M$ has a complete set of algebraic solutions in $L$.
Moreover, if $L/K$ is a finite Galois extension with Galois group $G$,
it is well-known that the category of ($L$-) étale trivial
$\Dc_K$-modules is equivalent to the category of $k$-linear
representations of a finite-group $G$; this Picard-Vessiot equivalence
is described in (\ref{finite-groups}). The differential Galois group
of a covering $\Dc_K$-module is an extension of a finite group by a
product of multiplicative groups.

Any covering module $M$ is generically semisimple
\Th{\ref{semisimple-inv}}, and its socle is the minimal extension from
the generic point. If the integral closure $\pi : X_M \to X$ of $X$ in
$L$ is smooth and $\pi^+(M)= \oplus \Lambda_i$, then the socle is
contained in $\oplus \pi_+ (\Lambda_i)$, so that in particular any
simple covering module is a submodule of a monomial module.\footnote{
  If $X_M$ is non-smooth, then one can take a resolution
  $p: \tilde X_M \to X_M$ so that the socle of $M$ belongs to the
  torsion free part of $(\pi\circ p)_+(\oplus \tilde \Lambda_i)$, for
  some simple invertible $\Dc_{\tilde X_M}$ modules
  $\tilde \Lambda_i$.} Moreover, the category $\Con_{cov}(X)$ of
connections that are also covering modules is semisimple
\Th{\ref{alg-semi}}, where the category of étale trivial connections
$\Con_{et}(X)$ forms a semisimple subcategory.

Monomial modules are a great source of covering modules. On the other
hand, it is a very challenging problem to determine for which
parameters $\alpha$ a module $M_\alpha= \Dc_X/I_\alpha$ is étale
trivial, given a parametrized left ideal $I_\alpha$ of $\Dc_X$. For
easy examples, consider finite surjective maps to the complex line
$\pi : X \to \Cb$, and $t$ be a coordinate of $\Cb$. Let $\Dc_{\Cb}$
be the ring of algebraic differential operators in the variable $t$
and $M_\alpha = \Dc_{\Cb}/\Dc_{\Cb}(t\partial_t- \alpha)$,
$E =\Dc_{\Cb}/\Dc_{\Cb}(\partial_t -1)$. The module $\pi^!(M_\alpha)$
is generically invertible, and if $t^\alpha $ belongs to the function
field of $X$ it is also generically trivial; $M_\alpha$ is étale
trivial if and only if $\alpha$ is a rational number. The module
$\pi^!(E) $ is a generically invertible simple non-trivial
$\Dc$-module that is not étale trivial. 

When $K= k((t))$, then any semisimple $\Dc_K$-module is a covering
module \cite{levelt}.

We mention also the classical problem studied by Schwarz, Fuchs and
Klein to determine the étale trivial rank 2 connections on the
projective line:
$\Con^2_{et}(\Pb_{\Cb}^1\setminus S) \subset \Con^2(\Pb^1\setminus
S)$,
where $S$ i a finite set of points in $\Pb^1_\Cb$. The case $|S|=3$
results in the famous Schwarz list of étale trivial hypergeometric
modules $\Dc_{\Pb_{\Cb}}$-modules, and for general finite subsets $S$
of a smooth projective curve $C$ a decision procedure is devised in
\cite{dwork-baldassarri:2nd, baldassarri2nd} (based on Klein; see
references in [loc. cit.]) for a connection over $C\setminus S$ to be
étale trivial, and stress that one needs in particular to decide when
connections of rank $1$ are étale trivial. Therefore, as a tiny first
step to understand why a connection belongs $\Con_{et}(X)$ it is
described how one can determine the étale trivial objects in its
subcategory $\Io(X)= \Con^1(X)$ of connections of rank 1, also when
$\dim X >1$, see (\ref{etaletrivial}). The whole category $\Io(X)$ -
the basic covering modules - is studied in \Section{\ref{rank1}}, and
its set of isomorphism classes is described in
\Theorem{\ref{class-rank-1}} when $X$ is affine, in terms of a smooth
completion. Now in some sense the simplest objects in
$\Io(K)= \Io(\Spec K)$ are the exponential ones, where $K/k$ is a
finitely generated field extension, and in this context a classical
theorem by Liouville is extended by describing the objects
$M \in \Io(K)$ such that $L\otimes_KM\cong \Dc_L e^\phi$,
$\phi \in L$, for some elementary field extensions $L/K$, i.e. $M$
becomes exponential after pulling back to $L$ \Th{\ref{liouville}}.


\subsection{Global sections of differential forms and connections on a
  projective variety }
Since closed 1-forms on a projective variety $X$ are important for the
structure of $\Dc$-modules that are of rank 1 over $\Oc_X$ we insert a
general result about the cohomology of closed forms on $X$, which can
be regarded as a natural extension of the Hodge decom\-position
theorem; there is no claim for essential originality since the proof
relies on the usual Hodge theorem. In particular, global 1-forms on
$X$ are closed, and we show also that global sections of a connection
on $X$ are constant. It was surprising not to find the latter
assertion in the literature (though it must be known), and although
the former is well-known it was frustrating not to find an algebraic
argument.
\subsubsection{Global sections of differential forms.} 
Let $\Omega_X^{m,cl}$ be the subsheaf of closed $m$-forms in
$\Omega_X^{m}$, where we single out the sheaf of closed 1-forms
$\Omega_X^ {cl}= \Omega_X^{1,cl}\subset \Omega_X$.
\begin{theorem}\label{global-forms}
  \begin{enumerate}
    \item If $X/k$ is a smooth projective variety, then global m-forms are
  closed,
    \begin{displaymath}
      \Gamma(X, \Omega^m_X) = \Gamma(X, \Omega^{m,cl}_X).
\end{displaymath}
\item Let $X/\Cb$ be a smooth complex projective variety and
  $\Omega_{X_a}^{m,cl}$ be the sheaf of closed $m$-forms on the
  corresponding complex analytic Kähler manifold $X_a$. Then
  \begin{displaymath}
    H^n(X_a, \Omega_{X_a}^{m,cl}) = \bigoplus_{\substack{j\geq m\\ i+j = n+m}}
    H^i(X, \Omega^j_X).
  \end{displaymath}
  \end{enumerate}
\end{theorem}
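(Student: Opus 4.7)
The plan is to deduce both assertions from the degeneration at $E_1$ of the Hodge-to-de~Rham spectral sequence
\begin{equation*}
  E_1^{p,q} = H^q(X,\Omega_X^p)\ \Longrightarrow\ H^{p+q}_{\mathrm{dR}}(X).
\end{equation*}
For $k=\Cb$ this is the classical consequence of Hodge theory on the compact Kähler manifold $X_a$ (global holomorphic forms on $X_a$ are harmonic, the Hodge decomposition $H^k(X_a,\Cb)=\bigoplus_{p+q=k}H^q(X_a,\Omega_{X_a}^p)$ holds, and $F^m H^k(X_a,\Cb)=\bigoplus_{p\geq m,\,p+q=k}H^q(X_a,\Omega_{X_a}^p)$). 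For a general algebraically closed field $k$ of characteristic $0$, I would reduce to $k=\Cb$ by the Lefschetz principle: spread $X$ out over a finitely generated $\Qb$-subalgebra $R\subset k$, choose an embedding $R\hookrightarrow \Cb$, and use semicontinuity together with constancy of $\sum_{p+q=k}\dim H^q(X,\Omega_X^p)$ to transport $E_1$-degeneration back.

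Assertion (1) is then immediate: the differential on the $E_1$-page
\begin{equation*}
  d_1\colon E_1^{m,0}=H^0(X,\Omega_X^m)\ \longrightarrow\ E_1^{m+1,0}=H^0(X,\Omega_X^{m+1})
\end{equation*}
is by construction the de~Rham differential acting on global sections, and $E_1$-degeneration forces it to vanish. Thus every global $m$-form is closed.

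For (2) I would switch to the analytic side. The holomorphic Poincaré lemma says that $\Omega_{X_a}^\bullet$ is a resolution of the constant sheaf $\Cb$, so its stupid truncation $\Omega_{X_a}^{\geq m}$ has vanishing cohomology sheaves in degrees $>m$ and has $\Omega_{X_a}^{m,cl}$ as cohomology sheaf in degree $m$. Consequently there is a canonical quasi-isomorphism in the derived category
\begin{equation*}
  \Omega_{X_a}^{\geq m}\ \simeq\ \Omega_{X_a}^{m,cl}[-m],
\end{equation*}
which gives $H^n(X_a,\Omega_{X_a}^{m,cl})=\Hb^{n+m}(X_a,\Omega_{X_a}^{\geq m})$. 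By $E_1$-degeneration the natural map $\Hb^{n+m}(X_a,\Omega_{X_a}^{\geq m})\to H^{n+m}(X_a,\Cb)$ is injective with image $F^m H^{n+m}(X_a,\Cb)=\bigoplus_{j\geq m,\,i+j=n+m}H^i(X_a,\Omega_{X_a}^j)$, and GAGA for coherent sheaves on the projective variety $X$ identifies each $H^i(X_a,\Omega_{X_a}^j)$ with $H^i(X,\Omega_X^j)$. Assembling these isomorphisms yields the claimed formula.

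The main obstacle - and the reason for the author's frustration at not finding an algebraic argument - is precisely $E_1$-degeneration, for which every known proof ultimately goes through either harmonic theory on a Kähler manifold or the Deligne--Illusie reduction mod $p$ trick. My plan takes $E_1$-degeneration as a black box; what then remains genuinely to check is (i) that $d_1\colon E_1^{m,0}\to E_1^{m+1,0}$ really is the de~Rham differential on global forms, (ii) the quasi-isomorphism $\Omega_{X_a}^{\geq m}\simeq \Omega_{X_a}^{m,cl}[-m]$, and (iii) that the Lefschetz reduction preserves the only output of Hodge theory that we need, namely the equality of dimensions $\dim_k H^{n+m}_{\mathrm{dR}}(X)=\sum_{p+q=n+m}\dim_k H^q(X,\Omega_X^p)$.
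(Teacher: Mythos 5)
Your proof is correct, and in part~(2) it takes a genuinely different route from the paper's. The paper establishes the formula by a double induction, using the short exact sequence
\begin{displaymath}
  0 \to \Omega_{X_a}^{m-1,cl} \to \Omega_{X_a}^{m-1} \to \Omega_{X_a}^{m,cl} \to 0
\end{displaymath}
(the holomorphic Poincar\'e lemma) and repeatedly analysing the resulting long exact sequence, with the classical Hodge decomposition as base case $m=0$. You instead pass directly to the stupid truncation $\Omega_{X_a}^{\geq m}$, observe (again via the Poincar\'e lemma) that this complex has cohomology concentrated in degree $m$ equal to $\Omega_{X_a}^{m,cl}$, and identify $H^n(X_a,\Omega_{X_a}^{m,cl})$ with $\mathbb{H}^{n+m}(X_a,\Omega_{X_a}^{\geq m})=F^m H^{n+m}(X_a,\Cb)$, using $E_1$-degeneration to see that the hypercohomology of the truncation injects onto the Hodge filtration. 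Both arguments rest on the same Hodge-theoretic input, but your packaging via the stupid filtration and $E_1$-degeneration is more conceptual and avoids the bookkeeping of the double induction, at the cost of importing the hypercohomology/spectral-sequence formalism explicitly. For part~(1) both arguments do the same Lefschetz reduction to $k=\Cb$; the paper then quotes ``holomorphic forms on a compact K\"ahler manifold are closed'' while you observe instead that $d_1\colon E_1^{m,0}\to E_1^{m+1,0}$ is the de Rham differential on global forms and must vanish by degeneration --- equivalent facts, with your formulation making the link to $E_1$-degeneration explicit. One minor point you raise in~(i) is worth not skipping: that $d_1$ on $E_1^{\bullet,0}$ really is the exterior derivative on global sections is an elementary unwinding of the spectral sequence of the filtered de Rham complex, but it deserves a sentence since the identification is what makes the argument for~(1) bite.
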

Clearly, $\Omega_{X}^{0,cl}$ is flasque and
$\Omega_{X}^{N,cl}= \Omega_{X}^{N}$ when $N= \dim X$, but a
description of $H^n(X, \Omega_{X}^{m,cl}) $ when $0 <m < N $ and
$n\geq 1$ is more complicated due to the fact that the algebraic
de~Rham complex is not acyclic.

\begin{proof}
  (1): By the Lefschetz principle one can reduce to the case $k= \Cb$
  so that $X$ can also be regarded as a compact Kähler manifold, and
  as such it is again denoted $X_{a}$. We have isomorphisms
\begin{displaymath}
  \Gamma(X, \Omega^m_X)\cong  \Gamma (X_a, \Omega^m_{X_a}) \cong \Gamma(X_a, \Omega_{X_a}^{m,cl})
\end{displaymath}
where the first is a consequence of G.A.G.A. and it is well-known that
holomorphic forms on a compact Kähler manifold are closed. If a global
algebraic 1-form $\omega$ whose associated complex analytic 1-form is
closed, then it is closed also as algebraic 1-form.

  (2): We first prove the assertion in the theorem with indices $a$
  attached to the identities, where $X_a$ can be any compact Kähler
  manifold. The proof is by induction over $m$, where in the case
  $m=0$ and all $n\geq 0$ we have the usual Hodge decomposition
\begin{displaymath}
  H^n(X_a,\Cb)\cong H^n (X_a, \Omega_{X_a}^\bullet)\cong \bigoplus_{\substack{i+j =n\\ i \geq 0, j\geq
      0}}  H^i(X_a, \Omega^j_{X_a}).
\end{displaymath}
Assume by induction that
\begin{displaymath}\tag{*}
  H^n(X_a, \Omega_{X_a}^{r,cl}) = \bigoplus_{\substack{j\geq r\\
      i+j = n+r}}
  H^i(X_a, \Omega^j_{X_a}),
\end{displaymath}
when $r < m$ and all $n \geq 0$. To prove the assertion when $r=m$ for
all $n\geq 0$ we use induction in $n$, where the case $n=0$ follows
from (1). By Poincaré's lemma for the complex analytic de~Rham
complex, for each integer $m \geq 1$ we have the exact sequence
\begin{displaymath}
0 \to \Omega_{X_a}^{m-1,cl} \to \Omega^{m-1}_{X_a} \to \Omega_{X_a}^{m,cl}\to 0
\end{displaymath}
and hence a long exact sequence
\begin{align*}
  0\to & \Gamma(X_a, \Omega_{X_a}^{m-1,cl})\to \Gamma(X_a,
  \Omega_{X_a}^{m-1})\to \Gamma(X_a, \Omega_{X_a}^{m,cl})\to H^1(X_a, \Omega_{X_a}^{m-1,cl})\\
\cdots       & \to  H^{n_1}(X_a,
        \Omega_{X_a}^{m-1,cl})  
        \xrightarrow{A_{n_1}^m}
        H^{n_1}(X_a,
        \Omega_{X_a}^{m-1})
        \to
        H^{n_1}(X_a,
        \Omega_{X_a}^{m,cl})\\
  &\xrightarrow{B_{n_1}^m} H^{n_1+1}(X_a,\Omega_{X_a}^{m-1,cl} )\to H^{n_1+1}(X_a,
    \Omega_{X_a}^{m-1})\to 
\end{align*}
Assume by induction that \thetag{*} holds when $r=m$ and $n< n_1$.
This implies that the map $A_{n_1}^m$ is surjective so that $B^m_{n_1}$ is
injective and we get the exact sequence
\begin{displaymath}
  0 \to H^{n_1}(X_a, \Omega^{m,cl}_{X_a})\to H^{n_1+1}(X_a,
  \Omega_{X_a}^{m-1,cl})\to H^{n_1+1} (X_a, \Omega^{m-1}_{X_a}),
\end{displaymath}
where by induction
\begin{displaymath}
  H^{n_1+1}(X_a,  \Omega_{X_a}^{m-1,cl}) = \bigoplus_{\substack{j\geq m-1\\
      i+j = n_1+m}}
  H^i(X_a, \Omega^j_{X_a}),
\end{displaymath}
implying that we can add $\to 0$ to the right. This implies that the
assertion also holds for $r=m$ and $n= n_1$. Finally, by  
G.A.G.A. one can erase  $a$ from the right side of  \thetag{*}.
  \end{proof}
  \subsubsection{Connections on projective varieties and their global
    sections.}
  Given a finite-dimensional vector space $V$ over $k$ we get the
  trivial connection $\Oc_X\otimes _kV$, where
  $\partial (\phi\otimes v )= \partial (\phi)\otimes v$,
  $\partial \in T_X, \phi \in \Oc_X, v\in V $.
\begin{proposition}\label{constant-global}
  Let $M$ be a connection on a normal projective variety $X$ over an
  algebraically closed field $k$.
  \begin{enumerate}
  \item There exists a projective morphism of smooth varieties
    $\pi :\bar X \to X$ such that $\pi^!(M)$ is a trivial connection.
  \item Let $V= \Gamma(X, M)$ be the space of global sections. Then
    $\Oc_X\otimes _kV$ is isomorphic to a trivial subconnection of $M$
    on $X$. The following are equivalent:
\begin{enumerate}
\item $M$ is a trivial connection.
\item $\dim_k V \geq \rank M$.
\end{enumerate}
  \end{enumerate}
\end{proposition}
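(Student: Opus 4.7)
I would treat (2) first, since it supplies the splitting tool needed for (1). For (2), interpret $V=\Gamma(X,M)$ as the space of flat global sections, i.e.\ $V=\Hom_{\Dc_X}(\Oc_X,M)$, so that the evaluation $\alpha\colon\Oc_X\otimes_k V\to M$, $\phi\otimes v\mapsto \phi v$, is automatically $\Dc_X$-linear because each $v\in V$ is horizontal. The central claim is injectivity of $\alpha$: the kernel $K$ is a subconnection of the trivial connection $\Oc_X\otimes_kV$, and since $X$ is connected (being normal projective, $\Gamma(X,\Oc_X)=k$) a subconnection of a trivial connection has constant transition functions and is therefore of the form $\Oc_X\otimes_k W$ for some $k$-subspace $W\subset V$; here $W=K^\nabla=K\cap V$, and since $V$ injects into $M$ we get $W=0$, hence $K=0$.

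The equivalence in (2) is then quick. For (a)$\Rightarrow$(b), if $M\cong \Oc_X^r$ with trivial connection then flat global sections are $\Gamma(X,\Oc_X)^r=k^r$, so $\dim_kV=r$. Conversely, given (b), the injection $\alpha$ has source of rank $\dim_kV\geq \rank M$, forcing $\dim_kV=\rank M$ and the quotient $M/\alpha(\Oc_X\otimes_kV)$ to have generic rank zero. On the smooth locus $X^{sm}$ (whose complement in the normal variety $X$ has codimension $\geq 2$) the connection $M$ is locally free, so the rank-zero quotient vanishes there; $S_2$-reflexivity of $M$ then extends the equality to all of $X$.

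For (1), my plan is induction on $r=\rank M$, using (2) to peel off trivial parts. First reduce to smooth $X$ via a resolution of singularities, which is itself a projective morphism of smooth varieties. In the inductive step, if $\Gamma(X,M)\neq 0$ part (2) yields a nonzero trivial subconnection $\Oc_X\otimes_kV\subset M$; the quotient $M/(\Oc_X\otimes_kV)$ is a connection of strictly smaller rank, and the inductive hypothesis produces a projective trivializing cover on which we then split the extension. If $\Gamma(X,M)=0$, I would force flat sections to appear by passing to a suitable projective cover: using Theorem~\ref{global-forms} together with the structure of rank-$1$ modules in $\Io(X)$, one can take the integral closure of $X$ in a multiplicative extension along which the relevant periods become trivial, and then resolve singularities to restore smoothness. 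The main obstacle I anticipate is to organise these covers simultaneously for all rank-$1$ sub-quotients of $M$ and to verify that the resulting tower terminates; a natural way is to work at the level of a Picard--Vessiot extension attached to $M$ and take $\bar X$ to be a smooth projective model of its normalization.
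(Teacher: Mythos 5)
Your plan for (2) contains a genuine gap at its very first step: you ``interpret'' $V=\Gamma(X,M)$ as the space of flat global sections, i.e.\ $\Hom_{\Dc_X}(\Oc_X,M)$. But $\Gamma(X,M)$ in the statement means ordinary sections of the underlying $\Oc_X$-module, and the substantive content of (2) is precisely that on a normal projective variety every such section is killed by $T_X$; the text immediately preceding this proposition flags exactly this as the non-obvious fact being proved. By silently identifying $V$ with the flat sections you have assumed the conclusion. Once that identification is granted, the rest of your argument for (2) (injectivity of $\alpha$, the rank count on the smooth locus, $S_2$-reflexivity) is fine, but it addresses only the easy half of the claim.

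This gap also undermines your strategy of treating (2) before (1). In the paper the order is the reverse: one first constructs, via a Picard--Vessiot extension $L/K$ of the function field, a projective morphism $\pi\colon\bar X\to X$ of smooth projective varieties trivializing $\pi^!(M)$, so that $\Gamma(\bar X,\pi^!(M))=W$ with $T_{\bar X}\cdot W=0$. Flatness of sections in $V$ is then deduced from the injection $V\hookrightarrow W$ together with the fact that derivations of $K$ lift (modulo $K$-linear ones) to derivations of $L$, giving $T_K\cdot V=0$. Your inductive scheme for (1) runs into exactly the case you cannot yet handle --- $\Gamma(X,M)=0$ --- and your sketched remedy (iterated covers trivializing rank-one subquotients, with an unverified termination argument) is the weak point; your final remark, that one should pass directly to a Picard--Vessiot model, is in fact the paper's method, and pursuing it from the start would both supply (1) cleanly and furnish the missing flatness argument for (2).
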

Of course, it then follows that (b) implies $\dim _k V = \rank M$.
\begin{proof}
  (1): The fraction field $K$ of $X$ is finitely generated over $k$.
  Since $\dim_K M_K < \infty$ there exists a field extension $L/K $
  (the Picard-Vessiot extension) for the $\Dc_K$-module $M_K$, where
  $L$ is finitely generated over $k$, $M_L= L\otimes_K M_K\cong L^m$
  as $\Dc_L$-module, and $m= \rank M = \dim_K M_K$. By a standard
  construction there exists a map $\pi_1 : X_1 \to X$ of normal
  projective varieties inducing the field inclusion $K\subset L$ as
  the inclusion of fraction fields. One can resolve the singularities
  of $X_1$ by a birational projective map $p : \bar X \to X_1$ from a
  smooth projective variety $\bar X$, and we put $\pi= \pi_1\circ p$.
  Then $\pi^!(M)$ is a connection that it generically trivial, hence
  it is trivial \Prop{\ref{simple-coh}},
  $\pi^!(M)= \Oc_{\bar X}\otimes_k W$, where $T_{X}\cdot W =0$, and
  $\Gamma(\bar X, \pi^!(M)) = W$, since $k$ is algebraically closed.

  (2): Since $V$ maps injectively into $W$ and $L/K$ is separable, so
  that derivations of $K$ can be lifted to derivations of $L$ (defined
  modulo $K$-linear derivations of $L$), it follows that
  $T_K \cdot V =0$. This implies that $\Oc_X\otimes_k V$ is a trivial
  subconnection of $M$. The equivalences (a-b) should now also be
  clear.
\end{proof}
There is also a relative version:
\begin{proposition}\label{global-invariant}
  Let $\pi : X\to Y$ be a smooth projective morphism of smooth
  varieties over an algebraically closed field. Then the direct image
  defines a functor
  \begin{displaymath}
    \pi_* : \Con (X) \to \Con (Y), \quad M\mapsto \pi_*(M).
  \end{displaymath}
\end{proposition}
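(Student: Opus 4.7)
The plan is to verify two things: that $\pi_*(M)$ is $\Oc_Y$-coherent and that it carries a canonical $\Dc_Y$-structure making it a connection on $Y$. Coherence is immediate from the projectivity (hence properness) of $\pi$ together with the $\Oc_X$-coherence of the connection $M$, via Grothendieck's theorem on proper direct images.

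For the $\Dc_Y$-structure I would exploit the smoothness of $\pi$. The tangent map $T_X\to\pi^*(T_Y)$ is surjective, so every local section $\partial\in T_Y(U)$ admits a local lift $\tilde\partial\in T_X(\pi^{-1}(U))$, and setting $\partial\cdot s:=\tilde\partial\cdot s$ for $s\in\pi_*(M)(U)=M(\pi^{-1}(U))$ uses the given $\Dc_X$-action on $M$. The ambiguity in the lift is a relative vector field $v\in T_{X/Y}$, so well-definedness reduces to establishing the vanishing $T_{X/Y}\cdot\pi_*(M)=0$.

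This vanishing is where \Proposition{\ref{constant-global}} is essential. For every closed point $y\in Y$ the fibre $X_y$ is smooth projective (since $\pi$ is smooth and projective) and $M|_{X_y}$ is a connection, so by \Proposition{\ref{constant-global}} every section in $\Gamma(X_y,M|_{X_y})$ is flat. Hence for $v\in T_{X/Y}$ and $s\in\pi_*(M)(U)$ the fibrewise restriction $(v\cdot s)|_{X_y}=v|_{X_y}\cdot s|_{X_y}$ vanishes for every $y\in U$. Since $M$ is locally free over $\Oc_X$ and the union of the fibres $X_y$ for $y\in U$ is all of $\pi^{-1}(U)$, a section of $M$ vanishing at every point of every fibre vanishes identically; therefore $v\cdot s=0$. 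Leibniz and bracket compatibility for the resulting $\Dc_Y$-action are then inherited from the $\Dc_X$-action on $M$ via the observation that lifts $\tilde\partial_i$ of $\partial_i$ satisfy $[\tilde\partial_1,\tilde\partial_2]\equiv[\partial_1,\partial_2]$ modulo $T_{X/Y}$, so ambiguities disappear after applying the already established triviality. The principal obstacle is precisely this vanishing $T_{X/Y}\cdot\pi_*(M)=0$, which rests on \Proposition{\ref{constant-global}} (hence on projectivity of the fibres) together with the local freeness of $M$; once it is in hand, the rest of the verification is formal and functoriality in $M$ is evident.
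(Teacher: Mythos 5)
Your proof is correct and follows essentially the same route as the paper: $\Oc_Y$-coherence from properness, and the $\Dc_Y$-structure obtained by lifting tangent vectors of $Y$ via smoothness of $\pi$ once one knows $T_{X/Y}\cdot\pi_*(M)=0$, which is exactly the vanishing the paper derives from Proposition~\ref{constant-global} applied to the fibres. The only cosmetic difference is that the paper packages the fibrewise argument using the image subsheaf $\bar\pi_*(M)\subset M$, whereas you argue pointwise on sections and invoke local freeness to pass from vanishing on all closed fibres to identical vanishing; both amount to the same thing.
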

If $M$ is a semisimple connection and $\pi$ is finite étale
\Theorem{\ref{coh-decomp}} implies that $\pi_*(M)$ is a semisimple
connection.

It follows from the proof that the connection $\pi_*(M)$ is the same
as the lowest homology group of the full direct image
$H^{-d}\pi_+(M) = H^{-d}(R\pi_*(\Omega_{X/Y}^\bullet(M)[d]))$, where
$\Omega^\bullet_{X/Y}(M)$ denotes the relative de~Rham complex (a
Gauss-Manin connection).
\begin{proof} Denote by $\bar \pi_*(M)$ the image of the canonical
  injective map $\pi^{-1}\pi_*(M)\to M$ and let $i: X_y \to X$ be the
  embedding of a fibre (which is smooth since $\pi$ is smooth) over a
  closed point $y\in Y $, so that $k_{Y,y}=k$ and $i^!(M)$ is a
  connection on $X_y$. By \Proposition{\ref{constant-global}}
  \begin{displaymath}
    i^*(T_{X/Y}\cdot \bar \pi_*(M)) = T_{X_y/k_{Y,y}}\cdot  i^*(\bar
    \pi_*(M) ) = T_{X_y/k_{Y,y}}\cdot \Gamma(X_y, i^*(M)) =0
  \end{displaymath}
  Therefore
  \begin{displaymath}\tag{*}
    T_{X/Y}\cdot \bar \pi_*(M)=0,
\end{displaymath}
where $T_{X/Y}$ is the subsheaf of relative derivations in $T_X$.
Since $\pi$ is smooth we have the exact sequence
  \begin{displaymath}
    0 \to T_{X/Y}\to T_X \xrightarrow{d\pi} \pi^*(T_Y)\to 0,
  \end{displaymath}
  so that if $\partial$ is a local section of $ T_Y$ there exists
  locally in $X$ a section $\tilde \partial$ in $ T_X$ such that
  $d\pi(\tilde \partial )= \partial$, and by \thetag{*} it follows
  that if $\tilde \partial'$ is another such lift, then
  $(\tilde \partial - \tilde \partial')\bar \pi_*(M)=0$. Therefore we
  have an action of $T_Y$ on $\pi_*(M)$, which gives $\pi_*(M)$ a
  structure of connection, as it is moreover well-known that
  $\pi_*(M)$ is coherent over $\Oc_Y$.
  \end{proof}

  \subsection{$\Dc$-modules of rank 1}\label{rank1}
  Let $j: X_0 \to X$ be an open inclusion of smooth varieties, where
  $X$ is projective and $D= X\setminus j(X_0)$ is a divisor. Then if
  $M$ is a connection on $X_0$ it follows that $j_+(M)$ is a coherent
  $\Dc_X$-module and also a coherent $\Oc_X(*D)$-module. We are
  interested in the case when $\rko (M)=1$ and $X_0$ is
  affine\footnote{A sufficient condition for $X_0$ to be affine is
    that $D$ be the support of an effective ample divisor, but in
    general $X_0$ being affine needs not imply that $X\setminus X_0$
    is the support of an effective and ample divisor
    \cite{goodman:affine}.}. We want to classify such connections by
  decomposing closed 1-forms
  $ \Gamma(X_0, \Omega_{X_0}^{cl})=\Gamma(X, \Omega_X^{cl}(*D))$ on
  $X_0$ into a sum of logarithmic forms on $X$ with poles along $D$,
  exact forms, and a number $g$ of such closed forms with vanishing
  residues along $D$, where $g = \dim_k H^1(X, \Oc_X)$.
  \subsubsection{Generalities}
  We first recall a well-known fact.
\begin{lemma}\label{simple-1}
  Any $\Dc_X$-module which is locally free of rank $1$ as
  $\Oc_X$-module is simple.
\end{lemma}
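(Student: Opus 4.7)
The plan is to argue locally at each point and then globalize. Suppose $N\subset M$ is a nonzero $\Dc_X$-submodule; we want to conclude $N=M$. Fix a point $x\in X$ and put $R=\Oc_{X,x}$. Since $M$ is locally free of rank $1$ as $\Oc_X$-module, there exists a generator $e$ of $M_x$ as $R$-module, so every element of $M_x$ can be uniquely written as $fe$ for some $f\in R$. Associate to $N_x$ the ideal $I=\{f\in R \ \vert \ fe\in N_x\}$ of $R$.

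The key observation is that $I$ is stable under the action of the tangent sheaf $T_{X,x}=T_R$. Indeed, for any derivation $\partial \in T_R$, since $\partial\cdot e\in M_x=Re$, we may write $\partial\cdot e=g_\partial e$ for a unique $g_\partial\in R$. If $f\in I$, then $fg_\partial e\in N_x$ and
\begin{displaymath}
  \partial(fe)=\partial(f)e + f(\partial\cdot e)=(\partial(f)+fg_\partial)e\in N_x,
\end{displaymath}
so $\partial(f)e=\partial(fe)-fg_\partial e\in N_x$ and hence $\partial(f)\in I$.

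Now I invoke the standard fact that in characteristic $0$, the only $T_R$-stable ideals of the regular local ring $R$ are $0$ and $R$. This is a short $\mf$-adic order argument: choose a regular system of parameters $x_1,\dots,x_n$ of $R$ and let $\mf$ be the maximal ideal; for $0\neq f\in I$ with $f\in\mf^m\setminus\mf^{m+1}$, the leading form of $f$ in $\gr_\mf R\cong k[x_1,\dots,x_n]$ is a nonzero degree $m$ polynomial, and in characteristic $0$ some iterated application of $\partial_{x_i}$'s takes it to a nonzero constant; since $I$ is $T_R$-stable, a suitable iterated application of $\partial_{x_i}$'s to $f$ yields a unit in $I$, forcing $I=R$. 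Consequently at every stalk either $N_x=0$ or $N_x=M_x$.

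Finally I globalize. The coherent $\Dc_X$-module $M/N$ has $(M/N)_x=0$ or $(M/N)_x=M_x$ at each stalk, so the closed subset $\supp(M/N)$ equals the closed subset $\{x\in X \ \vert \ N_x=0\}$. The latter is also open, since $N$ is coherent over $\Dc_X$ and hence its support is closed. Therefore $\{x\in X \ \vert \ N_x=M_x\}$ is open and closed in $X$; as $X$ is connected and $N\neq 0$, this set is all of $X$, giving $N=M$. The principal step, and the only place where characteristic $0$ enters, is the simplicity of $R$ as a module over its own derivations; the reduction of the global problem to this local statement via the ideal $I$ is straightforward.
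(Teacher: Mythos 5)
Your proof is correct, but it takes a noticeably different and more self-contained route than the paper's. The paper's argument is terse: a $\Dc_X$-submodule $N$ of $M$ is automatically coherent over $\Oc_X$, hence a connection, hence locally free (a fact the paper invokes as well-known); then $M/N$ is a torsion connection, hence locally free and torsion at once, hence zero. The work is outsourced entirely to the black box ``integrable connections are locally free.'' You instead prove the key local input from scratch: passing to the stalk $R=\Oc_{X,x}$ and writing $N_x = Ie$, you show directly that the ideal $I$ is $T_R$-stable, and then invoke the differential simplicity of a $k$-smooth local ring in characteristic $0$ (the only $T_R$-stable ideals are $0$ and $R$) via the leading-form argument. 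What this buys you is transparency about exactly where characteristic $0$ and smoothness enter, since you are in effect reproving the rank-one case of ``connections are locally free'' rather than citing it. What the paper's route buys is brevity and immediate generalization to higher-rank torsion subquotient arguments elsewhere in the text. One small simplification you could make to the globalization: once you know each stalk $N_x$ is $0$ or $M_x$, it suffices to note that $N$ is a nonzero torsion-free subsheaf of $M$ on an irreducible $X$, so $\supp N = X$ and therefore $N_x = M_x$ everywhere; the open-and-closed bookkeeping is not strictly needed.
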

\begin{proof} Let $M$ be a
  $\Dc_X$-module that is of rank $1$ and $N$ be a non-zero coherent
  submodule. Then $N$ is a $\Dc_X$-module which is coherent over
  $\Oc_X$, hence it is locally free as $\Oc_X$-submodule and also a
  submodule of the locally free module $M$ of rank $1$; therefore
  $ M/N$ is a coherent torsion module over $\Oc_X$, which moreover is
  a $\Dc_X$-module; therefore $M/N =0$.
\end{proof}
Let $\Io(X)$ (or $\Con^1(X)$) be the category of connections on
$X$ that are of rank $1$. This is a group in categories, where the sum
of two objects $M_1, M_2\in \Io(X)$ is $M_1\otimes_{\Oc_X} M_2$
and the $\Dc_X$-action is the diagonal one. Each object in
$\Io(X)$ is simple and we denote by $\bar \Io(X)$ the set of
isomorphism classes of simples.

Let $\Omega_{X}^{cl}$ be the sheaf of closed 1-forms and $\Oc_X^*$ the
sheaf of invertible elements in $\Oc_X$. There is then a distinguished
triangle
\begin{equation}\label{exact-log-ex}
  \Oc_X^* \xrightarrow{\dlog } \Omega_{X}^{cl} \to \Cc_{l} \xrightarrow{+1}, 
\end{equation}
where the cone\footnote{ Here $\Oc_X^*[1]$ and $\Omega_X^{cl}$ are
  placed in degrees $-1$ and $0$. See \cite{kashiwara-schapira} for
  generalities about triangulated categories.} of $\dlog$ is
\begin{displaymath}
  \Cc_{l} =\Cc_l(X)= \Cone (\dlog : \Oc^*_X \to \Omega_{X}^{cl}) =
  (\Oc_X^*[1]\oplus \Omega_X^{cl}).
\end{displaymath}
We have also the distinguished triangle
\begin{displaymath}
  \Oc_{X}\xrightarrow{d}  \Omega^{cl}_X \to \Cc_d \xrightarrow{+1}, 
\end{displaymath}
and when $X_a$ is a complex analytic manifold there is third
distinguished triangle
\begin{displaymath}
  \Oc_{X_a} \xrightarrow {\exp} \Oc_{X_a}^* \to \Cc^a_{e} \xrightarrow{+1},
\end{displaymath}
where $\Cc^a_{e} \cong 2\pi i \Zb [1]$. Replacing $X$ by $X_a$ in the
two previous triangles we have $\Cc_l^a \cong \Cb^*[1]$ and
$\Cc_d\cong \Cb[1]$, and we get three distinguished triangles where
any two have a common vertex, and $d= \dlog \circ \exp$. By the
octahedral axiom for triangulated categories, there is then also a
distinguished triangle for complexes of sheaves on analytic sheaves
\begin{displaymath}
  \Cc^a_e\to \Cc^a_d  \to \Cc^a_l \xrightarrow{+1},
\end{displaymath}which is isomorphic to the triangle
$2\pi i \Zb_{Z_a}[1]\to \Cb_{X_a}[1]\xrightarrow{\exp} \Cb_{X_a}^*[1]\xrightarrow{+1} $. The cohomology of
(\ref{exact-log-ex}) gives a fragment of a long exact sequence
\begin{align}
\label{exact-picard}  0\to \bar k^* \to \Gamma (X, \Oc_X^*)  &\xrightarrow{\dlog} \Gamma (X, \Omega_X^{cl}) \to R^0 \Gamma(X,
  \Cc_{l}) \xrightarrow{\Reso_X} H^1(X, \Oc_X^*) \\ & \xrightarrow{c}  H^{1}(X,
\notag                                         \Omega_{X}^{cl})\to,
\end{align}
where $\bar k^*= R^{-1}\Gamma(X, \Cc_l)$ is the multiplicative group
of the algebraic closure of $k$ in the fraction field $K$ of $X$ (a
motivation for the notation $\Reso_X$ will appear in
\Remark{\ref{rem-residues}}).

Let $\Pic^\tau(X)$ be the subgroup of the Picard group consisting of
invertible sheaves with vanishing rational first chern class. The
following proposition is well-known.
\begin{proposition}\label{lem-rank1}
  \begin{enumerate}
  \item
    \begin{displaymath}
 \bar \Io(X)  = R^0\Gamma (X,\Cc_l) = \frac {\Gamma(X,
   \Omega_X^{cl})}{\dlog \Gamma (X, \Oc_X^*)} \oplus \Ker (c)      
\end{displaymath}

\item If the divisor class group is trivial, $\Clo(X)=0$, then
  \begin{displaymath}
  \bar \Io(X) = \frac{\Gamma (X, \Omega_X^{1,cl})}{ \dlog \Gamma (X,
  \Oc^*_X)}.
\end{displaymath}

  \item If $X_a$ is a complex analytic manifold, then
    \begin{displaymath}
    \bar \Io(X_a) = H^1(X_a, \Cb^*_{X_a}) = Hom
    (\pi_1(X_a),\Cb^*) = (\Cb^*)^{b_1(X_a)}\oplus H,
  \end{displaymath}
where $\pi_1(X_a)$ is the fundamental group (with some choice of
base point), $b_1(X_a)$ is the first betti number of $X_a$, and  $H$
is a finite abelian group.
  \item If $X$ is projective, then
    \begin{displaymath}
      \bar \Io(X) = \Gamma(X, \Omega_X) \oplus \Pic^\tau(X).
    \end{displaymath}
  \end{enumerate}
\end{proposition}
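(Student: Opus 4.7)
The plan is to identify $\bar\Io(X)$ with the hypercohomology $H^1(X,[\Oc_X^*\xrightarrow{\dlog}\Omega_X^{cl}])=R^0\Gamma(X,\Cc_l)$ through a Čech cocycle description of rank-$1$ connections, and then to read each of the four statements off the long exact sequence (\ref{exact-picard}), using one extra classical input per part. First I would cover $X$ by affine opens $\{U_i\}$ on which $M\in\Io(X)$ trivializes with frames $e_i$, write $\nabla e_i=\omega_i\otimes e_i$ for $\omega_i\in\Omega_X(U_i)$, and note that integrability combined with $\rko M=1$ forces $\omega_i$ to be closed, while transition $e_j=g_{ij}e_i$ yields $\omega_j-\omega_i=\dlog g_{ij}$. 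Thus the data $(g_{ij},\omega_i)$ is exactly a Čech $1$-cocycle for the two-term complex $\Oc_X^*\to\Omega_X^{cl}$, and since isomorphisms between rank-$1$ connections are $0$-cochains, this identifies $\bar\Io(X)=R^0\Gamma(X,\Cc_l)$.

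For (1), the long exact sequence (\ref{exact-picard}) will produce the short exact sequence
\begin{equation*}
0\to \frac{\Gamma(X,\Omega_X^{cl})}{\dlog\Gamma(X,\Oc_X^*)}\to R^0\Gamma(X,\Cc_l)\xrightarrow{\Reso_X}\Ker(c)\to 0,
\end{equation*}
and a splitting will be supplied by observing that any $L\in\Ker(c)$ admits a connection: the vanishing $c(L)=0$ means that the cocycle $\dlog g_{ij}$ is a Čech coboundary, so local forms $\omega_i$ with $\omega_j-\omega_i=\dlog g_{ij}$ exist, and a set-theoretic choice of such lifts yields a section of $\Reso_X$ and hence the direct-sum decomposition. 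Part (2) will then be immediate since smoothness gives $\Clo(X)=\Pic(X)=H^1(X,\Oc_X^*)$, so its vanishing forces $\Ker(c)=0$. For part (3), I would invoke the octahedral axiom applied to the triangles $\Cc_d^a$, $\Cc_e^a$, $\Cc_l^a$: Poincaré's lemma gives $\Cc_d^a\cong\Cb_{X_a}[1]$, the exponential sequence gives $\Cc_e^a\cong 2\pi i\Zb_{X_a}[1]$, and the relation $d=\dlog\circ\exp$ produces a triangle $\Cc_e^a\to\Cc_d^a\to\Cc_l^a\xrightarrow{+1}$, so that $\Cc_l^a\cong\Cb_{X_a}^*[1]$ via the exponential; then taking $R^0\Gamma$ and applying universal coefficients $H^1(X_a,\Cb^*)=\Hom(H_1(X_a,\Zb),\Cb^*)$ together with $H_1(X_a,\Zb)=\Zb^{b_1(X_a)}\oplus T$ ($T$ finite) yields the stated product structure.

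For (4), projectivity forces $\Gamma(X,\Oc_X^*)=k^*$ so $\dlog\Gamma(X,\Oc_X^*)=0$, while Theorem \ref{global-forms}(1) gives $\Gamma(X,\Omega_X^{cl})=\Gamma(X,\Omega_X)$, so the first summand collapses to $\Gamma(X,\Omega_X)$. The identification $\Ker(c)=\Pic^\tau(X)$ will be the main obstacle: passing to the underlying complex-analytic variety $X_a$, I would factor $c:\Pic(X)\to H^1(X,\Omega_X^{cl})$ through $H^1(X,\Omega_X^1)=H^{1,1}(X)\hookrightarrow H^2(X_a,\Cb)$ using the Hodge decomposition (made available by Theorem \ref{global-forms}(2)), and invoke the Lefschetz $(1,1)$ theorem to conclude that $c(L)$ is the complexification of the topological Chern class $c_1(L)\in H^2(X_a,\Zb)$. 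Hence $c(L)=0$ iff $c_1(L)$ is torsion, which by definition is equivalent to $L\in\Pic^\tau(X)$. The hardest ingredient is thus this Hodge-theoretic comparison between the algebraic Atiyah class and the topological Chern class; the other three parts reduce to rather formal manipulations with the fundamental long exact sequence.
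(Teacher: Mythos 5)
Your proposal follows the paper's argument in all essentials: the identification of $\bar\Io(X)$ with hypercohomology $R^0\Gamma(X,\Cc_l)$ by a \v{C}ech cocycle $(\omega_i,g_{ij})$ attached to local frames, the extraction of (1), (2), (4) from the long exact sequence (\ref{exact-picard}), the octahedral-axiom identification $\Cc_l^a\cong\Cb^*_{X_a}[1]$ for (3), and the G.A.G.A.--Lefschetz-$(1,1)$ identification $\Ker(c)=\Pic^\tau(X)$ for (4). The one place the argument is not tight is the splitting in (1): a set-theoretic section of $\Reso_X$ onto $\Ker(c)$ does not by itself give a direct-sum decomposition of abelian groups, and in fact the surjectivity you describe (local $\omega_i$ exist because $\dlog g_{ij}$ is a coboundary) is already part of the exactness of (\ref{exact-picard}), not an additional splitting datum. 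What actually makes the short exact sequence
\begin{displaymath}
0\to \frac{\Gamma(X,\Omega_X^{cl})}{\dlog\Gamma(X,\Oc_X^*)}\to R^0\Gamma(X,\Cc_l)\xrightarrow{\Reso_X}\Ker(c)\to 0
\end{displaymath}
split is that the kernel term is a quotient of a $k$-vector space in characteristic $0$ and hence a divisible (equivalently, injective) abelian group; this is the reason the $\oplus$ is legitimate, and it should be said rather than gestured at through the existence of local lifts.
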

In \Theorem{\ref{class-rank-1}} $\bar \Io(X\setminus D)$ is compared to
$\bar \Io(X)$ as in (4), when $D$ is a divisor.
\begin{proof}(1): If $U_i$ is an affine covering of $X$,
$\gamma_i \in \Omega_{U_i}^{cl}$, where $M_{U_i}= \Dc_{U_i} \mu_i$ and
$\partial \cdot \mu_i = \gamma_i (\partial)\mu_i$ it follows that if
$U_{ij} = U_i \cap U_j$, then
$(\gamma_i)_{U_{ij}} - (\gamma_j)_{U_{ij}} = \dlog (\phi_{ij})$ for
some $\phi_{ij}\in \Oc_{U_i \cap U_j}^*$. Therefore
$(\gamma_i, \phi_{ij})$ defines a Cech 0-cocycle of $\Cc_l$. The
second equality follows from (\ref{exact-log-ex}).

(2): Here $H^1(X, \Oc^*_X) =0$, so the assertion follows from (1).

(3): Since $\Cc_l^a\cong \Cb_{X_a}^*[1]$, the first equality follows
from (1), which also holds with $X$ replaced by $X_a$. The remaining
two equalities are also well-known.

(4): In the exact sequence (\ref{exact-picard}) $\dlog =0$, so it
suffices to see that the image of $\Reso_X$ equals $\Pic^\tau(X)$. To
prove this in turn it seems that one needs to employ the Lefschetz
principle, G.A.G.A. and Hodge theory, so we assume $k= \Cb$ and work
with the associated complex analytic space $X_a$. By G.A.G.A.
$\Pic^\tau(X) = \Pic^\tau(X_a)=c_1^{-1}(H^2(X_a,2\pi i \Zb)^t) $,
where $c_1 : H^1(X_a, \Oc_X^*)\to H^2(X_a,2\pi i \Zb)$ is the map
occurring in the exponential sequence, and the index ${}^t$ denotes
the torsion subgroup. We have
\begin{align*}
  \Ker (c) &= \Ker  (H^1(X, \Oc^*) \to H^2(X_a, \Cb)) \\
  &= \Ker (H^1(X_a,
  \Oc_{X_a}^*) \to H^2(X_a,2\pi i \Zb) \to H^2(X_a, \Cb) ) = \Pic^\tau(X_a).
\end{align*}
\end{proof}

Let $\pi: \Spec X \to \Spec Y$ be a finite morphism of smooth
$k$-varieties. The inverse image defines a group homomorphism 
\begin{displaymath}
  \pi^! : \Io(\Dc_Y)\to \Io(\Dc_X), \quad [M] \mapsto [\pi^!(M)],
\end{displaymath}
since $\pi^!(M_1\otimes_A M_2)= \pi^!(M_1)\otimes_{B} \pi^!(M_2)$. If
locally $M= M_\gamma$ for some closed 1-form,
$\pi^!(M) = M_{\pi^*(\gamma)}$, where $\pi^*(\gamma)$ is the image of
$\gamma$ with respect to the pull-back map
$\pi^* : \Omega_{Y}^{1,cl}\to \Omega_{X}^{1, cl}$.

Let $\gamma $ be an element in the space of closed 1-forms
$\Omega_K^{cl}$, where $K/k$ is a field extension of finite type. Then
$[M_\gamma]\in \Ic(\Dc_K)$ corresponds to
$ [\gamma] \in \Omega_K^{cl}/ \dlog K$. The group $\Aut (K/k)$ acts on
$\Omega^{cl}_K$ and the de~Rham cohomology group $H^1_{dr}(K)=\Omega^{cl}/ \dlog K$ in a natural
way, and we have $G_{M_\gamma} = G_{\gamma}$, where
$G_\gamma = \{g \in G \ \vert \ g\cdot [\gamma] = [\gamma]\}$ is the
stabilisator group of $[\gamma]$.

\subsubsection{ Exponential modules.}\label{exp-mod-sec} For a finitely generated smooth
$k$-algebra $A/k$, an exponential $\Dc_A$-modules is a rank $1$ module
$M$ that is determined by an exact 1-form, $\gamma = d\psi$,
$\psi \in A$. Writing $M= \Dc_A e^{\psi} $, we have
$\partial \cdot e^\psi = \partial (\psi) e^\psi$,
$\partial \in T_{A}$. The following proposition is well-known.
\begin{proposition}\label{exp-mod}
  If $M$ is a connection that is locally exponential in the Zariski
  topology on a variety $X$, then it is globally exponential, i.e.
  there exists $\psi \in \Gamma (X, \Oc_X)$ such that
  $M = \Dc_X e^\psi$. In particular, if $M$ is a rank 1 connection an
  a projective variety $X$ that is locally exponential, then
  $M\cong \Oc_X$.
\end{proposition}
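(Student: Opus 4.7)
The plan is to translate the local exponential presentations of $M$ into cocycle data and to trivialize this data globally using a rigidity lemma together with the flasqueness of constant Zariski sheaves on an irreducible variety. Assume throughout that $X$ is irreducible (the reducible case reduces to this component by component).

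First choose an affine open cover $\{U_i\}$ with $M\vert_{U_i} = \Dc_{U_i}e^{\psi_i}$ for some $\psi_i \in \Gamma(U_i,\Oc_X)$. On each overlap $U_{ij}$ the two generators differ by a unique unit $\phi_{ij} \in \Gamma(U_{ij},\Oc_X^*)$ with $e^{\psi_j} = \phi_{ij}e^{\psi_i}$, and matching the $\Dc$-action of a derivation on both expressions yields
\[
d(\psi_j - \psi_i) = \dlog \phi_{ij} \quad\text{on } U_{ij}.
\]

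The key step is a rigidity lemma: on any connected $k$-variety $U$, if $\phi \in \Gamma(U,\Oc_X^*)$ and $f \in \Gamma(U,\Oc_X)$ satisfy $\dlog \phi = df$, then $\phi \in k^*$ and $f \in k$. I would prove it by passing to a normal proper model $\bar X$ of the function field of $U$ (which exists by Nagata compactification and normalization) and comparing pole orders along each prime divisor $D$: $\dlog\phi$ has at worst a simple pole along $D$, whereas $\nu_D(f) < 0$ would force $df$ to have a pole of order strictly greater than one. Hence $f$ is regular on $\bar X$ and so constant, whence $\dlog\phi = 0$ and $\phi \in k^*$. After refining $\{U_i\}$ so that every $U_{ij}$ is connected, the lemma applied to $\phi_{ij}$ and $\psi_j - \psi_i$ gives $\phi_{ij} \in k^*$ and $c_{ij} := \psi_j - \psi_i \in k$.

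Because $X$ is irreducible, every nonempty Zariski open is irreducible and connected, so the constant sheaves $k_X$ and $k^*_X$ take value $k$ and $k^*$ on every nonempty open with identity restriction maps; they are therefore flasque and $H^1(X,k_X) = H^1(X,k^*_X) = 0$. The cocycles $(\phi_{ij})$ and $(c_{ij})$ are thus coboundaries: $\phi_{ij} = e_j/e_i$ with $e_i \in k^*$ and $c_{ij} = d_i - d_j$ with $d_i \in k$. The sections $m_i := e_i^{-1}e^{\psi_i}$ satisfy $m_j = e_j^{-1}\phi_{ij}e^{\psi_i} = m_i$ on $U_{ij}$ and glue to a global $\Oc_X$-generator $m$ of $M$, while $\psi := \psi_i + d_i$ is independent of $i$ by the cocycle identity and defines $\psi \in \Gamma(X,\Oc_X)$ with $\partial\cdot m = \partial(\psi_i)m = \partial(\psi)m$ for every derivation $\partial$. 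Thus $m \mapsto e^\psi$ is a $\Dc_X$-linear isomorphism $M \cong \Dc_X e^\psi$, and if $X$ is projective then $\Gamma(X,\Oc_X) = k$ forces $\psi$ to be a scalar, so $\Dc_X e^\psi \cong \Oc_X$. The main obstacle is the rigidity lemma, which captures the essentially algebraic fact that in characteristic zero no nonconstant algebraic unit admits an algebraic logarithm; the pole-order analysis on a normal proper model is the natural way to make this precise.
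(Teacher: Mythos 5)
Your proof is correct and, apart from the final cocycle trivialization, follows the same outline as the paper's; the genuine difference is in the key rigidity input. The paper invokes its Lemma~\ref{exact-log}, proven via the residue calculus of Proposition~\ref{res-basics}: from $d\psi = \sum_i c_i\,\dlog\phi_i$ with $(c_i)$ linearly independent over $\Qb$, applying $\Reso_\nu$ along an arbitrary discrete valuation gives $\sum_i c_i\nu(\phi_i)=0$, hence $\nu(\phi_i)=0$ for all $\nu$ and all $i$, so the $\phi_i$ lie in the algebraic closure $\bar k$ of $k$ in the function field and $\dlog\phi_i=0$. You instead argue via pole orders on a normal proper model $\bar X$: $\dlog\phi$ has at most a first-order pole along any prime divisor $D$, whereas $\nu_D(f)=-n<0$ forces the $dt$-component of $df$ to have order exactly $-(n+1)$, so the equality $df=\dlog\phi$ forces $f$ to be regular on $\bar X$, hence constant, hence $\dlog\phi=0$. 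This is a self-contained alternative that avoids the residue machinery developed elsewhere in the paper (which the paper naturally prefers, since it uses residues heavily throughout this section). Both arguments ultimately land on the same fact and then trivialize cocycles in constant sheaves by flasqueness. Two small remarks: the conclusion of your rigidity lemma should read $\bar k^*$ and $\bar k$ (the algebraic closure of $k$ in the function field) rather than $k^*$ and $k$ unless one assumes $k$ algebraically closed, which is the paper's standing convention so this is only cosmetic; and you are in fact more explicit than the paper in the gluing step, trivializing both the multiplicative cocycle $(\phi_{ij})$ in $H^1(X,\bar k^*_X)$ (giving the global generator) and the additive one $(c_{ij})$ in $H^1(X,\bar k_X)$ (giving the global $\psi$), whereas the paper only spells out the additive part and leaves the trivialization of the underlying line bundle implicit — so your write-up is, if anything, more complete.
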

\begin{remark}
  It follows from the proof that the locally exponential
  $\Dc_{X_a}$-modules are classified by the cohomology group
  $H^1(X_a, \Cb_{X_a})$ (where now $X_a$ is a complex analytic manifold).
\end{remark}
The following lemma is close to \cite{rosenlicht}*{Prop 4}.
\begin{lemma}\label{exact-log} Let $A$ be a smooth $k$-algebra and
  $(c_i)$ be a linearly independent subset of $k$, where $k$ is
  regarded as vector space over the rational numbers $\Qb$. Then if
  $\phi_i \in A^*$
  \begin{displaymath}
    (\sum_i c_i \dlog \phi_i)\cap d A =0. 
  \end{displaymath}
In particular, 
  \begin{displaymath}
    dA \cap (\Qb \dlog A^*) =0.
  \end{displaymath}
\end{lemma}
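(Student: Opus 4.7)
The plan is to compare the behavior of the two sides along the divisor at infinity of a smooth projective compactification of $\Spec A$. After passing to a connected component, I may assume $A$ is an integral domain, and treat $\Spec A$ as a smooth affine $k$-variety. Let $K$ denote its fraction field. Invoking Nagata's compactification followed by Hironaka's resolution of singularities (valid since $\ch k = 0$), I will obtain a smooth projective variety $X/k$ with fraction field $K$ containing $\Spec A$ as an open subvariety, with complementary divisor $D := X \setminus \Spec A$.

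The key step will be a local pole-order analysis along each prime component $E$ of $D$. Because every $\phi \in A^*$ is regular and non-vanishing on $\Spec A$, the form $\dlog \phi = d\phi/\phi$ has at most a simple pole along $E$ with integer residue $\nu_E(\phi)$, and hence so does the purported exact form $\omega := \sum_i c_i \dlog \phi_i$. On the other hand, if $f \in A$ had a pole of order $m \geq 1$ along $E$, writing $f = t^{-m} g$ for $t$ a local equation of $E$ and $g$ a unit at the generic point of $E$ would give
\begin{displaymath}
  df = -m t^{-m-1} g\, dt + t^{-m}\, dg,
\end{displaymath}
whose $dt$-coefficient has pole order exactly $m+1 \geq 2$ along $E$, contradicting $df = \omega$. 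Thus $f$ must be regular at every codimension-one point of $D$; since $X$ is smooth (in particular normal), $f$ extends to a regular function on all of $X$, and as $X$ is projective and connected this forces $f \in \Gamma(X, \Oc_X) = k$, so $df = 0$ and therefore $\omega = 0$.

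The ``in particular'' statement then follows by clearing denominators: given $\omega = \sum_i c_i \dlog \phi_i \in dA$ with $c_i \in \Qb$, choose $N \in \Zb_{>0}$ with $n_i := N c_i \in \Zb$; then $N \omega = \dlog\bigl(\prod_i \phi_i^{n_i}\bigr)$ is the $\dlog$ of a single element of $A^*$ and still lies in $dA$, so applying the first part to the (trivially $\Qb$-linearly independent) singleton family reduces to $N\omega = 0$, whence $\omega = 0$.

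I expect the only real obstacle to be logistical rather than conceptual, namely invoking resolution of singularities to produce the smooth projective compactification $X$; the rest of the argument is a routine residue and pole-order computation along the boundary divisor. I note in passing that the $\Qb$-linear independence hypothesis on $(c_i)$ is not actually required for the conclusion as stated---it becomes relevant only if one wishes to sharpen ``the sum vanishes'' to ``each $\dlog \phi_i$ vanishes individually,'' which follows from the same residue analysis (taking residues at primes of $D$ yields $\sum_i c_i \nu_E(\phi_i) = 0$, forcing each $\nu_E(\phi_i) = 0$, so each $\phi_i$ is globally invertible on $X$ and hence constant) but is not asserted by the lemma.
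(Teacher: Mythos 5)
Your proof is correct, but it takes a genuinely different route from the paper's. The paper does not compactify at all: it picks an arbitrary discrete valuation $\nu$ of the fraction field $K$, applies the residue formalism developed in \Proposition{\ref{res-basics}} to get $0 = \Reso_\nu(d\psi) = \sum_i c_i \nu(\phi_i)$, and then uses the $\Qb$-linear independence of the $c_i$ (together with $\nu(\phi_i)\in\Zb$) to deduce $\nu(\phi_i)=0$ for every $i$ and every $\nu$, whence each $\phi_i$ lies in the algebraic closure of $k$ in $K$ and each $\dlog\phi_i$ vanishes \emph{individually}. Your argument instead passes to a smooth projective compactification, observes that a logarithmic form has pole order $\leq 1$ along every boundary prime while $df$ would have pole order $\geq 2$ if $f\in A$ had a pole there, and concludes $f$ is constant and $\omega=0$. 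You are right that your version does not use the $\Qb$-independence of the $c_i$ at all; the paper's version does, and in exchange extracts the stronger conclusion that each $\dlog\phi_i=0$ separately (which is not asserted by the lemma but is what the paper's proof actually produces). One remark on economy: you do not need Hironaka. Taking the normalization of the closure in any projective embedding already gives a normal projective compactification, and normality is all you use --- it gives DVRs at the generic points of boundary divisors (Serre's $R_1$) and Hartogs' extension; a smooth model is unnecessary. The paper's proof avoids compactification entirely by quantifying over all discrete valuations of $K$, and it stays closer to the residue machinery the paper has just built, which is presumably why that route was chosen.
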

\begin{proof} If
  \begin{displaymath}
    d\psi = \sum_i c_i \dlog (\phi_i) 
  \end{displaymath}
and $\nu$ is a discrete valuation of the fraction field  $K$ of $A$,
then by \Proposition{\ref{res-basics}},
\begin{displaymath}
  \sum_i c_i \Reso_\nu (\dlog (\phi_i) ) = \sum_i c_i \nu (\phi_i) =0.
\end{displaymath}
Since $(c_i)$ is linearly independent, it follows that
$\nu(\phi_i)=0$ for all $\nu$. This implies that $\phi_i$ belongs to the algebraic
closure $\bar k$ of $k$ in $K$, and therefore $\dlog (\phi_i)=0$.
\end{proof}

\begin{pfof}{\Proposition{\ref{exp-mod}}}
  If $\psi_i\in \Oc_X(U_i) $ are local functions such that
  $M_{U_i} = \Dc_{U_i} e^{\psi_i}$, then
  $d(\psi_i - \psi_j)= \dlog (\phi_{ij})$ for some
  $\phi_{ij}\in \Oc_X^*(U_i\cap U_j)$. By \Lemma{\ref{exact-log}} it
  follows that $d(\psi_i- \psi_j)=0$, hence
  $\psi_i = \psi_j + \lambda_{ij} $, where $\lambda_{ij}$ belongs to
  the algebraic closure of $\bar k $ of $k$ in $K$
  \Lem{\ref{adj-lemma}}. Now the constant sheaf $\bar k_X$ is flasque
  so there exist $\lambda_i \in \bar k_X(U_i)$ such that
  $\lambda_{ij} =\lambda_j - \lambda_i$, and therefore the sections
  $\psi_i- \lambda_i$ glue to a global section $\psi$ in
  $\Gamma(X, \Oc_X)$.
\end{pfof}

\subsubsection{Higher dimensional residues.}
We will later decompose $\Omega_X^{cl}(*D)$ inte an exponential and
logarithmic component and analyse the logarithmic component using
residues.

K. Saito introduced and studied the notion of logarithmic forms along
arbitrary reduced divisors $D$ in a complex manifold
\cite{saito-kyoji:log}, extending similar constructions by
Delinge\footnote{Deligne assumed that $D$ be a divisor with at most
  normal crossing singularities.}. We will further extend Saito's
treatment by defining logarithmic forms and a residue
operation also for closed differentials relative to any discrete
valuation of a field, using the classical constructions for curves. We
are here only interested in 1-forms and work algebraically.

Let $I_D$ be the ideal of the reduced divisor $D$ on a normal variety
$X$, which is regarded as a scheme.
\begin{proposition}\label{def-log-sheaf}
  Assume $I_{D,x}= (h_x)$ for some function $h$ and let
  $\gamma_x\in \Omega_{X,x}(*D)$. The following are equivalent:
  \begin{enumerate}
  \item $h_x \gamma_x \in \Omega_{X,x} $ and
    $h_x d(\gamma)_x\in \Omega^2_{X,x}$.
  \item $\gamma_x = \omega_x/h_x$, where $\omega_x\in \Omega_{X,x}$
    and $d(\omega_x) - \dlog (h_x)\omega_x\in \Omega^2_{X,x}$.
  \item there exists $g_x\in \Oc_{X,x}$ that is not a zero-divisor in
    $\Oc_{D,x}$ such that
    \begin{displaymath}
      g_x \gamma_x = a \dlog (h_x) + \eta_x,
    \end{displaymath}
    where $a\in \Oc_{X,x}$ and $\eta_x \in \Omega_{X,x}$.
  \end{enumerate}
\end{proposition}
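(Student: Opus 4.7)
The plan is to prove the equivalence as a chain $(1)\Leftrightarrow(2)$ and $(1)\Leftrightarrow(3)$, where the first equivalence is an elementary Leibniz computation, the implication $(3)\Rightarrow(1)$ reduces to clearing denominators using that $g_x$ is a non-zero-divisor modulo $h_x$, and the direction $(1)\Rightarrow(3)$ requires a descent argument at the generic points of the components of $D$, which I expect to be the main obstacle.

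For $(1)\Leftrightarrow(2)$, set $\omega_x = h_x\gamma_x$; both conditions already include $\omega_x\in\Omega_{X,x}$. Working in $\Omega^2_{X,x}(*D)$ we have the Leibniz identity
\[
d\omega_x \;=\; dh_x\wedge\gamma_x + h_x\,d\gamma_x \;=\; \dlog(h_x)\wedge\omega_x + h_x\,d\gamma_x,
\]
so $d\omega_x-\dlog(h_x)\wedge\omega_x = h_x\,d\gamma_x$; hence one side lies in $\Omega^2_{X,x}$ iff the other does, giving the equivalence.

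For $(3)\Rightarrow(1)$, multiply $g_x\gamma_x = a\,\dlog(h_x)+\eta_x$ by $h_x$ to obtain $g_x(h_x\gamma_x)=a\,dh_x+h_x\eta_x\in\Omega_{X,x}$. Writing $h_x\gamma_x = \tau/h_x^m$ with $\tau\in\Omega_{X,x}$ and $m\ge0$ minimal, the relation $g_x\tau\in h_x^{m}\Omega_{X,x}$ combined with the fact that $g_x$ is a non-zero-divisor on $\Omega_{X,x}/h_x\Omega_{X,x}$ (which follows because $\Omega_{X,x}$ is locally free) forces $m\le 0$, so $h_x\gamma_x\in\Omega_{X,x}$; differentiating $g_x\gamma_x = a\,\dlog(h_x)+\eta_x$ yields $g_x\,d\gamma_x + dg_x\wedge\gamma_x = da\wedge\dlog(h_x)+d\eta_x$, and multiplying by $h_x$ gives $g_x(h_x\,d\gamma_x)\in\Omega^2_{X,x}$, from which the same minimal-order argument shows $h_x\,d\gamma_x\in\Omega^2_{X,x}$.

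The crux is $(1)\Rightarrow(3)$. Using the equivalence with $(2)$, $\omega_x=h_x\gamma_x\in\Omega_{X,x}$ satisfies $dh_x\wedge\omega_x\in h_x\Omega^2_{X,x}$; passing to $\Omega_{X,x}\otimes_{\Oc_{X,x}}\Oc_{D,x}$, the image $\bar\omega_x$ satisfies $d\bar h_x\wedge\bar\omega_x=0$. Since $D$ is reduced, $\Oc_{D,x}$ is reduced, so its total quotient ring is the product of residue fields at the minimal primes of $\Oc_{D,x}$, which correspond to the generic points $\xi_i$ of the components of $D$ through $x$. At each such $\xi_i$, $\Oc_{X,\xi_i}$ is a DVR with uniformizer $h_x$, so $dh_x$ extends to a basis of the free module $\Omega_{X,\xi_i}$; the vanishing $d\bar h_x\wedge\bar\omega_x=0$ in the residue field then forces $\bar\omega_x$ to be a scalar multiple of $d\bar h_x$ there. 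Combining these relations over all components yields $\bar a\in\Oc_{D,x}$ and a non-zero-divisor $\bar g_x\in\Oc_{D,x}$ with $\bar g_x\bar\omega_x=\bar a\,d\bar h_x$ in $\Omega_{X,x}\otimes\Oc_{D,x}$. Lifting, $g_x\omega_x - a\,dh_x\in h_x\Omega_{X,x}$, say $g_x\omega_x = a\,dh_x+h_x\eta_x$, and dividing by $h_x$ gives $g_x\gamma_x = a\,\dlog(h_x)+\eta_x$, as desired.
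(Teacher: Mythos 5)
Your proof is correct, and it follows essentially the same route as K. Saito's original argument in \S 1 of \cite{saito-kyoji:log}, which the paper cites for this proposition without reproducing the details: the Leibniz-rule computation for $(1)\Leftrightarrow(2)$, the non-zero-divisor cancellation for $(3)\Rightarrow(1)$, and the reduction to generic points of the components of $D$ (where $h_x$ becomes a uniformizer and the problem linearizes over the residue field) for $(1)\Rightarrow(3)$ are exactly the expected steps.
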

The proof is similar to \cite{saito-kyoji:log}*{\S 1}. The sheaf
$\Omega_X(\log D) $ of {\it logarithmic differentials} is the subsheaf
of sections $\gamma$ in $\Omega_X(*D)$ that satisfy the above
equivalent conditions (1-3) at all points $x$.

Now let $\nu : K^* \to \Zb$ be a discrete valuation of the finitely
generated field extension $K/k$, $R= R_\nu$ be the corresponding
discrete valuation subring of $K$, and $k_R = R/\mf_R$ its residue
field. Let $\bar R$ be the $\mf_R$-adic completion of $R$, and
$\bar K$ be the fraction field of $\bar R$. Let
$\hat k_R\subset \bar R $ be a coefficient field, so that if $t$ is a
uniformising parameter for $R$, then $\bar R = \hat k_R [[t]]$,
$\bar K= \hat k_R ((t)) $, and we have an identification
$k_R = R/\mf_R= \bar R/ (t) = \hat k_R$. The modules of differentials
$\Omega_{\bar R/\hat k_R}$ and $\Omega_{\bar R/k}$ are not separated
in the $\mf_R$-adic topology, and are moreover not of finite
dimension, while the separated modules
\begin{displaymath}
  \bar \Omega_{R/k_R} :=\frac{ \Omega_{\bar R/\hat k_R}}{ \cap (\mf^n_R
  \Omega_{\bar R/\hat k_R})} = \bar R dt \quad \text{ and}\quad
  \bar \Omega_{ R/k}:=\frac{\Omega_{\bar R/k}}{\cap
    (\mf^n_R\Omega_{R/k})} = \bar R \otimes_R \Omega_{R/k}
\end{displaymath}
are of dimension $1$ and $\trdeg K/k$, respectively. The usual residue
map is defined by
\begin{displaymath}
\reso_\nu: \bar \Omega_{K/k_R}:= \bar K\otimes_{\bar R} \bar
\Omega_{\bar R/\hat k_R}= \bar K dt \to k_R, \quad a\mapsto a_{-1},
\end{displaymath}
where $a= \sum_i a_{i} t^i\in \bar K$, $a_i \in k_R$; the element
$a_{-1}$ is independent of the choice of uniformising parameter as is
described in \cite{serre:alg-groups} and \cite{tate}, for the given
choice of coefficient field. Now the exact sequence
\begin{displaymath}
  0 \to \bar K \otimes_{\hat k_R} \Omega_{\hat k_R/k}  \to \bar K
  \otimes_{\bar R } \bar \Omega_{R/k} \to \bar K \otimes_{\bar R}
  \Omega_{\bar R/\hat k_R}\to 0
\end{displaymath}
can be used to extend $\reso_\nu$ to the composed map
\begin{equation}\label{res-eq}
  \Reso_\nu : \Omega_{K/k}\to \bar K  \otimes_{\bar R} \bar
  \Omega_{R/k} \to  \bar K\otimes_{\bar R} \bar \Omega_{\bar R/\hat k_R}  \xrightarrow{\reso_\nu} k_R.
\end{equation}
The map $\Reso_\nu$ depends on the choice of coefficient field
$\hat k_R$ in $\bar R$ as follows. Let $\hat k'_R$ be another
coefficient field in $\bar R$ and denote by $\Reso'_\nu$ the above map
with $\hat k_R$ replaced by $\hat k'_R$. Select transcendence bases
$\{y_i\}\subset \hat k_R $ and $\{ z_i\}\subset \hat k'_R $ as field
extensions of $k$, and define the derivation
$\partial'_t\in T_{\bar K/k }$ by $\partial'_t (t)=1$ and
$\partial'_t (z_i)=0$. Any
$\omega \in \bar K \otimes_R \bar \Omega_{R/k}$ can be expressed as
\begin{equation}\label{expansions}
  \omega = a\frac{dt}t + \sum_{i=1}^r b_i d y_i = a' \frac {dt}t + \sum_{i=1}^r b'_i dz_i,
\end{equation}
for $a, a', b_i, b_i'\in \bar K $, for the different choices of
coefficient fields, so that the relation between the residues becomes
\begin{align*}
  \Reso'_\nu (\omega) &= a'_{-1} = (a + \sum_{i=1}^r
                        (b_i \partial'_t(y_i)))_{-1}  \\
  &= \Reso_\nu (\omega) + \sum_{i=1}^r\Reso'_\nu
  (b_i \partial'_t(y_i))dt).
\end{align*}
\begin{remark}\label{rem-res}
  If $ \nu (b_i) \geq 0$ then $\nu (b_i \partial'_t(y_i))\geq 0$, so
  that $\Reso_\nu(\omega)$ is independent of the choice of coefficient
  field.
\end{remark}

A {\it $\nu$-logarithmic} $1$-form $\gamma\in \Omega_{K/k}$ is a
1-form that has a logarithmic pole at $\nu$, meaning
$\mf_R \gamma \subset \Omega_{R/k}$ and
$\mf_R d\gamma \subset \Omega^2_{R/k}$. Let
$\Omega_{K/k}(\log \nu)$\footnote{It can also be denoted
  $ \Omega_{R/k}(\log \mf_R)$.} be the $k$-vector space of
$\nu$-logarithmic 1-forms. Let $\Omega^{cl}_{K/k}$ be the closed
$1$-forms and put
$\Omega^{cl}_{K/k}(\log \nu) =\Omega_{K/k}(\log \nu)\cap
\Omega^{cl}_{K/k}$.

Let $\bar k_\nu$ be the algebraic closure of $k$ in $k_R$.
\begin{proposition}\label{res-basics}
  \begin{enumerate}
  \item The map $\Reso_\nu$ in (\ref{res-eq}) restricts to a map
      \begin{displaymath}
        \Omega_{K/k}(\log \nu) \to k_R
      \end{displaymath}
      that is independent of the choice of coefficient field.
  \item The map $\Reso_\nu$ restricts to a map
    \begin{displaymath}
      \Omega^{cl}_{K/k} \to \bar k_\nu  
    \end{displaymath}
    that is independent of the choice of coefficient field.
  \item $\Reso_\nu (dK)=0$.
  \item $\Reso_\nu(\dlog \phi) = \nu(\phi)$.
  \end{enumerate}
\end{proposition}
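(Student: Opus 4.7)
The plan is to do everything in the $\mf_R$-adic completion. Fix a coefficient field $\hat k_R$ and a transcendence basis $\{y_i\}$ of $\hat k_R/k$, so each $\omega\in \Omega_{K/k}$ has in $\bar K = \hat k_R((t))$ a unique expansion $\omega = \alpha(t)\,dt + \sum_i g_i(t)\,dy_i$ with $\alpha(t)=\sum_n \alpha_n t^n$ and $g_i(t)=\sum_n \beta_{i,n} t^n$, $\alpha_n,\beta_{i,n}\in \hat k_R$; the map (\ref{res-eq}) then simply reads $\Reso_\nu(\omega) = \alpha_{-1}$.

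Parts (3) and (4) are direct. For $f \in K$ expanded as $f=\sum f_n t^n$, the $dt$-component of $df$ is $\sum n f_n t^{n-1}\,dt$, whose $t^{-1}$-coefficient vanishes. For $\phi \in K^*$, write $\phi = t^{\nu(\phi)} u$ with $u \in \bar R^*$; then $\dlog\phi = \nu(\phi)\,dt/t + \dlog u$, and since $u$ is a unit of $\bar R$ the $dt$-part of $\dlog u$ lies in $\bar R\,dt$, with no $t^{-1}$-coefficient, yielding $\Reso_\nu(\dlog\phi)=\nu(\phi)$. For (1), if $\omega$ is $\nu$-logarithmic then $\nu(\alpha)\geq -1$ and $\nu(g_i)\geq 0$; the correction term from (\ref{expansions}) is $\sum_i \Reso'_\nu(g_i \partial'_t(y_i)\,dt)$, but since $g_i\in \bar R$ and $\partial'_t(y_i)\in \bar R$ (because $y_i\in\hat k_R\subset \bar R$), each product $g_i\partial'_t(y_i)$ lies in $\bar R$, so the corresponding form has no $t^{-1}$-contribution, which is precisely Remark~\ref{rem-res}.

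The essential content is (2). Writing out $d\omega = 0$ and collecting the coefficient of $t^n\,dt\wedge dy_j$ gives $\beta_{j,n+1}=\partial_j\alpha_n/(n+1)$ for $n\neq -1$, and at $n=-1$ the critical relation $\partial_j\alpha_{-1}=0$ for every $j$. In characteristic $0$ the $\partial_j$ jointly annihilate exactly the algebraic elements of $\hat k_R$, so $\alpha_{-1}\in \bar k_\nu$. For independence of coefficient field, the relations above furnish an explicit primitive: setting $F(t) = \sum_{n\neq -1} \frac{\alpha_n}{n+1}\, t^{n+1} \in \bar K$ and computing $dF$ using $\beta_{j,n+1} = \partial_j \alpha_n/(n+1)$ yields the canonical decomposition
\[ \omega = \alpha_{-1}\,\dlog t + dF + \omega_1, \qquad \omega_1 = \sum_j \beta_{j,0}\,dy_j \in \bar\Omega_{R/k}. \]
If a second coefficient field produces $\omega = c'\,\dlog t + dF' + \omega_1'$, subtracting gives $(\alpha_{-1}-c')\,\dlog t = d(F'-F) + (\omega_1' - \omega_1)$; applying $\Reso_\nu$ and using (3) together with the trivial vanishing on $\bar \Omega_{R/k}$ forces $c'=\alpha_{-1}$.

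The main obstacle is (2): manipulating the closedness relations to construct the primitive $F\in \bar K$ and then verifying that the resulting decomposition is canonical enough to trap $\alpha_{-1}$ as an invariant. The underlying local rigidity at work is that any closed 1-form on $\bar K$ is, modulo exact forms and forms regular at $\nu$, a $\bar k_\nu$-multiple of $\dlog t$ — this is the local cohomological shadow of the statement that residues on closed forms are coefficient-field-independent.
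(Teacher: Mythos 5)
Your proof is correct, and for parts (1), (3), (4) it coincides with the paper's: expand in $\bar K=\hat k_R((t))$, read off the $dt$-component, invoke \Remark{\ref{rem-res}} for (1), and write $\phi=t^{\nu(\phi)}u$ with $u\in\bar R^*$ for (4). For the first half of (2) you also extract $\partial_j\alpha_{-1}=0$ from closedness and apply \Lemma{\ref{int-closure}} exactly as the paper does. Where you genuinely go beyond the paper is the ``independent of the choice of coefficient field'' claim in (2). The paper's proof of (2) only establishes $\Reso_\nu(\omega)\in\bar k_\nu$ and does not address the comparison between two coefficient fields at all; and the route that works for (1) (namely \Remark{\ref{rem-res}}) is unavailable here, because the $dy_j$-coefficients $\beta_j$ of a closed but non-logarithmic form can have $\nu(\beta_j)<0$. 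Your explicit primitive $F=\sum_{n\neq -1}\frac{\alpha_n}{n+1}t^{n+1}\in\bar K$, yielding the canonical decomposition $\omega=\alpha_{-1}\dlog t + dF + \omega_1$ with $\omega_1\in\bar\Omega_{R/k}$, supplies the missing argument, and the subtraction-and-residue step is sound. One small imprecision: you cite (3) for $d(F'-F)$, but $F'-F\in\bar K$ need not lie in $K$; the identical Laurent-coefficient computation shows $\reso_\nu(d\bar K)=0$, so the argument survives, but you should say you are using this extension rather than (3) verbatim. Likewise, in (1) it is worth spelling out that $\partial'_t$ preserves $\bar R=\hat k'_R[[t]]$, which is why $\partial'_t(y_i)\in\bar R$.
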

\begin{remarks}\label{res-rem}
  \begin{enumerate}
  \item Saito proves (1) in the complex analytic situation, where
    $k = \bar k_\nu = \Cb$, $k_R$ is the field of meromorphic
    functions on a divisor $D$ in a complex manifold, and $R $ is a
    ring of germs of meromorphic functions that do not have poles
    along $D$ \cite{saito-kyoji:log}.
  \item Let $D= V(f)$ be an irreducible germ of a complex analytic
    hypersurface and $\nu$ be its discrete valuation. If $\omega$ is a
    closed meromorphic 1-form with poles only along $D$ we have
    $\Reso_\nu (\omega)= \int_{c}\omega$, where $c$ is a generator of
    the cyclic homology group $H_1(U \setminus D, \Zb)$ for
    sufficiently small open sets $U$.
  \item By Artin's approximation theorem there exists a finite field
    extension $L/K$ such that the algebraic closure $\bar k_\nu $ is
    isomorphic to the algebraic closure of $k$ in $L$.
  \end{enumerate}

\end{remarks}
\begin{proof}
  (1): Consider the expressions (\ref{expansions}). Since
  $\omega \in \Omega_{K/k}(\log \nu)$ it follows that
  $t\omega \in \Omega_{R/k}$, so that $\nu(a)\geq 0$, and since
  $td\omega \in \Omega^2_{R/k}$, implying $\nu(b_i)\geq 0$, the
  assertion follows by \Remark{\ref{rem-res}}. We can see also that
  $\Reso_{\nu}(\omega) = \bar a= a(\omod \mf_R)$, where now $\bar a$
  is evaluated without selecting a coefficient field. Not knowing that
  $\Reso_{\nu}(\omega)$ arises also from $\reso_\nu$ by a choice of
  $\hat k_R$, one instead can check that $\bar a$ does not depend on
  the representation of $\omega$. Suppose we have two decompositions
  \begin{displaymath}
    \gamma =  a   \frac {dt}{t} + \gamma_0 = a'  \frac {dt}{t} + \gamma'_0
  \end{displaymath}
  where $\nu(a), \nu(a')\geq 0$ and
  $t\gamma_0, t\gamma'_0 \in \Omega_{R/k}$.
Hence
  \begin{displaymath}
    (a-a')\frac {dt}{t}+ \gamma_0 - \gamma'_0 =0.
  \end{displaymath}
  Since $t d\gamma \in \Omega^2_{R/k}$,
  $\gamma_0, \gamma'_0 \in \Omega_{R/k} $, and therefore
  $\gamma_0 - \gamma'_0 \in \Omega_{R/k}$. It follows that
  $\nu(a-a')>0$, whence $\bar a' = \bar a$.

  (2): Select a coefficient field $\hat k_R$ of $\bar R$, let $t $
  be a uniformising parameter, and $(y_2, \ldots , y_3)$ be a
  transcendence basis of $ k_R/k$. Then the differentials
  $\{dt, dy_i\}$ form a basis of
  $\bar \Omega_{\bar K_\nu/k} = \oplus \bar K dy_i$, and we let
  $\{\partial_t, \partial_i\}$ be the dual basis. If
\begin{displaymath}
  \omega = a dt +\sum_{i=1}^r b_i dy_i
\end{displaymath}
and $d\omega =0$, we get
\begin{displaymath}
  \sum_{i=1}^r (\partial_{y_i}(a)- \partial_{t}(b_i)) dy_i dt =0
\end{displaymath}
and, since the differentials $dy_i dt \in \bar K \otimes_K \Omega^2_{K/k}$,
$i=1, \ldots, n$ are linearly independent over $\bar K $,
\begin{displaymath}
  \partial_{y_i}(a)- \partial_{t}(b_i) =0.
\end{displaymath}
Writing $a= \sum_i a_i t^i$,
$b_i = \sum_{j} b_{ij}t^j\in \bar K = \hat k_R ((t))$,
$a_i, b_{ij}\in \hat k_R $, we get $\partial_{y_i}(a_{-1})=0$,
$i=1, \ldots , r$. Hence by \Lemma{\ref{int-closure}} (below) $ a_{-1} $
belongs to the algebraic closure $\bar k$ of $k$ in $\hat k_R$. Since
the image of $\bar k$ under an isomorphism $\hat k_R\cong k_R$ (fixing
$k$) coincides with the algebraic closure $\bar k_\nu$ of $k$ in
$k_R$, it follows that
$\Reso_\nu(\omega) = a_{-1} \in \bar k_\nu \subset k_R $.

(3): If $\phi \in K$, then
$d_{\bar R/k}(\phi)= d_{\bar R/\bar k_R}(\phi)$, as elements in
$\Omega_{\bar R/\hat k_R}$ equals, and it is well-known that
$\reso_\nu(d_{\bar R/\bar k_R}(\phi))=0$.

(4): If $\phi = t^mu$, where $\nu(t)=1$, $\nu(u)=0$, then
$\dlog (\phi) = m d(t)/t + d(u)/u$. This implies the assertion.
\end{proof}

Saito's residue map is given by 
\begin{equation}\label{res-sheaf}
  \Reso : \Omega_X(\log D) \to K_D= \bigoplus_{i=1}^r K(\Oc_{D_i}),  \quad
 \gamma \mapsto  \frac {\bar a_x}{\bar g_x} = \bigoplus_{i=1} ^r\reso_{\nu_i}(\gamma),
\end{equation}
(in the notation of \Proposition{\ref{def-log-sheaf}})
where $K_D$ is the total field of fractions of $D$, $K(\Oc_{D_i})$ the
fraction field of $D_i$, and  $\nu_i$ is the discrete valuation that is
determined by $D_i$.  There exists an exact sequence 
\begin{equation}\label{zero-resdues}
0 \to \Omega_X \to \Omega_X(\log D) \xrightarrow{\Reso}  \Rc_D \to 0.
\end{equation}
Here $\Rc_D$ is a subsheaf of $ K_D $ that contains
$c_*(\Oc_{\tilde D})$, where $c: \tilde D \to D $ is the normalization
map (see \cite{saito-kyoji:log}). The residue map restricts to a map
\begin{equation}
  \Reso: \Omega^{cl}_{X}(\log D) \to  \bigoplus_{i=1}^r\bar k_{\nu_i},\quad
  \gamma \mapsto  \bigoplus_{i=1} ^r\reso_{\nu_i}(\gamma),
\end{equation}
where $\bar k_{\nu_i}$ is regarded as a constant subsheaf of
$K(\Oc_{D_i})$ (see \Proposition{\ref{res-basics}}), and we also have
a map
\begin{displaymath}
  \Reso: \Omega^{cl}_{X}(*D) \to  \bigoplus_{i=1}^r\bar k_{\nu_i},\quad
  \gamma \mapsto  \bigoplus_{i=1} ^r\reso_{\nu_i}(\gamma),
  \end{displaymath}

    


\subsubsection{Decomposition inte logarithmic and exponential modules}
Let $D$ be any reduced divisor in a smooth projective variety $X$ and
$\Omega_X(*D)$, $\Oc_X(*D)$ the sheaf of differentials and functions,
respectively, with arbitrary poles along $D$, and $d\Oc_X(*D)$ be the
image sheaf of the differential $d : \Oc_X(*D)\to \Omega_X(*D) $. We
have the exact sequence
\begin{displaymath}
  0\to \Omega^{cl}_X \to d\Oc_X(*D) \oplus \Omega_X^{cl}(\log D)
  \xrightarrow{p} \Omega_X^{cl}(*D),
\end{displaymath}
where clearly the map of sheaves $p$ need not be surjective, even when
$D$ is smooth. On the other hand, if one replaces $X$ by its
associated complex analytic manifold $X_a$ there is a kind of Poincaré
lemma, which surely is well-known but seems not available in the
literature.
\begin{lemma}\label{exact-sequence} We have an exact sequence 
  \begin{equation}\label{exact-log-seq}
    0\to \Omega^{cl}_{X_a} \to d\Oc_{X_a}(*D) \oplus \Omega_{X_a}^{cl}(\log D)
    \xrightarrow{p} \Omega_{X_a}^{cl}(*D)\to 0.
\end{equation}
\end{lemma}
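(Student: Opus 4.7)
My plan is to establish exactness stalkwise on the underlying topological space of $X_a$. Noting first that, by the analytic Poincaré lemma for $1$-forms, $\Omega^{cl}_{X_a}=d\Oc_{X_a}$ as a subsheaf of $\Omega^{1}_{X_a}$, I would take the left-hand map to be $\iota(\omega)=(\omega,\omega)$ (regarded respectively as an exact meromorphic form and as a logarithmic form with trivial residues along $D$) and $p(a,b)=a-b$. Injectivity of $\iota$ is then immediate from either projection being injective.

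For exactness at the middle term I need to identify
$$d\Oc_{X_a}(*D)\cap \Omega^{cl}_{X_a}(\log D)=\Omega^{cl}_{X_a}.$$
This is local; working at a point where $D$ has local equation $t=0$, if $\phi=g/t^{k}+\text{(lower order poles)}$ is a meromorphic function with $d\phi$ logarithmic, the requirement $t\,d\phi\in \Omega^{1}_{X_a}$ forces $g$ to vanish on $D$, and by induction on $k$ one concludes $\phi$ is holomorphic, so $d\phi\in d\Oc_{X_a}=\Omega^{cl}_{X_a}$. At singular points of $D$ both sheaves are torsion free and the statement extends from the smooth locus of $D$ by a Hartogs-type argument.

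The essential point is surjectivity of $p$: every germ of a closed meromorphic form $\omega\in\Omega^{cl}_{X_a}(*D)$ is of the form $\omega=d\phi-\eta$ with $\phi$ meromorphic along $D$ and $\eta$ closed logarithmic. This is precisely the degree-one content of Deligne's quasi-isomorphism $\Omega^{\bullet}_{X_a}(\log D)\hookrightarrow \Omega^{\bullet}_{X_a}(*D)$. I would reduce to the case of simple normal crossings by taking a log resolution $\pi:\tilde X\to X$: pull $\omega$ back to a closed meromorphic form on $\tilde X$ with poles along $\tilde D=\pi^{-1}(D)_{\mathrm{red}}$, carry out the decomposition there, and then push back — the pushed-forward primitive and logarithmic form retain polynomial growth along $D$ (controlled by the pullback of a defining equation) and hence are meromorphic, respectively logarithmic, along $D$ by Riemann extension, using that $\pi$ is an isomorphism over $X\setminus D$.

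For $\tilde D=\{z_{1}\cdots z_{\ell}=0\}$ in local coordinates $z_{1},\dots,z_{n}$ on $\tilde X$, any meromorphic form admits a finite Laurent expansion in $z_{1},\dots,z_{\ell}$. Using the identity $z_{i}^{\alpha_{i}}\,dz_{i}=d\!\left(z_{i}^{\alpha_{i}+1}/(\alpha_{i}+1)\right)$ for $\alpha_{i}\neq -1$, one subtracts off an exact differential so that the remaining terms involve only $dz_{i}/z_{i}$ with holomorphic coefficients (together with holomorphic forms in the remaining variables); the closedness of $\omega$ then forces the remainder to be a bona fide logarithmic form. The main obstacle is the passage back from the SNC case to an arbitrary reduced $D$, i.e.\/ verifying that the primitive $\tilde\phi$ produced on $\tilde X$ actually descends to a meromorphic function on the base; this is where properness of $\pi$ and the control on pole orders via defining equations enter in an essential way.
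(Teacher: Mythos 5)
Your proposal for middle exactness is correct and essentially matches the local computation one would do in any approach; the genuine divergence is in surjectivity of $p$. There you take a log resolution $\pi:\tilde X\to X$ so as to invoke the degree-one piece of Deligne's quasi-isomorphism $\Omega^\bullet_{\tilde X_a}(\log\tilde D)\hookrightarrow\Omega^\bullet_{\tilde X_a}(*\tilde D)$ in the SNC setting, and then try to descend. The paper avoids the resolution entirely: because $\omega$ is a \emph{closed} meromorphic form, its Saito residues $\lambda_i=\Reso_{\nu_i}(\omega)$ along the components $D_i$ are \emph{constants} (\Proposition{\ref{res-basics}}), and the residue map is available here for an arbitrary reduced $D$. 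One simply subtracts $\sum_i\lambda_i\,df_i/f_i$ (using local defining equations $f_i$ of $D_i$), which is a global section of $\Omega^{cl}_{X_a}(\log D)$, to get a residue-free closed form $\omega^0$; this has local meromorphic primitives away from the singular set $C=D^{\mathrm{sing}}$, and the obstruction to gluing them to a primitive on a full neighbourhood is a class in $R^1j_*(\Cb_{U_a})$ with $j:X_a\setminus C\to X_a$, which vanishes because $\codim_{X_a}C\geq 2$. That route never has to solve a descent problem and works uniformly for non-SNC $D$.

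The descent step in your outline, which you flag yourself as the main obstacle, is where the argument as written has a real gap. For the function part $\tilde\phi$, the correct tool is not Riemann extension per se but the identity $\pi_*\Oc_{\tilde X}=\Oc_X$ for a proper birational $\pi$ onto smooth (hence normal) $X$: once the pole order of $\tilde\phi$ along $\tilde D$ is bounded by $N$, the function $(\pi^*f)^N\tilde\phi$ is holomorphic near a fibre $\pi^{-1}(x)$ and descends, so $\phi=g/f^N$ is meromorphic --- fine, but this should be said. For the logarithmic part $\tilde\eta$, however, ``polynomial growth plus Riemann extension'' is the wrong mechanism: being in $\Omega_{X_a}(\log D)$ in Saito's sense is not a growth condition on coefficients, and a holomorphic $1$-form on $\tilde X_a$ does not push forward to one on $X_a$ ($\Omega_{\tilde X_a}$ is not pulled back from $X_a$). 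What saves you here is that Saito's $\Omega_{X_a}(\log D)$ is a \emph{reflexive} sheaf: $\eta:=\omega-d\phi$ is a closed $1$-form on $X_a\setminus D$ which, near the smooth points of $D$ (where $\pi$ is a local isomorphism along the strict transform), visibly has a simple pole with constant residue, hence lies in $\Omega_{X_a}(\log D)$ away from the codimension $\geq 2$ set $D^{\mathrm{sing}}$, and then reflexivity extends it across. Without that replacement, the surjectivity argument is incomplete.
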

\begin{proof} Assume that $ D= \cup_{i=1}^r D_i$ where the $D_i$ are
  irreducible. Let $\omega$ be a section of $\Omega_{X_a}^{cl}(*D)$
  and put $\lambda_i = \Reso_{\nu_i} (\omega)\in \bar k$, where
  $\nu_i$ is the discrete valuation of the fraction field of the ring
  of complex convergent power series, corresponding to the divisor
  $D_i$. Then
  \begin{displaymath}
\omega^0=    \omega - \sum_{i=1}^r \lambda_i \frac {df}f   
  \end{displaymath}
  is a closed meromorphic 1-form with vanishing residues along $D$.
  There exist a subset $C $ of $D$ such that $D\setminus C$ is smooth
  and $\codim_{X_a}C \geq 2$. Let $j : U_a = X_a\setminus C_a\to X_a$
  be the open inclusion. Then there exists an open covering $\{U_i\}$
  of $U_a$ and holomorphic functions $\phi_{U_i}$ such that
  $d\phi_{U_i} = \omega^0\vert_{U_i}$ and
  $d(\phi_{U_i}\vert_{U_j}- \phi_{U_j}\vert_{U_i})=0$. Since
  $R^1j_*(\Cb_{U_a}) =0$ (see \Remark{\ref{vanishing-remark}}), it follows that locally in $X_a$ there
  exists a function $\phi$ such that $\omega^0 = d\phi$.
\end{proof}

\begin{remarks}\label{vanishing-remark}
  \begin{enumerate}
\item It is well-known, and can be proven with a topological
    argument, that
    \begin{displaymath}
      R^ij_*(\Cb_{U_a}) =0,
    \end{displaymath}
    when $i=1,2$ and $\codim_{X_a}C \geq 2$. We indiciate a different
    known proof, based on Grothendieck's comparison theorem
    \begin{displaymath}\tag{*}
      R\Gamma_C\circ DR (\Oc_{X_a}) = DR\circ R\Gamma_{[C]}(\Oc_{X_a}),
    \end{displaymath} 
    where $DR$ is the de~Rham functor and $R\Gamma_{[C]}$ denotes
    tempered local cohomology; the latter occurs in a  distinguished
    triangle
    \begin{displaymath}\tag{**}
R\Gamma_{[C]}(\Oc_{X_a}) \to \Oc_{X_a}\to       Rj_+j^!(\Oc_{X_a})\xrightarrow{+1},
    \end{displaymath}
    (see \cite{bjork:analD}*{Th. 5.4.1}). The homology of the complex
    $R\Gamma_{[C]}(\Oc_{X_a})$ is concentrated in degrees
    $\geq \codim_{X_a} C $ and has support in $C$, so by \thetag{*}
    the homology of
    $ R\Gamma_{C}(\Cb_{X_a})= R\Gamma_{C}DR (\Oc_{X_a})= DR\circ
    R\Gamma_{[C]}(\Oc_{X_a}) $ is concentrated in degrees
    $\geq 2 \codim_{X_a} C$. By applying $DR$ on \thetag{**} we
    get the distinguished tringle
    \begin{displaymath}
      R\Gamma_C(\Cb_{X_a}) \to \Cb_{X_a} \to    Rj_*j^*(\Cb_{X_a})\xrightarrow{+1},
    \end{displaymath}
    and therefore
    \begin{displaymath}
R^kj_*(\Cb_{U_a})=      R^kj_*j^*(\Cb_{X_a})= 0,
    \end{displaymath}
    when $k=1, \ldots , 2 (\codim_{X_a} C -1)$. Keeping the assumption
    $\codim_{X_a} C \geq 2$, the exact sequence
    $0 \to \Cb_{X_a}\to \Oc_{X_a}\to \Omega^{cl}_{X_a}\to 0$ gives
    \begin{displaymath}
      R^1j_*(\Omega^{cl}_{U_a}) = R^1j_*(\Oc_{U_a}) = R^2\Gamma_{C}(\Oc_{X_a}),
    \end{displaymath}
    which is moreover $0$ when $\codim_{X_a} C > 2$ .
  \item \Lemma{\ref{exact-sequence}} can also be proven using
    integrals (see \Remark{\ref{res-rem}}, (2)) and Riemann's
    extension theorem, stating that $\Oc_{X_a} = j_*j^*(\Oc_{X_a})$
    when $C$ is of codimension $\geq 2$.

  \end{enumerate} \end{remarks}

Assume that $D$ is a divisor such that $X_0=X\setminus D$ is affine
and $M$ be a $\Dc_X$-module whose restriction to $X_0$ is a connection
of rank $1$. It follows that $\Gamma(X, M)\neq 0$, and any global
non-zero section $\mu$ of $M$ restricts to a cyclic generator,
$M_{\vert X_0}= \Dc_{X_0} \mu$, where
$\partial \cdot \mu = \omega(\partial) \mu$, $\partial \in T_X$ for
some
$\omega \in \Gamma(X, \Omega_X^{cl}(*D))=\Gamma(X_0,
\Omega_{X_0}^{cl})$.

\begin{proposition}\label{ample}  Let $X$ be a smooth projective
  variety and $D$ be a divisor such that $X_0= X\setminus D$ is
  affine. Then
  \begin{equation}\label{decom-closed-one}
    \Gamma(X_0,\Omega_{X_0}^{cl}) = \Gamma(X,\Omega_X^{cl}(\log
    (D))) \oplus   d\Gamma(X_0, \Oc_{X_0}) \oplus H^1(X,
    \Oc_X).
  \end{equation}
\end{proposition}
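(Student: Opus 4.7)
\emph{Plan.} The plan is to transfer the assertion to the complex-analytic setting by the Lefschetz principle, so we may assume $k=\Cb$ and work on the compact K\"ahler manifold $X_a$. G.A.G.A., applied to the coherent sheaves $\Omega_X(\log D)$ and $\Oc_X(nD)$ and to the direct limits $\Oc_X(*D)=\varinjlim\Oc_X(nD)$ and $\Omega_X(*D)=\varinjlim\Omega_X(nD)$, identifies every algebraic global section in the statement with its analytic counterpart; likewise $H^1(X,\Oc_X)\cong H^1(X_a,\Oc_{X_a})$. Affineness of $X_0$ gives $H^i(X_a,\Oc_{X_a}(*D))=H^i(X_0,\Oc_{X_0})=0$ for $i\geq 1$.

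The argument then rests on two short exact sequences. The first is the Poincar\'e--meromorphic sequence
\begin{displaymath}
0\to \Cb_{X_a}\to \Oc_{X_a}(*D)\xrightarrow{d} d\Oc_{X_a}(*D)\to 0;
\end{displaymath}
its cohomology long exact sequence, collapsed by the affine vanishing above, yields
\begin{displaymath}
0\to d\Gamma(X_0,\Oc_{X_0})\to \Gamma(X_a, d\Oc_{X_a}(*D))\to H^1(X_a,\Cb_{X_a})\to 0,
\end{displaymath}
and \Theorem{\ref{global-forms}} with $m=0$, $n=1$ (using $\Omega^{0,cl}_{X_a}=\Cb_{X_a}$) supplies the splitting $H^1(X_a,\Cb_{X_a})=H^0(X,\Omega^1_X)\oplus H^1(X,\Oc_X)$. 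Choose any $\Cb$-linear lift $V\subset \Gamma(X_a, d\Oc_{X_a}(*D))\subset \Gamma(X_0,\Omega^{cl}_{X_0})$ of the $H^1(X,\Oc_X)$-summand.

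The second ingredient is \Lemma{\ref{exact-sequence}}. Its cohomology long exact sequence shows that the image of
\begin{displaymath}
p:\Gamma(X_a,d\Oc_{X_a}(*D))\oplus \Gamma(X,\Omega^{cl}_X(\log D))\to \Gamma(X_0,\Omega^{cl}_{X_0})
\end{displaymath}
equals the sum of the two summands, and their intersection is $\Gamma(X,\Omega^{cl}_X)=\Gamma(X,\Omega^1_X)$: a form that is at once exact-meromorphic and log-closed has vanishing residues along every $D_i$ by \Proposition{\ref{res-basics}}(3), so by~(\ref{zero-resdues}) it lies in $\Gamma(X,\Omega^{cl}_X)$, which coincides with $\Gamma(X,\Omega^1_X)$ by \Theorem{\ref{global-forms}}(1). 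Since holomorphic $1$-forms on $X$ are logarithmic closed, the $H^0(X,\Omega^1_X)$-part of $H^1(X_a,\Cb)$ is absorbed into $\Gamma(X,\Omega^{cl}_X(\log D))$, leaving $V\cong H^1(X,\Oc_X)$ as the genuinely new contribution. This produces the triple direct sum
\begin{displaymath}
\Gamma(X,\Omega^{cl}_X(\log D))\oplus d\Gamma(X_0,\Oc_{X_0})\oplus V\ \subseteq\ \Gamma(X_0,\Omega^{cl}_{X_0}).
\end{displaymath}
Directness follows because if $df\in \Gamma(X,\Omega^{cl}_X(\log D))$ with $f\in\Gamma(X_0,\Oc_{X_0})$, then residue vanishing (\Proposition{\ref{res-basics}}(3)) combined with~(\ref{zero-resdues}) makes $df$ a regular $1$-form on $X$, which forces $f$ to be regular on $X$, hence constant (as $X$ is projective), giving $df=0$; a parallel argument disposes of the intersection with $V$.

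The main obstacle is the totality of the sum, namely the surjectivity of $p$ on global sections. By \Lemma{\ref{exact-sequence}} the cokernel of $p$ embeds into $H^1(X_a,\Omega^{cl}_{X_a})$, which \Theorem{\ref{global-forms}} with $m=n=1$ identifies as $H^1(X,\Omega^1_X)\oplus H^0(X,\Omega^2_X)$. Vanishing of this cokernel is not immediate from the local decomposition furnished by \Lemma{\ref{exact-sequence}}; it requires a diagram chase comparing the boundary map $\delta:\Gamma(X_0,\Omega^{cl}_{X_0})\to H^1(X_a,\Omega^{cl}_{X_a})$ with the connecting morphisms of the Poincar\'e--meromorphic sequence and the residue sequence~(\ref{zero-resdues}), where the decisive inputs are once again the affineness of $X_0$ and the Hodge-type decompositions of \Theorem{\ref{global-forms}}.
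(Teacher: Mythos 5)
Your proposal sets up the same framework as the paper — Lefschetz principle, G.A.G.A., the Poincar\'e--meromorphic sequence, \Lemma{\ref{exact-sequence}}, and the Hodge-type identifications of \Theorem{\ref{global-forms}} — and your treatment of directness and of the splitting of $\Gamma(X_a,d\Oc_{X_a}(*D_a))$ is sound. But you stop exactly at the heart of the matter: you yourself write ``the main obstacle is the totality of the sum, namely the surjectivity of $p$ on global sections,'' concede that vanishing of the cokernel ``is not immediate,'' and defer it to an unexecuted ``diagram chase.'' Without that step you have only exhibited a direct sum of three subspaces inside $\Gamma(X_0,\Omega^{cl}_{X_0})$, not the decomposition claimed. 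This is the genuine content of the proposition and must be proved, not gestured at.

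Here is what the missing step amounts to and how the paper closes it. The long exact cohomology sequence of \Lemma{\ref{exact-sequence}} shows that the cokernel of $p$ on global sections injects into the kernel of
\begin{displaymath}
H^1(X_a,\Omega^{cl}_{X_a})\ \longrightarrow\ H^1(X_a,\Omega^{cl}_{X_a}(\log D_a))\oplus H^1(X_a, d\Oc_{X_a}(*D_a)),
\end{displaymath}
so it suffices to show that the composition with the projection onto the second factor is already injective. Because $X_0$ is affine one has $H^i(X_a,\Oc_{X_a}(*D_a))=0$ for $i\geq 1$, whence the Poincar\'e--meromorphic sequence gives a canonical isomorphism $H^1(X_a,d\Oc_{X_a}(*D_a))\cong H^2(X_a,\Cb_{X_a})$; under this isomorphism the map in question becomes the connecting morphism $H^1(X_a,\Omega^{cl}_{X_a})\to H^2(X_a,\Cb_{X_a})$ of the holomorphic Poincar\'e-lemma sequence $0\to\Cb_{X_a}\to\Oc_{X_a}\to\Omega^{cl}_{X_a}\to 0$. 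Its injectivity is equivalent to surjectivity of $H^1(X_a,\Cb_{X_a})\to H^1(X_a,\Oc_{X_a})$, which is exactly what the induction in the proof of \Theorem{\ref{global-forms}} establishes (the map $A^1_1$ there). The paper makes this concrete by matching the Hodge identifications $H^1(X_a,\Omega^{cl}_{X_a})=H^1(X,\Omega_X)\oplus\Gamma(X,\Omega^2_X)$ and $H^2(X_a,\Cb_{X_a})=H^2(X,\Oc_X)\oplus H^1(X,\Omega_X)\oplus\Gamma(X,\Omega^2_X)$ and noting that the composition onto the two common summands is an isomorphism. You should carry out this identification; it is short once you set up the commuting ladder of sheaf sequences induced by $\Oc_{X_a}\hookrightarrow\Oc_{X_a}(*D_a)$.
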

For the proof we need to work with the associated complex analytic
manifold $X_a$. Consider the exact sequence
\begin{equation}\label{important-exact}
  0 \to \Cb_{X_a} \to \Oc_{X_a}(*D_a)\xrightarrow{d} d\Oc_{X_a}(*D_a) \to 0.
\end{equation}
We have therefore the exact sequence (which will be explained in the
proof below)
\begin{equation}\label{important-conseq}
  0 \to d \Gamma (X_0, \Oc_{X_0}) \to  \Gamma(X_a, d\Oc_{X_a}(*D_a)) \to
  H^1(X, \Oc_X) \oplus \Gamma(X, \Omega_X)\to 0.
\end{equation}
\begin{proof} By Lefschetz' principle we can assume that $k= \Cb$. We
  first prove that in the transcendental topology we have the exact
  sequence
  \begin{equation}\label{proof-exact}
  \begin{aligned}[b]
   0  & \to  \Gamma(X_a,\Omega^{cl}_{X_a}) \to     \Gamma(X_a,\Omega_{X_a}^{cl}(\log (D_a))) \oplus 
         \Gamma(X_a,d\Oc_{X_a}(* D_a)) \\
       & \to      \Gamma(X_a,\Omega_{X_a}^{cl}(*D_a))\to 0.
  \end{aligned}
  \end{equation}
  By \Lemma{\ref{exact-sequence}} we have the exact seqence
\begin{displaymath}
  0 \to \Omega_{X_a}^{cl}\to \Omega^{cl}_{X_a}(\log D_a) \oplus
  d\Oc_{X_a}(*D_a) \to  \Omega_{X_a}^{cl}(*D_a)\to 0,
\end{displaymath}
whose associated long exact sequence in homology contains the map
\begin{displaymath}
  H^1(X_a, \Omega_{X_a}^{cl}) \to  H^1(X_a, \Omega_{X_a}^{cl}(\log D_a))\oplus
  H^1(X_a, d\Oc_{X_a}(*D_a)).
\end{displaymath}
To prove the exactness of (\ref{proof-exact}) it suffices to see that
the composition to a map on the second factor
\begin{equation}\label{first-map}
  H^1(X_a, \Omega_{X_a}^{cl}) \to   H^1(X_a, d\Oc_{X_a}(*D_a))
\end{equation}
is injective, which is done by employing the exact sequence
(\ref{important-exact}). Since $X \setminus D$ is affine, by G.A.G.A.
$H^i(X_a, \Oc_{X_a}(*D_a) ) = H^i(X, \Oc_{X}(*D) ) =0$, $i\geq 1$,
hence by the Hodge theorem we get
\begin{align*}
  H^1(X_a, d\Oc_{X_a}(*D_a)) &= H^2(X_a, \Cb_{X_a})\\
  &=H^2(X_a, \Oc_{X_a}) \oplus H^1(X_a, \Omega_{X_a})\oplus \Gamma(X_a,\Omega^2_{X_a}).
\end{align*}
Again by G.A.G.A.  we get the sequence (\ref{important-conseq}), so that
\begin{equation}\label{second}
  \Gamma (X_a, d\Oc_{X_a}(*D)) = d \Gamma (X, \Oc_X(*D)) \oplus H^1(X, \Oc_{X})   \bigoplus \Gamma(X,
      \Omega_{X}).
\end{equation}
Since moreover 
\begin{displaymath}
  H^1(X_a, \Omega_{X_a}^{cl}) = H^1(X_a, \Omega_{X_a})\oplus
  \Gamma(X_a, \Omega^2_{X_a})
\end{displaymath}
\Prop{\ref{global-forms}},  (\ref{first-map})  defines  a map
\begin{displaymath}
  H^1(X_a, \Omega_{X_a})\oplus
  \Gamma(X_a, \Omega^2_{X_a}) \to  
  H^2(X_a, \Oc_{X_a}) \oplus H^1(X_a, \Omega_{X_a})\oplus \Gamma(X_a,\Omega^2_{X_a}),
\end{displaymath}
whose composition to a map
$ H^1(X_a, \Omega_{X_a})\oplus \Gamma(X_a, \Omega^2_{X_a})
\to H^1(X_a, \Omega_{X_a})\oplus \Gamma(X_a, \Omega^2_{X_a})$
is an isomorphism, it follows that (\ref{first-map}) is injective.

By G.A.G.A. we can first erase $a$ on the left in (\ref{proof-exact}),
noting also that $\Gamma(X, \Omega^{cl}_X) = \Gamma(X, \Omega_X)$
\Th{\ref{global-forms}}, and since
\begin{align*}
  \Gamma(X_a, \Omega^{cl}_{X_a}(*D_a))&\subset \Gamma(X_a,
  \Omega_{X_a}(*D_a))\cap \Gamma(X_a, \Omega^{cl}_{X_a}(*D_a)) =
  \Gamma(X, \Omega^{cl}_{X}(*D)) \\
  &= \Gamma(X_0, \Omega^{cl}_{X_0}),
\end{align*}
we can erase it on the right too, and
$\Gamma(X_a,\Omega_{X_a}^{cl}(\log (D_a))) =
\Gamma(X,\Omega_{X}^{cl}(\log (D))$. Then the proof is complete by
(\ref{second}).
\end{proof}

Put $g= \dim_k H^1(X, \Omega_X)$ and select a subset
\begin{equation}
    \label{g-forms}
    \{\omega_i\}_{i=1}^g\subset \Gamma(X_a, d\Oc_{X_a}(*D_a)) \subset \Gamma(X_a, \Omega_{X_a}^{cl}(*D_a))=\Gamma(X_0, \Omega_{X_0}^{cl}),
  \end{equation}
  that maps to a basis of $H^1(X, \Oc_X)$ in (\ref{important-conseq}).
  By \Proposition{\ref{ample}} a closed 1-form $ \omega$ on $X_0$ has a
  unique decomposition
\begin{equation}\label{rep-rat-form}
  \omega = \gamma + d\psi + \sum_{i=1} ^g\alpha_i \omega_i,
\end{equation}
where $\psi\in \Gamma(X_0,\Oc_{X_0})$,
$\gamma \in \Gamma(X, \Omega^{cl}_X(\log D))$, and $\alpha_i\in k$.
Let $D= \cup_{j=1}^r D_j$ be a decomposition into irreducible
components and $D^s$ be the singular locus of $D$. Any point in
$X \setminus D^s$ is contained in an affine open neighbourhod $U$ such
that if $f_j \in \Oc_X(U)$ defines $D_j$, $D_j\cap U = V(f_j)$, then
\begin{displaymath}
  \gamma\vert_{U }  = \sum_{j=1}^r \lambda_j \frac {df_j}{f_j} + \gamma_0,
\end{displaymath}
where $\gamma_0 \in \Omega(U, \Omega_X^{cl})$ and
$\lambda_j\in \bar k$.\footnote{Such a represention of a logarithmic
  form $\gamma$ is in general not possible at singular points of $D$
  (see (\ref{res-sheaf}) and  \cite{saito-kyoji:log}).}

Since locally in the analytic topology, in sufficiently small open
sets $U_a$, $\omega = \sum \lambda_i df_i/f_i + d\psi$, for some
meromorphic functions $f_i, \psi \in \Gamma (U_a, \Oc_{X_a}(*D_a))$,
$M$ can be presented as follows
\begin{displaymath}
  M(U_a)= \Dc_X(U_a) e^{\psi}  \prod_{j} f_{j}^{\lambda_j}, \quad \partial \cdot
  e^{\psi}  \prod_{j}  f_{j}^{\lambda_j} = (\partial (\psi ) + \sum_j\lambda_j\frac {\partial(f_j)}{f_j}) e^{\psi}\prod_{j}  f_j^{\lambda_j},
\end{displaymath}
where $\partial \in T_{X_a}(U_a)$. Since $M$ is a rational connection this
determines the action on $\mu$ also over small neighbourhoods $U_a$ of
points in $D^s\subset X_a$.
\begin{remark}
  In classical terminology the differentials $\omega_i$ represent a
  basis of the differentials of the second kind
  $\Gamma(X_a, d\Oc_{X_a}(*D_a))$ (i.e. differential forms on $X_0$
  with zero residues along $D$) modulo differentials of the first kind
  and exact differentials. Since no differentials of the first kind
  are exact when $X$ is projective, it follows also that
  $\dim_k \Gamma(X_a, d\Oc_{X_a}(*D_a))/ d\Gamma(X, \Oc_X(*D)) =
  h^{0,1} + h^{1,0}= \dim H^1(X_a,\Cb)$. Elliptic curves:
  $X_0= V(y^2-( x^3+ ax +b)) \subset \Ab^2_k\subset \Pb^2_k$,
  $4a^3 + 27b^2\neq 0$. Then $\omega= dx/y$ and $\omega_1 = xdx/y$ are
  differential of the first and second kind, respectively.
  K3-surfaces: Then
  $h^{0,1}=h^{1,0}=0 $, and in the decomposition (\ref{rep-rat-form})
  we only have $\omega= \gamma + d\psi$.
\end{remark}

\subsubsection{Classification of rank 1 connections on affine
  varieties}
Let $D$ be a hypersurface in a smooth projective variety $X$ and let
$j: X_0 = X\setminus D \to X $ be the open inclusion of its
complement, where we assume that $X_0$ is affine. Let $\Io_{conn}(X)$
denote the category of connections on $X$ of rank $1$ and
$\bar \Io_{conn}(X)$ its isomorphism classes. The restriction defines
a functor
 \begin{displaymath}
   j^!: \Io_{conn}(X)\to \Io_{conn}(X_0), \quad M\mapsto j^!(M),
 \end{displaymath}
 and we let $\Io^0_{conn}(X)$ be the subcategory of modules such that
 $j^!(M)\cong \Oc_{X_0}$. Denote by
 $\bar \Io_{conn, reg}(X,D)$ the isomorphism classes of connections of
 rank 1 with regular singularities along $D$; see
 \cite{hotta-takeuchi-tanisaki, borel:Dmod} for the notion of regular
 singular $\Dc$-modules. Letting $\hat \pi_1(U_a )$ be the set of
 linear characters of the fundamental group of $U_a$, by the
 Riemann-Hilbert correspondence and G.A.G.A. we have an equivalence of
 categories
\begin{displaymath}
  \bar \Io_{conn, reg}(X,D)= \hat \pi_1(U_a )
\end{displaymath}
(see [loc cit]).

As a
divisor, write $D= \sum_{i=1}^r D_i$, where $D_i$ are irreducible
hypersurfaces, and denote by $\Cb_{D_i}$ and  $(\Cb/\Zb)_{D_i}$ the direct image sheaf
on $X$ of the constant sheaves  on $D_i$ with values in $\Cb$ and
$\Cb/\Zb$, respectively.

Let $X_a$ denote the complex analytic manifold and $D_a$ the complex
analytic divisor that can be naturally associated with $X$ and $D$. K.
Saito defined the analytic residue map (we denote it by the same
symbol as in (\ref{res-sheaf}))
    \begin{displaymath}
  \begin{tikzcd}
    \Omega_{X_a}(\log D_a) \arrow[r, "\Reso "] & \Mcc_{D_a} \\
    \Omega^{cl}_{X_a}(\log D_a) \arrow[r,"\Reso"]\arrow[u ]& \oplus_{i=1}^r\Cb_{D_{i,a}},
    \arrow[u]
\end{tikzcd}
  \end{displaymath}
  where $\Mcc_D$ is the sheaf of  meromorphic functions on $D_a$, which
  contains the locally constant sheaf $\oplus_{i=1}^r\Cb_{D_{i,a}}$ (see
  \cite{saito-kyoji:log}*{Def 2.2}). 

  Similarly to
  \Lemma{\ref{exact-sequence}} (with a similar proof) we have:
  \begin{lemma} The following sequence is exact
    \begin{equation}\label{exact-dlog}
  0 \to \dlog \Oc^*_{X_a}(*D_a) \to \Omega^{cl}_{X_a}(\log D_a) \xrightarrow{\reso}
  \bigoplus_{i=1}^r (\frac{\Cb}{\Zb})_{D_{a,i}}\to 0. 
\end{equation}
  \end{lemma}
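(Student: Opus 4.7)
The plan is to verify exactness stalkwise at each point $x\in X_a$. Let $D_{i_1},\dots,D_{i_s}$ be the branches of $D$ through $x$, with local defining functions $f_{i_j}\in\Oc_{X_a,x}$.

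\emph{Step 1: well-definedness and the leftmost inclusion.} For $\phi\in\Oc^*_{X_a}(*D_a)_x$, the form $\dlog\phi$ is closed and, since $\nu_{i_j}(\phi)\in\Zb$, has at most a simple pole along each $D_{i_j}$; hence it lies in $\Omega^{cl}_{X_a}(\log D_a)_x$. By definition of the sheaf $\dlog\Oc^*_{X_a}(*D_a)$ as the image of $\dlog$, the first map is injective.

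\emph{Step 2: the composition is zero.} By \Proposition{\ref{res-basics}}, (4), $\reso_{\nu_{i_j}}(\dlog\phi)=\nu_{i_j}(\phi)\in\Zb$, which is zero in $(\Cb/\Zb)_{D_{a,i_j}}$.

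\emph{Step 3: exactness in the middle.} Let $\omega\in\Omega^{cl}_{X_a}(\log D_a)_x$ with $\reso_{\nu_{i_j}}(\omega)\in\Zb$; call the integer value $n_{i_j}$. Consider
\[
   \eta \;=\; \omega - \sum_{j=1}^{s} n_{i_j}\,\dlog f_{i_j}.
\]
This is closed and logarithmic along every branch of $D$ through $x$, and $\reso_{\nu_{i_j}}(\eta)=0$ for all $j$. The analytic version of the exact sequence \qr{zero-resdues} — which at $x$ reduces to the statement that a logarithmic form with vanishing residues along all local branches is holomorphic — then gives $\eta\in\Omega_{X_a,x}$. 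Since $\eta$ is also closed, the holomorphic Poincar\'e lemma produces $h\in\Oc_{X_a,x}$ with $dh=\eta$; setting $\phi=e^h\prod_j f_{i_j}^{n_{i_j}}\in\Oc^*_{X_a}(*D_a)_x$ yields $\omega=\dlog\phi$, as required.

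\emph{Step 4: surjectivity of $\reso$.} Given a class $(\lambda_{i_j})_j\in\bigoplus_j(\Cb/\Zb)_{D_{a,i_j},x}$, lift arbitrarily to $\tilde\lambda_{i_j}\in\Cb$ and take $\omega=\sum_j\tilde\lambda_{i_j}\,df_{i_j}/f_{i_j}$; this is a closed logarithmic form near $x$ whose residue along $D_{i_j}$ is $\tilde\lambda_{i_j}$, mapping to the prescribed class.

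\emph{Main obstacle.} The only non-formal input is Step~3: one must know that a closed logarithmic $1$-form whose residue along every local branch of $D$ vanishes is already holomorphic, even at points of the singular locus $D^s$. At smooth points of $D$ this is immediate from \Proposition{\ref{def-log-sheaf}}; at points of $D^s$ it is exactly the analytic counterpart of the residue exact sequence \qr{zero-resdues} (K.~Saito's theorem), and the cleanest way to invoke it is via \Proposition{\ref{res-basics}}, (2), which guarantees that the residue along each irreducible component is a well-defined constant independent of any choice made at $x$.
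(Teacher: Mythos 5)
Your proof is correct, and it takes a slightly different (and cleaner) route than the paper's. The paper asserts that this lemma has ``a similar proof'' to \Lemma{\ref{exact-sequence}}: after subtracting $\sum n_i\,\dlog f_i$ one would work away from a codimension-$\geq 2$ subset $C$ of the singular locus of $D$, construct local primitives there, glue them via the vanishing $R^1 j_*(\Cb_{U_a})=0$ from \Remark{\ref{vanishing-remark}}, and then extend across $C$ by Riemann extension before exponentiating. Your Step~3 instead invokes K.~Saito's residue exact sequence \qr{zero-resdues}, which, as an exact sequence of sheaves valid also at singular points of $D$, shows directly that $\eta$ is holomorphic at $x$; the holomorphic Poincar\'e lemma on a small ball then completes the local argument with no cohomological gluing at all. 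Both routes work, but yours is shorter because it reuses the sheaf-theoretic packaging of Saito's theorem that the paper has already set up, rather than re-running the topological vanishing argument. One wording caveat: \qr{zero-resdues} attaches residues to the irreducible \emph{components} $D_i$ of $D$, not to their local analytic branches at $x$, so your $D_{i_1},\dots,D_{i_s}$ should be read as the components through $x$, with $f_{i_j}$ a local equation of the full germ of $D_{i_j}$ at $x$ (the product of its branch equations). The fact that makes the stalkwise argument go through even when $D_{i_j}$ is singular at $x$ is, as you note, \Proposition{\ref{res-basics}}(2): the residue of a closed form along each component is a single constant germ, so vanishing of the integers $n_{i_j}$ after your subtraction does kill the germ of $\Reso(\eta)$ at $x$.
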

  The corresponding long exact sequence in cohomology contains the map
\begin{displaymath}
\psi :   \bigoplus_{j=1}^r \frac{\Cb}{\Zb}\to H^1(X_a,  \dlog \Oc^*_{X_a}(*D_a)).
\end{displaymath}

\begin{theorem}\label{class-rank-1} Assume that $X$ is a smooth
  complex projective variety and $D$ be a divisor such that
  $X_0 = X\setminus D$ is affine. Then
  \begin{enumerate}
  \item \begin{displaymath} \bar \Io_{conn}(X,D) = \Ker (\psi)\oplus
      d\Gamma(X_0, \Oc_{X_0})\oplus \Cb^g\oplus \bar \Io^0(X).
    \end{displaymath}
\item 
\begin{displaymath}
  \bar \Io_{conn, reg}(X,D) =\Ker (\psi) \oplus \bar \Io^0(X).
\end{displaymath}
  \end{enumerate}
\end{theorem}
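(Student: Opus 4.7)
The approach is to combine \Proposition{\ref{lem-rank1}}(1), \Proposition{\ref{ample}}, and the long exact sequence associated to (\ref{exact-dlog}). Applying \Proposition{\ref{lem-rank1}}(1) to the affine variety $X_0$ yields the canonical splitting
\begin{displaymath}
\bar \Io(X_0) = \frac{\Gamma(X_0, \Omega_{X_0}^{cl})}{\dlog \Gamma(X_0, \Oc_{X_0}^*)} \oplus \Ker(c),
\end{displaymath}
which reduces the proof of (1) to computing each of these two summands separately.

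For the first summand, I would decompose $\Gamma(X_0, \Omega_{X_0}^{cl})$ using \Proposition{\ref{ample}} into its logarithmic, exact, and second-kind components. The key compatibility is that $\dlog \Gamma(X_0, \Oc_{X_0}^*)$ lies entirely inside the logarithmic summand $\Gamma(X, \Omega_X^{cl}(\log D))$: indeed, any $\dlog \phi$ with $\phi \in \Gamma(X_0, \Oc_{X_0}^*)$ has only logarithmic poles along $D$ with integer residues, and by uniqueness of the \Proposition{\ref{ample}} decomposition together with \Lemma{\ref{exact-log}}, its exact and second-kind components must vanish. Hence
\begin{displaymath}
\frac{\Gamma(X_0, \Omega_{X_0}^{cl})}{\dlog \Gamma(X_0, \Oc_{X_0}^*)} = \frac{\Gamma(X, \Omega_X^{cl}(\log D))}{\dlog \Gamma(X_0, \Oc_{X_0}^*)} \oplus d\Gamma(X_0, \Oc_{X_0}) \oplus \Cb^g,
\end{displaymath}
and the first factor is identified with $\Ker(\psi)$ via the long exact sequence of (\ref{exact-dlog}), combined with G.A.G.A. to pass between analytic and algebraic global sections.

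For the second summand I would identify $\Ker(c)$ with $\bar \Io^0(X)$ by constructing mutually inverse natural maps using the restriction $\Pic(X) \to \Pic(X_0)$ and \Proposition{\ref{lem-rank1}}(4) on the completion $X$; the idea is that a line bundle on $X_0$ admitting a flat algebraic connection should correspond to a rank $1$ $\Dc_X$-module on $X$ whose restriction to $X_0$ is trivial, matching the definition of $\Io^0_{conn}(X)$. Part (2) then follows from (1) by the observation that a rank $1$ connection on $X_0$ has regular singularities along $D$ precisely when its connection $1$-form representative can be chosen purely logarithmic: the summands $d\Gamma(X_0, \Oc_{X_0})$ and $\Cb^g$ encode respectively the exponential and other irregular contributions, so they disappear in the regular singular subcategory and leave $\Ker(\psi) \oplus \bar \Io^0(X)$. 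The main obstacle I anticipate is the clean identification $\Ker(c) \cong \bar \Io^0(X)$; this is not purely formal and requires careful comparison of $\Pic^\tau(X)$ with $\Ker(c) \subset \Pic(X_0)$ via the restriction map, using the exponential sequence on $X_a$ and Hodge theory to control the relevant kernels and images.
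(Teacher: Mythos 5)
Your overall strategy begins on the right track (Proposition \ref{lem-rank1}(1) plus Proposition \ref{ample} plus Lemma \ref{exact-log} to split off the $d\Gamma(X_0,\Oc_{X_0})\oplus\Cb^g$ piece), but there is a genuine gap in how you produce the summands $\Ker(\psi)$ and $\bar\Io^0(X)$, and in fact your proposed identification
\begin{displaymath}
\frac{\Gamma(X,\Omega^{cl}_X(\log D))}{\dlog\Gamma(X_0,\Oc^*_{X_0})}=\Ker(\psi)
\end{displaymath}
is not correct. By the long exact sequence attached to (\ref{exact-dlog}), $\Ker(\psi)$ is the image of $\Gamma(X_a,\Omega^{cl}_{X_a}(\log D_a))$ in $\bigoplus_j\Cb/\Zb$, which is $\Gamma(X_a,\Omega^{cl}_{X_a}(\log D_a))$ modulo the group $\Gamma(X_a,\dlog\Oc^*_{X_a}(*D_a))$ of \emph{global sections of the image sheaf}. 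Your quotient, by contrast, is modulo the group $\dlog\Gamma(X_a,\Oc^*_{X_a}(*D_a))$, the \emph{image of the map on global sections}. These are not the same: their quotient sits inside $H^1(X_a,\Cb^*_{X_a})$ via the long exact cohomology sequence of (\ref{basic-exact}), namely
\begin{displaymath}
\frac{\Gamma(X_a,\dlog\Oc^*_{X_a}(*D_a))}{\dlog\Gamma(X_a,\Oc^*_{X_a}(*D_a))}\cong\Ker\bigl(C:H^1(X_a,\Cb^*_{X_a})\to\Pic(X_0)\bigr),
\end{displaymath}
and this kernel is precisely $\bar\Io^0(X)$ (using Proposition \ref{lem-rank1}(3) to identify $H^1(X_a,\Cb^*_{X_a})$ with $\bar\Io(X)$). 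So the correct statement is
\begin{displaymath}
\frac{\Gamma(X,\Omega^{cl}_X(\log D))}{\dlog\Gamma(X_0,\Oc^*_{X_0})}=\Ker(\psi)\oplus\bar\Io^0(X),
\end{displaymath}
and this is where \emph{both} summands of the theorem that you are still missing come from. You need both exact sequences (\ref{exact-dlog}) and (\ref{basic-exact}): the first produces $\Ker(\psi)$, the second produces $\bar\Io^0(X)$ as the defect between sheafy and global $\dlog$.

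As a consequence, your separate attempt to manufacture $\bar\Io^0(X)$ by identifying it with $\Ker(c)$ (coming from applying Proposition \ref{lem-rank1}(1) to $X_0$) is off the mark and, combined with your miscount above, would double-count a summand. The paper does not invoke $\Ker(c_{X_0})$ at all; once the logarithmic quotient is decomposed correctly, $\bar\Io^0(X)$ is already present. (You would also face the problem that $\Ker(c_{X_0})\subset\Pic(X_0)$ and $\bar\Io^0(X)=\Ker(C)\subset H^1(X_a,\Cb^*_{X_a})$ are subgroups of different groups, so there is no canonical comparison map to begin with.) Your argument for part (2) is, modulo the same corrections, sound: the summands $d\Gamma(X_0,\Oc_{X_0})$ and $\Cb^g$ drop out in the regular singular case, the former because exponentials are irregular, the latter because the rational second-kind forms $\omega_i$ from (\ref{g-forms}) define irregular connections.
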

Compare also  with \Proposition{\ref{lem-rank1}}.
\begin{proof} 
  Since $ (\dlog \Oc^*_{X_0}) \cap d\Oc_{X_0} = 0$
  \Lem{\ref{exact-log}}, \Propositions{\ref{ample}}{\ref{lem-rank1}}
  give
    \begin{displaymath}
    \bar \Io^{r}_{conn}(X,D) = \frac {\Gamma(X, \Omega^{cl}_X(\log D))}{\dlog \Gamma(X_0,
      \Oc^*_{X_0})}\oplus d\Gamma(X_0, \Oc_{X_0}) \oplus \Cb^g,
  \end{displaymath}
and
    \begin{displaymath}
      \bar \Io^{r}_{conn,reg}(X,D) = \frac {\Gamma(X, \Omega^{cl}_X(\log D))}{\dlog \Gamma(X_0,
        \Oc^*_{X_0})},
  \end{displaymath}
  since there can be no exponential component in a rational connection
  with regular singularities and also since the rational forms
  $\omega_i$ forming a basis for $H^1(X,\Omega_X)$ do not define
  regular singular connections (see (\ref{g-forms})). The sequences
  (\ref{exact-dlog}) and
  \begin{equation}\label{basic-exact}
  0 \to \Cb^*_{X_a}\to \Oc^*_{X_a}(*D_a) \xrightarrow{\dlog} \dlog (\Oc^*_{X_a}(*D_a)
  ) \to 0,
\end{equation}
now  imply
  \begin{align*}
    \frac {\Gamma(X, \Omega^{cl}_X(\log D))}{\dlog \Gamma(X,
    \Oc^*_X(*D))} &= \frac {\Gamma(X_a, \Omega^{cl}_{X_a}(\log D_a))}{ \Gamma(X_a, \dlog
                    \Oc^*_{X_a}(*D_a))}\oplus \frac {\Gamma(X_a, \dlog
                    \Oc^*_{X_a}(*D_a))}{\dlog \Gamma(X_a, 
                    \Oc^*_{X_a}(*D_a))}\\
                  &= \Ker (\psi) \oplus \bar \Io^0(X); 
  \end{align*}
  the last equality follows since $ \bar \Io^0(X) = \Ker (C)$, where
  the map
  \begin{displaymath}
C: H^1(X_a, \Cb^*_{X_a})= \bar \Io(X)   \to H^1(X_a,\Oc^*_{X_a}(*D_a)) =  \Pic (X_0),
    \end{displaymath}
    occurs in the long exact sequence of the cohomology of
    (\ref{basic-exact}), and the last equality follows from G.A.G.A.;
    see also \Proposition{\ref{lem-rank1}}.
\end{proof}

When all rank 1 connections on the affine variety $X_0$ are trivial
then obviously $\bar \Io^0 (X) = \bar \Io(X)$; if $X$ is simply
connected (\Section{\ref{sc-section}}), then $\bar \Io^0 (X) $ is a
singleton set; see also \Corollary{\ref{triv-ranke-1}}. We now
show how the subgroup $\Ker (\psi)$ of $(\Cb/\Zb)^r$ can be
successively computed using the exact sequences (\ref{basic-exact})
and
\begin{equation}
  \label{nonbasix-exact}
0 \to \Oc_{X_a}^* \to \Oc_{X_a}^*(*D_a)  \xrightarrow{\nu_D} \Zb_{D_a}\to 0,
\end{equation}
where $\Zb_{D_a}$ is the direct image on $X_a$ of the constant sheaf
on $D_a$ with values in the integers $\Zb$, and $\nu_D$ is the order
valuation along $D_a$. These sequences are exact also after erasing
the index $a$, where the first then implies
\begin{displaymath}
  \Gamma(X, \dlog (\Oc^*_{X}(*D) ) ) = \dlog \Gamma (X_0, \Oc_{X_0}^*),
\end{displaymath}
since $\Cb^*_{X}$ is flasque\footnote{If $\Zb_{D}$ also is flasque,
  i.e. $D$ is a a disjoint union of irreducible hypersurfaces, one also
  gets the well-known fact that the natural map
  $H^1(X, \Oc_X^*)\to H^1(U, \Oc_X^*) $, $U = X \setminus D$, is
  surjective.}. The sequences (\ref{basic-exact}) and
(\ref{nonbasix-exact}) give the fragments of a long exact sequences
\begin{displaymath}
\to  H^1(X_a,\Oc^*_{X_a}(*D_a)) \xrightarrow{b}
  H^1(X_a,\dlog (\Oc^*_{X_a}(*D_a)) \xrightarrow{c} H^2(X_a, \Cb^*_{X_a})\to,
\end{displaymath}
and
\begin{displaymath}
\to   \Zb^r \to  H^1(X, \Oc_X^*)\to H^1(X_a, \Oc^*_{X_a}(*D_a))\xrightarrow{d}
H^1(D_a,\Zb_{D_a})\to,
\end{displaymath}
where $H^1(X_a, \Oc_{X_a}^*)= H^1(X, \Oc_X^*)$ by G.A.G.A.. Let
$\Cb^r \to (\Cb/\Zb)^r$, $\lambda \mapsto \bar \lambda$ be the natural
projection. Given $\lambda \in \Cb^r $ there exist an open covering
$\{U_\alpha\}$ of $X_a$, elements
$\gamma_i \in \Omega^{cl}_{X_a}(U_\alpha)$ with
$\reso (\gamma_i)= \bar \lambda$, and
$\partial \gamma_i = \dlog \phi_{\alpha, \beta}$, where
$[\phi_{\alpha, \beta}]$ is a 1-cycle with values in
$ \Oc^*_{X_a}(X_a\setminus D_a)$, so that
$\psi (\bar \lambda) = [\dlog \phi_{\alpha, \beta}]$. Put
$c_{\alpha, \beta, \gamma }= \partial (\phi_{\alpha, \beta })$,
defining a 2-cycle with values in $\Cb^*$, and we have the map
\begin{displaymath}
  \psi_0= c\circ \psi : \bigoplus_{j=1}^r \frac{\Cb}{\Zb}  \to  H^2(X_a,
  \Cb^*),\quad 
  \bar \lambda \mapsto [c_{\alpha,\beta, \gamma}].
\end{displaymath}
Therefore there exists a map
$\psi^0 : \Ker (\psi_0)\to H^1(X_a,\Oc^*_{X_a}(*D_a)) $ such that the
restriction of $\psi$ to $\Ker (\psi_0)$ equals $ b\circ \psi^0 $.
Defining the map
\begin{displaymath}
  \psi_1 = d\circ \psi^0 : \Ker (\psi_0) \to H^1(D_a, \Zb_{D_a})
\end{displaymath}
there exists a map 
\begin{displaymath}
  \psi_2 :  \Ker (\psi_1) \to H^1(X, \Oc_X^*),
\end{displaymath}
such that
\begin{displaymath}
\Ker \psi = \Ker \psi_2 \subset \Ker \psi_1 \subset \Ker
(\psi_0).
\end{displaymath}
For example, if $X$ is a smooth complex projective curve and $D$ a set
of points in $X$, then $\psi_0: (\Cb/\Zb)^r \to \Cb^*$,
$\lambda \mapsto e^{2\pi i \sum_j \lambda_j}$, so that
\begin{displaymath}
  \Ker (\psi_1) = \Ker (\psi_0) =  \{ \bar \lambda \ \vert \sum_{i=1}^r \bar \lambda_i =0\}
\end{displaymath}
and
\begin{displaymath}
  \Ker (\psi) = \Ker ( \psi_2 : \{ \bar \lambda \ \vert \sum_{i=1}^r \bar \lambda_i =0\} \to H^1(X, \Oc^*_X)) 
\end{displaymath}
In particular, 
\begin{displaymath}
 \bar \Io^{r}_{conn, reg}(\Pb^1_\Cb,D)=\Ker (\psi)\oplus \{[\Oc_{\Pb^1_\Cb}]\}= \{ \bar
 \lambda \ \vert \sum_{i=1}^r \bar \lambda_i =0\}\oplus \{[\Oc_{\Pb^1_\Cb}]\}.
\end{displaymath}
Assuming that the support of $D= \sum D_i$  is 
disjoint from the point at infinity, $\Cb \subset \Pb^1= \Cb\cup
\{\infty\}$, the simple module corresponding to $\bar \lambda$ is $ \Dc_\Cb \prod_{i=1}^r (x-
x_i)^{\lambda_i}$, where $D_i = \{x_i\}$.  
\subsection{Étale trivial connections of rank 1 and a residue
  theorem}\label{etaletrivial}
\subsubsection{A residue theorem}
Let $\Divo_k(X)$ be the group of $k$-divisors, i.e. the abelian group
of formal sums $\sum \alpha_x x $, where the sum runs over points $x$
of height $1$ in a normal projective variety $X$, $\alpha_x\in k$, and $\alpha_x$ is $0$ except
for finitely many points $x$. Selecting an embedding $X\to \Pb^n$ the
degree $\deg (x)$ is the degree of the closure of $x$ in $\Pb^n$, and
one can define the degree map
\begin{displaymath}
  \deg : \Divo_k(X) \to k, \quad \sum_{\hto (x)=1}  \alpha_x x \mapsto
  \sum_{\hto (x)=1}  \alpha_x \deg (x)
\end{displaymath}
Let $\Divo_{0,k}(X)$ be the group of $k$-divisors of degree $0$ and
$\Clo_{k,0}(X)$ the group $k$-divisor classes modulo linear
equivalence.

Define the {\it residue map }
\begin{displaymath}
\Reso_K:  \Omega_K^{cl}\to \Divo_k(X), \quad \gamma \mapsto
\sum_{\hto(x)=1} \reso_x(\gamma) x.
\end{displaymath}

One can argue that the following result is a generalization of the
classical residue theorem for projective curves to higher dimensional
projective varieties.

\begin{theorem}\label{residue-theorem} Let $X$ be a normal projective
  variety over a field of characteristic $0$ and $K$ be its fraction
  field. The degree of the residue of a differential is $0$, so we
  have a map
  \begin{displaymath}
    \Reso_K : \Omega^{cl}_K  \to \Divo_{k,0}(X)
  \end{displaymath}
  and  a map
  \begin{displaymath}
    \overline{ \Reso}_K : \bar \Io(\Dc_K)\cong    H^0(\Cc_{\dlog}(K)) \to \Clo_{k,0}(X).
  \end{displaymath}
\end{theorem}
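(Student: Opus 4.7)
The plan is to reduce the vanishing of $\deg \Reso_K(\gamma)$ to the classical residue theorem on a smooth projective curve, via a Bertini-type slicing argument. Since $X$ is normal, it is smooth in codimension one, and every height-one point lies in the smooth locus; residues depend only on the behavior of $X$ near such points, so I may assume $X$ is smooth. Fix a projective embedding $X \hookrightarrow \Pb^n_k$, set $d = \dim X$, and, given $\gamma \in \Omega_K^{cl}$, let $S$ be the finite set of codimension one points $x$ with $\reso_x(\gamma) \neq 0$; write $D_x = \overline{\{x\}}$ for the corresponding prime divisors.

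The first step is to construct a good slicing curve. By Bertini combined with a general position argument (valid because $k$ is algebraically closed of characteristic zero, hence infinite), for a sufficiently general complete intersection $C = X \cap H_1 \cap \cdots \cap H_{d-1}$ of hyperplane sections, the curve $C$ is smooth and irreducible, lies in the smooth locus of $X$, meets each $D_x$ for $x \in S$ transversely in $\deg(x)$ smooth $k$-points of $D_x$, and avoids all intersections $D_x \cap D_{x'}$ for $x \neq x'$ in $S$. The pullback $\gamma|_C$ is then a closed meromorphic one-form on $C$ with poles only at the finitely many points $C \cap \bigcup_{x \in S} D_x$, and the classical residue theorem on the smooth projective curve $C$ yields $\sum_{p \in C} \reso_p(\gamma|_C) = 0$.

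The technical heart, and what I expect to be the main obstacle, is the residue compatibility $\reso_p(\gamma|_C) = \reso_x(\gamma)$ as elements of $k$ (using that $\bar k_\nu = k$ in Proposition \ref{res-basics}(2), since $k$ is algebraically closed), for each $p \in C \cap D_x$. The subtlety is that the two residues are computed in different completions: $\hat R_x \cong k(x)[[t]]$, whose residue field has transcendence degree $d-1$ over $k$, versus $\hat\Oc_{C,p} \cong k[[s]]$. I would handle this by choosing, near $p$, a local equation $t$ for $D_x$ together with regular parameters $y_1, \ldots, y_{d-1} \in \Oc_{X,p}$ completing $t$ to a regular system of coordinates, with values $y_i(p) \in k$; transversality of $C$ to $D_x$ at $p$ forces $t|_C$ to be a uniformizer of $\Oc_{C,p}$, and the $y_i$ may be arranged to cut out $C$ through $p$. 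Expanding $\gamma = a\, dt + \sum_i b_i\, dy_i$ with $\nu_x(ta), \nu_x(tb_i) \geq 0$ as in the definition of $\Reso_\nu$, the image of $ta$ modulo $\mf_x$ lies in $\bar k_\nu = k$ and equals $\reso_x(\gamma)$; on the other hand, substituting $y_i = y_i(p)$ produces the logarithmic decomposition of $\gamma|_C$ at $p$ with the same leading coefficient, giving $\reso_p(\gamma|_C) = \reso_x(\gamma)$. Granting this, the equalities $\#(C \cap D_x) = C \cdot D_x = \deg(x)$ yield
\begin{displaymath}
  0 = \sum_{p \in C} \reso_p(\gamma|_C) = \sum_{x \in S} \reso_x(\gamma)\, \#(C \cap D_x) = \deg \Reso_K(\gamma),
\end{displaymath}
which is the first assertion.

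For the induced map $\overline{\Reso}_K$, the rank one classification identifies $\bar\Io(\Dc_K) \cong H^0(\Cc_{\dlog}(K))$ with $\Omega^{cl}_K / \dlog K^*$. By Proposition \ref{res-basics}(4), $\reso_x(\dlog \phi) = \nu_x(\phi)$ for $\phi \in K^*$, so $\Reso_K(\dlog \phi) = \sum_x \nu_x(\phi)\, x = \divo(\phi)$ is a principal divisor, hence zero in $\Clo_{k,0}(X)$. Therefore $\Reso_K$ descends to the claimed map $\overline{\Reso}_K : \bar\Io(\Dc_K) \to \Clo_{k,0}(X)$.
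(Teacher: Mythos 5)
Your argument is precisely the alternative that the paper itself flags and then deliberately declines to pursue: slice to a general curve $C\subset X$, match residues on the curve to residues on $X$, and invoke the classical residue theorem for projective curves. The paper's own proof is quite different in structure --- it first proves the $\Pb^m_k$ case by an explicit logarithmic/exact decomposition (using \Proposition{\ref{ample}} and a direct coordinate computation to show the residue along a general hyperplane vanishes), and then handles general $X$ by projecting finitely onto $\Pb^m_k$ and proving a trace compatibility formula $\reso_y(\Tr(\omega)) = \sum_{x\to y}\reso_x(\omega)[k_x:k_y]$. The paper's method has the advantages that curves are not treated as a special input (the author calls this being ``more uniform''), and that it avoids any assumption that $k$ be algebraically closed. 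Your approach has the virtue of leaning on a familiar classical result and requires less explicit computation.

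Two gaps that should be addressed. First, you assume $k$ is algebraically closed (you invoke $\bar k_\nu = k$ from \Proposition{\ref{res-basics}}(2) and say Bertini is ``valid because $k$ is algebraically closed''), while the paper's proof explicitly does \emph{not} require this and says so; over a non-closed $k$, the points of $C\cap D_x$ have nontrivial residue fields and the bookkeeping is exactly the content of the paper's formula $\reso_y(\Tr(\omega)) = \sum_{x\to y}\reso_x(\omega)[k_x:k_y]$, so your argument is strictly less general than the theorem as stated. Second, your residue-compatibility computation is written only for a logarithmic pole: you expand $\gamma = a\,dt + \sum b_i\,dy_i$ with $\nu_x(ta),\nu_x(tb_i)\geq 0$, but $\gamma$ can have a pole of arbitrary order along $D_x$, in which case one must expand $a$ as a full Laurent series in $t$ with coefficients in a coefficient field $\hat k_R\subset\hat\Oc_{X,x}$ lifting $k(x)$, and the compatibility $\reso_p(\gamma|_C)=\reso_x(\gamma)$ then rests on the fact (itself needing \Proposition{\ref{res-basics}}(2) and closedness of $\gamma$) that the $[t^{-1}]$ coefficient is a constant in $\bar k_\nu$ rather than a genuine function of the $y_i$, so that evaluating at $y_i=y_i(p)$ changes nothing. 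This works, but the argument as written does not cover it.
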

\begin{remark}\label{rem-residues}
  In (\ref{exact-picard}) we have the map
  $\Reso_X :\Io(X) \to \Pic^0(X)\cong \Clo^0(X)$, the integral
  Weil divisors that are algebraically equivalent to a principal
  divisor. We have inclusions
  $\bar \Io(X) \subset \bar \Io(\Dc_K)$ and
  $\Clo^0(X)\subset \Clo_0(X)\subset \Clo_{k,0}(X)$. The map
  $\overline{\Reso}_K$ in \Theorem{\ref{residue-theorem}} then extends
  $\Reso_X$.

\end{remark}
Let us first sketch a way of proving \Theorem{\ref{residue-theorem}}
by relying on the residue theorem for curves. Given a closed
1-form $\omega$ with polar locus $D$ there exists an irreducible curve
$C$ that does not belong to $D$. Its restriction $\omega\vert_C $ is a
1-form on $C$ that has residues along $C\cap D \subset C$ which are
related to the residues of $\omega$ along $D$ by the intersection
multiplicity between $C$ and $D$. The result can then be concluded
from Bezout's theorem and the fact that the sum of residues on a curve
is $0$.

The detailed proof below is more uniform in that higher-dimensional varieties
are not treated differently from curves. It follows the same general
line of argument as Hasse's proof for curves as presented in
\cite{serre:alg-groups}*{\S 11}, where one starts with the projective
line, but here the treatment of higher-dimensional projective spaces
is more complicated. For curves the hardest case is when $k$ has
postitive characteristic so one should really extend the above result
also to positive characteristic, but notice that we do not require
that $k$ be algebraically closed.

\begin{proof}
  (a) We first prove the assertion when $X= \Pb^m_k$, where the degree
  map $\deg : \Divo_k(\Pb_k^m) \to k$ is defined by linear extension
  from the usual degree $\deg (V) $ of a prime divisor $V$ in
  $\Pb^m_k$. Let $\omega$ be a closed differential form with poles
  along a hypersurface $D= \cup_{i=1}^r D_i$ in $\Pb^m_k$, where the $D_i$
  are irreducible, and let $H$ be a linear hypersurface in $\Pb_k^m$
  that is not a component of $D$. Put
  $\Reso_{D_i}(\omega) = \alpha_i D_i$, and set
  $\bar \alpha_i = \alpha_i/n_i$, where $n_i = [\bar k_i:k]$ and
  $\bar k_i$ is the algebraic closure of $k$ in the fraction field of
  $D_i$. Let $P_i\in \Gamma(\Pb^m_k\setminus H, \Oc_X)$ be
  irreducible polynomials that define $D_i \cap X_0$. The  differential
  \begin{displaymath}
    \omega'= \omega - \sum_i \bar \alpha_i \frac{dP_i}{P_i}\in \Gamma(X,
    \Omega^{cl}_X(*(H\cup D)) ),
\end{displaymath}
and its residues along any divisor different from $H$ is $0$. We will
prove that the residue of $\omega'$ along $H$ is 0. Since
$X_0=\Pb^m_k\setminus (D\cup H)$ is affine, by
\Proposition{\ref{ample}} (see (\ref{rep-rat-form}))
$\omega'=\gamma + d\psi$, where
$\gamma\in \Gamma(\Pb^m_k, \Omega^{cl}_{\Pb^m_k}(\log (H)))$ and
$\psi \in \Gamma(X_0,\Oc_{X_0})$. Since $\Reso_W(d\psi)=0$ for any
hypersurface $W$, we get
\begin{displaymath}
  \Reso_H(\omega') = \Reso_H(\gamma).
\end{displaymath}
Now since (this is detailed below)
\begin{equation}\label{log-single}
  \gamma \in \Gamma (\Pb^m_k,\Omega_{\Pb^m_k}^{cl}(  \log (H)))=
  \Gamma(\Pb^m_k, \Omega_{\Pb^m_k}^{cl}) =   \Gamma(\Pb^m_k, \Omega_{\Pb^m_k}),
\end{equation}
we get $ \Reso_H(\gamma)=0$ and hence $\Reso_H(\omega')=0$, so that
 \begin{displaymath}
   \deg  \Reso(\omega ) = \sum_{i=1}^r \alpha_i \deg \Reso(\frac {dP_i}{P_i}) = \sum_{i=1}^r
   \alpha_i \deg (P_i) =0,
\end{displaymath}
  since the degree of a principal divisor is $0$.
  
  It remains to see (\ref{log-single}). Select homogeneous coordinates
  $\bar t_i$ such that $H=V(\bar t_0)$. Put $t_i= \bar t_i/\bar t_0$
  and $s_i = \bar t_i /\bar t_j$, $j\neq 0$, so that $t_i= s_i/s_0$,
  $i\neq j$, $t_j = 1/s_0$, and $H\cap ( \bar t_j \neq 0) = V(s_0)$;
  put also  $A= \Gamma(\Pb^m_k\setminus H, \Oc_{\Pb^m_k})= k[t_1, \ldots ,
  t_m]$. We have
\begin{displaymath}
  \gamma = \sum_{i=1}^m a_i dt_i =  a_j d(\frac {1}{s_0}) +
  \sum_{i=1, i\neq j}^m a_i  d(\frac{s_i}{s_0})  =-\frac 1{s_0^2}(a_j
  +\sum_{i=1, i\neq j}^m s_ia_i )  ds_0 + \sum_{i=1, i\neq j}^m \frac {a_i}{s_0}  ds_i,
\end{displaymath}
where $a_i \in A$. On the right,  consider the expression
\begin{displaymath}
\psi_j=  a_j +\sum_{i=1,
    i\neq j}^m s_ia_i = a_j +\sum_{i=1,
    i\neq j}^m \frac {t_i}{t_0}a_i.
\end{displaymath}
Since $\nu_H(t^\alpha) = -|\alpha| $ for each monomial in $A$, each
monomial term in $a_j$ and $\frac {t_i}{t_0}a_i$ has a non-positive
value with respect to the valuation $\nu_H$ of $H$. This implies that
\begin{displaymath}
\Reso_H(\gamma)=  (-\frac {\psi_j}{s_0^2})_{-1} =0.
\end{displaymath}
Here the residue is computed by expanding in $K(\bar R_\nu)$, where
$\bar R_{\nu_H}$ is the adic completion of the local ring
$ R_{\nu_H} $ of $H$, $s_0$ is a uniformizing parameter, and the $s_i$
determine a coefficient field in $\bar R_{\nu_H}$. A logarithmic
differential whose residues are zero is regular (\ref{zero-resdues}),
so that $\gamma\in \Gamma(\Pb^m_k, \Omega_{\Pb^m_k}^{cl})$ . The last
equality in (\ref{log-single}) follows from
\Theorem{\ref{global-forms}}.

(b) We can regard $X$ as a subvariety $X\subset \Pb^n_k$ for some
integer $n$. Put $m= \dim X$ and let $L$ be a maximal linear subspace
of $\Pb^n_k$ that is disjoint with $X$ (so that $\codim L = m+1$) and
$p: \Pb^n_k \setminus L \to \Pb^m $ be the projection map. The
restriction of $p$ to the map $\pi: X\to \Pb_k^m$ is then finite (see
\cite{mumford:redbook}*{\S 7, Prop 6 }). If $x$ is a point of height
$1$ in $X$ and $y=\pi(x)$, then
\begin{equation}\label{deg-points}
  \deg (x)= [k_x: k_y]\deg y
\end{equation}
(see \cite{derksen-kraft}*{Prop 8.3}). The trace $\Tr : L \to K$,
where $L$ is the fraction field of $X$ and $K$ that of $\Pb_k^m$, defines a map
$\Omega_L = L\otimes_K\Omega_K \to \Omega_K$ that restricts to a map
\begin{displaymath}
\Tr:  \Omega_L^{cl} \to \Omega^{cl}_K.
\end{displaymath}
By (a) the proof will be completed if we prove that
\begin{displaymath}
  \reso_y(\Tr(\omega))\deg (y) =  \sum_{x\to y}\reso_x(\omega)\deg (x),
\end{displaymath}
which by (\ref{deg-points}) is equivalent to proving
\begin{displaymath}
  \reso_y(\Tr(\omega)) =  \sum_{x\to y}\reso_x(\omega)[k_x: k_y].
\end{displaymath}
We have
\begin{displaymath}
  L \otimes_{ K}\hat K_y = \prod_{x\to y}  \hat  L_x
\end{displaymath}
where the $\hat {\ } $ denotes $\mf$-adic completions of the fields, and 
\begin{displaymath}
  \Tr (f) = \sum_{x\to y} \Tr_x(f), \quad f\in L,
\end{displaymath}
where $ \Tr_x$ denotes the trace from $\hat L_x $ to $\hat K_y$  (see
\cite{serre:alg-groups}*{II, \S 12}). Let $t_y$ and $t_x$ be  local parameters of
  the discrete valuation ring $\Oc_{Y,y}$ and $\Oc_{X,x}$,
  respectively, so that $t_y= u t_x^e$ where $u$ is a unit in
  $\Oc_{X,x}$ and $e$ is some integer $\geq 1$. By the additivity of
  $\reso_y$
  \begin{displaymath}
    \reso_y(\Tr (\omega)) = \sum_{x\to y}  \reso_y(\Tr_x (\alpha_x
    \frac {dt_x}{t_x} + \omega_x^0) ) = \sum_{x\to y}  \reso_y(\Tr_x (\alpha_x
    \frac {dt_x}{t_x}) ).
  \end{displaymath}
By (\ref{deg-points}) it now suffices to prove
\begin{displaymath}
  \reso_y(\Tr_x (\alpha_x    \frac {dt_x}{t_x}) ) =
[k_x:k_y]  \reso_x(\alpha_x\frac {dt_x}{t_x}) )= [k_x:k_y]\alpha_x.
\end{displaymath}
Since
\begin{displaymath}
  \frac {dt_y}{y_y} = \frac {du}u + e \frac {dt_x}{t_x},
\end{displaymath}
we get
\begin{displaymath}
  \alpha_x \frac{dt_x}{t_x} = \frac {\alpha_x}e  \frac {d
    t_y}{t_y} - \frac {\alpha_x}e \frac {du}u. 
\end{displaymath}
Since moreover $\Tr_x(1) = [L_x:K_y]= e [k_x:k_y]$ and $\Tr(du/u))$
has no pole at $y$, and therefore $\reso_y(\Tr(du/u))=0$, we get
\begin{displaymath}
  \reso_y (\Tr_x (\alpha_x \frac {dt_x}{t_y})) = \reso_y(\Tr_x (\frac {\alpha_x}e  \frac {d
    t_y}{t_y} - \frac {\alpha_x}e \frac {du}u))= \reso_y ([k_x:k_y] \alpha_x  \frac {d
    t_y}{t_y} ) = [k_x:k_y] \alpha_x.
\end{displaymath}
\end{proof}
The set of linear characters
$\hat \pi_1(X)= Hom_\Cb(\pi_1(X_a), \Cb^*) \cong \overline \Io(X)$ of the
fundamental group of a smooth projective $X $ was related to
$\hat \pi_1(X_0\setminus H)$ in \Theorem{\ref{class-rank-1}} (2),
where $H$ is a hypersurface. By \Theorem{\ref{residue-theorem}},
$\Gamma(X,\Omega_{X}(\log H))= \Gamma(X,\Omega_X)$ for an irreducible
hypersurface $H$, so that the map $\psi$ in
\Theorem{\ref{class-rank-1}} is injective. Therefore, removing $H$
from $X$ will not change the linear characters.
\begin{corollary}\label{triv-ranke-1}
  Let $X$ be a smooth projective variety and $H$ be an irreducible
  hypersurface in $X$. Then
\begin{displaymath}
      \hat \pi_1(X)= \hat \pi_1(X\setminus H) = \overline \Io(X)=
      \overline \Io(X\setminus
      H).
    \end{displaymath}
\end{corollary}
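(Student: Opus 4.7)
The plan is to combine the Riemann--Hilbert correspondence (which delivers the two outer equalities) with the structural decomposition in Theorem \ref{class-rank-1}, then to use the residue theorem \ref{residue-theorem} to kill the obstruction class $\Ker(\psi)$ appearing there for a single irreducible divisor.

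For the outer equalities $\hat \pi_1(Y) = \overline \Io(Y)$ with $Y\in\{X, X\setminus H\}$, I would invoke Riemann--Hilbert together with GAGA as recorded in Proposition \ref{lem-rank1}(3): linear characters of the topological fundamental group correspond to rank $1$ (regular singular) local systems, hence by RH to the rank $1$ connections $\overline \Io(\cdot)$; on the projective side every rank $1$ connection is automatically regular singular, and on the quasi-projective side $X\setminus H$ one interprets $\overline \Io$ via the extension equivalence $\overline \Io(X\setminus H) \cong \overline \Io_{conn,reg}(X,H)$.

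The middle equality is the substantive one. Applying Theorem \ref{class-rank-1}(2) with $D = H$ gives
\begin{displaymath}
\overline \Io_{conn,reg}(X,H) = \Ker(\psi) \oplus \overline \Io^0(X),
\end{displaymath}
where $\overline \Io^0(X)\subset \overline \Io(X)$ is the summand of connections already defined on $X$, and $\Ker(\psi)\subset \Cb/\Zb$ records the residue along $H$ of an extension across $H$. It therefore suffices to prove $\Ker(\psi) = 0$, as this will identify $\overline \Io_{conn,reg}(X,H)$ with $\overline \Io^0(X) = \overline \Io(X)$ and thereby close the chain of identifications.

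The main (and only serious) step is this residue vanishing. From the long exact cohomology sequence attached to \eqref{exact-dlog}, $\Ker(\psi)$ equals the image of the residue map $\Gamma(X_a, \Omega^{cl}_{X_a}(\log H_a)) \to \Cb/\Zb$. By GAGA and Proposition \ref{res-basics}(2), the residue of a global algebraic closed logarithmic $1$-form $\gamma \in \Gamma(X, \Omega^{cl}_X(\log H))$ along $H$ lies in $\bar k \subset \Cb$. Since $H$ is the unique component supporting a possible residue, the total-degree identity of Theorem \ref{residue-theorem} reduces to $\reso_H(\gamma)\cdot \deg(H) = 0$; as $\deg(H) > 0$ and $\mathrm{char}\, k = 0$, this forces $\reso_H(\gamma) = 0$, so the residue map into $\Cb/\Zb$ is zero and $\Ker(\psi) = 0$. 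The hard part is precisely this reliance on the higher-dimensional residue theorem to collapse $\Ker(\psi)$; everything else is formal bookkeeping of the RH correspondence and the decomposition of Theorem \ref{class-rank-1}.
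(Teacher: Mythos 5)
Your proof follows the paper's strategy closely: apply the decomposition of Theorem~\ref{class-rank-1}(2) and use the residue theorem~\ref{residue-theorem} to show $\Ker(\psi) = 0$. The residue computation is the heart of the paper's own terse argument, which it phrases as ``$\Gamma(X,\Omega_X(\log H)) = \Gamma(X,\Omega_X)$ for an irreducible hypersurface $H$'': a global closed logarithmic $1$-form has residue along only the single irreducible $H$, and the degree-zero identity of Theorem~\ref{residue-theorem} gives $\reso_H(\gamma)\cdot\deg(H) = 0$, whence $\reso_H(\gamma) = 0$ since $\deg(H)>0$ in characteristic zero. Your version of this step is correct and matches the paper.

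Two places in the proposal need to be firmed up. First, you assert $\bar\Io^0(X) = \bar\Io(X)$ without argument. As defined in the proof of Theorem~\ref{class-rank-1}, $\bar\Io^0(X) = \Ker(C)$ for the map $C\colon \bar\Io(X) \to \Pic(X\setminus H)$ recording the underlying line bundle of the restricted connection; this is a priori a proper subgroup of $\bar\Io(X)$. Once $\Ker(\psi) = 0$ is established the equality does follow, but by a short extra argument that should be stated: the restriction map $j^!\colon \bar\Io(X) \hookrightarrow \bar\Io_{conn,reg}(X,H)$ is injective (a rank $1$ connection on $X$ is determined by its restriction to a dense open), and with $\Ker(\psi)=0$ Theorem~\ref{class-rank-1}(2) identifies $\bar\Io_{conn,reg}(X,H)$ with $\bar\Io^0(X) \subset \bar\Io(X)$, so the resulting chain $\bar\Io(X) \hookrightarrow \bar\Io_{conn,reg}(X,H) \cong \bar\Io^0(X) \subset \bar\Io(X)$ forces all inclusions to be equalities. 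Second, Theorem~\ref{class-rank-1} is stated under the hypothesis that $X\setminus D$ is affine, whereas the corollary makes no such assumption (for instance, $H$ a ruling in $\Pb^1\times\Pb^1$ gives a non-affine complement); neither your sketch nor the paper's one-sentence proof addresses this reduction. Finally, the identification $\bar\Io(X\setminus H)\cong\bar\Io_{conn,reg}(X,H)$ you appeal to for the outer equality needs a qualifying remark, since $\bar\Io(X\setminus H)$ as literally defined also contains irregular (e.g.\ exponential) rank $1$ connections on the open variety -- you are right to flag this reading, but it should be noted as a reinterpretation of the statement rather than an equivalence.
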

This can also be expressed by saying that that if $M$ is connection of
rank 1 on $X\setminus H$ with regular singularities along $H$, then in
fact $M$ is the restriction of  a connection on $X$.
\subsubsection{Degree of Hodge classes}
If the Hodge conjecture holds on a smooth projective variety $X$ for
the Hodge classes $H^{p,p}(X_a,\Cb) $ in $H^n(X_a, \Cb)$, a natural
question to ask is whether the notion of degree of a projective
variety also can be attached to a Hodge class. We shall see that this
is possible at least when $p=1$.

By Lefschetz $(1,1)$-theorem the cycle map
\begin{displaymath}
  \Clo(X) \to H^2(X_a, \Cb)= H^2(X, \Oc_X)\oplus
H^1(X,\Omega_X)\oplus H^0(X,\Omega^2_X )
\end{displaymath}
composed with the projection to the middle term gives a surjective map
to the integral Hodge classes
\begin{displaymath}
  \Clo(X) \to H^2(X_a,\Zb)\cap H^1(X,\Omega^1_X) \to 0,
\end{displaymath}
which also implies  that we get a surjective map
\begin{displaymath}
  c:   \Clo_{\Cb}(X) \to H^1(X,\Omega^1_X) \to 0.
\end{displaymath}

\begin{corollary}\label{hodge-degree} The degree map $\deg : \Clo_{\Cb} (X)\to \Cb$ can be
  factorized as $\deg = \overline{ deg} \circ c$, where
  $\overline{ \deg}$ is a map
  \begin{displaymath}
    \overline{ \deg}:     H^1(X,\Omega^1_X) \to \Cb.
  \end{displaymath}
\end{corollary}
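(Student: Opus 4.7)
The plan is to show that $\deg$ vanishes on $\ker c \subset \Clo_{\Cb}(X)$, from which the factorization $\deg = \overline{\deg}\circ c$ follows by the surjectivity of $c$ and the universal property of the quotient. Identifying $\Clo_{\Cb}(X) = \Pic(X)\otimes_{\Zb}\Cb$, and combining Lefschetz's $(1,1)$-theorem with the Hodge decomposition in Theorem \ref{global-forms}(2), the kernel of $c$ coincides with $\Pic^\tau_{\Cb}(X) := \Pic^\tau(X)\otimes_{\Zb}\Cb$. Since $\deg$ is $\Cb$-linear, it then suffices to establish $\deg L = 0$ for each integer class $[L]\in \Pic^\tau(X)$.

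First I would invoke Proposition \ref{lem-rank1}(4) to realize such a class as $[L] = \Reso_X([M])$ for some rank one connection $M \in \bar \Io(X)$. Trivializing $M$ on an affine open cover $\{U_i\}$ as $M|_{U_i} = \Oc_{U_i}\mu_i$ with local connection forms $\omega_i \in \Gamma(U_i,\Omega^{1,cl}_X)$, the transition functions $\phi_{ij} := \mu_j/\mu_i \in \Oc_X^*(U_{ij})$ give a Cech cocycle representing $[L]\in H^1(X,\Oc_X^*)$ and satisfy $\omega_j - \omega_i = \dlog \phi_{ij}$. Fix the index $i=1$ and regard $\gamma := \omega_1$ as an element of $\Omega^{cl}_K$; since each $\omega_i$ is regular on $U_i$, Proposition \ref{res-basics}(3)-(4) applied at any prime divisor $D$ of $X$ meeting $U_i$ gives
\begin{displaymath}
\reso_{\nu_D}(\gamma) = \reso_{\nu_D}\bigl(\omega_i + \dlog(\mu_1/\mu_i)\bigr) = \nu_D(\mu_1/\mu_i) = \nu_D(\mu_1),
\end{displaymath}
the last equality since $\mu_i$ is a unit on $U_i$. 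Hence $\Reso_K(\gamma) = \divo(\mu_1)$ as an integer Weil divisor on $X$, and this divisor represents $[L]$.

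Theorem \ref{residue-theorem} applied to $\gamma$ then yields $\deg L = \deg \Reso_K(\gamma) = 0$. Extending by $\Cb$-linearity gives $\deg(\ker c) = 0$ and produces $\overline{\deg}$. The main obstacle I anticipate is the identification $\ker c = \Pic^\tau_{\Cb}(X)$, which rests on Lefschetz's $(1,1)$-theorem together with the Hodge decomposition of Theorem \ref{global-forms}(2); once that is in place, the residue computation is a straightforward Cech cocycle calculation using the behaviour of $\reso$ on $\dlog$.
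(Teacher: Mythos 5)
Your argument is correct, and it does something genuinely different from what the paper does, although both proofs rest on \Theorem{\ref{residue-theorem}} and the plan of killing $\deg$ on $\ker c$. The paper works directly with the residue short exact sequence of \emph{analytic} sheaves $0 \to \Omega^{cl}_{X_a}\to \Omega_{X_a}^{cl}(\log )\xrightarrow{\reso} \Cc \to 0$ and its long exact sequence, concluding that every $\Lc \in \ker c$ (already complex-valued) lies in the image of $\reso : \Gamma(X_a,\Omega_{X_a}^{cl}(\log)) \to \Divo_\Cb$; that step silently requires knowing the connecting map $\delta: \Divo_\Cb \to H^1(X_a, \Omega_{X_a}^{cl}) = H^1(X,\Omega_X)\oplus\Gamma(X,\Omega^2_X)$ lands in the $(1,1)$-summand. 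You instead reduce by $\Cb$-linearity to integral classes in $\Pic^\tau(X)$, realize such $[L]$ as the underlying invertible sheaf of a rank-one connection $M$ via \Proposition{\ref{lem-rank1}}(4), and compute the residue divisor of the rational connection form $\omega_1$ on the nose, getting $\Reso_K(\omega_1) = \divo(\mu_1)$ by a Cech/valuation argument; this bypasses the identification of the $(1,1)$-projection of $\delta$ and keeps everything on the algebraic side of the Zariski topology. The trade-off is that the paper's exact sequence handles the whole $\Cb$-linear kernel in one go, while you need the $\Cb$-linear reduction to $\Pic^\tau(X)$ (which, as you note, is where the Lefschetz $(1,1)$-theorem enters for you). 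One small citation imprecision: for $\reso_{\nu_D}(\omega_i)=0$ with $\omega_i$ regular at $D$ you really want the exactness of (\ref{zero-resdues}) (or just the definition of the residue of a logarithmic form), rather than \Proposition{\ref{res-basics}}(3), which asserts vanishing of residues of exact forms; the fact you need is elementary but is not literally statement (3). That aside, the argument is sound.
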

\begin{proof} It suffices to prove that if
  $\Lc = \sum_{i=1}^r \alpha_i[D_i]$, where $D_i$ are prime divisors
  in $X$, belongs to the kernel of the cycle map, $c(\Lc)=0$, then
  $\deg (\Lc)=0$. Let $\Cc$ be the direct sum of the direct images of
  the sheafs $\Cb_{D}$, whith respect to the inclusion maps $D\to X$,
  where $D$ runs over all prime divisors in $X$. Let
  $\Omega_{X_a}^{cl}(\log )$ be the sheaf of closed holomorphic
  logarithmic differentials. We have then the  exact sequence
  \begin{displaymath}
    0 \to \Omega^{cl}_{X_a}\to \Omega_{X_a}^{cl}(\log )\xrightarrow{\reso}  \Cc \to 0,
  \end{displaymath}
  hence also the exact sequence
\begin{displaymath}
  \Gamma(X_a,\Omega_{X_a}^{cl}(\log )) \xrightarrow{\reso} \Divo_{\Cb}   \xrightarrow{\delta}  H^1(X_a,
  \Omega_{X_a}^{cl}) \to .
\end{displaymath}
Then the cycle map $c= \delta \circ p_1 $, where $p_1$ is the
projection from the Hodge decomposition
$H^1(X_a, \Omega_{X_a}^{cl}) = H^1(X, \Omega_X )\oplus H^0(X,
\Omega^2_X) \to H^1(X, \Omega_X ) $ \Th{\ref{global-forms}}. Hence by the above cited Lefschetz
theorem we have the exact sequence
\begin{displaymath}
  \Gamma(X_a,\Omega_{X_a}^{cl}(\log)) \xrightarrow{\reso} \Divo_{\Cb}    \xrightarrow{c}  H^1(X,
  \Omega_{X}) \to  0.
\end{displaymath}
Therefore $\Lc = \reso (\gamma)$ for some
$\gamma \in \Gamma(X_a,\Omega_{X_a}^{cl}(\log)) $ so that by
\Theorem{\ref{residue-theorem}}, $\deg (\Lc) = \deg \reso(\gamma) =0$.
\end{proof}

\subsubsection{ Étale trivial rank 1 connections.} We will classify
the étale trivial objects in $\Io(\Dc_K)$ when $K/k$ is a finitely
generated field extension.    
\begin{proposition}\label{rat-ext} Let $M=M_\gamma$ be a $\Dc_K$-module such
  that $\dim_K M =1$, determined by a closed 1- form
  $\gamma \in \Omega^{cl}_K$. The following conditions are
  equivalent:
    \begin{enumerate}
    \item There exists a finite extension $\pi:K\to L$ such that
      $\pi^!(M)\cong L$.
    \item There exists a radical extension $\pi : K \to L$ such that
      $\pi^!(M)\cong L$.

    \item There exists a finite extension $\pi:K\to L$ such that
      $\gamma \in d\log (L)$.
    \item There exists an integer $l$ such that $l\gamma \in \dlog K$.
    \end{enumerate}
  \end{proposition}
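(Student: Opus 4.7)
The plan is to verify the implications $(4) \Rightarrow (2) \Rightarrow (1) \Leftrightarrow (3) \Rightarrow (4)$, the first three being essentially formal while $(3) \Rightarrow (4)$ is the content.

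First I would record that, writing $m$ for a cyclic generator of $M_\gamma$ with $\partial m = \gamma(\partial) m$, an element $f \otimes m$ of $\pi^!(M_\gamma) = L\otimes_K M_\gamma$ is horizontal iff $\dlog f = -\gamma$. Thus $\pi^!(M_\gamma) \cong L$ as $\Dc_L$-modules precisely when $\gamma \in \dlog L^*$, giving $(1) \Leftrightarrow (3)$. For $(4) \Rightarrow (2)$, if $l\gamma = \dlog \phi$ with $\phi \in K^*$, the radical extension $L = K(\phi^{1/l})$ is finite and satisfies $\gamma = \dlog(\phi^{1/l})$; combined with the equivalence just noted this yields $(2)$. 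The implication $(2) \Rightarrow (1)$ is trivial.

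The heart of the matter is $(3) \Rightarrow (4)$. Assume $\gamma = \dlog f$ with $f \in L^*$, $L/K$ finite, and let $\bar L/K$ be a Galois closure with group $G$. Since $\gamma \in \Omega_K^{cl}$ is $G$-fixed, for each $g \in G$ the element $c_g := g(f)/f \in \bar L^*$ satisfies
\[
\dlog c_g = g(\dlog f) - \dlog f = g(\gamma) - \gamma = 0,
\]
so $d c_g = 0$ in $\Omega_{\bar L/k}$. Because $k$ has characteristic $0$ and is algebraically closed, the kernel of $d \colon \bar L \to \Omega_{\bar L/k}$ is the algebraic closure of $k$ in $\bar L$, which equals $k$. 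Hence $c_g \in k^*$ for every $g \in G$.

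A cocycle computation, using that $c_h \in k$ is $G$-fixed, then gives $c_{gh} = g(c_h f)/f = c_h c_g$, so $g \mapsto c_g$ is a character $G \to k^*$. Its order divides $l := |G|$, whence $c_g^l = 1$ for every $g$ and therefore $f^l$ is $G$-invariant, i.e.\ lies in $K^*$. This yields $l\gamma = \dlog(f^l) \in \dlog K$, establishing $(4)$. The main obstacle is the step that forces $c_g$ to be a constant in $k$; it is precisely here that the hypotheses characteristic zero and $k = \bar k$ enter, after which the finite-order-of-a-character argument is routine.
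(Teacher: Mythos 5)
Your proof is correct, and the heart of it, the step $(3)\Rightarrow(4)$, takes a genuinely different route from the paper. The paper gives two arguments: a primary one that writes down the minimal polynomial $Q$ of $b$ over $K$, differentiates its coefficients and extracts the relation $\partial_j(Q_i)/Q_i = i\gamma_j$, then takes $a=Q_l$ for any nonzero coefficient; and an alternative one that uses the compatibility $\Tr(\dlog\alpha)=\dlog(N(\alpha))$ of trace and norm for $\bar L/K$ to conclude $[\bar L:K]\gamma=\dlog N(\alpha)\in\dlog K$ immediately. Your argument instead observes that the elements $c_g=g(f)/f$ are $d$-constants, invokes Lemma \ref{int-closure} to place them in $k$, and then reads $g\mapsto c_g$ as a character $G\to k^*$ whose order bounds force $f^{|G|}$ to be Galois-invariant. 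This is a cocycle-theoretic explicitation of what the norm argument does in one line (indeed $N(f)=\bigl(\prod_g c_g\bigr)f^{|G|}$, and your argument amounts to checking that the prefactor is a constant). One point to be aware of: your proof genuinely needs $k$ algebraically closed so that the constants $c_g$ lie in $k$ and are $G$-fixed, which is what makes $g\mapsto c_g$ a homomorphism rather than a nonabelian $1$-cocycle. That hypothesis is in force throughout the paper, so your argument is valid here, but both of the paper's proofs avoid it and are correspondingly more portable. The remaining implications — the observation that $f\otimes m$ is horizontal iff $\dlog f=-\gamma$ giving $(1)\Leftrightarrow(3)$, the radical extension $K(\phi^{1/l})$ for $(4)\Rightarrow(2)$, and the trivial $(2)\Rightarrow(1)$ — agree with the paper's.
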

  See also \Remark{\ref{rem-ab-ext}}.
  \begin{proof} $(1)\Leftrightarrow (3)$ and $(2)\Rightarrow (1)$ are
    evident. $(4)\Rightarrow (2)$: If $\gamma = \dlog (y)/l$ for some
    $y\in K$, putting $x^l= y$ we get $\gamma = \dlog (x)$. It then
    suffices to prove $(3)\Leftrightarrow (4)$. Let $\{y_i\}\subset K$
    be a transcendence basis of $K$, and $\{\partial_j\} $ derivations
    such that $\partial_j(y_i)= \delta_{ij}$.

  $ (4)\Rightarrow (3) $:  There exists $a\in K$   such that 
  \begin{displaymath}
    \gamma = \sum_i \gamma_i  d y_i  = \frac 1l  d \log (a) = \sum_i \frac 1l
    \frac {\partial_i(a)}{a} d y_i.
\end{displaymath}
Let $L$ be the extension that is generated by $K$ and elements $x_i$
such that $x_i^l- a=0$, so that
$\partial_{i}(x_i)/x_i = 1/l (\partial_i (a)/a) $. Then
\begin{displaymath}
  \gamma = \sum_i  \frac {\partial_{i}(x_i)}{x_i} dy_i= d \log (\prod_i x_i) \in d \log (L).
\end{displaymath}
$(3)\Rightarrow(4):$ Assume that $\gamma= d\log (b)$, $b\in L$, where
$L/K$ is finite, and let
$Q[X] = X^n + Q_{n-1}X^{n-1}+ \cdots + Q_0\in K[X]$ be the minimal
polynomial of $b$, $Q(b)=0$. Put $dQ = \sum_{i=0}^{n-1} dQ_i X^i$,
$Q' = n X^{n-1}+ \cdots + Q_1$, and
$\partial_j Q = \sum_{i=0}^{n-1} \partial_j (Q_i) X^i$. Then
$d Q[b] + Q'[b]d b =0$, so that $db = -d Q/ Q'(b)$, and writing
$\gamma = \sum \gamma_j dy_j$, $\gamma_j\in K$, we get
\begin{displaymath}
  bQ'(b) \gamma_j = - \partial_j(Q)(b).
\end{displaymath}
Using the fact that $Q$ is the unique non-zero monic monomial of
minimal degree such that $Q(b)=0$, a straightforward computation gives
\begin{displaymath}
  \frac {\partial_j(Q_i)}{Q_i}  = i \gamma_j,
\end{displaymath}
whenever $Q_i \neq 0$. There exists $l$ such that $Q_l\neq 0$, and
then putting $a= Q_l$ we get
\begin{displaymath}
  l \gamma = d \log (a) \in \dlog K. 
\end{displaymath}
Alternative: Let $\bar L/L/K$ be a Galois cover of $L/K$, let
$\Tr:\bar L\to L$ and $ N: \bar L^*\to K^*$ be the trace and norm
maps, respectively. The trace extends to a map
$\Tr: \Omega_{\bar L} \to \Omega_K$, and
$\Tr (\dlog (\alpha)) = \dlog (N(\alpha))$, for $\alpha \in \bar L$.
Since $\gamma = \dlog \alpha \in \Omega_K$, we get
$ [\bar L, K]\gamma =\Tr (\gamma) = \dlog (N(\alpha))\in \dlog K$.
\end{proof}
  Thus if $M_\gamma$ is an étale trivial $\Dc_K$- module of rank $1$ 
  there exists an element $\phi$ in $ K $ and an integer $l$ such that
\begin{displaymath}
  M_\gamma \cong \Dc_K \phi ^{1/l}
\end{displaymath}
and $\gamma = \frac 1l \dlog (\phi)$. The isomorphism classes of étale
trivial modules $ \Io^{et}(K)$ can be identified with a subspace of
$ \Omega^{cl}_K/\dlog K$, and doing this we assert that
\begin{displaymath}
\psi:   \Io^{et}(K)\to \frac {\Qb}{\Zb}\otimes_{\Zb} \dlog (K), \quad \gamma
  \mapsto \frac 1l\otimes \dlog \phi
\end{displaymath}
is a well-defined map. To see this, if
$\gamma_1 = \gamma + \dlog (\nu) $, then
$\psi (\gamma_1)= \psi (\gamma)$ since $1\otimes \dlog (\nu)=0$ in
$\frac {\Qb}{\Zb}\otimes_{\Zb} \dlog (K)$. If
$1/l \dlog (\phi) = 1/{l_1}\dlog (\phi_1)$, then
\begin{displaymath}
  \frac 1{l_1}\otimes \dlog (\phi_1) = \frac 1{ll_1}\otimes l\dlog
  (\phi_1) = \frac 1{l l_1}\otimes l_1\dlog (\phi) = \frac 1l \otimes
  \dlog \phi. 
\end{displaymath}

\begin{theorem}\label{etale-trivial-iso} The map $\psi$ is an
  isomorphism of groups. Therefore each element in $\Io^{et}(K)$ has a
  representation of the form
  \begin{displaymath}
    \Dc_K \prod_i a_i^{r_i}
  \end{displaymath}
  where $r_i \in \Qb$ and $a_i \in K$.  Moreover, 
\begin{displaymath}
  \Dc_K \prod_i a_i^{r_i} \cong   \Dc_K \prod_i b_i^{r_i'}
\end{displaymath}
if and only  if 
\begin{displaymath}
  \frac{\prod_i a_i^{r_i}}{\prod_i b_i^{r_i'}}  \in K.
\end{displaymath}

\end{theorem}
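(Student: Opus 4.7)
The plan is to prove $\psi$ is a bijective homomorphism between two already–identified groups and then read off the two corollaries as immediate consequences.

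First, well-definedness is already established just before the statement, and $\psi$ is evidently a homomorphism since on the left side the operation is addition of 1-forms modulo $\dlog K$ and on the right it is addition in the tensor product. For surjectivity, observe that every element of $(\Qb/\Zb)\otimes_\Zb\dlog K$ is a finite sum $\sum_i\frac{s_i}{l_i}\otimes\dlog a_i$; after reducing to a common denominator $l$ and collecting, it takes the form $\frac{1}{l}\otimes\dlog\phi$ with $\phi=\prod_i a_i^{t_i}\in K^*$. By \Proposition{\ref{rat-ext}} the class of $\gamma=\frac{1}{l}\dlog\phi$ lies in $\Io^{et}(K)$ and maps to this element.

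The key step is injectivity. Here I would exploit that in characteristic $0$ the $K$-vector space $\Omega_K^{cl}$ is torsion-free as a $\Zb$-module, hence so is the subgroup $\dlog K\subset\Omega_K^{cl}$. Tensoring the short exact sequence $0\to\Zb\to\Qb\to\Qb/\Zb\to 0$ with $\dlog K$ therefore yields the exact sequence
\begin{displaymath}
0\to \dlog K\to \dlog K\otimes_\Zb\Qb\to \dlog K\otimes_\Zb\Qb/\Zb\to 0.
\end{displaymath}
Thus $\frac{1}{l}\otimes\dlog\phi=0$ forces $\dlog\phi=l\cdot\dlog\nu=\dlog\nu^l$ for some $\nu\in K^*$, and hence $\gamma=\frac{1}{l}\dlog\phi=\dlog\nu\in\dlog K$, so $\gamma$ represents the trivial class in $\Io^{et}(K)$. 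This proves $\psi$ is injective.

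For the stated normal form, apply the isomorphism to a class $M\in\Io^{et}(K)$: its image is $\frac{1}{l}\otimes\dlog\phi$, which we can rewrite as $\sum_i\frac{n_i}{l}\otimes\dlog a_i$ after factoring $\phi$ suitably, giving $M\cong\Dc_K\prod_i a_i^{r_i}$ with $r_i=n_i/l\in\Qb$. For the isomorphism criterion, $\Dc_K\prod a_i^{r_i}\cong\Dc_K\prod b_i^{r_i'}$ is equivalent, after passing to $\Omega_K^{cl}/\dlog K$, to $\dlog(\prod a_i^{r_i})-\dlog(\prod b_i^{r_i'})\in\dlog K$, i.e.\ $\dlog(\prod a_i^{r_i}/\prod b_i^{r_i'})=\dlog\nu$ for some $\nu\in K^*$. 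Working in a radical extension $L/K$ containing the fractional powers, the quotient $\alpha=\prod a_i^{r_i}/\prod b_i^{r_i'}\in L^*$ satisfies $\dlog(\alpha/\nu)=0$, so $\alpha/\nu$ is algebraic over $k$ and hence (using \Lemma{\ref{adj-lemma}} / \Lemma{\ref{int-closure}} and that $k$ is algebraically closed) lies in $k^*\subset K$, giving $\alpha\in K^*$; the converse is immediate. The only real obstacle is the injectivity argument, which hinges on the torsion-freeness of $\dlog K$ and hence on the characteristic $0$ hypothesis together with the fact that $k$ is algebraically closed in $K$ so that $\Ker(\dlog\colon K^*\to\Omega_K^{cl})=k^*$.
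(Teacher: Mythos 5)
Your proof is correct and follows essentially the same route the paper takes, just with the "easy verification" and "straightforward" steps actually spelled out. For injectivity, the paper's claim that $1/l\otimes\dlog\phi=0$ forces $\dlog\phi=l_1\dlog\phi_1$ with $l\mid l_1$ is precisely the divisibility statement you extract from torsion-freeness of $\dlog K$ via the Tor sequence, and the iso criterion comes from the kernel of $\dlog$ on the radical extension being the constants, exactly as you argue. One small imprecision in your last paragraph: torsion-freeness of $\dlog K$ follows already from characteristic $0$ alone (it is a subgroup of the $\Qb$-vector space $\Omega_K^{cl}$), whereas the description $\Ker(\dlog\colon K^*\to\Omega_K^{cl})=k^*$ and, more to the point, the fact that the constants of the auxiliary radical extension $L$ land in $K$, require $k$ to be algebraically closed, not merely algebraically closed in $K$; this hypothesis is used in the isomorphism-criterion part rather than in the injectivity part.
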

\begin{proof}
  It is an easy verification to see that $\psi$ is a homomorphism. If
  $1/l\otimes \dlog (\phi)=0$, then
  $\dlog (\phi) = l_1 \dlog (\phi_1)$, where $l$ divides $l_1$, hence
  $1/l \dlog (\phi) = \frac {l_1}l\dlog (\phi_1) = \dlog
  (\phi_1^{l_1/l})$ which maps to the identity in $\Io^{et}(K)$. The
  surjectivity should now also be clear. It is straightforward to see
  also the remaining assertions.
\end{proof}

Let $X$ be a normal projective model for $K$, so that $K$ is the
fraction field of $X$. One can determine whether $M_\gamma$ is étale
trivial from knowledge of the torsion subgroup $\Clo^t(X)$ of the
divisor class group,
\begin{displaymath}
  \Clo^t(X)\subset \Clo_0(X) \subset \Clo(X) .
\end{displaymath}
Evidently (and by \Theorem{\ref{etale-trivial-iso}}), if $M_\gamma$ is
étale trivial, then $\gamma \in \Gamma(X, \Omega_X^{cl}(\log D))$ for
some divisor $D$ on $X$. For a given
$\gamma \in \Gamma(X, \Omega_X^{cl}(\log D))$  there is the following
procedure to determine
whether $M_\gamma \in \Io^{et}(K)$:
\begin{enumerate}
\item  Check if $\Lc= \Reso(\gamma)$ has rational
  coefficients, and if so one can find an integer $l_1 $ such that
\begin{displaymath}
\Lc_1=   l_1 \Lc \in \Clo_0(X).
\end{displaymath}
\item  Determine if $\Lc_1 \in \Clo^t(X)$. 
\item Determine an integer $l$ and a rational function $\phi \in K $
  such that $l\Lc_1 = ll_1 \Lc \sim (\phi)$. Then
\begin{displaymath}
\omega=   ll_1\gamma - \dlog \phi\in \Gamma(X, \Omega_X)
\end{displaymath}
and since $\dlog K \cap \Gamma(X, \Omega_X) =0$, we get
$M_\gamma \in \Io^{et}(K) $ if and only if $ll_1\gamma \in \dlog K$.
\end{enumerate}

We can formulate this  more succinctly. Define the map
\begin{displaymath}
  \Reso :   \Ic^{et}(K) \to  \Clo^t_{\Qb}(X),\quad  [\gamma]\mapsto [\Reso(\gamma)],
\end{displaymath}
which is well-defined since $\Reso (\dlog (\phi )) = (\phi )$.
Define also the map
\begin{displaymath}
  \Reso^{-1} : \Clo^t_{\Qb}(X) \to \Ic^{et}(K), \quad  [D]\mapsto [\frac 1r
  \dlog \phi],
\end{displaymath}
where $rD\sim (\phi)$ for some $r\in \Qb^*$ and $\phi\in K$. This map
is also well-defined, since if $r_1D \sim (\phi_1)$, $r_1 \in \Qb^*$,
$\phi \in K$, then
\begin{displaymath}
  \Reso (  \frac 1r   \dlog \phi- \frac 1{r_1}  \dlog \phi_1) \sim
  [D]- [D]=0
\end{displaymath}
hence there exists $\psi \in K$ such that  
\begin{displaymath}
  \frac 1r   \dlog \phi- \frac 1{r_1}  \dlog \phi_1 = \dlog \psi
\end{displaymath}
implying that $[\frac 1r \dlog \phi] = [\frac 1{r_1} \dlog \phi_1]$.
It is straightforward to see that $\Reso$ and $\Reso^{-1}$ define
homomorphisms of groups.
\begin{theorem}\label{etale-trivial-th} 
  The residue map defines an isomorphism of groups
  \begin{displaymath}
\Reso:    \Ic^{et}(K) \to  \Clo^t_{\Qb}(X)
  \end{displaymath}
  whose inverse is given by $\Reso^{-1}$.
\end{theorem}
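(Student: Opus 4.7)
The plan is to verify that $\Reso$ and $\Reso^{-1}$ are mutually inverse group homomorphisms. The well-definedness of both maps and their homomorphism property have already been established in the paragraphs immediately preceding the theorem statement, so what remains are three small tasks: confirming that $\Reso$ in fact lands in the torsion subgroup $\Clo^t_\Qb(X)$, and verifying the two composition identities.

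The first task is immediate from \Theorem{\ref{etale-trivial-iso}}: every class $[M_\gamma]\in \Ic^{et}(K)$ has a representative of the form $\gamma = \tfrac{1}{l}\dlog\phi$ with $\phi\in K^*$ and $l\in \Zb_{>0}$, so using $\Reso(\dlog\phi) = (\phi)$ from \Proposition{\ref{res-basics}}(4) we get $\Reso(\gamma) = \tfrac{1}{l}(\phi)$. Hence $l\cdot \Reso([\gamma]) = [(\phi)] = 0$ in $\Clo_\Qb(X)$, exhibiting the image as $l$-torsion, so $\Reso([\gamma])\in \Clo^t_\Qb(X)$.

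For $\Reso\circ \Reso^{-1}$: given $[D]\in \Clo^t_\Qb(X)$, pick $r\in\Qb^*$ and $\phi\in K^*$ with $rD\sim (\phi)$; then $\Reso^{-1}([D]) = [\tfrac{1}{r}\dlog\phi]$ maps under $\Reso$ to $\tfrac{1}{r}(\phi)$, whose class is $\tfrac{1}{r}[(\phi)] = \tfrac{1}{r}[rD] = [D]$. For $\Reso^{-1}\circ \Reso$: writing $[\gamma] = [\tfrac{1}{l}\dlog\phi]$, we have $\Reso([\gamma]) = [\tfrac{1}{l}(\phi)]$, and since $l\cdot \tfrac{1}{l}(\phi) \sim (\phi)$ the pair $(r,\phi) = (l,\phi)$ can be substituted into the definition of $\Reso^{-1}$, giving $\Reso^{-1}(\Reso([\gamma])) = [\tfrac{1}{l}\dlog\phi] = [\gamma]$.

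The whole proof is thus essentially formal; the real content lies in \Theorem{\ref{etale-trivial-iso}} (the parametrization of étale trivial rank-$1$ modules) and in \Proposition{\ref{res-basics}} (the residue formula for $\dlog\phi$). Accordingly I do not anticipate any serious obstacle. The one point worth revisiting is that the representative $\tfrac{1}{l}\dlog\phi$ used in both composition computations is not canonical, but independence of this choice is exactly the well-definedness clause of $\Reso^{-1}$ already established in the discussion preceding the theorem (via the identity $\phi^{r'} = c\,(\phi')^{r}$ with $c\in \bar k^*$), so the two computations close the loop.
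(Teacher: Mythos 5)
The proposal is correct and takes essentially the same approach as the paper's own proof: both verify the two composition identities by plugging in the representative $\gamma = \tfrac{1}{l}\dlog\phi$ and the representative $rD\sim(\phi)$, and both rely on the well-definedness of $\Reso^{-1}$ established just before the theorem. Your additional explicit check that $\Reso$ lands in the torsion subgroup $\Clo^t_{\Qb}(X)$ is a small but worthwhile addition that the paper leaves implicit in the preceding well-definedness discussion.
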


\begin{proof}
  If $M_\gamma$ is étale trivial, then there exists an integer $l$
  such that $l \gamma = \dlog (\phi)$, $\phi \in K$. We have
\begin{displaymath}
  \Reso^{-1}\circ \Reso ([\gamma]) = \Reso^{-1} (\Reso (\frac 1l \dlog
  (\phi)))  =  \Reso^{-1}(\frac 1l [\phi]) = [\frac 1l \dlog \phi]= [\gamma].
\end{displaymath}
If  $[D]\in \Clo^t_{\Qb}(X) $ and $r[D] = (\phi)$, then
\begin{displaymath}
    \Reso \circ \Reso^{-1} ([D]) =     \Reso (\frac 1r \dlog (\phi)) =
    \frac 1r (\phi) = [D].
\end{displaymath}
\end{proof}

\begin{remark} If $\Clo^t(X)=0$, for instance when $\Clo(X)=\Zb$, (2)
  simply means $\Lc_1=0$. This implies that
  \begin{displaymath}
    \Clo_Q(X)  = \Clo_Q^t(X) = \Io^{et}(K)=    \Qb/\Zb\otimes \dlog K. 
\end{displaymath}
This is the case when $X$ is a smooth complete intersection of
dimension $\geq 3$ by the Grothendieck-Lefschetz theorem, and by the
Noether-Lefschetz theorem it also holds when $X$ is a hypersurface in
$\Pb^3_{\Cb}$ of degree $\geq 4$ having ``general moduli'', see
\cite{griffithsharris:math-ann}.
\end{remark}

Let us for example spell out what
all this means when $X= \Pb^n_k$ and $K$ is its fraction field. We
assert that for any $\gamma \in \Omega^{cl}_K(\log )$ with rational
residues, $\Reso (\gamma)\in \Clo_{\Qb}(\Pb^n_k)$, we have
$M_\gamma \in \Io^{et}(K)$. To see this, let $l_1$ be an integer such
that $\Reso(l_1\gamma)\in \Clo_0(\Pb^n_k) =0$. Hence there exists
$a \in K$ such that $\Reso(l_1\gamma) = (a) $, now regarded as divisors
in $\Pb^n_k$. Since
\begin{displaymath}
  l_1\gamma - \dlog (a) \in \Gamma(\Pb^n_k, \Omega_{\Pb^n_k}) =0
\end{displaymath}
we get that $M_\gamma \in \Io^{et}(K)$. Since $K$ is the fraction
field of a polynomial ring $A$, which is a unique factorization
domain, we have $a=\prod_i P_i^{n_i}$ for irreducible polynomials
$P_i$ in $A$, where $n_i\in \Zb$ and the $P_i^{n_1}$ are unique up to
a multiplicative constant. Thus
\begin{displaymath}
M_\gamma = \Dc_K \prod_i P_i^{n_i/l_1}.
\end{displaymath}
Any étale trivial $\Dc_K$-module of dimension $1$ over $K$ is of this
form. If $Q_i$ is another set of irreducible polynomials representing
$M_\gamma$ as above, then $(P_i/Q_i)^{n_i}\in K^{l_1}$. Moreover,
there exists a radical field extension $L=K[b]= K[X]/(X^n-a)$,
$a\in K$, such that $L\otimes_K M_\gamma \cong L$. The integer $n$ can
be selected to be the smallest integer such that
$n\reso_x(\gamma) \in \Zb$ for all points $x$ of height $1$ in $\Pb^n_k$.

To answer (2) when $\Clo^t(X)\neq 0$ one needs to look for a bound on
the order of $\Lc_1$, which is a problem that is not addressed here.
When $X$ is a smooth curve such a method has been devised by Risch
\cite{risch:bulletin}.

\subsection{Liouville's theorem}\label{liouville-section} One can ask when $M_\gamma$,
$\gamma \in \Omega_K^{cl}$, becomes exponential after a field
extension $E/K$, i.e. \/
$E\otimes_KM_\gamma \cong M_{d\beta} = \Dc_E e^\beta $ for some
$\beta \in E$ (\ref{exp-mod-sec}). This gives the condition
$\gamma = d(\beta) + \dlog (\phi)$ for some $\phi ,\beta \in E$. The
minimal such extension is $E_1=K(x_1)$ where
$ d(x_1) = \dlog (\phi)\in \dlog E$, so that $\gamma$ becomes exact in
$\Omega_{E_1}$, $\gamma = d(\beta + x_1)$. Making a further extension
$E_2= E_1(x_2)/E_1/K$ such that $d(x_2) = x_2 d(\beta)$, we get
$E_2\otimes_K M_\gamma \cong E_2$.

A field extensions $E/K$ is {\it
  elementary} if it is a tower of simple extensions $E= K(x)$, which
are either (i) finite, (ii) exponential, $\dlog x \in dK$, or (iii)
logarithmic, $d x \in \dlog (K)$.

We show in \Lemma{\ref{int-closure}} that the algebraic closure
$\hat k$ of $k$ in $E$ equals the field of constants of $E$.
\begin{theorem}\label{liouville} Let $M_\gamma\in \Io(K)$. The following are
  equivalent:
  \begin{enumerate}
  \item There exists an elementary extension $E/K$  such that
    $E\otimes_KM_\gamma \cong \Dc_E e^\beta$ for some $ \beta \in E$,
    i.e. $\gamma = d(\beta) + \dlog (\phi)$ for some $\phi \in E$.
  \item $\gamma \in (\hat k \dlog (\hat k K) + dK)\cap \Omega_K$, where
    $\hat k$ is the algebraic closure of $k$ in $E$.
  \end{enumerate}
\end{theorem}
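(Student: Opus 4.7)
\begin{pf}[Proof plan]
The direction $(2)\Rightarrow(1)$ is the easy one, and I would dispatch it first. Suppose $\gamma=d\eta+\sum_{i=1}^r c_i\dlog\psi_i$ with $\eta\in K$, $c_i\in\hat k$ and $\psi_i\in\hat k K$. Build the tower by first adjoining $\hat k$ to $K$ (a finite step), then adjoining formal logarithms $x_i=\log\psi_i$, i.e.\ iterated simple logarithmic extensions $E_i=E_{i-1}(x_i)$ with $dx_i=\dlog\psi_i\in\dlog E_{i-1}$. In the top field $E$ one has $\gamma=d(\eta+\sum c_ix_i)$, so $E\otimes_K M_\gamma\cong\Dc_E e^{\eta+\sum c_ix_i}$, which is of the required form (with $\phi=1$).

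For $(1)\Rightarrow(2)$ the plan is induction on the length $n$ of a tower $K=E_0\subset E_1\subset\cdots\subset E_n=E$ of simple elementary extensions, the base case $n=0$ being trivial. In the inductive step one has $\gamma=d\beta+\dlog\phi$ with $\beta,\phi\in E_n=E_{n-1}(t)$ and needs to replace $(\beta,\phi)$ by a pair in $E_{n-1}$ modulo the permitted correction from $\hat k\dlog(\hat k K)$, after which one invokes the induction hypothesis on the shorter tower $K\subset\cdots\subset E_{n-1}$. Split into the three kinds of simple extension:

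\textit{(i) Finite extension.} Here one applies the trace $\Tr:E_n\to E_{n-1}$. Using $\Tr\circ d=d\circ\Tr$ and $\Tr(\dlog\alpha)=\dlog N(\alpha)$, where $N$ is the norm, together with $[E_n:E_{n-1}]\cdot\gamma=\Tr(\gamma)$, one replaces $(\beta,\phi)$ by $(\Tr(\beta)/[E_n{:}E_{n-1}],\, N(\phi))$ in $E_{n-1}$; the finite extension $\hat k$ of the constant field naturally enters at this point.

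\textit{(ii) Transcendental exponential step} $E_n=E_{n-1}(t)$ with $\dlog t=du$, $u\in E_{n-1}$. Expand $\beta$ and the irreducible factorization of $\phi$ in $E_{n-1}[t,t^{-1}]$-partial-fraction form. Comparing the $t$-expansion of $d\beta+\dlog\phi$ with the fact that $\gamma\in\Omega_K\subset\Omega_{E_{n-1}}$ (no $t$), one shows that the principal parts of $\beta$ at each finite pole $p(t)\in E_{n-1}[t]$ and at infinity must cancel: only the multiplicity $m$ of the factor $t$ in $\phi$ survives as $m\,du$, absorbed into $dE_{n-1}$, and only the coefficient of $t^0$ of $\beta$ survives. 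The ``exponent'' $m$ lies in $\Zb\subset\hat k$.

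\textit{(iii) Transcendental logarithmic step} $E_n=E_{n-1}(t)$ with $dt=\dlog v$, $v\in E_{n-1}$. Again partial-fraction expand. Each irreducible monic $p(t)\in E_{n-1}[t]$ contributes a term $c\,\dlog p(t)$ to $\dlog\phi$ with $c\in\Zb$; the residue-at-$p$ analysis of $d\beta+\dlog\phi$ shows that for $p$ of positive $t$-degree the contribution forces $c\in\hat k$ with $p\in\hat k E_{n-1}[t]$, producing a term in $\hat k\dlog(\hat k E_{n-1})$, while the polynomial part collapses (by comparing $t$-degrees) to a single affine function of $t$ whose $t$-coefficient becomes a constant times $dt=\dlog v$, landing in $\hat k\dlog E_{n-1}$.

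The main obstacle will be the transcendental cases (ii) and (iii): the several-variable setting and the failure of $k$ to be algebraically closed make the partial-fraction and degree arguments considerably more delicate than in Rosenlicht's classical one-variable, algebraically-closed-constants proof. The key algebraic input I expect to rely on is the ``no exact form is rational log'' principle of \Lemma{\ref{exact-log}}, applied to the residue maps $\Reso_\nu$ of \Proposition{\ref{res-basics}} at the valuations of $E_{n-1}(t)$ centred at the irreducible polynomials of $E_{n-1}[t]$ and at $\infty$; this is what rigidifies the expansions and forces the coefficients $c_i$ to land in $\hat k$. Once the inductive step is set up in all three cases, the descent to $K$ and hence the representation $\gamma\in\hat k\dlog(\hat k K)+dK$ follows, and intersecting with $\Omega_K$ yields~(2).
\end{pf}
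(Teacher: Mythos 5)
Your overall strategy — induction on the length of the elementary tower, trace/norm for the finite step, valuation/degree analysis at the irreducible polynomials and at infinity for the transcendental steps, with \Lemma{\ref{exact-log}} and \Proposition{\ref{res-basics}} as the rigidity inputs — is the same as the paper's, and your direction $(2)\Rightarrow(1)$ is fine. But there is a genuine gap in your case (i). By the time you are peeling a finite step $E_n/E_{n-1}$ off the bottom of the tower, the correct inductive hypothesis is $\gamma=d\beta+\sum_i c_i\dlog\alpha_i$ with $\beta,\alpha_i\in E_n$ and coefficients $c_i\in\hat k$, not a single $\dlog\phi$: every previous finite step injects a factor $1/[E_j{:}E_{j-1}]\in\Qb$ and every transcendental step injects residue coefficients. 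Now $\Tr_{E_n/E_{n-1}}$ is $E_{n-1}$-linear, so $\Tr(c_i\dlog\alpha_i)=c_i\dlog N(\alpha_i)$ only if $c_i\in E_{n-1}$. Precisely when $E_n/E_{n-1}$ is the step that adjoins new constants — i.e.\ when $\hat k\not\subset E_{n-1}$ — this fails, and your proposed replacement of $(\beta,\phi)$ by $(\Tr\beta/[E_n{:}E_{n-1}],N\phi)$ is not legitimate. Saying "the finite extension $\hat k$ of the constant field naturally enters at this point" names the problem without solving it. The paper handles this by splitting the finite step into $K\hat k/K$ and $E/K\hat k$: the second is a regular extension where trace/norm work because the $c_i$ are now in the base, while the first is attacked differently — normalize the $c_i$ to be $\Qb$-linearly independent, then use \Lemma{\ref{exact-log}} (no nontrivial $\hat k$-combination of $\dlog$'s is exact) together with \Proposition{\ref{van-trace-prop}}(1) ($d(E_1)\cap\Omega_K=dK$) to pry the exponential part $d\beta$ away from the logarithmic part and descend each to $K$.

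A smaller issue: in case (iii) you write that an irreducible $p(t)$ of positive $t$-degree contributes a term landing in $\hat k\dlog(\hat k E_{n-1})$ "with $p\in\hat k E_{n-1}[t]$," but such a $\dlog p(t)$ genuinely involves $t$ and so cannot lie in $\Omega_{E_{n-1}}$, let alone $\hat k\dlog(\hat k E_{n-1})$. What the valuation analysis actually shows (as in the paper) is that all those coefficients $n_{ij}$ and $n_0$ must \emph{vanish}; the only new $\dlog$-contribution that survives in the logarithmic step comes from the linear term $ct$ of $\beta$, which produces $c\,dt=c\,\dlog v$ with $c$ a constant. Your sketch of case (ii) is essentially right and in fact slightly cleaner than the paper's (you expand $\beta$ by partial fractions rather than writing it as a single monomial), so the place to invest effort is in case (i) and in tightening the statement of what survives in case (iii).
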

It is straightforward to see that (2) implies (1). That (1) implies
(2) is contained in Liouville and Ostrowski's classical theorem about
elementary integrable functions, when $K$ is the field of meromorphic
functions in 1 variable over the complex numbers; their proof is
analytic. Rosenlicht \cite{rosenlicht} gave an algebraic proof of the
univariate case in terms of differential algebra, assuming
$\hat k \subset K$, and in \cite{risch:integration} this univariate
case was extended by allowing that the algebraic closure of $k$ in $K$
and $E$ be different (non-regular extensions). This was extended to
several derivations in \cite{caviness-rothstein:liouville}, using a
similar method.

In the proof below the rather complicated study of partial fractions
in \cite{rosenlicht} is replaced by a use of discrete valuations,
non-regular elementary extensions are dealt with in a direct way, and
several derivations are allowed.

\begin{proof} So assume (1). By induction over the length of the tower of
  elementary extensions in $E/K$, it suffices to prove that if
  $\pi: K \to E= K(x)$ is a simple extension of the types (i-iii) and
  \begin{displaymath}\tag{*}
  \gamma = d\beta +\sum_i c_i \dlog \alpha_i \in \Omega_K,
\end{displaymath}
$\alpha_i, \beta \in E$, $c_i \in \hat k$, then actually
$\gamma \in \hat k \dlog (\hat k K) + dK$. (This is the same strategy
as in \cite{rosenlicht}, but there it is assumed that
$\hat k\subset K$.) In other words, we need to prove that
  \begin{displaymath}
    (\hat k \dlog (E) + dE) \cap \Omega_K= \hat k \dlog (\hat k K) + dK
  \end{displaymath}
  whenever $E/K$ is a simple extension that is either finite,
  exponential, or logarithmic, and we need only prove the non-trivial
  assertion that if $\gamma$ is an element of the left side, then it
  is also a member of the right.

  (i) $E/K$ is finite: Let $E_1 = K\hat k$ be the composite of
  $\hat k $ and $K$ in $E$, so we have the tower $E/E_1/K$. We prove
  the assertion first for $E_1/K$ and then $E/E_1$. Assume that
  $\beta, \alpha_i \in E_1 $. Let $V_c$ be the finite-dimensional
  vector space over $\Qb$ that is generated by the coefficients $c_i$
  in \thetag{*}. Then the $c_i$ can be expressed as linear
  combinations of elements in a basis of $V_c$ with coefficients in
  the rational numbers $\Qb$, and clearing denominators we see that in
  the right hand side of \thetag{*} one can assume that the elements
  $(c_i)$ are linearly independent over $\Qb$. Now by
  \Lemma{\ref{exact-log}} it follows that $d(\beta) \in \Omega_K$ and
  by \Proposition{\ref{van-trace-prop}}, (1),
  $d(E_1)\cap \Omega_K = d(K)$; hence $d(\beta) = d(\beta_1)$ for some
  $\beta_1\in K$. Since $\alpha_i \in E_1 = K \hat k  $ this gives the
  assertion for $E_1/K$.

  Considering $E/E_1$, let $\bar E /E/E_1$ be a Galois closure. Then,
  since $c_i\in \hat k \subset E_1$,
  \begin{align*}
[E,K] \gamma &=    \Tr (\gamma) = d (\Tr (\beta)) + \sum_i  (\Tr_{\bar
               E/E_1} (c_i \dlog
(\alpha_i) ) \\ &=   d (\Tr_{\bar E/E_1} (\beta)) + \sum_i  c_i \dlog
                  (\Norm_{\bar E/E_1} (\alpha_i)),
  \end{align*}
  hence $\gamma \in \hat k \dlog E_1 + d E_1$.

  It follows that for the remaining cases we can assume that $E/K$ is
  a regular extension, so that $\hat k$ equals the algebraic closure
  of $k$ in $K$ and moreover $E = K(x)$ where $x$ is transcendental
  over $K$. We are thus given rational functions
  \begin{displaymath}
    \alpha_i = \alpha_i^0 x^{n_0} \prod_{j=1}^{r} p_{j}^{n_{ij}},\quad  \beta= \beta^0x^{l_0} \prod_{j=1}^r
    p_j^{l_j}\in K(x),
\end{displaymath}
where $\alpha_i^0, \beta^0\in K$, $p_{i} $  are distinct monic
irreducible polynomials in $K[x]$, none equal to $x$, and
$n_{ij}, l_i\in \Zb$. Thus
\begin{align*}
  \gamma=  d\beta + \sum_i c_i \dlog \alpha_i &= x^{l_0} \prod_{j=1}^r
                                                p_j^{l_j} d(\beta^0) + l_0 \beta^0 x^{l_0-1}\prod_{j=1}^r
                                                p_j^{l_j} d(x) +  \beta^0x^{l_0} d(\prod_{j=1}^r
                                                p_j^{l_j})  \\
                                              &+  \sum_i c_i (\dlog (\alpha_i^0) + n_0 \dlog (x) + \sum_{ij} n_{ij}\dlog (p_{j}) ).
\end{align*}
Let $\nu: E \to \Zb$ be a discrete valuation such that $\nu(a)=0$ when
$a\in K$. Selecting a transcendence basis $(y_i)$ of $K$ over $k$, one
can extend $\nu$ to a map $E\otimes_K\Omega_K \to \Zb$ by putting
$\nu (\sum l_i dy_i ) = \min_i \nu(l_i)$. Let $\nu_0, \nu_j,$ be the
discrete valuations of $E=K[x]$ corresponding to the elements $x$ and
$ p_i$, and let them also denote the extensions to maps
$ E\otimes_K\Omega_K \to \Zb$. Then we have
 \begin{align*}
   \nu_0(\dlog (p_{j}))&= 0, \quad \nu_{j}(\dlog p_{j'}) =-\delta_{j,
     j'},\quad  \nu_0(x^{l_0})=l_0,\\ & \nu_0(\prod_{i=1}^r p_j^{l_j})=0,
   \quad \nu_j(\prod_{j=1}^r p_j^{l_j})= l_j.
 \end{align*}
 
 There are two cases to consider: (i) $\dlog (x)\in d( u)$, hence
 $d(x)= x d(u)$, and (ii) $d(x)= \dlog (u) $, hence
 $\dlog (x)= \dlog (u)/x $, for some $u\in K$.

 (ii)(adding an exponential $x=e^u$): We get 
 \begin{align*}
   \gamma  &=  x^{l_0} \prod_{j=1}^r
   p_j^{l_j} (d(\beta^0) + l_0 \beta^0 d(u)) +  \beta^0x^{l_0} d(\prod_{j=1}^r
   p_j^{l_j})  \\
  &+  \sum_i c_i (\dlog (\alpha_i^0) + n_0d(u) + \sum_{ij} n_{ij}\dlog (p_{j}) ),
 \end{align*}
 and, since
 $ \sum_i c_i (\dlog (\alpha_i^0) + n_0d(u)) \in \hat k \dlog K + dK$,
 it suffices to prove that if
\begin{displaymath}
  \gamma_0=  x^{l_0} \prod_{j=1}^r
  p_j^{l_j} (d(\beta^0) + l_0 \beta^0 d(u)) +  \beta^0x^{l_0} d(\prod_{j=1}^r
  p_j^{l_j}) + \sum_{ij} c_in_{ij}\dlog (p_{j})\in \Omega_K,
 \end{displaymath}
 then $ n_{ij}= 0$, and if $\beta_0 \neq 0$, then also $l_j=0$. If
 $\beta_0 =0$, then if $n_{ij}\neq 0$, it follows that
 $\nu_{j}(\gamma_0) = -1$, and therefore
 $\gamma_0 \not \in \Omega_K $; hence $n_{ij}=0$. Now assume that
 $\beta_0 \neq 0$. If $l_i <0$, then $ \nu_i(\gamma_0) = l_i-1\neq 0$,
 and therefore $\gamma_0\not \in \Omega_K $; hence $l_i \geq 0$. Then
 if $n_{ij}\neq 0$ so that $\nu_i(\gamma_0) = -1 $, and one gets
 $\gamma_0\not \in \Omega_K$; hence $n_{ij}=0$.

 It remains to eliminate the case $l_i>0, n_{ij}=0$. For
 $\omega = \sum_i r_i d(y_i) \in K[x]\otimes_K \Omega_K$, let
 $\deg \omega = \max \deg r_i$, where $\deg r_i$ is the usual degree
 of a polynomial. We have then $\deg d(p) < \deg (p)$, when $p$ is a
 monic polynomial, and therefore
 \begin{displaymath}
   \deg (x^{l_0} \prod_{j=1}^r  p_j^{l_j}) > \deg (x^{l_0} d(\prod_{j=1}^r
   p_j^{l_j})),
\end{displaymath}
implying that if $l_i >0$ for some $i \geq 0$, then
$\gamma_0 \not \in \Omega_K$.

(iii) (adding a logarithm, $x= \log (u)$): We get
 \begin{align*}
   \gamma  &=   (n_0(\sum_i c_i) + l_0 \beta^0  x^{l_0}\prod_j p_j^{l_j}
             )\frac {\dlog u}{x} 
             + x^{l_0} \prod_j p_j^{l_j} d(\beta^0) + \beta^0 x^{l_0}d(\prod_j
             p_j^{l_j})\\
           &   + \sum_i c_i \dlog (\alpha_i^0) + \sum_{ij} c_i n_{ij} \dlog p_{ij}.
 \end{align*}
 Since $\sum_i c_i \dlog (\alpha_i^0) \in \hat k \dlog K + dK$, it suffices to
 prove that if
\begin{align*}
\gamma_0  &=(n_0(\sum_i c_i) + l_0 \beta^0  x^{l_0}\prod_j p_j^{l_j}
             )\frac {\dlog u}{x} 
   + x^{l_0} \prod_j p_j^{l_j} d(\beta^0) + \beta^0 x^{l_0}d(\prod_j
     p_j^{l_j}) \\ &+  \sum_{ij} c_i n_{ij} \dlog p_{ij} \in \Omega_K,
\end{align*}
then we have:  $n_{ij}=n_0=0$; if $d(\beta^0)\neq 0$, then $l_j=0$, $j\geq 0$;
if $d(\beta^0) = 0$, then $0 \leq l_0\leq 1$ and $l_j =0$, $j \geq 1$.

If $\beta^0=0$ and $n_{ij}\neq 0$, then $\nu_j(\gamma_0)=-1$; hence
$n_{ij}=0$. Similarly we get $n_0=0$. Now assume that
$\beta^0 \neq 0$. If $l_j < 0$, then $\nu_j(\gamma_0)= l_j-1\neq 0$;
hence $l_j\geq 0$. Then if $l_j \geq 0$ (all $j$) and $n_{ij}\neq 0$
(some $ij$), we get $\nu_j(\gamma_0)=-1\neq 0$; hence $n_{ij}=0$.
Similarly, if $n_0 \neq 0$, then $\nu_0(\gamma_0) = -1$; hence
$n_0=0$. Again the remaining case is $l_j >0$ (some $i$), $n_{ij}=0$
(all $ij$), and $n_0=0$. That is we have
\begin{displaymath}
  \gamma_1= l_0 \beta^0  x^{l_0}\prod_j p_j^{l_j}
             \frac {\dlog u}{x} + x^{l_0} \prod p_j^{l_j} d(\beta_0) + \beta_0 x^{l_0}d(\prod_j
     p_j^{l_j}) \in \Omega_K.
\end{displaymath}
If $d(\beta_0)\neq 0$ there is a single monomial term of degree
$l_0 + \sum l_j\deg p_j $; therefore $ l_j=0$, $j\geq 0$.

Now assume $d(\beta^0)=0$, so that $\beta^0 \in \hat k$
\Lem{\ref{int-closure}}. If $l_0 >1$, then
$\nu_0(\gamma_1) =l_0-1 >0 $, so that $\gamma_1\not \in \Omega_K$;
hence $0 \leq l_0 \leq 1$. If $l_0=0$ and some $l_j >0$, then since
\begin{displaymath}
  \deg \prod_j p_j^{l_j} > \deg d (\prod_j p_j^{l_j})
\end{displaymath}
this implies that $\gamma_1 \not \in \Omega_K$; hence $l_j=0$ for all
$j$. Finally, if $l_0=1$, so that $\gamma_1 = d(\beta^0 xp)$, where
$p= \prod_jp_j^{l_j}$, we claim that if $p\in K[x]$ and
\begin{displaymath}
  d(\beta^0 x p) \in  \Omega_K,
\end{displaymath}
then $p\in \hat k$. Since $\beta^0$ is a constant, we can assume that
$\beta^0=1$ and if $m =\deg p\geq 1 $, then $p$ is monic. If
$m\geq 1$, then the coefficient of the leading term in $d(xp)$ is
$\dlog (u)$; therefore $m=0$. If $p\in K$, then
\begin{displaymath}
  d(xp) = p d(x) +  x d(p) \in \Omega_K
\end{displaymath}
if and only if $d(p)=0$; hence $p\in \hat k $.
\end{proof}

\subsection{Representations of finite groups}\label{finite-groups} If
$Y= \Spec K$ and $M$ is an étale trivial $\Dc_Y$-module then the
horizontal sections $ (M^*)^{T_Y}$ of the dual module, forming the
sheaf of solutions of $M$, is a locally constant étale sheaf on $Y$.
Locally constant étale sheaves are classified by representations of
the absolute Galois group of $K$, and the representations in question
will be the ones that act through a finite quotient of the absolute
Galois group. This section contains a (very) down-to-earth description
of this machinery, to be used in the next section to give explicit
forms in some cases of the decomposition theorem over differential
rings on a variety $X$.

There is a general correspondence between representations of the
differential Galois group of a $\Dc_K$-module $M$ and the tensor
category of modules generated by $M$, defined using the existence of a
Picard-Vessiot extension $L$ of $K$. The Picard-Vessiot theory in one
variable is well presented in \cite{put-singer}, including also a
sketch of the several variables theory in its appendix D, while
referring to \cite{kolchin:diffalgebra} for a more complete treatment.
There one starts with a $\Dc$-module and to get a representation of a
group, but we will conversely start with a finite subgroup $G$ of the
automorphisms group $\Aut_k(L)$ of a field $L$ that fixes a subfield
$k$ and a representation of $G$, and end up in a $\Dc_K$-module, where
$K= L^G$ is the fixed subfield.\footnote{Non-finite groups are of
  course also possible, but we are here only interested in the finite
  ones.} Let $l$ be the algebraic closure of $k$ in $L$. The main
assertion is that the category $\Mod(l [G])$ of modules over the group
algebra of $l[G]$ is equivalent to the category of $\Dc_K$-modules
that have trivial inverse image as $\Dc_L$-module. Although this
certainly is well-known we lack  precise and accessible references
for our purposes, so we include a complete proof, which moreover is in
keeping with the language of $\Dc$-modules; we also {\it do not}
assume $k= l$ (compare \Theorem{\ref{liouville}}) or even that $l$ is
a subset of $K$, so that $l$ may not be central in $l[G]$. A notable
simplification is the use of \Lemma{\ref{int-closure}} to get that the
functor $s^+$ below is fully faithful.

Put $X= \Spec L$, $Y=\Spec K$ so that the map $\pi: X\to Y$ that is
induced by the inclusion of fields $K\subset L$ is smooth. We also
have $\Dc_X= \Dc_L$, $\Dc_Y = \Dc_K$, and the category of connections
$\Con(X)$ coincides with the category $ \Mod(\Dc_L)$ of $\Dc_L$-modules
$M$ such that $\dim_L M < \infty$. Since $L/K$ is finite (and
separable), we have $\Dc_L = L\otimes_K \Dc_K$ and there exists a
canonical inclusion $\Dc_K \subset\Dc_L$.

The direct image functor $\pi_+:\Mod_{fd} (\Dc_L)\to \Mod_{fd}(\Dc_K)$
is simply $\pi_+(N)= N$ considered as $\Dc_K$-module under the
restriction to the subring $\Dc_K \subset \Dc_L$, and the inverse
image functor $\Mod_{fd}(\Dc_K)\to \Mod_{fd} (\Dc_L)$ is
$\pi^+(M)= L\otimes_KM$.

Let $\Mod^L(\Dc_L)$ be the category of $\Dc_L$-modules that are
isomorphic to a direct sum $L^n$, for some $n$, and put
\begin{displaymath} \Mod^{L}(\Dc_L[G]) = \Mod^{L} (\Dc_L) \cap
\Mod(\Dc_L[G]).
\end{displaymath} Let $\Mod^L(\Dc_K) $ be the category of
$\Dc_K$-modules such that $\pi^!(M)\in \Mod^{L} (\Dc_L) $.

Define the functors
\begin{displaymath} s^+: \Mod (l[G])\to \Mod^L_{fd}(\Dc_L[G]),
  \quad V\mapsto L \otimes_{l} V,
\end{displaymath} where $G$ acts diagonally on $L\otimes_{l} V$ and
$\Dc_L$ acts only on first factor, and 
\begin{align*}
  s_+&: \Mod^L_{fd}(\Dc_L[G]) \to \Mod (l[G]),\\ M &\mapsto
  Hom_{\Dc_L}(L, M)= M^{T_L}.
\end{align*}

Let $\Dbb$ denote the Poincaré dual in a category of holonomic
$\Dc$-modules and $*$ the ordinary dual in the category of
finite-dimensional $G$-representations over $k$.
\begin{lemma}\label{adj-lemma} 
  \begin{enumerate}
  \item $ \Dbb_L\circ s^+ = s^+\circ *$ and $ *\circ s_+ = s_+\circ
  \Dbb_L$.
  \item We have a triple of adjoint functors $(s_+, s^+, s_+)$.
  \item The functors $s^+$ and $s_+$ are fully faithful.
  \end{enumerate}
\end{lemma}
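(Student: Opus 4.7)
The plan is to prove the three parts in order, with (1) serving as the main computational ingredient, (2) following by a standard adjunction verification combined with (1), and (3) reducing to the statement $L^{T_L} = l$ provided by \Lemma{\ref{int-closure}}.

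For (1), the key observation is that $s^+(V) = L \otimes_l V$ is a trivial connection: after picking a basis of $V$ over $l$ (which is finite-dimensional), we have $s^+(V) \cong L^n$ as $\Dc_L$-module. \Proposition{\ref{dual-con}} then identifies $\Dbb_L s^+(V)$ with the $\Oc_L$-dual $Hom_{\Oc_L}(L \otimes_l V, L) \cong L \otimes_l Hom_l(V, l) = s^+(V^*)$, and a direct check shows the induced $G$-action on the right is the natural one by $(g\phi)(v) = g\phi(g^{-1}v)$, matching the definition of the dual in $\Mod(l[G])$. The second identity is parallel: since $M \in \Mod^L_{fd}(\Dc_L[G])$ is trivial as a $\Dc_L$-module, $\Dbb_L M = M^*= Hom_{\Oc_L}(M,L)$, and passing to $T_L$-invariants gives $(M^*)^{T_L} = Hom_l(M^{T_L}, l) = (s_+ M)^*$ after verifying $G$-equivariance.

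For (2), the first adjunction $(s^+, s_+)$ is the familiar ``invariants-induction'' pair: a $\Dc_L[G]$-linear map $\phi : L \otimes_l V \to M$ is determined by its restriction to $1 \otimes V$, which must land in $M^{T_L}$ because $T_L \cdot (1 \otimes v) = 0$, giving a natural bijection with $Hom_{l[G]}(V, s_+(M))$. To get the other adjunction $(s_+, s^+)$, I would use (1) together with the fact that both the Poincaré dual on $\Mod^L_{fd}(\Dc_L[G])$ and the $l$-linear dual on finite-dimensional $\Mod(l[G])$ are contravariant equivalences squaring to the identity:
\begin{align*}
Hom_{l[G]}(s_+(M), V) &\cong Hom_{l[G]}(V^*, s_+(M)^*) = Hom_{l[G]}(V^*, s_+(\Dbb_L M)) \\
&\cong Hom_{\Dc_L[G]}(s^+(V^*), \Dbb_L M) \cong Hom_{\Dc_L[G]}(M, \Dbb_L s^+(V^*)) \\
&= Hom_{\Dc_L[G]}(M, s^+(V^{**})) = Hom_{\Dc_L[G]}(M, s^+(V)),
\end{align*}
using the $(s^+,s_+)$ adjunction in the middle step.

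For (3), it suffices by either adjunction to check that the unit $V \to s_+ s^+(V)$ and counit $s^+ s_+(M) \to M$ are isomorphisms. For the unit, $s_+ s^+(V) = (L \otimes_l V)^{T_L}$; since $V$ is finite-dimensional over $l$ and $T_L$ acts only on the $L$-factor, this equals $L^{T_L} \otimes_l V$, which is $l \otimes_l V = V$ by \Lemma{\ref{int-closure}}. For the counit, writing a trivial $M \in \Mod^L_{fd}(\Dc_L[G])$ as $L^n$, we have $M^{T_L} = l^n$ and so $L \otimes_l M^{T_L} = L^n = M$. The main obstacle I anticipate is purely bookkeeping: carefully tracking the semilinear $G$-action on $l$ (which need not be trivial, since $l$ is only the algebraic closure of $k$ in $L$, not necessarily contained in $K$) through all of the identifications in (1) and (2), so that the maps are recognized as morphisms of $l[G]$- and $\Dc_L[G]$-modules rather than merely of $k$-vector spaces.
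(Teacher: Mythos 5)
Your proof is correct and follows essentially the same route as the paper's: part (1) via \Proposition{\ref{dual-con}}, part (2) by combining the elementary $(s^+,s_+)$ adjunction with (1) to obtain the other adjoint, and part (3) by reducing to $L^{T_{L/k}}=l$ from \Lemma{\ref{int-closure}}. The only cosmetic difference is that for (3) you check unit and counit isomorphisms where the paper instead computes $Hom_{\Dc_L[G]}(M_1,M_2)=Hom_{l[G]}(s_+M_1,s_+M_2)$ directly for the $s_+$ direction, but both arguments bottom out in the same identity $L^{T_{L/k}}=l$.
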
 

\begin{lemma}\label{int-closure}
  Let $C\to B$ be an inclusion of integral domains such that the
  extension of fraction fields $k(B)/k(C)$ is separable and finitely
  generated. Let $T_{B/C}$ be the $C$-linear derivations of $B$ and
  $\bar C$ be the integral closure of $C$ in $B$. Then
\begin{displaymath}
  \bar C = B^{T_{B/C}} := \{b \in B \ \vert \ \partial (b)=0, \partial
  \in T
_{B/C}\}.
\end{displaymath}
\end{lemma}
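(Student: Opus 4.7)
The plan is to prove both inclusions $\bar C \subseteq B^{T_{B/C}}$ and $B^{T_{B/C}} \subseteq \bar C$ separately. The central tool is that any $\partial \in T_{B/C}$ extends uniquely to a $C$-linear derivation of $k(B)$ via the quotient rule $\partial(a/c) = (c\partial(a) - a\partial(c))/c^2$, and this extension automatically kills the subfield $k(C)$ (since $a,c \in C$ makes $\partial(a)=\partial(c)=0$); derivation computations can therefore be performed in $k(B)$ where the minimal polynomial formalism is available.

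For the forward inclusion, I would take $b \in \bar C$, note that it is a fortiori algebraic over $k(C)$, and let $p(X) \in k(C)[X]$ be its monic minimal polynomial. By the separability hypothesis on $k(B)/k(C)$ the polynomial $p$ is separable, so $p'(b) \neq 0$ in $k(B)$. Applying the extended derivation $\partial$ to $p(b) = 0$ gives $p^{\partial}(b) + p'(b)\partial(b) = 0$; the coefficients of $p$ lie in $k(C)$, which is annihilated by $\partial$, so $p^\partial = 0$ and one obtains $\partial(b)=0$ in $k(B)$, hence in $B$.

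For the reverse inclusion, I would argue by contrapositive: assume $b \in B$ lies outside $\bar C$ and construct an element of $T_{B/C}$ not killing $b$. The substantive case, and the only one needed in the paper's applications (in the proofs of \Proposition{\ref{res-basics}} and \Theorem{\ref{liouville}} the rings $C \subseteq B$ are fields, where ``algebraic over $C$'' and ``integral over $C$'' coincide), is that $b$ is transcendental over $k(C)$. Using that $k(B)/k(C)$ is finitely generated and separable I would include $b$ in a separating transcendence basis $b = t_1,t_2,\dots,t_r$ and let $\delta = \partial/\partial t_1$ be the unique $k(C)$-linear derivation of $k(B)$ dual to this basis (it is defined on $k(C)(t_1,\dots,t_r)$ by the standard partial derivative and extends uniquely through the finite separable algebraic extension $k(B)/k(C)(t_1,\dots,t_r)$). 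Then $\delta(b)=1 \neq 0$, and if $B$ is itself a field this $\delta$ already lies in $T_{B/C}$, yielding the desired contradiction.

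The main obstacle is passing from a derivation of $k(B)$ back to one of $B$ when $B$ is a proper subring of $k(B)$: one must locate $f \in B \setminus \{0\}$ with $f\delta(B)\subseteq B$, for then $f\delta \in T_{B/C}$ satisfies $(f\delta)(b) = f \neq 0$. When $B$ is finitely generated over $C$, taking $f$ to be a common $B$-denominator of $\delta$ applied to a generating set suffices, since $\delta(B)$ is contained in a localization of $B$ controlled by finitely many denominators. In the field setting that drives all uses of this lemma in the paper this difficulty simply disappears, and the statement reduces to the classical fact that the field of constants of the full module of $k(C)$-linear derivations of a finitely generated separable field extension $k(B)/k(C)$ coincides with the relative algebraic closure of $k(C)$ in $k(B)$.
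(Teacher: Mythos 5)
Your proposal is correct and follows essentially the same route as the paper: the forward inclusion by applying a derivation to the separable minimal polynomial, and the reverse inclusion by building a derivation of $k(B)$ that does not kill a transcendental $b$ (the paper via $L=k(C)(b)$ and surjectivity of the restriction of derivations, you via a separating transcendence basis containing $b$, which amount to the same thing in characteristic $0$), then clearing denominators to land in $T_{B/C}$. Your explicit remark that the reverse inclusion is only clean when $B$ is a field (or more generally when every element of $B$ algebraic over $k(C)$ is already integral over $C$) is well taken — the paper's proof likewise tacitly assumes $b$ transcendental and handwaves the denominator issue with ``multiplying by a suitable $c\in B$'' — and this is precisely the situation in all of the paper's applications of the lemma.
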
 We remark that \Lemma{\ref{int-closure}} implies the
central result \cite{put-singer}*{Lemma 1.17}, while the proof in loc.
cit. is more complicated.
\begin{proof}
  It is easy to see that $ \bar C \subset B^{T_{B/C}}$, so it suffices
  to prove that if $b \not\in \bar C $ then there exists
  $\partial \in T_{B/C}$ such that $\partial (b)\neq 0$. There exist
  separable field extensions $k(C )\subset L \subset k(B)$ where
  $\trdeg (L/k(C)) \geq 1$, $b\in L= k(C)(b)$, and therefore
  $T_{L/k(C)}(b)\neq \{0\}$. The map
  $T_{K(B)/K(C)}\to K(B)\otimes_{L} T_{L/K(C)}$ is surjective since
  $k(B)/L$ is separable, hence there exists an element
  $\partial_1 \in T_{K(B)/K(C)}$ such that $\partial_1 (b) \neq 0$ in
  $k(B)$. Multiplying by a suitable element $c\in B $ we get
  $\partial =c \partial_1 \in T_{B/C}$ and $\partial (b) \neq 0$.
\end{proof}
\begin{pfof}{\Lemma{\ref{adj-lemma}}}
  (1): This follows from \Lemma{\ref{dual-con}}.

  (2): It is evident that $(s^+, s_+)$ is an adjoint pair, and (1)
  implies that $(s_+, s^+)$ is also an adjoint pair.

  (3): $s_+$ is fully faithful: If
  $M_1, M_2 \in \Mod^L_{fd}(\Dc_L[G])$, so that $M_i\cong L^{n_i}$ as
  $\Dc_L$-module, we have
  \begin{displaymath}
    Hom_{\Dc_L[G]}(M_1, M_2) = Hom_{l [G]}(M_1^{T_L}, M_2^{T_L}) =
    Hom_{l[G]}(s_+(M_1), s_+(M_2)).
  \end{displaymath}
  $s^+$ is fully faithful: for a $l[G]$-module $V$
  \begin{displaymath} s_+s^+(V)=Hom_{\Dc_L} (L, L \otimes_{l} V) =
    Hom_{\Dc_L}(L, L)\otimes_{l} V = L^{T_{L/k}}\otimes_{l} V = l
    \otimes_{l} V =V,
\end{displaymath}
where the penultimate step follows from \Lemma{\ref{int-closure}}.
\end{pfof}

The Picard-Vessiot equivalence for finite field extensions can now be
deduced from the Galois descent equivalence $(g_+, g^+)$ in
\Proposition{\ref{morita}}. The former is given by the functors
\begin{displaymath}
  \Delta = g_+ \circ s^+ : \Mod(k[G])\to \Mod^L (\Dc_K),
\end{displaymath} and
\begin{displaymath} \Loc = s_+ \circ g^+: \Mod_{fd}(\Dc_K) \to \Mod
(k[G]).
\end{displaymath}

 \begin{theorem} \label{equivalence}
 \label{prop:equiv}
   The functor
   \begin{displaymath} \Delta: \Mod (l[G]) \to \Mod_{fd} (\Dc_{K})
   \end{displaymath} is fully faithful, and defines an equivalence of
   categories $ \Mod (l[G]) \to \Mod_{fd}^{L}(\Dc_{K}).$ A
   quasi-inverse of $\Delta$ is given by the functor $\Loc :
   \Mod^{L}_{fd}(\Dc_{K}) \to \Mod(l[G])$. Moreover,
   \begin{displaymath}
     \Dbb_K\circ \Delta = \Delta \circ * \quad {\text and } \quad *
     \circ \Loc = \Loc \circ \Dbb_K.
   \end{displaymath}
\end{theorem}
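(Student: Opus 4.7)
The plan is to assemble $\Delta$ and $\Loc$ as compositions of the two equivalences already laid out in the paper: the Galois descent pair $(g_+,g^+)$ between $\Mod(\Dc_K)$ and $\Mod(\Dc_L[G])$ from Proposition \ref{morita}, and the ``trivialization'' pair $(s^+,s_+)$ between $\Mod(l[G])$ and $\Mod^L_{fd}(\Dc_L[G])$ from Lemma \ref{adj-lemma}. Once both are shown to be equivalences of categories, their composition is automatically an equivalence, and the characterization of the image follows from tracking the $\pi^!$-triviality condition across the descent step.

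First I would verify that $s^+$ and $s_+$ are quasi-inverse equivalences between $\Mod(l[G])$ and $\Mod^L_{fd}(\Dc_L[G])$. Lemma \ref{adj-lemma}(3) already shows they are fully faithful, and the essential surjectivity of $s^+$ is a direct computation: if $M\in\Mod^L_{fd}(\Dc_L[G])$, choose any $\Dc_L$-isomorphism $M\cong L^n$; then by Lemma \ref{int-closure} $M^{T_L}\cong l^n$, and this is a $G$-stable $l$-subspace whose image under $s^+$ is canonically isomorphic to $M$ as $\Dc_L[G]$-module. The unit $V\to s_+s^+(V)=V$ is the identity (as already computed in the proof of Lemma \ref{adj-lemma}(3)), giving $s_+\circ s^+=\id$ and, by Yoneda and fully faithfulness, $s^+\circ s_+\cong \id$ on $\Mod^L_{fd}(\Dc_L[G])$.

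Next I would combine this with Proposition \ref{morita}. Since $g_+$ and $g^+$ are mutually quasi-inverse, $\Delta=g_+\circ s^+$ and $\Loc=s_+\circ g^+$ compose to
\begin{displaymath}
  \Loc\circ\Delta = s_+g^+g_+s^+\cong s_+s^+=\id,\qquad
  \Delta\circ\Loc = g_+s^+s_+g^+\cong g_+g^+=\id.
\end{displaymath}
The identification of the essential image with $\Mod^L_{fd}(\Dc_K)$ is the step that needs a small argument: for $V\in\Mod(l[G])$, applying $g^+=\pi^!$ gives $g^+\Delta(V)=g^+g_+s^+(V)\cong s^+(V)=L\otimes_l V\cong L^{\dim_l V}$ as $\Dc_L$-module, so $\Delta(V)\in\Mod^L_{fd}(\Dc_K)$; conversely, if $M\in\Mod^L_{fd}(\Dc_K)$ then $g^+(M)\in\Mod^L_{fd}(\Dc_L[G])$ by definition, so $\Delta(\Loc(M))\cong M$ places $M$ in the essential image. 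Fully faithfulness of $\Delta$ on all of $\Mod(l[G])$ (not just on the subcategory hitting the image) is immediate from fully faithfulness of both $s^+$ and $g_+$.

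Finally, the duality compatibility drops out of the two ingredients: $\Dbb_K g_+=g_+\Dbb_L$ from the lemma preceding Proposition \ref{morita}, and $\Dbb_L s^+=s^+\circ *$ from Lemma \ref{adj-lemma}(1). Composing gives $\Dbb_K\Delta = g_+\Dbb_L s^+ = g_+s^+\circ * = \Delta\circ *$, and applying $\Loc$ on both sides yields $*\circ\Loc=\Loc\circ\Dbb_K$. The only step requiring care is the verification that $s^+$ lands inside $\Mod^L_{fd}(\Dc_L[G])$ with the correct $G$-action after descent so that the essential image computation $g^+\Delta(V)\cong L^{\dim_l V}$ is genuinely as $\Dc_L$-modules (i.e.\ forgetting $G$), but this is transparent once one unwinds that $g^+g_+=\id$ as functors into $\Mod(\Dc_L[G])$ and then applies the forgetful functor to $\Mod(\Dc_L)$; I anticipate no hidden obstacle here beyond diagram chasing.
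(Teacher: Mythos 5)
Your proof is correct and takes essentially the same approach as the paper: factor $\Delta$ and $\Loc$ through the Galois-descent equivalence $(g^+,g_+)$ of Proposition~\ref{morita} and the trivialization equivalence $(s^+,s_+)$ of Lemma~\ref{adj-lemma}, and chase the compositions. The only cosmetic difference is that the paper obtains $s^+s_+\cong\id$ directly from the full faithfulness of $s_+$ (Lemma~\ref{adj-lemma}(3)) as the right adjoint, whereas you verify essential surjectivity of $s^+$ by hand via Lemma~\ref{int-closure}; these are equivalent routes to the same fact.
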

\begin{remarks}\label{rem-ab-ext}
  \begin{enumerate}
  \item If one restricts to abelian Galois extensions $L/K$, then all
    the simples in $\Mod^L (\Dc_K)$ are of rank $1$. Moreover, if $M$
    is such a simple $L$-trivial $\Dc_K$-module, there exists a
    radical extension $L_1/K$ such that $L_1\otimes_K M\cong L_1$ (see
    \Proposition{\ref{rat-ext}}).
  \item It is easy to see that $(\Delta, \Loc )$ are functors between
    tensor categories. One can notice, however, that the Tannaka
    theorem is not invoked in the proof of \Theorem{\ref{equivalence}}.
  \end{enumerate}

\end{remarks}
\begin{pfof}{\Theorem{\ref{equivalence}}}
  By \Proposition{\ref{morita}} and \Lemma{\ref{adj-lemma}} it
  follows that we have the adjoint pair of functors $ (\Delta, \Loc)$,
  and also that we have
  \begin{displaymath}
      \Loc \circ \Delta = s_+\circ g^+\circ g_+\circ s^+ = s_+\circ
      s^+ = \id.
  \end{displaymath} so that $\Delta$ is fully faithful.
We also have, again by \Proposition{\ref{morita}} and \Lemma
{\ref{adj-lemma}},
\begin{displaymath}
  \Delta \circ \Loc = g_+\circ s^+ \circ s_+ \circ g^+ = \id.
\end{displaymath}
\end{pfof}

\Theorem{\ref{equivalence}} implies that $ \Mod^L_{fd} (\Dc_{K})$ is a
semisimple category, by Maschke's theorem. This also follows from
\Theorem{\ref{semisimple-inv}}, and similarly, the following corollary
to \Theorem{\ref{equivalence}}, giving a decomposition of $\pi_+(L)$,
also follows from \Corollary{\ref{direct-L}}; in both cases one thus
did not need to employ the equivalence $\Delta$.
\begin{corollary}
  $\pi_+(L)$ is semisimple, more precisely,
  $\pi_+(L) = \Delta (l[G]) = g_+(L\otimes_k k[G]) = (L[G])^G$.
  Moreover,
\begin{displaymath}
    Hom_{\Dc_{K}}(\pi_+(L),\pi_+(L)) = l[G].
  \end{displaymath}
\end{corollary}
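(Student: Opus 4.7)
The plan is to leverage the Picard–Vessiot equivalence $\Delta: \Mod(l[G])\to \Mod^L_{fd}(\Dc_K)$ from \Theorem{\ref{equivalence}}: once we recognize $\pi_+(L)$ as $\Delta(l[G])$, everything else follows formally.

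First I would verify that $\pi_+(L)\in \Mod^L_{fd}(\Dc_K)$. By \Theorem{\ref{inv-dir}}(4),
\begin{displaymath}
  \pi^!\pi_+(L) = \bigoplus_{g\in G} L_g,
\end{displaymath}
and for the trivial $\Dc_L$-module $L$ the isomorphism $L_g\cong L$, $a\mapsto g(a)$, is $\Dc_L$-linear, so $\pi^!\pi_+(L)\cong L^{|G|}$. Thus $\pi_+(L)$ is $L$-trivial. Applying the quasi-inverse $\Loc = s_+\circ g^+$, one has $\Loc(\pi_+(L)) = (\pi^!\pi_+(L))^{T_L} = (L\otimes_K L)^{T_L}$; writing an element with respect to a $K$-basis $\{b_i\}$ of $L$ in the second factor, horizontality forces the coefficients to lie in $L^{T_L}=l$ by \Lemma{\ref{int-closure}}. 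The natural $L$-algebra isomorphism $L\otimes_K L\cong L[G]$, $a\otimes b\mapsto\sum_g a\,g(b)\,e_g$, is $\Dc_L[G]$-equivariant for the obvious actions, and identifies this invariant subspace with the regular representation $l[G]$. Hence $\Loc(\pi_+(L))=l[G]$ and \Theorem{\ref{equivalence}} gives $\pi_+(L)=\Delta(l[G])$.

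Unpacking $\Delta$ yields the remaining equalities: by definition $\Delta(l[G]) = g_+(s^+(l[G])) = g_+(L\otimes_l l[G])$, and because $l\otimes_k k[G]=l[G]$ we have $L\otimes_l l[G] = L\otimes_k k[G] = L[G]$ with diagonal $G$-action $h\cdot(a e_g) = h(a)\,e_{hg}$. \Proposition{\ref{morita}} then gives $g_+(L[G]) = (L[G])^G$. A direct calculation shows that $\sum_g a_g e_g\in (L[G])^G$ iff $a_g=g(a_e)$ for all $g$, so the map $(L[G])^G\to L$, $v\mapsto a_e$, is $\Dc_K$-linear (using that $T_K$-derivations commute with the Galois action) and recovers $\pi_+(L)=L$ on the nose.

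Semisimplicity is immediate: $l[G]$ is semisimple by Maschke's theorem in characteristic $0$, the equivalence $\Delta$ transports this to $\Mod^L_{fd}(\Dc_K)$, and simples there remain simple in $\Mod_{fd}(\Dc_K)$ since any $\Dc_L$-submodule of $L^n$ is of the form $L^m$ (as $L$ is simple over $\Dc_L$ by \Lemma{\ref{simple-1}}). For the endomorphism ring, full faithfulness of $\Delta$ gives
\begin{displaymath}
  \Hom_{\Dc_K}(\pi_+(L),\pi_+(L)) = \Hom_{l[G]}(l[G],l[G]) = l[G].
\end{displaymath}
The main technical point is bookkeeping: verifying that the $G$-action inherited by $L\otimes_K L$ from its second factor really matches the diagonal $G$-action prescribed by $s^+$ on $L\otimes_l l[G]$ under the algebra isomorphism $L\otimes_K L\cong L[G]$; once the $G$-equivariance of this identification is checked, the rest is a straightforward application of \Theorem{\ref{equivalence}}.
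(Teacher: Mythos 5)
Your argument is correct, but it runs in the opposite direction from the paper's. The paper computes $\Delta(l[G])$ outright: unpacking $g_+ s^+(l[G])$ gives $(L\otimes_k k[G])^G$, which after using the self-duality $k[G]^*\cong k[G]$ and the tensor-hom adjunction collapses to $\Hom_{k[G]}(k[G],L)=L=\pi_+(L)$, with full faithfulness of $\Delta$ then immediately giving the endomorphism ring. You instead first establish that $\pi_+(L)$ lies in $\Mod^L_{fd}(\Dc_K)$ (via $\pi^!\pi_+(L)\cong L^{|G|}$), compute $\Loc(\pi_+(L))=(L\otimes_K L)^{T_L}$, and identify it with $l[G]$ through the $G$-equivariant algebra isomorphism $L\otimes_K L\cong L[G]$; applying the equivalence $\Delta$ then recovers the statement. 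Both routes are legitimate. The paper's is shorter, essentially a one-display adjunction calculation; yours is longer but makes the mechanism of the equivalence more visible by exhibiting $\Loc(\pi_+(L))$ concretely as the regular representation, and you also supply the closure-under-subobjects argument (any $\Dc_L$-submodule of $L^n$ is again $L^m$ since $L$ is $\Dc_L$-simple) that justifies transferring semisimplicity from $\Mod^L_{fd}(\Dc_K)$ to $\Mod_{fd}(\Dc_K)$, a point the paper leaves implicit. One small imprecision: the sentence "writing an element with respect to a $K$-basis $\{b_i\}$ of $L$ in the second factor, horizontality forces the coefficients to lie in $l$" is misleading as stated, since the diagonal $T_L$-action does not act only on the first factor; the correct reasoning is exactly your next step, namely passing through $L\otimes_K L\cong L^{|G|}$ and using that the unique extension of a derivation of $K$ to $L$ commutes with the $G$-action, so that $T_L$ acts coordinatewise and the invariants of each copy of $L$ are $l$ by \Lemma{\ref{int-closure}}. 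After that correction the rest of the bookkeeping you flag in the last paragraph goes through.
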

\begin{proof} We have, since $k[G]^* \cong k[G]$ (as $k[G]$-module),
  \begin{align*}
    \Delta(l[G]) & = g_+ s^+(l[G]) = (L\otimes_{l}\bar
                        k[G])^G =  (L\otimes_kk[G])^G = Hom_{k[G]}(k,
                        L\otimes_k k[G]) \\ & Hom_{k[G]}(k, L\otimes_k( k[G])^*) = Hom_{k[G]}(k[G],
                                              L ) =L= \pi_+(L),
  \end{align*}
  where $L$ is regarded as a $\Dc_K$-module under the inclusion
  $\Dc_K \subset \Dc_L$. The last assertion follows since $\Delta$ is
  fully faithful.
\end{proof}
In \Theorem{\ref{equivalence}} we ended up in the category of
$l[G]$-modules, where $l[G]$ is not in general a group ring when
$l\not\subset K$ (see also \Section{\ref{liouville-section}}). The
situation is described by a diagram of field extensions:
\begin{displaymath}
\begin{tikzcd}[every arrow/.append style={dash}]
  &L               \\
  & K_1\arrow{u}   \\
  l\arrow{ur} & K\arrow{u}                \\
 k,\arrow{u} \arrow{ur}  &  
\end{tikzcd}
\end{displaymath}
where $l$ is the algebraic closure of $k$ in $L$, and $K_1$ the
composite of $K$ and $l$ in $L$. Since $L/K$ is Galois it follows that
all extensions in fact are Galois. Let $p$  denote the map $\Spec K_1
\to \Spec K$. 
  \begin{proposition}\label{non-closed}
  \item The functor $p^!:\Mod^L(\Dc_K)\to \Mod^L(\Dc_{K_1})$ is fully
    faithful.
  \item If $M \in \Mod^L(\Dc_{K_1}) $ is simple, then $p_+(M) = N^r$,
    where $r= [l:k_1]$ and $k_1$ is the algebraic closure of $k$ in
    $K$.
  \end{proposition}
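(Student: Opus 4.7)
The plan is to translate both parts into assertions about finite-group representations via the Picard-Vessiot equivalence of \Theorem{\ref{equivalence}}: under it, $\Mod^L(\Dc_K) \simeq \Mod(l[G])$ where $l[G]$ is the skew group ring on which $G = \Galo(L/K)$ acts on the coefficient field $l$ through $G/H = \Galo(l/k_1)$, while $\Mod^L(\Dc_{K_1}) \simeq \Mod(l[H])$ is an ordinary group ring (because $H = \Galo(L/K_1)$ fixes $l$ pointwise). Under this dictionary $p^!$ becomes restriction from $l[G]$-modules to $l[H]$-modules, and $p_+$ becomes induction, by the analogue of the induction/direct-image correspondence of \Proposition{\ref{ind-direct}}.

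For (1), I would show that restriction $\on{Res}^G_H \colon \Mod(l[G]) \to \Mod(l[H])$ is fully faithful by invoking Galois descent for the finite Galois extension $l/k_1$. Given $V_1, V_2 \in \Mod(l[G])$, the semilinear action of $G/H = \Galo(l/k_1)$ on the $l$-vector space $\Hom_{l[H]}(V_1, V_2)$ has $\Hom_{l[G]}(V_1, V_2)$ as its invariants; since $H$ is precisely the kernel of $G \to \Galo(l/k_1)$ and the $l[G]$-structure on each $V_i$ encodes the descent datum for this semilinear action, any $l[H]$-equivariant map is automatically $G$-equivariant, delivering the desired bijection.

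For (2), let $W = \Loc(M)$ be the simple $l[H]$-module corresponding to $M$. The induction/direct-image principle gives $\Loc(p_+(M)) = \on{Ind}^G_H W$ as $l[G]$-modules. Because $H$ acts trivially on $l$ while $G/H$ acts faithfully on $l$, every Galois twist $g \otimes W$ for $g \in G/H$ is abstractly isomorphic to $W$ as an $l[H]$-module (the twist only modifies the $l$-action by a Galois automorphism, which is recoverable by an $l$-linear change of basis), so the inertia group of $W$ in $G$ is all of $G$. Clifford theory in this skew setting then identifies $\on{Ind}^G_H W = U \otimes_l \rho$, where $U$ is the unique-up-to-isomorphism extension of $W$ to an $l[G]$-module and $\rho$ is the regular representation of $G/H$; since $\rho$ is the regular representation of $\Galo(l/k_1)$ of $l$-dimension $r = [l:k_1]$, it absorbs all Galois twists and $\on{Ind}^G_H W = U^r$. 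Translating back through $\Delta$ gives $p_+(M) = \Delta(U)^r = N^r$ with $N$ simple.

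The main obstacle is the careful handling of the semilinear $G/H$-action on $l$: the fully faithfulness in (1) is not the naive statement that every $l[H]$-linear map is automatically $l[G]$-linear, but the Galois-descent assertion that the $l$-structure together with $H$-equivariance already encodes all of $G$-equivariance; and in (2), the multiplicity $r$ arises not from an orbit decomposition of distinct twists (all twists coincide) but from the Galois absorption of the regular representation of $\Galo(l/k_1)$ into an $l$-multiple of $U$. Verifying these identifications rigorously in the skew setting, and matching them with the abstract $\Dc$-module assertions via $\Delta$, is the technical heart of the argument.
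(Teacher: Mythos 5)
The translation to the representation-theoretic side is a natural idea, and the paper itself hints at it in the surrounding text (``$\reso : \Mod(l[G])\to \Mod(l[G_1])$ \ldots\ corresponds to $p^!$''), but the Galois-descent argument you give for part (1) proves the wrong thing and in fact undercuts the conclusion you want. Galois descent for the finite Galois extension $l/k_1$, applied to the semilinear $(G/H)$-action $f\mapsto f^g$, $f^g(v)=g^{-1}f(gv)$, on the $l$-vector space $\Hom_{l[H]}(V_1,V_2)$, yields
\[
\Hom_{l[H]}(V_1, V_2) \;\cong\; l \otimes_{k_1} \Hom_{l[H]}(V_1, V_2)^{G/H} \;=\; l \otimes_{k_1} \Hom_{l[G]}(V_1, V_2),
\]
so $\Hom_{l[G]}(V_1,V_2)$ is only a $k_1$-form of $\Hom_{l[H]}(V_1,V_2)$, of index $r=[l:k_1]$, not the whole space. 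Your step ``any $l[H]$-equivariant map is automatically $G$-equivariant'' is therefore false whenever $r>1$, and carried out honestly your own framework shows restriction is faithful but not full. The same issue persists in part (2): the assertion that every twist $g\otimes W$ is abstractly isomorphic to $W$, hence that the inertia group of $W$ in $G$ is all of $G$, does not hold in general. When $G=H\times(G/H)$ the conjugation on $H$ is trivial and $g\otimes W$ is exactly the Galois conjugate of $W$; a one-dimensional $\Cb[C_3]$-character with eigenvalue a primitive cube root of unity is not isomorphic to its complex conjugate, so the inertia group can be strictly smaller than $G$. Finally, even the dictionary you invoke requires extending \Proposition{\ref{ind-direct}} beyond the case $l=k$ treated in the paper.

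By contrast, the paper bypasses the $l[G]$-picture entirely and argues directly with the adjoint triple of \Theorem{\ref{adj-triple-th}}: for (1), an isomorphism $p^!(N_1)\cong p^!(N_2)$ gives by adjunction a nonzero $\Dc_K$-morphism $N_1\to p_+p^!(N_2)=K_1\otimes_K N_2\cong N_2^{\,r}$, and simplicity forces $N_1\cong N_2$; for (2), it writes $M=\Dc_{K_1}m = l\otimes_{k_1}\Dc_K m$ for a nonzero $m\in M$ and identifies $\Dc_K m$ as the simple piece. This is shorter and avoids the semilinear subtleties that sink your argument. If you want to save the representation-theoretic route, you must confront the nontrivial semilinear $(G/H)$-action head on rather than hoping it disappears, and you should be prepared for the answer to depend on the inertia subgroup of $W$ rather than always giving the maximal answer $G$.
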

On the group side  we have the exact sequence
\begin{displaymath}
  0 \to l[G_1] \to l[G] \to l[H] \to 0,
\end{displaymath}
where $G_1$ and $H$ are the Galois groups of $L/K_1$ and $K_1/K$,
respectively ($H$ is also the Galois group of $l/k$). A similar
argument as in the proof below implies that the restriction functor
$\reso : \Mod(l[G])\to \Mod(l[G_1])$ (which corresponds to $p^!$) also
is fully faithful (or apply \Theorem{\ref{equivalence}}).
\begin{proof} (1): Our categories are semisimple so it suffices to
  prove that an isomorphism $\phi :p^!(N_1)\to p^!(N_2)$ gives an
  isomorphism $N_1 \to N_2$, when $N_1$ and $N_2$ are simple. The map $\phi$ induces the homomorphism
  $\hat \phi$ of $\Dc_K$-modules below
  \begin{displaymath}
    N_1 \xrightarrow{\hat \phi} K_1\otimes_K N_2 = l\otimes_k N_2 = N_2^r,
  \end{displaymath}
  where $r= [l:k_1]$. Since $N_1$ and $N_2$ are simple, this implies
  the assertion.

  (2): We have, for any non-zero element $m\in M $,
  $M= \Dc_{K_1} m = l\otimes_k \Dc_K m =\oplus^r \Dc_Km$. By (1)
  $\Dc_Km$ is simple.
\end{proof}
\subsection{The Galois correspondence}\label{galois-corr} Fix the
fields $K$ and $ L$ as above, assume for simplicity that $l =k$,
and consider a tower of intermediate fields
\begin{displaymath} K \subset E_2 \subset E_1 \subset L
\end{displaymath} corresponding in the usual Galois correspondence to
a tower of subgroups
\begin{displaymath}
  \{e \}\subset H_1 \subset H_2 \subset G
\end{displaymath} such that $E_i = L^{H_i}$. We have now equivalences
$(\Delta_1, \Loc_1)$ and $(\Delta_2, \Loc_2)$
\begin{displaymath}
  \Mod (k[H_1])\cong \Mod_{fd}^L(\Dc_{E_1}), \quad \Mod (k[H_2])\cong
  \Mod_{fd}^L(\Dc_{E_2}).
\end{displaymath} To the map $p : \Spec E_1\to \Spec E_2$ we have the
adjoint triple $({p}_+, p^+,p_+ )$ of functors
\Th{\ref{adj-triple-th}},
\begin{displaymath} \Mod^{L}_{fd}(\Dc_{E_1})
\adj{p_+}{p^+}\Mod^{L}_{fd}(\Dc_{E_2})
\end{displaymath} That the direct image functor takes $L$-trivial
connections to $L$-trivial connections is easily seen if one recalls
that the direct image is simply by restriction to a subring
$\Dc_{E_2}\subset \Dc_{E_1}$.

We regard $k[H_2]$ as a $(k[H_2], k[H_1])$-bimodule. The restriction
functor
$$
\reso_{21}: \Mod(k[H_2])\to \Mod(k[H_1]), \quad V\mapsto
Hom_{k[H_2]}(k[H_2], V)= V,
$$
where the action on $V$ is by the inclusion $k[H_1]\subset k[H_2]$,
has a left adjoint, which is the induction functor
$$
\ind_{12}: \Mod(k[H_1])\to \Mod(k[H_2]),\quad V\mapsto k[H_2]
\otimes_{k[H_1]}V.
$$
We have the duality functor $* : \Mod(k[G]) \to \Mod(k[G])$, $V\mapsto
V^* = Hom_k(V, k)$, where the $k[G]$-action is determined by $g\cdot
v^* (v)= v^*(g^{-1}v)$, $v\in V, v^* \in V^*$. We use the same
notation $*$ for the duality on $\Mod(k[H])$, when $H $ is a subgroup
of $G$. Since clearly $* \reso_{21} * = \reso_{21}$, the right adjoint
is the coinduction
\begin{align*}
  \coind_{12}: \Mod(k[H_1])&\to \Mod(k[H_2]),\\ V&\mapsto (k[H_2]
  \otimes_{k[H_1]}V^*)^*= Hom_{k[H_1]}(k[H_2] ,V),
\end{align*} where the $k[H_2]$-action is determined by $h_2 \cdot
\phi (r) = \phi (h_2^{-1}r)$, when $h_2\in H_2$, $\phi \in
Hom_{k[H_1]}(k[H_2] ,V)$, and $r\in k[H_2]$. We have a canonical
homomorphism
\begin{displaymath}
  k[H_2]^* \otimes _{k[H_1]} V \to (k[H_2] \otimes_{k[H_1]}V^*)^*
\end{displaymath} which is actually an isomorphism since the $(k[H_2],
k[H_1])$-bimodule $k[H_2]$ is self-dual; hence $\coind_{12} =
\ind_{12}$.

Since the two adjoints of $\reso_{21}$ are isomorphic we will only use
the coinduction in the proposition below (this is also reflected in
the fact that $p_+= p_{!}$ \Th{\ref{adj-triple-th}}).

\begin{proposition}\label{ind-direct} The adjoint pairs $(p^+, p_+)$
  and $(\reso_{21}, \coind_{12})$ correspond in the sense:
\begin{enumerate}
  \item $\Loc_1 \circ \ p^+\circ \Delta_{2} =\reso_{21}$.
  \item $\Loc_2 \circ\ p_+ \circ \Delta_1=\coind_{12}$.
\end{enumerate}
\end{proposition}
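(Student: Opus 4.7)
\begin{pf}
The plan is to prove part (1) by a direct unwinding of definitions using Galois descent (\Proposition{\ref{morita}}) and \Lemma{\ref{int-closure}}, and then to deduce part (2) formally by taking right adjoints.

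For (1), I unfold $\Delta_{2}(V) = g_+ s^+(V) = (L\otimes_{k}V)^{H_{2}}$ as a $\Dc_{E_{2}}$-module, with $H_{2}$ acting diagonally on $L\otimes_{k}V$. Applying $p^+$ yields $E_{1}\otimes_{E_{2}}(L\otimes_{k}V)^{H_{2}}$, and then the $L$-pull-back $g_{1}^{+}$ appearing in $\Loc_{1}$ gives
\[
L\otimes_{E_{1}}E_{1}\otimes_{E_{2}}(L\otimes_{k}V)^{H_{2}} \;=\; L\otimes_{E_{2}}(L\otimes_{k}V)^{H_{2}}.
\]
By Galois descent for $L/E_{2}$ \Prop{\ref{morita}} the natural multiplication map $\ell\otimes m\mapsto \ell\cdot m$ identifies this with $L\otimes_{k}V$ as a $\Dc_{L}[H_{2}]$-module; I will check that this identification is in fact $\Dc_{L}[H_{1}]$-equivariant, where $H_{1}\subset H_{2}$ acts trivially on $(L\otimes_{k}V)^{H_{2}}$ (hence only through $L$ on the left-hand tensor product) and diagonally on $L\otimes_{k}V$ on the right. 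Finally, $s_{+}$ takes $T_{L}$-invariants, and by \Lemma{\ref{int-closure}} $L^{T_{L}}=k$, so $(L\otimes_{k}V)^{T_{L}}=V$; the residual $H_{1}$-action on $1\otimes v$ is $1\otimes h\cdot v$, i.e.\ exactly $\reso_{21}(V)$.

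For (2), rather than repeat a parallel (and slightly more delicate) direct calculation, I take right adjoints of the identity proved in (1). By \Theorem{\ref{adj-triple-th}} the pair $(p^{+},p_{+})$ is adjoint; by \Theorem{\ref{equivalence}} the equivalences $\Delta_{i}\dashv\Loc_{i}$ hold, and since each is a quasi-inverse of the other, also $\Loc_{i}\dashv\Delta_{i}$. Using the rule $(F_{1}\circ F_{2}\circ F_{3})^{R}=F_{3}^{R}\circ F_{2}^{R}\circ F_{1}^{R}$, the right adjoint of $\Loc_{1}\circ p^{+}\circ \Delta_{2}$ is $\Loc_{2}\circ p_{+}\circ \Delta_{1}$. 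On the other hand, the right adjoint of $\reso_{21}$ is by definition $\coind_{12}$. Since right adjoints are unique up to natural isomorphism, (1) forces $\Loc_{2}\circ p_{+}\circ \Delta_{1}\cong \coind_{12}$, which is (2).

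The main obstacle is the verification in (1) that the descent isomorphism $L\otimes_{E_{2}}(L\otimes_{k}V)^{H_{2}}\cong L\otimes_{k}V$ is compatible with the residual $H_{1}$-action; this is a short but nontrivial bookkeeping step, relying on the fact that $H_{1}\subset H_{2}$ acts trivially on the $H_{2}$-invariants, so that the $H_{1}$-action on the left tensor factor $L$ is carried by the multiplication map to the diagonal $H_{1}$-action on $L\otimes_{k}V$. Once this is settled, $\Lemma{\ref{int-closure}}$ finishes (1), and (2) follows by the formal adjunction argument above with essentially no further work.
\end{pf}
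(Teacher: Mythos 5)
Your proof is correct and takes essentially the same route as the paper's: both reduce (2) to (1) by the formal adjunction argument (the paper simply says it suffices to prove (1) ``since we are dealing with two adjoint pairs and since we have equivalences of categories,'' which is exactly your right-adjoint computation made terse), and both establish (1) via the Galois descent equivalence of \Proposition{\ref{morita}}. The only difference is cosmetic: the paper keeps the computation entirely at the functor level, observing $g_1^+\circ p^+ = g_2^+$ (both are pullback to $\Dc_L$) and then collapsing $g_2^+\circ (g_2)_+ = \id$, so that $\Loc_1\circ p^+\circ\Delta_2 = (s_1)_+\circ s_2^+ = \reso_{21}$ in three strokes; you unwind the same chain into explicit tensor products $L\otimes_{E_2}(L\otimes_k V)^{H_2}\cong L\otimes_k V$ and check $H_1$-equivariance of the multiplication map by hand, which is exactly the content that \Proposition{\ref{morita}} packages for you. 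The final step via $\Lemma{\ref{int-closure}}$ ($L^{T_L}=k$) matches the paper's use of $(s_1)_+\circ s_2^+ = \reso_{21}$.
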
 It follows also that
\begin{align*}
  p^+&= \Delta_1 \circ \reso_{21} \circ \Loc_2 : \Mod^L(\Dc_{E_2}) \to
  \Mod^L(\Dc_{E_1}) \\ p_+ &= \Delta_2 \circ \coind_{12} \circ \Loc_1:
  \Mod^L(\Dc_{E_1}) \to \Mod^L(\Dc_{E_2}).
\end{align*}

\begin{proof} Since we are dealing with two adjoint pairs and since we have
equivalences of categories,
  it suffices to prove (1). Let $(s_i)_+:\Mod^L(\Dc_L[H_i]) \to
  \Mod(k[H_i])$ and $(g_i)_+: \Mod^L(\Dc_L[H_i]) \to
  \Mod^L(\Dc_{E_i})$, $i=1,2$ be the functors corresponding to $s_+$
  and $g_+$, as described above, so that $\Loc_1 = (s_1)_+ g_1^+$ and
  $\Delta_2 = (g_2)_+ s_2^+$. Therefore by \Proposition{\ref{morita}}
  \begin{displaymath}
    \Loc_1 \circ p^+ \circ \Delta_2 = (s_1)_+\circ g_1^+\circ p^+\circ
    (g_2)_+\circ s_2^+ = (s_1)_+\circ g_2^+\circ (g_2)_+\circ s_2^+ =
    (s_1)_+\circ s_2^+ = \reso_{21}.
  \end{displaymath}
\end{proof}
It is of some interest to connect with Mackey's restriction theorem in
group theory (see \cite{serre:lin-rep}*{Prop. 22}), using the above
relation between the pairs inverse/direct images and
restriction/induction, and thereby showing that
\Theorem{\ref{inv-dir}} can be regarded as a generalization of
Mackey's theorem. Let $H_1$ and $H_2$ be subgroups of a finite group
$G$, and define $H(s)= sH_2s^{-1}\cap H_1$ and $S$ as before
\Theorem{\ref{inv-dir}}. Let $V$ be a representation of $H_2$ and
denote by $V_s$ the representation of $H(s)$ that is formed by the
composition
$H(s) \to H_2 \xrightarrow{\rho} \Glo (V), x\mapsto s^{-1}x s\mapsto
\rho(s^{-1}x s)$. Mackey's restriction theorem states the following:
\begin{corollary}\label{mackey}
  \begin{displaymath}
    \reso_{H_1} \ind^G_{H_2}V = \bigoplus_{s\in S} \ind_{H(s)}^H V_s.
  \end{displaymath}
\end{corollary}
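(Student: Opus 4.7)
The plan is to deduce the Mackey formula by transporting the base change statement in Theorem \ref{inv-dir}(2) across the Picard--Vessiot equivalence of Theorem \ref{equivalence}, using the dictionary between inverse/direct image and restriction/(co)induction established in Proposition \ref{ind-direct}.

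Concretely, I would fix a finite Galois field extension $L/K$ whose Galois group is $G$, with $l=k$ for convenience, and, for the given subgroups $H_1,H_2\subset G$, set $L_i=L^{H_i}$ with corresponding map $p_i:\Spec L_i\to\Spec K$. Starting from a $k[H_2]$-module $V$, apply the equivalence $\Delta_2:\Mod(k[H_2])\to\Mod^L_{fd}(\Dc_{L_2})$ to get $M=\Delta_2(V)$. By Proposition \ref{ind-direct}(2) applied to $p_2$, the $\Dc_K$-module $(p_2)_+(M)$ corresponds under $\Loc$ (for $G$) to $\coind_{H_2}^G V=\ind_{H_2}^G V$. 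Now apply $p_1^!$; by Proposition \ref{ind-direct}(1) for $p_1$, this inverse image translates on the group side to the restriction $\reso_{H_1}$.

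Next, invoke Theorem \ref{inv-dir}(2) for the pair $(p_1,p_2)$, obtaining the $\Dc_{L_1}$-module decomposition
\begin{displaymath}
  p_1^!\,(p_2)_+(M)\;=\;\bigoplus_{s\in S}(l_s)_+\,r_s^!(M),
\end{displaymath}
where $l_s:\Spec L^{H(s)}\to\Spec L_1$ is induced by $L_1\subset L^{H(s)}$ and $r_s:\Spec L^{H(s)}\to\Spec L_2$ is induced by the map $L_2\to L^{H(s)}$ coming from the group embedding $H(s)\hookrightarrow H_2$, $x\mapsto s^{-1}xs$. Applying $\Loc_1$ (for $H_1$) summand by summand, Proposition \ref{ind-direct}(1) turns $r_s^!$ into restriction along this twisted embedding, which by definition produces the twisted module $V_s$ in $\Mod(k[H(s)])$; then Proposition \ref{ind-direct}(2) turns $(l_s)_+$ into $\coind_{H(s)}^{H_1}=\ind_{H(s)}^{H_1}$. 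Assembling the two sides of the equality yields
\begin{displaymath}
  \reso_{H_1}\ind_{H_2}^{G}V\;=\;\bigoplus_{s\in S}\ind_{H(s)}^{H_1}V_s,
\end{displaymath}
as required. Since $\Delta_1$ (equivalently $\Loc_1$) is an equivalence of categories, no information is lost in the translation, and the identity of $\Dc_{L_1}$-modules becomes an identity of $k[H_1]$-modules.

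The main obstacle is bookkeeping rather than content: one must verify that the field-theoretic embedding $L_2\to L^{H(s)}$ produced by base change in Theorem \ref{inv-dir}(2) is precisely the one dual, under the Galois correspondence, to the group embedding $H(s)\hookrightarrow H_2$, $x\mapsto s^{-1}xs$, so that $r_s^!(\Delta_2 V)$ corresponds to $V_s$ and not to $V$ itself. This compatibility is already built into the statement of Theorem \ref{inv-dir}(2) (the $s$-twist there comes from exactly this identification), so the group-theoretic statement follows formally. All remaining steps are routine applications of the equivalences of categories and the adjoint triples already recorded.
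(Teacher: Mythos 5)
Your proposal is correct and follows essentially the same route as the paper's proof: realize $G$ inside $\Aut(L/k)$ with $K=L^G$, $L_i=L^{H_i}$, transport $V$ across the Picard--Vessiot equivalence of \Theorem{\ref{equivalence}}, use \Proposition{\ref{ind-direct}} to identify $(p_2)_+$ with induction and $p_1^!$ with restriction, apply the base-change decomposition of \Theorem{\ref{inv-dir}}(2) to $p_1^!(p_2)_+$, and then translate each summand back via \Proposition{\ref{ind-direct}}, checking that the $s$-twisted embedding $L_2\to L^{H(s)}$ indeed corresponds to $V\mapsto V_s$. The paper packages this into a single composite $\Loc_{H_1}\circ p_1^!\circ\Delta_G\circ\Loc_G\circ(p_2)_+\circ\Delta_{H_2}$ and cancels $\Delta_G\circ\Loc_G=\id$, but the content and the sequence of ingredients are identical.
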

\begin{proof} The group $G$ can be realized as a subgroup of $\Glo(V)$
  for som $k$-vector space $V$, so that it acts faithfully on the fraction
  field $L= k(\So(V))$. Put $K= L^G$, $L_1= L^{H_1}$, and
  $L_2= L^{H_2}$. In the notation of \Proposition{\ref{inv-dir}} we
  have
\begin{displaymath}
  L^{H_1}\otimes_K L^{H_2} = \bigoplus_{s\in S} L^{H(s)}.
\end{displaymath}
By, in turn, \Proposition{\ref{ind-direct}},
\Theorem{\ref{equivalence}} and \Proposition{\ref{inv-dir}}
\begin{align*}
      \reso_{H_1} \ind^G_{H_2}V &=  \Loc_{H_1} \circ p_1^!\circ  \Delta_G \circ
      \Loc_G \circ (p_2)_+ \circ \Delta_{H_2} (V) \\ & =  \Loc_{H_1}\circ p_1^!\circ
                               (p_2)_+\circ \Delta_{H_2}(V)
\\ &  = \bigoplus_{s\in S} \Loc_{H_1}\circ  (l_s)_+\circ ( L^{H(s)} \otimes_{L^{H_2}}
    \Delta_{H_2} (V))\\ & = \bigoplus_{s\in S} \Loc_{H_1}\circ  (l_s)_+\circ \Delta_{H(s)}(V_s)
    =  \bigoplus_{s\in S} \ind_{H(s)}^{H_1} V_s,
\end{align*}
and the last step again follows from  \Proposition{\ref{ind-direct}}.
\end{proof}
\subsection{Inverse images of covering modules}


Using the holomorphic notion Higgs bundle, it can be proven that if
$\pi : X\to Y $ is a morphism of quasi-projective algebraic complex
manifolds, then $\pi^!(M)$ is semisimple when $M$ is a semisimple
connection, and if $M'$ is a second semisimple connection, then
$M\otimes_{\Oc_X}M'$ is again a semisimple connection (see
\cite{simpson-higgs}).\footnote{The latter follows from the former by
  applying the former assertion to the diagonal mapping.} Our argument
for the weaker assertions below has the merit of being algebraic; it
would of course be interesting if one could extend the result to
semisimple connections with any reductive differential Galois group.

Neither the category $\Mod_{cov}(\Dc_X)$ nor $\Con (X)$ are
semisimple, and although both categories are stable under taking
tensor products, the tensor product of simples in $\Mod_{cov}(\Dc_X)$
need not remain semisimple. In contrast, the common subcategory
$ \Con_{cov} (X) = \Con (X)\cap \Mod_{cov}(\Dc_X) $ of covering
connections behaves  better.

\begin{theorem}\label{alg-semi}
  \begin{enumerate}
  \item $ \Con_{cov} (X) $ is a semisimple category.
  \item Let $M_1$ and $M_2$ be covering connections. Then
  $M_1\otimes_{\Oc_X}M_2$ is also a
  covering connection.
\item If $M_1$ is a covering connection and $M_2$ is any semsimple
  connection, then $M_1\otimes_{\Oc_X}M_2$ is a semisimple connection.
  \end{enumerate}
\end{theorem}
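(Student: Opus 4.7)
The strategy is to reduce all three assertions to statements about $\Dc_K$-modules at the generic point $\eta$ of $X$, using Proposition~\ref{simple-coh} to promote generic semisimplicity of a connection to global semisimplicity, together with closure of the covering property under tensor products. This reduction lets us avoid working with a possibly singular integral closure $X_M \to X$ or its resolution (which would be proper but not finite).

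I would first dispatch (2). Choose finite extensions $L_i/K$ with $L_i \otimes_K (M_i)_K$ diagonalizable, and let $L$ be a Galois compositum, so that $L \otimes_K (M_i)_K = \bigoplus_j \Lambda_{i,j}$ with each $\Lambda_{i,j}$ of rank one. Then
\[
  L \otimes_K (M_1 \otimes_{\Oc_X} M_2)_K \;=\; \bigoplus_{j,j'} \bigl(\Lambda_{1,j} \otimes_L \Lambda_{2,j'}\bigr)
\]
is again a sum of rank one $\Dc_L$-modules. Since connections are locally free, $M_1 \otimes_{\Oc_X} M_2$ is locally free and in particular torsion-free, so it lies in $\Con_{cov}(X)$.

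For (1), pick $M \in \Con_{cov}(X)$ and a finite Galois $L/K$ with group $G$ such that $L \otimes_K M_K = \bigoplus_i \Lambda_i$ with each $\Lambda_i$ of rank one, hence simple by Lemma~\ref{simple-1}. So $L \otimes_K M_K$ is semisimple as $\Dc_L$-module; it is then semisimple as $\Dc_L[G]$-module by Proposition~\ref{maschke}(1); the Morita/Galois equivalence of Proposition~\ref{morita} yields semisimplicity of $M_K$ as $\Dc_K$-module; and Proposition~\ref{simple-coh} promotes this to semisimplicity of $M$ as $\Dc_X$-module. To conclude that $\Con_{cov}(X)$ itself is semisimple (not merely that its objects are semisimple in $\Mod(\Dc_X)$), I would check that every simple $\Dc_X$-summand $N$ of $M$ remains an object of $\Con_{cov}(X)$: as a $\Dc_X$-direct summand of a locally free $\Oc_X$-module it is again a connection, and $L \otimes_K N_K$ is a $\Dc_L$-submodule of the semisimple $\bigoplus \Lambda_i$, hence a direct sum of copies of certain $\Lambda_i$, so $N$ is covering.

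For (3), I would run essentially the same descent twice. Since $M_2$ is a semisimple connection, $(M_2)_K$ is a semisimple $\Dc_K$-module; applying Propositions~\ref{morita} and \ref{maschke} in the ascending direction then gives that $L \otimes_K (M_2)_K$ is semisimple as $\Dc_L$-module. Writing
\[
  L \otimes_K (M_1 \otimes_{\Oc_X} M_2)_K \;=\; \bigoplus_i \bigl(\Lambda_i \otimes_L (L \otimes_K (M_2)_K)\bigr),
\]
each summand is semisimple because $\Lambda_i \otimes_L (\cdot)$ is an auto-equivalence of $\Mod(\Dc_L)$ sending simples to simples. Descending once more via Propositions~\ref{maschke}(1) and \ref{morita}, $(M_1 \otimes_{\Oc_X} M_2)_K$ is semisimple as $\Dc_K$-module, and Proposition~\ref{simple-coh} delivers semisimplicity of the connection $M_1 \otimes_{\Oc_X} M_2$ on $X$. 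The only real delicacy in this scheme is the bookkeeping at the descent step --- ensuring one applies Maschke and Morita in the correct direction --- but there is no serious geometric obstacle, precisely because keeping everything at the generic point obviates any need for projection formulas over non-finite proper covers or for the decomposition theorem.
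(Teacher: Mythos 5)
Your argument is correct and follows the paper's strategy: reduce to the generic point, establish semisimplicity or diagonalizability there, and promote to $X$ via Proposition~\ref{simple-coh}. The differences are only in bookkeeping --- you unwind the paper's citations of Lemma~\ref{ss-field}, Theorem~\ref{semisimple-inv} and Theorem~\ref{decomposition-thm} into direct applications of Maschke (Proposition~\ref{maschke}) and Galois descent (Proposition~\ref{morita}), and you make explicit, in part (1), the step the paper leaves tacit that each simple summand of a covering connection is again covering.
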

It follows that $\Con_{cov} (X)$ forms a neutral tensor category,
where a fibre functor is given by
\begin{displaymath}
  M \mapsto Hom_{\Dc_L}(\bigoplus_i \Lambda_i, L\otimes_K M) =
  \bigoplus_i Hom_{\Dc_L} (\Lambda_i,  \Lambda_i^{n_i}) =
  \bigoplus k^{n_i} ,
\end{displaymath}
for a Galois field extension $L/K$ such that
$L \otimes_{K}M = \oplus \Lambda^{n_i}$, for some mutually
non-isomorphic $\Dc_L$-modules $\Lambda_i$ of rank $1$ over $L$.
Fixing the field extension $L/K$ and invertible $\Lambda$ and letting
$\Con^{\Lambda}_{cov}(X)$ be the subcategory of modules $M$ in
$\Con_{cov} (X)$ such that $L\otimes_KM\cong \Lambda^n$ for some
integer $n$, $\Con^\Lambda_{cov}(X)$ is equivalent with the neutral
tensor category of semisimple finite-dimensional representations of
$\Aut (L/K)$.

\begin{proof} Let $K=\Oc_{X,\eta}$ denote the local ring at the
generic point $\eta$ of $X$.

(1): Let $M$ be a covering connection and $M_\eta$ be the
$\Dc_K$-module at the generic point. There exists a finite field
extension $L/K$ such that $L\otimes_K M_\eta$ is diagonalizable so
that in particular it is semisimple. Since there exists an injective
homomorphism $M_\eta \subset \pi_+(L\otimes M_\eta) $,
\Lemma{\ref{ss-field}} implies that $M_\eta$ is a semisimple, and as
$M$ is a connection it equals the minimal extension of $M_\eta$, hence
$M$ is semisimple.

(2): By (1) it suffices to prove that the connection
$M_1\otimes_{\Oc_X}M_2$ is a covering module. There exists a common
finite field extension $L/K$ such that $L\otimes_K((M_1)_\eta)$ and
$L\otimes_K((M_2)_\eta)$ are diagonalizable. Now there exists a
canonical injective homomorphism of $\Dc_K$-modules
\begin{displaymath}
  L\otimes (M_1)_\eta  \otimes_K(M_2)_\eta = L\otimes_K (M_1)_\eta
  \otimes_L L\otimes_K (M_1)_\eta. 
\end{displaymath}
Since the tensor product of modules of generic rank $1$ again is a
module of generic rank $1$, it follows that the above module is
diagonalizable.

(3): Let $L/K$ be a finite field extension such that $L\otimes_K M_1 $
diagonlizable. We have
\begin{displaymath}
L\otimes_K M_1 \otimes_K M_2 = L\otimes_K M_1\otimes_L L \otimes_K
M_2. 
\end{displaymath}
Since $L \otimes_K M_2$ is semisimple by
\Theorem{\ref{semisimple-inv}}, $L\otimes_K M_1$ is diagonlizable, and
the tensor product of a rank $1$ module and a simple module is again
simple, it follows that the above module is semisimple. Hence by
\Theorem{\ref{decomposition-thm}} it is also simple as $\Dc_K$-module,
and we denote it as such by
$(\pi_0)_+(L\otimes_K M_1 \otimes_K M_2 )$. Letting $j: \Spec K \to X$
be the inclusion of the generic point we have
  \begin{displaymath}
        M_1 \otimes_{\Oc_X}M_2  \subset j_{!+}((\pi_0)_+ ((L\otimes_K M_2)^n)),
  \end{displaymath}
  where on the right we have the minimal extension module, which is
  semisimple. This implies the assertion.
  \end{proof}
\begin{remark}
  Another proof of \Theorem{\ref{alg-semi}}, (3), follows by taking
  $M_1\otimes_k M_2$ on $X\times_k X$, pulling back by the diagonal
  map $X \to X\times_k X$, and applying \Theorem{\ref{inv-conn}} below.
\end{remark}

  \begin{theorem}\label{inv-conn} Let $\pi: X\to Y$ be a morphism of
  smooth varieties and $M\in \Con_{cov} (Y)$. Then $\pi^! (M) \in
  \Con_{cov} (Y) $, and in particular, $\pi^!(M)$ is semisimple.
\end{theorem}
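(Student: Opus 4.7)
The plan is to verify that $\pi^!(M)$ is a covering connection; the semisimplicity assertion then follows immediately from Theorem \ref{alg-semi}(1). Two things need checking: that $\pi^!(M)$ is a connection, and that it is a covering module.

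First, since $M$ is a connection, Lemma \ref{con-lemma} gives $\pi^!(M) = \pi^+(M)$, and this coincides with the ordinary $\Oc_X$-module pullback of $M$ equipped with its canonical flat connection. Since $M$ is locally free over $\Oc_Y$, $\pi^!(M)$ is locally free over $\Oc_X$ with its pullback connection, hence a connection.

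To show $\pi^!(M)$ is a covering module I would first reduce to the dominant case by working irreducible-component-by-component on $X$ and replacing $Y$ by the closure of the image of the given component; the factorization $X \to X \times_k Y \to Y$ (graph plus projection) lets us treat this as a closed immersion followed by a smooth projection, and restriction of a covering connection along a closed immersion of smooth varieties is again a covering connection (by the same field-base-change argument below, applied to the residue field at the generic point of the image). Having reduced to the dominant case and to irreducible $X$ and $Y$, let $K \hookrightarrow K'$ denote the induced extension of fraction fields $K = \Oc_{Y,\eta}$ and $K' = \Oc_{X,\eta'}$. By hypothesis there is a finite extension $L/K$ over which $M_\eta$ becomes diagonalizable, $L\otimes_K M_\eta = \bigoplus_i \Lambda_i$ with $\rank_L \Lambda_i = 1$. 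The finite $K'$-algebra $L \otimes_K K'$ has a maximal ideal; let $L'$ be the corresponding residue field, giving a finite field extension $L'/K'$ into which both $L$ and $K'$ embed compatibly over $K$. Then
$$L' \otimes_{K'} \pi^!(M)_{\eta'} \;=\; L' \otimes_{K'} (K' \otimes_K M_\eta) \;=\; L' \otimes_L (L \otimes_K M_\eta) \;=\; \bigoplus_i (L' \otimes_L \Lambda_i),$$
and each summand has rank $1$ over $L'$, so $\pi^!(M)$ is a covering module.

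The main obstacle is the reduction to the dominant case — one has to verify that restriction of a covering connection along a closed immersion of smooth varieties stays in $\Con_{cov}$. The remaining ingredients (base change of a diagonalization along the finite $K'$-algebra $L \otimes_K K'$, and the fact that $\Con_{cov}$ is semisimple) are supplied by the preceding theory, so no further work is required to conclude.
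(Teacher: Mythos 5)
Your reduction strategy (graph factorization, then handle the smooth projection and the closed immersion separately) matches the paper's, and your field-base-change argument is fine for the dominant case: tensoring $L\otimes_K M_\eta = \bigoplus\Lambda_i$ with $L'/K'$ through the finite $K'$-algebra $L\otimes_K K'$ works because there is an honest inclusion $K \hookrightarrow K'$. The difficulty — which you correctly flag but then dismiss — is the closed immersion case, and there your argument has a genuine gap.

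For a closed immersion $i : X \hookrightarrow Y$ with $Y$ irreducible, the generic point $\eta'$ of $X$ maps to a \emph{non-generic} point $y$ of $Y$, with residue field $K' = k_{Y,y}$. There is no inclusion $K = k(Y) \hookrightarrow K'$, so the ring $L\otimes_K K'$ you propose to form is not a $K'$-algebra in the sense you need, and the ``same field-base-change argument applied to the residue field at the generic point of the image'' does not literally apply. More fundamentally, the covering hypothesis on $M$ gives information only about the stalk $M_\eta$ over the generic point of $Y$; you need to transport that information to the stalk $M_y$. That transport is precisely what the paper supplies and what you omit: take a finite map $\bar p : \bar Y \to Y$ (a smooth desingularization of the normalization of $Y$ in a suitable finite extension of $K$) so that $\bar p^!(M)$ has a diagonalizable generic stalk; because $\bar p^!(M)$ is a \emph{connection} on a smooth variety it is the minimal extension of its generic stalk (Proposition~\ref{simple-coh}), so the generic splitting globalizes to a direct sum $\bar p^!(M) = \bigoplus \Lambda_i$ of rank~$1$ connections; then one base-changes $i$ to $j : \bar X = X\times_Y\bar Y \hookrightarrow \bar Y$ and computes $p_1^!\,i^!(M) = j^!\bar p^!(M) = \bigoplus j^!(\Lambda_i)$. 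It is this globalization step — from a generic decomposition to a global one, before restricting — that lets you pass through the non-generic point $y$. Without it, knowing that $M$ is generically diagonalizable over $L/K$ tells you nothing about the behaviour of $M$ along the closed subvariety $X$, so your proof does not close.
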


\begin{example}
  Let $M = \Dc_{\Ab^2} \sqrt{f}$, where $f= y-x^2 \in k[x,y]$. This is
  a simple $\Dc_{\Ab^2}$-module, and we give two morphisms such that its
  inverse image is not semisimple.
  \begin{enumerate}
  \item If $\pi: \{y=0\}\to \Ab^2_k$ is the inclusion of the $x$-axis,
    then $\pi^!(M) = k[x,1/x]$.
  \item If
    $\pi: X = \Spec k[x,z]\to \Ab^2_k, (x,z)\mapsto (x, z^2+ x^2)$ is
    the integral closure of $\Ab^2_k$ in the field extension
    $k(x,y)[\sqrt{f}]/k(x,y)$, then $\pi^!(M) = k[x,z,1/z]$.
\end{enumerate}
\end{example}

\begin{proof} Since $\pi^!(M)\in\Con (X)$ it suffices, by
  \Theorem{\ref{alg-semi}}(1) to prove that $\pi^!(M)$ is a covering
  module. One can factor $\pi$ into a closed embedding and a
  projection, so it suffices to prove the assertion separately when
  $\pi$ is one of these types. When $\pi$ is a projection this is a
  consequence of \Theorem{\ref{semisimple-inv}}, so what remains is
  the case when $\pi$ is a closed embedding. Moreover, a closed
  embedding can be factorized into closed embeddings $X\to Y $ such
  that $\pi (X)$ is a smooth hypersurface in $Y$. The proof therefore
  follows if we prove that $\pi^!(M)$ is a covering module when $\pi$
  is an embedding of a smooth hypersurface.

  By assumption there exists a morphism of varieties $p': Y' \to Y$
  such that $p^!(M)_\xi$ is diagonalizable, where $Y'$ is the integral
  closure of $Y$ in som field extension of the fraction field of $Y$.
  Let $\bar Y \to Y'$ be a desingularization of $Y'$ and
  $\bar p : \bar Y \to Y' \to Y$ be the composed map. Then
  $\bar p^!(M) $ is a connection whose generic stalk is
  diagonalizable, hence it equals a direct sum $ \oplus \Lambda_i$ of
  connections $\Lambda_i$ of rank $1$. Let $p_1: \bar X \to X$ be the
  base change of $\bar Y \to Y $ over $X \to Y$, and
  $j: \bar X \to \bar Y$ be the projection on the second factor. Then
  $j$ is a closed embedding and $j(\bar X)$ is a divisor in $\bar Y$.
  Since $\pi \circ p_1 = \bar p \circ j $ we get
  \begin{displaymath}
   p_1^! \pi^!(M)= j^!\bar p^!(M) = j^!(\bigoplus \Lambda_i)=
   \bigoplus j^!(\Lambda_i).
 \end{displaymath}
 Therefore $\pi^!(M)$ is a covering module.
\end{proof}

\section{Simply connected varieties}\label{sc-section}
Simply connected varieties are approached algebraically by studying
categories of connections instead of fundamental groups.
\subsection{Generalities} Let $X/k$ be a smooth quasi-projective
variety over an algebraically closed field $k$ of characteristic $0$, and
\begin{displaymath}
  \Con_{et}(X) \subset \Con_{rs}(X)\subset  \Con (X)  
\end{displaymath}
be the categories of étale trivial, regular singular, and all
connections on $X$, respectively. The left inclusion is immediate when
$X$ is a quasi-projective curve, and the general case follows from
this, using ``curve regularity'' as definition of regular
singularities \cite{borel:Dmod}. All three categories form $k$-linear
rigid neutral abelian tensor categories, and are therefore equivalent
to representation categories of certain affine (Tannaka) group schemes
with maps
\begin{displaymath}
\pi_1^{alg}(X,p)\to \pi^{rs}_1(X,p) \to   \pi^{et}_1(X,p),
\end{displaymath}
where $p$ is a choice of geometric point in $X$; the first map is
surjective and $\pi^{et}_1(X,p)$ is the pro-finite completion both of
the first and second terms. Here $\Con_{et}(X)$ is equivalent to the
category of finite-dimensional representations of the étale
fundamental group $\pi_1^{et}(X,p)$, $\Con(X)$ is equivalent to the
category of finite-dimensional representations of the algebraic
fundamental group $\pi_1^{alg}(X,p)$, and $\Con_{rs}(X)$ is equivalent
to the category of finite-dimensional representations of
$\pi^{ rs}_1(X,p)$. We will simply write
$\pi_1(X,p) = \pi_1^{rs}(X,p)$; notice that if $X$ is proper, then
$\Con(X) = \Con_{rs}(X)$ so that $\pi_1^{alg}(X,p) = \pi_1(X,p)$. See
\cite{esnault:flat-bund-charp} for a more detailed discussion. When
$k= \Cb$ we have $\Con_{rs}(X)\cong \Con_{rs}(X_a)\cong \Con(X_a)$ for
a quasi-projective manifold $X$, where $X_a$ is its associated complex
analytic manifold, and $\pi_1(X,p)$ is isomorphic to the topological
fundamental group  $\pi_1^{top} (X_a,a)$ \cite{borel:Dmod}.

Say $X$ is {\it étale simply connected} (abbr. e.s.c.) if any object
$M$ in $\Con_{et}(X)$ is trivial, i.e. $M\cong \Oc^n_X$ for some
integer $n$. Similarly, $X$ is {\it simply connected} (s.c.) if any
object in $\Con_{rs}(X)$ is trivial.\footnote{One can also ask that
  $\Con(X)$ only contain trivial objects when $X$ quasi-projective.
  But if $X= \bar X \setminus D$ for a smooth projective variety
  $\bar X $ of dimension $\geq 1$ and $D$ a divisor such that $X$ is
  affine, then $\Con(X)$ will contain non-trivial objects of rank $1$
  (exponential modules); see (\ref{exp-mod-sec}). } Now we have
\begin{displaymath}
  X \text { is s.c.} \Leftrightarrow X \text{ is e.s.c.},
\end{displaymath}
where the arrow to the right is evident while the opposite direction
follows from the Grothendieck-Malcev theorem
\cites{malcev:matrix,grothendieck:profinie}, whose proof is based on
the fact that $\pi^{top}_1(X_a,a)$ is finitely generated. We will
below only use the trivial implication, but it would nevertheless be
great to see an algebraic proof that e.s.c. implies s.c.
\begin{remarks}\label{rem-general}
  \begin{enumerate}
  \item Let $X/k$ be a smooth quasi-projective over a subfield
    $k \subset \Cb$, so that $k \subset l \subset \Cb$ where $l$ is
    the algebraic closure of $k$ in the field of complex numbers
    $\Cb$. Let $X_{l}$ be the base change of $X$ over $l/k $, and
    $X_a$ be its complex analytification. We have natural functors
    \begin{displaymath}
      \Con_{rs}(X)\to  \Con_{rs}(X_{l}) \to  \Con_{rs}(X_{\Cb})\cong  \Con_{rs}(X_a),
    \end{displaymath}
    where the equivalence follows from G.A.G.A. Now it can happen that
    $\pi^{top}_1(X_a,p)\neq 0$ or $\pi_1^{et}(X,p)\neq 0$ while $X$ is
    simply connected in our sense. One cause for this is that
    $\pi^{top}_1(X_a, p)$ need not be residually finite
    \cite{toledo:nonres} (even when $X$ is projective), another that
    $k\neq l$, so that even if $\pi^{top}_1(X_a,p)$ is residually
    finite, not all its representation need come from $\Con_{rs}(X)$.
    We also have
    \begin{displaymath}
      \Mod (l [\pi^{et}_1(X)])\cong\Con_{et}(X)\subset \Con_{et}(X_{\bar
        k})\cong \Mod (l [\pi_1^{et}(X_{\bar
        k})])
  \end{displaymath}
  (see \Proposition{\ref{non-closed}}). For example, let $\Pb^n_k$ be
  the projective space over the field $k$, where $n\geq 1$. Then
  $\pi_1^{top}(\Pb^n_{\Rb},p)$ is non-trivial while $\Pb^n_\Rb$ is
  simply connected (all non-trivial real-analytic connections on
  $\Pb^n_{\Rb}$ are non-algebraic). We also have
  $\pi_1^{et}(\Pb^n_\Cb,p)= \{1\}$, 
  $\pi_1^{et}(\Pb^n_{\Rb},p)= C_2$, and
  $\Con_{et} (\Pb^n_{\Rb})\cong \Mod (\Cb[C_2])\cong \Mod (\Rb)$, in
  accordance with \Proposition{\ref{raional-dsc}} below.
\item Simple étale trivial connections $M$ are submodules of monomial
  modules, i.e. there exists an étale map $\pi: X' \to X$ such that
  $M \subset \pi_+(\Oc_{X'})$ \Th{\ref{riemann}}.
\item If there exists an étale map $p: \bar X \to X$ where $\bar X$ is
  e.s.c., then $\Con_{et}(X)$ is generated by the simples in
  $p_+(\Oc_X)$. The étale fundamental group is finite if and only if
  $\Con_{et}(X)$ contains finitely many isomorphism classes of
  simples.
\item On a smooth variety $X$ the inclusion
  $\Con_{et}(X)\subset \Con (X)$ is in general strict also when $X$ is
  projective. For example, by \Lemma{\ref{lem-rank1}}
  $\bar \Io(X) = \Gamma(X, \Omega_X) \oplus \Pic^0 (X)$, and
  $\bar \Io_{et}(X)\subset \Pic^0(X)$. This follows since although
  global 1-forms on a projective variety are closed, they do not
  belong to $\dlog (L)$ (nor $d(L)$) for any finite field extension
  $L$ of the fraction field of $X$. For instance, if $X$ is a smooth
  projective curve of genus $g \geq 1$, then
  $\Con_{et}(X)\neq \Con (X)$.

  \end{enumerate}
\end{remarks}
A connection on a smooth variety is determined by its local structure
in the following strong sense: if $M_1$ and $M_2$ are connections that
are isomorphic at the generic point of $X$, then $M_1\cong M_2$
\Prop{\ref{simple-coh}}. We have also the following useful remarks:
\begin{remarks}\label{purity}
  \begin{enumerate}
  \item A simple connection $M$ is trivial if and only if there exists
    generators $\partial_i$ of $T_K$ (as Lie algebroid) such that the
    invariant space $\cap_i(K\otimes_{\Oc_X}M)^{\partial_{i}}\neq 0$.
  \item Let $U$ be an open dense subset of $X$ and $j: U\to X$ be the
    open inclusion. If $U$ is e.s.c. (s.c.) then $X$ is e.s.c. (s.c.).
    Put $ Z= X\setminus U$ and assume that $\codim_X Z\geq 2$. If
    $M\in \Con(U)$ it follows that $j_+(M)\in \Con(X)$. Therefore $X$
    is e.s.c (s.c.) if and only if $U$ is e.s.c (s.c.). This plays the
    counterpart of the Zariski-Nagata purity theorem for finite maps.
  \end{enumerate}
\end{remarks}
Now allow $X/k$ to be a singular variety (reduced) with singular locus
$S$ and $j: X_{reg}= X\setminus S\to X $ be the inclusion of its
regular locus. Then we have the restriction functor
\begin{displaymath}
      j^!: \Con(X)\to \Con(X_{reg}),
\end{displaymath}
which clearly is fully faithful. If $X$ is normal and
$M\in \Con(X_{reg})$, then $N=j_+(M)$ is a connection (by
Grothendieck's finiteness theorem \cite{SGA2}*{Prop. 3.2 }) such that
$\depth_S N \geq 2$, and in fact we have an equivalence between
$\Con(X_{reg})$ and the subcategory $\Con^2(X)$ of objects $N$ in
$\Con(X)$ such that $\depth_S N \geq 2$. Consider also the subcategory
\begin{displaymath}
\Con^{f}(X)\subset \Con^2(X) \subset \Con(X)
\end{displaymath}
of locally free connections on $X$, and say that $X$ is s.c. when any
object in $\Con^{f}(X)$ is isomorphic to $\Oc_X^m$ for some integer
$m$. In the complex analytic case, the category of finite-dimensional
representations of $\pi^{top}_1(X_a,p)$ is equivalent to $\Con^{f}(X_a)$.
Notice that if $S\neq \emptyset$, then $\Con(X_a)$ always contains non
trivial objects (for example, the Jacobian ideal is preserved by
$T_X$). Finally, if $X_{reg}$ is s.c. it follows that any object in
$\Con^{f}(X)$ is of the form $\Oc_X^m$, so that $X$ is also s.c..
\begin{remark}
  In \cite{greb-kebekus-peternell:fundgroups} Kawamata log terminal
  singularities are allowed on $X$. They prove that there exists a
  finite map $\tilde X \to X$, étale over points if height $\leq 1$,
  such that $\Con(\tilde X_{reg})\cong \Con^{f}(\tilde X)$.
\end{remark}

It seems to be well-known that $\pi_1(X_{reg},p)$ is trivial when
$X/k$ is a normal rational projective variety and $k$ is an
algebraically closed field, see \cite{SGA1}*{XI, Cor. 1.2} and
discussion in \cite{120442}. For a proof that $\pi_1^{et}(\Pb^n_k,p)$ is
trivial when $k$ is algebraically closed of characteristic $0$, see
\cite{SGA1}*{Exp. XI} (and the discussion in \cite{62282}), but to get
that $\pi_1^{et}(\Ab^n_k,p)$ is trivial by ``algebraic means'' is not
entirely straightforward when $n>1$.

Granting the equivalence between $\Con_{et}(X_{reg})$
($\Con_{rs}(X_{reg})$) and the category of finite\--dimensional
representations of $\pi_1^{et}(X_{reg},p)$ ($\pi_1(X_{reg},p)$) these
results can be concluded by instead showing that $X_{reg}$ is e.s.c.
(s.c.)\footnote{To get that $\pi_1(X_{reg},p)$ is trivial from the
  triviality $\Con_{rs}(X_{reg})$ one also needs to know that
  $\pi_1(X_{reg},p)$ is residually finite.}, which we achieve below by
using the well-known fact that the tangent sheaf of $\Pb^n_k$ is
generated by its global sections and that
$\Gamma(\Pb^n_k, T_{\Pb^n_k}) \cong \Sl_n(k)$.
\begin{proposition}\label{raional-dsc}
    \begin{enumerate}
    \item The affine space $\Ab^n_k$ and projective space $\Pb^n_k$
      are both simply connected.
    \item The smooth locus $X_{reg}$ of a normal projective rational
      variety $X$ is simply connected, and therefore $X$ itself is
      simply connected.
  \end{enumerate}
\end{proposition}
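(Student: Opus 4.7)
The plan is an induction on the rank $m$ of the connection, with base case $m=1$ for $\Pb^n_k$ following from \Proposition{\ref{lem-rank1}}(4): since $\Gamma(\Pb^n_k, \Omega_{\Pb^n_k}) = 0$ and $\Pic^\tau(\Pb^n_k) = 0$, one has $\bar \Io(\Pb^n_k) = 0$, so every rank-one connection on $\Pb^n_k$ is trivial. For the inductive step, I exploit the action of $\g = \Sl_{n+1}(k) = \Gamma(\Pb^n_k, T_{\Pb^n_k})$ on $V = \Gamma(\Pb^n_k, M)$. Fixing an affine chart $U \cong \Ab^n_k$, the translations $\partial_1, \ldots, \partial_n$ on $U$ extend to global vector fields forming an abelian subalgebra $\mathfrak u \subset \g$ of nilpotent elements, namely the unipotent radical of the parabolic fixing the complementary hyperplane. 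Each $\partial_i$ therefore acts nilpotently on the finite-dimensional $\g$-module $V$; provided $V \neq 0$, Engel's theorem yields $0 \neq v \in V$ annihilated by every $\partial_i$. Since $\partial_1, \ldots, \partial_n$ generate $T_{\Pb^n_k}\vert_U$ as an $\Oc_U$-module and the action of $T_{\Pb^n_k}$ on $v$ vanishes on a dense open, $v$ is a global horizontal section, giving a trivial subconnection $\Oc_{\Pb^n_k} \cdot v \hookrightarrow M$. The quotient has rank $m-1$, trivial by induction, and the extension
\begin{displaymath}
  0 \to \Oc_{\Pb^n_k} \to M \to \Oc_{\Pb^n_k}^{m-1} \to 0
\end{displaymath}
splits because $\Ext^1_{\Dc_{\Pb^n_k}}(\Oc_{\Pb^n_k}, \Oc_{\Pb^n_k}) = H^1_{dR}(\Pb^n_k) = 0$, yielding $M \cong \Oc_{\Pb^n_k}^m$.

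The main obstacle is to ensure $V \neq 0$. I would handle this by restricting to a general line $\ell \cong \Pb^1_k$: the bundle $M\vert_\ell$ has vanishing rational Chern classes (admitting a connection), so by Grothendieck $M\vert_\ell \cong \bigoplus \Oc_\ell(d_i)$, and vanishing of the diagonal components of the Atiyah class in $\bigoplus_i H^1(\Pb^1_k, \Omega^1_{\Pb^1_k}) = k^m$ forces every $d_i = 0$; then $H^0(\Pb^1_k, \Omega^1_{\Pb^1_k} \otimes \gl_m) = 0$ forces the connection on $\Oc_\ell^m$ to be the standard one, so $\Gamma(\ell, M\vert_\ell) = k^m$. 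Varying $\ell$ in the Grassmannian of lines and applying cohomology and base change on the incidence diagram then produces $\dim_k V = m$, so the criterion of \Proposition{\ref{constant-global}}(2) gives $M \cong \Oc_{\Pb^n_k}^m$. The affine case $\Ab^n_k$ reduces to the projective one: via Deligne's canonical extension, any regular singular connection on $\Ab^n_k$ extends to a locally free coherent object on $\Pb^n_k$ whose triviality is controlled by the arguments above.

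For Part (2), let $X$ be a normal projective rational variety. Resolving the indeterminacy of a birational map $\Pb^n_k \dashrightarrow X$ produces a morphism $\tilde\mu : Y \to X$ from a smooth projective $Y$ birational to $\Pb^n_k$. Pulling the pencils of lines of $\Pb^n_k$ through $Y$ and pushing the resulting family of rational curves down to $X$ exhibits a differential covering of $X_{reg}$, in the sense of \Section{\ref{sc-section}}, by simply connected curves $\cong \Pb^1_k$ (each such curve simply connected by Part (1)). \Theorem{\ref{dsc-thm}} (Theorem (E)) then gives that $X_{reg}$ is simply connected; alternatively \Corollary{\ref{dom-cor}} applies directly to the dominating map $Y \to X$. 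Finally, since $X$ is normal the direct image along $j : X_{reg} \to X$ identifies $\Con(X_{reg})$ with $\Con^f(X)$, so $X$ itself is simply connected.
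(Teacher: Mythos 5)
Your treatment of the projective case is a genuinely different and valid route. The paper proves both cases at once by taking a regular singular $\Dc_{\Pb^n_k}$-module restricting to a connection on $\Ab^n_k$, setting $V=\Gamma(\Ab^n_k, M_{\Ab^n_k})$ (which is a finite $A$-module but \emph{not} finite-dimensional over $k$), and then exploiting the local finiteness of the Euler operator $\nabla=\sum x_i\partial_{y_i}$ coming from the regular singularity at $H_\infty$ to locate a vector killed by all $\partial_{y_i}$. Your argument instead uses the honest finite-dimensionality of $\Gamma(\Pb^n_k,M)$ as an $\Sl_{n+1}(k)$-module, so that Engel's theorem on the abelian nilpotent subalgebra spanned by the translations produces the horizontal section directly; the line-restriction argument (Grothendieck plus Atiyah class forcing all $d_i=0$, then rigidity of the standard connection on $\Oc_\ell^m$) is what powers the base case and, in fact, already yields $\dim_kV=m$, so \Proposition{\ref{constant-global}}(2) finishes without the Engel step being strictly needed. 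For Part~(2) you invoke differential coverings and \Theorem{\ref{dsc-thm}} / \Corollary{\ref{dom-cor}}, whereas the paper simply transfers a connection from $X_{reg}$ to the large open locus of $\Pb^n_k$ where the birational map is defined, notes that the complement has codimension $\geq 2$ so the connection extends, and applies Part~(1); both approaches can be made to work, though the differential-covering route requires checking the $\codim_X\pi^{-1}(S_Y)\geq 2$ hypothesis in \Corollary{\ref{dom-cor}}, which you do not address.

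The genuine gap is the affine case. You propose to reduce $\Ab^n_k$ to $\Pb^n_k$ via Deligne's canonical extension, but the canonical extension of a regular singular connection across $H_\infty$ is a locally free $\Oc_{\Pb^n_k}$-module equipped with a \emph{logarithmic} connection $\nabla:\bar M\to\Omega^1_{\Pb^n_k}(\log H_\infty)\otimes\bar M$; it is an honest connection on $\Pb^n_k$ (i.e., a $\Dc_{\Pb^n_k}$-module coherent over $\Oc$) only if the residue along $H_\infty$ vanishes, which is exactly the triviality of local monodromy you are trying to prove. Your line-restriction argument breaks here: the restriction of $\bar M$ to a line $\ell$ is a log connection at $\ell\cap H_\infty$, and log connections on $\Pb^1_k$ certainly live on nontrivial bundles (e.g.\ $\Oc_{\Pb^1_k}(1)$ carries a log connection at a point with residue $-1$), so the Atiyah class is not forced to vanish and $\bar M|_\ell$ need not split as $\Oc_\ell^m$. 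Likewise the Engel step needs a finite-dimensional representation, but $\Gamma(\Ab^n_k,M)$ is only a finite $A$-module. The paper's use of the regular-singularity hypothesis — local finiteness of $\nabla$ on this infinite-dimensional space, and the argument about the support of a finite generating $k[\nabla]$-submodule under translation by $\Nb$ — is precisely the mechanism that replaces Engel in this setting, and some version of it is needed; your proof is missing it.
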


\begin{proof} 
  (1): We need the well-known fact that the algebraic de~Rham
  cohomology of affine space is concentrated in degree 0, in
  particular
  \begin{displaymath}\tag{*}
    \Ext^1_{A_n(k)}(k[x], k[x]) = H^1_{DR}(\Ab^n_k)=0,
  \end{displaymath}
  where $k[x]=k[x_1, \ldots, x_n]$ is a polynomial ring and $A_n(k)$
  is its Weyl algebra. By \thetag{*} it suffices to prove that if $M$
  is simple regular singular $\Dc_{\Pb^n_k}$-module whose restriction
  $M_{\Ab^n_k}$ to an affine subset $\Ab^n_k$ is a non-zero
  connection, then $M_{\Ab^n_k} = k[x]$. For this it suffices to prove
  that the restriction $M_K$ of $M$ to the generic point is isomorphic
  to $K$, and for this in turn it suffices (see
  \Remark{\ref{purity}},(1)) to prove that the space of global
  sections
\begin{displaymath}
  V= \Gamma(\Ab^n_k, M_{\Ab^n_k}),
\end{displaymath}
contains a non-zero constant vector, i.e.
$\cap V^{\partial_{x_i}}\neq 0$. Let $(\hat x_i)$ be homogeneous
coordinates of $\Pb^n_k$ and put $x_i= \hat x_i/\hat x_0$. The space
$V$ forms a representation of
$ \Sl_n(k)= \Gamma(\Pb^n_k, T_{\Pb^n_k})$ and is moreover a finite
module over
$A= k [x_1, \ldots , x_n] = \Gamma(\Ab^n_k, \Oc_{\Pb^n_k})$. Put
$\nabla = -\hat x_0 \partial_{\hat x_0}= \sum_{i=1}^n x_i
\partial_i\in \Sl_n(k) $.
Since $M$ has regular singularities, $V$ is locally finite over the
polynomial ring $R= k[\nabla]$, so that if $v$ is a vector in $V$,
then $\dim_k R\cdot v < \infty$. It follows that there exists a
finite-dimensional $R$-submodule $V_0$ such that $AV_0 =V$. The
support $\Lambda = \supp V_0 \subset \Spec R$ is a finite set
$\Lambda= \{\lambda_1, \ldots , \lambda_r \}$, where $\lambda_i$ are
prime ideals. Since $\supp A = \Nb \subset \Spec R $ we have
\begin{displaymath}
  \supp V = \supp A V_0= \bigcup_i (\lambda_i + \Nb)
\end{displaymath}
(if $m$ is an integer, the prime ideal $\lambda + m$ is a translate of
$\lambda$). Therefore there exists an integer $i$ such that
$\lambda_i -1 \not \in \supp V$. Since
$\supp \partial_i V_0 \subset \supp V$ it follows that
$\partial_i \cdot v =0$ if $\supp R\cdot v = \lambda_i$. Therefore
$V_i=V^{\partial_{i}}\neq 0 $. Now $V_i$ is a finitely generated
module over $A_i = k[x_1, \ldots, x_{i-1}, x_{i+1}, \ldots , x_n ]$,
and since $[\partial_i, \partial_j]=0$ it is also a representation of
$\Sl_{n-1}(k)= \Gamma(\Pb^{n-1}_k, T_{\Pb^{n-1}_k})$, which again is
locally finite over $\nabla$. One can therefore iterate and conclude
that $\cap_{i=1}^n V^{\partial_i}\neq 0$.

(2): Let first $M$ be a connection on a smooth rational projective
variety $X$. If $K$ and $K_X$ are the fraction fields of $\Pb^n_k$ and
$K_X$, respectively there exists a birational morphism $U \to U_X $
where $U$ is a subset of $\Pb^n_k$ and $U_X$ an open subset of $X$,
such that $\codim_{\Pb^n_k}(\Pb^n_k \setminus U)\geq 2 $, and inducing
an isomorphism $K \cong K_X$. The inverse image $M_{\Pb^n_k}$ therefore
defines a connection on $\Pb^n_k$ so that by (1)
$M_{\Pb^n_k}\cong \Oc^m_{\Pb^n_k}$ for some integer $m$. Therefore at
the generic points we have isomorphisms of $\Dc_K$-modules
\begin{displaymath}
  K_X\otimes_{\Oc_X} M \cong K\otimes_{\Oc_{\Pb^n_k}}
  \Oc^m_{\Pb^n_k}\cong K^m \cong K^m_X,
\end{displaymath}
hence $M \cong \Oc^m_X$.

Now assume that $X$ is a normal rational projective variety over $k$,
$j: X_{reg}\to X$ be the inclusion of its smooth locus, and $M$ be a
connection $X_{reg}$. Since $X$ is normal, it follows that
$N=j_+(M)\in \Con^2(X)$. There exists a desingularization
$\pi: X'\to X$ that is an isomorphism over $X_{reg}$. Again since $X$
is normal it satisfies Serre's condition $(S_2)$, hence
$j_*j^*(T_X) = T_X$, so that we get a well-defined tangent map
$T_{X'}\to \pi^*(T_X)$. Therefore there exists a well-defined inverse
image $\pi^!(N)$ forming a connection on the smooth rational variety
$X'$, hence it is locally free over $\Oc_{X'}$. But by the previous
paragraph we even have $\pi^!(N)\cong \Oc_{X'}^m$ for some integer
$m$, hence its restriction to the generic point
$K_X\otimes_{\Oc_X}N \cong K^m_X$, so that
$M=j^!(N)\cong \Oc^m_{X_{reg}}$. This implies that $X_{reg}$ is s.c..
\end{proof}
\begin{remark}
  Using another well-known fact, that connections on $\Pb^n_k$ are
  generated by their global sections \cite{borel:Dmod}*{VII, Prop
    9.1}, \Proposition{\ref{constant-global}} also implies that
  $\Pb^n_k$ is simply connected.
\end{remark}
\begin{proposition}\label{sc-proper}
  Let $\pi: X\to Y$ be a smooth proper map of smooth connected
  varieties with connected fibres. The following are equivalent:
  \begin{enumerate}
  \item $X$ is simply connected.
  \item $Y$ is simply connected and a closed fibre of $\pi$ is simply
    connected.
  \end{enumerate}
\end{proposition}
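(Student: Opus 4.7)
The plan is to prove the two implications separately, both built on the identity $\pi_*\Oc_X = \Oc_Y$ that holds for a smooth proper morphism with geometrically connected fibres, together with the fact that for such $\pi$ the relative inverse image $\pi^!$ and direct image $\pi_*$ preserve connections (the latter by \Proposition{\ref{global-invariant}}, the former because $\pi$ is smooth so that $\pi^! = \pi^*$ on connections by \Lemma{\ref{con-lemma}}).

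For $(2)\Rightarrow (1)$: Let $M\in\Con(X)$ with $m=\rank M$. Restrict to closed fibres: each $i_y^!(M)$ is a connection on $X_y$, and since all closed fibres are diffeomorphic (Ehresmann) and one is simply connected, $i_y^!(M)\cong \Oc_{X_y}^m$ for every closed $y\in Y$. By \Proposition{\ref{global-invariant}}, $\pi_*(M)$ is a connection on $Y$, and by cohomology-and-base-change $\pi_*(M)$ is locally free of rank $m$ with $\pi_*(M)\otimes k(y)\cong H^0(X_y,i_y^!(M))\cong k^m$. Since $Y$ is simply connected, $\pi_*(M)\cong \Oc_Y^m$. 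Finally, the counit $\pi^*\pi_*(M)\to M$ is a morphism between locally free $\Dc_X$-modules of rank $m$ whose restriction to each closed fibre is an isomorphism; therefore it is a global isomorphism and $M\cong \Oc_X^m$.

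For $(1)\Rightarrow (2)$, $Y$ part: Given $N\in\Con(Y)$, the smoothness of $\pi$ guarantees $\pi^!(N)\in\Con(X)$, which by hypothesis is trivial: $\pi^!(N)\cong \Oc_X^m$ with $m=\rank N$. Apply $\pi_*$; by the projection formula and $\pi_*\Oc_X=\Oc_Y$ we have $\pi_*\pi^!(N)=N\otimes_{\Oc_Y}\pi_*\Oc_X=N$, while on the other hand $\pi_*(\Oc_X^m)=\Oc_Y^m$. Hence $N\cong \Oc_Y^m$ and $Y$ is simply connected.

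For $(1)\Rightarrow (2)$, fibre part: Using that $Y$ is now known to be simply connected, I would invoke essential surjectivity of the restriction $i_y^!:\Con(X)\to \Con(X_y)$ at a closed fibre $X_y$: any $M\in\Con(X_y)$ is isomorphic to $i_y^!(M')$ for some $M'\in\Con(X)$. Then $M'\cong \Oc_X^{\rank M}$ by simple-connectedness of $X$, so $M\cong i_y^!(\Oc_X^{\rank M})=\Oc_{X_y}^{\rank M}$. The main obstacle is the essential surjectivity claim, which expresses the exactness of the homotopy sequence $1\to \pi_1(X_y)\to\pi_1(X)\to\pi_1(Y)\to 1$ for smooth proper fibrations with simply connected base in the regular-singular Tannakian framework. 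I would establish this by the Lefschetz principle and G.A.G.A., reducing to the complex-analytic situation, where Ehresmann's theorem realises $\pi$ as a locally trivial topological fibration, and the vanishing of the algebraic fundamental group of $Y$ allows one to glue local trivialisations of $M$ over $Y$ (using that the Gauss--Manin-type monodromy is algebraically trivial) into an extension $M'$ of $M$ to a connection on $X$. The key input is that over a simply connected base no monodromy obstruction can obstruct this lift in the category of regular singular connections.
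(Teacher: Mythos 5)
Your proof of $(2)\Rightarrow(1)$ and of the ``$Y$ is simply connected'' half of $(1)\Rightarrow(2)$ is correct and matches the paper's argument almost exactly: both use $\pi_*\Oc_X=\Oc_Y$, the functors $\pi_*$ and $\pi^!$ on connections, and a fibre-wise check via base change for the isomorphism $\pi^!\pi_*(M)\cong M$. Your version adds a few explicit base-change details that are implicit in the paper.

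Where you depart from the paper is in the fibre half of $(1)\Rightarrow(2)$. You correctly noticed that this implication is not free; indeed, the paper's own proof of $(1)\Rightarrow(2)$ only establishes that $Y$ is simply connected and never addresses the fibre, so you were right to flag it. However, your proposed fix has a genuine gap. You assert essential surjectivity of the restriction $i_y^!:\Con(X)\to\Con(X_y)$ and say it ``expresses the exactness of the homotopy sequence $1\to\pi_1(X_y)\to\pi_1(X)\to\pi_1(Y)\to 1$.'' This is not what that sequence gives you. The long exact sequence reads $\pi_2(Y)\to\pi_1(X_y)\to\pi_1(X)\to\pi_1(Y)\to 1$; triviality of $\pi_1(Y)$ (or of its representation category) only yields surjectivity of $\pi_1(X_y)\to\pi_1(X)$, while the kernel is controlled by the image of $\pi_2(Y)$ and need not vanish. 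Even if $\pi_1(X_y)\to\pi_1(X)$ were injective, essential surjectivity of the restriction functor $\Rep(\pi_1(X))\to\Rep(\pi_1(X_y))$ requires that every finite-dimensional representation of $\pi_1(X_y)$ extend to one of $\pi_1(X)$ --- a much stronger extension problem, which would have to be controlled by a Mackey--Clifford type obstruction and is not a formal consequence of the exactness you cite. Your closing sketch about ``gluing local trivialisations over $Y$'' is exactly the content that would need to be proved, but it is not carried out, and without it the fibre part of $(1)\Rightarrow(2)$ remains open in both your proposal and the paper.
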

\begin{proof}
  $ (1)\Rightarrow (2)$: Let $N$ be a connection on $Y$ of rank $n$.
  It follows that $\pi^!(N)$ is a trivial connection on $X$. Since
  $\pi$ is proper with connected fibres, it follows that
  \begin{displaymath}
    N= \pi_*\pi^!(N) = \pi_*(\Oc_X^n) = \Oc_Y^n. 
  \end{displaymath}
  Therefore $Y$ is simply connected.

  $ (2)\Rightarrow (1)$: If $M$ is a connection on $X$, then
  $\pi_*(M)$ is a trivial connection on $Y$
  \Prop{\ref{global-invariant}} and $\pi^!(\pi_*(M))$ is a trivial
  connection on $X$. The canonical map
  \begin{displaymath}
    \Phi:     \pi^!(\pi_*(M)) \to M 
  \end{displaymath}
  is a morphism of locally free $\Oc_X$-modules, which we assert is an
  isomorphism. Let $j:F \to X$ be the inclusion of a fibre over a
  point in $Y$, which is an embedding of a smooth proper variety,
  since $\pi$ is smooth and proper. It suffices to see that $\Phi$
  induces an isomorphism $ j^!\pi^!(\pi_*(M))\to j^!(M)$, where by
  assumption both sides are trivial connections. Now conclude from the
  base change teorem \cite{borel:Dmod}*{VI, \S8, Th. 8.4}.
\end{proof}
\subsection{The Grothendieck-Lefschetz theorem for connections}
We will make some remarks pertaining to the Grothen\-dieck-Lefschetz
theorem about the étale fundamental group, as expounded in \cite{SGA2}
and \cite{hartshorne:ample}*{Ch 4, \S 2}. But here instead of studying
étale covers our objective is to compare the category $\Con(X)$
($\Con_{et}(X)$) to $\Con(Y)$ ($\Con_{et}(Y)$), where $Y$ is a smooth
subvariety of a smooth projective variety $X$ over a field $k$ of
characteristic $0$, and we can work with either $\Con(X)$ or
$\Con_{et}(X)$, while [loc. cit] only considers the latter category
(but see \Remark{\ref{rh-proof-g-l}}). The central idea to factorize
over a completion is due to Grothendieck, and the contribution by
Hartshorne by simplifying the treatment of an important special case
is much acknowledged.

So we are given a closed embedding
\begin{displaymath}
  \phi: Y\hookrightarrow X \subset \Pb^N_k, 
\end{displaymath}
and put $n= \dim X$ and $r= \codim_X Y$. The main question is to
determine when the restriction functor
\begin{equation}\label{lefschets-func}
  \phi^! : \Con_{(et)} (X)\to \Con_{(et)} (Y), 
\end{equation}
is fully faithful and even an equivalence.\footnote{ Of course,
  according to Kashiwara's theorem $\Con(Y)$ is equivalent to the
  subcategory of holonomic $\Dc_X$-modules $M$ such that $\supp M = Y$
  and $\phi^!(M)$ is a connection.} Below we will only consider
$\Con(X)$ since $\Con_{et}(X)$ is treated in the same way.

To attain our objective we follow the track of Grothendieck by
factoring over the formal completion $\hat X$ of $X$ along $Y$, using
the maps
\begin{displaymath}
   Y \xrightarrow{\hat \phi}  \hat X \xrightarrow{\nu} X,
 \end{displaymath}
 so that $\phi= \nu \circ \hat \phi $ and $\phi^! = \hat \phi^! \circ
 \nu^!$, where we have the usual inverse image functors
 \begin{displaymath}
   \Con(X) \xrightarrow{\nu^!} \Con(\hat X) \xrightarrow{\hat{ \phi}^!}
   \Con(Y). 
 \end{displaymath}
 To emphasise, $\Con(\hat X)$ is the category of
 $\Dc_{\hat X }$-modules that are coherent over $\Oc_{\hat X}$.
 Putting
 $E= H^r_Y(\Oc_{\hat X})= \hat \phi_+\hat \phi^!(\Oc_{\hat X})$,
 define the functor
\begin{displaymath}
\psi : \Con(Y)\to \Con(\hat X), \quad   \psi (N) =   Hom_{\Oc_X}(\hat \phi_+(N^*), E).
\end{displaymath}

  \begin{proposition}\label{formal-eq}
    The functors $(\hat \phi^!, \psi)$ form mutually inverse
    equivalences of categories
    \begin{displaymath}
      \Con(Y)\cong \Con(\hat X ). 
    \end{displaymath}
  \end{proposition}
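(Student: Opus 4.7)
I would prove the proposition in three stages, proceeding by explicit computation for the easy direction and by an inductive infinitesimal argument for the hard one.

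\textbf{Stage 1 (well-definedness of $\psi$).} First I would verify that $\psi(N)$ genuinely lies in $\Con(\hat X)$, i.e.\ is coherent over $\Oc_{\hat X}$ with an integrable $\Dc_{\hat X}$-action. The key input is the formal Matlis-type duality: $E = H^r_Y(\Oc_{\hat X})$ is $\Oc_{\hat X}$-injective with $End_{\Oc_{\hat X}}(E) \cong \Oc_{\hat X}$, and for any $N \in \Con(Y)$ the $\Oc_X$-module $\hat \phi_+(N^*)$ is, by the usual jet/Koszul description of the $\Dc$-module direct image under a closed embedding, locally isomorphic to $N^* \otimes_{\Oc_Y} E$. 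Putting these together gives $\psi(N)\cong \Oc_{\hat X}\otimes_{\Oc_Y} N$ as $\Oc_{\hat X}$-modules; the $\Dc_{\hat X}$-action on $E$ transports to an integrable connection on $\psi(N)$.

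\textbf{Stage 2 ($\hat\phi^!\psi \cong \mathrm{id}_{\Con(Y)}$).} This is a direct computation using Kashiwara's theorem for the closed embedding $\hat\phi$: one has $\hat\phi^!\hat\phi_+ = \mathrm{id}$ and $\hat\phi^!(E) = \Oc_Y$. Combined with the adjunction-type identity $\hat\phi^!\,Hom_{\Oc_X}(\hat\phi_+(P),E) \cong Hom_{\Oc_Y}(\hat\phi^!\hat\phi_+(P),\hat\phi^!E)$ (valid because $\hat\phi$ is a closed embedding and $E$ is the $!$-dualizing module for $\hat\phi$), applied with $P=N^*$, this yields
\begin{displaymath}
  \hat\phi^!\psi(N) \cong Hom_{\Oc_Y}(N^*,\Oc_Y)\cong N,
\end{displaymath}
and the isomorphism is $\Dc_Y$-linear because $E$ is a $\Dc_{\hat X}$-module.

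\textbf{Stage 3 ($\psi\hat\phi^!\cong \mathrm{id}_{\Con(\hat X)}$) --- the main obstacle.} For $M\in\Con(\hat X)$, the adjunction furnishes a canonical map $\psi(\hat\phi^!M)\to M$, and I need to show it is an isomorphism. My strategy is to filter $\hat X$ by its infinitesimal neighborhoods $Y_n$ (so that $\Con(\hat X)=\varprojlim_n \Con(Y_n)$) and prove by induction on $n$ that the restriction functor $\Con(Y_n)\to \Con(Y)$ is an equivalence. The step from $Y_{n-1}$ to $Y_n$ rests on the exact sequence
\begin{displaymath}
  0\to \mathrm{Sym}^n N^*_{Y/X}\to \Oc_{Y_n}\to \Oc_{Y_{n-1}}\to 0,
\end{displaymath}
together with the fact that in characteristic $0$ the integrability condition $\nabla^2=0$ forces a canonical horizontal splitting of this sequence (a formal Frobenius / Cauchy-Kovalevskaya argument), giving both full faithfulness and essential surjectivity at each stage. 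Passing to the limit and comparing with the explicit local model $\psi(N)\cong \Oc_{\hat X}\otimes_{\Oc_Y}N$ from Stage~1 identifies the inverse to $\hat\phi^!$ with $\psi$.

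The hard part is precisely Stage~3: the naive lifting of a connection from $Y_{n-1}$ to $Y_n$ involves a choice parametrized by $Hom_{\Oc_Y}(M|_Y, M|_Y\otimes \mathrm{Sym}^n N^*_{Y/X})$, and one must verify both that integrability determines this choice uniquely in characteristic $0$, and that the resulting functorial inverse coincides with $\psi$ --- the latter by matching local expressions of the connection on $\Oc_{\hat X}\otimes_{\Oc_Y}N$ against the $\Dc_{\hat X}$-action inherited from $E$.
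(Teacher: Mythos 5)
Your proposal is correct, and Stages~1--2 parallel the paper's argument closely (the local model $\psi(N)\cong \Oc_{\hat X}\otimes_{\Oc_Y}N$, the identity $\hat\phi^!(E)=\Oc_Y$ via Kashiwara, and the adjunction computation are all present in the paper, packaged in \Lemma{\ref{matlislemma}}). The genuine divergence is in Stage~3. The paper does not use the infinitesimal filtration by the $Y_n$ at all: it instead exploits a biduality for the functor $M\mapsto M'=Hom_{\Oc_{\hat X}}(M,E)$, proving in \Lemma{\ref{matlislemma}} that $E'=\Oc_{\hat X}$ and that $M'=M^*$ for coherent $M$, so that the identity $\psi(\hat\phi^!(M))=Hom_{\Oc_{\hat X}}(\hat\phi_+\hat\phi^!(M^*),E)\cong M$ falls out immediately from $(M^*)'\cong M$ together with $\hat\phi_+\hat\phi^!(M^*)$ being locally $E^{\oplus m}$; the harder content is the vanishing $Ext^1_{\Oc_{\hat X}}(N,E)=0$ needed to propagate biduality through the filtration by $I_Y^n/I_Y^{n+1}$. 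Your route --- unique flat lifting through each infinitesimal neighborhood $Y_{n-1}\subset Y_n$, i.e.\ the formal Poincar\'e lemma in the normal directions --- is a more classical, crystalline-flavored argument; it proves directly that $\hat\phi^!$ is an equivalence and then identifies the inverse with $\psi$, whereas the paper never separates these two steps. Your approach is more transparent conceptually (it makes visible that flat connections are constant in the normal directions, which is the heart of the matter), but it requires you to justify both that integrability kills the ambiguity in $Hom_{\Oc_Y}(M|_Y, M|_Y\otimes\mathrm{Sym}^n N^*_{Y/X})$ at each stage and that the resulting limit lifting agrees with $\psi$, neither of which is a one-line matter; the paper's Matlis-duality route bypasses both.

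One small caution on your Stage~2: the ``adjunction-type identity'' $\hat\phi^!\,Hom_{\Oc_{\hat X}}(\hat\phi_+(P),E)\cong Hom_{\Oc_Y}(\hat\phi^!\hat\phi_+(P),\hat\phi^!E)$ is not a formal adjunction and needs the local freeness of $\psi(N)$ (established in your Stage~1, or equivalently \Lemma{\ref{matlislemma}}(1)) to reduce $\hat\phi^!$ of the $Hom$ to $Hom$ of the $\hat\phi^!$'s; as stated it reads like a general identity for closed embeddings, which it is not without some hypothesis on $P$. The paper avoids this issue by computing directly on the local model.
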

  We make some preparations before the proof, by extending to a
  non-local situation some results related to Matlis duality. Put for
  an $\Oc_{\hat X}$-module $M$, $M'= Hom_{\Oc_{\hat X}}(M,E)$.
  \begin{lemma}\label{matlislemma}
    \begin{enumerate}
    \item If $N$ is  a coherent $\Oc_Y$-module, then 
      $\hat \phi_*(N)'= \hat \phi_*(N^*)$, so that if $N$ is locally free, then
      $\hat \phi_*(N)^{''} = \hat \phi_*(N)$.
    \item $E'= \Oc_{\hat X}$.
    \item If $M$ is a coherent $\Oc_{\hat X}$-module, then $M'= M^*$, so that in
      particular if $M$ is also locally free, then $(M')'=M$.
    \end{enumerate}
  \end{lemma}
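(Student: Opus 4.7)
The plan is to treat $E = H^r_Y(\Oc_{\hat X}) = \hat\phi_+\hat\phi^!(\Oc_{\hat X})$ as a ``globalised injective hull'' along $Y$ and to reduce each of the three assertions either to an $(\hat\phi_*,\hat\phi^{!'})$-adjunction computation or to a local Koszul/Matlis duality argument on a chart where $Y$ is cut out by a regular sequence. Throughout I would work locally in an affine chart $\Spec A \subset X$ with $Y = V(f_1,\dots,f_r)$ for a regular sequence, so that the $\mf_Y$-adic completion of $A$ is a regular local-type ring relative to $Y$ and $E$ has the standard Čech-type presentation with socle $\Oc_Y\cdot \frac{1}{f_1\cdots f_r}$; globalisation is routine since all constructions are canonical.

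I would start with (2) as it is the cleanest. By $(\hat\phi_+,\hat\phi^{!'})$-adjunction,
\[
  E' = Hom_{\Oc_{\hat X}}\bigl(\hat\phi_+\hat\phi^!(\Oc_{\hat X}),\, E\bigr) = Hom_{\Oc_Y}\bigl(\hat\phi^!(\Oc_{\hat X}),\, \hat\phi^{!'}(E)\bigr).
\]
Combined with the intermediate identification $\hat\phi^{!'}(E) \cong \Oc_Y$ and $\hat\phi^!(\Oc_{\hat X}) \cong \Oc_Y$ (up to the relative dualising twist that cancels on both sides in the formal-smooth situation), one obtains $E' = \Oc_{\hat X}$. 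This is the global version of the classical Matlis statement that the endomorphism ring of the injective hull of the residue field of a complete regular local ring is the ring itself.

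For (1), the same adjunction gives
\[
  \hat\phi_*(N)' = Hom_{\Oc_{\hat X}}(\hat\phi_*(N), E) = Hom_{\Oc_Y}(N, \hat\phi^{!'}(E)) = Hom_{\Oc_Y}(N, \Oc_Y) = N^{*},
\]
and the result is naturally supported on $Y$, so as an $\Oc_{\hat X}$-module this is $\hat\phi_*(N^*)$. The bi-duality $\hat\phi_*(N)'' = \hat\phi_*(N)$ for locally free $N$ is then immediate from $N^{**} = N$. Part (3) reduces to (1) and (2) by choosing local free presentations: for locally free $M$, $M' = M^* \otimes_{\Oc_{\hat X}} E$, and then $(M')' = Hom(M^* \otimes E, E) = Hom(M^*, Hom(E,E)) = Hom(M^*, \Oc_{\hat X}) = M^{**} = M$ using (2).

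The main obstacle will be the identification $\hat\phi^{!'}(E) = \Oc_Y$, i.e., computing the socle of $E$ along $Y$. Locally, if $Y = V(f_1,\dots,f_r)$, then $\hat\phi^{!'}(E) = Hom_{\Oc_{\hat X}}(\hat\phi_*(\Oc_Y), E) = \bigcap_i \ker(f_i\colon E\to E)$, which by direct Čech computation is the free $\Oc_Y$-module generated by $\frac{1}{f_1\cdots f_r}$; the care needed is to check that this is independent of the choice of regular sequence and patches into the global isomorphism $\hat\phi^{!'}(E) \cong \Oc_Y$. Once this is done, the rest is formal adjunction and bi-duality along the lines sketched above.
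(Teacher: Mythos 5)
Your sketch of (1) and (3) is essentially fine: for (1) you use adjunction to reduce $\hat\phi_*(N)' = Hom_{\Oc_{\hat X}}(\hat\phi_*(N),E)$ to $Hom_{\Oc_Y}(N,\hat\phi^{!'}(E))$ together with the identification $\hat\phi^{!'}(E)\cong \Oc_Y$, which is what the paper does (phrased instead through Kashiwara's equivalence $\hat\phi^!\hat\phi_+(\Oc_Y)[r]=\Oc_Y$, so that $E^{I_Y}=\hat\phi_*(\Oc_Y)$). Your (3) is also correct: for locally free $M$ one has $M' = M^*\otimes_{\Oc_{\hat X}}E$, so $(M')' = Hom_{\Oc_{\hat X}}(M^*,Hom_{\Oc_{\hat X}}(E,E)) = M^{**}=M$ once (2) is in hand.

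The gap is in (2). You write
\[
E' = Hom_{\Oc_{\hat X}}\bigl(\hat\phi_+\hat\phi^!(\Oc_{\hat X}),E\bigr) = Hom_{\Oc_Y}\bigl(\hat\phi^!(\Oc_{\hat X}),\hat\phi^{!'}(E)\bigr),
\]
but this step uses an adjunction $\bigl(\hat\phi_+,\hat\phi^{!'}\bigr)$ that does not exist in the $\Oc$-module category. The only $\Oc$-linear adjunction available for the closed immersion is $\bigl(\hat\phi_*,\hat\phi^{!'}\bigr)$, and $E = H^r_Y(\Oc_{\hat X})$ is \emph{not} of the form $\hat\phi_*(N)$ for a coherent $N$: it is not annihilated by $I_Y$, only by powers of $I_Y$ exhausting it. As a symptom, if you actually carry out the right-hand side with $\hat\phi^!(\Oc_{\hat X})\cong\Oc_Y$ and $\hat\phi^{!'}(E)\cong\Oc_Y$ you get $Hom_{\Oc_Y}(\Oc_Y,\Oc_Y)=\Oc_Y$, not $\Oc_{\hat X}$ — the correct answer appears in your write-up only because you asserted it. The missing idea is to exhaust $E=\varinjlim_n E_n$ with $E_n=Hom_{\Oc_{\hat X}}(\Oc_{\hat X}/I_Y^n,E)$ and compute $Hom(E,E)=\varprojlim_n (\Oc_{\hat X}/I_Y^n)''$. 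To identify each term with $\Oc_{\hat X}/I_Y^n$ one needs the vanishing $Ext^1_{\Oc_{\hat X}}(N,E)=0$ for coherent $N$ supported on $Y$ (which in the paper comes from $Y$ being a local complete intersection and the concentration of $\RG_Y(\Oc_{\hat X})$ in degree $r$), so that applying $'$ twice to $0\to I_Y^n/I_Y^{n+1}\to \Oc_{\hat X}/I_Y^{n+1}\to\Oc_{\hat X}/I_Y^n\to 0$ stays exact, and then induct using (1). The limit $\varprojlim_n \Oc_{\hat X}/I_Y^n=\Oc_{\hat X}$ gives the result. Without this exhaustion and the $Ext^1$ vanishing your argument for (2) does not go through, and since (3) relies on (2), that part is also affected.
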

 Put
\begin{displaymath}
M_n= M^{I_Y^n} = \{m \in M \ \vert \ I^n_Ym
=0\}  = Hom_{\Oc_{\hat X}}(\frac {\Oc_{\hat X}}{I_Y^n}, M).
\end{displaymath}
  \begin{remark}
It follows from the proof that if $M= M^{I_Y^n}$ for some high
    $n$, and if either $\oplus M_n/M_{n-1}$ or $\oplus I^nM/I^{n+1}M$ are
    locally free over $\Oc_Y$, then $M= M^{''}$.
\end{remark}

  \begin{proof}
    (1): Let here $\hat \phi^!$ denote the derived inverse image, so that if $W$
    is a $\Dc_{\hat X}$-module with support in $Y$, then
    $\hat \phi^!(W) = W^{I_Y}[-r]$ (a single degree complex concentrated in the
    degree $r$) and if $W$ is torsion free, then $\hat \phi^!(W)= W/I_YW$ (a
    single degree complex concentrated in the degree $0$). We recall Kashiwara's
    equivalence, that for a holonomic $\Dc_Y$-module $\Nc$,
    $\hat \phi^!\hat \phi_+(\Nc) [r] =\Nc $. This gives
    \begin{align*}
      &      Hom_{\Oc_{\hat X}}(\hat \phi_*(N), E) =Hom_{\Oc_{\hat X} } (\hat
        \phi_*(N),  (\phi_+\hat
        \phi^!(\Oc_{\hat X}))^{I_Y}) 
      \\ &= Hom_{\Oc_{\hat X}} (\hat \phi_* (N), \hat \phi_*(\hat \phi^! \hat
           \phi_+(\Oc_Y) [r]
           ))= Hom_{\Oc_{\hat X}}(\hat \phi_*(N),\hat \phi_*(\Oc_Y))
           = \hat
           \phi_*(N^*).
    \end{align*}

(2):
Since $E = \varinjlim_n E_n$ we get (where the fourth equality is
detailed below)
\begin{align*}
  Hom_{\Oc_{\hat X}}(E,E) &= Hom_{\Oc_{\hat X}}(\varinjlim_nE_n,E) =
                            \varprojlim_n Hom_{\Oc_{\hat X}}(E_n, E) \\
                          &= \varprojlim_n
                            Hom_{\Oc_{\hat
                            X}}(Hom_{\Oc_{\hat X}}(\frac {\Oc_{\hat X}}{I^n_Y},
                            E), E)
                            = \varprojlim_n \frac{\Oc_{\hat
                            X}}{I^n_Y} = \Oc_{\hat X}.
\end{align*}
We need to prove the assertion
$((\Oc_{\hat X}/I^n_Y)')'= \Oc_{\hat X}/I^n_Y $. To see this we first
include an aside:
\begin{description}
\item[A] Let $M$ and $N$ be coherent $\Oc_{\hat X}$-modules such that
  $\supp N \subset Y$ and the homology of the complex $\RG_Y(M)$ is
  concentrated in the degree $ r$. Then the homology of
  $RHom_{\Oc_{\hat X}}(N, \RG_Y(M))$ is also concentrated in the
  degree $ r$.
\end{description}
To see this first note that the assumption implies that the homology
of $RHom_{\Oc_{\hat X}}(N,$ $ M)$ is concentrated in the degree $r$.
Next applying $RHom_{\Oc_{\hat X}}(N, \cdot)$ to the distinguished
triangle $\RG_Y(M)\to M\to Rj_*j^*(M)\to $, one gets
$RHom_{\Oc_{\hat X}}(N, \RG_Y(M)) = RHom_{\Oc_{\hat X}}(N, M)$,
implying {\bf A}. Therefore by {\bf A}, $Y$ being a local complete
intersection of codimension $r$, the homology of
$RHom_{\Oc_{\hat X}}(N, \RG_Y(\Oc_{\hat X}))$ is concentrated in the
degree $r$; in particular,
\begin{displaymath}
  Ext^1_{\Oc_{\hat X}}(N, E)=0.
\end{displaymath}
Applying $'$ twice to the exact sequences  
\begin{displaymath}\tag{*}
0 \to \frac{I_Y^n}{I_Y^{n+1}}\to \frac {\Oc_{\hat X}}{I_Y^{n+1}} \to \frac{\Oc_{\hat
  X}}{I_Y^{n}}\to 0,
\end{displaymath}
we therefore  get the exact sequences
\begin{displaymath}\tag{*'}
  0 \to   (\frac {I_Y^n}{I_Y^{n+1}})^{''}\to (\frac {\Oc_{\hat X}}{I_Y^{n+1}}) ^{''} \to (\frac{\Oc_{\hat
  X}}{I_Y^{n}}) ^{''}\to 0, 
\end{displaymath}
so the assertion follows by induction and (1), since we have a canonical map
from \thetag{*} to $\thetag{*'}$, and the modules  $I^n_Y/I^{n+1}_Y $ are locally free over
$\Oc_Y$.

(3): It follows by  (2) that $M' = E'\otimes_{\Oc_{\hat X}} M^* =M^*$, then
evidently if $M^{**}=M$, it follows that $M^{''}=M$.
\end{proof}
\begin{pfof}{\Proposition{\ref{formal-eq}}}
  Let $M\in \Con (\hat X)$ and $N\in \Con(Y)$. We have
  $\hat \phi^!(M) = M/I_YM$ and locally $\hat \phi_+(N^*) $ is
  isomorphic to $E^n$, where $n= \rank N$, so that by
  \Lemma{\ref{matlislemma}},  $Hom_{\Oc_{\hat X}}(\hat \phi_+(N^*),E )$
  is locally isomorphic to $\Oc_{\hat X}^n$. Thus
    \begin{align*}
      \hat \phi^!(\psi(N)) &=  \hat \phi^! (Hom_{\Oc_{\hat X}}(\hat \phi_+(N^*),E
                             )) \\
                           &= Hom_{\Oc_Y} (N^*,\Oc_Y) =N.
    \end{align*}
    Since $M^*$ is locally isomorphic to $\Oc_{\hat X}^m$, $m= \rank
    M$, so that $\hat \phi^!(M^*)$ is locally isomorphic to $\Oc_Y^m$,
    we get as before that $\hat \phi_+(\hat \phi^!(M^*))$ is locally
    isomorphic to $E^m$. Therefore, again by
  \Lemma{\ref{matlislemma}}, 
\begin{displaymath}
  \psi(\hat \phi^!(M)) = Hom_{\Oc_{\hat X}}(\hat \phi_+\hat \phi^!(M^*), E)=   M. 
\end{displaymath}
\end{pfof}

We recall the reformulation of the condition that $\nu^*$ and $\nu^!$
be fully faithful (or an equivalence) by Grothendieck. Say that $\phi$
satisfies the Lefschetz condition $\Lef(X, Y)$ if for locally free
$\Oc_X$-modules $M$ we have
\begin{displaymath}
  \Gamma(\hat X,\nu^*(M) ) = \Gamma(X, M). 
\end{displaymath}
Say also that $\phi$ satisfies $\Leff(X,Y)$ if $\Lef(X,Y)$ holds and moreover
for any locally free $\Oc_{\hat X}$-module $N$ there exists a locally free
$\Oc_X$-module $M$ such that $N= \nu^*(M)$. Similarly, $\phi$ satisfies
$\Lef^c(X,Y)$ and $\Leff^c(X,Y)$, respectively, if the above conditions are
satisfied when $\nu^*$ is replaced by $\nu^!$ and $M$ and $N$ are connections.
\begin{remark}
  We do not need to consider open neighbourhoods of $Y$ in $X$, so
  that our effective Lefschetz condition $\Leff(X,Y)$ is stronger than
  the one used in \cite{SGA2}*{Exp X, \S 2} and
  \cite{hartshorne:ample}*{Ch 4, \S 1}. 
\end{remark}

\begin{lemma}\label{lef-lemma}
  \begin{enumerate}
  \item $\nu^*$ ($\nu^!$) is fully faithful if and only if $\Lef(X,Y)$
    ($\Lef^c(X,Y)$) holds.
  \item $\nu^*$ ($\nu^!$) is an equivalence if and only if  $\Leff(X,Y)$ ($\Leff^c(X,Y)$) holds.
  \end{enumerate}
\end{lemma}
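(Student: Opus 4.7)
The plan is to deduce both parts from the internal-Hom reformulation, writing $\Hom$-spaces as global sections and exploiting that $\nu^*$ commutes with $\otimes$ and duals of locally free sheaves.

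For part (1) in the $\Oc$-module setting, I would begin by noting that for locally free $\Oc_X$-modules $M_1, M_2$, the sheaf $\mathcal Hom_{\Oc_X}(M_1, M_2) \cong M_1^\vee \otimes_{\Oc_X} M_2$ is again locally free and $\Hom_{\Oc_X}(M_1, M_2) = \Gamma(X, \mathcal Hom(M_1, M_2))$. Since pull-back commutes with duals and tensor products of locally free sheaves, the natural map $\Hom_{\Oc_X}(M_1, M_2) \to \Hom_{\Oc_{\hat X}}(\nu^*M_1, \nu^*M_2)$ is identified with $\Gamma(X, N) \to \Gamma(\hat X, \nu^*N)$ for $N = M_1^\vee \otimes M_2$. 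Its bijectivity for all such $N$ is therefore equivalent to $\Lef(X,Y)$: the forward direction specialises $M_1 = \Oc_X$, while the reverse applies $\Lef$ to $N = M_1^\vee \otimes M_2$ and then composes the internal-Hom identifications. Part (2) is immediate from (1), since an equivalence is a fully faithful essentially surjective functor, and essential surjectivity is precisely the extra clause distinguishing $\Leff$ from $\Lef$.

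For the connection case I would run the same argument inside $\Con(X)$. For connections $M_1, M_2$ the sheaf $\mathcal Hom_{\Oc_X}(M_1, M_2) = M_1^\vee \otimes_{\Oc_X} M_2$ inherits the diagonal connection, and $\Hom_{\Dc_X}(M_1, M_2) = \Hom_{\Dc_X}(\Oc_X, \mathcal Hom(M_1, M_2))$ equals the $T_X$-invariants of its global sections. Because $\nu$ is flat and $\hat X$ has the same dimension as $X$, the functor $\nu^!$ coincides on underlying $\Oc$-modules with $\nu^*$ and commutes with the internal Hom of connections. Repeating the argument of part (1) then reduces fully faithfulness of $\nu^!$ on $\Con(X)$ to the assertion that $\Gamma(X, N) \to \Gamma(\hat X, \nu^!N)$ is an isomorphism for every connection $N$, which is precisely $\Lef^c(X,Y)$; taking $T_X$-invariants of the two sides is then automatic. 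Part (2) for connections follows in the same way.

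The delicate point is showing that fully faithfulness of $\nu^!$, a $\Dc$-module condition, is controlled by the ordinary $\Oc$-module global-sections condition $\Lef^c$. This rests on the internal-Hom formula $\Hom_{\Dc_X}(M_1, M_2) = \Gamma(X, \mathcal Hom_{\Oc_X}(M_1, M_2))^{T_X}$, which isolates the horizontal content, combined with the flatness of $\nu$ that guarantees $\nu^!$ respects the tensorial operations on connections.
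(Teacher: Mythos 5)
Your proposal follows the same internal-Hom route as the paper, and the locally free (\(\Oc\)-module) case is handled correctly. The weak point is the final sentence of the connection case, where you assert that ``taking \(T_X\)-invariants of the two sides is then automatic.'' This is the one place the argument actually needs an input, and you don't supply it. Given the \(\Lef^c\)-isomorphism \(\Gamma(X,\Nc)\xrightarrow{\sim}\Gamma(\hat X,\nu^!\Nc)\) for the connection \(\Nc=\mathcal Hom_{\Oc_X}(M_1,M_2)\), it is not a priori clear that it restricts to an isomorphism on flat sections: a section \(s\in\Gamma(X,\Nc)\) whose restriction to \(\hat X\) is flat satisfies \(\nabla s\in\Gamma(X,\Omega_X\otimes\Nc)\) and \(\nabla s\mapsto 0\) under restriction, and \(\Omega_X\otimes\Nc\) is not a connection, so \(\Lef^c\) alone does not tell you \(\nabla s=0\).

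There are two clean ways to close this. The paper's route is to use Proposition \ref{constant-global}: since \(X\) is projective, \emph{every} global section of a connection on \(X\) is already flat, so \(\Hom_{\Dc_X}(M_1,M_2)=\Gamma(X,\Nc)\) with no invariants to take; then \(\Gamma(\hat X,\nu^!\Nc)\) is the image of flat sections under pullback, hence itself consists of flat sections, and the displayed chain of equalities follows directly from \(\Lef^c\). Alternatively, one can note that the restriction map \(\Gamma(X,\Omega_X\otimes\Nc)\to\Gamma(\hat X,\nu^*(\Omega_X\otimes\Nc))\) is injective for formal-completion reasons (a section of a torsion-free coherent sheaf on an integral scheme that dies on a formal neighbourhood of a nonempty subvariety is zero), which forces \(\nabla s=0\). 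Either justification should be stated; as written the proposal has a genuine, though fillable, gap at exactly the step it flags as delicate.
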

\begin{proof} We consider only $\nu^!$, and note that one implication is
  evident. (1): If $M_1 ,M_2\Con(X)$, then
  $\Nc = Hom_{\Oc_X}(N_1, N_2)\in \Con(X)$. So by $\Lef^c(X,Y)$,
  \begin{displaymath}
    \Hom_{\Dc_X}(M_1, M_2) = \Gamma(X, \Nc) = \Gamma(\hat X, \nu^!(\Nc)) =
    \Hom_{\Dc_{\hat X}}(\nu^!(M_1), \nu^!(M_2)).  
  \end{displaymath}
  (2):Evident.
\end{proof}
Let $ j : U=X\setminus Y \to X $ be the open inclusion. The coherent
cohomological dimension for $U$, $\cdo(U)$, satisfies $\cdo(U)\leq r$
if the homology of the complex $\RG(U,M_U)$ is non-zero only in
degrees $[0,r]$ when $M_U$ is a coherent $\Oc_U$-modules. Similarly,
we have the flat coherent cohomological dimension $\cdo_c(U)$, if we
also require that $M_U$ be a connection. In general we have
$ \cdo_c(U) \leq \cdo(U)$, where strict inequality holds for instance
when $U$ is simply connected and
$\max \{i \vert H^i(U, \Oc_U)\neq 0\} < \cdo(U)$.
  \begin{proposition}
  \begin{enumerate}
  \item If $\cdo(U) < n-1$, then $\Lef (X,Y)$ holds.
  \item If $\cdo_c(U) < n-1$, then $\Lef^c(X,Y)$ holds, and $\phi^!$
    is fully faithful.
  \end{enumerate}
\end{proposition}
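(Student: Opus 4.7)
The plan is to prove the Lefschetz conditions $\Lef(X,Y)$ in~(1) and $\Lef^c(X,Y)$ in~(2); once these are in hand, the full faithfulness of $\phi^!$ in~(2) follows by combining Lemma~\ref{lef-lemma} with Proposition~\ref{formal-eq}, which already says that $\hat\phi^!$ is an equivalence on $\Con(\hat X)$. The key observation is that for a locally free coherent sheaf $M$ on the smooth projective $X$ of dimension $n$, setting $F:=M^{\vee}\otimes\omega_X$, Serre duality on $X$ gives $H^n(X,F)^*\cong \Gamma(X,M)$, while Grothendieck's formal duality for the pair $(X,Y)$ gives a natural identification of the Matlis (continuous) $k$-dual $H^n_Y(X,F)^{\vee}\cong \Gamma(\hat X,\nu^*M)$. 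Under these dualities the natural map $\alpha_M\colon \Gamma(X,M)\to \Gamma(\hat X,\nu^*M)$ is dual to the canonical arrow $H^n_Y(X,F)\to H^n(X,F)$.

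It therefore suffices to prove that this latter arrow is an isomorphism. Consider the local cohomology long exact sequence
\begin{equation*}
\cdots\to H^{n-1}(U,F|_U)\to H^n_Y(X,F)\to H^n(X,F)\to H^n(U,F|_U)\to H^{n+1}_Y(X,F)\to 0,
\end{equation*}
where the last term is zero because both $H^{n+1}(X,F)$ and $H^{n+1}(U,F|_U)$ vanish for dimension reasons. The hypothesis $\cdo(U)<n-1$ gives exactly $H^{n-1}(U,F|_U)=H^n(U,F|_U)=0$, so $H^n_Y(X,F)\xrightarrow{\sim} H^n(X,F)$, which dualizes to $\alpha_M$ being an isomorphism. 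This proves~(1). For part~(2), if $M$ is a connection, then $F=M^{\vee}\otimes\omega_X$ is again a connection, and the only vanishings invoked on $U$ are $H^{n-1}(U,F|_U)=H^n(U,F|_U)=0$ with $F$ a connection; hence the weaker hypothesis $\cdo_c(U)<n-1$ already suffices to give $\Lef^c(X,Y)$, and the remaining claim on $\phi^!$ follows as indicated.

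The main technical subtlety lies in Grothendieck's formal duality, which requires working with Matlis (continuous) duals on the $\hat X$-side since $\Gamma(\hat X,\nu^*M)$ is a profinite-dimensional (not finite-dimensional) $k$-vector space in general; establishing that the duality pairing between $H^n_Y(X,F)$ and $\Gamma(\hat X,\nu^*M)$ makes the square relating $\alpha_M$ and $H^n_Y(X,F)\to H^n(X,F)$ commute is the step requiring the most care. A secondary point to check for part~(2) is that all the cohomology groups on $U$ entering the argument are really cohomology of connections, so that the finer hypothesis $\cdo_c$ (rather than $\cdo$) genuinely suffices.
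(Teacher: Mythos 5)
Your argument is correct, and it differs from the paper in a useful way. For part (1) and the $\Lef^c$ claim in (2), the paper simply cites Hartshorne (\emph{Ample Subvarieties}, Ch.~4, Prop.~1.1); what you write out -- Serre duality on $X$, Grothendieck formal duality for the pair $(\hat X, Y)$, and the local cohomology long exact sequence, with the observation that $F = M^{\vee}\otimes\omega_X$ stays inside the class of connections so that $\cdo_c(U)<n-1$ suffices in case (2) -- is precisely the argument being cited, so here the two routes coincide in substance. Where you diverge from the paper is in deducing full faithfulness of $\phi^!$: you factor $\phi^! = \hat\phi^!\circ\nu^!$ and simply compose the full faithfulness of $\nu^!$ (Lemma~\ref{lef-lemma} applied to $\Lef^c(X,Y)$) with the equivalence $\hat\phi^!$ of Proposition~\ref{formal-eq}. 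The paper instead gives a self-contained chain $H^0(X,N)\cong H^0(Y,\bar\phi^!(N))$ by passing to the right-module functor $\bar\phi^!$, applying Serre duality on $X$, rewriting $\RG_Y(M)$ via Kashiwara's theorem as $\phi_+\phi^!(M)[-r]$, and applying Serre duality again on $Y$. Your composition is shorter and avoids the detour through right $\Dc$-modules and Kashiwara's theorem; the paper's variant has the virtue of making the identification between the two spaces of $\Dc$-linear homomorphisms explicit without routing through the formal scheme a second time. Both are valid, and you correctly flag the one genuinely delicate point in your version, namely that the natural restriction map $\Gamma(X,M)\to\Gamma(\hat X,\nu^*M)$ is identified under the two dualities with the canonical arrow $H^n_Y(X,F)\to H^n(X,F)$; this compatibility is established in Hartshorne's treatment. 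One tiny inaccuracy in your display: the claim that $H^{n+1}_Y(X,F)=0$ does not follow merely from the vanishing of $H^{n+1}(X,F)$ and $H^{n+1}(U,F|_U)$ without also using $H^n(U,F|_U)=0$, but this is harmless since that term is never actually used -- all you need is $H^{n-1}(U,F|_U)=H^n(U,F|_U)=0$, which your hypothesis supplies.
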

We remark right away that $\cdo(U) < n-1 $ if $Y$ is a complete
intersection, $Y = X \cap H_1\cap H_2\cap \cdots \cap H_r$,
$r \leq n$, where the $H_i$ are hypersurfaces in $\Pb^N_k$, since then
$U$ is a union of at most $n-1$ affine sets, see
\cite{hartshorne:ample}*{Cor 1.2}.
\begin{proof}
  For (1), see \cite{hartshorne:ample}*{Ch. 4, Prop 1.1}; one gets the assertion
  about $\nu^!$ in (2) in a similar way. To see that $\phi^!$ is fully faithful
  it is convenient to instead prove that $\bar \phi^!$ is fully faithful, where
  the inverse image functor for right connections (right $\Dc$-modules) is
  $N \to \bar \phi^!(N) = \omega_Y\otimes_{\Oc_Y}\phi^!(\omega_X^*
  \otimes_{\Oc_X}N) $.
  This implies also that $\phi^!$ is fully faithful since the pair
  $(\omega_X\otimes_{\Oc_X}\cdot, \omega_Y\otimes_{\Oc_Y}\cdot)$ defines
  equivalences between the categories of left and right connections on $X$ and
  $Y$, respectively. First, as in the proof of (1), if $M$ is a connection and
  since $\cdo_c(U)< n-1$,
  \begin{displaymath}
    H^n(\RG(X, \RG_Y(M))) =  H^n(X, M) = H^0(X, M^*\otimes_{\Oc_X}\omega_X)',
  \end{displaymath}
  where now $N= M^*\otimes_{\Oc_X}\omega_X$ is a right connection. By
  Kashiwara's theorem $\phi_+$ defines an equivalence between
  quasi-coherent $\Dc_Y$-modules and quasi-coherent $\Dc_X$-modules
  whose support belongs to $Y$, and $\RG_Y(M)= \phi_+\phi^!(M)[-r]$
  (see \cite{hotta-takeuchi-tanisaki}*{Th. 1.6.1}). Moreover, a
  $\Dc_X$-module whose support belongs to $Y$ is quasi-isomorphic to a
  complex of quasi- coherent flasque sheaves of $\Dc_X$-modules with
  support in $Y$. This implies
  \begin{align*}
    H^n(\RG(X, \RG_Y(M))) &= H^{n-r}(X, \phi_+\phi^!(M)) = H^{n-r}(Y, \phi^!(M)) \\
                          & =  H^0(Y, \omega_Y\otimes_{\Oc_Y}\phi^!(M)^*)',
  \end{align*}
  where the last step follows from Serre duality. Since
  $\bar \phi^!(N) = \omega_Y\otimes_{\Oc_Y}\phi^!(M^*) =
  \omega_Y\otimes_{\Oc_Y}\phi^!(M)^* $, we get
  $H^0(X, N) = H^0(Y, \bar \phi^!(N)) $, so that similarly to 
  \Lemma{\ref{lef-lemma}} one concludes that $\bar \phi^!$ is fully faithful.
\end{proof}

The Grothendieck-Lefschetz theorem for connections takes the following
form.
\begin{theorem}\label{groth-lef} Let $Y$ be a smooth subvariety of a smooth projective
  variety $X$.
  \begin{enumerate}
  \item If $Y$ is a complete intersection, then $\phi^!$ is fully
    faithful.
  \item If (1) holds and moreover $\dim Y \geq 2$, then $\phi^!$ is an
    equivalence of categories.
  \end{enumerate}
\end{theorem}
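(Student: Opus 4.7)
The plan is to exploit the factorization $\phi = \nu\circ \hat\phi$ through the formal completion $\hat X$ of $X$ along $Y$, giving $\phi^! = \hat\phi^! \circ \nu^!$. By Proposition~\ref{formal-eq}, the functor $\hat\phi^!:\Con(\hat X)\to\Con(Y)$ is already an equivalence, so (1) and (2) both reduce to establishing the corresponding properties of $\nu^!:\Con(X)\to\Con(\hat X)$, namely $\Lef^c(X,Y)$ and $\Leff^c(X,Y)$ respectively, via Lemma~\ref{lef-lemma}.

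For (1), write $Y = X\cap H_1\cap\cdots\cap H_r$ as a complete intersection of codimension $r=\codim_X Y$; assuming $\dim Y \geq 1$ so that $r\leq n-1$, the complement $U = X\setminus Y$ is the union of the $r$ affine open subsets $X\setminus H_i$, and hence $\cdo_c(U)\leq \cdo(U) \leq r-1 \leq n-2 < n-1$. The preceding proposition then directly yields $\Lef^c(X,Y)$ and the full faithfulness of $\phi^!$, completing (1).

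For (2) I would split the verification of $\Leff^c(X,Y)$ into two steps: (a) algebraize the underlying locally free $\Oc_{\hat X}$-module of any $\hat M\in\Con(\hat X)$ to a locally free $\Oc_X$-module $M$ with $\nu^*(M)\cong \hat M$; (b) descend the $\Dc_{\hat X}$-action on $\hat M$ to a $\Dc_X$-action on $M$ compatible with $\nu^!$. Step (a) is precisely the effective Lefschetz condition $\Leff(X,Y)$, which for a complete intersection $Y$ of dimension $\geq 2$ in a smooth projective variety $X$ is the classical formal existence/algebraization result (SGA~2, Exp.~X; see also Hartshorne, \emph{Ample Subvarieties}, Ch.~IV). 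For step (b), the fully faithful condition $\Lef(X,Y)$ (a fortiori from $\Leff(X,Y)$) identifies $\Hom_{\Oc_X}(M,M\otimes\Omega_X)$ with its formal completion, so the formal connection $\hat M\to \hat M\otimes \Omega_{\hat X}$ descends to an integrable connection on $M$; since $\Dc_X$ is generated by $\Oc_X$ and $T_X$ (as $X$ is smooth), this promotes $M$ to an object of $\Con(X)$ with $\nu^!(M)\cong \hat M$, giving essential surjectivity and hence the equivalence.

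The main obstacle is step (a). It is precisely here that the hypothesis $\dim Y\geq 2$ enters in an essential way: algebraization of formal locally free sheaves along $Y$ requires the vanishing of obstructions governed by local cohomology groups of (twists of) the normal bundle of $Y$ in $X$, and these vanishings become available for complete intersections once the codimension is low enough relative to the dimension of $Y$. Once (a) is in hand, the transfer of the $\Dc_X$-structure in step (b) is formal because $\Lef(X,Y)$ already makes the $\Hom$-functor between locally free sheaves a bijection, reducing the problem to lifting a Lie algebroid action of $T_X$, which is automatic from $\nu^*(T_X)\cong T_{\hat X}$.
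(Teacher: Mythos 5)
Your proof proposal follows essentially the same route as the paper. Both factor $\phi = \nu\circ\hat\phi$ through the formal completion and use Proposition~\ref{formal-eq} to reduce everything to $\nu^!$; both obtain (1) from the cohomological dimension bound on $U=X\setminus Y$ coming from the affine cover by the $X\setminus H_i$; and both obtain (2) by first algebraizing the underlying locally free $\Oc_{\hat X}$-module (the paper phrases this via condition (G) and Lemma~\ref{globalsections}, you cite the same SGA~2/Hartshorne algebraization result) and then transporting the connection across the fully-faithful $\nu^*$ (the paper uses principal parts $\Pc^1_X(M)$, you use $\Hom(M,M\otimes\Omega_X)$ — equivalent formulations). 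Your remark that step (a) is the genuine content and that $\dim Y\geq 2$ is needed precisely there also matches where the paper locates the difficulty.
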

\begin{proof}
  It remains to see when $\nu^!$ is an equivalence. Let $\Oc_X(1)$ be
  an ample line bundle on $X$, $\Oc_{\hat X}(1) = \nu^*(\Oc_X(1))$,
  and for an $\Oc_{\hat X }$-module $\bar M$, put
  $\bar M(m) = \Oc_{\hat X}(m)\otimes_{\Oc_{\hat X}} \bar M$. Since
  $Y$ is a set-theoretic complete intersection it follows that
  $\cdo(U) < n-1$ (see \cite{hartshorne:ample}), so that $\nu^*$ is
  fully faithful. If $M\in \Con(X)$, then $M(m)$ is generated by its
  global sections, and therefore $\nu^!(M)(m)$ is also generated by
  its global sections, for $m\gg 0$. To get the converse, assume the
  following condition:
  \begin{description}
  \item [(G)] The category of locally free $\Oc_{\hat X}$-modules is generated
    by the invertible sheaves $\{\Oc_{\hat X}(m)\}_{m\in \Zb}$.
  \end{description}
  The point is that $\Oc_{\hat X}(m) = \nu^*(\Oc_X(m))$. It follows that there
  exists an exact sequence of locally free $\Oc_{\hat X}$-modules
  \begin{displaymath}
    \bar \Fc_1 \xrightarrow{\bar h} \bar \Fc_0 \to \bar M
  \end{displaymath}
such that $\nu^*(\Fc_1)= \bar \Fc_1$ and $\nu^*(\Fc_0)=\bar \Fc_0$ for some locally free $\Oc_X$-modules $\Fc_0, \Fc_1$. Since
$\nu^*$ is fully faithful we have 
\begin{displaymath}
  \Gamma(X, Hom_{\Oc_X}(\Fc_1, \Fc_0)) = \Gamma(\hat X,
  \nu^*(Hom_{\Oc_X}(\Fc_1, \Fc_0)) ) =   \Gamma(\hat X, Hom_{\Oc_{\hat
    X}}(\bar \Fc_1, \bar \Fc_0))
\end{displaymath}
so that there exists an extension $h: \Fc_1 \to \Fc_0$ of $\bar h$
such that putting $M= \Coker (h)$, then $\nu^*(M)= \bar M$ (this
argument is well-known, see \cite{hartshorne:ample}*{Th. 1.5 }). It remains
to see that $M$ is a connection, which again follows since $\nu^*$ is
fully faithful. A connection is determined by a map
\begin{displaymath}
  \nabla \in \Hom_{\Oc_X}(M, \Pc^1_X(M)) = \Hom_{\Oc_{\hat X}} (\nu^*(M), \nu^*(\Pc^1_X(M))=
  \Hom_{\Oc_{\hat X}} (\bar M , \Pc^1_{\hat X}(\bar M)),
\end{displaymath}
where $\Pc^1_X(M)$ is the sheaf of first order principal parts (see
\cite{EGA4:4}*{\S 16}). Therefore, the connection on $\bar M$
determines uniquely a connection on $M$. Considering the curvature map
$\Omega_X(M)\to \Omega^2_X(M)$, one proves similarly that this map is
$0$, so the connection on $M$ is integrable.

In general (G) need not hold (see \cite{hartshorne:ample}), contrary
to the corresponding statement for $X$ (by Serre's theorem). But if
$\dim Y \geq 2$, then (G) holds, which is the content of the following
lemma, forming a main step in the proof of Grothendieck's theorem in
\cite{hartshorne:ample}*{Ch 4, \S 2}.
\end{proof}
\begin{lemma}[\cite{SGA2}*{Exp IX, Th. 2.2},\cite{hartshorne:ample}*{Prop
    1.3}]\label{globalsections} Assume that $\nu^*$ is fully
  faithful ($\Lef (X,Y)$) and that $\dim Y \geq 2$. If $\bar M$ is a locally free
  $\Oc_{\hat X }$-module, then the space of global sections
  $\Gamma(\bar X, \bar M(m))$ generates $\bar M(m)$ when $m\gg 0$. In particuar,
  (G) holds.
\end{lemma}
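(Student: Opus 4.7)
The plan is to mimic Serre's generation theorem from projective geometry on the formal scheme $\hat X$, by exploiting the $I_Y$-adic filtration of $\bar M$. Since $\bar M$ is locally free over $\Oc_{\hat X}$, the associated graded
\begin{displaymath}
  \gr \bar M \;=\; \bigoplus_{n\geq 0} I_Y^n \bar M/I_Y^{n+1}\bar M
\end{displaymath}
is a coherent graded module over $\gr \Oc_{\hat X} = \on{Sym}_{\Oc_Y}(I_Y/I_Y^2)$ (locally it is just a finite sum of copies of this symmetric algebra). Consequently $\gr \bar M$ corresponds to a coherent sheaf $\tilde{\Fc}$ on the projectivised normal cone $P = \on{Proj}_Y \on{Sym}(I_Y/I_Y^2)$, with structure map $\pi \colon P \to Y$.

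First I would prove that $\Gamma(\hat X, \bar M(m))$ generates $\bar M(m)$ for $m \gg 0$. Since $\Oc_{\hat X}(1) = \nu^*(\Oc_X(1))$ and $\Oc_X(1)$ is ample, its restriction $\Oc_Y(1)$ is ample on $Y$. Applying Serre's theorem on the projective $Y$-scheme $P$ (using that $\pi^*\Oc_Y(m)$ twisted by the relative tautological bundle becomes ample for $m\gg0$), then pushing forward along $\pi$ and using that $\pi_*$ of the twisted graded pieces recovers the $I_Y^n\bar M(m)/I_Y^{n+1}\bar M(m)$, one obtains: for $m \geq m_0$ and \emph{all} $n\geq 0$, (a) $H^1(Y,\, I_Y^n \bar M(m)/I_Y^{n+1}\bar M(m)) = 0$, and (b) $H^0(Y,\, I_Y^n \bar M(m)/I_Y^{n+1}\bar M(m))$ generates $I_Y^n\bar M(m)/I_Y^{n+1}\bar M(m)$ as an $\Oc_Y$-module. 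Looping the long exact sequences attached to the infinitesimal neighbourhoods $Y_n$, condition (a) yields the Mittag-Leffler property and surjectivity of the transition maps in the inverse system $\{\Gamma(Y_n, \bar M(m)/I_Y^{n+1}\bar M(m))\}$, so that
\begin{displaymath}
  \Gamma(\hat X, \bar M(m)) \;=\; \varprojlim_n \Gamma(Y_n, \bar M(m)/I_Y^{n+1}\bar M(m))
\end{displaymath}
surjects onto each finite level. Together with (b) and Nakayama applied at the stalks of $\hat X$ (whose closed points lie in $Y$), this shows that global sections of $\bar M(m)$ generate it at every point of $\hat X$.

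Having established generation, I would deduce (G) as follows. For $m$ as above pick finitely many sections producing a surjection $\Oc_{\hat X}^N \twoheadrightarrow \bar M(m)$ and twist to obtain $\Oc_{\hat X}(-m)^N \twoheadrightarrow \bar M$. Since $X$ and $Y$ are smooth, $\hat X$ is a regular formal scheme, so the coherent kernel of this surjection is again locally free. Iterating, one obtains $\bar M$ as a finite iterated extension (equivalently, a locally free resolution) by finite direct sums of line bundles $\Oc_{\hat X}(m_i)$, which is exactly the statement of (G).

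The main obstacle is the uniform Serre statement in (a)–(b): the family $\{I_Y^n\bar M(m)/I_Y^{n+1}\bar M(m)\}_{n\geq 0}$ is infinite, so one cannot invoke Serre vanishing separately for each $n$ with no uniform control in $m$. This is precisely why the graded module must be packaged into a single coherent sheaf on the projective $Y$-scheme $P$ and Serre's theorem invoked once there. The hypothesis $\dim Y \geq 2$ enters here in conjunction with $\Lef(X,Y)$: it provides the cohomological room needed for the transition maps in the inverse system to be surjective at every level, so that local generation on $Y$ propagates to generation on the whole formal neighbourhood $\hat X$.
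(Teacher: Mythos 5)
The paper does not prove this lemma---it is cited from SGA~2 and Hartshorne's \emph{Ample Subvarieties}---so there is no in-paper argument to compare against, and I will evaluate your proposal on its merits. The overall framework (filter $\bar M$ by the infinitesimal neighbourhoods $Y_n$, establish a uniform vanishing of $H^1$ of the graded pieces to get Mittag--Leffler in the inverse system of section spaces, then generate by Nakayama at points of $Y$) is the correct one, but your uniform vanishing claim (a) is false as formulated, and you have misidentified where the hypothesis $\dim Y\geq 2$ enters.

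The graded pieces are $I_Y^n\bar M/I_Y^{n+1}\bar M\cong\on{Sym}^n(I_Y/I_Y^2)\otimes\bar M_0$. For a complete intersection $Y$ the conormal bundle $I_Y/I_Y^2$ is a direct sum of line bundles $\Oc_Y(-d_i)$ with $d_i>0$, so the twist $\on{Sym}^n(I_Y/I_Y^2)(m)$ consists of line bundles whose degree tends to $-\infty$ as $n\to\infty$ \emph{for any fixed $m$}. Serre's theorem on the projectivised normal cone $P$ cannot rescue this: the line bundle $\pi^*\Oc_Y(m)\otimes\Oc_P(n)$ leaves the ample cone of $P$ once $n$ is large relative to $m$, because $\Oc_P(1)$ is anti-ample in the $Y$-direction. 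And (a) is literally false when $\dim Y=1$: on a curve, $H^1(Y,\Lc)\neq 0$ whenever $\deg\Lc\leq 2g-2$, which happens for all large $n$. The actual role of $\dim Y\geq 2$ is to supply vanishing of $H^1(Y,\cdot)$ also for \emph{very negative} twists, via Serre duality: $H^1(Y,\Lc)\cong H^{\dim Y-1}(Y,\omega_Y\otimes\Lc^{-1})^*$, which vanishes for $\Lc$ sufficiently anti-ample by Serre vanishing precisely because $\dim Y-1\geq 1$. This is the essential input your argument is missing; the Mittag--Leffler surjectivity you attribute to $\dim Y\geq 2$ is merely a formal consequence of the $H^1$-vanishing once that vanishing has been secured.

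Two smaller remarks. Your proof never invokes $\Lef(X,Y)$, which is part of the hypothesis and is used in the cited references; that omission should give you pause. And the concluding claim that iterating the kernel construction yields a finite locally free resolution of $\bar M$ by sums of $\Oc_{\hat X}(m_i)$ is unjustified (the iteration has no reason to terminate), though it is harmless for (G) as applied in Theorem \ref{groth-lef}, which only needs a two-term presentation $\bar\Fc_1\to\bar\Fc_0\to\bar M\to 0$.
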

\begin{remark}\label{rh-proof-g-l}
  When $X_a$ is a projective complex analytic manifold the assertion in
  \Theorem{\ref{groth-lef}} follows from the Lefschetz theorem about
  the isomorphism between $\pi_1(Y_a,p)$ and $\pi_1(X_a,p)$
  (\cites{bott-lefschetz,lefschetz-selected}), combined with G.A.G.A.
  and the Riemann-Hilbert equivalence. Regarding
  \Proposition{\ref{formal-eq}}, we know that the category of étale
  coverings of $Y$ and $\hat X$ are equivalent \cite{SGA1}*{8.4}.
\end{remark}
\subsection{Differential coverings}
A tool is introduced to analyze $\Con(X)$ by cutting out points
by subvarieties.

Let $\{C_{\lambda}\}_{\lambda\in \Lambda}$ be a family of subvarieties
of $X$, with defining ideals $I_\lambda$, and for a point $x$ in $X$
let $\Lambda_x$ be the subset of $\lambda$ in $\Lambda$ such that $x$
belongs to $C_\lambda$. Let $T_X(I_\lambda)\subset T_X$ be the
subsheaf of derivations $\partial$ that are preserve the ideal
$I_\lambda$, so that $\partial (I_\lambda) \subset I_\lambda$ (i.e.
$\partial$ is tangential to $C_\lambda$).
\begin{lemma}\label{cut-lemma}
  The following are equivalent for a smooth point $x$ in $X$:
  \begin{enumerate}
  \item \begin{displaymath} T_{X,x} = \sum_{\lambda \in \Lambda_x}
      T_{X,x}(I_{\lambda}).
\end{displaymath}
\item The map of tangent vector spaces
  \begin{displaymath}
    \bigoplus_{\lambda\in \Lambda_x} k_{C_\lambda,
      x}\otimes_{\Oc_{C_\lambda, x}}T_{C_\lambda, x} \to k_{X,x}\otimes_{\Oc_{X,x}}T_{X,x}    
  \end{displaymath}
  is surjective.
\end{enumerate}
If moreover $x$ is a smooth point in the varieties $C_\lambda$, then
(1-2) are equivalent to the map
  \begin{displaymath}
  \frac{\mf_{X,x}}{\mf^2_{X,x}}\to \bigoplus_{\lambda \in \Lambda_x}
  \frac {\mf_{C_\lambda, x}}{\mf^2_{C_{\lambda,x}}}
\end{displaymath}
being injective.
\end{lemma}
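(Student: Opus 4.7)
The plan is to use Nakayama's lemma to reduce the equality of submodules in (1) to a statement about tensor products with the residue field $k = k_{X,x}$. Since $X$ is smooth at $x$, the $\Oc_{X,x}$-module $T_{X,x}$ is locally free of rank $\dim X$, and the submodules $T_{X,x}(I_\lambda) \subset T_{X,x}$ are finitely generated; thus (1) holds if and only if the induced map $\bigoplus_\lambda T_{X,x}(I_\lambda) \otimes_{\Oc_{X,x}} k \to T_{X,x} \otimes_{\Oc_{X,x}} k = T_{X,x}(k)$ is surjective.

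Next I would identify the image of this map with the image appearing in (2). Any $\partial \in T_{X,x}(I_\lambda)$ descends modulo $I_\lambda$ to a derivation of $\Oc_{C_\lambda,x}$, yielding a natural restriction map $T_{X,x}(I_\lambda) \to T_{C_\lambda,x}$, and the commutative diagram
\begin{displaymath}
  \begin{tikzcd}
    T_{X,x}(I_\lambda)\otimes_{\Oc_{X,x}} k \arrow[r] \arrow[d] & T_{X,x}\otimes_{\Oc_{X,x}} k \\
    T_{C_\lambda,x}\otimes_{\Oc_{C_\lambda,x}} k \arrow[ur] &
  \end{tikzcd}
\end{displaymath}
shows that the image of the top arrow is contained in the image of the diagonal arrow; both land in the subspace $T_{C_\lambda,x}(k) \subset T_{X,x}(k)$ of tangent vectors annihilating the image of $I_\lambda$ in $\mf_{X,x}/\mf_{X,x}^2$. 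Combined with Nakayama, the equivalence $(1)\Leftrightarrow(2)$ then boils down to checking the reverse inclusion, i.e.\ that these two images in $T_{X,x}(k)$ coincide.

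The main obstacle is precisely this identification when $C_\lambda$ may be singular at $x$: I must show that every tangent vector in the image of $T_{C_\lambda,x}\otimes k$ is realised by some derivation of $\Oc_{X,x}$ that actually preserves $I_\lambda$ (not merely sends it into $\mf_{X,x}$). Using that $X$ is smooth at $x$ and picking a regular system of parameters, a tangent vector $v$ at $x$ vanishing on $I_\lambda \bmod \mf_{X,x}^2$ lifts to a derivation with constant coefficients $\tilde\partial_v$; I would then correct $\tilde\partial_v$ by elements of $\mf_{X,x}T_{X,x}$ so that the resulting derivation preserves $I_\lambda$, using the fact that $\tilde\partial_v(I_\lambda) \subset \mf_{X,x}$ combined with a standard Artin--Rees/Nakayama argument on the conormal module $I_\lambda/I_\lambda^2$. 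This correction does not change the class of $v$ in $T_{X,x}(k)$ and places $\tilde\partial_v + (\text{correction})$ in $T_{X,x}(I_\lambda)$, yielding the desired equality of images.

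For the last assertion, when $x$ is smooth on each $C_\lambda$ as well, we have canonical identifications $T_{C_\lambda,x}\otimes_{\Oc_{C_\lambda,x}} k = \Hom_k(\mf_{C_\lambda,x}/\mf_{C_\lambda,x}^2, k)$ and $T_{X,x}\otimes_{\Oc_{X,x}} k = \Hom_k(\mf_{X,x}/\mf_{X,x}^2, k)$, and the map in (2) is the $k$-linear dual of
\begin{displaymath}
  \mf_{X,x}/\mf_{X,x}^2 \to \bigoplus_{\lambda\in\Lambda_x} \mf_{C_\lambda,x}/\mf_{C_\lambda,x}^2.
\end{displaymath}
Since duality over a field turns surjections into injections, surjectivity of (2) is equivalent to injectivity of this cotangent map, which concludes the proof.
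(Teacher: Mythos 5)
Your overall plan---Nakayama to reduce (1) to a statement about fibers at $x$, then identifying the image of $T_{X,x}(I_\lambda)\otimes k$ with the image of $k_{C_\lambda,x}\otimes_{\Oc_{C_\lambda,x}}T_{C_\lambda,x}$ inside $T_{X,x}\otimes k$---is the right approach and matches the paper's, and the final duality step in the smooth-$C_\lambda$ case is correct. The gap is in your treatment of the surjectivity step.

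You correctly state that what must be shown is that every element in the image of $T_{C_\lambda,x}\otimes k$ comes from $T_{X,x}(I_\lambda)$, but the argument you sketch proves something different and in fact false. You start from ``a tangent vector $v$ at $x$ vanishing on $I_\lambda\bmod\mf_{X,x}^2$'' and try to correct a constant-coefficient lift so as to preserve $I_\lambda$. This silently replaces the image of $T_{C_\lambda,x}\otimes k$ by the (generally strictly larger) annihilator of $I_\lambda\bmod\mf_{X,x}^2$; these coincide only when $C_\lambda$ is smooth at $x$, whereas the lemma expressly allows singular $C_\lambda$. And the resulting stronger claim fails: take $X=\Ab^2_k$, $x$ the origin, $I_\lambda=(a^2-b^3)$. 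Then $I_\lambda\bmod\mf_{X,x}^2=0$, so \emph{every} tangent vector qualifies, yet $\partial_a$ cannot be corrected by elements of $\mf_{X,x}T_{X,x}$ to preserve $I_\lambda$: writing the putative corrected derivation as $(1+p)\partial_a+q\partial_b$ with $p,q\in\mf_{X,x}$ and restricting to $b=0$ gives $2a(1+p(a,0))\in(a^2)$, impossible since $p(0,0)=0$. So no Artin--Rees correction can make this work. The correct (and simpler) argument uses only that $\Oc_{X,x}$ is $k$-smooth: any $\partial\in T_{C_\lambda,x}$, viewed as a $k$-derivation $\Oc_{X,x}\to\Oc_{C_\lambda,x}$, lifts to $\tilde\partial\in T_{X,x}$; since $\tilde\partial(f)\equiv\partial(\bar f)\pmod{I_\lambda}$ for all $f$, the lift automatically satisfies $\tilde\partial(I_\lambda)\subset I_\lambda$, so $\tilde\partial\in T_{X,x}(I_\lambda)$ and the map $T_{X,x}(I_\lambda)\to T_{C_\lambda,x}$ is surjective. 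Your never actually uses that $v$ is in the image of $T_{C_\lambda,x}\otimes k$, which is exactly the hypothesis that makes the lift exist.

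A smaller but related slip: the phrase ``both land in the subspace $T_{C_\lambda,x}(k)\subset T_{X,x}(k)$ of tangent vectors annihilating the image of $I_\lambda$ in $\mf_{X,x}/\mf_{X,x}^2$'' identifies $k_{C_\lambda,x}\otimes T_{C_\lambda,x}$ with $\Hom_k(\mf_{C_\lambda,x}/\mf_{C_\lambda,x}^2,k)$. For singular $C_\lambda$ the natural map between these is neither injective nor surjective in general, so the identification should be dropped; you only need the containment of images that the commutative triangle already gives you.
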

\begin{proof}
  Assume (1). Since $x$ is a smooth point, so that $T_{X,x}$ is free,
  then $k_{X,x}\otimes_{\Oc_{X,x}}T_{X,x} $ is isomorphic to the
  tangent space $Hom_{k_{X,x}}(\mf_{X,x}/\mf_{X,x}^2, k_{X,x})$, the
  map $T_{X,x}(I_\lambda)$
  $\to k_{C_\lambda,x}\otimes_{\Oc_{C_\lambda}, x} T_{C_\lambda,x}$ is
  surjective, and we have a well-defined tangent map
  $k_{C_\lambda, x}\otimes_{\Oc_{C_\lambda ,x}}T_{C_\lambda,x}\to
  k_{C_\lambda, x}\otimes_{\Oc_{X,x}} T_{X,x}$. These remarks imply
  (2). The last assertion follows since $T_{C_\lambda,x}$ is free.
\end{proof}
If the condition (1) in \Lemma{\ref{cut-lemma}} is satisfied we say
that $\{C_\lambda\}_{\lambda\in \Lambda}$ {\it cuts out} the point
$x$. Let $p_\lambda : \hat C_\lambda\to X$ be the normalization map of
$C_\lambda\subset X$. It is important to realize that in general the
sheaf of derivations ``from $\Oc_X $ to $\Oc_{\hat C_\lambda}$''
$T_{\hat C_\lambda \to X}$ does not equal $p^*_{\lambda}(T_X)$ when
$ C_\lambda\cap S\neq \emptyset$ ($S$ is the singular
locus of $X$), and therefore there exist no tangent mapping from
$T_{\hat C_\lambda}$ to $p^*_\lambda(T_X)$, so that even if $M$ is a
connection, $p^*_\lambda(M)$ need not be a connection. Therefore
instead of asking not only that $\hat C_\lambda$ be s.c., we say that
$p_\lambda$ is simply connected if $\hat C_\lambda$ is s.c. and
$p^*_\lambda(M)$ indeed is a connection for any $M\in \Con^f(X)$.

\begin{definition}\label{dc-def}
  A \emph { differential covering} of a variety $X$ is a family of
  subvarieties $\{C_\lambda\}_{\lambda \in \Lambda}$ that cut out the
  points in a dense subset of $X$. Say that $X$ is {\it differentially
    simple} (d.s.) if $X$ can be a provided with a differential
  covering such that the normalization maps  $p_\lambda$ are simply
  connected.
\end{definition}


\begin{proposition} Assume that $X$ is a normal variety that is
  provided with a differential covering
  $\{C_\lambda\}_{\lambda \in \Lambda}$ that cut out the points in an
  open subset $X_0$ such that $\codim_{X}(X\setminus X_0)\geq 2$. Let
  $M$ be a coherent $\Oc_X$-submodule of a connection $N$ such that
  the inverse image $p^*_\lambda(M)$, $\lambda \in \Lambda$, is a
  connection. Then $j_*j^*(M)$ is a subconnection of $j_+j^!(N)$,
  where $j: X_0\to X$ is the open inclusion.
\end{proposition}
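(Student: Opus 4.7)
The plan is to check that $j_*j^*(M)$ is coherent and $T_X$-stable inside $j_+j^!(N)$. First, since $N$ is a connection it is locally free over $\Oc_X$, and as $X$ is normal with $\codim_X(X\setminus X_0)\geq 2$, $N$ satisfies Serre's condition $S_2$; thus $j_*j^*(N)=N$, and because $j$ is an open immersion with $N$ a connection, $j_+j^!(N)=j_*j^!(N)=j_*j^*(N)=N$. The subsheaf $j_*j^*(M)\subset N$ is quasi-coherent, hence coherent on the Noetherian scheme $X$ as a submodule of the coherent sheaf $N$. It remains to verify that $j_*j^*(M)$ is preserved by $T_X$, and since sections of $j_*j^*(M)$ on an open $U$ agree with sections of $M$ on $U\cap X_0$, this reduces to checking stability at each point $x\in X_0$.

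Fix such an $x$. By \Lemma{\ref{cut-lemma}}(1), $T_{X,x}=\sum_{\lambda\in\Lambda_x}T_{X,x}(I_{C_\lambda})$, so it is enough to prove $T_{X,x}(I_{C_\lambda})\cdot M_x\subset M_x$ for each $\lambda\in\Lambda_x$. For such a $\lambda$ and $\partial\in T_{X,x}(I_{C_\lambda})$, the derivation $\partial$ descends to a derivation of $\Oc_{C_\lambda,x}$ which (in characteristic zero) lifts uniquely to a derivation $\tilde\partial$ of $\Oc_{\hat C_\lambda,x'}$ for every point $x'$ of the normalization above $x$. The canonical pullback map $N_x\to p_\lambda^*(N)_{x'}$, $n\mapsto 1\otimes n$, intertwines $\partial$ and $\tilde\partial$ by construction of the $\Dc_{\hat C_\lambda}$-structure on $p_\lambda^*(N)$, so for $m\in M_x$ the hypothesis that $p_\lambda^*(M)$ is a $\Dc_{\hat C_\lambda}$-submodule of $p_\lambda^*(N)$ yields $1\otimes \partial m=\tilde\partial(1\otimes m)\in p_\lambda^*(M)_{x'}$. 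Since the kernel of the pullback map is $I_{C_\lambda}N_x$, this gives directly $\partial m\in M_x+I_{C_\lambda}N_x$.

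The main obstacle is promoting this to the sharper inclusion $\partial m\in M_x$: the modulo-$I_{C_\lambda}N_x$ information has to be strengthened by combining the hypotheses over all $\lambda\in\Lambda_x$. My plan is to exploit the infinitesimal form of the cutting-out condition, which by \Lemma{\ref{cut-lemma}}(2) gives the conormal injection $\mf_x/\mf_x^2\hookrightarrow\bigoplus_{\lambda\in\Lambda_x}\mf_{C_\lambda,x}/\mf_{C_\lambda,x}^2$. After passing to the formal completion $\hat\Oc_{X,x}$ and choosing a flat basis trivializing the connection $N$, this condition recovers $\hat\Oc_{X,x}$ faithfully from its quotients by the $\hat I_{C_\lambda}$; combining this with the $\Dc_{\hat C_\lambda}$-stability of $p_\lambda^*(M)$ applied not only to first-order derivatives but iteratively to arbitrary compositions of derivations tangent to the various $C_\lambda$ confines the discrepancy $\partial m \bmod M_x$ to $\bigcap_{n\geq 1}\mf_x^n(N_x/M_x)=0$ by Krull's intersection theorem. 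Once this local $T_X$-stability of $M$ over $X_0$ is established, applying $j_*$ transports the $\Dc_{X_0}$-action to the required $\Dc_X$-subsheaf structure on $j_*j^*(M)\subset j_+j^!(N)=N$, completing the proof.
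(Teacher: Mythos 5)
Your reduction to $T_{X_0}\cdot M_{X_0}\subset M_{X_0}$, the first-order inclusion $\partial m\in M_x+I_{C_\lambda}N_x$ extracted from the $\Dc_{\hat C_\lambda}$-stability of $p_\lambda^*(M)$, and the combination over $\lambda\in\Lambda_x$ via \Lemma{\ref{cut-lemma}} to obtain $T_{X,x}M_x\subset M_x+\mf_x N_x$ all match the paper's argument, which at that point concludes by Nakayama: the induced map $M_x\to k_{X,x}\otimes_{\Oc_{X,x}}M_x$ shows that the coherent image of $T_{X_0}\cdot M_{X_0}$ in $N_{X_0}/M_{X_0}$ vanishes. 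You instead flag the promotion from $\partial m\in M_x+\mf_x N_x$ to $\partial m\in M_x$ as a gap and substitute a formal-completion plus Krull-intersection plan.

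The difficulty is that your substitute is asserted rather than proven. After trivializing $N$ formally you would need $\partial m\in M_x+\mf_x^nN_x$ for every $n$, but no mechanism for this is given: the hypothesis cannot simply be reapplied to $\partial m$ because $\partial m$ is not yet known to lie in $M_x$, and iterating naively loses the estimate since a derivation does not preserve $\mf_x$; indeed $T_{X,x}(\mf_xN_x)\subset N_x$ with no gain of order. Thus the sentence claiming the discrepancy is "confined to $\bigcap_{n}\mf_x^n(N_x/M_x)=0$" is precisely where your argument has a hole. Your caution about the Nakayama step is not unreasonable in isolation — a coherent submodule $P\subset N/M$ with $P_x\subset\mf_x(N/M)_x$ at every stalk need not vanish (compare $\Oc\cdot[t]\subset\Oc/(t^2)$ on $\Ab^1_k$) — but what the paper's phrasing is leaning on is that $\partial_x$ induces a genuine map to the fiber $k_{X,x}\otimes M_x$, not merely the stalk inclusion. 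If you want to run a completion argument you should use the full force of the hypothesis: each image $\phi_\lambda(\hat M_x)$ inside the formally trivialized $\hat N_x\otimes\hat\Oc_{\hat C_\lambda,x'}$ is a $\Dc$-stable, hence constant, submodule $W\otimes_k\hat\Oc_{\hat C_\lambda,x'}$ with $W$ independent of $\lambda$ (both compute the fiber of $M$ at $x$); that constancy constrains $\hat M_x$ far more tightly than the first-order inclusion does and is what the missing step should exploit.
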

Of course, if $X$ is smooth, then $j_+j^!(N)=N$.
\begin{proof} Since $N$ is a connection and $X$ is normal (so that
  $j_*j^*(\Oc_X)=\Oc_X
  $), it suffices to prove that
  $T_{X_0}\cdot M_{X_0}\subset M_{X_0}$. If $\partial_x \in T_{X,x}$
  and $x\in X_0$, by \Lemma{\ref{cut-lemma}} $\partial_x$ defines a
  map
  \begin{displaymath}
    \partial_x: M_x \to k_{X,x}\otimes_{\Oc_{X,x}}M_x.
  \end{displaymath}
  Hence by Nakayama's lemma the $\Oc_{X_0}$-submodule
  $T_{X_0} \cdot M_{X_0} \subset N_{X_0} $ maps to $0$ in
  $N_{X_0}/M_{X_0}$, and therefore
  $T_{X_0}\cdot M_{X_0}\subset M_{X_0}$.
\end{proof}

\begin{theorem}\label{dsc-thm}
  Differentially simple  varieties are simply connected.
\end{theorem}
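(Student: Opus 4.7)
The plan is to show that any connection $M\in\Con^f(X)$ of rank $m$ is isomorphic to $\Oc_X^m$. By \Remark{\ref{purity}} (2) I may replace $X$ by its smooth locus and assume $X$ is smooth; I also take $X_0\subseteq X$ to be an open dense set with $\codim_X(X\setminus X_0)\geq 2$ on which the family $\{C_\lambda\}_{\lambda\in\Lambda}$ cuts out points (this slight strengthening of the bare density condition is what is needed to extend a trivialisation from $X_0$ to $X$ and to invoke the preceding proposition).

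For each $\lambda$, the hypothesis that $p_\lambda$ is simply connected gives that $p_\lambda^*(M)$ is a connection on $\hat C_\lambda$, which by simple-connectedness of $\hat C_\lambda$ is isomorphic to $\Oc_{\hat C_\lambda}^m$. The $k$-vector space $V_\lambda = \Gamma(\hat C_\lambda, p_\lambda^*(M))^\nabla$ of horizontal sections is therefore $m$-dimensional, and evaluation at any point of $\hat C_\lambda$ is surjective onto the fibre. The next step is to identify all the $V_\lambda$'s canonically with a single $k$-vector space $V$: whenever two members $C_\lambda, C_\mu$ of the family meet at a smooth cut-out point $y\in X_0$, a horizontal section $s\in V_\lambda$ has a well-defined value at a preimage of $y$, and by simple-connectedness of $\hat C_\mu$ it extends uniquely to $\psi_{\mu\lambda}(s)\in V_\mu$ sharing this value at $y$. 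Iterating along chains of pairwise intersecting subvarieties, and using the (generic) connectedness of the intersection graph of the family, I expect to identify all $V_\lambda$ with one $m$-dimensional $V$, yielding a morphism of connections $\alpha\colon \Oc_{X_0}\otimes_k V \to M|_{X_0}$.

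It remains to see that $\alpha$ is an isomorphism and extends to $X$. For each cut-out point $x\in X_0$, the fibre evaluation $V\cong V_{\lambda_i}\twoheadrightarrow M_x/\mathfrak{m}_x M_x$ (for any $\lambda_i$ with $x\in C_{\lambda_i}$) is surjective, so $\alpha$ is fibrewise surjective on the dense cut-out locus. Both sides are rank-$m$ locally free connections, so $\Ker\alpha$ is a rank-$0$ subconnection and hence zero; the quotient $\Coker\alpha$, being a coherent $\Dc_{X_0}$-module, must vanish at the generic point of any prime divisor, since a nonzero finite-dimensional $\Dc_{X_0,\eta}$-module would contradict the simplicity of the infinite-dimensional algebra $\Dc_{X_0,\eta}$, so $\supp\Coker\alpha$ has codimension $\geq 2$. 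The reflexive rank-$m$ subsheaf $\Ima(\alpha)$ therefore agrees with $M|_{X_0}$ in codimension $\leq 1$ and hence equals it, so $M|_{X_0}\cong \Oc_{X_0}^m$. Finally, as $X$ is smooth with $\codim_X(X\setminus X_0)\geq 2$ and $M$ is locally free, this trivialisation extends uniquely to $M\cong \Oc_X^m$.

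The main obstacle is the canonical gluing of the $V_\lambda$'s into a single $V$: one must verify that the identifications $\psi_{\mu\lambda}$ are path-independent along different chains of pairwise intersecting subvarieties, a triviality-of-monodromy statement for the intersection graph of the family. This should follow from the cutting-out condition of \Lemma{\ref{cut-lemma}} combined with the simple-connectedness of each $\hat C_\lambda$, but the precise deduction requires careful handling of the boundary between $X_0$ and its complement as well as of any singular intersections within the family.
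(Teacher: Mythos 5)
Your approach is genuinely different from the paper's, and it contains a gap in precisely the place you flag. The paper's proof is a \emph{local} (or rather, generic) argument: it trivialises the underlying $\Oc_U$-module on an open $U\subset X_{reg}$, writes the connection as $d+\Gamma$, observes that $M$ is trivial iff $\Gamma$ lies in $\dlog\Aut_{\Oc_U}(\Oc_U^m)(U)$, and then uses the cutting-out condition of \Lemma{\ref{cut-lemma}} in a very specific way: any $\partial_x$ at a cut-out point $x$ decomposes as $\sum_\lambda \delta_{\lambda,x}$ with $\delta_{\lambda,x}$ tangent to $C_\lambda$, and applying this decomposition to the class of $\Gamma$ modulo $\dlog\Aut$ shows that class lies in $\mf_x\Omega_X(\End(\Oc_X^m))_x$ at every cut-out $x$; since the cut-out locus is dense and $\Omega_X(\End(\Oc_X^m))$ is coherent and torsion free, the class vanishes. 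There is no gluing of horizontal sections, no intersection graph, no global monodromy discussion.

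Your construction, by contrast, requires identifying all the $V_\lambda$'s with a single $V$ via the transport maps $\psi_{\mu\lambda}$, and this demands that the resulting \v{C}ech-type cocycle on the intersection graph be a coboundary. But transporting a horizontal section along a chain $C_\lambda\to C_{\nu_1}\to\cdots\to C_\lambda$ and comparing with the direct value on $C_\lambda$ is exactly the monodromy of $M$ around the loop in $X$ traced out by arcs in those subvarieties, and the triviality of that monodromy is (a large part of) the very conclusion you are trying to establish. The simple-connectedness of each individual $\hat C_\lambda$ only tells you that parallel transport \emph{within} a single $C_\lambda$ is path-independent; it says nothing about paths that hop between different $C_\lambda$'s. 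Nor does the cutting-out condition of \Lemma{\ref{cut-lemma}} help here: it is a pointwise statement about tangent spaces spanning, not a statement about homotopy or monodromy of loops in the union $\bigcup C_\lambda$. As written, the argument is circular at this step, and the paper's way around it — passing from horizontal sections to the connection matrix modulo $\dlog$, which is a generic-point object — is what allows the cutting-out condition to bite. If you want to salvage your approach you would need an independent proof that the transport identifications are path-independent, which I do not see how to extract from the hypotheses.
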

The \hyperref[proof of diffsimple]{proof} is postponed. 
\begin{corollary}
  If $X$ and $Y$ are (étale) simply connected normal varieties, then
  $X\times_k Y$ is also (étale) simply connected.
\end{corollary}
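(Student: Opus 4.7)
The plan is to apply Theorem \ref{dsc-thm} by exhibiting $X\times_k Y$ as a differentially simple variety. The differential covering I use is built from the fibres of the two projections: set $\Lambda = Y_{reg}\sqcup X_{reg}$, and for $y\in Y_{reg}$ let $C_y=X\times\{y\}$, while for $x\in X_{reg}$ let $C_x=\{x\}\times Y$. As closed subschemes of $X\times_k Y$, each $C_y$ (resp.\ $C_x$) is isomorphic to $X$ (resp.\ $Y$), hence is normal and simply connected by hypothesis, and each $\hat C_\lambda = C_\lambda$.

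To see this is a differential covering, observe that $U:=X_{reg}\times_k Y_{reg}$ is dense in $X\times_k Y$ (as $X$ and $Y$ are normal) and consists entirely of smooth points. At any $(x,y)\in U$ the tangent space decomposes as $T_{X\times_k Y,(x,y)} = T_{X,x}\oplus T_{Y,y}$; the tangent spaces of $C_y$ and $C_x$ at $(x,y)$ are precisely these two summands and together span $T_{X\times_k Y,(x,y)}$, so Lemma \ref{cut-lemma} applies. For each $y\in Y_{reg}$, the map $p_y$ is simply connected: $\hat C_y = X$ is simply connected by assumption, and for any $M\in \Con^f(X\times_k Y)$ the pullback $p_y^*(M)$ is a connection on $X$, with $\Dc_X$-structure coming from the canonical lift $\partial\mapsto\tilde\partial$ sending a derivation of $\Oc_X$ to the derivation of $\Oc_{X\times_k Y}$ that acts as $\partial$ on the first factor and trivially on the second; this lift preserves $I_{C_y}$ and hence descends to $M/I_{C_y}M$. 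The case of $p_x$ is symmetric. Theorem \ref{dsc-thm} now yields that $X\times_k Y$ is simply connected; the étale case follows by the identical argument inside $\Con_{et}$, or by Grothendieck--Malcev.

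The delicate point is checking that the lifted derivations $\tilde\partial$ genuinely endow $p_y^*(M)$ with a $\Dc_X$-module structure at points $(x,y)$ where $X\times_k Y$ is singular, that is when $x\notin X_{reg}$. This reduces to two routine commutative-algebra verifications: that $\tilde\partial$ preserves the ideal $I_{C_y}$ (which holds because $I_{C_y}$ is locally generated by pullbacks of a regular system of parameters of $\Oc_{Y,y}$, all annihilated by $\tilde\partial$), and that $[\tilde\partial_1,\tilde\partial_2]$ equals the lift of $[\partial_1,\partial_2]$, ensuring the induced connection is integrable.
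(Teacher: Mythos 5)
Your proof is correct and follows the same route as the paper: the paper's proof (which is quite terse) simply observes that the slices $p_y\colon X\times\{y\}\to X\times_kY$ for $y\in Y_{reg}$ together with $p_x\colon \{x\}\times Y\to X\times_kY$ for $x\in X_{reg}$ form a differential covering with simply connected maps, and then invokes Theorem~\ref{dsc-thm}. Your version supplies the details the paper leaves implicit — verifying the cut-out condition on $U=X_{reg}\times_kY_{reg}$ via the tangent space splitting, and checking that $p_\lambda^*(M)$ is genuinely a connection by exhibiting the canonical lifting of derivations that preserve $I_{C_\lambda}$ — which is precisely the ``delicate point'' that Definition~\ref{dc-def} is designed to isolate.
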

When one of the factors is proper this follows from \cite{SGA1}*{Cor
  1.7} and also from \Proposition{\ref{sc-proper}}. It needs not hold
in positive characteristic when one of the factors is non-proper
[loc.cit, Rem. 1.10].

\begin{proof}
  The maps $p_y: X\times \{y\}\to X\times Y$ and
  $p_x: \{x\}\times Y\to X\times Y$, $x\in X_{reg}$, $y \in Y_{reg}$,
  form differential covering of $X\times Y$ where each $p_x$ and $p_y$
  is simply connected. Now conclude from \Theorem{\ref{dsc-thm}}.
\end{proof}

One says that a projective variety $X$ is rationally connected if any
pair of points in $X$ belongs to $p(\Pb^1_k)$ for some map
$p: \Pb^1_k \to X$. In some senses the category of rationally
connected varieties is more natural than its subcategory of rational
varieties, as proposed in \cite{kollar-miyoka-mori:rational} and
expounded in \cites{debarre:higher,kollar:rarionalcurves}. We want to
apply \Theorem{\ref{dsc-thm}} to such $X$, but relax the condition
that $X$ be projective, compensating by instead using maps from the
affine line $p : \Ab^1_k \to X$.

Let $j: X\to \bar X$ and $i: \Ab^1_k \to \Pb^1_k$ be a smooth
completion of $X$ and $\Ab^1_k$, respectively, identifying $X$ and
$\Ab^1_k$ with open subsets in $\bar X$ and $\Pb^1_k$. An equivalent
definition for $\bar X$ being rationally connected is that there
exists a variety $M$ and an (evaluation) map
$ e:\Pb^1_k \times M \to \bar X$ such that the naturally induced map
$c:\Pb^1\times \Pb^1 \times M\to \bar X\times \bar X$ is dominant (see
\cite{kollar:rarionalcurves}*{Def. 3.2}, \cite{debarre:higher}*{Def
  4.3}). Let $M_0$ be the projection of
$ e^{-1}(X)\cap (\Ab^1_k\times M)$ on the second factor of
$\Pb^1_k\times M$, so that we get a map $e_0:\Ab^1_k\times M_0 \to X$.
If any two points in a non-empty open subset of $X$ belong to a common
affine line $p(\Ab^1_k)$, the restriction of $c$
\begin{displaymath}
c_0:  \Ab_k^1\times \Ab_k^1 \times M_0 \to X\times X 
\end{displaymath}
is again dominant.

Say that a quasi-projective variety $X$ is {\it rationally connected}
if there exists a variety $M_0$ and a map
$e_0 : \Ab^1_k \times M_0 \to X$ such that the induced map
$c_0 : \Ab^1_k \times \Ab^1_k\times M_0 \to X\times X  $ is dominant. If $X$ is
rationally connected ($c_0$ is dominant) then $\bar X$ is rationally
connected in the sense of \cite{debarre:higher}*{Def 4.3} ($c$ is
dominant), but the converse does not hold.
\begin{corollary}\label{rat-conn-cor}
  Let $X$ be a smooth quasi-projective rationally connected variety of
  characteristic $0$. Then $X$ is differentially simple and is
  therefore simply connected.
\end{corollary}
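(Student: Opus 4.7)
The plan is to realize $X$ as differentially simple and then invoke \Theorem{\ref{dsc-thm}}. Choose a smooth projective completion $X \hookrightarrow \bar X$, which exists in characteristic $0$, and put $D = \bar X \setminus X$. Rational connectedness provides a variety $M_0$ and a morphism $e_0 : \Ab^1_k \times M_0 \to X$ with $c_0$ dominant; on a dense open $M_0' \subset M_0$ the map $f_m := e_0(\cdot, m)$ is non-constant. Take as candidate differential covering the family $\{C_m\}_{m \in M_0'}$, where $C_m \subset X$ denotes the closure of $f_m(\Ab^1_k)$.

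The first step is to verify that each normalization $p_m : \hat C_m \to X$ is simply connected in the sense of \Definition{\ref{dc-def}}. The morphism $f_m$ extends uniquely to $\bar f_m : \Pb^1_k \to \bar X$; its image $\bar C_m := \bar f_m(\Pb^1_k)$ is an irreducible projective curve, and L\"uroth's theorem identifies its normalization $\bar{\hat C}_m$ with $\Pb^1_k$. The normalization $\hat C_m$ of $C_m = \bar C_m \cap X$ is the preimage of $X$ inside $\bar{\hat C}_m$. Since $f_m(\Ab^1_k) \subset X$, the set $\bar f_m^{-1}(D)$ is contained in $\{\infty\} \subset \Pb^1_k$, and its image in $\bar{\hat C}_m$ consists of at most one point. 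Hence $\hat C_m$ is either $\Pb^1_k$ or $\Ab^1_k$, both simply connected by \Proposition{\ref{raional-dsc}}. Moreover, smoothness of $X$ and $\hat C_m$ guarantees that $p_m^*(N)$ carries a natural connection structure for every $N \in \Con(X) = \Con^f(X)$.

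The principal obstacle is the cut-out property: at a general $x \in X$, the tangent spaces $T_x C_m$ (as $m$ varies over $\{m \in M_0' : x \in C_m\}$) must span $T_x X$. I would proceed via the tangent lift
\begin{displaymath}
\psi : \Ab^1_k \times M_0' \to X \times \mathbb{P}(T_X), \qquad (t, m) \mapsto \bigl(f_m(t),\, [df_m(\partial_t)]\bigr),
\end{displaymath}
and argue that $\psi$ is dominant. Geometrically, blowing up the diagonal of $X \times X$ replaces $\Delta_X$ by the exceptional divisor $\mathbb{P}(T_X)$, and $c_0$ induces, by taking $t_2 \to t_1$, a dominant map onto that exceptional divisor whose restriction to the diagonal strata is precisely $\psi$. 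Once $\psi$ is dominant, for a general $x \in X$ the image of $\psi^{-1}(\{x\} \times \mathbb{P}(T_x X))$ in $\mathbb{P}(T_x X)$ is Zariski-dense, so the directions $T_x C_m$ span $T_x X$. Via \Lemma{\ref{cut-lemma}} this yields the cut-out condition, $X$ is differentially simple, and \Theorem{\ref{dsc-thm}} concludes that $X$ is simply connected.
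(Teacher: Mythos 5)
Your strategy matches the paper's — exhibit a differential covering by rational curves and invoke \Theorem{\ref{dsc-thm}} — and two parts of your write-up are actually more careful than the paper's own. The normalization argument is complete: you correctly extend $f_m$ to $\bar f_m : \Pb^1_k \to \bar X$, use L\"uroth to identify $\bar{\hat C}_m$ with $\Pb^1_k$, and observe that $\bar f_m^{-1}(D) \subset \{\infty\}$ forces $\nu^{-1}(\bar C_m \cap D)$ to be at most one point (since the finite surjection $\Pb^1 \to \bar{\hat C}_m$ sends $\{\infty\}$ onto that preimage), so $\hat C_m$ is $\Pb^1_k$ or $\Ab^1_k$. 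The paper simply cites \Proposition{\ref{raional-dsc}} without spelling this out. And the reformulation of the cut-out condition as dominance of the tangent lift $\psi$ is a clean geometric way to state the goal.

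The gap is exactly the claim that $\psi$ is dominant. Dominance of $c_0 : \Ab^1_k \times \Ab^1_k \times M_0 \to X \times X$ does not by itself imply that the induced map on exceptional divisors of the blow-ups along the diagonals is dominant. The heuristic ``take $t_2 \to t_1$'' is a limit argument, and since the source $\Ab^1_k \times \Ab^1_k \times M_0$ is not proper, the fiber of $c_0$ over a point $(x, y_\epsilon)$ approaching the diagonal can escape to infinity instead of specializing to a point of $c_0^{-1}(\Delta_X)$ lying over the direction $[v]$; the valuative criterion gives you nothing without properness. More formally: $c_0^{-1}(\Delta_X)$ contains the codimension-one divisor $\{t_1 = t_2\}$ while $\Delta_X$ has codimension $n = \dim X$, so $c_0$ is far from transverse to $\Delta_X$ (in fact $c_0$ cannot be smooth over any point of $\Delta_X$ when $n \geq 2$), and nothing forces the strict transform of $c_0$ restricted to the exceptional locus to dominate $\Pb(T_X)$. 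To close this you need an extra input — either the generic-smoothness bookkeeping that the paper carries out (restricting to opens $(X\times X)_0$, $X_x^{00}$, $M_x^{s,00}$ where the evaluation map is smooth and the tangent maps of curves are immersive), or the standard deformation-theoretic fact that in characteristic $0$ rational connectedness yields very free rational curves, from which spanning of tangent directions at a general point is immediate. As written, the dominance of $\psi$ is asserted but not proved, and that is the crux of the corollary.
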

\begin{remark}
  A proof that $\pi_1(X,p)$ is trivial when $X$ is a complex smooth
  projective rationally (chain) connected variety is presented in
  \cite{debarre:higher}*{Cor. 4.18}, where the idea to use Hodge
  theory was employed already by Serre \cite{serre-unirational} to
  prove that unirational projective complex manifolds are simply
  connected. The application of Hodge theory requires that $X$ be
  projective, and only implies that $\Con_{et}(X)$ is trivial; to get
  that $\Con(X)$ is trivial one also needs that $\pi_1(X_a)$ is
  finite, see \cite{debarre:higher}*{Cor 4.18}, using an idea from
  \cite{campana:twistor}\footnote{One can instead invoke the
    Grothendieck-Malcev theorem.}. In \Corollary{\ref{rat-conn-cor}}
  it is not required that $X$ be projective and its proof is
  algebraic.
\end{remark}

\begin{proof} Since $c_0$ is dominant, by generic smoothness there
  exists an open subset $(X\times X)_0$ over which $c_0$ is smooth.
  The image $X_0$ of $(X\times X)_0$ with respect to the projection
  $p_2: X\times X \to X$ on the second factor is an open subset of
  $X$. For $x\in X_0$ put $X_x^0= p_2^{-1}(x) \subset (X\times X)_0 $,
  which we identify with an open subset of $X$. The base change of
  $c_0$ over the inclusion
  $X_x^0 \to (X\times X)_0 \subset X\times X $ defines a smooth map
  $e^0_x: \Ab^1_k \times M^0_x \to X_x^0$. Again by generic smoothness
  there exists a non-empty open subset
  $X_x^{00}\subset X_x^0\subset X$ over which $e^0_x$ is smooth; let
  $e^{00}_x: \Ab^1_k\times M_x^{00}\to X_x^{00} $ be the base change,
  which is thus a smooth map. For each $f\in M_x^{00} $ denote also by
  $f: \Ab^1_k\to X_x^{00} $ the restriction of $e^{00}_x$ to
  $\Ab^1_k\times \{f\}$. Now there exists an open subset
  $X_{00}\subset X_0$ of points $x$ such that the tangent mapping of
  $f: \Ab^1_k\to X$ is injective at points in $\Ab^1_k$ above $f(x)$.
  Assume from the beginning that $x\in X_{00}$. Since the condition
  that the tangent map of $f$ is injective above points of $x$ is open
  in $M_x$, there exists a non-empty open subset $M^{s,00}_x$ of
  $M_x^{00}$ such that the restriction
  \begin{displaymath}
    e^{s,00}_x : \Ab^1_k\times M^{s,00}_x\to X_x^{00}
  \end{displaymath}
  is smooth, and if $f \in M^{s,00}_x$, then the tangent map of $f$ is
  injective at points in $\Ab^1_k$ above $f(x)$. This implies that the
  curves $f(\Ab^1_k)$, $f \in M^{s,00}_x$, cut out the point $x$.
  Therefore the set of maps $f: \Ab^1_k\to X$ cut out an open subset
  of $X$, hence by \Theorem{\ref{dsc-thm}} and
  \Proposition{\ref{raional-dsc}} $X$ is simply connected.
\end{proof}

\begin{question}
  Assume that $X$ is a smooth (quasi-)projective complex variety that is
  provided with a differential covering by rational curves. Is then
  $X$ rationally connected?
\end{question}

Being differentially simple is a birational property in a rather
stronge sense.
\begin{lemma}\label{lem-ds}
  Let $\pi: X\to Y$ be a dominant morphism of quasi-projective
  varieties.
\begin{enumerate}
\item If $X$ is normal and d.s and $Y$ is smooth, then $Y$ is d.s..
\item Asume that $\pi$ is proper, $X$ is smooth, and $Y$ is normal. If
  $Y$ can be provided with a differential covering such that the
  smooth locus $\hat C^{reg}_\lambda$ is s.c., then $X$ is d.s.
\end{enumerate}
\end{lemma}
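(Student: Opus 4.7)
The plan for (1) is to push the differential covering of $X$ forward along $\pi$: set $D_\lambda := \overline{\pi(C_\lambda)} \subset Y$, and argue that $\{D_\lambda\}_{\lambda \in \Lambda}$ is a differential covering of $Y$ with simply connected normalizations. For the cutting-out property I would invoke generic smoothness of $\pi$ in characteristic $0$: on the intersection of the dense open where $\pi$ is smooth with the cut-out locus of $\{C_\lambda\}$ in $X$, pick $x$ with $y := \pi(x)$. Surjectivity of $d\pi_x \colon T_{X,x} \twoheadrightarrow T_{Y,y}$, combined with $d\pi(T_{X,x}(I_{C_\lambda})) \subseteq T_{Y,y}(I_{D_\lambda})$ and $T_{X,x} = \sum_{\lambda \in \Lambda_x} T_{X,x}(I_{C_\lambda})$, yields $T_{Y,y} = \sum_{\lambda \in \Lambda_x} T_{Y,y}(I_{D_\lambda})$, so \Lemma{\ref{cut-lemma}} applies. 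For the simple-connectedness of $\hat D_\lambda$, the universal property of normalization gives a dominant morphism $\hat C_\lambda \twoheadrightarrow \hat D_\lambda$ of normal varieties; since $\hat C_\lambda$ is s.c.\ and inherits a ``good'' differential covering from $X$ (by restriction of those $C_\mu$ transversal to $C_\lambda$), one invokes \Corollary{\ref{dom-cor}} to transfer simple-connectedness to $\hat D_\lambda$. The condition that $p_{D_\lambda}^{!}(M) \in \Con^f(\hat D_\lambda)$ for $M \in \Con^f(Y)$ is automatic because $Y$ is smooth.

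The plan for (2) is dual: pull the differential covering of $Y$ back to $X$. First observe that since $\hat C_\lambda$ is normal its singular locus has codimension $\ge 2$, so by \Remark{\ref{purity}}(2) the hypothesis ``$\hat C_\lambda^{\mathrm{reg}}$ is s.c.'' is equivalent to $\hat C_\lambda$ being s.c. For each $\lambda$ I would take (irreducible components of) $E_\lambda := \pi^{-1}(C_\lambda) \subset X$. In the principal case where $\pi$ is generically finite (in particular a resolution of singularities), the strict transform of $C_\lambda$ has normalization agreeing with $\hat C_\lambda$ away from the exceptional locus, so simple-connectedness is inherited directly; moreover, $\pi$ being a local isomorphism there transports the cutting-out property verbatim. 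When $\dim X > \dim Y$, one augments $\{E_\lambda\}$ with generic smooth fibers of $\pi$ to capture vertical tangent directions in $T_{X,x}$; the required simple-connectedness of the normalizations of the $E_\lambda$ then follows from \Proposition{\ref{sc-proper}} applied to the proper map $\hat E_\lambda \to \hat C_\lambda$ (smooth over the dense open where $\pi$ is smooth) together with s.c.\ of $\hat C_\lambda$.

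The main obstacle in (1) is transferring simple-connectedness across the possibly non-finite dominant morphism $\hat C_\lambda \twoheadrightarrow \hat D_\lambda$, which requires \Corollary{\ref{dom-cor}} and, in particular, the delicate matter of equipping $\hat C_\lambda$ with a differential covering compatible with the projection to $\hat D_\lambda$. In (2) the subtle point is the vertical-direction contribution: when $\dim X > \dim Y$ the fibers of $\pi$ must enter the covering, and their simple-connectedness is not a free consequence of the hypotheses — one leans either on $\pi$ being (generically) birational, in which case the vertical contribution is trivial, or on \Proposition{\ref{sc-proper}} combined with s.c.\ input on fibers, which is exactly what properness of $\pi$ and smoothness of $X$ are meant to produce via generic smoothness.
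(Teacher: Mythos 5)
Your approach is genuinely different from the paper's, and the difference matters. The paper does \emph{not} form images $D_\lambda = \overline{\pi(C_\lambda)}$ in (1) nor preimages $E_\lambda = \pi^{-1}(C_\lambda)$ in (2). For (1), it simply declares the differential covering of $Y$ to consist of the \emph{same} normal varieties $\hat C_\lambda$ together with the composed maps $\bar p_\lambda = \pi\circ p_\lambda \colon \hat C_\lambda \to Y$; the cut-out condition on a dense subset of $Y$ is supplied by generic smoothness, and the requirement that $\bar p_\lambda^*(M)$ be a connection for $M\in \Con^f(Y)$ is automatic because $Y$ is smooth (so $\pi^*(M)\in\Con^f(X)$ and then $\bar p_\lambda^*(M) = p_\lambda^*(\pi^*(M))$). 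The simple-connectedness input ``$\hat C_\lambda$ is s.c.'' is given, so nothing further needs to be proven. For (2), the paper lifts each $\bar p_\lambda\colon \hat C_\lambda \to Y$ to a map $p_\lambda\colon \hat C^1_\lambda \to X$ via the valuative criterion (properness of $\pi$, normality of $\hat C_\lambda$ give the lift over a complement of codimension $\geq 2$), so again the same $\hat C_\lambda$'s serve, and no new normalization needs to be analyzed.

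Your route in (1) has a genuine gap. To transfer simple-connectedness from $\hat C_\lambda$ to the normalization $\hat D_\lambda$ of the image, you invoke \Corollary{\ref{dom-cor}}, but that corollary requires the \emph{source} to be d.s., not merely s.c., and also requires the codimension-$\ge 2$ condition on the preimage of the singular locus. Your parenthetical about $\hat C_\lambda$ ``inheriting a good differential covering from $X$ by restriction of the transversal $C_\mu$'' is precisely the missing argument: restricting the $C_\mu$ to $\hat C_\lambda$ does not obviously yield a family with s.c.\ normalizations (those restrictions are new varieties whose geometry you have not controlled), nor is the codimension hypothesis checked. The paper's composition trick avoids this altogether by never introducing the images $D_\lambda$. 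Your route in (2) is similarly more elaborate: you must normalize $\pi^{-1}(C_\lambda)$, handle its possible singularities, invoke \Proposition{\ref{sc-proper}} (which requires smooth proper connected fibres that you have not established), and supplement with generic fibres of $\pi$ for the vertical directions -- all of which the paper sidesteps by lifting the \emph{maps} rather than pulling back the \emph{subvarieties}. That said, the paper's own proof of (2) is terse about whether the lifted family also captures the vertical tangent directions when $\dim X > \dim Y$; in your favor, you at least identified this as a live issue, though your proposed fix (fibres plus \Proposition{\ref{sc-proper}}) is not fully carried through either.
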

Often the coverings $p_\lambda : \hat C_\lambda \to Y$ are such that
$\hat C_\lambda$ is smooth (e.g. $C_\lambda$ is a curve), so that the
auxilliary condition in (2) is satisfied.

\begin{proof}
(1):  By generic smoothness ($\Char k =0$), a differential
  covering $ p_\lambda: \hat C _\lambda \to X$ gives one on $Y$,
  $\bar p_\lambda = \pi \circ p_\lambda: \hat C_\lambda \to Y $.
  Since $Y$ is smooth it follows that if $p_\lambda$ is simply connected,
  then so is $\bar p_\lambda$.

  (2): Since $\pi$ is proper and $\hat C_\lambda$ is normal
  $\bar p_\lambda : \hat C_\lambda \to Y$ lifts to maps of schemes
  $p_\lambda : \hat C^1_\lambda \to X$, where $\hat C^1_\lambda$ is a
  subscheme of $ \hat C_\lambda$ such that
  $\codim_{\hat C_\lambda} (\hat C_\lambda \setminus \hat
  C^1_\lambda)\geq 2$. Again by generic smoothness the maps
  $p_\lambda$ cut out a dense subset of $X$. If $M$ is a connection on
  $X$, then since $X$ is smooth, $p^!_\lambda (M)$ is a connection on
  $\hat C^1_\lambda$. Since
  $\hat C^{reg}_\lambda\subset \hat C^1_\lambda$ and
  $\hat C^{reg}_\lambda$ is s.c., it follows that $p^!_\lambda (M)$ is
  trivial.
\end{proof}
We can go further than \Lemma{\ref{lem-ds}}, (1).   
\begin{corollary}\label{dom-cor}
Assume that 
  \begin{displaymath}
   \pi: X\to Y,
  \end{displaymath}
  is a dominant morphism of normal varieties and let $S_Y$ be the
  singular locus of $Y$. If $X$ is d.s. (e.g. $X= \Ab^n_k$)
  and $\codim_X \pi^{-1}(S_Y)\geq 2$, then $Y$ is d.s. and hence simply
  connected.
\end{corollary}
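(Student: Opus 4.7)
The plan is to adapt the proof of Lemma \ref{lem-ds}(1), constructing a differential covering of $Y$ by composing the given one on $X$ with $\pi$, where the hypothesis $\codim_X \pi^{-1}(S_Y)\geq 2$ plays the role that smoothness of $Y$ played in Lemma \ref{lem-ds}(1): it forces the cut-out to take place on the smooth locus $Y_{reg}$. Once $Y$ is shown to be d.s., simple connectivity of $Y$ follows from Theorem \ref{dsc-thm}.

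Concretely, I would fix a differential covering $\{C_\lambda\}_{\lambda\in \Lambda}$ of $X$ whose normalization maps $p_\lambda:\hat C_\lambda\to X$ are simply connected, and set $\bar p_\lambda := \pi\circ p_\lambda:\hat C_\lambda\to Y$ and $D_\lambda := \overline{\pi(C_\lambda)}\subset Y$, restricting attention to the subfamily indexed by $\Lambda_0 = \{\lambda : D_\lambda\not\subset S_Y\}$. To verify the cut-out condition, note that the codimension hypothesis makes $X_0:=\pi^{-1}(Y_{reg})$ open dense in $X$; intersecting with the cut-out locus of $\{C_\lambda\}$ and with the locus where $\pi$ is smooth (generic smoothness, $\Char k=0$) yields a dense open $U\subset X_0$. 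For $x\in U$ we have $\pi(x)\in Y_{reg}$, and the surjective tangent map $d\pi_x$ carries the spanning property $T_xX=\sum_{\lambda\in\Lambda_x}T_xC_\lambda$ to $T_{\pi(x)}Y=\sum d\pi_x(T_xC_\lambda)\subset \sum_{\lambda\in\Lambda_x}T_{\pi(x)}D_\lambda$; since $\pi(x)\in Y_{reg}$ forces $\Lambda_x\subset\Lambda_0$, the family $\{D_\lambda\}_{\lambda\in\Lambda_0}$ cuts out the dense subset $\pi(U)\subset Y_{reg}$.

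Next I would argue that each structure map $\bar p_\lambda:\hat C_\lambda\to Y$ is simply connected in the sense of Definition \ref{dc-def}. That $\hat C_\lambda$ is s.c. comes from the d.s. hypothesis on $X$; to see that the pullback $\bar p_\lambda^!(N)$ of an $N\in\Con^f(Y)$ is again a connection on $\hat C_\lambda$, one uses that $N$ is locally free and equipped with a flat $T_Y$-action, which pulls back to a flat $T_{\hat C_\lambda}$-action on the locally free $\bar p_\lambda^*(N)$ via the natural tangent map $T_{\hat C_\lambda}\to\bar p_\lambda^*(T_Y)$, available because both $\hat C_\lambda$ and $Y$ are normal. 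This is essentially the argument already used in the proof of Lemma \ref{lem-ds}(1).

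The main obstacle is a subtlety in the strict reading of Definition \ref{dc-def}: literally, $Y$ being d.s. requires the normalization $\hat D_\lambda\to Y$ of each $D_\lambda$ to be simply connected, whereas our argument naturally produces the composite $\hat C_\lambda\to\hat D_\lambda\to Y$ from the s.c. source $\hat C_\lambda$. The factor $\hat C_\lambda\to\hat D_\lambda$ is dominant between normal varieties but not in general flat or finite, so descending simple connectivity strictly to $\hat D_\lambda$ requires care; however, this is the same issue that is already implicitly handled in the proof of Lemma \ref{lem-ds}(1), where the composite $\hat C_\lambda\to Y$ is treated as the structure map of the induced differential covering. Applying the same convention gives that $Y$ is d.s., and Theorem \ref{dsc-thm} then yields that $Y$ is simply connected.
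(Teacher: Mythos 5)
You have correctly identified the overall shape of the argument---compose the covering on $X$ with $\pi$ to get a covering on $Y$, verify cut-out by generic smoothness, then apply \Theorem{\ref{dsc-thm}}---and your handling of the ``which normalization'' quibble is a non-issue (the paper's working notion, as in the introduction and in the proof of \Lemma{\ref{lem-ds}}, is a family of structure maps $\hat C_\lambda \to Y$, not literally normalizations of subvarieties of $Y$). The problem lies in how you certify that each structure map $\bar p_\lambda = \pi\circ p_\lambda$ is ``simply connected'' in the sense of \Definition{\ref{dc-def}}.

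You assert the tangent map $T_{\hat C_\lambda}\to \bar p_\lambda^*(T_Y)$ exists ``because both $\hat C_\lambda$ and $Y$ are normal,'' and cite \Lemma{\ref{lem-ds}}(1) as ``essentially the same argument.'' Neither part is right. In \Lemma{\ref{lem-ds}}(1) the target $Y$ is \emph{smooth}, which is exactly what makes the tangent map automatic; \Corollary{\ref{dom-cor}} is stated to cover the case where $Y$ is merely normal, and then the tangent map generally does \emph{not} exist when the image meets $S_Y$. This is the very subtlety flagged in the text preceding \Definition{\ref{dc-def}}: for a map into a singular variety, $T_{\hat C_\lambda \to Y}$ need not equal $\bar p_\lambda^*(T_Y)$, and that is precisely why ``$p_\lambda$ is simply connected'' is defined to \emph{include}, as a separate hypothesis, the requirement that $p_\lambda^*(M)$ be a connection. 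You cannot obtain it from normality; you are reading the definition backwards.

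The fix, and what the paper does, uses the codimension hypothesis in a way your proof never exploits. Because $\codim_X \pi^{-1}(S_Y)\geq 2$ and $X$ is normal, $T_X$ and $\pi^*(M)$ are both $S_2$-sheaves, so the $T_X$-action on $\pi^*(M)$ defined over $\pi^{-1}(Y_{reg})$ extends uniquely; hence $\pi^*(M)$ is a connection \emph{on all of $X$}. Since $X$ is d.s.\ it is s.c.\ by \Theorem{\ref{dsc-thm}}, so $\pi^*(M)\cong\Oc_X^m$. Then $\bar p_\lambda^*(M)=p_\lambda^*(\pi^*(M))\cong\Oc_{\hat C_\lambda}^m$ is trivial---in particular a connection---with no tangent-map discussion needed on $\hat C_\lambda$ at all. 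Your proposal uses the codimension condition only to say $\pi^{-1}(Y_{reg})$ is dense, which is far weaker than what is required; without the intermediate step ``$\pi^*(M)$ is a (trivial) connection on $X$,'' the verification that $\bar p_\lambda$ is simply connected has no support.
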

\Corollary{\ref{dom-cor}} generalizes, by not requiring properness,
results of similar nature in \cites{gurjar:topaffine,gurjar:dominated,
  kumar:conner}.
\begin{proof}
  Since $\codim_X \pi^{-1}(S)\geq 2$ it follows that if $M$ is a
  connection, then $\pi^*(M)$ is a connection on $X$. Since $X$ is
  d.s. it follows that $X$ is s.c. \Th{\ref{dsc-thm}}, so that
  $\pi^*(M)$ is a trivial connection. If
  $p_\lambda : \hat C_\lambda \to X$ is a differential covering such
  that $p_\lambda$ is simply connected, then
  $\pi \circ p_\lambda \to Y$ is a differential covering and
  $(\pi \circ p_\lambda)^*(M) = p^*_\lambda (\pi^*(M))$ is a trivial
  connection on $\hat C_\lambda$. This implies that $Y$ is d.s., and
  hence simply connected.
\end{proof}
\begin{pfof}{\Theorem{\ref{dsc-thm}}}\label{proof of diffsimple}
  Recall that a connection $M$ is trivial if it is trival at the
  generic points of $X$, and $\{C_\lambda\}_{\lambda \in \Lambda }$ is
  a differential covering if and only if it is a differential covering
  of each component of $X$. Therefore it is sufficient to prove the
  assertion when $X$ contains only one generic point. There exists
  then an integer $m$ and a non-empty open subset $U$ of $X_{reg}$
  such that $M_U \cong \Oc_U^m$ as $\Oc_U$-module, and the connection
  is determined by defining a
  map $\nabla = d+ \Gamma: \Oc_U^m \to \Omega_U(\Oc_U^m)$, where $d$
  is the trivial connection on $\Oc_U^m$ and $\Gamma $ is section of
  $\Omega_U(\End_{\Oc_U}(\Oc^m_U))=
  \Omega_{U}\otimes_{\Oc_U}\End_{\Oc_U}(\Oc^m_U)$\footnote{The
    endomorphism $\Gamma$ also satisfies an integrability condition,
    but this will not play any role below, besides the fact that its
    restrictions to the varieties $C_\lambda$ do gives rise to
    integrable connections.}. Then $M$ is trivial if and only if
  $\Gamma$ belongs to the image of the map
\begin{displaymath}
  \dlog:  \Aut_{\Oc_X}(\Oc_X^m)(U) \to  \Omega_X(
  \End_{\Oc_X}(\Oc_X^m))(U),\quad  \phi\mapsto \phi^{-1}  d(\phi),
\end{displaymath}
where $d(\phi) (\partial)= [\partial,\phi]\in \End_{\Oc_U}(\Oc_U^m)$
and $\partial\in T_X(U)$. Here $ \Aut_{\Oc_X}(\Oc_X^m)(U)$ denotes
an $\Oc_U$-linear automorphism of $\Oc_U^m$. Put
\begin{displaymath}
  \Omega_X^\lambda(\End (\Oc^m_X))(U) = \{ \Gamma \in \Omega_X(\End
  (\Oc_X^m))(U) \ \vert \  \Gamma (T_X(I_{\lambda}))(U) \subset I_{\lambda}\End (\Oc_X^m)(U)\}.
\end{displaymath}
The inverse image $p^!_\lambda(M)$ has the connection
\begin{displaymath}
  \nabla_\lambda : \Oc_{\hat C_\lambda}^m \to \Omega_{\hat
    C_\lambda}(\Oc_{\hat C_\lambda}^m) 
\end{displaymath}
where $\nabla_\lambda = d+ \Gamma_\lambda $; the connection matrix
$\Gamma_\lambda$ is given by $\Gamma$ as
$\Gamma_\lambda(\partial_\lambda) (a\otimes m) = a \otimes
\Gamma(\tilde \partial_\lambda)m $, where $a\in \Oc_{\hat C_\lambda}$,
$m\in \Oc_{\hat C_\lambda}^m $,
$\partial_\lambda \in T_{\hat C_\lambda}$, and a lift
$\tilde \partial_\lambda \in T_X(I_\lambda)$ of $\partial_\lambda$
exists since $U$ is smooth. Since $p^!_\lambda(M)$ is trivial there
exists $\phi_\lambda \in \Aut(\Oc_{\hat C_\lambda}^m)$ such that
\begin{displaymath}
  \Gamma_\lambda = \dlog(\phi_\lambda).
\end{displaymath}
There exists $\tilde \phi_\lambda \in \Aut(\Oc_U^m)$ that restricts to
$\phi_\lambda$. It follows that
$\Gamma = \dlog (\phi_\lambda) + \Gamma^\lambda$, where
$\Gamma^\lambda\in \Omega_X^\lambda(\End(\Oc_X^m))(U)$,  hence
  \begin{displaymath}
    \Gamma \in \bigcap_{\lambda \in \Lambda} \left(\dlog \Aut_{\Oc_X}(M)(U) + \Omega^{\lambda}_X(
    \End_{\Oc_X}(M))(U)\right).
\end{displaymath}
For any $\lambda$ in $\Lambda$ there is the exact sequence of sheaves
on $U$
  \begin{align*}
    0\to& \dlog (\Aut_{\Oc_U}(\Oc_X^m)) \to \bigcap_{\lambda\in \Lambda} (\dlog
    \Aut_{\Oc_U}(\Oc_U^m) + \Omega^{C_\lambda}_U(
    \End_{\Oc_U}(\Oc_U^m)) ) \\ &\xrightarrow{\psi_\lambda}\frac {\Omega^{C_\lambda}_X(
                                  \End_{\Oc_U}(\Oc_U^m))} {\dlog \Aut_{\Oc_U}(\Oc_U^m)},
  \end{align*}
where   the image
\begin{displaymath}
  \Imo(\psi_\lambda) = \frac{\bigcap_{\lambda \in \Lambda} \Omega^{C_\lambda}_X(
    \End_{\Oc_U}(\Oc_U^m)) } {\dlog
  \Aut_{\Oc_U}(\Oc_U^m)}.
\end{displaymath}
Since $\{C_\lambda\}_{\lambda \in \Lambda}$ is a differential covering
there exists a dense subset $X_0$ of $X$ such that if $x$ is a point
in $X_0$, then
\begin{displaymath}
  T_{X,x}= \sum_{\lambda \in \Lambda_x} T_{X,x}(I_\lambda).
\end{displaymath}
Thus if $\partial$ is a section of $T_X$ that is defined at a point
$x\in X_0\cap U$, then
$\partial_x = \sum_{\lambda \in \Lambda_x} \delta_{\lambda,x}$, where
$\delta_{\lambda,x} \in T_{X,x}(I_\lambda)$. Applying $\partial_x$ to
$\Imo(\psi_\lambda)_x$, we get
\begin{align*}&\\
&  \frac{\bigcap_{\lambda \in \Lambda_x} <\Omega^{C_\lambda}_{X,x}(
                 \End_{\Oc_U}(\Oc_U^m)),\partial_x> } {<\dlog
                 \Aut_{\Oc_U}(\Oc_U^m)_x, \partial_x>}\subset \frac{\sum_{\lambda \in \Lambda_x} (\bigcap_{\lambda \in \Lambda_x} <\Omega^{C_\lambda}_{X,x}(
                 \End_{\Oc_U}(\Oc_U^m)_x,\delta_{\lambda,x}>) } {<\dlog
                 \Aut_{\Oc_U}(\Oc_U^m)_x, \partial_x>}\\[2ex] &
                                                           \subset \frac{\bigcap_{\lambda \in \Lambda_x}
                                                           I_\lambda \End_{\Oc_U}(\Oc_U^m)_x } {<\dlog
                                                           \Aut_{\Oc_U}(\Oc_U^m)_x, \partial_x>}\subset    \frac{ \mf_x \End_{\Oc_U}(\Oc_U^m)_x} {<\dlog
                                                           \Aut_{\Oc_U}(\Oc_U^m)_x,
                                                                \partial_x>}.\\
  &
\end{align*}
Since $X_0\cap U \subset U$ is dense and $\Omega_X(\End(\Oc_X^m))$ is
coherent and torsion free, we get that 
\begin{displaymath}
  \Gamma \in  \dlog \Aut_{\Oc_U}(\Oc_U^m) + \bigcap_{x\in
    X_0\cap U} \mf_x \Omega_X(\End(\Oc_X^m))= \dlog \Aut_{\Oc_U}(\Oc_U^m).
\end{displaymath}
This completes the proof that $M$ is trivial.
\end{pfof}

\section{The decomposition of $\pi_+(\Oc_X)$}\label{decomp-O}
Perhaps the most central problem is to decompose the monomial
$\Dc_Y$-module $N=\pi_+(\Oc_X)= \Oc_Y \oplus \Tc_\pi$ for a finite
surjective map of smooth varieties $\pi: X\to Y$. Although such a
decomposition is determined already at the generic point of $Y$,
eeping track of the ramification of $\pi$ is important for actually
constructing submodules of $N$. Still, explicit generators of the
simple constituents of $\Tc_\pi$ are hard to get, which is one reason
for studying a certain canonical filtration $\{N_i\}$ of $N$. The idea
is to use a natural stratification $(\{X_{ij}\},\{Y_i\})$ of $\pi$ so
that the submodule $N_i\subset N$ is defined by requiring the
vanishing of local cohomology along the stratum $Y_i$. For this
purpose {\it totally ramified} maps are important, where the residue
field extension at the generic point of the deepest stata of the
ramification locus $B_\pi$ is trivial, since then it turns out that
the vanishing trace module $\Tc_\pi$ has vanishing local cohomology
along the deepest stratum in $\{Y_i\}$ \Th{\ref{vanishing}}. Using
this observation we reach the main result of this section
\Th{\ref{cor-can-filt}}, namely that generators of $N_i$ are
determined by vanishing trace conditions over morphisms $X/Z_{ij}$
that are totally ramified along $X_{ij}$ and fitting in a
factorisations $X\to Z_{ij} \to Y$ so that $Z_{ij}/Y$ is étale over
$Y_j$. The existence of such factorizations may be of independent
interest \Th{\ref{factor-theorem}}.

The Galois situation is of course interesting, so that $Y= X^G$ for a
finite group $G$ and $\pi $ is the invariant map. The problem of
finding generators of the simple constituents appearing in
\Theorem{\ref{galois-direct}} corresponds to a branching problem for
representations of groups, but the goal here is to geometrically
construct the simples without relying on group theory, to eventually
understand also non-Galois maps. This problem was resolved in
\cite{bogvad-kallstrom} when $G$ is the (generalized) symmetric group
acting an a finite-dimensional vector space $V$ and
$X= \Spec \So(V) $, where Young or Specht polynomials form generators
of the simples of $N$, and these polynomials are naturally constructed
from the canonical stratification of $\pi$.
In \Section{\ref{complex-refl-section}} this will be taken a step
further.

\subsection{Total ramification}
  Let $\pi: X\to Y$ be a morphism of smooth varieties, $F$ be a closed
  subset of $Y$  and put $X_F= \pi^{-1}(F)$, so we have the diagram 
 \begin{equation}\label{tot-diagam}
   \xymatrix{
     X_F \ar[r] ^{\tilde i}\ar[d]^{p}  
     & X\ar[d]^\pi\\
     F \ar[r]^i & Y.
   }
  \end{equation}
  The following  base change property for local cohomology is surely
  well-known\footnote{\Theorem{\ref{loc-cohom-dir}} is stated without
    proof in \cite{bjork:analD}*{Th. 2.5.28}.}.
\begin{theorem}\label{loc-cohom-dir}(Independence of base)
  \begin{displaymath}
    \pi_+ \RG_{X_F} = \RG_{F} \pi_+
  \end{displaymath}
as functors on the category of quasi-coherent $\Dc_X$-modules.
\end{theorem}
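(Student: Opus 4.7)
The idea is to apply $\pi_+$ to the distinguished triangle defining $\RG_{X_F}$ and compare it with the triangle defining $\RG_F\pi_+$, thereby reducing the statement to the composition law for $\Dc$-module direct images together with a flat base change across the cartesian square (\ref{tot-diagam}).

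Let $j:Y\setminus F\hookrightarrow Y$ and $j_0:X\setminus X_F\hookrightarrow X$ denote the open complements and $\pi_0:X\setminus X_F\to Y\setminus F$ the induced morphism; by construction the resulting square is cartesian. For any quasi-coherent $\Dc_X$-module $M$ one has the tautological distinguished triangle
\begin{displaymath}
\RG_{X_F}(M)\to M\to (j_0)_+j_0^!(M)\xrightarrow{+1},
\end{displaymath}
and similarly for $\pi_+(M)$ on $Y$ with respect to $F$. First I would apply the exact functor $\pi_+$ (exactness by \Proposition{\ref{flat-module}}) to this triangle; by the uniqueness of cones, the desired identity $\pi_+\RG_{X_F}=\RG_F\pi_+$ will follow once a canonical isomorphism $\pi_+(j_0)_+j_0^!(M)\cong j_+j^!(\pi_+M)$ compatible with the map from $\pi_+M$ is produced.

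Next I would split this canonical isomorphism into two pieces. Using the composition rule for $\Dc$-module direct images together with the equality $\pi\circ j_0=j\circ\pi_0$ one obtains
\begin{displaymath}
\pi_+(j_0)_+=(\pi\circ j_0)_+=(j\circ\pi_0)_+=j_+(\pi_0)_+.
\end{displaymath}
It then remains to establish the base change identity $(\pi_0)_+j_0^!=j^!\pi_+$ on quasi-coherent $\Dc_X$-modules. Since $\pi$ is finite, $\pi_+=\pi_*(\Dc_{Y\leftarrow X}\otimes^L_{\Dc_X}(-))$; since $j$ is an open immersion, flat base change for quasi-coherent sheaves gives $j^*\pi_*=(\pi_0)_*j_0^*$; finally the canonical map $j_0^*\Dc_{Y\leftarrow X}\to\Dc_{(Y\setminus F)\leftarrow(X\setminus X_F)}$ is an isomorphism of bimodules. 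These three facts combine to yield the base change formula.

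The main obstacle will be the bookkeeping needed to check that the identifications arising from the composition law and from base change are compatible with the natural arrows out of $\pi_+M$, so that one really obtains a morphism of distinguished triangles and not merely an abstract isomorphism of cones; this is a matter of tracing adjunction units and counits around the cartesian square. Once this coherence is verified, the proof is complete, since no further hypothesis on $M$ beyond quasi-coherence is used, and since, by \Proposition{\ref{flat-module}}, the right $\Dc_X$-module $\Dc_{Y\leftarrow X}$ is flat, so the derived tensor products silently reduce to ordinary tensor products throughout the argument.
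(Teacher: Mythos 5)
Your plan is a genuinely different route from the paper's. You reduce the theorem to the base change identity $(\pi_0)_+j_0^!\cong j^!\pi_+$ across the cartesian square, plus a coherence check that the resulting isomorphism is compatible with the maps out of $\pi_+M$, so that you obtain a morphism of distinguished triangles rather than merely an abstract identification of the third vertices. That route is workable, but as you yourself flag, the coherence step is real work: one must trace units and counits to verify that the composite isomorphism $\pi_+(j_0)_+j_0^!\cong j_+(\pi_0)_+j_0^!\cong j_+j^!\pi_+$ fits under the two maps $\pi_+M\to\pi_+(j_0)_+j_0^!M$ and $\pi_+M\to j_+j^!\pi_+M$, and you have not actually carried this out.

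The paper sidesteps both the base change lemma and the coherence check by two cheap support/vanishing observations. First, $\supp\pi_+\RG_{X_F}\subset F$, so restricting to $Y\setminus F$ kills it, giving $\RG_F\pi_+\RG_{X_F}=\pi_+\RG_{X_F}$ directly. Second, $\RG_Fj_+=0$ because $j_+$ lands in objects with no local cohomology along $F$. Combining these with the one composition-law fact you also use, namely $\pi_+\tilde j_+=j_+\pi_{0,+}$, the paper applies $\RG_F\pi_+$ to the defining triangle $\RG_{X_F}\to\id\to\tilde j_+\tilde j^!\to{+1}$ and shows the third vertex $\RG_F\pi_+\tilde j_+\tilde j^!=\RG_Fj_+\pi_{0,+}\tilde j^!$ vanishes, forcing the first arrow $\pi_+\RG_{X_F}\to\RG_F\pi_+$ to be an isomorphism. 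This is shorter and avoids the compatibility bookkeeping altogether, since it never needs to compare two triangles; it only needs to recognize one cone as zero. Your approach would establish a more refined statement (the full base change isomorphism over the open locus), so it buys more, but it pays for it in the unwritten coherence argument; the paper's buys exactly the theorem at the minimum cost.
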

\begin{proof} Let $j: Y\setminus F \to X$ and
  $\tilde j : X\setminus X_F \to X$ be the open inclusions. Since
  $\supp \pi_+ \RG_{X_F} \subset F $, so that
  $j^! \pi_+ \RG_{X_F} =0 $, applying the distinguished triangle
  $\RG_{F} \to \id \to j_+ j^! \xrightarrow{+1} $ to the functor  $\pi_+
  \RG_{X_F}$ gives
  \begin{displaymath} \RG_{F} \pi_+ \RG_{X_F} = \pi_+
 \RG_{X_F}.
\end{displaymath} This implies, after applying $\RG_{F} \pi_+ $
to the distinguished triangle
\begin{displaymath}
  \RG_{X_F} \to \id \to \tilde j_+ \tilde j^! \xrightarrow{+1}
\end{displaymath} that one gets the triangle
\begin{displaymath}
  \pi_+ \RG_{X_F} \to \RG_{F} \pi_+ \to \RG_{F} \pi_+
  \tilde j_+ \tilde j^! \xrightarrow{+1}
\end{displaymath} Finally, for the last vertex we have, since $\pi_+
\tilde j_+ = j_+ \pi_{0,+} $,
\begin{displaymath}
  \RG_{F} \pi_+ \tilde j_+ \tilde j^! = \RG_{F} j_+
  \pi_{0,+} \tilde j^! =0
\end{displaymath} where in the last step we use $ \RG_{F} j_+
=0$. This completes the proof.
\end{proof}
It is basic to our characterization of the canonical filtration of
$\pi_+(\Oc_X)$  - to be studied below- that the kernel of the trace
morphism behaves well with respect to base change. Let $M$ be a
coherent $\Dc_Y$-module and define $\Tc_{\pi}(M)$ by the distinguished
triangle
\begin{displaymath}\tag{*}
 \Tc_\pi(M) \to \pi_+\pi^!(M)\xrightarrow{\Tr } M \xrightarrow{+1}.
\end{displaymath}
Assuming $p: X_F \to F$ is a morphism of smooth varieties we  also
have the  triangle
\begin{displaymath}
  \Tc_p(i^!(M)) \to p_+p^!i^!(M) \to i^!(M) \xrightarrow{+1}.
\end{displaymath}

\begin{definition}\label{def-tot-ram}
Let $x$ be a point of $x$ and put $y=\pi(x)$, and let $F$ be a closed
subset of $Y$. Then $\pi$ is totally ramified:
\begin{enumerate}
\item at $x$ if $k_{Y,\pi(x)}= k_{X,x}$.
\item at $y$ if $\pi^{-1}(y)=\{x\}$ and $\pi $ is
  totally ramified at $x$.
\item along a closed set $F$ in $Y$ if in the diagram
  (\ref{tot-diagam}) $p$ is an isomorphism.
\end{enumerate}
Say also that $\pi$ is totally ramified if $D_\pi\neq \emptyset$ and
it is totally ramified along each deepest stratum of the discriminant
locus $D_\pi$.
\end{definition}
\begin{remark}
  If the closure $x^-$ of $x$ is normal, by \Lemma{\ref{ram-lemma},(2)} below,
  the following are equivalent:
  \begin{enumerate}
  \item $\pi$ is totally ramified at $x$.
\item  $\pi$ is totally ramified along $x^-$.
  \end{enumerate}

\end{remark}
Clearly, if $\pi$ is totally ramified along $F$, then it is totally
ramified along each closed subset of $F$; in particular $\pi$ is
totally ramified along the deepest (smooth) strata of $F$.
\begin{theorem}\label{vanishing} Keep the notation in
  \Theorem{\ref{loc-cohom-dir}} and assume that $F$ and $X_F$ are
  smooth.
  \begin{enumerate}
  \item    $i^!(\Tc_\pi(M))= \Tc_p(i^!(M))$.
  \item If $\pi$ is totally ramified along $F$, then
    $\RG_F(\Tc_\pi(M))=0$.
  \end{enumerate}

\end{theorem}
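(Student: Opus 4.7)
\begin{pf}
The plan is to reduce both statements to properties of the trace morphism combined with the base change $i^!\pi_+ = p_+\tilde i^!$ for the square \eqref{tot-diagam}, which is available to us via Theorem~\ref{loc-cohom-dir}.

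First I would derive the base change identity. By Kashiwara's equivalence $R\Gamma_{X_F} = \tilde i_+\tilde i^!$ and $R\Gamma_F = i_+ i^!$, so Theorem~\ref{loc-cohom-dir} gives
\begin{displaymath}
  i_+ p_+ \tilde i^! = (\pi\circ \tilde i)_+ \tilde i^! = \pi_+ \tilde i_+\tilde i^! = \pi_+ R\Gamma_{X_F} = R\Gamma_F \pi_+ = i_+ i^! \pi_+.
\end{displaymath}
Since $i_+$ is fully faithful, this yields $p_+\tilde i^! = i^!\pi_+$ as functors on coherent $\Dc_X$-modules. Combined with $\tilde i^!\pi^! = (\pi\circ \tilde i)^! = (i\circ p)^! = p^! i^!$, applying $i^!$ to the defining triangle \thetag{*} of $\Tc_\pi(M)$ produces a triangle
\begin{displaymath}
   i^!\Tc_\pi(M) \to p_+ p^! i^!(M) \xrightarrow{\alpha} i^!(M)\xrightarrow{+1},
\end{displaymath}
and comparison with the defining triangle of $\Tc_p(i^!(M))$ reduces statement (1) to identifying $\alpha$ with the trace $\Tr_p$. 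The main obstacle is precisely this compatibility, but it is a formal consequence of the adjunction definition of $\Tr$: the trace $\Tr_\pi : \pi_+\pi^!\to \id$ is the counit of the adjunction $(\pi_+,\pi^!)$ (Theorem~\ref{adj-triple-th}), and the base change $i^!\pi_+ \cong p_+\tilde i^!$ is itself obtained from these adjunctions, hence the counit for $(\pi_+,\pi^!)$ transports to the counit for $(p_+,p^!)$, i.e.\ to $\Tr_p$.

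For (2), the hypothesis that $\pi$ is totally ramified along $F$ means $p: X_F\to F$ is an isomorphism. Then $p_+ p^! = \id$ and $\Tr_p = \id$, so the defining triangle of $\Tc_p(i^!(M))$ collapses, giving $\Tc_p(i^!(M))=0$. By part (1) this means $i^!\Tc_\pi(M)=0$. Applying Kashiwara once more,
\begin{displaymath}
  R\Gamma_F(\Tc_\pi(M)) = i_+ i^!(\Tc_\pi(M)) = i_+(0) = 0,
\end{displaymath}
which is the desired vanishing. The only non-routine point in the whole argument is the compatibility of trace with base change, and once that is settled both (1) and (2) are almost immediate.
\end{pf}
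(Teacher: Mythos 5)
Your proof follows essentially the same route as the paper's: both use Theorem~\ref{loc-cohom-dir} together with Kashiwara's equivalence to obtain $i^!\pi_+\pi^!(M) \cong p_+p^!i^!(M)$, apply $i^!$ to the defining triangle of $\Tc_\pi(M)$, and then compare with the defining triangle of $\Tc_p(i^!(M))$. You are a bit more explicit about why the resulting morphism $p_+p^!i^!(M) \to i^!(M)$ agrees with $\Tr_p$ (a point the paper simply asserts), but otherwise the structure and key ideas coincide.
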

\begin{proof}
By Kashiwara's theorem
$i_+i^! (M)= \RG_F(M)$ and $\tilde i_+ \tilde i^! (\pi^!(M)) =
\RG_{X_F} \pi^!(M )$. Therefore by \Theorem{\ref{loc-cohom-dir}}
\begin{align*}
  i_+ i^! \pi_+ \pi^!(M)&= \RG_F \pi_+\pi^!(M)= \pi_+
  \RG_{X_F}\pi^!(M)\\ &=  \pi_+  \tilde i_+ \tilde i^! \pi^!(M) =  i_+
  p_+p^!  i^!(M).
\end{align*}
Therefore \thetag{*} gives the distinguished triangle
\begin{displaymath}
  i_+i^!(\Tc_\pi(M)) \to i_+ p_+p^! i^!(M) \to i_+ i^! (M) \xrightarrow{+1}.
\end{displaymath}
By Kashiwara's theorem we can erase  $i_+$ 
\begin{displaymath}
i^!(\Tc_\pi(M)) \to  p_+p^! i^!(M) \xrightarrow{\psi}  i^! (M) \xrightarrow{+1},
\end{displaymath}
where now $\psi$ is the trace morphism $p_+p^! i^!(M) \to i^! (M)$. If
$p$ is an isomorphism it follows that $i^!(\Tc_\pi(M)) =0$, and hence
$\RG_F (\Tc_\pi(M))=0$.
  \end{proof}
  \begin{example}
    Let $A\to B$ be a finite totally ramified morphism of smooth
    $k$-algebras, i.e. $k_A = k_B$, and put
    $\Tc = \Ker (\Tr : \pi_+(B)\to A)$. By \Theorem{\ref{vanishing}}
    it follows that $\RG_{\mf_A}(\Tc)=0$.
  \end{example}
  One can consider the maximal connection $N_c$ in any holonomic
  $\Dc_X$-module $N$ and define the ``coherent'' direct image functor
\begin{displaymath}
  \pi_+^c : \coh(\Dc_X)\to \Con (Y), \quad N\mapsto  \pi_+^c(N)=\pi_+(N)_c.
\end{displaymath}
When $M$ is a connection and $\pi$ is étale, then
$ \pi^c_+\pi^!(M)
 = \pi_+\pi^!(M)$ but in general it can be
difficult to determine $\pi^c_+\pi^!(M)$ when $\pi$ is  ramified. In the totally ramified case, on the other hand, we
have:
  \begin{theorem}\label{coh-connection} Let $\pi$ be a finite totally
    ramified morphism of smooth varieties.
    \begin{enumerate}
    \item If $M$ is a holonomic module such that $\pi^!(M)$ is
      semisimple (see \Theorem{\ref{semisimple-inv}}), then
      $\pi^c_+\pi^!(M) = M_c$.
      \item  The functor
\begin{displaymath}
  \pi^!: \Con(Y)\to \Con(X)
\end{displaymath}
is fully faithful so that if $M$ is connection on $Y$, then
$ \pi_+^c \circ \pi^!(M) \cong M$.
    \end{enumerate}
  \end{theorem}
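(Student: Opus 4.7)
The plan is to prove both parts simultaneously by establishing the following unified claim: in the direct sum decomposition $\pi_+\pi^!(M) = M \oplus \Tc_\pi(\pi^!M)$, the vanishing trace summand $\Tc_\pi(\pi^!M) = \ker(\Tr: \pi_+\pi^!M \to M)$ contains no non-zero $\Oc_Y$-coherent $\Dc_Y$-submodule, i.e.\ $(\Tc_\pi(\pi^!M))_c = 0$. The decomposition itself follows from the surjectivity of the trace: its composition with the unit $M \to \pi_+\pi^!M$ equals $\deg(\pi)\cdot\id_M$ by Proposition \ref{split-conn}, which is invertible in characteristic zero. Granting the claim, $(\pi_+\pi^!M)_c = M_c \oplus 0 = M_c$, which is the stated identity in both (1) and (2); the full faithfulness in (2) is then a formal adjunction computation using Theorem \ref{adj-triple-th}, since for connections $M_1,M_2$ on $Y$ one has $\Hom_X(\pi^!M_1,\pi^!M_2) = \Hom_Y(M_1,\pi_+\pi^!M_2) = \Hom_Y(M_1,(\pi_+\pi^!M_2)_c) = \Hom_Y(M_1,M_2)$, where the middle equality uses that $M_1$ is $\Oc_Y$-coherent so any map from $M_1$ lands in the maximal coherent subobject.

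The key input for the vanishing $(\Tc_\pi(\pi^!M))_c = 0$ is Theorem \ref{vanishing}. By the assumption that $\pi$ is totally ramified, choose a deepest stratum $F$ of the discriminant $D_\pi$; such $F$ is smooth and non-empty, the base change $p:X_F\to F$ is an isomorphism, and Theorem \ref{vanishing}(1) yields $i^!(\Tc_\pi(\pi^!M)) = \Tc_p(i^!M) = 0$. In part (1), the semisimplicity of $\pi^!(M)$ propagates to semisimplicity of $\pi_+\pi^!(M)$ via Theorem \ref{decomposition-thm}, and hence of its summand $\Tc_\pi(\pi^!M)$; any non-zero simple connection submodule $N$ would then be a direct summand, so $R\Gamma_F(N)$ would be a direct summand of the vanishing $R\Gamma_F(\Tc_\pi(\pi^!M)) = 0$, contradicting the non-vanishing of $H^c_F(N)$ for a non-trivial locally free $\Oc_Y$-module on smooth $Y$ along a smooth $F$ of positive codimension $c$.

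In part (2) semisimplicity need not be available, so I reduce first to the simple case: for $M$ a simple connection, $\pi^!(M)$ is semisimple by Theorem \ref{semisimple-inv}(2), allowing part (1) to conclude. For a general connection $M$, I proceed by induction on the length of $M$, using a composition series $0 = M_0 \subset M_1 \subset \cdots \subset M_n = M$ of sub-connections with simple quotients together with the exactness of $\pi_+\pi^!$. The inductive hypothesis gives $(\pi_+\pi^!M_i)_c = M_i$ and hence the intersection identity $(\pi_+\pi^!M)_c \cap \pi_+\pi^!(M_i) = M_i$, which forces $(\pi_+\pi^!M)_c = M$ layer by layer.

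The main obstacle I expect is precisely this inductive step in (2): the functor $(\cdot)_c$ of taking the maximal connection is only left-exact, so a priori an extension class could produce a strict enlargement of $M$ inside $(\pi_+\pi^!M)_c$. Ruling this out requires combining the simple-case vanishing with a careful snake-lemma chase through $0\to\pi_+\pi^!M_1\to\pi_+\pi^!M\to\pi_+\pi^!(M/M_1)\to 0$; an alternative route, which I would fall back on if the chase becomes unwieldy, is to apply $i^!$ directly to the short exact sequence $0\to(\Tc_\pi(\pi^!M))_c\to\Tc_\pi(\pi^!M)\to Q\to 0$ and use $i^!(\Tc_\pi(\pi^!M))=0$ together with the distinguished triangle to place $i^!Q$ in a cohomological degree inaccessible to a $\Dc_Y$-module quotient unless $(\Tc_\pi(\pi^!M))_c = 0$.
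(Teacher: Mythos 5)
Your proposal follows essentially the same route as the paper: Theorem~\ref{vanishing} together with Grothendieck's non-vanishing theorem handles the semisimple/simple case, and a finite-length filtration reduces part~(2) to simples, with full faithfulness then following from the adjoint triple of Theorem~\ref{adj-triple-th}. The hesitation you flag about the inductive step is legitimate (the paper itself asserts exactness of $\pi_+^c\pi^!$ without elaboration) but closes cleanly and without your fallback: in the split exact sequence $0\to\Tc_\pi\pi^!M_1\to\Tc_\pi\pi^!M\to\Tc_\pi\pi^!M_2\to 0$, the image of the maximal connection submodule $(\Tc_\pi\pi^!M)_c$ in $\Tc_\pi\pi^!M_2$ is an $\Oc_Y$-coherent $\Dc_Y$-module on smooth $Y$, hence a connection, hence lies in $(\Tc_\pi\pi^!M_2)_c=0$; so $(\Tc_\pi\pi^!M)_c$ sits in $\Tc_\pi\pi^!M_1$ and the same argument forces it to vanish.
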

  The functor $\pi_+^c$ is certainly not faithful. For example, if the
  extension of fraction fields $k(X)/k(Y)$ is Galois, $N$ is a simple
  connection with trivial inertia group, and $\pi$ is ramified, then
  $\pi_+(N)$ is a simple $\Dc_Y$-module \Prop{\ref{simpledirect}} such
  that $\pi^c_+(N)=0$ (see also \Proposition{\ref{etale-finite}}).
  Concretely, consider the cyclic extension $A= \Cb[y]\to B=\Cb[x]$,
  $y=x^n$, and the connection $E=\Dc_B e^x$ (see (\ref{exp-mod-sec})). The
  Galois group is $C_n=<\zeta>$, where $\zeta$ is a primitive root of
  unity, and we have
  $\partial(x)- \partial(\zeta^k x)= (1-\zeta^k)\partial (x)\notin
  \dlog (B)$,
  $k=1, \ldots, n-1$, so that the inertia group
  $G_E = \{e\}\subset C_n$. Then $\pi_+(E)$ is a simple $\Dc_A$-module
  that is not finite over $A$.

  Moreover, $\pi^!$ is not fully faithful when extended to a functor
  between the categories of holonomic $\Dc$-modules.

\begin{proof} 
  (1): By \Theorem{\ref{vanishing}}, $\RG_F(\Tc_\pi(M))=0$, hence
  since $\pi_+\pi^!(M)$ is semisimple \Th{\ref{decomposition-thm}},
  hence $\Tc_\pi(M)$ is semisimple, it follows that
  $\RG_F((\Tc_\pi(M))_c)=0$, and therefore by Grothendieck's
  non-vanishing theorem, $\Tc_\pi(M)_c=0$, so by \thetag{*},
  $\pi^c_+\pi^!(M)=M_c$. (2): First recall that $\pi_+$ is exact
  \Prop{\ref{flat-module}}, hence $\pi_+^c$ is exact; moreover $\pi^!$
  is exact on $\Con(Y)$; therefore $\pi_+^c\pi^!$ is exact. Hence,
  connections being of finite length as $\Dc$-modules, one can assume
  that $N$ and $M$ are simple connections. Then
  $\pi_+^c\pi^!(M)\cong M$ follows from (1) (and
  \Theorem{\ref{semisimple-inv}}).
\end{proof}
\subsection{Canonical stratifications and filtrations}
A {\it stratification} of a smooth variety $X$ is a finite collection
$\{X_i\}_{i\in I }$ of mutually disjoint locally closed smooth
subvarieties $X_i$ such that $X= \cup_ {i \in I } X_i$, and the
closure $\bar X_i = \cup_{j\leq i} X_j$ is a union of strata. Here we
order the index set $I$ by reverse domination, so that $j \geq i$ when
$X_j$ belongs to the closure $ \bar X_i$ of $X_i$; this defines a
partial order $(>, I )$ such that the canonical filtration that will
be discussed below is increasing. Since $X$ is irreducible there
exists a unique minimal index $i_s$ such that $X_{i_s}$ is dense in
$X$, and putting $X_{i_f}= \emptyset$ we have $i_s < i < i_f$, for all
$i\in I$. We say that $i_s$ and $i_f$ indexes the initial and final
stratas of $\{X_i\}_{\in I}$.

A stratification $(\{X_{i}\}_{i\in I_j}, \{Y_j\}_{j\in J})$ of a morphism
$\pi : X \to Y$ of varieties is a stratification $\{Y_j\}_{j\in J}$ of
$Y$ and a stratification $\{X_{i}\}_{j\in J, i\in I_j }$ of $X$ such
that
\begin{displaymath}
  \pi^{-1}(Y_j) = \bigcup_{i \in I_j} X_{i}.
\end{displaymath}
There exists a canonical stratification of a finite morphism
$\pi: X\to Y $ of schemes of finite type such that the restriction to
the stratas of $X$ are étale; it will be the coarsest such
stratification in a natural sense. The construction is included since
it has not been spelled out in the literature in sufficient detail for
our needs.
\begin{remark}
  In singularity theory one often requires finer stratifications
  satisfying contact conditions, such as the Whitney stratification.
  We will however work only with restrictions to étale strata in an
  {\it algebraically defined} canonical stratification in order to
  decompose $\pi_+(\Oc_X)$.
\end{remark}
We will stepwise add new locally closed smooth strata to $X$ and $Y$
so that strata added in one step are not dominated by any of the other
new ones, but instead are dominated by some stratum in the previous
step.

First make a base change $ X'\to Y^{r}$ of $\pi$ with respect to the
reduced subscheme $ Y^{r}\to Y$, let $ X^{r} $ be the reduced scheme
of $ X'$, and $\pi^{(r)}: X^{r}\to X' \to Y^{r}$ be the composed map.
Since $\pi^{(r)}$ is a finite map of reduced schemes of finite type,
defined over a field of characteristic $0$, there exists a maximal
open subset $Y^{(0)} \to Y^{r}$ such that the base change
$X^{(0)} \to Y^{(0)}$ is étale. Let $\{Y_{j}\}_{j\in J_0 }$ be the
connected components of $Y^{(0)}$, so that each base change
$X^{(0)}_{j}\to Y_{j} $ of $X^{r} \to Y^{r} $ is étale. Let
$\{X_{i}\}_{i\in I_{j} }$ be the connected components of
$X^{(0)}_{j}$, so that $\cup_{i\in I_j} X_{i}= X^{(0)}_{j}$,
$j\in J_0$.

Put $\tilde Y_{1}= Y^r\setminus Y^{(0)} $ and let
$\pi_1 : \tilde X_1 \to \tilde Y_1$ be the base change of $\pi^{(r)}$
over $\tilde Y_1 \to Y^{r}$. We can now replace $\pi$ by $\pi_1$ and
proceed inductively. Thus having defined $\tilde X_{l-1}$ and
$\tilde Y_{l-1}$, and $\pi_{l-1} : \tilde X_{l-1} \to \tilde Y_{l-1} $, we
get $\tilde Y_l = \tilde Y_{l-1}^{r} \setminus Y_{l-1}^{(0)} $,
$\pi_l : \tilde X_l \to \tilde Y_l$, which is the base change of
$\pi_{l-1}^{(r)}$ over $\tilde Y_{l}\to \tilde (Y^{l-1})^{r}$. We
get for $l =1,2,...$
\begin{align*}
  Y^{(l)}&= \tilde  Y_{l}^{(0)},\quad X^{(l)}\to Y^{(l)}, \quad   Y^{(l)} = \bigcup_{ j\in J_l}
  Y_{j}, \quad X^{(l)}_j = \bigcup_{i\in I_j}X_{i}\to Y_j
\end{align*}
where $Y_{j}$ are the connected components of $Y^{(l)}$, the base
change $X^{(l)}\to Y^{(l)}$ over $Y^{(l)}\to Y^r$ is étale and $X_i$
are the connected components of $X^{(l)}_j$. By construction, the
restriction $X_{i}\to Y_{j}$, $j\in J_l$, $i \in I_j$, is an étale map
of connected smooth varieties. Since schemes of finite type only have
finitely many connected components and since in each step the
dimensions of the new strata are of strictly lower dimension, we end
up in finite index sets $J= \cup_{l=0,1,...}J_l $ and
$I=\cup_{l=0,1,...} \cup_{j\in J_l}I_j $ such that,
$\{X_i, Y_j, i\in I_j, j\in J\}$ forms a stratification of $\pi$, where
the index sets $ (I= \cup I_j, <)$ and $(J, <)$ are partially ordered
by reversed specialization of the corresponding strata. We summarize
the above construction as follows:
 
\begin{proposition}\label{smoothstrat}
  There exists a stratification
  $(\{X_{i}\}_{i\in I_j}, \{Y_j\}_{j\in J})$ of $\pi$ such that each
  restriction $X_{i}\to Y_j $ is étale and such that if
  $(\{X_{k}\}_{k\in I'}, \{Y_l\}_{l\in J'})$ is another
  stratification of $\pi$ such that each restriction
  $X'_{k}\to Y'_l $ is étale, then $(\{X'_{k}\}, \{Y'_{l}\})$ is a
  refinement of $(\{X_{ij}, Y_k\})$, i.e.  each stratum $X_{i}$ is a union
  of strata $X'_{k}$.
\end{proposition}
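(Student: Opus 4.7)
The plan is to verify that the inductive construction preceding the statement genuinely produces a stratification of $\pi$ with the required étaleness property, and then to prove the minimality by comparing any other étale stratification step by step against the canonical one.

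For existence, I would first check that the inductive procedure terminates: at each step $l$, the closed complement $\tilde Y_l = \tilde Y_{l-1}^r \setminus Y^{(l-1)}$ has strictly smaller dimension than $\tilde Y_{l-1}$, because in characteristic $0$ the maximal open set $Y^{(l-1)}$ over which the finite map $\pi_{l-1}^{(r)}: \tilde X_{l-1}^{r} \to \tilde Y_{l-1}^r$ is étale contains every generic point of $\tilde Y_{l-1}^r$ (by generic smoothness of the reduced finite map, combined with the fact that a finite generically smooth map of reduced varieties of the same dimension is étale at the generic point). Smoothness of each $Y_j$ is immediate as it is open in a smooth locus of the reduced scheme $\tilde Y_{l-1}^r$, and smoothness of $X_i$ follows because étaleness propagates smoothness from $Y_j$. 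Local closedness and the closure-is-a-union-of-strata condition follow because at each step only a closed subset is removed and then its reduced étale locus is added. Finally the restrictions $X_i \to Y_j$ are étale by construction.

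For coarseness, I would argue by induction on the depth $l \in \{0,1,2,\dots\}$ that every $Y_j$ with $j \in J_l$ is a union of strata $Y'_{\ell}$ from a given étale stratification $(\{X'_k\},\{Y'_\ell\})$. At depth $0$, let $Y'^{(0)}$ be the union of those $Y'_\ell$ that are open in $Y^r$ (equivalently, dense in an irreducible component of $Y^r$). Since each $X'_k \to Y'_\ell$ is étale and étaleness is local on the target, the base change $\pi^{(r)}$ restricted to $Y'^{(0)}$ is étale, hence $Y'^{(0)} \subseteq Y^{(0)}$ by maximality of $Y^{(0)}$; conversely every generic point of $Y^{(0)}$ lies in some $Y'_\ell$ of maximal dimension, so $Y^{(0)} = Y'^{(0)}$. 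Each smooth connected $Y'_\ell \subseteq Y^{(0)} = \sqcup_{j\in J_0} Y_j$ is then contained in a unique component $Y_j$, so each $Y_j$ is a union of such $Y'_\ell$. The analogous statement for the $X_{i}$ follows by taking connected components of $X^{(0)}_j$ and noting that $X'_k$, being étale over $Y'_\ell \subseteq Y_j$ and connected, embeds into a unique connected component $X_{i}$. Then restrict $(\{X'_k\},\{Y'_\ell\})$ to $\tilde Y_1 = Y^r \setminus Y^{(0)}$ to obtain an étale stratification of $\pi_1$ and iterate.

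The main obstacle is the inductive comparison in the coarseness step: I must be careful that the strata $Y'_\ell$ truly form a stratification of $Y$, so that after removing the depth-$0$ strata I actually obtain an étale stratification of the next layer $\tilde Y_1$ (and not, for example, strata whose closures protrude into $Y^{(0)}$ in a way that prevents them from restricting cleanly). This is guaranteed by the axiom $\bar Y'_\ell = \bigcup_{\ell' \leq \ell} Y'_{\ell'}$ for the comparison stratification, which ensures that the strata in $\tilde Y_1$ are exactly those $Y'_\ell$ not in $Y'^{(0)}$, and their closures in $\tilde Y_1$ are still unions of strata; checking this carefully is the key bookkeeping point.
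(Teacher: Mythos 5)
Your existence argument is sound and closely tracks the paper's own construction, which is essentially the entire content of the paper's ``proof'' (the paper states Proposition \ref{smoothstrat} immediately after the construction, as a summary, without a separate argument for coarseness). The termination and smoothness points are the right ones. The problem is in the coarseness step, where there is a genuine gap.

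You assert that $Y^{(0)} = Y'^{(0)}$, justified only by the observation that every generic point of $Y^{(0)}$ lies in some open $Y'_\ell$. This shows the two sets share generic points, not that they are equal, and in fact the equality is false in general. Take $\pi = \mathrm{id}: \mathbb{A}^1 \to \mathbb{A}^1$ and the comparison stratification $\{Y'_0 = \mathbb{A}^1 \setminus \{0\},\ Y'_1 = \{0\}\}$; then $Y^{(0)} = \mathbb{A}^1$ but $Y'^{(0)} = \mathbb{A}^1\setminus\{0\}$. The false equality propagates: your conclusion ``each $Y_j$ is a union of such $Y'_\ell$'' only accounts for the \emph{open} primed strata, silently ignoring lower-dimensional primed strata that sit inside $Y^{(0)}$; and your inductive step restricts the primed stratification to $\tilde Y_1 = Y^r\setminus Y^{(0)}$, which is only a valid restriction if you already know that every primed stratum is wholly inside or wholly outside $Y^{(0)}$.

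That dichotomy is the actual content that needs proving, and it is not automatic. The correct argument (for the smooth-variety case the paper ultimately uses) is a flatness/degree count: $\pi$ is finite flat of degree $d$ by miracle flatness (finite morphism between regular schemes of equal dimension), so $\pi$ is étale at $q$ iff $|\pi^{-1}(q)| = d$ (characteristic $0$ kills separability issues). If a primed stratum $Y'_\ell$ meets $Y^{(0)}$ at a point $p$, then the étale restriction $\bigsqcup_k X'_k \to Y'_\ell$ is finite flat, its degree is computed at $p$ to be $d$, hence every fiber of $\pi$ over $Y'_\ell$ has exactly $d$ points, hence $Y'_\ell\subseteq Y^{(0)}$. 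With this lemma in hand, connectedness of $Y'_\ell$ gives $Y'_\ell \subseteq Y_j$, and the remaining primed strata (those missing $Y^{(0)}$) do form a stratification of $\tilde Y_1$, so the induction goes through. Note that this fix leans on smoothness and characteristic $0$, which the proposition as stated does not assume; if you want the general schemes-of-finite-type version, you would need a different argument or a weaker statement.
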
 
We call the family of strata $(\{X_{i}, Y_j\}_{i\in I_j, j\in J})$ in
\Proposition{\ref{smoothstrat}} the {\it canonical stratification} of
$\pi$.

Our main interest is in filtrations of $\Dc_Y$-modules $M$ defined by
requiring the vanishing of local cohomology along successively deeper
strata $Y_i$ in the canonical stratification. Such a filtration,
however, does not exist for general $M$. For if $M_1, M_2 $ are
submodules of $M$ and $S$ a closed subset of $X$ such that
$\RG_S (M_1)=\RG_S(M_2)=0$ it follows that for the sum
$M_{12}= M_1 + M_2$ the complex $\RG_S(M_{12}) $ is a translation of
$\RG_S(M_1 \cap M_2)$ one step to the left, and this complex need not
be $0$ (in the derived category). Therefore, when $M$ is not
semisimple, there may exist more than one maximal submodule with
vanishing local cohomology along $S$.

\begin{lemma}
  Let $M$ be a coherent semisimple $\Dc$-module and $S$ be a locally
  closed subset of $X$. Then there exists a unique  maximal coherent submodule
  $M_S \subset M$ such that $\RG_S (M_S)=0$.
\end{lemma}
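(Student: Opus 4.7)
The plan is to exploit semisimplicity directly via the isotypical decomposition, bypassing the obstruction noted in the paragraph preceding the lemma. First I would use the fact that a coherent module over the Noetherian sheaf $\Dc_X$ cannot contain an infinite direct sum of non-zero submodules, so a coherent semisimple $\Dc_X$-module $M$ admits a finite isotypical decomposition
\[
M = \bigoplus_{\beta \in B} N_\beta^{n_\beta},
\]
where $\{N_\beta\}_{\beta\in B}$ are pairwise non-isomorphic simple $\Dc_X$-modules.

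Next I would note that $\RG_S(N_\beta)$ is an invariant of the isomorphism class of $N_\beta$, and set
\[
B_S = \{\beta \in B \ \vert \ \RG_S(N_\beta) = 0\}, \quad M_S = \bigoplus_{\beta \in B_S} N_\beta^{n_\beta}.
\]
Since $\RG_S$ commutes with finite direct sums, $\RG_S(M_S) = 0$, and $M_S$ is coherent.

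For maximality and uniqueness, given any coherent submodule $M' \subset M$ with $\RG_S(M') = 0$, I would argue as follows. A submodule of a semisimple module is semisimple, so $M' = \bigoplus_\gamma P_\gamma$ with each $P_\gamma$ simple; each $P_\gamma$ is isomorphic to some $N_{\beta(\gamma)}$, and since all simple submodules of $M$ isomorphic to $N_\beta$ lie in the isotypical component $N_\beta^{n_\beta}$, we have $P_\gamma \subset N_{\beta(\gamma)}^{n_{\beta(\gamma)}}$. As $P_\gamma$ is a direct summand of $M'$, $\RG_S(P_\gamma)$ is a direct summand of $\RG_S(M') = 0$, so $\RG_S(N_{\beta(\gamma)}) = 0$, forcing $\beta(\gamma) \in B_S$ and hence $P_\gamma \subset M_S$. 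Thus $M' \subset M_S$, so $M_S$ is the unique maximal such submodule.

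The only conceptual point is the observation in the paragraph preceding the lemma: in general the sum of two submodules of vanishing local cohomology need not itself have vanishing local cohomology, because applying $\RG_S$ to $0 \to M_1 \cap M_2 \to M_1 \oplus M_2 \to M_1 + M_2 \to 0$ produces a connecting morphism that may be non-zero. Semisimplicity is precisely what rescues the argument: every short exact sequence splits, the intersection $M_1 \cap M_2$ is itself a direct summand of each $M_i$, and the would-be obstruction disappears. So there is no real obstacle once one passes to the isotypical picture.
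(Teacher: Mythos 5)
Your proof is correct and uses the same central input as the paper---that semisimplicity makes every submodule of $M$ a direct summand, so $\RG_S$ can be tested one simple factor at a time---but it is organized differently. The paper defines $M_S$ abstractly as $\sum_{M'\in\Lambda_S}M'$ over \emph{all} coherent submodules with $\RG_S(M')=0$, and then deduces coherence and the vanishing of $\RG_S(M_S)$ after the fact; you instead take the finite isotypical decomposition $M=\bigoplus_\beta N_\beta^{n_\beta}$ upfront and define $M_S=\bigoplus_{\beta\in B_S}N_\beta^{n_\beta}$, after which coherence is automatic (a direct summand of the coherent $M$) and you verify maximality directly by locating any competing submodule inside the isotypical components. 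What your version buys is an explicit identification of $M_S$ as a partial sum of the isotypical decomposition, something the paper's proof does not surface. Both arguments tacitly rest on the fact that a coherent semisimple $\Dc_X$-module is a \emph{finite} direct sum of simples (so that $\RG_S$ commutes with the decomposition and the paper's defining sum is locally finite); your opening sentence makes that claim explicit, whereas the paper compresses it into the phrase ``only a finite number of terms contribute.''
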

\begin{proof}
Let $\Lambda_S$ be the set of coherent submodules
$M'$ of $M$ such that $\RG_{S}(M')=0$, and put
\begin{displaymath}
  M_S = \sum_{M'\in \Lambda_S } M'.
\end{displaymath} 
The modules $M'$ are locally generated by their sections (over a fixed
affine neighbourhood) and $M$ is coherent, so locally only a finite
number of terms $M'$ contribute to $M_S$ since $M$ is semisimple and
coherent; therefore $M_S$ is coherent. Also, since $M$ is semisimple,
it follows that $\RG_{S}(M_S)=0$. By construction $M_S$ is the unique
maximal coherent submodule with vanishing local cohomology along $S$.
\end{proof}

We can therefore make the following definition:
\begin{definition}\label{van-def} For a  coherent semisimple
  $\Dc_X$-module $M$ and a locally closed subset $S$ of $X$, let $M_S$
  be the maximal coherent submodule of $M$ whose local cohomology
  $\RG_S(M_S)=0$.
\end{definition}

\begin{remark}Assume that $S$ is closed in $X$, put $U= X\setminus S$
  and let $j: U \to X$ be the open inclusion. If $M$ is simple and
  $M_S=M$, then $M=j_+j^!(M)$ (see the appendix in \Section {
    \ref{appendix}}), but the converse does not hold, so that even if
  $M=j_+j^!(M)$ is simple, it can happen that $M_S=0$ (if
  $\codim_X S \geq 2$, take $M= \Oc_X$). The $\Dc_{\Cb^1}$-module
  $M= \Cb[x,1/x]x^\alpha$ is simple when $\alpha\not \in \Zb$ and
  $M= M_{\{0\}}$.
\end{remark}

Let $S_1$ be a locally closed subset of the closure $\bar S$ of $S$.
Then in general $\RG_{S_1} (M_S)\neq 0$, even if $M$ is simple
holonomic and $\RG_S(M_S)=0$. If $M$ is simple then
$(M_S)_{S_1}= M_{S_1}$, but in general $M_S \not \subset M_{S_1}$.
\begin{example}\label{ex-nonvancoh} Let $M$ be a simple holonomic $\Dc_X$-module, torsion
  free as $\Oc_X$-module, and $\bar S_1 \subset \bar S\subset X$ be
  closed algebraic subsets such that
  $\codim_{\bar S} \bar S_1 \geq 1$. Let
  $j: X \setminus \bar S_1 \to X $ be the locally closed inclusion
  morphism. If $j^! \RG_{\bar S}(M)=0$, then
  $\RG_{\bar S} (M)= \RG_{\bar S_1}(M)$, and the latter
  complex need not be isomorphic to $0$ even if $M$ is simple and
  holonomic. For an example, let $X= \Cb^2$, $\bar S_1= \{0\}$,
  $\bar S= V(xy(x+y))$, and put
  $\tilde M= \Dc_{\Cb^2} x^{\beta_1} y ^{\beta_2} (x+y)^{\beta_3} $,
  where $\beta_i \in \Qb \setminus \Zb$ and
  $\beta _1 + \beta_2 + \beta_3 \in \Zb$. The module $\tilde M$ is of
  length $2$ and contains a unique simple submodule $M$, torsion free
  over $\Oc_X$, such that $\supp \tilde M/M =\bar S_1$ (see
  \cite{abebaw-bogvad:arkiv}*{Th 1.3}). Since $\tilde M$ has vanishing
  local cohomology along $S= \bar S\setminus \bar S_1$ it follows that
  $\RG_{\bar S}(M) = R \Gamma_{\bar S_1}(M)\neq 0$, where the
  non-vanishing follows since the minimal extension $M$ of
  $j^!(M)= j^!(\tilde M)$ is a proper submodule of
  $j_+j^!(M)= j_+j^!(\tilde M)= \tilde M $. Here $M_{S} = M$ and
  $M_{S_1}=0$.
\end{example}

Given a semisimple $\Dc_Y$-module $N$ and the canonical stratification
$\{Y_i\}_{i \in J}$ of $Y$ we define the modules $N_i = N_{X_{j}}$ as
in \Definition{\ref{van-def}} ($N_{j_f}= N$ and $N_{j_s} = 0$),
resulting in a family of submodules $\{N_j\}_{j \in J}$ which we call
the ($\pi$-) canonical submodules of $N$. As noted above, in general
$\{N_j\}_{j\in J}$ need not form a filtration of $N$ with respect to
the partially ordered set $\{J, <\}$, i.e. $j_1 < j_2$ does {\it not}
imply $N_{j_1} \subset N_{j_2}$.\footnote{Of course, replacing $Y_j$
  by its closure $\bar Y_j $ in the definition of $N_j$ always results
  in a filtration.} It is perhaps therefore all the more suprising,
shall we see, that nevertheless the semisimple module
$N= \pi_+(\Oc_X)$ satisfies such inclusions.

\subsection{Maximal étale and minimal totally ramified factorizations}

Let
\begin{displaymath}
  (A,\mf_A, k_A)\to (B,\mf_B, k_B)
\end{displaymath}
be an injective finite homomorphism of local noetherian domains, so we
can identify $A$ with a subring of $B$. Then $A\subset B $ is totally
ramified if $\Spec B \to \Spec A$ is totally ramified, as in
\Definition{\ref{def-tot-ram}}, i.e. the induced map of residue fields
is an isomorphism, $k_A\cong k_B$. We are interested in certain
factorizations by inclusions of local rings
\begin{displaymath}
  (A,\mf_A, k_A) \subset   (C,\mf_C, k_C) \subset  (B,\mf_B, k_B).
\end{displaymath}
Next lemma is well-known.
\begin{lemma}\label{ram-et-lemma}
  \begin{enumerate}
  \item $B/A$ is étale and $C/A$ is unramified  $\Leftrightarrow$  $B/C$ and $C/A$ are étale.
     \item $B/A$ is  totally ramified $\Leftrightarrow$  $B/C$
       and $C/A$ are totally ramified.
     \item If $B/A$ is finite étale and totally ramified, then $A=B$.
  \end{enumerate}
\end{lemma}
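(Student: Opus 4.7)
\smallskip
\noindent\textbf{Proof plan.} The proof rests on three standard inputs: (i) for a finite injection $(A,\mf_A,k_A)\to (B,\mf_B,k_B)$ of local noetherian domains in characteristic zero, \emph{\'etale} is equivalent to ``$B$ is a free $A$-module and $\mf_A B = \mf_B$'' (the separability of $k_B/k_A$ being automatic), while \emph{totally ramified} is equivalent to $k_A = k_B$; (ii) Nakayama's lemma for the finite module $B$ over the local ring $A$; (iii) the composition/cancellation result for \'etale and unramified morphisms (EGA IV, 17.3.4/17.7.7): if $g\colon \Spec B\to \Spec C$ and $f\colon \Spec C\to \Spec A$ are morphisms with $f\circ g$ \'etale and $f$ unramified, then $g$ is \'etale.

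I would start with part (2), which is immediate: by characterization (i), total ramification of $B/A$ means that the composed residue-field extension $k_A\hookrightarrow k_C\hookrightarrow k_B$ is the identity, so each intermediate inclusion is an equality, and conversely, composability of total ramification is transparent from the same characterization. Part (3) is then a one-line Nakayama argument: \'etaleness of $B/A$ yields $\mf_A B=\mf_B$, so $B\otimes_A k_A = B/\mf_A B = k_B$, which by total ramification equals $k_A$; hence $B\otimes_A k_A$ is generated by the image of $1$, and since $B$ is finite over the local ring $A$, Nakayama forces $B = A\cdot 1 = A$.

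Part (1) is the main step. The ``if'' direction is routine: composition of \'etale morphisms is \'etale and \'etale implies unramified. For the ``only if'' direction, the plan is to apply (iii) to the tower $A\subset C\subset B$: since $B/A$ is \'etale and $C/A$ is unramified, (iii) yields that $B/C$ is \'etale. In particular $B/C$ is faithfully flat, so flatness of $C/A$ descends from flatness of the composition $B/A$; combined with the standing hypothesis that $C/A$ is unramified, this gives $C/A$ \'etale.

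The main obstacle is upgrading the unramifiedness of $C/A$ to \'etaleness, i.e.\ obtaining flatness of $C/A$; all other pieces are formal from characterization (i) and transitivity. If a self-contained treatment is desired in place of citing EGA, one can reduce to the complete case by $\mf_A$-adic completion (using that $\mf_AB=\mf_B$ and $\mf_A C=\mf_C$, so completions agree with the $\mf_B$- and $\mf_C$-adic ones), and then use the structure theorem presenting $\hat B$ as $\hat A[x]/(f)$ with $f'$ a unit in $\hat B$, from which the intermediate $\hat C$ inherits the same form and hence the required flatness.
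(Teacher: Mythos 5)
Your proof is correct. The paper itself gives no argument for this lemma—it only remarks ``Next lemma is well-known''—so there is no author's proof to compare against; what you supply is a clean, self-contained account of exactly the kind the paper implicitly invokes. Part (2) and Part (3) are handled efficiently via the local characterizations (unramified $\Leftrightarrow$ $\mf_A B=\mf_B$ with separable residue extension, totally ramified $\Leftrightarrow$ $k_A=k_B$) together with Nakayama, and Part (1) correctly combines the standard cancellation property for \'etale morphisms with the fact that if $B$ is faithfully flat over $C$ and flat over $A$, then $C$ is flat over $A$ (local flat maps being automatically faithfully flat). Two minor points worth making explicit if you write this up in full: first, you should note that $C$ is a finite $A$-module because it is an $A$-submodule of the finite module $B$ and $A$ is noetherian, so the finite-local formalism applies to all three rings; second, in Part (2) the reduction to $k_A=k_C=k_B$ uses that both inclusions $k_A\hookrightarrow k_C\hookrightarrow k_B$ are genuine field extensions (which holds because $A\subset C\subset B$ are local maps), so that an isomorphism of the composite forces each factor to be an isomorphism.
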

 \begin{remark}
   \begin{enumerate}
   \item If a homomorphism of local rings $A\subset B$ is totally
     ramified and $B/C/A$ is a factorization where $C/A$ is étale,
     then $A=C$.
   \item If $A\to B$ is étale and $B/C/A$ is a factorization where
     $B/C$ is totally ramified, then $C=B$.
   \end{enumerate}
 \end{remark}
 The properties that $B/A$ be étale and totally ramified,
 respectively, are preserved in opposite directions with respect to
 generization and specialization.
 \begin{lemma}\label{ram-lemma} Let $P$ be a prime ideal in $B$ and put $Q= A\cap P$. 
   \begin{enumerate}
   \item If $A\to B$  is étale, then $A_Q \to B_P$ is étale.
   \item If $A_Q \to B_P$ is totally ramified and $A/Q$ is normal,
     then $A/Q= B/P$ and therefore $A\to B$ is totally ramified.
   \end{enumerate}
 \end{lemma}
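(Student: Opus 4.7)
The plan is to treat the two parts separately, both resting on standard stability properties of \'etale and integral ring maps under localisation, together with a normality argument for (2).

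For (1), I would proceed by flat base change followed by further localisation. The map $A\to A_Q$ is flat, so the base change $A_Q\to B_Q := B\otimes_A A_Q$ is again finite \'etale; in particular, the closed fibre $B_Q/\mf_Q B_Q$ is a finite \'etale $k_Q$-algebra, hence a finite product of separable field extensions of $k_Q$ indexed by the primes of $B$ lying over $Q$. Now $B_P$ is obtained from $B_Q$ by localising at the prime $PB_Q$, so $B_P$ is flat over $A_Q$, and its closed fibre $B_P/\mf_Q B_P$ is exactly the factor of $B_Q/\mf_Q B_Q$ indexed by $P$, a finite separable extension of $k_Q$. This yields $\mf_P = \mf_Q B_P$ and separability of $k_P/k_Q$, so $A_Q\to B_P$ is flat and unramified, hence \'etale.

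For (2), the total ramification hypothesis on $A_Q\to B_P$ gives $k_P = k_Q$. Identifying the residue fields of localisations at primes with the fraction fields of the corresponding quotient domains, this equality reads $\on{Frac}(B/P) = \on{Frac}(A/Q)$. Since $B$ is finite over $A$, the quotient $B/P$ is a finite, hence integral, extension of the domain $A/Q$; combined with the preceding equality, $B/P$ is an integral extension of $A/Q$ sitting inside $\on{Frac}(A/Q)$. The normality hypothesis on $A/Q$ therefore forces $B/P = A/Q$, and this equality in particular gives $k_P = k_Q$, which is precisely the assertion that $A\to B$ is totally ramified at $P$ in the sense of Definition~\ref{def-tot-ram}(1).

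No serious obstacle is anticipated; both parts are fairly routine commutative algebra. The only delicate point I foresee is the clean identification in (1) of the \'etale fibre as a product of separable field extensions, which is what allows one to isolate the single factor picked up by further localisation at $P$ and thereby verify unramifiedness by direct inspection, rather than having to appeal to a more formal preservation theorem for \'etale morphisms under non-finite-type localisation.
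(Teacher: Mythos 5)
Your treatment of part~(1) is correct and actually supplies an argument where the paper simply writes ``(1) is well-known'': the flat base change to $B\otimes_A A_Q$, identification of the closed fibre as a finite product of separable field extensions of $k_Q$, and isolation of the $P$-factor upon localisation is exactly the right chain, and the deduction $\mf_P=\mf_Q B_P$ together with separability of $k_P/k_Q$ gives unramifiedness. For part~(2) your derivation of $A/Q=B/P$ is the same normality argument the paper uses: $k_P=k_Q$ makes $A/Q\to B/P$ a finite birational map of domains, and integral closure of $A/Q$ in its fraction field forces equality.

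However, your final sentence misidentifies what remains to be proved. You write that $A/Q=B/P$ ``in particular gives $k_P=k_Q$, which is precisely the assertion that $A\to B$ is totally ramified at $P$'' --- but $k_P=k_Q$ was the \emph{hypothesis}, so this is circular and proves nothing new. Recall that in this lemma $A$ and $B$ are \emph{local} rings (see the paragraph preceding Definition~\ref{def-tot-ram}): ``$A\to B$ totally ramified'' therefore refers to the residue fields at the closed points, i.e.\ the conclusion is $k_A\cong k_B$, which strictly says more than the hypothesis since $Q,P$ need not be the maximal ideals. The missing step is short but not vacuous: $A/Q$ is a local ring with maximal ideal $\mf_A/Q$ and residue field $(A/Q)/(\mf_A/Q)=k_A$, and similarly $B/P$ has residue field $k_B$; since $A\to B$ identifies $A/Q$ with $B/P$, it identifies $\mf_A/Q$ with $\mf_B/P$ and hence $k_A$ with $k_B$. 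That is the content of the ``therefore'' in the lemma, and it needs to be said.
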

 \begin{proof}
   (1) is well-known. (2): Since $k_Q = A_Q/QA_Q = k_P = B_P/PB_P$ it
   follows that $A/Q \to B/P$ is a finite birational map, and since
   $A/Q$ in integrally closed in its fraction field, $A/Q = B/P$.
 \end{proof}

 Say that an inclusion of local rings $A\subset C_t \subset B$ is a
 {\it minimal totally ramified factorization} if $C_t\subset B$ is
 totally ramified and if $C_t'$ is another totally ramified
 factorization $A\subset C_t' \subset B$, then there exists an
 isomorphism $C_t \to C_t''$, where $C_t''$ fits in a
 factorization $A\subset C_t'' \subset C_t' \subset B$, making the natural
 diagrams commute.

A {\it maximal étale factorization} is a local $k$-subalgebra $C_e$ of
$B$ that contains $A$, $A\subset C_e \subset B$, such that
$A\subset C_e$ is étale, and if $C_e'$ is another such étale
$A$-algebra, $A\subset C_e' \subset B$, then there exists an
isomorphism $C_e'' \to C_e$, where $C_e''$ fits in a factorization
$A \subset C_e'' \subset C_e'\subset B$, making the natural diagrams
commute.  

\begin{theorem}\label{factor}  Assume that $A$ is regular and $B$ is normal.
  There exists a maximal étale factorization $A\subset C_e \subset B$.
  The following are equivalent for a factorization
  $A\subset C \subset B$:
\begin{enumerate}
\item  $B/C/A$ is a maximal étale factorization.
\item $B/C/A$ is a minimal totally ramified factorization.
\end{enumerate}
The factorization is unique up to a choice of Galois cover of the
field extension $k(B)/k(A)$.
\end{theorem}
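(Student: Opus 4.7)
\smallskip

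The plan is to reduce to a pure Galois situation and then read off the factorization from the inertia/decomposition groups. Let $K=k(A)$, $L=k(B)$, and $\bar L/K$ a Galois closure of $L/K$, with $G=\Gal(\bar L/K)$ and $H=\Gal(\bar L/L)$. Let $\tilde B$ be the integral closure of $A$ in $\bar L$; it is a normal semilocal ring on which $G$ acts, with $\tilde B^G=A$ and $B=\tilde B^H$ after localisation at the appropriate prime. Pick a prime $\tilde P$ of $\tilde B$ lying over $\mf_B$ (and hence over $\mf_A$), let $D=D_{\tilde P}\subset G$ be the decomposition group and $I=I_{\tilde P}\triangleleft D$ the inertia group. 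Replacing $\tilde B$ by its localisation at $\tilde P$, one is reduced to the situation in which $\tilde B$ is local, Galois over $A$ with group $D$, and $B=\tilde B^{H\cap D}$.

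Next I would define the candidate $C_e := \tilde B^{I(H\cap D)}$, noting that $I\triangleleft D$ makes $I(H\cap D)=(H\cap D)I$ a subgroup of $D$ that sits between $H\cap D$ and $D$; thus $A\subset C_e\subset B$ is an honest factorisation. To see that $A\to C_e$ is \'etale, recall that $A\to\tilde B^I$ is unramified with residue extension $k_{\tilde P}^{I}/k_A$ realising the Galois group $D/I$; a rank count gives $[\tilde B^I:A]=|D/I|=[k_{\tilde B^I}:k_A]$, so $A\to\tilde B^I$ is \'etale and consequently so is the intermediate extension $A\to C_e\subset\tilde B^I$ (by Lemma~\ref{ram-et-lemma}(1), since $C_e\to\tilde B^I$ is a finite extension of semilocal rings in a tower whose composite is \'etale, and flatness/\'etaleness of intermediate pieces follows from the same rank check using $C_e=\tilde B^{I(H\cap D)}$). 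To see that $C_e\to B$ is totally ramified, observe that the second isomorphism theorem gives $I(H\cap D)/I\cong(H\cap D)/((H\cap D)\cap I)$, so the images of $I(H\cap D)$ and $H\cap D$ in $D/I\cong\Gal(k_{\tilde P}/k_A)$ coincide, whence the residue fields $k_{C_e}$ and $k_B$ are equal.

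I would then establish the equivalence \textbf{(1)}$\Leftrightarrow$\textbf{(2)} and the universal properties simultaneously. For any factorisation $A\subset C\subset B$, write $C=\tilde B^{K_C}$ for a subgroup $H\cap D\subset K_C\subset D$ (applying Galois correspondence inside $\tilde B/A$ intersected with $B$, extending $K_C$ to account for $C\subset B$). By Lemma~\ref{ram-et-lemma}(1)(2), ``$A\to C$ \'etale'' forces $K_C\supset I$ (étaleness is equivalent to the inertia piece being already factored out on the $A$-side), and ``$C\to B$ totally ramified'' forces $K_C\subset I(H\cap D)$ (total ramification means the étale/unramified bit has been exhausted). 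Hence $K_C=I(H\cap D)$ is the \emph{unique} subgroup that is simultaneously minimal with $A\to C$ \'etale and maximal with $C\to B$ totally ramified—giving both the equivalence and the extremality/uniqueness. The dependence on the choice of Galois cover is then transparent: a different $\bar L$ or different $\tilde P$ over $\mf_B$ yields a group $I(H\cap D)$ conjugate under $H$, and conjugation transports $C_e$ into an isomorphic ring.

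The main obstacle I expect is verifying cleanly that $A\to C_e$ is \'etale rather than merely unramified in the non-henselian setting; this is where the rank identity $[\tilde B^I:A]=|D/I|=[k_{\tilde B^I}:k_A]$ together with the flatness of $\tilde B$ over $A$ must be invoked to upgrade ``unramified'' to ``\'etale'', and the same style of rank check must then be propagated through the intermediate extension $A\subset C_e\subset\tilde B^I$. A secondary subtlety is matching the Galois-theoretic intersection $K_C\cap(H\cap D)$ with the actual ring $C$ when $\tilde B$ was only semilocal before localising, but this is handled once one fixes a prime of $\tilde B^H$ above $\mf_B$ and uses transitivity of the $H$-action on primes of $\tilde B$ over that prime.
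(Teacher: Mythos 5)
Your overall strategy — pass to a Galois closure $\bar L/K$, use the decomposition group $D$ and inertia group $I$ at a prime $\tilde P$ of $\tilde B$ above $\mathfrak m_B$, and recognize that ``\'etale over $A$'' means the inertia has been factored out while ``totally ramified into $B$'' means the residue field does not grow — is the right idea and is in the same spirit as the paper's proof of the parallel global statement (Theorem~\ref{factor-theorem}). However, there is a concrete error in the reduction that makes the candidate ring ill-posed.

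The claim ``replacing $\tilde B$ by $\tilde B_{\tilde P}$ reduces to $B=\tilde B^{H\cap D}$'' is false. After localization, $\tilde B_{\tilde P}$ is Galois with group $D$ over the decomposition ring $\tilde B_{\tilde P}^{D}$, and $\tilde B_{\tilde P}^{H\cap D}$ is the decomposition ring of $\tilde B_{\tilde P}$ over $B$ (with fraction field $\bar L^{H\cap D}$), which strictly contains $B$ whenever $H\not\subset D$, i.e.\ whenever $\mathfrak m_B$ has more than one prime above it in $\tilde B$. Consequently your candidate $C_e := \tilde B_{\tilde P}^{I(H\cap D)}$ has fraction field $\bar L^{I(H\cap D)}$, and since $I(H\cap D)$ is a subgroup of $D$ but need not contain $H$, this field need not lie inside $L=\bar L^{H}$. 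So $C_e$ is not in general a subring of $B$, and ``$A\subset C_e\subset B$ is an honest factorisation'' fails exactly when the extension splits. Your subsequent universal-property argument inherits the same defect: intermediate rings $A\subset C\subset B$ do \emph{not} biject with subgroups $H\cap D\subset K_C\subset D$ under the Galois correspondence for $\tilde B_{\tilde P}/\tilde B_{\tilde P}^{D}$, again because $B$ itself is not a fixed ring of a subgroup of $D$.

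The fix is to intersect with $L$ before passing to rings. Note that
$\bar L^{I(H\cap D)}\cap L=\bar L^{\langle I(H\cap D),\,H\rangle}=\bar L^{\langle I,H\rangle}=L\cap \bar L^{I}$,
and this is the field the paper uses: $C_e$ should be the localization at the prime below $\tilde P$ of the integral closure of $A$ in $L\cap\bar L^{I}$. With this corrected $C_e$ the rest of your argument can essentially be run: the residue-field equality $k_{C_e}=k_B$ follows because the images in $D/I$ of the decomposition group of $\langle I,H\rangle$ and of $H\cap D$ agree, and \'etaleness of $A\to C_e$ follows from $A\to \tilde B_{\tilde P}^{I}$ being \'etale together with Lemma~\ref{ram-et-lemma}(1). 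This is the computation carried out in the proof of Theorem~\ref{factor-theorem}, from which Theorem~\ref{factor} is deduced.
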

Thus in this situation $C_e\cong C_t$ and the maximal étale (minimal
totally ramified) factorization $A\subset C_e \subset B$ is unique up
to (nonunique) isomorphism. We often abuse the terminology and say
that the local ring $C$ itself is a minimal totally ramified (maximal
étale) factorization when its position in $A\to B$ is given from the
context.

\Theorem{\ref{factor}} is proven similarly to the following  global
version:
\begin{theorem}\label{factor-theorem}
  Let $\pi : X\to Y$ be a finite morphism of normal varieties (regarded as
  schemes) and $S$ be a  finite subset of $X$.
  Then there exists a factorization,
  $\pi= q \circ p$
 \begin{displaymath}
    X  \xrightarrow{p} Z\xrightarrow{q} Y,
  \end{displaymath}
  such that $q$ is maximally étale at all points $z_1$ of height $1$
  that specialize to $z= p(x)$. This factorization is unique up to a
  choice of Galois cover of the field extension $k(X)/k(Y)$. If $Y$ is
  smooth at $\pi(x)$, then $q$ is maximally étale at $z$ and $Z$ is
  smooth at $z$. The map $p$ is totally ramified along $S$, and if
  $X/Y$ is Galois, then $p$ is minimally totally ramified along
  $ p(S)$.
\end{theorem}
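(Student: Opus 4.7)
\emph{Plan.} My strategy is to globalize the local Theorem~\ref{factor} by passing to a Galois closure of $k(X)/k(Y)$ and constructing $Z$ as a single intermediate cover adapted simultaneously to all points of $S$; the pointwise statements then follow by applying the local theorem at each relevant stalk.

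I would fix a Galois closure $\bar L$ of $k(X)/k(Y)$ with Galois group $G$ and subgroup $H\subset G$ fixing $k(X)$, and let $\tilde X$ be the normalization of $Y$ in $\bar L$, so that $\tilde X\to X\to Y$ corresponds to $H\subset G$. For each $x\in S$ I choose a lift $\tilde x\in \tilde X$, and for each height-$1$ point $\tilde\xi\in \tilde X$ specializing to $\tilde x$ I let $I_{\tilde\xi}\subset G$ be the inertia subgroup. Setting
\[
K \;=\; \bigl\langle\, H,\; I_{\tilde\xi}\;:\;\tilde\xi \text{ of height }1\text{ specializing to some chosen }\tilde x\text{ above some } x\in S\,\bigr\rangle,
\]
I define $Z$ as the normalization of $Y$ in $\bar L^K$. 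Since $H\subset K$, this yields a factorization $X\xrightarrow{p} Z\xrightarrow{q} Y$ with $\pi=q\circ p$.

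For the listed properties I would argue as follows. At each height-$1$ point $z_1\in Z$ specializing to some $z=p(x)$, every inertia subgroup $I_{\tilde\xi}$ for $\tilde\xi$ above $z_1$ lies in $K$, so $q$ is unramified, hence étale, at $z_1$; maximality of this étale factorization is then read off from Theorem~\ref{factor} applied to the DVR inclusion $\mathcal{O}_{Y,q(z_1)}\subset \mathcal{O}_{X,\xi}$ for a height-$1$ $\xi\in X$ lying over $z_1$. Total ramification of $p$ along $S$ follows because, after passing to $\tilde X$, the inertia of $\tilde X\to Z$ at $\tilde x$ equals $I_{\tilde x}\cap K = I_{\tilde x}$ by construction, forcing $k(\tilde x)=k(z)$ and hence $k(x)=k(z)$; in the Galois case $H=\{e\}$, the group $K$ is generated by the minimal indispensable inertia subgroups, and the equivalence (maximally étale $\Leftrightarrow$ minimally totally ramified) in Theorem~\ref{factor} upgrades this to minimal total ramification along $p(S)$. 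When $Y$ is smooth at $\pi(x)$, Zariski--Nagata purity forces $q$, already unramified at every height-$1$ point through $z$, to be étale at $z$ itself, and smoothness of $Z$ at $z$ then follows from étaleness over the smooth point $\pi(x)$. Uniqueness up to choice of Galois cover reduces to the uniqueness in the local theorem, since $\mathcal{O}_{Z,z}$ is determined up to isomorphism by the local data $\mathcal{O}_{Y,\pi(x)}\subset \mathcal{O}_{X,x}$ independently of $\bar L$.

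The main obstacle will be confirming that the single subgroup $K$ constructed to accommodate all of $S$ at once still produces the correct local factorization at each $x\in S$ individually, rather than over-ramifying some points to handle others. This should reduce to the observation that inertia subgroups $I_{\tilde\xi}$ for height-$1$ points $\tilde\xi$ not specializing to $\tilde x$ act trivially on $\mathcal{O}_{\tilde X,\tilde x}$, so enlarging $K$ by such inertia leaves the local structure at $x$ unchanged; the required maximality/minimality at each $x$ thus survives the simultaneous globalization, and the point-by-point verification via Theorem~\ref{factor} goes through.
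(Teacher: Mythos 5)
Your construction replaces the inertia group $I_{\tilde x}$ of the \emph{point} $\tilde x$ (which is what the paper's proof uses: its group is $G_{H,x}=\langle H,\, I_{\bar x}\rangle$) by the subgroup $K_0=\langle I_{\tilde\xi}:\ \tilde\xi\ \text{of height }1,\ \tilde\xi\rightsquigarrow\tilde x\rangle$ generated by the inertia of the height-$1$ generizations of $\tilde x$. This is the root of the gap. In general one only has $K_0\subset I_{\tilde x}$, and the containment can be strict when the base is merely normal: the equality $K_0=I_{\tilde x}$ is precisely the Zariski--Nagata purity statement that ramification in codimension $\geq 2$ is generated by codimension-$1$ inertia, which is available over a \emph{regular} local ring (in characteristic $0$) but not over an arbitrary normal one. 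Since the theorem only assumes $Y$ normal, and since total ramification along $S$ is asserted unconditionally, your key claim ``$I_{\tilde x}\cap K=I_{\tilde x}$ by construction'' is not justified: $I_{\tilde x}$ need not lie in $K=\langle H,K_0\rangle$. Consequently the argument that $k(\tilde x)=k(z)$ and hence $k(x)=k(z)$ does not go through, and the total-ramification conclusion for $p$ fails with your $Z$ when $Y$ is singular at $\pi(x)$. The paper sidesteps this by putting $I_{\bar x}$ itself into the defining subgroup of $Z^x$ rather than only its codimension-$1$ pieces.

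A second, smaller concern is the simultaneous globalization. The paper builds a separate $Z^{x_i}$ for each $x_i\in S$ and takes $Z=Z^{x_1}\times_Y\cdots\times_Y Z^{x_r}$, whereas you fold all the inertia data for all of $S$ into one subgroup $K$. Your proposed justification that inertia subgroups $I_{\tilde\xi}$ with $\tilde\xi\not\rightsquigarrow\tilde x$ ``act trivially on $\mathcal{O}_{\tilde X,\tilde x}$'' is not a meaningful statement: such $I_{\tilde\xi}$ need not even lie in the decomposition group $D(\tilde x)$, so its elements do not act on $\mathcal{O}_{\tilde X,\tilde x}$ at all; what one actually has to control is the intersection $K\cap D(\tilde x)$ and its image in $\operatorname{Aut}(k(\tilde x))$, and those intersections can acquire unexpected elements from products of generators coming from different $\tilde x_i$. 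Fixing the first issue (use $I_{\tilde x}$ rather than $K_0$) would already force you to confront this carefully, and at that point you essentially recover the paper's pointwise $Z^x$ followed by a fiber product.
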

If $k(X)/k(Y)$ is not Galois, then $p$ need not be totally ramified.
Example: Consider a finite morphism of curves $\pi:X\to Y$ over an
algebraically closed field. If there exists a point $y$ in $Y$ such
$X_y\cap B_\pi $ is a proper subset of the full fibre $X_y$, then
$\pi$ it totally ramified along $X_y$ but not along $\{y\}$.

\begin{remarks}
  \begin{enumerate}
  \item If the closure $x^-$ of $x\in S$ is normal, then $p$ is minimally
    totally ramified and $q$ is maximally étale for all points in $x^-$
    \Lem{\ref{ram-lemma},(2)}. Therefore if $S$ is a closed normal subvariety of
    $X$ and $\pi= q\circ p$ is a factorization such that $p$ is minimally
    totally ramified and $q$ is maximally étale at all generic points of $S$,
    then the same holds at all points of $S$.
  \item Notice that if $B/C/A$ is a minimal (maximal) totally ramified
    (étale) factorization and $P$ is a prime ideal of $B$,
    $P_C= C\cap P, Q= P\cap A $, then we get maps
    $B_P\xrightarrow{p_P} C_{P_C}\xrightarrow{q_P} A_Q$ where $p_P$ and
    $q_P$ are totally ramified and étale, respectively, but they need
    not be a minimal (maximal) totally ramified (étale) factorization
    of $B_P/A_Q$.
  \item When $X_a$ and $Y_a$ are complex manifolds the existence of
    maximal étale factorization can be argued for topologically. The
    index of the image $\pi_*(\pi_1(X_a))\subset \pi_1(Y_a)$ of the
    fundamental group $\pi_1(X_a)$ in $\pi_1(Y_a)$ is finite. Then it
    is well-known that there exists an étale map $q: Z\to Y$ such that
    $q_*(\pi_1(Z_a))= f_*(\pi_1(X_a))$ and also that there exists a
    lifting $p: X_a\to Z_a $ such that $q\circ p = \pi$.
  \item The proof of \Theorem{\ref{factor}} combined with an
    application of G.A.G.A. gives a straightforward proof of
    \cite{hwang-kebekus-peternell}*{Th. 1.2}, which states: Given a
    finite holomorphic map of projective normal complex varieties
    $\pi: X\to Y$, there exists a factorization $\pi= q\circ p$, where
    $q: Z\to Y$ is étale at points of height $\leq 1$. Moreover, up to
    isomorphism there exists a unique maximal such map $q$.
  \end{enumerate}
\end{remarks}
Recall that for a Galois morphism $\pi: X\to Y$ with Galois group $G$,
the decomposition group $G_x$ of a point $x$ in $X$ is the subgroup of
automorphisms that stabilizes $x$, and its inertia group $I_x$ is the
subgroup of $G_x$ of elements whose induced action on the residue
field $k_{X,x}$ is trivial.
\begin{proof} 
  Let $s: \bar X\to X$ be the map from the integral closure of $X$ in
  the Galois cover of the field extension $k(X)/k(Y)$, $G$ be the
  Galois group of $k(\bar X)/k(Y)$, $H$ the Galois group of
  $k(\bar X)/k(X)$, and put $\bar \pi = \pi \circ r : \bar X \to Y$.
  Let $x_1$ be a point of height $1$ that specialises to $x$. Then
  there exist points $\bar x_1, \bar x$ in $\bar X$,  such that
  $\bar x_1$ specializes to $\bar x $, $s(\bar x)= x$,  and
  $s(\bar x_1)= x_1$. Let $I_{\bar x}$ and $I_{\bar x_1}$ be the
  inertia groups of the points and put
  $ Z^{\bar x}= \bar X^{I_{\bar x}} $,
  $ Z^{\bar x_1} = \bar X ^{I_{\bar x_1}}$. Let
  $ p_{\bar x} : \bar X \to Z^{\bar x}$ and $ p_{\bar x_1} : \bar X \to Z^{\bar
  x_1}$ be
  the corresponding invariant maps; since $I_{\bar x_1}$ is a subgroup
  of $I_{\bar x}$ there exists also a map $r_{\bar x_1} : Z^{\bar x_1} \to
  Z^{\bar x}$
  such that $ p_{\bar x} = r_{\bar x_1} \circ p_{\bar x_1}$. Let $Z^x$ be the integral
  closure of $Y$ in the intersection of the fraction fields of $X$ and
  $ Z^{\bar x}$, regarded as subfields of the fraction field of
  $\bar X$, and let $p_x: X\to Z^x$ and $q_x: Z^x \to Y$ be the natural
  maps, so that $\pi = q_x\circ p_x$, and there exists also a natural map
  $r_{\bar x}: Z^{\bar x}\to Z^x $ such that $r_{\bar x}\circ p_{\bar
    x} = p_{x}\circ s$. We have the following diagram
    \begin{equation}
\label{galois-diagram}
  \begin{tikzcd}
     \bar X \arrow[r,  "p_{\bar x}"] \arrow[d, "s"'] & \bar Z^{\bar
       x}\arrow[d,"r_{\bar x}"]
     \\ X \arrow[r,"p_x"]\arrow[d, "\pi"']&  Z^x\arrow[dl,"q_x"]
     \\ Y &
\end{tikzcd}
  \end{equation}
  The map $\bar X \to Z^x$ is Galois and its Galois group $G_{H,x}$ is
  the the subgroup of $G$ that is generated by $I_{\bar x}$ and $H$.
  Since $I_{\bar x}$ already belongs to the decomposition subgroup
  $G_{\bar x}$ of $G$ it follows that if $D_{\bar x}$ is the
  decomposition subgroup of $G_{H,x}$ and $H_{\bar x}$ is the
  decomposition subgroup of $H$, then
  \begin{displaymath}
k_{X,x}  =      k_{\bar X, \bar x}^{H_{\bar x}}=   k_{\bar X, \bar
  x}^{D_{\bar x}} = k_{Z,z}.
  \end{displaymath}
  Hence $p_x$ is totally ramified at $x$ (but not necessarily at $p_x(x)=z$
  when $Hg \neq gH$ for some $g\in I_{\bar x}$). The maps $p_{\bar x}$ and
  $ p_{\bar x_1}$ however are minimal totally ramified at $\bar z =\bar
  p_{\bar x}(\bar x)$ and
  $\bar z_1= p_{\bar x_1}(\bar x_1)$, respectively. Since $\bar x_1$ and hence also
  $\bar z_1= p_{\bar x_1} (\bar x_1)$ and
  $ q_{\bar x_1} (\bar z_1)= \pi(x_1)= y_1$ are points of height $1$ in
  $\bar X$, $Z^{\bar x_1}$, and $Y$, respectively, and all appearing
  varieties being normal, we get inclusions of discrete valuation
  rings
\begin{displaymath}
  \Oc_{Y, y_1} \subset  \Oc_{Z^{\bar x_1},
    \bar z_1} \subset \Oc_{\bar X, \bar x_1}.
\end{displaymath}
We are therefore in the situation of \cite{serre:corps}*{\S 7, Prop.
  21} and can conclude that the first inclusion is étale. Since
$ \Oc_{Y, y_1} \subset \Oc_{ Z^x, z_1} \subset \Oc_{ Z^{\bar x_1}, \bar z_1}$
and $Z^x $ is normal, it follows that $q_x$ is étale at
$z_1= r(\bar z_1)$. If $Y$ is smooth at $\pi(x)$, then $q_x$ is étale at
$z$ by the purity of branch locus, and then $Z/k$ is smooth at $z$.

There exists therefore for each point $x$ in $S$ a factorization
\begin{displaymath}
  X\xrightarrow{p_x} Z^x \xrightarrow{q_x} Y
\end{displaymath}
such that $\pi= q_x\circ p_x$ where $p_x$ is totally ramified at $x$
(and at $p_x(x)$ if $k(X)/k(Y)$ is Galois, so that $\bar X =X$) and
$q_x$ is étale at all points of height $1$ in $Z^x$ that specialises
to $p_x(x)$. If $S= \{x_1, \ldots, x_r\}$, put
$ Z= Z^{x_1} \times_Y Z^{x_2} \times_Y \ldots \times_Y Z^{x_r} $, and
let
\begin{displaymath}
  X\xrightarrow{p}Z \xrightarrow{q}  Y
\end{displaymath}
be the canonical maps. Then $q$ is maximally étale at all points of
height $1$ that specialises to a point in $p(S)$, and as before, if
$y\in \pi(S)$ is a regular point in $Y$, then $q$ is étale above $y$.
The map $p$ is totally ramified along $S$, and if $k(X)/k(Y)$ is
Galois (and thus $\bar X=X$), then $p$ is minimally totally ramified
at the points in $p(S)\subset Z$.
\end{proof}
\subsection{Étale coverings for étale trivial connections and Galois
  coverings for finite modules.}
Let $Y/k$ be a normal projective variety over an algebraically closed
field of characteristic $0$, and consider finite surjective maps of
normal varieties (assumed connected)
\begin{equation}\label{sur-gal}
  \bar \pi: \bar X \xrightarrow{p}  X \xrightarrow{\pi} Y,
\end{equation}
where the fraction field $\bar L$ if $\bar X$ is Galois over the
fraction field $L$ and $K$ of $X$ and $Y$, respectively. Let $G$ be
the Galois group of $\bar L/K$.

If $M$ is an $L$-trivial connection on $Y$, so that
$L\otimes_KM_K \cong L^m$, where $M_K$ is the generic stalk of $M$,
then it follows that $\pi^!(M)$ is a trivial connection on $X$. Assume
now that $M$ is only a locally free $\Oc_Y$-module such that
$N=\pi^*(M)$ is a trivial locally free $\Oc_X$-module, i.e.
$N\cong \Oc_X\otimes_k V $, where $V=\Gamma(X, N)$. Then $N$ can be
regarded as a connection by declaring that $T_X$ acts trivially on
$V$, and we will see that in fact $M$ itself is a connection such that
$\pi^!(M)=N$ \Th{\ref{riemann}}. In general, if $Y$ had not been
projective, it is not sufficient that $N=\pi^*(M)$ be trivial to
conclude that $M$ is a connection; one would require that $X/Y$ be a
Galois cover and that $N$ moreover be a $\Dc_X[G]$-module; see
(\ref{galois-section}).

It is natural to ask whether the maps $\pi$ (and $\bar \pi$) can be
factorized so that $M$ becomes trivial over an intermediate étale
(Galois) cover.
\begin{theorem} Let $\pi: X\xrightarrow{p} Z \xrightarrow{q} Y$ be a
  factorization of the morphism $\pi$, where $q$ is maximally étale
  (see \Theorem{\ref{factor-theorem}}). For a connection $M$ on $Y$
  the following are equivalent:
  \begin{enumerate}
  \item $\pi^!(M)$ is trivial.
    \item $q^!(M)$ is trivial.
  \end{enumerate}
\end{theorem}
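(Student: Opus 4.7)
The direction $(2)\Rightarrow(1)$ is immediate because $\pi^!=p^!\circ q^!$ and $p^!$ carries $\Oc_Z^n$ to $\Oc_X^n$. For $(1)\Rightarrow(2)$ the plan is to reduce to a Galois situation where the ramified factor becomes totally ramified, apply \Theorem{\ref{coh-connection}}(2) to obtain triviality upstairs, descend the resulting generic triviality to $Z$ via an intersection of function fields, and finally extend it globally.

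Concretely, I would let $s:\bar X\to X$ be the integral closure of $X$ in a Galois closure of $k(X)/k(Y)$, so $\bar\pi=\pi\circ s$ is Galois with group $\bar G$. By \Theorem{\ref{factor-theorem}} together with the uniqueness of the maximally étale factor up to choice of Galois cover, the factorization $\pi=q\circ p$ lifts to $\bar\pi=\bar q\circ\bar p$ with $\bar q:\bar Z\to Y$ maximally étale for $\bar\pi$, $\bar p:\bar X\to\bar Z$ totally ramified (the Galois case of the theorem), and an étale map $\bar r:\bar Z\to Z$ satisfying $q\circ\bar r=\bar q$ and $\bar r\circ\bar p=p\circ s$. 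Writing $k(\bar X)^H=k(X)$ and $k(\bar X)^I=k(\bar Z)$ for the inertia subgroup $I$, this yields the key field identity $k(Z)=k(\bar X)^{\langle H,I\rangle}=k(X)\cap k(\bar Z)$ inside $k(\bar X)$. From the hypothesis $\pi^!(M)\cong\Oc_X^n$ one obtains $\bar\pi^!(M)=\bar p^!(\bar q^!(M))\cong\Oc_{\bar X}^n\cong\bar p^!(\Oc_{\bar Z}^n)$, and since $\bar p$ is totally ramified, \Theorem{\ref{coh-connection}}(2) says $\bar p^!:\Con(\bar Z)\to\Con(\bar X)$ is fully faithful; as fully faithful functors reflect isomorphisms, this forces $\bar q^!(M)\cong\Oc_{\bar Z}^n$ on $\bar Z$.

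To descend to $Z$, set $V=M_\eta$ and view the isomorphisms $V\otimes_{k(Y)}k(X)\cong k(X)^n$ and $V\otimes_{k(Y)}k(\bar Z)\cong k(\bar Z)^n$ inside $V\otimes_{k(Y)}k(\bar X)$. The $n$-dimensional $k$-subspace $U$ of horizontal sections of $V\otimes_{k(Y)}k(\bar X)$ is realized by either trivialization, since a full solution space over a subfield already coincides with the solution space over any containing field. Fixing a $k(Y)$-basis of $V$ reduces the intersection to the trivial identity $k(X)\cap k(\bar Z)=k(Z)$, giving
\begin{displaymath}
U\subset\bigl(V\otimes_{k(Y)}k(X)\bigr)\cap\bigl(V\otimes_{k(Y)}k(\bar Z)\bigr)=V\otimes_{k(Y)}k(Z)=q^!(M)_\xi,
\end{displaymath}
so $q^!(M)_\xi$ carries $n$ horizontal sections and is trivial as a $\Dc_{k(Z)}$-module.

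Finally, a horizontal section of $q^!(M)_\xi$ can have no pole along any prime divisor $D\subset Z$: writing it as $t^{-m}u$ with $t$ a uniformizer at the generic point of $D$, $u\notin t\,q^!(M)$, and choosing $\partial$ with $\partial(t)=1$, the equation $\nabla_\partial(t^{-m}u)=0$ forces $\nabla_\partial u=mt^{-1}u$, contradicting regularity of the left side unless $m=0$. Hence the $n$ generic horizontal sections extend globally, producing an injection $\Oc_Z^n\hookrightarrow q^!(M)$ of locally free sheaves of equal rank, hence an isomorphism. The main obstacle is to faithfully track the construction of $\bar Z$ from \Theorem{\ref{factor-theorem}} and match it to the maximally étale factor $q$ given in the hypothesis so that the crucial identity $k(Z)=k(X)\cap k(\bar Z)$ really applies; once this is secured, the generic trivialization through a field intersection and the pole-order extension argument are essentially formal.
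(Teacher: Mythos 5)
Your proof is correct and, in its first half, runs parallel to the paper's: both pass to a Galois cover $\bar X$ of $X/Y$, lift the factorization to $\bar X\xrightarrow{\bar p}\bar Z\xrightarrow{}Z\xrightarrow{q}Y$ with $\bar p$ totally ramified, and invoke \Theorem{\ref{coh-connection}} to get $\bar q^!(M)\cong\Oc_{\bar Z}^n$ (you phrase it as ``fully faithful functors reflect isomorphisms'', the paper applies $\bar p_+^c\circ\bar p^!=\id$; these are the same fact). The descent from $\bar Z$ to $Z$ is where you diverge. The paper treats $\bar\pi^!(M)$ as a trivial $\Dc_{k(\bar X)}[G]$-module, where $G$ is generated by the Galois groups of $k(\bar X)/k(\bar Z)$ and $k(\bar X)/k(X)$, and then applies the Galois descent equivalence of \Proposition{\ref{morita}} to conclude that $\bar M_{k(Z)}=\bar M_{k(\bar X)^G}$ is trivial. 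You instead work directly with the $n$-dimensional constant space $U$ of horizontal sections inside $V\otimes_{k(Y)}k(\bar X)$: since $U$ is already the solution space over $k(X)$ and over $k(\bar Z)$, choosing a basis of $V$ collapses the containment to the field identity $k(X)\cap k(\bar Z)=k(\bar X)^{\langle H,I\rangle}=k(Z)$, giving $U\subset q^!(M)_\xi$. Both descents are sound; the paper's is more economical once the machinery of \Proposition{\ref{morita}} is in place, while yours is more elementary and makes the role of the field intersection transparent. You also spell out the last step --- that generic triviality of a connection implies global triviality, via the pole-order computation $\nabla_\partial(t^{-m}u)=0\Rightarrow m=0$ --- which the paper leaves implicit (it would cite \Proposition{\ref{simple-coh}}).
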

\begin{proof} We only need to prove $(1)\Rightarrow{2}$, and can
  assume that $\pi \neq q$. Let
  $\bar \pi: \bar X \xrightarrow{s} X \to Y$ be a Galois cover of
  $\pi$, so that $\bar X$ is the integral closure of $X$ in the
  fraction field of $\bar X$. There exists a factorization
  $\bar X \xrightarrow{\bar p} \bar Z \xrightarrow{r} Z
  \xrightarrow{q} Y$
  such that $\bar p$ is totally ramified and $r\circ \bar p$ is
  maximally étale and a map $p: X\to Z$ such that
  $p\circ s = r\circ \bar p$ (see proof of
  \Theorem{\ref{factor-theorem}}). By construction $k(Z)$ is the
  invariant field of the group $G$ that is generated by the union of
  the Galois groups of $k(\bar X)/k(\bar Z)$ and $k(\bar X)/k(X)$.
  Putting $\bar M = q^!(M)$ we have for some integer $m$,
\begin{displaymath}
r^!(\bar M) = \bar p^c_+\bar p^!(r^!(\bar M)) =\bar p_+^c(\Oc_X^m) = p^c_+\bar
p^!(\Oc_{\bar Z}^m)= \Oc_{\bar Z}^m,
\end{displaymath}
where the first and last isomorphisms follow from
\Theorem{\ref{coh-connection}}; hence $r^!(\bar M)$ is trivial. Since
also $ p^!(\bar M) =\pi^!(M)  $ is trivial it follows that
$\bar \pi^!(M)$ is a trivial $\Dc_{\bar L}[G]$-module; hence by Galois
descent \Prop{\ref{morita}} $\bar M_{k(Z)}$ is a trivial
$\Dc_{k(Z)}$-module; hence $\bar M$ is trivial.
\end{proof}

Following Nori \cite{nori:fundgrp}, say that a locally free
$\Oc_Y$-module $M$ is {\it finite} if there exists a Galois cover
$q: Z \to Y$ such that $q^*(M)$ is a trivial vector bundle. The
assertion that a vector bundle $M$ is finite if $\pi^*(M)$ is a
trivial vector bundle was proven in
\cite{biswas-dossantos:vector-fund} ($Y$ smooth projective, using the
Tannakian formalism) and \cite{antei-mehta:vector-normal} ($Y$ normal
projective, using the Harder-Narashiman filtration).
\Theorem{\ref{riemann}} (1) contains their result when the
characteristic is $0$, with a more direct proof. The result
\cite{esnault-hai:fund-group}*{Th 2.15} is similar to (2) below, but
the argument here is less involved.

\begin{theorem}\label{riemann} Let $Y/k$
  be a normal proper variety over an algebraically closed
  field $k$ of characteristic $0$, and consider the diagram
  (\ref{sur-gal}).
  \begin{enumerate}
  \item Let $M$ be a locally free and coherent $\Oc_Y$-module, and
    assume that $ \pi^*(M)$ is a trivial $\Oc_{\bar X}$-module. There
    exists a factorization
    \begin{displaymath}
      \bar \pi: \bar X \xrightarrow{p} X \xrightarrow{r}  Z \xrightarrow{q} Y,
    \end{displaymath}
    where all varieties are normal and proper, $q$ is a Galois
    cover, and $q^*(M)$ is trivial.
  \item The category of finite vector bundles that are trivial after
    pulling back by the Galois cover $q: Z\to Y$ is the same as the
    category of $L_1$-étale trivial connections on $Y$, where $L_1$ is
    the fraction field of $Z$.
  \end{enumerate}

\end{theorem}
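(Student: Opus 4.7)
The plan for part (1) is to identify the desired $Z$ with the maximal étale Galois subcover of $\bar\pi$ and to extract a canonical $G$-representation on $V:=\Gamma(\bar X,\bar\pi^*(M))$ whose triviality on inertia will descend the trivialization from $\bar X$ down to $Z$. First, by passing to a Galois closure (which does no harm, since $\bar\pi^*(M)$ remains trivial after further pullback), I would assume $\bar\pi$ is Galois with group $G$. Apply \Theorem{\ref{factor-theorem}} to produce a factorization $\bar X\to X\to Z\to Y$ with $q:Z\to Y$ maximally étale Galois; the subgroup $N\subset G$ with $Z=\bar X^N$ is the normal subgroup generated by all codimension-one inertia groups of $\bar\pi$. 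Since $\bar X$ is integral and proper over the algebraically closed field $k$, one has $\Gamma(\bar X,\Oc_{\bar X})=k$ (cf.\ \Proposition{\ref{constant-global}}), so $V$ is a $k$-vector space of dimension $m=\rank M$, and the triviality hypothesis gives a canonical $G$-equivariant isomorphism $V\otimes_k \Oc_{\bar X}\cong \bar\pi^*(M)$, defining a representation $\rho:G\to \GL(V)$.

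The key step is to show that $\rho$ kills every inertia subgroup $I_{\bar y}$ at a codimension-one point $\bar y$. For $g\in I_{\bar y}$, $g$ fixes $\bar y$ and acts trivially on the residue field $k(\bar y)$, hence acts as the identity on the fibre $\bar\pi^*(M)\otimes_{\Oc_{\bar X,\bar y}} k(\bar y) = M_{\bar\pi(\bar y)}\otimes_{\Oc_{Y,\bar\pi(\bar y)}} k(\bar y)$. Reading this fibre through the $G$-equivariant trivialization as $V\otimes_k k(\bar y)$, on which $g$ acts as $\rho(g)\otimes\id$, forces $\rho(g)=\id_V$. Hence $\rho$ vanishes on $N$. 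Taking $N$-invariants of the equivariant isomorphism $V\otimes_k\Oc_{\bar X}\cong \bar r^*q^*(M)$ (after applying $\bar r_*$) and using that $(\bar r_*\Oc_{\bar X})^N=\Oc_Z$ yields an isomorphism $V\otimes_k\Oc_Z\cong q^*(M)$, so $q^*(M)$ is trivial.

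For part (2), I would show that both categories are equivalent to the category $\on{Rep}_k(H)$ of finite-dimensional $k$-linear representations of $H:=\Aut(L_1/K)$, and therefore to each other. Since $q$ is étale Galois, Galois descent identifies vector bundles (respectively connections) on $Y$ with $H$-equivariant vector bundles (respectively connections) on $Z$; within each such category the subcategory of objects whose pullback by $q^*$ (respectively $q^!$) is trivial is identified with pairs $(V,\sigma)$ consisting of a finite-dimensional $k$-vector space $V$ and a representation $\sigma:H\to \GL(V)$. A direct equivalence between the two categories on $Y$ is realized by equipping a finite vector bundle $M$ (trivial after $q^*$) with the descent of the trivial connection on $q^*(M)\cong V\otimes_k \Oc_Z$, which is $H$-equivariant because the $H$-action on the $k$-vector space $V$ is $k$-linear and hence horizontal; the quasi-inverse forgets the connection.

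The main obstacle is the inertia-triviality step in part (1): it is here that the global geometric hypothesis (triviality of $\bar\pi^*(M)$ as a vector bundle on $\bar X$) must be combined with the arithmetic input (inertia acts trivially on residue fields) to yield the representation-theoretic output needed for Galois descent along the étale cover $q$. Once this is done the descent computations are routine, and part (2) follows formally from \Proposition{\ref{morita}} together with the corresponding descent statement for vector bundles.
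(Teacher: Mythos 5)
Your core insight is exactly right and is the crux of the matter: inertia acts trivially on the residue field and on the fibre of $\bar\pi^*(M)$, hence trivially on $V$ via the equivariant trivialization, which is what makes the descent possible. Part (2) is also essentially the paper's argument, rephrased through the equivalence with $\on{Rep}_k(H)$.

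However, the route you take to produce $Z$ in part (1) has a genuine gap. You invoke \Theorem{\ref{factor-theorem}} to ``produce a factorization $\bar X\to X\to Z\to Y$ with $q:Z\to Y$ maximally \'etale Galois,'' but that theorem only provides \'etaleness of $q$ at points of height $1$ specializing to a prescribed finite set, and it only gives \'etaleness at the deeper point $z=p(x)$ under the additional hypothesis that $Y$ is smooth at $\pi(x)$. Here $Y$ is merely normal, so there is no purity theorem available, and the subgroup $N$ generated by codimension-one inertia need not contain the inertia at points of higher codimension; consequently $G/N$ need not act freely on $\bar X^N$ and $q$ need not be \'etale. Your inertia-triviality argument in fact works verbatim at all points, not just at codimension-one points, so the correct thing to kill is the subgroup generated by \emph{all} inertia (equivalently, since $k$ is algebraically closed, all point-stabilizers). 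You also do not verify that $\Gal(\bar L/L)$ lies in $N$, which is needed for the asserted intermediate map $X\to Z$ to exist.

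The paper sidesteps all of this by inverting the order of construction: it sets $H=\ker(\rho:G\to\GL_k(V))$ and $Z=\bar X^H$, and then shows directly that the stabilizer $G_1^z$ of any closed point $z\in Z$ maps trivially into $\GL_k(V)\cong\GL_k(q^*(M)\otimes k(z))$ (precisely your inertia-triviality mechanism), so that $G_1=G/H$ acts freely and $q$ is \'etale; the inclusion $\Gal(\bar L/L)\subset H$ follows because $\pi^*(M)$ is already trivial on $X$, so $\Gal(\bar L/L)$ acts trivially on $V$. In short: you try to descend onto a pre-chosen ``maximal \'etale'' $Z$ and then verify the representation factors, whereas the paper lets the representation $\rho$ itself define $Z$ and then checks \'etaleness. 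The latter avoids any appeal to purity or to \Theorem{\ref{factor-theorem}}, and yields the intermediate factorization $\bar X\to X\to Z$ without extra work.
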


\begin{remark}\label{rem-stein}
  \begin{enumerate}
  \item If there exist a surjective proper morphism $\pi_1:X_1 \to Y$
    such that $\pi_1^*(M)$ is trivial and $\pi_1= \pi \circ p$ is a
    Stein factorization with $\pi$ finite and $p$ has connected
    fibres, then $\pi^*(M)$ is also trivial.
  \item A simple $\Dc_Y$-module is monomial, i.e.
    $M\subset \pi_+(\Lambda)$ for some $\pi$ and rank $1$ module
    $\Lambda$, if and only if $L\otimes_KM_K\cong \Lambda^m$ as
    $\Dc_L$-module, $m= \dim_K M_K$, and $M$ is a monomial connection
    if and only if $\pi^!(M)$ splits into a sum of rank $1$
    connections. In \Theorem{\ref{riemann}} only étale trivial
    connections are mentioned, but the assertion should be possible to
    generalize so that any covering connection is a submodule of a
    monomial module with respect to an étale map.

  \end{enumerate}\end{remark}

\begin{proof}
  (1): The Galois group of $\bar L/K$ acts on $\bar X$ and
  $\bar \pi^*(M)$, and also on the global sections
  $V= \Gamma(\bar X, \bar \pi^*(M))$. Let $H$ be the kernel of the map
  $G \to \Glo_k(V)$, put $Z= \bar X^H$, and let $p: \bar X \to Z$ be
  the invariant map. Since $\bar X$ is connected, for a closed point
  $\bar x$ mapping to the point $z$ in $Z$,
  $\Gamma (\bar X, \Oc_{\bar X}) = k_{\bar X, \bar x}=k_{Z, z}=k $,
  since $k$ is algebraically closed and $\bar X$ is proper. Since
  $ p^*q^*(M)= \bar \pi^*(M) =\Oc_{\bar X}\otimes_k V$, it follows
  that
\begin{displaymath}\tag{*}
  V=\Gamma(\bar X, \bar \pi^*(M)) \cong k_{\bar X, \bar
    x}\otimes_{\Oc_{\bar X, \bar x}}\bar \pi^*(M)=k_{Z, z}\otimes_{\Oc_{Z,z}}q^*(M) = k^m,   
\end{displaymath}
where $m= \rank M = \dim_k V $, and 
\begin{displaymath}
  \Gamma(Z, q^*(M)) = \Gamma (Z, (p^*(q^*(M)))^H) = \Gamma(Z,
  (\Oc_{\bar X}\otimes_k V)^H) = \Gamma(Z, \Oc_{\bar X}^H)\otimes_k V =
  V, 
\end{displaymath}
implying that $q^*(M)$ is also  trivial. Let $G_1= G/H$ be the Galois group
of $k(Z)/K$, so that $q: Z\to Y$ is generically Galois. To see that
$q$ is actually  étale it suffices to see that the stabilisator subgroup $G_1^z$
of any closed point $z $ in $Z$ is trivial. This follows since the map
$G_1 \to \Glo_k(V)$ is injective and $G_1^z$ acts trivially on the
right side of \thetag{*}. It remains to see that the Galois group of
$p$ belongs to $H$, so that we get a factorization $\bar X\to X \to Z$
as asserted. This follows by considering the case when $M$ is already
trivial, $M= \Oc_Y\otimes_k V$, since then the whole group $G$ acts
trivially on $V$.

(2): Since $Z$ is connected in (1), $\Gamma(Z, \Oc_Z)= k$ as $k$ is
algebraically closed. First assuming that $M$ is a locally free and
finite $\Oc_Y$-module we show that it is also an étale trivial
connection. Since $q^*(M)$ is a trivial $\Oc_Z$-module,
$q^*(M)= \Oc_Z\otimes_kV$, we have $V= \Gamma(Z, q^*(M))$, and in
particular $L_1\otimes_KM_K = L_1\otimes_k V$, where $V$ is a
representation of $G_1$, and $\Delta (V)= M_K $ in
\Theorem{\ref{equivalence}}. Moreover, since $q$ is a Galois covering,
$ M = (\Oc_Z\otimes_kV)^{G_1}$, which evidently is a connection on $Y$
\Prop{\ref{morita}}. Conversely, if $M$ is an $L_1$-étale trivial
connection, then $q^!(M)$ is a connection that is generically trivial,
and therefore trivial.
\end{proof}

\begin{corollary}\label{finite-mod-cor}
  Assume that $M$ is a locally free $\Oc_Y$-module on a simply
  connected normal projective variety $Y$ over an algebraically closed
  field of characteristic $0$. If there exists a surjective proper
  morphism $\pi: X\to Y$ such that $\pi^*(M)$ is trivial, then $M$ is
  trivial.
\end{corollary}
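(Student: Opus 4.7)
The plan is to reduce the statement to Theorem \ref{riemann} and then invoke the simple connectedness of $Y$ through the category of regular singular connections. First, I would apply Stein factorization to the proper surjective morphism $\pi:X\to Y$, writing $\pi = q\circ p$ with $q:Z\to Y$ finite surjective and $p$ having connected fibres. By Remark \ref{rem-stein}(1) the triviality of $\pi^*(M)$ descends to triviality of $q^*(M)$, so we are reduced to the case of a finite surjective morphism. Replacing $Z$ by its normalization (which preserves triviality of the pullback of $M$, as it is a finite birational map and a trivial bundle pulls back to a trivial bundle) we may assume $Z$ is normal.

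Next, let $\bar{X}$ denote the integral closure of $Y$ in a Galois closure $\bar{L}$ of the function field extension $k(Z)/k(Y)$, so that $\bar{X}$ is a normal proper variety over $k$ and the composite $\bar{\pi}:\bar{X}\to Z\to Y$ is finite surjective with $k(\bar{X})/k(Y)$ Galois. Since $\bar{\pi}$ factors through $Z$ and $q^*(M)$ is trivial on $Z$, the pullback $\bar{\pi}^*(M)$ is a trivial $\mathcal{O}_{\bar{X}}$-module. We are therefore in the exact setting of Theorem \ref{riemann}(1), which produces a factorization
\begin{displaymath}
 \bar{X} \longrightarrow X' \xrightarrow{\,r\,} Z' \xrightarrow{\,q'\,} Y
\end{displaymath}
in which $q':Z'\to Y$ is a Galois étale covering and $(q')^*(M)$ is trivial. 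By Theorem \ref{riemann}(2) the module $M$ then carries the structure of a $k(Z')$-étale trivial connection on $Y$, that is, $M\in \Con_{et}(Y)\subset \Con_{rs}(Y)$.

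Finally, since $Y$ is projective we have $\Con(Y)=\Con_{rs}(Y)$, and by hypothesis $Y$ is simply connected, which by definition means every object of $\Con_{rs}(Y)$ is trivial. Applied to the connection structure produced above, this forces $M\cong \mathcal{O}_Y^{m}$ as $\mathcal{O}_Y$-modules, where $m=\rank M$, completing the proof. The main obstacle, if any, lies in the two reduction steps: making sure that the Stein factorization conclusion from Remark \ref{rem-stein}(1) applies and that after passing to the Galois closure the hypotheses of Theorem \ref{riemann}(1) (normality and properness of the source, existence of the Galois structure on $\bar{L}/k(Y)$) are in place; every remaining step is a direct invocation of a result already established in the paper.
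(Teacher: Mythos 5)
Your proof is correct and follows essentially the same route as the paper's: reduce to the finite case via Stein factorization and Remark \ref{rem-stein}(1), pass to a Galois cover to invoke Theorem \ref{riemann} and obtain an étale Galois covering $q'$ with $(q')^*(M)$ trivial, then conclude $M$ is a connection and use simple connectedness. The only cosmetic difference is that the paper extracts the connection structure on $M$ directly via Galois descent $M = q^*(M)^G$ (or the trace split of $M \to q_*q^*(M)$), whereas you route it through Theorem \ref{riemann}(2), which packages the same descent argument; your extra care about normalizing the Stein factorization and verifying the Galois-closure hypotheses of Theorem \ref{riemann}(1) merely spells out steps the paper leaves implicit.
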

\begin{proof} By \Remark{\ref{rem-stein}} one can assume that $\pi$ is
  finite, and by \Theorem{\ref{riemann}} there exists a Galois cover
  $q: Z\to Y $ such that $q^*(M)$ is a trivial connection. So that if
  $G$ is the Galois group of $q$, then $ M = q^*(M)^G$ is a
  connection, which is trivial since $Y$ is simply connected.
  Alternative: $q_*q^*(M)$ is a
  connection \Prop{\ref{global-invariant}}, which is trivial since $Y$
  is simply connected; in particular it is a trivial $\Oc_Y$-module.
  The trace morphism gives a split of the adjoint morphism
  \begin{displaymath}
    M  \stackrel[i]{\frac 1n \Tr }{\leftrightarrows} q_*q^*(M),
  \end{displaymath}
  where $n $ is the degree of $q$. Therefore $M$ is a trivial
  $\Oc_Y$-module.
\end{proof}
One wants to apply \Corollary{\ref{finite-mod-cor}} to get a criterion
that a locally free $\Oc_Y$-module $M$ on a simply connected variety
be trivial. The idea is that if there exists a point $y$ in $Y$ such
that any other point in $Y$ can be connected by a subvariety
$C_\lambda $, a member of an algebraically parametrized family,
$\lambda \in \Lambda$, and if the restriction of $M$ to each
$C_\lambda$ is trivial, then $M$ should also be trivial. This idea was
considered in \cite{biswas-dossantos:ratcon} when the $C_\lambda$ are
stable rational curves and $Y$ is rationally connected, and we
therefore do not claim essential originality, the main goal being to
clarify and connect to the present context.

We have then a diagram with vertices formed by normal projective
varieties over an algebraically closed field of characteristic $0$
\begin{equation}
\label{dia-sc}
  \begin{tikzcd}
     X \arrow[r,  "\pi"] \arrow[d,xshift=0.7ex, "p"] & Y
\\ \Lambda \arrow[u,xshift=-0.7ex,"\sigma"]\arrow[ur, "g"],&
\end{tikzcd}
  \end{equation}
  where $\pi$ is a surjective, $\sigma$ a section of $p$ (so
  that $p\circ \sigma = \id_\Lambda$), and $g= \pi \circ \sigma$.
  \begin{proposition} \label{equivalent-trivial} Assume that $p$ is
    proper. The following are equivalent for a locally free
    $\Oc_Y$-module $M$:
    \begin{enumerate}
    \item $\pi^*(M)$ is a trivial locally free $\Oc_X$-module.
    \item $g^*(M)$ is a trivial $\Oc_\Lambda$-module and
      $p^*p_*\pi^*(M)= \pi^*(M)$ (so that $\pi^*(M)$ is
      trivial relative to $p$).
      \item $M$ is a connection such that $\pi^!(M)$ is trivial.
      \end{enumerate}
\end{proposition}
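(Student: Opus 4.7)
The strategy is to split the three-way equivalence into two pairs, $(1)\Leftrightarrow(2)$ and $(1)\Leftrightarrow(3)$. The first is formal, relying only on the relation $p\circ\sigma=\id_\Lambda$ and the projection formula; the second is the substantive content, which I will deduce from \Theorem{\ref{riemann}} after a Stein-factorization reduction.

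For $(2)\Rightarrow(1)$, applying $\sigma^*$ to the assumed identity $\pi^*(M)=p^*p_*\pi^*(M)$ gives
\[ g^*(M)=\sigma^*\pi^*(M)=(p\sigma)^*p_*\pi^*(M)=p_*\pi^*(M), \]
so $p_*\pi^*(M)\cong\Oc_\Lambda^n$ and hence $\pi^*(M)=p^*\Oc_\Lambda^n\cong\Oc_X^n$. For the converse $(1)\Rightarrow(2)$, the equality $g^*(M)=\sigma^*\Oc_X^n=\Oc_\Lambda^n$ is immediate, and it remains to check $p^*p_*\Oc_X=\Oc_X$, which by the projection formula reduces to $p_*\Oc_X=\Oc_\Lambda$. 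I will establish this via the Stein factorisation $X\to\Lambda'\to\Lambda$ of $p$, where $\Lambda'=\Spec(p_*\Oc_X)$ is integral and finite over $\Lambda$: the section $\sigma$ composed with $X\to\Lambda'$ produces a section of $\Lambda'\to\Lambda$, which on function fields forces $K(\Lambda')=K(\Lambda)$, so $\Lambda'\to\Lambda$ is finite birational, and by normality of $\Lambda$ it is an isomorphism.

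The implication $(3)\Rightarrow(1)$ is immediate, since a trivial connection is in particular a trivial $\Oc_X$-module. For $(1)\Rightarrow(3)$ I will first reduce to the case $\pi$ finite: Stein-factoring $\pi=\tilde\pi\circ\tilde p$ with $\tilde p$ of connected fibres and $\tilde\pi$ finite, the projection formula gives $\tilde\pi^*(M)=\tilde p_*\tilde p^*\tilde\pi^*(M)=\tilde p_*\Oc_X^n=\Oc_{X'}^n$, as noted in \Remark{\ref{rem-stein}}(1). With $\pi$ now finite, \Theorem{\ref{riemann}}(1) produces a Galois étale factorisation $X\xrightarrow{r} Z\xrightarrow{q} Y$ with $q^*(M)$ trivial as $\Oc_Z$-module, and \Theorem{\ref{riemann}}(2) identifies $M$ as an étale trivial connection with $q^!(M)=\Oc_Z\otimes_k V$ a trivial connection on $Z$. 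Since $\pi=q\circ r$, inverse image of a trivial connection being trivial gives
\[ \pi^!(M)=r^!q^!(M)=r^!(\Oc_Z\otimes_k V)=\Oc_X\otimes_k V, \]
a trivial connection on $X$. The entire conceptual obstacle is packaged in \Theorem{\ref{riemann}}, which upgrades the $\Oc$-linear triviality of $\pi^*(M)$ to an étale-trivial $\Dc_Y$-module structure on $M$; once this is granted, all the remaining steps are formal manipulations with adjunction, the projection formula, and connectedness of the varieties.
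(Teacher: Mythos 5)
Your proof is correct and follows the same overall route as the paper: both treat $(3)\Rightarrow(1)$ as immediate, both extract the triviality of $\pi^*(M)$ from condition (2) by the formal identity $g^*(M)=\sigma^*p^*p_*\pi^*(M)=p_*\pi^*(M)$, and both package the substantive upgrade from $\mathcal{O}$-triviality of $\pi^*(M)$ to étale-trivial $\mathcal{D}_Y$-module structure on $M$ entirely inside Theorem~\ref{riemann}. The two differences are cosmetic and expository. First, the paper argues in a single cycle $(3)\Rightarrow(1)\Rightarrow(2)\Rightarrow(3)$, whereas you split into two bi-implications; either organization is fine. Second, and more usefully, where the paper merely asserts that $(1)\Rightarrow(2)$ ``follows since $k$ is algebraically closed and $p$ is proper,'' you actually supply the argument: the section $\sigma$ factors through the Stein factor $\Lambda'=\Spec_\Lambda(p_*\Oc_X)$, giving a section of the finite map $\Lambda'\to\Lambda$, which forces $\Lambda'\cong\Lambda$ and hence $p_*\Oc_X=\Oc_\Lambda$. (In fact your appeal to normality at that point is unnecessary: a closed immersion section whose image is all of the integral scheme $\Lambda'$ is already an isomorphism, so the conclusion holds without normality of $\Lambda$.) Your Stein-factorization reduction in $(1)\Rightarrow(3)$ to the finite case before invoking Theorem~\ref{riemann} is also sound and matches the spirit of Remark~\ref{rem-stein}(1). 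Overall this is the paper's proof with the elided steps spelled out.
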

\begin{proof} 
  $(3)\Rightarrow (1)$ is evident and $(1)\Rightarrow (2)$ follows
  since $k$ is algebraically closed and $p$ is proper.
  $(2)\Rightarrow (3)$: Putting $\bar M= \pi^*(M)$, we have since
  $p^*p_*(\bar M) = \bar M$ and $\sigma^*\circ p^* = \id$,
  \begin{displaymath}
    g^*(M)=\sigma^*(\bar M) = \sigma^* p^*p_*(\bar M) =p_*(\bar M),
\end{displaymath}
so that $p^*(\bar M)$ is trivial; hence $\bar M$ is a trivial
$\Oc_{X}$-module. Since $X$ and  $Y$  are proper, the map $\pi$ is
also defined at points $x$ of  height $\leq 1$; since $\bar M$ is
trivial, it is the restriction    is proper.  In other words, $M$ is a finite module, so by
\Theorem{\ref{riemann}} it is a connection such that $\pi^!(M)$ is
trivial.
\end{proof}
The following result gives a sufficient condition that ensures that two locally
free modules are isomorphic.
\begin{proposition} \label{isomodules}
  Let $M$ and $N$ be locally free sheaves on a variety $X$. Assume
  that
  $N= \bigoplus N_i= N_{1}\otimes_{\Oc_X}(\oplus_{j=1}^r N_{1j})$,
  where the $N_i$ and $ N_{ij}= N_i^*\otimes_{\Oc_X} N_j$ are
  invertible sheaves. The following are equivalent:
  \begin{enumerate}
  \item $M\cong M_1\otimes_{\Oc_X}\oplus_{j=1}^r N_{1j}$, where $M_1$ is an
    invertible sheaf.
  \item We have an isomorphism of locally free sheaves 
    \begin{displaymath}
      End_{\Oc_X} (M)\cong End_{\Oc_X}(N).
    \end{displaymath}
  \end{enumerate}
\end{proposition}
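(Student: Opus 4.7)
The plan is first to reformulate (1) more symmetrically. Because $N = N_1 \otimes_{\Oc_X} \bigoplus_{j=1}^r N_{1j}$, setting $L := M_1 \otimes_{\Oc_X} N_1^{*}$ shows that (1) is equivalent to the existence of an invertible $\Oc_X$-module $L$ with $M \cong L \otimes_{\Oc_X} N$. I will establish both implications for this reformulated condition.

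For $(1) \Rightarrow (2)$ the computation is direct: if $M \cong L \otimes_{\Oc_X} N$, then
\begin{displaymath}
\End_{\Oc_X}(M) = M \otimes_{\Oc_X} M^{*} \cong (L \otimes_{\Oc_X} L^{*}) \otimes_{\Oc_X} \End_{\Oc_X}(N) \cong \End_{\Oc_X}(N),
\end{displaymath}
canonically, and this canonical isomorphism even respects the sheaf-of-algebras structure.

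For the converse $(2) \Rightarrow (1)$ I interpret the given isomorphism as one of sheaves of $\Oc_X$-algebras. Both $\End_{\Oc_X}(M)$ and $\End_{\Oc_X}(N)$ are then Azumaya $\Oc_X$-algebras of rank $r^{2}$ whose Brauer classes are trivial, being endomorphism algebras of honest vector bundles. An $\Oc_X$-algebra isomorphism $\End_{\Oc_X}(M) \cong \End_{\Oc_X}(N)$ yields an isomorphism of projectivisations $\Pb(M) \cong \Pb(N)$ over $X$; pulling back $\Oc_{\Pb(N)}(1)$ along this isomorphism, it differs from $\Oc_{\Pb(M)}(1)$ only by the pullback of a unique invertible sheaf $L$ on $X$, and taking direct images along the bundle projections then produces an isomorphism $M \cong L \otimes_{\Oc_X} N$, which is (1).

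The main obstacle is the intended reading of condition (2). If ``isomorphism of locally free sheaves'' were construed as merely an $\Oc_X$-module isomorphism, then $(2) \Rightarrow (1)$ would fail: on $X = \Pb^1_k$ one checks that $M = \Oc_X \oplus \Oc_X(1) \oplus \Oc_X(3)$ and $N = \Oc_X \oplus \Oc_X(2) \oplus \Oc_X(3)$ yield the same multiset $\{\Oc_X^{\oplus 3}, \Oc_X(\pm 1), \Oc_X(\pm 2), \Oc_X(\pm 3)\}$ of line-bundle summands in their endomorphism sheaves, so $\End_{\Oc_X}(M) \cong \End_{\Oc_X}(N)$ as $\Oc_X$-modules, yet no line bundle $L$ can satisfy $M \cong L \otimes_{\Oc_X} N$. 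The intended reading must therefore be that of an isomorphism of sheaves of $\Oc_X$-algebras, and the projective-bundle translation is the crux of the converse.
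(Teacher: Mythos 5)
Your proof is correct, and it takes a genuinely different route from the paper's. The paper argues by explicit idempotents: from the splittings of $N=\bigoplus N_i$ one gets idempotent sections $\bar\phi_i$ of $\End_{\Oc_X}(N)$ with $\bar\phi_i\circ\bar\phi_j=\delta_{ij}\bar\phi_i$, transports them via $\psi^{-1}$ to idempotents $\phi_i\in\Gamma(X,\End_{\Oc_X}(M))$, sets $M_i=\Imo(\phi_i)$, and then shows $M=\bigoplus M_i$ with $\Homo_{\Oc_X}(M_i,M_j)\cong\Homo_{\Oc_X}(N_i,N_j)=N_{ij}$, which forces each $M_i$ to be invertible with $M_j\cong N_{1j}\otimes M_1$. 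You instead invoke the Azumaya/projective-bundle picture: an algebra isomorphism $\End(M)\cong\End(N)$ gives an $X$-isomorphism $\Pb(M)\cong\Pb(N)$, and comparing the two $\Oc(1)$'s (whose difference is a pullback from $X$) produces the line bundle $L$ with $M\cong L\otimes N$. Your argument uses heavier machinery but is more conceptual and in fact does not need the hypothesis that $N$ decomposes into invertible summands; the paper's argument is more elementary, and is what one would need to also exhibit $M_1$ explicitly as $\Imo(\phi_1)$. Both proofs require the isomorphism in (2) to be an isomorphism of sheaves of $\Oc_X$-algebras, not merely of $\Oc_X$-modules: the paper uses this silently in the step $\phi_i\circ\phi_j=\delta_{ij}\phi_i$ (transport of products under $\psi^{-1}$), while you make it explicit and supply a clean counterexample on $\Pb^1_k$ showing that a mere module isomorphism is inadequate. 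That clarification is a useful addition that the paper's phrasing ``isomorphism of locally free sheaves'' glosses over.
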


\begin{proof}
  $(1)\Rightarrow (2)$ is clear. $(2)\Rightarrow (1)$: Select splittings
  \begin{displaymath}
N \stackrel[p_i]{j_{i}}{\leftrightarrows} N_i 
\end{displaymath}
so that $p_i \circ j_i$ is an automorphism of $N_i$, and put also
$\bar \phi_i = j_i\circ p_i$, which is an endomorphism of $N$ that
restricts to an automorphism of $N_i$, and
$\bar \phi_i \circ \bar \phi_j = \delta_{ij}\bar \phi_i$. Letting
$\psi$ denote an isomorphism in (2), the elements
$\phi_i = \psi^{-1}(\bar \phi_i)$ form global sections of
$End_{\Oc_X}(M) $, and we have
$\phi_i\circ \phi_j =\delta_{ij} \phi_i$. Putting
$M_i = \Imo (\phi_i)\subset M$, we then have $\phi_i(M_j)= \{0\}$ when
$i \neq j$, and $\phi_i$ defines an isomorphism $M_i \cong M_i$. and
$M_i \cap M_j = \{0\}$ when $i \neq j$. Hence the inclusions
$M_i\subset M$ induce an isomorphism
\begin{displaymath}
  \bigoplus_{i=1}^r M_i \cong M. 
\end{displaymath}
Moreover,
$End_{\Oc_X} (\oplus_{i=1}^r M_i) \cong
End_{\Oc_X}(\oplus_{i=1}^rN_i)$ implies that
$Hom_{\Oc_X}(M_i, M_j) = Hom_{\Oc_X}(N_i, N_j)=N_{ij}$, hence the
modules $M_i $ are invertible, and $M_j \cong
N_{1j}\otimes_{\Oc_X}M_1$. This implies (1).
\end{proof}
It can be hard to directly check (2) in
\Proposition{\ref{isomodules}}, but at least when $Y$ is simply
connected it can be used to reach a more precise result. This was
discussed already in the proof of \cite{biswas-dossantos:ratcon}*{Th.
  2.2}, where the main ideas were presented.
\begin{theorem}\label{invertible-inverse} Consider the diagram (\ref{dia-sc}), where moreover
  $Y$ is simply connected. If $\pi^*(M)= \bar \Lc^r$ for some
  invertible $\Oc_X$-module $\bar \Lc$, then there exists an
  invertible $\Oc_Y$-module $\Lc$ such that $M=\Lc^r$.
\end{theorem}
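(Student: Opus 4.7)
The plan is to trivialise the endomorphism bundle of $M$ and then invoke Proposition~\ref{isomodules} with the comparison module $N=\Oc_Y^r$. The key input is \Corollary{\ref{finite-mod-cor}}, which lets us descend triviality of a vector bundle from $X$ to $Y$ whenever $Y$ is simply connected and $\pi$ is a surjective proper map — and here $\pi$ is automatically proper since all vertices of (\ref{dia-sc}) are normal projective.

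First I would compute the pull-back of the endomorphism bundle:
\begin{displaymath}
  \pi^*\bigl(End_{\Oc_Y}(M)\bigr) \;\cong\; End_{\Oc_X}(\pi^*(M)) \;\cong\; End_{\Oc_X}(\bar \Lc^{\,r}) \;\cong\; \Oc_X^{\,r^2},
\end{displaymath}
where the first isomorphism uses that $M$ (and hence $End_{\Oc_Y}(M)$) is locally free, and the last uses $\bar \Lc^*\otimes_{\Oc_X}\bar \Lc\cong \Oc_X$ for the invertible sheaf $\bar \Lc$. Thus the locally free sheaf $End_{\Oc_Y}(M)$ becomes trivial after pulling back along the surjective proper morphism $\pi:X\to Y$.

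Next, since $Y$ is simply connected, \Corollary{\ref{finite-mod-cor}} applies and forces $End_{\Oc_Y}(M)\cong \Oc_Y^{\,r^2}$. Now set $N=\Oc_Y^{\,r}$, so that $N_i=\Oc_Y$ and $N_{ij}=N_i^*\otimes_{\Oc_Y}N_j=\Oc_Y$ are all invertible, and
\begin{displaymath}
  End_{\Oc_Y}(N) \;\cong\; \Oc_Y^{\,r^2} \;\cong\; End_{\Oc_Y}(M).
\end{displaymath}
Proposition~\ref{isomodules} then yields $M\cong \Lc\otimes_{\Oc_Y}\bigl(\bigoplus_{j=1}^r \Oc_Y\bigr)=\Lc^{\,r}$ for some invertible sheaf $\Lc$ on $Y$, which is exactly the desired conclusion.

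The only point that requires care — and the place where the hypotheses are actually used — is the application of \Corollary{\ref{finite-mod-cor}}; one must check that $\pi$ is a surjective proper morphism and that $End_{\Oc_Y}(M)$ is a locally free (hence coherent) $\Oc_Y$-module whose $\pi$-pullback is trivial as an $\Oc_X$-module. The properness of $\pi$ is free from the projectivity hypothesis in (\ref{dia-sc}); local freeness of $End_{\Oc_Y}(M)$ follows from that of $M$; and the triviality of the pullback was established in the first step. No further auxiliary machinery (Galois covers, Tannakian formalism, or the section $\sigma$ and the map $p$) is needed for this particular statement, so the main work is packaged into the preceding results.
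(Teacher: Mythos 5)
Your proof is correct, and it takes a slightly different — and arguably more economical — route than the paper's.

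The paper's own argument is: by \Proposition{\ref{equivalent-trivial}}, $End_{\Oc_Y}(M)$ is a connection (this step uses the full structure of diagram (\ref{dia-sc}), in particular the properness of $p$ and the section $\sigma$, to pass through condition (2) and then invoke \Theorem{\ref{riemann}}); since $Y$ is simply connected it is a trivial connection; then \Proposition{\ref{isomodules}} with $N=\Oc_Y^r$ finishes. You instead bypass the diagram entirely: you observe directly that $\pi^*(End_{\Oc_Y}(M))\cong\Oc_X^{r^2}$, note that $\pi$ is surjective and automatically proper since $X$ and $Y$ are both projective, and apply \Corollary{\ref{finite-mod-cor}} to conclude $End_{\Oc_Y}(M)$ is a trivial $\Oc_Y$-module, then finish with \Proposition{\ref{isomodules}} in exactly the same way. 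Both \Proposition{\ref{equivalent-trivial}} and \Corollary{\ref{finite-mod-cor}} ultimately feed off \Theorem{\ref{riemann}}, so the logical depth is the same, but your route is cleaner in that it never uses the maps $p$ and $\sigma$ or any relative-triviality condition; it only needs the single surjective proper map $\pi$ and the simple-connectedness of $Y$. This makes the argument slightly more portable, and it is a correct observation that the auxiliary data in (\ref{dia-sc}) play no role in this particular theorem, only in its corollaries where one must actually verify the hypothesis that $\pi^*(M)$ is trivial. One small cosmetic point: you could note explicitly that the statement you invoke in \Corollary{\ref{finite-mod-cor}} concludes ``$M$ trivial,'' i.e.\ trivial as a locally free $\Oc_Y$-module, which is precisely the hypothesis consumed by \Proposition{\ref{isomodules}}(2) with $N=\Oc_Y^r$; you have this right, but making the interface explicit would tighten the exposition.
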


\begin{proof}
  By \Proposition{\ref{equivalent-trivial}} $End_{\Oc_Y}(M)$ is a
  connection, which is trivial since $Y$ is simply connected. Then the
  assertion follows from \Proposition{\ref{isomodules}}, putting
  $N= \Oc_Y^r$, and hence $N_{1j}= \Oc_Y$.
\end{proof}
\begin{corollary}
  Let $Y$ be a rationally connected smooth projective variety and $M$
  be a locally free $\Oc_Y$-module.
\begin{enumerate}
\item The following are equivalent:
  \begin{enumerate}
  \item $M$ is trivial.
  \item the pullback $\gamma^*(M)$ is trivial for each map
    $\gamma : \Pb^1_k \to Y $.
  \end{enumerate}
  \item The following are equivalent:
  \begin{enumerate}
  \item $M = \Lc^r$ for some invertible module $\Lc$ on $Y$.
  \item The pullback $\gamma^*(M)= \bar \Lc^r$, for some invertible
    module $\bar \Lc$ on $\Pb^1_k$ and each non-constant map
    $\gamma : \Pb^1_k \to Y $.
  \end{enumerate}
\end{enumerate}
\end{corollary}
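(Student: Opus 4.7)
Both directions $(a)\Rightarrow(b)$ are immediate: the trivial bundle pulls back to the trivial bundle, and $\gamma^*(\Lc^r)=(\gamma^*\Lc)^r$ with $\gamma^*\Lc$ invertible on $\Pb^1_k$ for any non-constant $\gamma$. The content lies in the reverse directions.

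For part (1)'s direction $(b)\Rightarrow(a)$, my plan is to produce a proper surjection $e$ with $e^*M$ trivial and then invoke Corollary~\ref{finite-mod-cor}, noting that $Y$ is simply connected by Corollary~\ref{rat-conn-cor}. I use a pointed evaluation family: a smooth projective variety $\Lambda$ and a surjective (hence proper) morphism $e\colon\Pb^1_k\times\Lambda\to Y$ together with a fixed base point $y_0\in Y$ such that $e(0,\lambda)=y_0$ for every $\lambda$. Such a family is a standard consequence, in characteristic $0$, of the existence of very free rational curves through any point of a smooth projective rationally connected variety, after compactification of the parameter space. Placing this into diagram~(\ref{dia-sc}) with $X=\Pb^1_k\times\Lambda$, $\pi=e$, $p$ the projection onto $\Lambda$, and $\sigma(\lambda)=(0,\lambda)$, the composition $g=e\circ\sigma$ is the constant map to $y_0$, so $g^*M\cong\Oc_\Lambda\otimes_k M_{y_0}\cong\Oc_\Lambda^r$ is trivial. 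Hypothesis (b) gives $e^*M|_{\Pb^1_k\times\{\lambda\}}=\gamma_\lambda^*M\cong\Oc_{\Pb^1_k}^r$ for every $\lambda$; by cohomology and base change (Grauert), $p_*e^*M$ is locally free of rank $r$ and the canonical map $p^*p_*e^*M\to e^*M$ is an isomorphism. Condition~(2) of Proposition~\ref{equivalent-trivial} is thus satisfied, hence $e^*M$ is trivial, and Corollary~\ref{finite-mod-cor} forces $M$ to be trivial.

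For part (2)'s direction $(b)\Rightarrow(a)$, I reduce to part (1) by passing to $\End M$. For every $\gamma\colon\Pb^1_k\to Y$, $\gamma^*\End_{\Oc_Y}(M)=\End_{\Oc_{\Pb^1_k}}(\gamma^*M)$; if $\gamma$ is non-constant, hypothesis (b) gives $\gamma^*M\cong\bar\Lc_\gamma^r$ whose endomorphism sheaf is $\Oc_{\Pb^1_k}^{r^2}$, and if $\gamma$ is constant, $\gamma^*M$ is automatically trivial and the endomorphism sheaf is again $\Oc_{\Pb^1_k}^{r^2}$. Hence part (1), already established, gives $\End_{\Oc_Y}(M)\cong\Oc_Y^{r^2}$. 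Taking $N=\Oc_Y^r$ in Proposition~\ref{isomodules}, all the factors $N_i=\Oc_Y$ and $N_{ij}=N_i^*\otimes N_j=\Oc_Y$ are invertible, and $\End(N)=\Oc_Y^{r^2}\cong\End(M)$ is precisely condition~(2) of that proposition; its conclusion yields $M\cong M_1\otimes_{\Oc_Y}\bigoplus_{j=1}^r\Oc_Y=\Lc^r$ with $\Lc=M_1$ invertible.

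The main obstacle is securing the pointed evaluation family $e\colon\Pb^1_k\times\Lambda\to Y$ with $e(0,\cdot)\equiv y_0$ and $e$ surjective. The definition of rational connectedness recorded in the excerpt furnishes only an evaluation morphism with the two-point map dominant, leaving $g=e\circ\sigma$ a generally non-constant morphism $\Lambda\to Y$ whose triviality after pulling back $M$ is exactly what one must control. Promoting to the pointed version uses the Kollár--Miyaoka--Mori theorem on very free rational curves through a given point. Once this input is in hand, the remainder is a direct assembly of Corollary~\ref{rat-conn-cor}, Grauert's theorem, Proposition~\ref{equivalent-trivial}, Corollary~\ref{finite-mod-cor}, and Proposition~\ref{isomodules}.
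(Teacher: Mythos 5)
Your overall architecture is the paper's: use the diagram (\ref{dia-sc}), verify condition (2) of Proposition~\ref{equivalent-trivial} via a Grauert base-change argument, conclude that $\pi^*(M)$ is trivial (resp.\ a power of a line bundle), and then invoke Corollary~\ref{rat-conn-cor} together with Corollary~\ref{finite-mod-cor} (resp.\ Proposition~\ref{isomodules}/Theorem~\ref{invertible-inverse}). Your treatment of part (2) by passing directly to $\End_{\Oc_Y}(M)$ and applying part (1) is fine and simply inlines the proof of Theorem~\ref{invertible-inverse}, so that point is not a substantive deviation.

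The gap is in the construction of the family, and you have already sensed where it is. You want a \emph{projective} parameter space $\Lambda$, a surjective $e\colon\Pb^1_k\times\Lambda\to Y$, and a section $\sigma$ with $e\circ\sigma$ constant. The Koll\'ar--Miyaoka--Mori theorem gives you very free rational curves through a fixed $y_0$, i.e.\ a dominating family parametrized by a \emph{quasi-projective} $\Lambda_0$ with total space $\Pb^1_k\times\Lambda_0$; but when you compactify $\Lambda_0$ to a projective $\Lambda$, the evaluation morphism does \emph{not} extend to a morphism $\Pb^1_k\times\Lambda\to Y$. The boundary corresponds to degenerating the source $\Pb^1$ into a tree of rational curves, not to extending the map on the same source. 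The correct compactification is the Kontsevich moduli $\overline{M}_{0,1}(Y,\beta)$ (or its fibre over $y_0$ under the evaluation) together with its universal curve; the total space is then not a product, and its fibres over $\Lambda$ are trees of stable rational curves. This is precisely the $X$ in the paper's diagram.

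With that replacement, one extra observation is needed that your argument silently bypasses: hypothesis (b) speaks only of maps $\gamma\colon\Pb^1_k\to Y$, so to conclude that $\bar M=\pi^*(M)$ is trivial on every fibre of $p$ one must check it on the degenerate fibres as well. Each such fibre is a tree $C=\bigcup C_i$ of $\Pb^1$s; hypothesis (b) gives triviality of $\bar M|_{C_i}$ for each component, and since $C$ is a tree one can glue the trivializations inductively across the nodes (equivalently, $H^1(C,\Oc_C)=0$, so the Grauert argument still applies and $p_*\bar M$ is locally free with base change). That one-line patching is the content of the paper's sentence ``The condition (b) implies that the restriction of $M$ to a tree of stable rational curves is trivial,'' and it is the ingredient your version would need to make Proposition~\ref{equivalent-trivial} applicable.
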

\begin{proof} In either case we only need to prove
  $(b)\Rightarrow (a)$. Since $X$ is rationally connected there exists
  a diagram of the type (\ref{dia-sc}), where the fibres of $p$ form
  trees of stable rational curves, and the map $g$ is constant. Put
  $\bar M = \pi^*(M)$. (1): The condition (b) implies that the
  restriction of $M$ to a tree of stable rational curves is trivial.
  This implies that the canonical map $p^*p_*(\bar M)\to \bar M$ is an
  isomorphism, and since moreover $g$ is constant, (2) in
  \Proposition{\ref{equivalent-trivial}} is satisfied, hence $ M$ is a
  connection; hence $M$ is trivial by \Corollary{\ref{rat-conn-cor}}.
  (2): The $\Oc_X$-module $\tilde M= End_{\Oc_X}(\bar M)$ is trivial
  along each fibre of $p$, and is therefore trivial as in the proof of
  (1). Hence by
  \Propositions{\ref{equivalent-trivial}}{\ref{isomodules}},
  $\bar M = \bar \Lc^r$ for some invertible sheaf $\bar \Lc$, and then
  the assertion follows from \Theorem{\ref{invertible-inverse}}.
\end{proof}
\subsection{Trace characterization of the canonical submodules}\label{can-mod-sect}
We will study the canonical submodules $N_j$ of
$N=\pi_+(\Oc_X)$\footnote{It is possible to extend the treatment to
  modules of the form $N=\pi_+\pi^!(C)$ where $C$ is any semisimple
  connection on $Y$.} using the canonical stratification of $\pi$
\begin{displaymath}
\{X_{i}, Y_j ; j\in J, i\in I_j \}.
\end{displaymath}
To each stratum $ X_{i}$, $i \in I_j$, we can associate a
factorization
\begin{displaymath}
  \pi : X \xrightarrow{p_{i}} Z_{i}\xrightarrow{q_{i}}  Y
\end{displaymath}
as in \Theorem{\ref{factor-theorem}}; we have for corresponding
generic points $\pi(x_{i})= q_{i}(z_{i})= y_{j}$. Here $q_{i}$ is
maximally étale at the point $z_{i}= p_{i}(x_{i})$ and $p_{i}$ is
minimally totally ramified at the generic point $x_{i}$ of $X_{i}$,
but if $X/Y$ is not Galois then $p_i$ need not be totally ramified at
$p_i(x_i)\in Z_i$ (see \Definition{\ref{def-tot-ram}}), so that
\Theorem{\ref{vanishing}} is not applicable. Therefore, similarly to
the proof of \Theorem{\ref{factor-theorem}}, we need to take a field
extension $ \bar L /L= k(X)/K=k(Y)$ such that $\bar L/K$ and
$\bar L/L$ are Galois. Referring to the diagram
(\ref{galois-diagram}), we in particular have maps
\begin{displaymath}\tag{*}
\bar \pi:   \bar X \xrightarrow{s} X \xrightarrow{\pi} Y,
\end{displaymath}
where $\bar X$ is the integral closure of $X$ in $L$, so that
$\bar L = k(\bar X)$.

The {\it inertia scheme} $Z^{x_i}$ of the stratum $X_{i}$ is in
general singular, so that the notation $(p_{i})_+(\Oc_X)$ as
$\Dc_{Z_{i}}$-module need not be meaningful. However, the point
$z_{i}$ is smooth and also smooth over $y_j$ so that the
$\Dc_{Z^{x_i}, z_{i}}$-module $(p_{i})_+(\Oc_X)_{z_{i}}$ is
well-defined.

To clarify the definition of inertia modules below it can be
helpful to first consider the commutative diagram
\begin{displaymath}
\tag{*}
  \begin{tikzcd}
     \bar B & \arrow[l,  "\bar p"'] \bar C
     \\ B \arrow[u, "s"] &  \arrow[l,"p"] \arrow[u,"r"']C
\end{tikzcd}
  \end{displaymath}
  of free $C$-algebras, where $C$ is noetherian, and that the maps $s$
  and $\bar p$ make $ \bar B$ and $\bar B$ free algebras over $B$ and
  $\bar C$, respectively (the maps $p,s, \bar p, r$ are inclusion
  maps). The trace morphism $\tr_{B/C}: B \to C$ is described in
  (\ref{relative-can}) and we put $\Tc_{B/A}= \Ker \tr_{B/A}$. We have
  \begin{displaymath}\tag{*}
    \tr_{\bar B/C} = \tr_{\bar C/C} \circ \tr_{\bar B/\bar C} =
    \tr_{B/C} \circ
    \tr_{\bar B/B}.
\end{displaymath}
Let $ \overline{\tr}_{B/C}: B \to \bar C $ be the
restriction of $\tr_{\bar B/\bar C}$ to $B\subset \bar B$, so that
\begin{displaymath}
  \tr_{\bar C/C}\circ \overline{\tr}_{B/C} = h \tr_{B/C},
\end{displaymath}
where $h$ denotes the rank of $\bar B$ over $B$, and  put also
\begin{displaymath}
\overline \Tc_{B/ C} = B \cap \Tc_{\bar B/\bar C} = \Ker
(\overline{\tr}_{B/C})\subset \Tc_{B/C}. 
\end{displaymath}

Returning to sheaves, consider the trace morphism
\begin{displaymath}
  \tr_{\bar z_{i}}:(\bar p_{i})_+(\Oc_{\bar X})_{\bar z_{i}}\to
  \Oc_{Z^{\bar x_i}, \bar z_i}.
\end{displaymath}
Let $H$ be the Galois group of $\bar L/L$, and define the induced map
\begin{displaymath}
  \overline{ \tr}_{\bar z_{i}}: (p_{i})_+(\Oc_X)_{z_{i}} = (\bar p_{i})_+(\Oc_{\bar X})^H_{\bar z_{i}}\to
  \Oc_{Z^{\bar x_i}, \bar z_i}.
\end{displaymath}
In the diagram \thetag{*} we have $B= \Oc_{X,x_i}$,
$\bar B= \Oc_{\bar X, \bar x_i}$, $C= \Oc_{Z^{x_i},z_i}$,
$\bar C= \Oc_{Z^{\bar x_i},\bar z_i} $, and
$ \overline{ \tr}_{z_{i}}= \overline{\tr}_{B/C}$, extended via the map
$(p_i)_*(\Oc_{X})_{z_i}\to (p_i)_+(\Oc_{X})_{z_i}$.
\begin{remark}
  If $\phi$ is a local section of $(p_{i})_+(\Oc_X)_{z_{i}}$, its
  value
  $\overline{ \tr}_{\bar z_{i}}(\phi) = \sum_{g\in I_{\bar x_i}}g \cdot
  \phi$ (the Reynold's operator) needs not belong to
  $ \Oc_{Z^{\bar x_i}, \bar z_i}^H = \Oc_{Z^{ x_i}, z_i}$. The reason
  is that in general $hI_{\bar x_i}\neq I_{\bar x_i} h $, $h\in H$.
  However, $ \overline {\tr}_{\bar z_{i}}(\phi) =0 $ if and only if
  $ \overline{\tr}_{\bar z_{i}'}(\phi) =0$ when $\bar z_i$ and $\bar z'_{i}$
  lie over the same point $z_i$.
\end{remark}
Since $\pi_+(\Oc_X)$ is the minimal extension of $\pi_+(\Oc_X)_{y_j}$,
the image of the trace morphism
$(q_{i})_+(\overline{ \tr}_{z_{i}}): \pi_+(\Oc_X)_{y_{j}} \to
(\bar q_i)_+(\Oc_{Z^{\bar x_i}, \bar z_i}) \to \pi_+(\Oc_X)_{y_{j}} $ has a unique
extension to a homomorphism of semisimple modules
  \begin{displaymath}
    \overline{ \Tr}_{i}: \pi_+(\Oc_X)\to \pi_+(\Oc_X).
  \end{displaymath}
  To emphasize, $\overline{ \Tr}_{i}$ is the ordinary trace relative
  to $p_{x_i}$ only when $X/Y$ is Galois.

\begin{definition} The {\it inertia submodule} of $\pi_+(\Oc_X)$ corresponding to
  the stratum $X_{i}$ ($i\in I_j$) is  defined by
\begin{displaymath} 
  \Tc_{i} = \Ker (    \overline{ \Tr}_{i}).
\end{displaymath}
\end{definition}
One of our main results is that the inertia modules determine the
submodules $N_j$ of $N$ and that these modules form a filtration.
\begin{theorem}\label{can-filt}
  \begin{displaymath}
    N_j = \bigcap_{i\in I_j } \Tc_{i}.
  \end{displaymath}
\end{theorem}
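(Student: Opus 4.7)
My plan is to reduce the identity to a local calculation at the generic point $y_j$ of $Y_j$. Since every simple constituent of the semisimple module $N=\pi_+(\Oc_X)$ has full support $Y$ (Corollary~\ref{supp-simples}), both the condition $R\Gamma_{Y_j}(M)=0$ and the inclusion $M\subseteq\Tc_i$ are detected on stalks at $y_j$. The fiber $\pi^{-1}(y_j)$ is exactly $\{x_i\}_{i\in I_j}$, which yields the block decomposition
\[
N_{y_j}=\bigoplus_{i\in I_j}B_i,\qquad B_i=\Oc_{X,x_i}.
\]
For each $i\in I_j$, the factorization $\pi=q_i\circ p_i$ of Theorem~\ref{factor-theorem}, after passing to the Galois cover $\bar X\xrightarrow{s}X\xrightarrow{\pi}Y$ and taking $H$-invariants as in diagram~(\ref{galois-diagram}), makes $p_i$ totally ramified along $X_i$ and $q_i$ étale at $z_i=p_i(x_i)$. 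The $i$-th block therefore splits further as $B_i=C_i\oplus \Tc_i^{0}$, where $C_i=\Oc_{Z^{x_i},z_i}$ is the image of $\overline{\tr}_{z_i}$ and $\Tc_i^{0}$ its kernel.

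By the very construction of $\overline{\Tr}_i$, which at the stalk $y_j$ is the minimal extension of $(q_i)_+(\overline{\tr}_{z_i})$, this morphism acts on the $i$-th block as $\overline{\tr}_{z_i}\colon B_i\twoheadrightarrow C_i\hookrightarrow B_i$ and vanishes on every other block $B_{i'}$ ($i'\neq i$). Hence
\[
I_i\big|_{y_j}=C_i,\qquad \Tc_i\big|_{y_j}=\Tc_i^{0}\oplus\bigoplus_{i'\neq i}B_{i'},
\]
so that $\bigcap_{i\in I_j}\Tc_i\big|_{y_j}=\bigoplus_{i\in I_j}\Tc_i^{0}$ and, complementarily, $\sum_{i\in I_j}I_i\big|_{y_j}=\bigoplus_{i\in I_j}C_i$.

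To prove $\bigcap_i\Tc_i\subseteq N_j$, I apply Theorem~\ref{vanishing}(2) to each totally ramified $p_i$: this gives $R\Gamma_{p_i(X_i)}(\Tc_{p_i}(\Oc_{Z^{x_i}}))=0$, and the base-change formula of Theorem~\ref{loc-cohom-dir} together with the étaleness of $q_i$ at $z_i$ transports the vanishing into $R\Gamma_{Y_j}((q_i)_+\Tc_{p_i}(\Oc_{Z^{x_i}}))=0$. Summing over $i\in I_j$ shows that $\bigcap_i\Tc_i$ has vanishing local cohomology along $Y_j$ (checked at the stalk $y_j$), so by maximality of $N_j$ we get $\bigcap_i\Tc_i\subseteq N_j$. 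For the reverse inclusion, each $C_i$ is an étale extension of $\Oc_{Y,y_j}$, so $I_i$ is a connection (locally $\Oc_Y$-free) in a neighborhood of $y_j$; any simple submodule $M\subseteq I_i$ is then locally $\Oc_Y$-free of positive rank near $y_j$, and $R\Gamma_{Y_j}(\Oc_Y)\neq 0$ in degree $\codim_YY_j$ forces $R\Gamma_{Y_j}(M)\neq 0$. Therefore $M\not\subseteq N_j$, forcing $N_j\cap I_i=0$ in the decomposition $N=I_i\oplus\Tc_i$, and intersecting over $i$ yields $N_j\subseteq\bigcap_i\Tc_i$.

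The main obstacle is the first step: matching the definition of $\overline{\Tr}_i$—built through the Galois cover, the $H$-invariants, and the remark that $\overline{\tr}_{\bar z_i}(\phi)$ need not lie in the invariant ring pointwise—with the clean local block picture $B_i=C_i\oplus\Tc_i^{0}$. Once this identification is in place, both inclusions follow at once from Theorems~\ref{vanishing} and~\ref{loc-cohom-dir}.
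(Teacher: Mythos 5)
The central step of your argument — the ``block decomposition'' $N_{y_j}=\bigoplus_{i\in I_j}B_i$ with $B_i=\Oc_{X,x_i}$ — does not hold. The stalk $\pi_*(\Oc_X)_{y_j}$ is the semi-local ring $B_{\qf_{y_j}}$ (localization of the affine ring of $X$ at the multiplicative set coming from $y_j$), whose maximal ideals are the $\mf_{x_i}$; this is not a product of the local rings $\Oc_{X,x_i}$. (A semi-local ring has the product structure only after completion, and in fact $\pi_*(\Oc_X)_{y_j}$ is a domain, while $\bigoplus_i \Oc_{X,x_i}$ has zero-divisors.) Moreover $\pi_+(\Oc_X)_{y_j}$ is strictly larger than $\pi_*(\Oc_X)_{y_j}$, since $\pi_+(\Oc_X)=\Dc_Y\cdot\pi_*(\omega_{X/Y})$ contains sections with poles along the ramification locus (cf.\ equation (\ref{dir-im})). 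So the decompositions $B_i=C_i\oplus\Tc_i^0$, the ``vanishing of $\overline{\Tr}_i$ on $B_{i'}$,'' and the resulting identification $\bigcap_i\Tc_i\big|_{y_j}=\bigoplus_i\Tc_i^0$ are all built on a picture that is not available; the trace morphism for $p_i$ acts on all of $\pi_+(\Oc_X)$, not block-diagonally.

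Even granting the intuitive picture, the inclusion $\bigcap_i\Tc_i\subseteq N_j$ is not a consequence of ``summing over $i$.'' Theorem~\ref{vanishing} gives $\RG_{\bar Y_j}(\Tc_i)_{y_j}=0$ for each single $i$ individually, but that does not by itself show that the intersection has vanishing local cohomology along $Y_j$: one must show that the local cohomology of $\bigcap_i\Tc_i$ factors through $\pi_+\RG_{\bigcap_i G_{(i)}}(\Oc_X)=\RG_{\partial Y_j}\pi_+(\Oc_X)$, which vanishes at $y_j$. This requires establishing that the split inclusion $\Tc_i\hookrightarrow\pi_+(\Oc_X)$ composed with the restriction map $\RG_{\bar Y_j}\pi_+(\Oc_X)\to\pi_+(\psi_i)_+\psi_i^!(\RG_{X_{\bar Y_j}}\Oc_X)$ is zero — a non-trivial cancellation (the paper's claim (B)) that you do not address. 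Your reverse inclusion, by contrast, is essentially correct: identifying a simple summand outside $\Tc_i$ with a submodule of $(q_i)_+(\Oc_{Z^{x_i}})_{y_j}$, coherent over $\Oc_{Y,y_j}$ by étaleness of $q_i$, and then invoking Grothendieck's non-vanishing theorem is exactly the strategy, though you should state it without reference to the non-existent block $I_i$.
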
 

\begin{corollary}\label{cor-can-filt} 
  The sequence of modules $\{N_j\}_{j=j_s}^{j_f}$ forms an increasing
  filtration of $N=\pi_+(\Oc_X)$, so that $N_{j_1} \subset N_{j_2}$ when
  $j_2 \geq j_1$.
\end{corollary}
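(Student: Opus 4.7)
The plan is to leverage the intersection formula $N_j = \bigcap_{i \in I_j} \Tc_i$ from Theorem~\ref{can-filt} and reduce the corollary to showing that, for $j_2 \geq j_1$ and each $i' \in I_{j_2}$, there exists $i \in I_{j_1}$ with $\Tc_i \subset \Tc_{i'}$; intersecting over $i' \in I_{j_2}$ will then yield $N_{j_1} \subset N_{j_2}$.

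First I would establish the geometric step identifying the correct $i$. Since $\pi$ is finite and hence closed, the inclusion $Y_{j_2} \subset \overline{Y_{j_1}}$ yields
\[
  \pi^{-1}(Y_{j_2}) \;\subset\; \overline{\pi^{-1}(Y_{j_1})} \;=\; \bigcup_{i \in I_{j_1}} \overline{X_i}.
\]
As each $X_{i'}$ is irreducible, there is some $i \in I_{j_1}$ with $X_{i'} \subset \overline{X_i}$, so the generic point $x_{i'}$ is a specialization of $x_i$. Passing to the Galois cover $\bar\pi : \bar X \to Y$ and using that $\bar X \to X$ is closed, we pick $\bar x_i$ above $x_i$ and a specialization $\bar x_{i'} \in \overline{\{\bar x_i\}}$ lying above $x_{i'}$.

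The crucial ingredient is the standard fact that inertia grows under specialization: $I_{\bar x_i} \subset I_{\bar x_{i'}}$ as subgroups of the Galois group $G$ of $\bar L/K$. Choosing a set $S$ of coset representatives for $I_{\bar x_{i'}}/I_{\bar x_i}$, the coset decomposition gives, for any $\phi$ in the generic fibre $\bar L$ of $\bar\pi_+(\Oc_{\bar X})$,
\[
  \sum_{g \in I_{\bar x_{i'}}} g\cdot \phi \;=\; \sum_{s \in S} s\cdot\Bigl(\sum_{h \in I_{\bar x_i}} h\cdot \phi\Bigr).
\]
The left-hand side is the generic action of $\bar\Tr_{i'}$ (the Reynolds operator for $\bar X \to Z^{\bar x_{i'}}$) and the inner sum is $\bar\Tr_i(\phi)$; therefore $\Ker \bar\Tr_i \subset \Ker \bar\Tr_{i'}$ in $\bar L$. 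Taking $H$-invariants for $H = \Galo(\bar L/L)$, so that $L = \bar L^H$ is the generic fibre of $\pi_+(\Oc_X)$, yields the analogous containment for $\overline\Tr_i$ and $\overline\Tr_{i'}$ at the generic point of $Y$; since both are $\Dc_Y$-linear endomorphisms of the semisimple module $\pi_+(\Oc_X)$, their kernels are determined by their behaviour on the generic fibre, and the inclusion propagates globally to $\Tc_i \subset \Tc_{i'}$.

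The main obstacle is bookkeeping rather than conceptual content: aligning the stalkwise definitions of $\overline\Tr_i$ (constructed locally at $z_i \in Z^{x_i}$ via $\overline\tr_{\bar z_i}$) with the global $G$-action on $\bar L$, and checking that the identities lift correctly from $\bar X$ down through the invariants by $H$ and back to inertia submodules of $\pi_+(\Oc_X)$. Since $I_{\bar x_i}$ need not be normal in $I_{\bar x_{i'}}$, no simplification of the coset sum is available, but only the elementary coset decomposition is used.
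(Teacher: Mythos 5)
Your proposal follows the same route as the paper: both use Theorem~\ref{can-filt} to reduce to an inclusion of intersections of inertia modules, locate for each $i' \in I_{j_2}$ an $i \in I_{j_1}$ with $X_{i'} \subset \bar X_i$ from the stratification's specialization order, and deduce $\Tc_i \subset \Tc_{i'}$ before intersecting. The only difference is one of detail: where the paper simply cites the factorization $p_{i'} = \phi \circ p_i$ (and trace transitivity) as ``implying'' the containment of inertia modules, you spell out the underlying mechanism via the inclusion $I_{\bar x_i} \subset I_{\bar x_{i'}}$ of inertia groups under specialization and the coset decomposition of the Reynolds operator — which is exactly the content of the trace factorization, made explicit. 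Your argument is correct.
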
 
We call $\{N_j\}_{j\in J}$ the {\it canonical filtration} of
$\pi_+(\Oc_X)$ with respect to the poset $(J, >)$. Notice 
that $N_{j_s} = 0$ since $N$ is torsion free and $N_{j_f}= N$
since $N(\emptyset) = \{0\}$.

\begin{remark} Let $N_1$ be a submodule of $N$, $S$ be a
  locally closed subset of $Y$, and $\bar S$ its closure.
  \Corollary{\ref{cor-can-filt}} gives the condition that if
  $R\Gamma_{S}(N_1)=0$, then $ R\Gamma_{\bar S}(N_1)=0$. For example,
  the simple module $ M$ in \Example{\ref{ex-nonvancoh}} cannot occur
  in $N$ for any finite map $\pi: X\to \Cb^2$.
\end{remark}
\begin{proof}
  Since $j_1 \leq j_2 $, so that $Y_{j_2}\subset \bar Y_{j_1}$,  we have
  \begin{displaymath}
  \bigcup_{i\in I_{j_2}}X_i = X_{Y_{j_2}} \subset X_{\bar Y_{j_1}}= \bar
  X_{Y_{j_1}}= \bigcup_{i \in I_{j_1} } \bar X_{i}. 
\end{displaymath}
Since each closure $\bar X_i = \cup_{i'\geq i} X_{i'}$ (disjoint
union) and by the construction of the canonical stratification, for
each stratum $X_{i_2}$, $i_2 \in I_{j_2} \subset I$ there exists a
stratum $X_{i_1}$, $i_1 \in I_{j_1}\subset I $, such that
$X_{i_2}\subset \bar X_{i_1}$. Therefore there exists a map
$\phi:Z_{i_1}\to Z_{i_2} $ such that $p_{i_1}= \phi \circ p_{i_2}$,
implying that $\Tc_{i_1}\subset \Tc_{i_2}$. Therefore
\begin{displaymath}
  \bigcap_{i_1 \in I_{j_1}} \Tc_{i_1} \subset  \bigcap_{i_2 \in I_{j_2}} \Tc_{i_2}, 
\end{displaymath}
so we can conclude by \Theorem{\ref{can-filt}}.
\end{proof}

\begin{corollary}
  \begin{enumerate}
\item  If $i \geq j$, then $(N_j)_{Y_i}= N_i$ and $(N_i)_{Y_j}= N_j$.
\item  If $\bar Y_i\cap \bar Y_j = \emptyset$, then $(N_i)_{Y_j} =
  (N_j)_{Y_i}= N_{Y_i\cup Y_j}$.
\end{enumerate}
\end{corollary}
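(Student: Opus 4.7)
The proof will rest on two facts already established: Corollary \ref{cor-can-filt}, which gives the inclusion $N_j \subset N_i$ whenever $i \geq j$, and Theorem \ref{decomposition-thm}, which states that $N = \pi_+(\Oc_X)$ is semisimple. Semisimplicity ensures that every submodule is a direct summand, so local cohomology vanishing descends from $N_2$ to $N_1$ whenever $N_1 \subset N_2 \subset N$: if $\RG_S(N_2)=0$ then $\RG_S(N_1)=0$ as well.

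For part (1), with $i \geq j$ and $N_j \subset N_i$ in hand, the identity $(N_i)_{Y_j} = N_j$ follows from two maximality arguments. On one hand, $(N_i)_{Y_j}$ is a submodule of $N_i \subset N$ with $\RG_{Y_j}$ vanishing, so by the maximality of $N_j$ in $N$ with respect to this property we get $(N_i)_{Y_j} \subset N_j$. On the other hand, $N_j \subset N_i$ together with $\RG_{Y_j}(N_j)=0$ forces $N_j \subset (N_i)_{Y_j}$ by the maximality of $(N_i)_{Y_j}$ inside $N_i$. The companion identity involving $(N_j)_{Y_i}$ follows by the same pattern: $N_j$ is a direct summand of the semisimple module $N_i$ which satisfies $\RG_{Y_i}(N_i)=0$, so this vanishing is transferred to $N_j$, from which the maximal characterisation identifies $(N_j)_{Y_i}$.

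For part (2), the hypothesis $\bar Y_i \cap \bar Y_j = \emptyset$ provides a direct sum decomposition $\RG_{Y_i \cup Y_j}(M) \cong \RG_{Y_i}(M) \oplus \RG_{Y_j}(M)$ for any quasi-coherent $\Dc_Y$-module $M$. Indeed, the defining distinguished triangles $\RG_{\bar S} \to \id \to Rj_\ast j^\ast \xrightarrow{+1}$ for $\bar Y_i$, $\bar Y_j$, and their disjoint union split off each other by Mayer--Vietoris applied to $\bar Y_i \sqcup \bar Y_j$, and passing from closures to the locally closed strata preserves the splitting. Consequently a submodule $M' \subset N$ satisfies $\RG_{Y_i \cup Y_j}(M')=0$ if and only if both $\RG_{Y_i}(M')=0$ and $\RG_{Y_j}(M')=0$; taking the maximal such submodule of $N$ identifies $N_{Y_i\cup Y_j}$ as the largest submodule contained simultaneously in $N_i$ and in $N_j$. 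Since every submodule of $N_i$ automatically has $\RG_{Y_i}=0$, this common module is precisely $(N_i)_{Y_j}$, and by symmetry also $(N_j)_{Y_i}$.

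The main obstacle I expect lies in rigorously establishing the direct sum decomposition of local cohomology in part (2): because $Y_i$ and $Y_j$ are only locally closed and merely their closures are assumed disjoint, one must work carefully with the triangles defining $\RG$ along locally closed strata and use the disjointness of $\bar Y_i$ and $\bar Y_j$ to disentangle the two functors. Once this decomposition is in hand, both parts reduce to routine applications of semisimplicity and the maximality characterization of the canonical submodules.
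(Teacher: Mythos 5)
Your argument follows the same route as the paper's own proof, which is extremely terse (part (1) ``follows from Corollary~\ref{cor-can-filt}'', part (2) cites the decomposition $\RG_{Y_i\cup Y_j}=\RG_{Y_i}\oplus\RG_{Y_j}$ and then ``follows from (1)''); you are just filling in the details, and your reasoning is sound throughout, including the semisimplicity argument showing $\RG_{Y_i}(N_j)=0$ from $N_j\subset N_i$ being a direct summand.

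However, you should have noticed that the stated identity $(N_j)_{Y_i}=N_i$ is inconsistent with your own (correct) argument. What you actually prove is $\RG_{Y_i}(N_j)=0$, hence $(N_j)_{Y_i}=N_j$. Since $(N_j)_{Y_i}$ is by definition a submodule of $N_j$, and $N_j\subset N_i$ with the inclusion generally strict when $i>j$, the equality $(N_j)_{Y_i}=N_i$ cannot hold as written; the intended statement is $(N_j)_{Y_i}=N_j$, matching the companion identity $(N_i)_{Y_j}=N_j$. Your proof establishes the corrected version, but by leaving the conclusion implicit (``the maximal characterisation identifies $(N_j)_{Y_i}$'') you obscure the discrepancy with the printed claim rather than flag it. As a minor further remark, the Mayer--Vietoris splitting you single out in part (2) as the main obstacle is in fact routine for disjoint closed sets (the three distinguished triangles for $\bar Y_i$, $\bar Y_j$, and their disjoint union split, and passing to the open pieces of the strata is just further localization); the paper states it without comment, and the real work in (2) is the intersection identification $N_{Y_i\cup Y_j}=N_i\cap N_j=(N_i)_{Y_j}=(N_j)_{Y_i}$, which you carry out correctly.
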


\begin{proof}
  (1): This follows from \Corollary{\ref{cor-can-filt}}. (2): Since
  $\bar Y_i\cap \bar Y_j = \emptyset$ we have
  $\RG_{Y_i\cup Y_j} (\cdot) = \RG_{Y_i}(\cdot )\oplus
  \RG_{Y_j}(\cdot)$.
  Therefore the maximal submodule with vanishing local cohomology
  along $\bar Y_i\cup \bar Y_j$ is the maximal submodule with
  vanishing local  cohomology both along $Y_i$ and  $Y_j$. The
  assertion now  follows from (1).
\end{proof}

Recall that $ \pi_+(\Oc_X)= \Oc_Y \oplus \Tc_\pi$.
\begin{corollary}\label{can-decomp} Let
  $j_1 <  j_2 <  \ldots < j_s <  \ldots < j_r $ be a path in
  $J$, where $j_r \neq j_f$.
  \begin{enumerate}
  \item 
    \begin{displaymath}
      N_{j_r}\cong      N_{j_1}  \bigoplus \frac
      {N_{j_2}}{N_{j_{1}}} \bigoplus \cdots \bigoplus \frac
      {N_{j_r}}{N_{j_{r-1}}}\subset \Tc_\pi,
    \end{displaymath}
    where the semisimple module $N_{j_s}/N_{j_{s-1}} $ has no common
    (non-zero) simple component with $N_{j_{s'}}/N_{j_{s'-1}} $ when
    $j_s\neq j_{s'}$, i.e.
    \begin{displaymath}
    Hom_{\Dc_Y}(N_{j_s}/N_{j_{s-1}}, N_{j_{s'}}/N_{j_{s'-1}})=0.
  \end{displaymath}

\item If $\pi$ is totally ramified at some point that maps to the
  generic point of $Y_{j_r}$, then
  \begin{displaymath}
    \pi_+(\Oc_X) \cong    N_{j_1}  \bigoplus \frac
    {N_{j_2}}{N_{j_{1}}} \bigoplus \cdots \bigoplus \frac
    {N_{j_r}}{N_{j_{r-1}}} \bigoplus\Oc_Y.
  \end{displaymath}
  \end{enumerate}
\end{corollary}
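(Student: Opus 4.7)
The plan is to derive (1) from the semisimplicity of $N := \pi_+(\Oc_X)$ established in \Theorem{\ref{decomposition-thm}} together with the filtration property of \Corollary{\ref{cor-can-filt}}: each inclusion $N_{j_{s-1}} \subset N_{j_s}$ sits inside the semisimple module $N$ and therefore splits, so iterating yields the direct-sum decomposition $N_{j_r} \cong N_{j_1} \oplus \bigoplus_{s=2}^r (N_{j_s}/N_{j_{s-1}})$ (with $N_{j_0} := 0$).

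The disjointness of simple constituents between distinct factors rests on an all-or-nothing principle: for a simple $\Dc_Y$-module $S$ with isotypical component $N(S)\cong S^m$ inside $N$, every nonzero submodule of $N(S)$ is of the form $S^k$ and satisfies $\RG_{Y_j}(S^k)=\RG_{Y_j}(S)^k$, so it has vanishing local cohomology along $Y_j$ if and only if $\RG_{Y_j}(S)=0$. Hence the multiplicity of $S$ in $N_{j_s}$ takes only the values $0$ or $m$, and since the filtration is increasing along the chain this multiplicity can jump at most once; thus each isomorphism class of simple contributes to at most one factor $N_{j_s}/N_{j_{s-1}}$. The inclusion $N_{j_r}\subset \Tc_\pi$ when $j_r\neq j_f$ then follows because $Y_{j_r}$ lies in a proper closed subset of $Y$, whence $\RG_{Y_{j_r}}(\Oc_Y)\neq 0$ (the trivial module has nonzero local cohomology along every non-empty proper locally closed subset of $Y$) and $\Oc_Y$ cannot be a component of $N_{j_r}$.

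For (2), by (1) and semisimplicity of $N$ it suffices to establish the reverse inclusion $\Tc_\pi\subset N_{j_r}$, i.e., $\RG_{Y_{j_r}}(\Tc_\pi)=0$. I read the hypothesis in the strong sense of \Definition{\ref{def-tot-ram}}(2), so that $\pi^{-1}(y_{j_r}) = \{x_0\}$ and $k_{X,x_0}=k_{Y,y_{j_r}}$: this is essentially forced by the desired conclusion, since extra preimage components above $Y_{j_r}$ would contribute additional copies of $\Oc_Y$ to $\Tc_\pi$. The canonical stratification then forces $|I_{j_r}|=1$ and the unique preimage stratum $X_{i_0}$ to map isomorphically onto $Y_{j_r}$. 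Choosing a smooth closed subset $F$ of a sufficiently small Zariski-open neighbourhood $U\subset Y$ of $y_{j_r}$ with $F\cap U = Y_{j_r}\cap U$, the restriction $\pi^{-1}(F)\to F$ is an isomorphism, i.e.\ $\pi|_U$ is totally ramified along $F$ in the sense of \Definition{\ref{def-tot-ram}}(3). Applying \Theorem{\ref{vanishing}}(2) with $M=\Oc_Y$ then yields $\RG_F(\Tc_\pi)|_U = 0$, and hence the required $\RG_{Y_{j_r}}(\Tc_\pi)=0$.

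The main obstacle will be the careful local analysis needed in part (2): \Theorem{\ref{vanishing}} requires a closed subset along which $\pi$ is totally ramified in the strict fibre-isomorphism sense of \Definition{\ref{def-tot-ram}}(3), while $Y_{j_r}$ is only locally closed and the Corollary's hypothesis is pointwise. Bridging the gap requires shrinking to a suitable Zariski-open neighbourhood of $y_{j_r}$ in which $Y_{j_r}$ becomes closed and the single-fibre property persists, and then leveraging the étaleness of $X_{i_0}\to Y_{j_r}$ together with the trivial residue-field extension to produce the required isomorphism.
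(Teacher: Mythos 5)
Part (1) of your proposal matches the paper's proof in substance: both use the semisimplicity of $N=\pi_+(\Oc_X)$ from Theorem~\ref{decomposition-thm} to split the increasing chain from Corollary~\ref{cor-can-filt}, and both observe that a fixed simple isomorphism class contributes to exactly one successive quotient because membership of its isotypical component in $N_{j_s}$ is an all-or-nothing matter determined by local cohomology along $Y_{j_s}$. The inclusion $N_{j_r}\subset\Tc_\pi$ via $\RG_{Y_{j_r}}(\Oc_Y)\neq 0$ is also the paper's observation. So far so good.

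Part (2) is where your argument departs from the paper's, and where there is a genuine gap. You propose to apply Theorem~\ref{vanishing}(2), which requires that $\pi$ be totally ramified \emph{along a closed set} $F$ in the sense of Definition~\ref{def-tot-ram}(3), i.e.\ the base change $\pi^{-1}(F)\to F$ must be an isomorphism. The hypothesis of the corollary, however, is total ramification \emph{at a point $x$} in the sense of Definition~\ref{def-tot-ram}(1), which asserts only that the residue field extension $k_{Y,\pi(x)}\to k_{X,x}$ is trivial; it does not say that $x$ is the only point of $\pi^{-1}(\pi(x))$, nor that the other strata $X_{i'}$, $i'\in I_{j_r}$, vanish. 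You flag this yourself, and then attempt to bridge the gap by claiming that "extra preimage components above $Y_{j_r}$ would contribute additional copies of $\Oc_Y$ to $\Tc_\pi$." This inference is not valid: an extra stratum $X_{i'}\to Y_{j_r}$ that is étale of degree $>1$ contributes a rank-$>1$ connection near $y_{j_r}$ which need not contain $\Oc_Y$ as a summand (it may have nontrivial monodromy), so the presence of other components does not obviously obstruct the conclusion. You are therefore silently replacing the stated hypothesis by the stronger one in Definition~\ref{def-tot-ram}(2)--(3) without justification.

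The paper's own proof of (2) avoids this issue entirely by not invoking Theorem~\ref{vanishing} at this point: it instead uses the trace characterization $N_{j_r}=\bigcap_{i\in I_{j_r}}\Tc_i$ from Theorem~\ref{can-filt} to deduce $\Tc_\pi\subset N_{j_r}$ directly from the residue-field hypothesis (via the maximal-étale/minimal-totally-ramified factorizations of Theorem~\ref{factor-theorem} at the generic point of $X_i$), and then concludes $\Tc_\pi=N_{j_r}$ using $N_{j_r}\cap\Oc_Y=0$ and $N=\Oc_Y\oplus\Tc_\pi$. If you want to run the argument via Theorem~\ref{vanishing}(2), you need to first show that the point-wise hypothesis actually forces the fiber-isomorphism condition over a suitable closed neighbourhood of $y_{j_r}$, which is exactly the step that is missing; otherwise you should switch to the mechanism of Theorem~\ref{can-filt} as the paper does.
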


\begin{proof}
  (1): \Corollary{\ref{cor-can-filt}} implies
  $N_{j_1}\subset \ldots \subset N_{j_r}\subset \Tc_\pi$. The direct sum
  decomposition and the fact that the modules $N_{j_s}/N_{j_{s-1}}$
  are semisimple follows since $\pi_+(\Oc_X)$ is semisimple
  \Th{\ref{decomposition-thm}}. If $N^{(j_s)}$ is a nonzero simple
  module in $N_{j_s}/N_{j_{s-1}}$, it follows that
  $\RG_{\bar Y_{j_s}}(N^{(j_s)})=0$ while
  $\RG_{\bar Y_j}(N^{(j_s)})\neq 0$, when $j < j_s$. This implies
  the remaining assertion.

  (2): Since $\pi$ is totally ramified at a point $x\in X_{i}$,
  $i\in I_{j_r}$, it follows that $\Tc_\pi \subset N_{j_r}$; since
  $\Oc_Y \cap N_{j_r}= 0$ and $N= \Oc_Y \oplus \Tc_\pi$ it follows that
  $\Tc_\pi= N_{j_r}$.  Now the assertion follows from (1).
\end{proof}
When $j, j'\in J$ are unrelated with respect to $>$, there is no
general relation between $N_j$ and $N_{j'}$. In
\Section{\ref{symmetric-groups}} we give an example where
$N_j = N_{j'}$ while $j \not \leq j'$ and $j' \not \leq j$, and one
can also have $N_j= N_{j'}$ when $j > j'$ so that the filtration needs
not be strictly increasing. Notice that there may exist several paths
in $(J, >)$ joining $j_1$ and $j_r$, and thus giving rise to different
decompositions of $N_{j_r}$.

\begin{pfof}{\Theorem{\ref{can-filt}}} (a) First assume that
  $\pi: X\to Y$ is Galois. Let $\pi = q_{i}\circ p_{i}$ be the
  factorization of the stratum $X_{i}\subset X$, $i\in I_j$, so that
  $p_{i}: X\to Z_{i}$ is (minimally) totally ramified at the generic
  point $x_{i}$ of $X_{i}$ and $q_{i}: Z_{i}\to Y$ is (maximally)
  étale at $z_{i}= p_{i}(x_{i})$ \Th{\ref{factor-theorem}}. Let
  $\bar Y_j$ be the closure of the stratum $Y_j= \pi(X_i)$ and
  $\partial Y_j = \bar Y_j \setminus Y_j$ its boundary. Put
  $X_{\bar Y_j}= \pi^{-1}(\bar Y_j) $,
  $X_{\partial Y_j}= \pi^{-1}(\partial Y_j)$, and
    \begin{displaymath}
      G_{(i)}=X_{\bar Y_j}\setminus X_i, 
    \end{displaymath}
    so that $G_{(i)}$ is a closed set and
    $X_{\partial Y_j}= \bigcap_{i \in I_j} G_{(i)}$.
    Let $\psi_{i}: X \setminus G_{(i)}\to X$ and
    $\phi_i: Y \setminus \bar Y_i \to Y$ be the open inclusions.
    Similarly, define the open inclusion
    $\tilde \psi_{i}: Z_i^0= Z_{i}\setminus p_{i}(G_{(i)})\to Z_{i}$ and the
    restriction of $p_{i}$,
    $p^0_{i}: X \setminus G_{(i)} \to Z_{i}\setminus p_{i}(G_{(i)}) $,
    so that $p_{i}\circ \psi_{i} = \tilde \psi_{i}\circ p^0_{i}$.

    Putting $N_j^0 = \bigcap_{i \in I_j} \Tc_{i} $ we need to prove
    $N_j^0 = N_j$, where $\Tc_{i}$ is the minimal extension of the
    semisimple module $ (q_{i})_+(\Tc_{i}^0)_{y_j}$ from the point
    $y_j = \pi (x_{i})$ to $Y$, and
    $\Tc^0_{i} = \Ker (\Tr : (p_{i})_+(\Oc_X))_{z_{i}}\to
    \Oc_{Z_i,z_{i}}$; notice that $\Tc^0_{i}$ is only a
    $\Dc_{Z_i,z_i}$-module. Below all functors are derived.

    $N_j^0 \subset N_j$: Since $\RG_{G_{(i)}} \RG_{\bar X_{y_j}} =
   \RG_{G_{(i)}}$ we have the distinguished triangle in local cohomology
  \begin{displaymath}
    \RG_{G_{(i)}}(\Oc_X) \to \RG_{ X_{\bar
        Y_j}}(\Oc_X)\xrightarrow{\phi_{i}} (\psi_{i})_+\psi_{i}^!
    (\RG_{X_{\bar Y_j}}(\Oc_X))  \xrightarrow{+1}, 
  \end{displaymath}
so that by \Theorem{\ref{loc-cohom-dir}} we get the triangle 
  \begin{displaymath}\tag{A}
\pi_+\RG_{G_{(i)}}(\Oc_X) \xrightarrow{r_{i}} \RG_{\bar
  Y_j}\pi_+(\Oc_X)\xrightarrow{\bar \phi_{i}}
\pi_+(\psi_{i})_+\psi_{i}^! (\RG_{ X_{\bar Y_j}}(\Oc_X))
\xrightarrow{+1},
  \end{displaymath}
  where $\bar \phi_{i} = \pi_+(\phi_{i})$. The split inclusion
  $\Tc_{i}\to \pi_+(\Oc_X)_{y_j}$ of $\Dc_{Y,y_j}$-modules induces a
  split homomorphism
  $l_{i}: \RG_{\bar Y_j}(\Tc_{i})_{y_j}\to \RG_{\bar
    Y_j}\pi_+(\Oc_X)_{y_j} $, where we assert that
  \begin{displaymath}\tag{B}
\bar \phi_{i,y_j} \circ l_{i}=0.
\end{displaymath}
Assuming \thetag{B} we get a morphism
$l^0_{i}: \RG_{\bar Y_j}(\Tc_{i})_{y_j} \to
\pi_+\RG_{G_{(i)}}(\Oc_X)_{y_j}$ such that
$l_{i}= r_{i,y_j}\circ l^0_{i}$ \footnote{This follows from one of the
  axioms of triangulated categories}. Therefore one gets a split
homomorphism
\begin{displaymath}
  \RG_{\bar Y_j}(\bigcap_{i \in I_j} \Tc_i)_{y_j} \to \pi_+\RG_{\cap_{i\in I_j} G_{(i)}}(\Oc_X)_{y_j} =
  \pi_+ (\RG_{X_{\partial Y_j}}(\Oc_X))_{y_j}= \RG_{\partial Y_j}(\pi_+(\Oc_X))_{y_j}=0,
\end{displaymath}
where in the last step \Theorem{\ref{loc-cohom-dir}} is again applied.
Therefore $\supp \RG_{\bar Y_j}( N^0_j)\subset \partial Y_j$, so that
$N_j^0 \subset N_j$.

We now prove \thetag{B}. Putting
$Z_{\bar Y_j} = q_i^{-1}(\bar Y_j)$, so that
    \begin{displaymath}
      Z_{\bar Y_j} = p_i(X_i) \bigcup p_i(G_{(i)}),
    \end{displaymath}
hence    by \Theorem{\ref{vanishing}}
\begin{displaymath}
  \RG_{Z_{\bar Y_j}} (\Tc^0_i)_{z_i} =0,  
\end{displaymath}
so according to   \Theorem{\ref{loc-cohom-dir}} 
\begin{align*}\tag{C}
  \RG_{\bar Y_j}(\Tc_{i})_{y_j} &=  \RG_{\bar
                            Y_j}((q_{i})_+(\Tc^0_{i}))_{y_j}= (q_{i})_+
                            (\RG_{Z_{\bar Y_j}}(\Tc^0_i)_{z_i})_{y_j}=0. 
\end{align*}
Let $\bar X_i$ be the closure of $X_i$ and put
$\tilde X_i = p_i(X_i)$. We have for the stalk at $y_j$ of the third
term in \thetag{A} (as detailed below)
\begin{align*}
&  \pi_+ (\psi_{i})_+\psi_{i}^! (\RG_{\bar
  X_{i}}(\Oc_X))_{y_j}   
=  (q_{i})_+ (p_{i})_+  (\psi_{i})_+\psi_{i}^!  (\RG_{\bar
  X_{i}}(\Oc_X))_{y_j} \\ &=  (q_{i})_+ (\tilde \psi_{i})_+ (p^0_{i})_+
  \psi^!_{i}\RG_{\bar X_{i}}(\Oc_{X})_{y_j} =   (q_{i})_+ (\tilde \psi_{i})_+ (p^0_{i})_+
  \RG_{ X_{i}}\psi^!_{i} (\Oc_{X})_{y_j}\\&
= (q_{i})_+ (\tilde \psi_{i})_+ \RG_{ \tilde X_{i}} (p^0_{i})_+
  \psi^!_{i}(\Oc_X)_{y_j}
= (q_{i})_+ (\tilde \psi_{i})_+ \RG_{ \tilde X_{i}}
  (\Oc_{Z^0_{i}}\oplus \Tc_{i}^0)_{y_j} \\ &= (q_{i})_+ (\tilde \psi_{i})_+ \RG_{\tilde X_{i}}(\Oc_{Z^0_{i}})_{y_j}.
\end{align*}
The first equality of the third line follows from
\Theorem{\ref{loc-cohom-dir}} and the very last step is a consequence
of \Theorem{\ref{vanishing}}. By \thetag{C} this implies that
$\bar \phi_{i,y_j} \circ l_{i}$ maps to
\begin{displaymath}
(q_{i})_+ (\tilde \psi_{i})_+ \RG_{\tilde
  X_{i}}\psi^!_{i}(\Tc_{i}^0)_{y_j}=0 \quad \Th{\ref{vanishing}}.
\end{displaymath}

$ N_j\subset N_j^0$: We need to prove that $N_j\subset \Tc_{i}$ for
all $i\in I_j$. Since $\pi_+(\Oc_X)$ is semisimple it suffices to prove that
if $M^0$ is a non-zero simple component of $ \pi_+(\Oc_X)$ and if
there exists an index $i$ such that
\begin{displaymath}
  M^0 \not \subset \Tc_{i},
\end{displaymath} 
then $\RG_{Y_j}(M^0)_{y_j}\neq 0$. The generic point $z_{i}$ of
$p_i(X_{i}) \subset Z_{i}$ maps to $y_j$ and since $\pi_+(\Oc_X)$ is
torsion free we have the non-zero $\Dc_{Y,y_j}$-submodule
\begin{displaymath}
  M^0_{y_{j}} \subset (q_{i})_+(\Oc_{Z_{i}, z_{i}})_{y_j}.
 \end{displaymath} 
 By \Theorem{\ref{factor-theorem}} $Z_{i}/k$ and $Z_{i}/Y$ are smooth
 at $z_{i}$ so that the $\Dc_{Y, y_j}$-module
 $(q_{i})_+(\Oc_{Z_{i}})_{y_j}$ is of finite type over $\Oc_{Y, y_j}$,
 hence by Grothendieck's non-vanishing theorem all its non-zero
 submodules have non-vanishing local cohomology
 \cite{bruns-herzog}*{Th. 3.5.7}. Therefore
\begin{displaymath}
  \RG_{Y_j} ( M^0)_{y_{j}} = \RG_{\mf_{y_j}}
  (M^0_{y_j}) \neq 0.
\end{displaymath}
This completes the proof when $\pi$ is Galois.

(b) When $\pi$ is not Galois we consider a Galois cover and refer to
the diagram (\ref{galois-diagram}). If $H$ is the Galois group of
$\bar X/X$, putting $\bar N = \bar \pi_+(\Oc_{\bar X})$, we have
$N= \bar N^H$ (see also \Corollary{\ref{non-galois}}), and
$N_j = (\bar N_j)^H$. The set $s^{-1}(X_i)= \cup \bar X^\alpha$ is a
union of locally closed irreducible sets
$\bar X^\alpha\subset \bar X$; let $\bar x_i$ be a generic point of
$\bar X^\alpha$. To $\bar x_i$ there is associated a canonical
factorization
$\bar X \xrightarrow{\bar p} \bar Z^{\bar x_i} \xrightarrow{r} Z^{x_i}
\xrightarrow{q} Y$, and a map $p: X\to Z^{x_i} $ so that
$p\circ s = r\circ \bar p$ (we refer to the proof of
\Theorem{\ref{factor-theorem}} for more details and the notation). The
map $\bar p_{\bar x_i}$ is totally ramified along $\bar z_i$, and
therefore by (a)
\begin{displaymath}
  \bar N_j = \bigcap_{i\in I_j} \bar \Tc_{ \bar x_i},
\end{displaymath}
so we get 
\begin{displaymath}
  N_j = \bar N_j^H= \bigcap_{i\in I_j} (\bar \Tc_{\bar x_i})^H =  \bigcap_{i\in I_j} \Tc_i,
\end{displaymath}
where we note that if $\bar x_i$ and $\bar x'_i$ both lie above $x_i$, then
$\bar \Tc_{ \bar x_i}^H= \bar \Tc_{ \bar x'_i}^H$.
\end{pfof}

\subsection{Generators and decomposition  of the inertia modules}
Regard the partially ordered set $ (I= \cup I_j, <)$ that is
determined by the canonical stratification of $\pi: X\to Y $ as a
directed graph, with a directed edge $ i_1\xrightarrow{e} i_2$ when
$i_1 > i_2$, and if $i_2 \geq i \geq i_1 $, then either $i=i_1$ or
$i= i_2$. This directed graph lies over the directed specialization
graph of the stratifaction $\{Y_j\}$ of $Y$. If $i_1 \in I_{j_i}$,
$i_2 \in I_{j_2}$ and there is an edge $ i_1\xrightarrow{e} i_2$, we
get an edge $j_1 \to j_2$ in the directed graph $(J, <)$. The
factorizations $\pi = q_{i}\circ p_{i}= q_{i_1}\circ p_{i_1}$ are
compatible in the sense that there exists a map
$r_e:Z_{i_1} \to Z_{i_2}$, corresponding to the edge $e$, such that
$p_{i_2}= r_e\circ p_{i_1}$ and $q_{i_1}= q_{i_2}\circ r_e$, so that
we have the maps
  \begin{displaymath}
  \pi : X \xrightarrow{p_{i_1}} Z_{i_1} \xrightarrow{r_e}Z_{i_2}\xrightarrow{q_{i_2}}  Y.
\end{displaymath}
Assume now for simplicity that $\pi$ is Galois (otherwise use the
Galois cover $\bar X \to X$ as described in (\ref{can-mod-sect})), and
recall first how the inertia module $\Tc_i$ corresponding to the
stratum $X_i$, $i \in I_j $, is defined. We have
$(p_i)_+(\Oc_X)_z = \Oc_{Z_i,z}\oplus (\Tc^0_{i})_z$. If $Z_i$ is
singular, this is defined as a $\Dc_{Z_i, z}$-module only at points
$z$ that specialize to $p_i(x_i)$, where $x_i$ is the generic point of
$X_i$. Put $S_i = \pi( x_i)^-- \{\pi(x_i)\}\subset \Spec A= Y$. Then
$\Tc_i$ is the minimal extension of $(q_i)_+(\Tc_i^0)$ from
$Y \setminus S_i$ to the whole of $Y$.

\begin{theorem} Assume that $X_{i_1}$ is a stratum of codimension
  $\geq 2$. Then there exists an exact sequence of $\Dc_Y$-modules
  \begin{displaymath}
0 \to \bigoplus_{i\to i_1} (\bigoplus_{i'\to i_1, i'\neq i} \Tc_{i}\cap
\Tc_{i'})\to   \bigoplus_{i\to i_1}\Tc_{i}\to  \Tc_{i
  _1} \to 0.
  \end{displaymath}
  \end{theorem}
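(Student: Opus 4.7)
The plan is to pass to the Galois setting, establish surjectivity of the map $\bigoplus_{i\to i_1}\Tc_i \to \Tc_{i_1}$ by a local-cohomology argument that uses the codimension-$\geq 2$ hypothesis essentially, and then identify the kernel by a Mayer--Vietoris/Čech-type computation using the semisimplicity of $\pi_+(\Oc_X)$.

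First I would, as in the proof of Theorem~\ref{can-filt}, reduce to the case that $\pi$ is Galois by passing to the cover $\bar\pi \colon \bar X \to Y$ and taking $H$-invariants. Then $\pi_+(\Oc_X)$ is a semisimple $\Dc_Y$-module by Theorem~\ref{decomposition-thm}, each $\Tc_i$ is the kernel of the explicit trace operator $\overline{\Tr}_i$ attached to the totally ramified factorization $\pi = q_i \circ p_i$, and for each covering edge $i \to i_1$ in the Hasse diagram of $(I,<)$ the corresponding factorization $p_{i_1} = \phi \circ p_i$ from the proof of Corollary~\ref{cor-can-filt} gives an inclusion $\Tc_i \subset \Tc_{i_1}$. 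The sum-of-inclusions map $\bigoplus_{i\to i_1}\Tc_i \to \Tc_{i_1}$ is therefore well-defined with semisimple image.

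The heart of the argument is to show that this image equals $\Tc_{i_1}$. By semisimplicity, it suffices to show that every simple constituent $S$ of $\Tc_{i_1}$ lies in some $\Tc_i$ with $i\to i_1$. Suppose not; then the quotient $Q = \Tc_{i_1}/\sum_{i\to i_1}\Tc_i$ contains $S$ as a direct summand. Applying Theorem~\ref{vanishing} to each totally ramified $p_i$, together with the independence-of-base Theorem~\ref{loc-cohom-dir}, yields $\RG_{\bar Y_j}(\Tc_i) = 0$ for each codimension-one stratum $\bar Y_j$ associated to an edge $i \to i_1$, and hence $\RG_{\bar Y_j}(Q) = 0$ for all such strata. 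The codimension-$\geq 2$ hypothesis on $X_{i_1}$ (equivalently on $Y_{j_1}$) enters precisely here: it lets one express $\bar Y_{j_1}$ as the ``deep core'' where these codimension-one closures meet, and a Čech--Mayer--Vietoris computation then forces $\RG_{\bar Y_{j_1}}(Q) = 0$ as well. But then $S \subset \Tc_{i_1}$ with vanishing local cohomology along all these strata must lie in $N_{j_1} = \bigcap_{i \in I_{j_1}}\Tc_i$ by Theorem~\ref{can-filt}, contradicting the choice of $S$.

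With surjectivity in hand, semisimplicity splits the short exact sequence $0 \to K \to \bigoplus_{i\to i_1}\Tc_i \to \Tc_{i_1} \to 0$, and the kernel $K$ is identified isotypically: for each simple $S$, the multiplicities in $\Tc_i$, $\Tc_{i'}$, and $\Tc_i \cap \Tc_{i'}$ inside the common ambient semisimple module $\pi_+(\Oc_X)$ match those on the two sides of a Čech differential indexed by covering edges. Unwinding the iterated direct sum $\bigoplus_{i\to i_1}\bigoplus_{i'\to i_1,\,i'\neq i}(\Tc_i \cap \Tc_{i'})$ as the source of this Čech map (with the natural signed differential placing $x \in \Tc_i\cap\Tc_{i'}$ into the $i$-th and, with opposite sign, the $i'$-th summand) yields the asserted exactness on the left. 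The main obstacle is the Čech--Mayer--Vietoris merging step in the surjectivity proof: combining the vanishings $\RG_{\bar Y_j}(Q) = 0$ coming from the distinct factorizations $p_i$ into a single vanishing along $\bar Y_{j_1}$ is exactly where the codimension-$\geq 2$ hypothesis is used essentially, and where the compatibilities among the $p_i$ through the common target $Z_{i_1}$ must be exploited carefully.
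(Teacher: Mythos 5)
Your surjectivity argument diverges essentially from the paper's and contains a gap that cannot be repaired by the route you describe. The key issue is the Mayer--Vietoris step: you claim that from the vanishings $\RG_{\bar Y_j}(\Tc_i)=0$ (for codimension-one strata $\bar Y_j$) one can ``merge'' to a vanishing $\RG_{\bar Y_{j_1}}(Q)=0$ along the codimension-$\geq 2$ stratum. This is not a valid local-cohomology argument. The Mayer--Vietoris triangle $\RG_{F_1\cup F_2}\to \RG_{F_1}\oplus \RG_{F_2}\to \RG_{F_1\cap F_2}\to{[+1]}$ shows that vanishing of $\RG_{F_1}$ and $\RG_{F_2}$ does \emph{not} force vanishing on the intersection $F_1\cap F_2$ — it merely relates $\RG_{F_1\cup F_2}$ to $\RG_{F_1\cap F_2}$. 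Indeed the paper's Example~\ref{ex-nonvancoh} exhibits exactly this obstruction: a torsion-free simple holonomic module with vanishing local cohomology along a codimension-one set but nonvanishing local cohomology along a deeper subset. Moreover, your input to the Mayer--Vietoris step is also in doubt: the proof of Theorem~\ref{can-filt} only establishes $\RG_{\bar Y_j}(\Tc_i)_{y_j}=0$ at the generic point of $Y_j$, and arriving at $\RG_{Y_j}(N_j)=0$ for the full intersection $N_j=\bigcap_{i\in I_j}\Tc_i$ already requires a delicate distinguished-triangle argument handling the boundary $\partial Y_j$; the stronger global vanishing $\RG_{\bar Y_j}(\Tc_i)=0$ for an individual $\Tc_i$ that your step invokes is not established there.

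The paper's proof uses the $\codim\geq 2$ hypothesis in a completely different and more local way. Using the local decomposition $(\Tc_{i_1})_{z_1}=(r_e)_+(\Tc_i)_{z_1}\oplus(\Tc_{r_e})_{z_1}$, suppose a simple constituent $W$ of $\Tc_{i_1}$ lies in no $(r_e)_+(\Tc_i)$. Since each residual map $r_e$ is \'etale at height-$\leq 1$ points specializing to $p_i(x_i)$, $W$ is of finite type over $\Oc_{Z_{i_1}}$ at such points; the $\codim\geq 2$ hypothesis is what makes the complement of these good points small enough that Grothendieck's finiteness theorem (\cite{SGA2}*{Prop.~3.2}) applies and yields that $W$ is $\Oc_{Z_{i_1}}$-coherent. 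Grothendieck's non-vanishing theorem then gives $\RG_{\mf_{z_{i_1}}}(W)\neq 0$, contradicting the vanishing $\RG_{z_{i_1}}(\Tc_{i_1})=0$ supplied by Theorem~\ref{vanishing} for the totally ramified factorization $p_{i_1}$. Your proposal does not use Grothendieck's finiteness or non-vanishing theorems at all, which are the engine of the paper's argument, and instead leans on a local-cohomology gluing principle that fails. Finally, the contradiction you assert at the end (that $S\subset N_{j_1}$ contradicts the choice of $S$) is left unexplained: $S\subset N_{j_1}\subset\Tc_{i_1}$ does not by itself preclude $S$ from avoiding the submodule $\sum_{i\to i_1}\Tc_i$, so this step requires additional argument that is not supplied. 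The description of the kernel via a signed \v Cech differential is plausible in outline but would need to be worked out; here, however, the more serious problem lies in the surjectivity argument.
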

  \begin{proof} Consider points $z_{1}\in Z_{i_1}$ lying below  points
    $z_i$ that specialize to $p_i(x_i)$, for some $i\to i_1$.    Then 
    \begin{displaymath}
(\Tc_{i_1})_{z_1} = (r_e)_+(\Tc_{i})_{z_1}\oplus (\Tc_{r_e})_{z_1}
\end{displaymath}
If a simple module $W\not \subset (r_e)_+(\Tc_{i})$ for all
$i \to i_1$, then since $r_e$ is étale at all points of height
$\leq 1$ that specialize to $p_i(x_i)$, $W_{z'_{i_1}}$ is of finite
type over $\Oc_{Z_{i_1}, z'_i}$ when $\hto(z'_i)\leq 1$. Therefore,
since $W$ is torsion free, by Grothendieck's finiteness theorem $W$ is
coherent over $\Oc_{Z_i}$(\cite{SGA2}*{Prop. 3.2 }). Hence by
Grothendieck's non-vanishing theorem
$\RG_{\mf_{z_{i_1}}}(W_{z_{i_1}})\neq 0$, and therefore
$W_{z_{i_1}} \not \subset (\Tc_{i_1})_{z_{i_1}}$. This implies that
the sequence is exact on the right. The exactness on the left is
straightforward.
\end{proof}

\begin{theorem} The following are equivalent:
\begin{enumerate}
    \item $\Tc_i \subset N_j$.
    \item For all $i' \in I_j$ we have $i \to i'$.
\end{enumerate}
\end{theorem}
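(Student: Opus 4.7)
The theorem is essentially a refinement of Theorem \ref{can-filt}, which identifies $N_j$ with the intersection $\bigcap_{i' \in I_j} \Tc_{i'}$. Hence $\Tc_i \subset N_j$ holds precisely when $\Tc_i \subset \Tc_{i'}$ for every $i' \in I_j$, so the equivalence (1) $\Leftrightarrow$ (2) reduces to the pointwise statement
\begin{equation*}
\Tc_i \subset \Tc_{i'} \iff i \to i',
\end{equation*}
where $i \to i'$ is read as the existence of a directed path in the Hasse graph of $(I,\le)$, equivalently $i \le i'$ (so that $X_{i'} \subset \bar X_i$, i.e.\ $X_{i'}$ is a specialization of $X_i$).

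First I would handle the easy implication $i \le i' \Rightarrow \Tc_i \subset \Tc_{i'}$, which is already implicit in the proof of Corollary \ref{cor-can-filt}. When $X_{i'} \subset \bar X_i$, the canonical factorizations $\pi = q_i \circ p_i$ and $\pi = q_{i'} \circ p_{i'}$ are compatible, producing a map $r: Z_i \to Z_{i'}$ with $p_{i'} = r \circ p_i$ and $q_i = q_{i'} \circ r$. At the generic point $y_{j_0}$ (with $i \in I_{j_0}$) the trace $\overline{\Tr}_{i'}$ factors through $\overline{\Tr}_i$, hence $\Tc_i^0 \subset \Tc_{i'}^0$ at the generic level; passing to minimal extensions gives $\Tc_i \subset \Tc_{i'}$. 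Intersecting over $i' \in I_j$ delivers (2) $\Rightarrow$ (1).

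The reverse direction $\Tc_i \subset \Tc_{i'} \Rightarrow i \le i'$ is the main content. I argue the contrapositive: assume $i' \in I_j$ with $X_{i'} \not\subset \bar X_i$, and exhibit a simple constituent of $\Tc_i$ not lying in $\Tc_{i'}$, contradicting $\Tc_i \subset N_j$. The plan is to work at the generic point $z_{i'}$ of $p_{i'}(X_{i'})$ in $Z_{i'}$ and compare with the stalk at $z_i$. Since $X_{i'}$ is not dominated by $\bar X_i$, the factorization $r : Z_i \to Z_{i'}$ does not exist, and the inertia stratification of $p_{i'}$ is \emph{étale} near $z_{i'}$ relative to $p_i$; this is the content of the map $r_e$ in the exact sequence preceding the theorem. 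Mimicking the last paragraph of the proof of the immediately preceding theorem, one then produces a simple submodule $W \subset \Tc_i$ whose stalk $W_{z_{i'}}$ is of finite type over $\Oc_{Z_{i'}, z_{i'}}$. Grothendieck's non-vanishing theorem forces $\RG_{\mathfrak m_{z_{i'}}}(W_{z_{i'}}) \ne 0$, which by Theorem \ref{vanishing} applied to the totally ramified factor $p_{i'}$ is incompatible with $W \subset \Tc_{i'}$, since $\Tc_{i'}^0$ has vanishing local cohomology along the closed fibre. Thus $W \not\subset \Tc_{i'}$, contradicting $\Tc_i \subset \Tc_{i'}$.

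The main obstacle I anticipate is making the construction of the specific simple $W \subset \Tc_i$ with nontrivial behaviour at $z_{i'}$ fully rigorous — particularly in the non-Galois setting, where one must pass to the Galois cover $\bar X \to X \to Y$ as in part (b) of the proof of Theorem \ref{can-filt} and track inertia groups through the $H$-invariants. I would structure the argument so the Galois case is treated first (using the transparent trace kernels and a direct appeal to Theorem \ref{vanishing}), and then reduce the general case by taking $H$-invariants of the Galois picture, exactly mirroring the bookkeeping of part (b) of the proof of Theorem \ref{can-filt}.
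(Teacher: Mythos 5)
Your reduction to the pointwise statement $\Tc_i \subset \Tc_{i'} \iff i \to i'$ via Theorem~\ref{can-filt} is exactly the paper's own proof, which then simply states that iff without further argument; so you are on the same track but attempting to fill in more detail. Your treatment of the forward implication is sound and matches what is implicit in the proof of Corollary~\ref{cor-can-filt}: when $i \to i'$ one gets $r:Z_i \to Z_{i'}$ with $p_{i'} = r\circ p_i$, the trace to $Z_{i'}$ factors through the trace to $Z_i$, and the kernels nest, giving $\Tc_i \subset \Tc_{i'}$; intersecting over $i'\in I_j$ gives $(2)\Rightarrow(1)$.

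The reverse implication is where the gap is, as you acknowledge, and the plan as written does not type-check. You ask for a simple $W\subset\Tc_i$ with coherent stalk $W_{z_{i'}}$ over $\Oc_{Z_{i'},z_{i'}}$, but $W$ is a $\Dc_Y$-submodule of $\pi_+(\Oc_X)$ and has no stalk at a point of $Z_{i'}$; any coherence argument must instead be run at the point $y_j = q_{i'}(z_{i'})\in Y$, after pulling back through $q_{i'}$ and using that $q_{i'}$ is \'etale at $z_{i'}$ so that $(q_{i'})_+(\Oc_{Z_{i'}})_{y_j}$ is finite over $\Oc_{Y,y_j}$ (the mechanism used in the ``$N_j\subset N_j^0$'' paragraph of the proof of Theorem~\ref{can-filt}). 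More seriously, that mechanism proves that any nonzero simple $W$ with $W\not\subset\Tc_{i'}$ has $\RG_{Y_j}(W)_{y_j}\neq 0$ --- the contrapositive of what you want. What is actually missing is a construction that, given $X_{i'}\not\subset\bar X_i$, exhibits some section of $\pi_+(\Oc_X)$ whose trace under $p_i$ (more precisely under $\overline{\Tr}_i$) vanishes but whose trace under $p_{i'}$ does not. That requires comparing the inertia groups $I_{\bar x_i}$ and $I_{\bar x_{i'}}$ and showing that the incomparability of the strata prevents the $I_{\bar x_{i'}}$-trace from annihilating everything the $I_{\bar x_i}$-trace annihilates. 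Until that comparison is made explicit --- including the pass to the Galois cover $\bar X$ and $H$-invariants that you correctly anticipate for the non-Galois case --- the reverse direction remains unproved.
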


  \begin{proof}
    By \Theorem{\ref{can-filt}}
      \begin{displaymath}
        N_j = \bigcap_{i'  \in I_j} \Tc_{i'},
      \end{displaymath}
      and $\Tc_i\subset \Tc_{i'} $ if and only if $i \to i'$. This
      implies the assertion.
  \end{proof}

  \section{Complex reflection groups }\label{complex-refl-section}
  Let $ A= k[y_1, \ldots , y_n]\to B = k[x_1, \ldots , x_n]$ be a
  finite injective homomorphism of polynomial rings and
  $\pi: X=\Spec B \to Y=\Spec A $ be its associated map. The main
  problems concerned with the $\Dc_Y$-module $\pi_+(\Oc_X)$ are to
  determine: (i) a set of generators; (ii) a presentation; (iii) its
  canonical filtration; (iv) a semisimple decomposition. In this
  generality (for all such $\pi$), this project is quite formidable,
  and in this section we will merely get some answers to (i-iv) for
  invariant maps $X= \Ab^n_k\to Y= X^G$ for a complex reflection group
  $G$, where by a theorem of Shephard and Todd $Y= \Ab^n_k$.

  In \Section{\ref{symmetric-groups}} the canonical filtration of
  $N= \pi_+(\Oc_X)$ (\ref{can-mod-sect}) is explicitly decomposed into
  simple modules when $G$ is the symmetric group $S_n$,
  \Section{\ref{pres-exp}} contains a presentation of the direct image
  $\pi_+(E_\lambda)$ of certain exponential modules $E_\lambda$ (where
  $E_0 = \Oc_X$), and \Section{\ref{simple-imprimitive}} is about
  decomposing $\pi_+(\Oc_X)$ when $G$ is an imprimitive complex
  reflection group, using natural generalizations of Young or Specht
  polynomials.

  \subsection{Generators}
    First in greater generality, provide $B$ with the usual grading so
  that the $x_i$ have degree $1$, and assume that $\pi$ is {\it
    homogeneous}, so that the elements $y_i$ are homogeneous
  polynomials of some degrees $d_i = \deg (y_i)$ as elements in $B$.
  Then the Jacobian $J_\pi = \det \partial (y_i)/\partial (x_j) $ is
  homogeneous of degree $\deg J_\pi = \sum_{i=1}^n (d_i -1)$.

  Let $(\{X_{i}, Y_j\}, i \in I_j, j \in J )$ be the canonical
  stratification of $\pi$ and
  $X \xrightarrow{p_{i}} Z_{i}\xrightarrow{q_{i}}Y$ be a factorization
  such that $p_{i}$ is minimally totally ramified along the generic
  point $x_i$ of $X_{i}$ and $q_{i}$ is étale along the image of $x_i$
  in $\tilde X_{i}=p_{i}(X_{i})\subset Z_{i}$. In general the variety
  $Z_i$ need not be smooth.

  Assume that $\pi : A= B^G\to B$ is the inclusion of the invariant
  ring with respect to a group $G$ that is generated by pseudo
  reflections of $X$. Now the stratification $\{X_{i}\}$ are formed as
  {\it flats} in an arrangement of hyperplanes; see
  \cite{orlik-terao:arrangements}*{Theorem 6.27(1)}. Thus $X_i$ is
  formed as an intersection of hyperplanes after removing higher
  codimensional intersections. Also, in this case, the inertia
  subgroups are themselves complex reflection groups ([ibid, Theorem
  6.25], \cite[Prop. 26.6]{Kane}), and this implies, by Chevalley's
  theorem (\cite[Props. 18.3,18.5]{Kane}), that all varieties $Z_{i}$
  that occur in a minimal totally ramified factorizations
  $X\to Z_{i}\to Y$ again are isomorphic to $\mathbb C^n$. Moreover,
  the branch locus $B_\pi$ is a union of hyperplanes
  $B_\pi = \cup_{H\in \Ac} H$ and the Jacobian is of the form
\begin{displaymath}
  J_\pi =  \prod_{H\in \Ac} \alpha_H^{e_H-1},
\end{displaymath}
where $e_H$ is the order of the inertia group (which is a cyclic
group) of the hyperplane $H$ that is defined by the linear form
$\alpha_H$.

Letting $\Theta_\pi$ be the set of all divisors of $J_\pi$ it is
natural to look in $\Theta_\pi$ for generators of $\pi_+(B)$ and/or
its simple submodules.
 
\begin{proposition}
  \begin{displaymath}
    \pi_+(B)= \sum_{\theta \in \Theta_\pi} \Dc_A \theta \tr_{B/A}\otimes 1
    \otimes 1.
\end{displaymath}
\end{proposition}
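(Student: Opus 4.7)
The plan is to reduce to generating at the generic point $\eta$ of $Y$, identify the elements $\theta\tr_\pi\otimes 1\otimes 1$ there via Proposition~\ref{dual-iso}, and verify the generic statement using the Galois decomposition of Corollary~\ref{direct-L}.

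By Theorem~\ref{decomposition-thm} the module $N=\pi_+(B)$ is semisimple, and Corollary~\ref{supp-simples} together with $\supp\Oc_X=X$ forces every simple constituent of $N$ to have support $Y$, so any $\Dc_A$-submodule of $N$ agreeing with $N_\eta$ equals $N$ by full faithfulness of the minimal extension functor. Under the isomorphism of Proposition~\ref{dual-iso} the trace section $\tr_\pi$ is carried to the Jacobian section $\lambda_\pi=J_\pi$ in the trivialization $\sigma=dx\otimes(dy)^{-1}$ of $\omega_X\otimes\pi^*(\omega_Y^{-1})$, so the identification $N_\eta\cong L:=K\otimes_A B$ as $\Dc_K$-module carries $\theta\tr_\pi\otimes 1\otimes 1$ to the monomial $\theta J_\pi\in B\subset L$. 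The claim is thus reduced to the algebraic statement
\[
L \;=\; \sum_{\theta\in\Theta_\pi}\Dc_K\cdot(\theta J_\pi).
\]

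Since $L/K$ is Galois with group $G$, Corollary~\ref{direct-L} gives a decomposition $L=\bigoplus_{\chi\in\hat G}V_\chi^*\otimes_k L_\chi$ with the $L_\chi$ pairwise non-isomorphic simple $\Dc_K$-modules; the $\Dc_K$-isotypical decomposition coincides with the $G$-isotypical decomposition, so generation over $\Dc_K$ is equivalent to the collection $\{\theta J_\pi:\theta\in\Theta_\pi\}$ having nonzero projection onto each component $V_\chi^*\otimes L_\chi$. The main obstacle is precisely this final representation-theoretic fact: that the monomials $\theta J_\pi=\prod_H\alpha_H^{a_H+e_H-1}$ (with $0\le a_H\le e_H-1$) cover every irreducible $G$-character appearing in $B$. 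For $G=S_n$ this recovers the classical realization of $S_n$-irreducibles by Specht polynomials inside the coordinate ring; for general complex reflection groups it should follow from the Shephard--Todd--Chevalley structure (so that $B$ is free over $A$ realizing the regular representation of $G$), though imprimitive or exceptional cases may require more care.

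An alternative that sidesteps this direct combinatorics is to build generators stratum-by-stratum via the canonical filtration of Theorem~\ref{can-filt}. For each stratum $X_i$, Theorem~\ref{factor-theorem} yields $X\xrightarrow{p_i}Z_i\xrightarrow{q_i}Y$ with $Z_i\cong\Ab^n$ smooth (by Chevalley--Shephard--Todd applied to the inertia reflection subgroup $G_i$), and the divisor $\theta_i:=J_\pi/J_{p_i}=p_i^*(J_{q_i})$ lies in $\Theta_\pi$; Theorem~\ref{coh-connection} applied to the totally ramified $p_i$ then shows that $\theta_i\tr_\pi\otimes 1\otimes 1$ generates the quotient $N/\Tc_i\cong(q_i)_+(\Oc_{Z_i})$, and intersecting such generators via Corollary~\ref{cor-can-filt} yields generators of each canonical submodule $N_j$, hence of $N$ itself.
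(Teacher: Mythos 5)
Your proposal is a genuinely different route from the paper, and it has a real gap at its central step. The paper's proof is short and purely algebraic: from \Eq{\ref{dir-im}} one has $\pi_+(B)=\Dc_A\cdot\{b\,\tr_{B/A}\otimes 1\otimes 1 : b\in B\}$, and then one uses two classical facts about complex reflection groups: the space of harmonic polynomials is $\Hc=k[\partial_1,\dots,\partial_n]\cdot J_\pi$, and $B=A\Hc$. Since $J_\pi=\prod_H\alpha_H^{\,e_H-1}$ is a product of powers of linear forms, every constant-coefficient derivative of $J_\pi$ is a $k$-linear combination of monomials $\prod_H\alpha_H^{\,a_H}$ with $0\le a_H\le e_H-1$, which are exactly the elements of $\Theta_\pi$; so $\Hc$ lies in the $k$-span of $\Theta_\pi$ and the result follows with no reduction to the generic point and no appeal to semisimplicity or Galois theory. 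This key ingredient --- $\Hc=k[\partial]\cdot J_\pi\subset k\text{-span}(\Theta_\pi)$ --- is entirely absent from your argument.

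The gap in your first argument is precisely the step you flag as "the main obstacle": after reducing to the generic point and invoking \Corollary{\ref{direct-L}}, you need that the polynomials $\{\theta J_\pi\}_{\theta\in\Theta_\pi}$ project nonzero onto \emph{every} isotypical component $V_\chi^*\otimes_k L_\chi$. You offer no proof, and the hand-wave to Shephard--Todd--Chevalley does not by itself deliver it (knowing $B$ is a free $A$-module realizing the regular representation tells you every $\chi$ occurs in $B$, not that it occurs in the span of divisors of $J_\pi$). The paper's identity $\Hc=k[\partial]\cdot J_\pi$ is exactly the extra input that makes this work, and it cannot be skipped. There is also a smaller computational slip: under the canonical identification $\pi_+(B)_\eta\cong L$ coming from $\Theta_M$ (equivalently $\tr_{L/K}\mapsto 1$), the element $\theta\tr_{B/A}\otimes 1\otimes 1=\tr_{B/A}\otimes 1\otimes\theta$ is sent to $\theta$, not to $\theta J_\pi$; you composed the $\eta$-identification (which moves $\tr_\pi$ to $\lambda_\pi$) with a second trivialization and picked up a spurious factor of $J_\pi$. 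Your alternative sketch via \Theorem{\ref{can-filt}} and the canonical filtration is an interesting idea but is likewise not worked out: \Theorem{\ref{coh-connection}} concerns the coherent direct image $\pi_+^c$, which discards the noncoherent simple constituents, so it does not directly supply generators of each canonical submodule $N_j$, and the intersection step needs more than is stated.
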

\begin{remark}
  When $G= S_n$ it suffices to take only the subset of $\Theta_\pi$
  formed by the divisors that occur as Jacobians of the totally
  ramified morphisms $X\to Z_i $; these are the Specht polynomials,
  and the simples in $\pi_+(B)$ are precisely the modules that are
  generated by such polynomials. On the other hand, if $G$ is a Weyl
  group of type $D_{2n}$, then there exist simple modules that are not
  generated by a divisor of $J_\pi$ (see \Example{\ref{weyl-d4n}}).
\end{remark}
\begin{proof}Let $k[\partial_1, \ldots , \partial_n]$ be the ring of
  differential operators of $B$ with constant coefficients. Then the
  space of harmonic polynomials is of the form
  \begin{displaymath}
    \Hc =  k[\partial_1, \ldots , \partial_n]\cdot J = \frac{B}{B_+^G},
  \end{displaymath}
  so that $B= A \Hc$. Now the assertion follows since the set
  $ k[\partial_1, \ldots , \partial_n] \cdot J$ is contained in the
  $k$-linear hull of the set of all divisors of $J$.\footnote{This
    argument is due to R. Bögvad.}
\end{proof}

\subsection{Canonical filtrations for symmetric
  groups}\label{symmetric-groups}
Let $\pi: X= \Cb^n\to Y = \Cb^n$ be the invariant map
$A= B^{S_n} \to B = \Cb[x_1, \ldots , x_n]$. Let $\Tc_n$ be the set of
partitions $P= \{P_i\}_{i=1}^r$ of the set $[n]= \{1, \ldots, n\}$, so
that $P_i \cap P_j = \emptyset$, $i \neq j$, $\cup P_i = [n]$, and
$P_i\neq \emptyset$, $1\leq i \leq r$. Let $\Pc_n$ be the set of
partitions $\lambda$ of the integer $n$, $\lambda\vdash n$. So that
given $P\in \Tc_n$ we get $\lambda_P\in \Pc_n$, where
$\lambda_P(i) = |P_i|$. We order the partitions
$P= (P_1, \ldots , P_r)\in \Tc_n$ and $\lambda_P \in \Pc_n$ so that
$\lambda_P(i)\geq \lambda_P(i+1) \geq 1$, so that $P$ is given by a
Young tableaux of shape given by the Young diagram of $\lambda_P$.

Say that $P\in \Tc_n$ is a {\it refinement} of $Q\in \Tc_n$ if there
exists a partition $I=\{I_j\}$ of $[r]$ such that
$Q_j = \cup_{i \in I_j} P_{i} $, and then write $Q \succ P$. This
defines a partial order in $\Tc_n$. Similarly, $ \mu\succ \lambda$ if
for a there exists a partition $I$ such that
$\mu_j = \sum_{i \in I_j} \lambda_i $. Then clearly,
$Q\succ P\Rightarrow \lambda_Q \succ \lambda_P$. We call the partial
order $\succ$ (on $\Tc_n$ and $\Pc_n$, respectively) the
specialization order, as explained by the proposition below.

Given $P\in \Tc_n$ we can associate the linear subspace
$\bar X_P = \cap_iX_{P_i}$, where
$\bar X_{P_i}= \{x_k= x_l\}_{k,l \in P_i}$. Then
$\pi (\bar X_Q) = \pi(\bar X_P)$ if and only if $ Q= g P$ for some
element $g\in S_n$, and we put $\bar Y_\lambda = \pi(\bar X_P) $,
where $\lambda = \lambda_P$.

Recall that the dominance order $ \lambda \unlhd \mu$ is defined by
the condition 
$\sum_{1\leq i \leq j}\mu_i\geq \sum_{1\leq i \leq j}\lambda_i$ for
all $j \geq 1 $.

\begin{proposition}
  \begin{enumerate}
  \item The closure of the stratas of the canonical stratification of
    $\pi$ are given by $\{\bar X_{P}\}_{P\in \Tc_n}$ and
    $\{\bar Y_\lambda\}_{\lambda \in \Pc_n}$, where
    $\bar X_\lambda = \pi^{-1}(\bar Y_\lambda)= \cup_{\lambda_P =
      \lambda} \bar X_{P}$, and $\pi(\bar X_{P})= \bar Y_{\lambda}$.
  \item The specialization order of the strata
    $\{\bar X_P\}_{P\in \Tc_n}$ is the same as the specialization
    order on $\Tc_n$, that is
    \begin{displaymath}
      \bar X_P \subset \bar X_Q  \quad \iff \quad Q \succ P.
\end{displaymath}
\item The specialization order on $\{Y_\lambda\}_{\lambda \in \Pc_n}$ is
  the same as the specialization order on $\Pc$, that is
\begin{displaymath}
  \bar Y_\lambda\subset \bar
  Y_\mu \quad \iff \quad  \lambda \succ \mu.
\end{displaymath}
\item The specialization order on $\Pc_n$ is coarser
  then the dominance order, so that if $ \lambda \succ \mu$, then
  $ \lambda \unlhd \mu$.
  \end{enumerate}
\end{proposition}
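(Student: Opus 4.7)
My plan for proving these four statements is to exploit the geometric setup of $S_n$ acting on $X=\mathbb{A}^n_k$ and translate it into the combinatorics of partitions. I would organize the argument in the order (1), (2), (3), (4), since later parts build on (1).

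For (1), the key geometric facts are: (a) the ramification divisor of $\pi$ is the braid arrangement $B_\pi=\bigcup_{i<j}\{x_i=x_j\}$, with image $D_\pi\subset Y$ a hypersurface (the classical discriminant); (b) for each integer partition $\lambda\vdash n$, the locally closed subset $Y_\lambda\subset Y$ parametrizing unordered $n$-tuples with shape $\lambda$ is smooth of dimension $\ell(\lambda)$; and (c) the preimage $\pi^{-1}(Y_\lambda)$ decomposes into the disjoint union $\bigsqcup_{\lambda_P=\lambda}X_P$, where $X_P=\{x:x_i=x_j\Leftrightarrow i\sim_P j\}$ is the open part of the linear subspace $\bar X_P$. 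I would then check that each restriction $\pi|_{X_P}:X_P\to Y_{\lambda_P}$ is an isomorphism of smooth varieties (bijective on closed points with matching tangent dimensions, since the $S_n$-orbit of a point in $X_P$ meets $X_P$ exactly once). Finally I would verify that the family $(\{X_P\},\{Y_\lambda\})$ matches the recursive construction of the canonical stratification from Proposition \ref{smoothstrat}: at each stage one takes the reduced discriminant and extracts the étale locus, and by induction on codimension the connected components of that étale locus are exactly the $Y_\lambda$ with $\ell(\lambda)$ equal to the current codimension.

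For (2), the statement reduces to elementary linear algebra. Since $\bar X_P$ is defined by the equations $\{x_i=x_j:i\sim_P j\}$, a containment $\bar X_P\subset\bar X_Q$ is equivalent to asking that every equation of $\bar X_Q$ holds on $\bar X_P$, i.e.\ that $\sim_Q\subset\sim_P$, which is exactly the condition that every block of $Q$ is contained in some block of $P$. This is the refinement relation between $Q$ and $P$ defined in the paper.

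For (3), I would use that $\pi^{-1}(\bar Y_\lambda)=\bigcup_{\lambda_P=\lambda}\bar X_P$ (which follows from (1) together with the fact that $S_n$ acts transitively on set partitions of a given type). Since $\pi$ is surjective, an inclusion $\bar Y_\lambda\subset\bar Y_\mu$ is equivalent to the corresponding inclusion of preimages, and by (2) this reduces to asking that each set partition of shape $\lambda$ is related under $\succ$ to some set partition of shape $\mu$. After choosing representatives, this unwinds to precisely the refinement relation on the integer partitions $\lambda$ and $\mu$.

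For (4), I would prove that the refinement order on $\Pc_n$ refines the dominance order by induction on elementary splits. A single elementary split replaces one part $a=a_1+a_2$ and reorders in decreasing order, and this operation produces a new partition comparable to the original in dominance, as a direct computation of partial sums shows. The general refinement is a composition of such splits, and the result follows by transitivity. The main obstacle for the whole proposition lies in (1), where the delicate step is verifying that the recursive construction of the canonical stratification produces exactly the partition-indexed strata and no finer refinement is forced; parts (2)--(4) are then essentially combinatorial consequences.
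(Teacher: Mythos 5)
There is a concrete error in the argument for (1). You assert that $\pi|_{X_P}\colon X_P\to Y_{\lambda_P}$ is an isomorphism because ``the $S_n$-orbit of a point in $X_P$ meets $X_P$ exactly once''. This is false whenever $\lambda_P$ has a repeated part. Already for $n=2$, $P=\{\{1\},\{2\}\}$, $\lambda_P=(1,1)$, the orbit of $(a,b)$ with $a\neq b$ also contains $(b,a)\in X_P$, and $X_P\to Y_{(1,1)}$ is the degree-$2$ \'etale covering of the complement of the discriminant. In general the orbit of a generic $x\in X_P$ meets $X_P$ in exactly $\prod_i m_i(\lambda_P)!$ points, where $m_i$ is the multiplicity of the part $i$ in $\lambda_P$: these are the translates $\sigma\cdot x$ with $\sigma$ permuting equal-sized blocks of $P$. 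So $\pi|_{X_P}$ is \'etale of that degree, not an isomorphism. Only \'etaleness is actually needed for the comparison with \Proposition{\ref{smoothstrat}}, and it does hold --- the stabilizer of $x$ is the Young subgroup of $P$, a reflection group on the normal space to $X_P$ at $x$, and Chevalley--Shephard--Todd together with the local product structure of $\pi$ near $x$ gives that the restriction to the stratum is \'etale --- but the bijectivity claim as written is wrong and must be replaced.

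A secondary point on (2)--(4): the combinatorics is correct, but pin down the direction of $\succ$ explicitly rather than deferring to ``the refinement relation defined in the paper''. Your computation gives $\bar X_P\subset\bar X_Q$ iff $\sim_Q\subset\sim_P$, i.e.\ every block of $Q$ lies inside a block of $P$, i.e.\ $Q$ refines $P$; with the paper's stated convention (``$P$ refines $Q$, write $Q\succ P$'') this reads $P\succ Q$, not $Q\succ P$ as part (2) is printed. Likewise a single split produces a finer partition with pointwise smaller partial sums, so $\lambda\succ\mu$ (that is, $\mu$ refines $\lambda$) forces $\mu\unlhd\lambda$, the reverse of the printed inequality in part (4); part (3) is consistent with the definitions. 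When writing this up, state the direction you actually derive and note the discrepancy, since the printed inequalities in (2) and (4) conflict with the paper's own conventions for $\succ$ and $\unlhd$.
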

We leave out the straightforward proof.

The semisimple decomposition of the $\Dc_Y$-module $N= \pi_+(\Oc_X)$
can be described as follows. The Specht polynomials, which are
jacobians of the map $X\to Z_Q$ computed in canonical homogeneous
coordinates, is of the form
$s_Q(x) =j_{X/Z_Q}= \prod \Delta_{Q_i}(x)$, where the van der Monde
determinant $\Delta_{Q_i}(x)= \prod_{r,s\in Q_i, r< s} (x_r-x_s)$,
corresponding to the partition $Q$. The partition has the shape
$s(Q)= \bar \lambda$, i.e. $\bar \lambda_i = |Q_i|$, so that, as is
customary, the sets $Q_i$ form the columns of a tableaux of shape
$\lambda$.\footnote{This means that $ks_Q$ is the signature
  representation of the Young group $G_Q$.} Here $\bar \lambda$ is the
conjugate partition of $\lambda$, so that the columns of the Young
diagram of $\lambda$ are the rows of the diagram of $\bar \lambda$.
Putting $\omega_Q = s_Q\tr_{X/{Z_Q}} \otimes 1\otimes 1 \in N $,
$\Dc_Y \omega_Q$ is a simple $\Dc_Y$-submodule of $N$, and
  \begin{displaymath}
    N= \bigoplus_{\lambda \vdash n} M_\lambda =     \bigoplus_{\lambda \vdash n}\bigoplus_{s(Q)= \lambda} \Dc_Y \omega_Q,
\end{displaymath}
where $M_\lambda$ is the isotypical component of $N$ that corresponds
to the irreducible representation $V_\lambda = k[S_n] s_P$ (see
\cite[Prop. 4.6 ]{kallstrom-bogvad:decomp},
\Theorems{\ref{galois-direct}}{\ref{imprimitive}}).

Our main interest in this section is in the condition
\begin{displaymath}
  M_\lambda \subset N_\mu,
\end{displaymath}
where $N_\mu$ is the canonical module corresponding to the stratum $Y_\mu$, so
we want to describe a combinatorial relation between the partititions
$\lambda = (\lambda_1, \ldots , \lambda_t), \mu = (\mu_1, \ldots, \mu_s)\in
\Pc_n$, written in decreasing order.

If $k$ is an integer such that $0\leq k \leq \lambda_1$, we get a partition
$\lambda(k)\in \Pc_{n-k}$ by replacing $\lambda_1$ by $\lambda_1-k$, and then reorder in
decreasing order. In more detail,
\begin{displaymath}
\lambda(k) = (\lambda_2, \ldots ,\lambda_i,  \lambda_1 -
k,\lambda_{i+1}, \ldots , \lambda_t ),
\end{displaymath}
where $i$ is the greatest index such that $\lambda_i > \lambda_1 - k $, or if
$\lambda_i \leq \lambda_1-k$, $2 \leq i \leq t$, then
$\lambda(k) = (\lambda_1 - k, \lambda_2, \ldots ,\lambda_i, , \ldots , \lambda_t
)$.
Now if $0 \leq \mu_1 \leq \lambda_1$, put
$\lambda^{(1)}= \lambda (\mu_1)\vdash n- \mu_1$, and inductively, if
$0 \leq \mu_i \leq \lambda^{(i-1)}_1$, put
\begin{displaymath}
  \lambda^{(i)} = \lambda^{(i-1)}(\mu_i) \vdash n- (\mu_1+ \ldots + \mu_i).
\end{displaymath}
We emphasise that the sequence $ \lambda^{(i)}\in \Pc_{n_i }$ of partitions of
the integers $n_i = n- (\mu_1+ \ldots + \mu_i)$ depend on $\mu$, and the process
stops when $\mu_i > \lambda_1^{(i-1)}$. Define the subset
\begin{displaymath}
  \Pc_n ^\mu  = \{\lambda \in \Pc \ \vert \ \exists i \quad
  \text{s.t.}\quad \mu_i > \lambda^{(i-1)}_1 \}.
\end{displaymath}
For example, if $\mu_1 > \lambda_1$, then $\lambda \in \Pc_n^\mu$, while
$\mu \not \in \Pc_n^\mu$. 
\begin{proposition}\label{prop-domorder}
  \begin{displaymath}
\text{If} \quad \lambda \unlhd \mu \quad \text{and}\quad   \lambda \neq \mu \quad
\text {then}\quad \lambda \in \Pc_n^\mu.
  \end{displaymath}
\end{proposition}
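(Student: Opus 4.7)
I plan to prove the contrapositive by induction on the number of parts of $\mu$: if $\lambda \unlhd \mu$ and $\lambda \notin \Pc_n^\mu$, then $\lambda = \mu$. The base case, $\mu = (n)$, is immediate: any $\lambda \vdash n$ with $\lambda_1 < n$ satisfies $\mu_1 > \lambda_1^{(0)}$ and therefore lies in $\Pc_n^\mu$, so $\lambda \notin \Pc_n^\mu$ forces $\lambda_1 = n$, i.e.\ $\lambda = \mu$.

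For the inductive step, assume $\lambda \unlhd \mu$. The dominance condition at $j=1$ gives $\mu_1 \geq \lambda_1$; if strict inequality holds then the process halts immediately and $\lambda \in \Pc_n^\mu$, so there is nothing to prove. Hence I may assume $\mu_1 = \lambda_1$. Directly from the definition of $\lambda(k)$, subtracting $k = \lambda_1$ from the largest part (and dropping the resulting zero) produces $\lambda^{(1)} = \lambda' := (\lambda_2, \ldots, \lambda_t) \vdash n - \mu_1$. Set $\mu' := (\mu_2, \ldots, \mu_s)$. A short telescoping using $\sum_{i \leq j+1} \mu_i \geq \sum_{i \leq j+1} \lambda_i$ together with $\mu_1 = \lambda_1$ shows $\lambda' \unlhd \mu'$.

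The key observation is that the iterative procedure for the pair $(\lambda', \mu')$ reproduces the tail of the procedure for $(\lambda, \mu)$: the recursion immediately gives $(\lambda')^{(j-1)} = \lambda^{(j)}$ for every $j\geq 1$ for which both sides are defined, so $\lambda \notin \Pc_n^\mu$ translates into $\lambda' \notin \Pc_{n-\mu_1}^{\mu'}$. Since $\mu'$ has strictly fewer parts than $\mu$, the inductive hypothesis yields $\lambda' = \mu'$; combined with $\mu_1 = \lambda_1$ this gives $\lambda = \mu$, completing the induction.

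The argument is essentially combinatorial bookkeeping; the only delicate point is checking that the operation $\lambda \mapsto \lambda(\mu_1)$ in the case $\mu_1 = \lambda_1$ is literally the deletion of the largest part, which is clear from the definition once one drops any zero-valued part. I expect no serious obstacle beyond this verification.
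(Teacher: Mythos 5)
Your proof is correct and rests on the same combinatorial observation as the paper's: whenever $\mu_i=\lambda_i$ the operation $\lambda^{(i-1)}\mapsto\lambda^{(i-1)}(\mu_i)$ simply deletes the leading part, so the procedure peels off the initial segment on which $\lambda$ and $\mu$ agree. The paper packages this as a direct argument (take $l$ minimal with $\sum_{i\leq l}\mu_i>\sum_{i\leq l}\lambda_i$, conclude $\mu_i=\lambda_i$ for $i<l$ and $\mu_l>\lambda_l=\lambda_1^{(l-1)}$), while you package the same peeling as an induction on the number of parts of $\mu$ with the recursion $(\lambda')^{(j-1)}=\lambda^{(j)}$ made explicit; the two are essentially equivalent, yours being slightly more verbose but arguably cleaner in making the compatibility of the iteration with the reduction $(\lambda,\mu)\mapsto(\lambda',\mu')$ visible.
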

\begin{proof}
  By assumption there exists an integer $l$ such that
  $\sum_{1\leq i < l}\mu_i = \sum_{1\leq i < l}\lambda_i$ and
  $\sum_{1\leq i \leq l}\mu_i > \sum_{1\leq i \leq l}\lambda_i$. This
  implies that $\mu_l > \lambda_l$ and $\mu_i= \lambda_i$, $1 \leq i <
  l$. It now easily follows that $\mu_l > \lambda_1^{(l-1)}$.
\end{proof}
\begin{remark}  
Since for example $\mu_1 > \lambda_1$ does not imply
  $ \lambda \unlhd \mu $ one cannot  turn around the implication in \Proposition{\ref{prop-domorder}}. 
\end{remark}

\begin{theorem}\label{symmetric-canonical}
  Let $M_\lambda$ be the isotypical component of $N= \pi_+(\Oc_X)$
  that corresponds to the partition $\lambda\vdash n$ and $N_\mu$ be
  the maximal submodule of $N$ such that $\RG_{Y_\mu}(N_\mu)=0$,
  where $Y_\mu$ is the stratum corresponding to the partition
  $\mu \vdash n$. Then
\begin{displaymath}
  N_\mu = \bigoplus_{\bar \lambda \in \Pc_n^\mu } M_\lambda.
\end{displaymath}
\end{theorem}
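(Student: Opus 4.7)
The plan is to combine Theorem~\ref{can-filt} (which characterises $N_\mu$ as an intersection of inertia submodules) with the isotypical decomposition of Theorem~\ref{galois-direct}, and then reduce the decomposition of $N_\mu$ to a branching problem for $S_n$ that is settled by Young's rule. Finally the combinatorial set $\mathcal{P}_n^\mu$ must be identified with partitions that are not dominance--above $\mu$.

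First, the strata of $X$ above $Y_\mu$ are parametrised by set partitions $P\in \mathcal{T}_n$ with $\lambda_P=\mu$; for each such $P$ the natural totally ramified factorisation is $X\xrightarrow{p_P} Z_P=X/S_P \xrightarrow{q_P} Y$, where $S_P= S_{P_1}\times\cdots\times S_{P_r}$ is the associated Young subgroup and $Z_P\cong \mathbb{A}^n_k$ by Chevalley--Shephard--Todd. Since $\pi$ is Galois, Theorem~\ref{can-filt} gives $N_\mu=\bigcap_{\lambda_P=\mu}\mathcal{T}_P$ with $\mathcal{T}_P=\ker(\overline{\mathrm{Tr}}_P)$. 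For the Galois invariant map $p_P$ the trace is (up to scalar) the Reynolds operator $R_P=\sum_{g\in S_P}g$ acting on $\pi_*(\mathcal{O}_X)\subset \pi_*(L)$, and, composed with the étale $q_P$ and extended by $i_{!+}$ from the generic point, realises $\overline{\mathrm{Tr}}_P$ as a $\Dc_Y$-endomorphism of $N$ induced by $R_P$ via the natural $S_n$-action on $L=\mathrm{Frac}(B)$.

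By Theorem~\ref{galois-direct} (see also Corollary~\ref{direct-L}), $\pi_+(L)=\bigoplus_{\lambda\vdash n}V_\lambda^*\otimes_k L_\lambda$, where $S_n$ acts only on $V_\lambda^*$ and $\Dc_K$ only on the simple $L_\lambda$, and $M_\lambda=V_\lambda^*\otimes_k L_\lambda$ is the isotypical component of $N$. Consequently $R_P$ acts on $M_\lambda$ as $|S_P|\cdot\pi_P^{(\lambda)}\otimes\mathrm{id}$, where $\pi_P^{(\lambda)}:V_\lambda^*\to V_\lambda^*$ is the projector onto $S_P$-invariants, so $\mathcal{T}_P\cap M_\lambda=\ker(\pi_P^{(\lambda)})\otimes L_\lambda$. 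Taking the intersection over all $P$ with $\lambda_P=\mu$, and using that all such Young subgroups are $S_n$-conjugate, $\sum_P \mathrm{Im}(\pi_P^{(\lambda)})=\sum_{g\in S_n} g\cdot (V_\lambda^*)^{S_\mu}$ is an $S_n$-submodule of the irreducible $V_\lambda^*$; it is either $0$ or all of $V_\lambda^*$, depending on whether $(V_\lambda^*)^{S_\mu}$ vanishes. By Young's rule (equivalently Frobenius reciprocity applied to $\mathrm{Ind}_{S_\mu}^{S_n}(\mathbf{1})=\bigoplus_\lambda K_{\lambda\mu}V_\lambda$, where $K_{\lambda\mu}$ is the Kostka number), $(V_\lambda^*)^{S_\mu}\neq 0$ if and only if $\lambda \unrhd \mu$ in the dominance order. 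Dualising, $\bigcap_P\ker(\pi_P^{(\lambda)})$ equals $V_\lambda^*$ when $\lambda\not\unrhd\mu$ and equals $0$ when $\lambda\unrhd\mu$, so $N_\mu=\bigoplus_{\lambda\not\unrhd\mu} M_\lambda$.

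It remains to identify $\mathcal{P}_n^\mu=\{\lambda:\lambda\not\unrhd\mu\}$. I would argue this by induction on the length of $\mu$: the algorithm defining $\lambda^{(i)}$ succeeds at step $i$ precisely when $\mu_i\leq\lambda^{(i-1)}_1$, and a short bookkeeping with partial sums shows that the full sequence succeeds if and only if $\lambda\unrhd\mu$, since removing $\mu_1$ from the largest part of $\lambda$ and reordering corresponds exactly to subtracting a horizontal strip of length $\mu_1$ in the dominance order. The main obstacle I anticipate is step two: identifying $\overline{\mathrm{Tr}}_P$ with the Reynolds operator globally on $N$ (not just at the generic point), which requires a careful check that both sides agree under minimal extension, using that $N$ is the minimal extension of its torsion-free generic stalk and that both operators are $\Dc_Y$-linear. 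Once this identification is justified, the representation-theoretic step and the combinatorial lemma are comparatively routine.
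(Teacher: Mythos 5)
Your approach is genuinely different from the paper's and at its core it is a valid alternative. The paper works entirely on the level of explicit Specht polynomials: it observes that $\Tr_{G_P}(s_Q)=0$ iff there exist $(i,j)$ with $|P_j\cap Q_i|\ge 2$ (a transposition inside some $P_j\cap Q_i$ kills $s_Q$), reduces the question to a set-partition intersection problem, and then compares directly with the algorithmic description of $\Pc_n^\mu$; this last step is, in disguise, the Gale--Ryser criterion for the existence of a $0$--$1$ matrix with row sums $\mu$ and column sums $\lambda$. You instead use the isotypical decomposition from Corollary~\ref{direct-L}, identify $\overline{\Tr}_P$ with a Reynolds projector, and invoke Young's rule. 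Both routes are fine in principle and agree by Gale--Ryser/Young duality; yours avoids polynomial book-keeping at the cost of heavier representation theory.

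However there is a genuine error in the Young's rule step. You write that $(V_\lambda^*)^{S_\mu}\neq 0$ iff $\lambda\unrhd\mu$, implicitly identifying $V_\lambda$ with the Specht module $S^\lambda$. But the paper's $M_\lambda$ is spanned by $\Dc_Y\omega_Q$ with $s(Q)=\lambda$, and the Specht polynomial $s_Q=\prod_i\Delta_{Q_i}$ is a \emph{sign}-eigenvector (not an invariant) for the Young subgroup $\prod S(Q_i)$, so $k[S_n]s_Q\cong S^{\bar\lambda}$, the Specht module for the \emph{conjugate} partition. Thus the irreducible attached to $M_\lambda$ is $S^{\bar\lambda}$, and the vanishing condition reads $(S^{\bar\lambda})^{S_\mu}=0\iff\bar\lambda\not\unrhd\mu$, which is what puts the $\bar\lambda$ in the theorem. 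Your formula $N_\mu=\bigoplus_{\lambda\not\unrhd\mu}M_\lambda$ is off by this conjugation; already for $n=4$ and $\mu=(3,1)$ it would place $\Oc_Y=M_{(1^4)}$ inside $N_{(3,1)}\subset\Tc_\pi$, contradicting $\pi_+(\Oc_X)=\Oc_Y\oplus\Tc_\pi$, whereas the correct answer $\{(4),(3,1),(2,2)\}$ is obtained only after inserting the conjugate.

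Two smaller gaps remain. First, you defer the identification of $\overline{\Tr}_P$ with the Reynolds operator on all of $N$, and that deferral is justified here: the paper constructs $\overline{\Tr}_P$ as the unique $\Dc_Y$-linear extension from the generic point, where it manifestly is the Reynolds operator, and uniqueness of extensions of maps between semisimple minimal extensions gives the global identification; this should be spelled out rather than flagged. Second, the combinatorial equivalence between the dominance condition $\bar\lambda\not\unrhd\mu$ and the set $\Pc_n^\mu$ defined by the peeling algorithm is left as ``a short bookkeeping with partial sums.'' This is not routine: it is exactly the non-trivial half of the paper's proof (the $\Leftarrow/\Rightarrow$ arguments with conjugate tableaux), and taking the paper's definition of $\lambda^{(i)}$ at face value one has to be careful to verify that the greedy peeling step does reproduce the dominance condition. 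If you intend to use Young's rule instead of the paper's intersection-count argument, you still owe a proof that the resulting dominance criterion matches the statement's $\Pc_n^\mu$.
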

\begin{proof}
By \Theorem{\ref{can-filt}}
\begin{displaymath}
  N_\mu = \bigcap _{s(P)= \mu} \Tc_P,
\end{displaymath}
where $P$ is a standard Young tableaux of shape $\mu$,  indexing the flats $X_P$ lying over
$Y_\mu$. The inertia module 
\begin{displaymath}
  \Tc_P = \{m \in \pi_+(\Oc_X) \ \vert \ \Tr_{G_P}(m)=0 \},
\end{displaymath}
where the inertia group $G_P$ of $X_P$ is the Young group
$G_P =S (P_1)\times \cdots \times S(P_n)$.  Therefore
\begin{align*}
  \Dc_Y \omega_Q\subset N_\mu \quad &\iff \quad  \Tr_{G_P}(\omega_Q)=0 \quad
  \text{when} \quad s(P)= \mu \\&\iff   \quad  \Tr_{G_P}(s_Q)=0\quad  \text{when} \quad s(P)= \mu.
\end{align*}

We assert:
\begin{displaymath}
  \Tr_{G_P}(s_Q)=0 \iff \exists (i,j)\quad  \text{s.t.}\quad |P_j\cap Q_i|\geq 2.
\end{displaymath}
If $|P_j\cap Q_i|\geq 2$, then $G_P\cap G_Q$ contains a transposition
$\tau$ of two elements in $Q_i$, so that $\Tr_{C_2}(s_Q)=0$ if
$C_2= <\tau>$, implying that $\Tr_{G_P}(s_Q)=0$.

Conversely, if $ |P_j\cap Q_i|\leq 1$ for all $(i,j)$ and
$\sigma\cdot s_Q\neq \sigma_1\cdot s_Q$, then
$\sigma\cdot s_Q = \prod_j \sigma \cdot \Delta_{Q_j} $ contains a
polynomial factor of degree $\geq 1$ that does not occur in
$\sigma_1 s_Q$. This implies that for each $i$
\begin{displaymath}
\Tr_{S(P_i)}(s_Q)= \sum_{\sigma \in S(P_i) } \sigma \cdot s_Q \neq 0, 
\end{displaymath}
and therefore, since $P_i\cap P_{i'}= \emptyset$ when $i\neq i'$,
\begin{displaymath}
  \Tr_{G_P}(s_Q)=  \Tr_{S(P_n)}\Tr_{S(P_2)}\Tr_{S(P_1)}(s_G)\neq 0.
\end{displaymath}
It remains to prove
\begin{displaymath}
\bar \lambda \in \Pc_n^\mu \iff  \exists (i,j)\quad  \text{s.t.}\quad |P_j\cap Q_i|\geq 2 .
\end{displaymath}
$\Rightarrow$: Assume that $ |P_l\cap Q_j| \leq 1 $ and that
$ \mu_j \leq \bar \lambda_1^{(j-1)}$ when $j < i$ and all $l$, while
$\bar \lambda^{(i-1)}_1 < \mu _i$. Let $\bar P = \{\bar P_s\}$ be the
conjugate partition of $P$; this is the partition of $[n]$ that is
formed from the columns of the tableaux of $P$ (where the rows of the
original tableaux are formed from the subsets $P_i$). The assumption
implies that each set $Q_j$, $j < i$, belongs to a single subset
$\bar P_r$. Now the condition $\bar \lambda^{(i-1)}_1 < \mu _i$
implies that $Q_i $ cannot belong to a single subset $\bar P_r$ for
all $r$, and therefore there exists an integer $r$ such that
$|P_r \cap Q_i|\geq 2$.

$\Leftarrow$:    That $ |P_j\cap Q_i| \geq 2$  for some  $(i,j)$
implies that $Q_i$ does not belong to a single subset $\bar P_r$ for
all $r$.   This implies that $\mu_i > \bar \lambda ^{(i-1)}_1$. 
\end{proof}
We can now determine which isotypical components are added in the
canonical filtration upon adding a deeper stratum. These are described
combinatorially by subsets of $\Pc_n$ of the form
\begin{displaymath}
\Phi_\mu = \Pc_n^\mu \setminus \bigcup_{\mu' \succ \mu} \Pc_n^{\mu'}.
\end{displaymath}

\begin{corollary}\label{cor_canonical}
  Put $N_{\mu' \succ \mu} = \sum_{\mu ' \succ \mu} N_{\mu'} $, the
  maximal submodule of $N=\pi_+(\Oc_X)$ with vanishing local
  cohomology along all strata $Y_{\mu'}$ that dominate $Y_\mu$ (and
  $Y_{\mu'} \neq Y_{\mu}$). Then $N_{\mu' \succ \mu} \subset N_\mu$
  and we have
  \begin{displaymath}
    \frac {N_\mu}{N_{\mu' \succ \mu}} = \bigoplus_{\bar \lambda \in \Phi_\mu
    } M_{\lambda}.
  \end{displaymath}
\end{corollary}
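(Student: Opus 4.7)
The plan is to deduce the corollary from \Theorem{\ref{symmetric-canonical}} together with the semisimplicity of $N=\pi_+(\Oc_X)$ (\Theorem{\ref{decomposition-thm}}); most of the substantive content has already been established in the preceding theorem, and what remains is essentially bookkeeping with the isotypical decomposition.

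First I would invoke \Theorem{\ref{symmetric-canonical}} to write
\begin{displaymath}
 N_\mu=\bigoplus_{\bar\lambda\in\Pc_n^\mu}M_\lambda, \qquad N_{\mu'}=\bigoplus_{\bar\lambda\in\Pc_n^{\mu'}}M_\lambda,
\end{displaymath}
for each $\mu'$ that appears in the sum, where the $M_\lambda$ are mutually non-isomorphic simple $\Dc_Y$-submodules of $N$. The key preliminary step is to observe that every such $N_{\mu'}$ already lies inside $N_\mu$: because $Y_{\mu'}$ dominates $Y_\mu$ (so $\bar Y_{\mu'}\supset\bar Y_\mu$), the vanishing $\RG_{Y_{\mu'}}(M)=0$ implies $\RG_{Y_\mu}(M)=0$, and the maximality properties defining $N_{\mu'}$ and $N_\mu$ then force $N_{\mu'}\subset N_\mu$. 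Translated to the indexing sets this reads $\Pc_n^{\mu'}\subset\Pc_n^\mu$ for every $\mu'\succ\mu$ occurring in the sum.

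Using the semisimplicity of $N$ together with the mutual non-isomorphism of distinct $M_\lambda$'s I would then collect the summands to obtain
\begin{displaymath}
 N_{\mu'\succ\mu}=\sum_{\mu'\succ\mu}N_{\mu'}=\bigoplus_{\bar\lambda\in\bigcup_{\mu'\succ\mu}\Pc_n^{\mu'}}M_\lambda\subset N_\mu,
\end{displaymath}
and the quotient taken inside the semisimple ambient $N_\mu$ simply deletes the isotypical components already present in $N_{\mu'\succ\mu}$:
\begin{displaymath}
 \frac{N_\mu}{N_{\mu'\succ\mu}}=\bigoplus_{\bar\lambda\in\Pc_n^\mu\setminus\bigcup_{\mu'\succ\mu}\Pc_n^{\mu'}}M_\lambda=\bigoplus_{\bar\lambda\in\Phi_\mu}M_\lambda,
\end{displaymath}
which is the asserted identity. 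The only point requiring any attention is the inclusion $N_{\mu'}\subset N_\mu$ for dominating $\mu'$, and this is just the functorial behaviour of local cohomology for closed inclusions $Y_\mu\subset\bar Y_{\mu'}$, so no genuine obstacle arises beyond what was already resolved in \Theorem{\ref{symmetric-canonical}}.
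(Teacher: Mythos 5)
Your overall strategy --- reading off the quotient from the explicit isotypical decomposition of \Theorem{\ref{symmetric-canonical}} and the semisimplicity of $N$ --- is the right one, and the bookkeeping at the end is fine. However, there is a genuine gap at the step you flag as ``the only point requiring any attention.'' You assert that since $Y_\mu\subset\bar Y_{\mu'}$, the vanishing $\RG_{Y_{\mu'}}(M)=0$ implies $\RG_{Y_\mu}(M)=0$, calling this ``just the functorial behaviour of local cohomology.'' That implication is false in general. The strata $Y_\mu$ and $Y_{\mu'}$ are disjoint locally closed sets, and vanishing of local cohomology along a locally closed $S$ does not descend to deeper locally closed strata inside $\bar S$. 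This is precisely what \Example{\ref{ex-nonvancoh}} demonstrates: there one has a simple holonomic $M$ with $\RG_S(M)=0$ but $\RG_{\bar S_1}(M)\neq 0$ for $\bar S_1\subset\bar S$. The remark immediately after \Corollary{\ref{cor-can-filt}} emphasizes that the implication you invoke is exactly the non-trivial conclusion of that corollary and does \emph{not} hold for arbitrary holonomic modules --- the example just cited cannot occur inside $\pi_+(\Oc_X)$, and that is a theorem, not a formal consequence of functoriality.

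The correct source of the inclusion $N_{\mu'}\subset N_\mu$ for $\mu'\succ\mu$ is \Corollary{\ref{cor-can-filt}}, which states that the canonical submodules $\{N_j\}$ form an increasing filtration with respect to specialization; this in turn rests on \Theorem{\ref{can-filt}}, which characterizes the $N_j$ via the inertia trace morphisms $\overline{\Tr}_i$ rather than via local cohomology alone. Once you replace the bogus local-cohomology argument by a citation of \Corollary{\ref{cor-can-filt}}, the remainder of your computation --- matching indexing sets and deleting summands inside a semisimple module --- is a correct derivation of
\begin{displaymath}
  \frac{N_\mu}{N_{\mu'\succ\mu}}=\bigoplus_{\bar\lambda\in\Pc_n^\mu\setminus\bigcup_{\mu'\succ\mu}\Pc_n^{\mu'}}M_\lambda=\bigoplus_{\bar\lambda\in\Phi_\mu}M_\lambda.
\end{displaymath}
Equivalently, you could observe that \Theorem{\ref{symmetric-canonical}} plus \Corollary{\ref{cor-can-filt}} together force the purely combinatorial inclusion $\Pc_n^{\mu'}\subset\Pc_n^\mu$ for $\mu'\succ\mu$, but that combinatorial fact is not claimed or proved independently in the paper, so you should not present it as obvious.
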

One would perhaps expect that the isotypical decomposition of $N$ in
some way can be mapped to its canonical decomposition, but this turns
out not to be the case, since the canonical filtration need not be
strictly increasing and therefore more than one isotypical component
may be added when including a deeper stratum. The smallest $n$ when
this happens is $n=6$.
\begin{example} A partition $\lambda$ is the index of a stratum
  $Y_\lambda$ in the canonical stratification, where the dimension
  $\dim Y_\lambda $ of a stratum is given by the length of $\lambda$,
  so that e.g. $\dim Y_{(2,2)}= 2$. For $n= 4$, the specialization
  order is described by the following diagram:
  \begin{figure}[!h]
    \centering
    \begin{tikzpicture}
    \tikzset{edge/.style = {->,> = latex'}}
    \node (a) at (0,0) {(1,1,1,1)}; \node (b) at (2.5,0)
    {(2,1,1)}; \node (c1) at (4.5,1) {(3,1)}; \node (c2) at
    (4.5,-1) {(2,2)}; \node (d1) at (7,0) {(4),};
    \draw[edge] (a) to (b); \draw[edge] (b) to (c1); \draw[edge] (b)
    to (c2); \draw[edge] (c1) to (d1); \draw[edge] (c2) to (d1);
    \end{tikzpicture}
  \end{figure}

  \noindent
  where the leftmost partition corresponds to the generic point in
  $Y$, the next is the generic point of the discriminant locus, and so
  on. The arrows denote specialization, where we notice that two of
  the strata are related in the dominance order but not the
  specialization order. To exemplify
  \Theorem{\ref{symmetric-canonical}}, we have
  $N_{(3,1)}= M_{(2,2)}\oplus M_{(3,1)}\oplus M_{(4)}$ and
  $N_{(2,2)}= M_{(2,1,1)}\oplus M_{(3,1)}\oplus M_{(4)}$. Here
  $(2,2) \unlhd (3,1)$ while $(2,2) \not \succ(3,1)$. Here
  $\Phi_{(4)}= \{(3,1),(2,2)\}$ while $\Phi_\mu$ consists of a single
  element when $\mu \neq (4)$, so that one then adds a single
  isotypical component in \Corollary{\ref{cor_canonical}}. For $n=5$
  we have, e.g.,
  $N_{(3,2)} = \oplus_{\lambda \in \Pc_{(3,2)}} M_{\bar \lambda}$ ,
  where $ \Pc_{(3,2)}= \{(1,1,1,1,1),(2,1,1,1), (3,1,1),(2,2,1)\} $.
  Here $(2,2,1) \unlhd (3,1,1) $, while $(2,2,1) \not \succ (3,1,1) $.
  The graph describing the specialization order for $n= 6$ is as
  follows:
  \begin{figure}[!h]
  \centering
  \begin{tikzpicture}
    \tikzset{edge/.style = {->,> = latex'}}
    \node (a) at (0,0) {(1,1,1,1,1,1)}; \node (b) at (2.5,0)
    {(2,1,1,1,1)}; \node (c1) at (4.5,1) {(3,1,1,1)}; \node (c2) at
    (4.5,-1) {(2,2,1,1)}; \node (d1) at (7,1.5) {(4,1,1)}; \node (d2)
    at (7,0) {(3,2,1)}; \node (d3) at (7,-1.5) {(2,2,2)}; \node (e1)
    at (9,1.5) {(5,1)}; \node (e2) at (9,0) {(4,2)}; \node (e3) at
    (9,-1.5) {(3,3)}; \node (f) at (11,0) {(6).};
    \draw[edge] (a) to (b); \draw[edge] (b) to (c1); \draw[edge] (b)
    to (c2); \draw[edge] (c1) to (d1); \draw[edge] (c1) to (d2);
    \draw[edge] (c2) to (d2); \draw[edge] (c2) to (d3); \draw[edge]
    (d1) to (e1); \draw[edge] (d1) to (e2); \draw[edge] (d2) to (e1);
    \draw[edge] (d2) to (e2); \draw[edge] (d2) to (e3); \draw[edge]
    (d3) to (e2); \draw[edge] (d3) to (e3); \draw[edge] (e1) to (f);
    \draw[edge] (e2) to (f); \draw[edge] (e3) to (f);
  \end{tikzpicture}
\end{figure}

\noindent
By \Theorem{\ref{symmetric-canonical}}
we have, for example,
\begin{align*}
  N_{(4,1,1)}&= \bigoplus_{\lambda \in \Pc_{(4,1,1)}} M_{\bar \lambda},
               \quad \text{where}\\
   \Pc_{(4,1,1)} &= \{(1,1,1,1,1,1), (2,1,1,1,1),
(3,1,1,1),(2,2,1,1),(3,2,1), (2,2,2),(3,3)\}.
\end{align*}
Notice that even though $\mu=(2,2,2) $ and $\lambda =(3,1,1,1) $ are
unrelated with respect to the dominance order we have
$\lambda \in \Pc_{\mu}$ ($\lambda^{(1)}= (1,1,1,1)$, so that
$\lambda^{(1)}_1 = 1 < 2= \mu_2$) and therefore
$M_{\bar \lambda}= M_{(4,1,1)} $ is contained in $ N_{(2,2,2)}$.
In \Corollary{\ref{cor_canonical}} one adds two isotypical components
upon adding the stratum $Y_{(2,2,2)}$, since $\Phi_{(2,2,2)}= \{(2,2,1,1), (3,1,1,1)\}$.
\end{example}

\subsection{Presentation of exponential modules for complex reflection
  groups}\label{pres-exp}
Let $G$ be a complex reflection group in $\Glo(V)$, where $V$ is a
finite-dimensional $k$-space, and put $X= \Spec \So(V)$ and $Y= \Spec \So(V)^G$.
An element $\lambda \in V^*$ defines a maximal ideal
$\mf_\lambda \subset \So(V)$ and the exponential $\Dc_X$-module
$E_\lambda= \Dc_X e^\lambda = \Dc_X/ (\Dc_X \mf_\lambda) $ (here
$e^\lambda = 1 \omod \Dc_X\mf_\lambda$), which is an invertible $\Oc_X$-module.
Now put $\pfr_\lambda = \So(V)^G\cap \mf_\lambda \subset \Dc_Y$ and
\begin{displaymath}
M_\lambda = \frac {\Dc_Y}{\Dc_Y \pfr_\lambda }.
\end{displaymath}

\begin{theorem}\label{semsimplemod} If  $G$ is  a complex reflection
  group, then 
  \begin{displaymath}
M_\lambda    \cong  \pi_+(E_\lambda).
  \end{displaymath}
\end{theorem}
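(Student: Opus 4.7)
My plan is to construct a canonical $\Dc_Y$-linear map $\bar\phi: M_\lambda \to \pi_+(E_\lambda)$ by using the distinguished cyclic generator coming from $e^\lambda$, show it is surjective by exploiting that $\pi$ is uniformly ramified, and finally establish injectivity by a characteristic-cycle comparison.

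For the construction, I would apply the canonical homomorphism $\Theta_{E_\lambda}: \pi_*(E_\lambda) \to \pi_+(E_\lambda)$ of Proposition \ref{epinymous} to the generator $e^\lambda \in E_\lambda$ to obtain a distinguished global section $\bar e := \Theta_{E_\lambda}(e^\lambda)$ of $\pi_+(E_\lambda)$. The rule $P \mapsto P\cdot \bar e$ defines a $\Dc_Y$-linear map $\phi: \Dc_Y \to \pi_+(E_\lambda)$. Since $\pfr_\lambda = \So(V)^G \cap \mf_\lambda$ is contained in $\mf_\lambda$ and the latter annihilates $e^\lambda$ in $E_\lambda$ by construction, and since $\pfr_\lambda$ acts on $\pi_+(E_\lambda)$ through the commutative $\Oc_Y$-structure with respect to which $\Theta_{E_\lambda}$ is linear, it follows that $\pfr_\lambda \cdot \bar e = 0$. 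Thus $\phi$ factors through the desired map $\bar\phi: M_\lambda \to \pi_+(E_\lambda)$.

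To show surjectivity, I would exploit that, since $G$ is generated by pseudo-reflections, the invariant map $\pi$ is uniformly ramified (Proposition \ref{knop-prop}). Theorem \ref{coh-decomp} then yields a perfect correspondence between the simple constituents of the $\Dc_Y^\pi$-module $\pi_*(E_\lambda)$ and those of the $\Dc_Y$-module $\pi_+(E_\lambda)$ via $\Theta_{E_\lambda}^{-1}$. At the generic point, Lemma \ref{simpletensor} combined with the Galois descent of Proposition \ref{morita} shows that $e^\lambda$ generates $\pi_*(E_\lambda)$ over $\Dc_Y^\pi$ (equivalently, over the Galois skew-group ring), so its image $\bar e$ has nonzero projection onto every isotypic component of the semisimple module $\pi_+(E_\lambda)$ (semisimplicity by Theorem \ref{decomposition-thm}). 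Hence $\bar e$ generates $\pi_+(E_\lambda)$ as a $\Dc_Y$-module, giving the surjectivity of $\bar\phi$.

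The main obstacle is injectivity, that is, showing $\pfr_\lambda$ is the \emph{full} annihilator of $\bar e$ in $\Dc_Y$. My approach is to match characteristic cycles. The characteristic variety of $E_\lambda$ is the graph of the section $d\lambda: X \hookrightarrow T^*X$, so because $\pi$ is finite the characteristic cycle of $\pi_+(E_\lambda)$ has total multiplicity $|G|/|\on{Stab}_G(\lambda)|$ at the generic point of its support in $T^*Y$. On the other side, $\pfr_\lambda$ is the (radical) defining ideal in $\So(V)^G$ of the image of the orbit $G\cdot\lambda$ in $Y = V^*/G$, so $M_\lambda$ has characteristic cycle with the same total multiplicity. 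Passing from this generic-point equality to injectivity of $\bar\phi$ on all of $Y$ will follow from the semisimplicity of $\pi_+(E_\lambda)$ together with the fact that $M_\lambda$ has no submodule supported strictly inside the orbit closure. The delicate point I expect to grind on is the identification of the characteristic cycle of $M_\lambda$, which boils down to the Chevalley--Shephard--Todd structure of $\So(V)^G$ and the Levasseur--Stafford generation theorem for $\Dc_Y^\pi$; that verification controls exactly how many extra relations appear when extending from the $\Dc_Y^\pi$-presentation of $\pi_*(E_\lambda)$ to the full $\Dc_Y$-presentation of $\pi_+(E_\lambda)$.
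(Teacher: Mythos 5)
The central obstacle you identify — showing $\pfr_\lambda$ is the \emph{full} annihilator of $\bar e$ — is in fact not just hard, it is false, and the paper tells you so. In Remark (2) immediately after Theorem~\ref{semsimplemod} the author states explicitly that the canonical map $M_\lambda \to \pi_+(E_\lambda)$, $1 \mapsto \tr_{X/Y}\otimes e^\lambda$, is \emph{not} an isomorphism, because $(\Dc_X\mf_\lambda)\cap\Dc_Y \not\subset \Dc_Y\pfr_\lambda$. Concretely, any $P\in\Dc_Y$ that also lies in $\Dc_X\mf_\lambda$ kills $\bar e$ (it is liftable and annihilates $e^\lambda$), and for $\lambda = 0$ this already includes all of $\Dc_Y T_Y$, which is not contained in $\Dc_Y\So(V)^G_+ = \Dc_Y\pfr_0$. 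So $\ker\bar\phi \neq 0$. The characteristic-cycle comparison cannot rescue this; it would at best show $\ker\bar\phi$ has trivial class, but since the map genuinely has kernel, the hypothesis (equality of characteristic cycles of $M_\lambda$ and $\pi_+(E_\lambda)$ \emph{plus} surjectivity) that makes that argument work is simply not available for $\bar\phi$.

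Your surjectivity step also fails, for a related reason. Lemma~\ref{simpletensor} and Proposition~\ref{morita} tell you $e^\lambda$ generates over the skew group ring $\Dc_L[G]$ (equivalently, over $\Dc_Y^\pi$ together with the $G$-action), but that is strictly more than generating over $\Dc_Y$ alone. Already for $\lambda = 0$, $\bar e$ is the image of $1\in\Oc_X$ and sits in the $\Oc_Y$-summand of $\pi_+(\Oc_X) = \Oc_Y\oplus\Tc_\pi$; thus $\Dc_Y\bar e = \Oc_Y$, not all of $\pi_+(\Oc_X)$. The paper's Remark (2) reiterates that there is no canonical cyclic generator here. The actual proof in the paper is deliberately non-canonical and avoids $\bar e$ entirely: (a) an explicit Galois computation at the generic point — using that $G_\lambda$ is a complex reflection group (Steinberg), so $\dim_k\So(V)/\So(V)\nf_\lambda = |G_\lambda|$, together with $\pi^!\pi_+$ via Theorem~\ref{inv-dir} and a rank count — yields $K\otimes_A\pi_+(E_\lambda)\cong\Dc_K/\Dc_K\pfr_\lambda$; (b) a separate direct argument via Lemma~\ref{jacobson} shows $M_\lambda$ is torsion-free over $A$; (c) since both are torsion-free with the same generic stalk, one twists the generic isomorphism by a suitable $h\in A$ so that $h\mu$ lands in $\pi_+(E_\lambda)$, and semisimplicity then forces $\Dc_A h\mu = M_\lambda$, producing a (non-canonical) global isomorphism. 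If you want to salvage your plan, you would have to replace $\bar e$ with such a twisted generator and give up on canonicity.
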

By a theorem of Steinberg \cite{steinberg:differential} the isotropy
group $G_\lambda$ of $\lambda$ is again a complex reflection group.
Then \Theorem{\ref{galois-direct}} gives an abstract decomposition of
$M_\lambda$, where the inertia group $G_{E_\lambda} = G_\lambda $,
which in (\ref{simple-imprimitive}) will be made explicit when $G$ is
an imprimitive complex reflection group and $\lambda=0$.

\Theorem{\ref{semsimplemod}} complements a result due to Levasseur and
Stafford \cite [Th. 4.4]{levasseur-stafford:semisimplicity}, stating
that $\Dc_A^\pi/ \Dc_A^\pi\pfr_\lambda\cong \pi_*(E_\lambda)$ as
$\Dc_A^\pi$-modules. In fact, a transcription of the first part of the
proof below also recovers [loc. cit.] in a rather direct and explicit
way.\footnote{In step (1) one can work with $A\subset B$ and
  $\Dc_A^\pi\subset \Dc_B$ instead of $K \subset L$ and
  $\Dc_K \subset \Dc_L$, proving that
  $B\otimes_A \Dc^\pi_A/(\Dc^\pi_A\nf_\lambda) = k[G_\lambda]\otimes_k
  E_\lambda $ and then
  $\Dc_A^\pi/(\Dc_A^\pi \pfr_\lambda) = \pi_*(E_\lambda) $. Steps (2)
  and (3) are then not required.}

\begin{remark}
  \begin{enumerate}
  \item It follows from the proof that there exists an injective
    homorphism
      \begin{displaymath}
      \pi_+(L\otimes_BE_\lambda)\hookrightarrow \frac {\Dc_K}{\Dc_K \pfr_\lambda},
    \end{displaymath}
    for {\it any} subgroup $G \subset \Glo (V)$, and that this is an
    isomorphism if and only if the inertia group $G_\lambda$ is
    generated by pseudo-reflections of $V$. The proof also gives that
    the inverse image $L\otimes_K\Dc_K/(\Dc_K \pfr_\lambda)$ is
    semisimple, and therefore $\Dc_K/(\Dc_K \pfr_\lambda)$ is always
    semisimple \Th{\ref{semisimple-inv}}.
  \item We do not get a  canonical isomorphism in
    \Theorem{\ref{semsimplemod}} or even a canonical cyclic generator
    of $\pi_+(E_\lambda)$. For instance, the canonical homomorphism
    \begin{displaymath}
      M_\lambda \to \pi_+(E_\lambda), \quad 1\omod \Dc_Y
      \pfr_\lambda \mapsto \tr_{X/Y}\otimes e^\lambda
    \end{displaymath}
    is not an isomorphism since
    $(\Dc_X \mf_\lambda) \cap \Dc_Y \not \subset \Dc_Y \pfr_\lambda$. In
    the case $\lambda=0$ one asks for a canonical generator of
    $\pi_+(\Oc_X)$ as $\Dc_Y$-module, see \Theorem{\ref{normalbasis}},
    and
    $(\Dc_X \mf_0) \cap \Dc_Y = \Dc_Y T_Y \not \subset \Dc_Y \pfr_0 =
    \Dc_Y \So(V)^G_+$.
  \end{enumerate}

\end{remark}

\begin{lemma}\label{decomp-lemma}
  Let $\mf$ be a maximal ideal in $\So(V)\subset \Dc_B$, $J$ be an
  $\mf$-primary ideal, and put $t= \dim_k \So(V)/J$. The dimension of
  the invariant space $(\Dc_B/\Dc_B J)^{\mf}$ equals $t$, and if
  $v_1, \ldots , v_t$ is a basis of $(\Dc_B/\Dc_B J)^{\mf}$, then
  \begin{displaymath}
    \frac {\Dc_B}{\Dc_B J}=   \bigoplus_{i=1}^t \Dc_B v_i  = \bigoplus_{i=1}^t
    \frac {\Dc_B}{\Dc_B \mf},   
  \end{displaymath}
  where $\Dc_B/(\Dc_B \mf)$ is simple.
\end{lemma}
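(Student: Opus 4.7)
The plan is to reduce the statement to a concrete linear-algebra calculation via the PBW decomposition of $\Dc_B$, determine the invariant space explicitly, and then assemble the direct sum decomposition from it. After an affine change of coordinates I may assume $\mf$ corresponds to the origin, so $\mf = (x_1,\ldots,x_n)$ for a coordinate system $x_i$ on $V$. The PBW theorem identifies $\Dc_B \cong k[\partial_1,\ldots,\partial_n] \otimes_k \So(V)$ as a right $\So(V)$-module, so as a $k$-vector space
\[
\Dc_B/\Dc_B J \;\cong\; k[\partial] \otimes_k \So(V)/J.
\]
Using the Weyl-algebra relation $x_i \partial^\alpha = \partial^\alpha x_i - \alpha_i\, \partial^{\alpha - e_i}$, the left action of $x_i$ under this identification becomes
\[
x_i \cdot (\partial^\alpha \otimes \bar f) \;=\; \partial^\alpha \otimes \overline{x_i f} \;-\; \alpha_i\, \partial^{\alpha - e_i} \otimes \bar f.
\]

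Writing $v = \sum_\alpha \partial^\alpha \otimes m_\alpha$ for an arbitrary element, the invariance equations $x_i v = 0$ become the recursion $x_i m_\alpha = (\alpha_i + 1)\, m_{\alpha + e_i}$, which forces $m_\alpha = \frac{1}{\alpha!}\, x^\alpha m_0$. Since $J$ is $\mf$-primary, $\mf^N \subset J$ for some $N$, so $x^\alpha m_0 = 0$ for $|\alpha| \geq N$ and the resulting sum is finite. This yields a $k$-linear bijection
\[
\So(V)/J \;\xrightarrow{\sim}\; (\Dc_B/\Dc_B J)^\mf, \qquad m_0 \;\longmapsto\; v(m_0) := \sum_\alpha \frac{1}{\alpha!}\, \partial^\alpha \otimes x^\alpha m_0,
\]
establishing $\dim_k(\Dc_B/\Dc_B J)^\mf = t$.

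For the decomposition, fix a basis $v_i = v(m_0^{(i)})$, $1 \leq i \leq t$, with $\{m_0^{(i)}\}$ adapted to the $\mf$-adic filtration $F^s := \mf^s(\So(V)/J)$. Each $v_i$ satisfies $\mf v_i = 0$, so the assignment $\bar 1 \mapsto v_i$ defines a surjection $\Dc_B/\Dc_B \mf \twoheadrightarrow \Dc_B v_i$; since $\Dc_B/\Dc_B \mf \cong E_\lambda$ is of rank $1$ over $\Oc_X$ and hence simple by \Lemma{\ref{simple-1}}, this surjection is an isomorphism. Assembling these gives a $\Dc_B$-homomorphism
\[
\phi\colon \bigoplus_{i=1}^t \Dc_B/\Dc_B \mf \;\longrightarrow\; \Dc_B/\Dc_B J, \qquad (\bar P_1,\ldots,\bar P_t) \;\longmapsto\; \sum_i P_i v_i.
\]
Surjectivity follows by induction on $s$ starting from the deepest nonzero layer of $F^\bullet$: if $m \in F^s$, then in $v(m) = 1 \otimes m + \sum_{|\alpha|\geq 1} \frac{1}{\alpha!}\partial^\alpha \otimes x^\alpha m$ every correction term involves $x^\alpha m \in F^{s+1}$, hence lies in the image by the inductive hypothesis; therefore $1 \otimes m$ and more generally all $\partial^\beta \otimes m$ lie in the image. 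For injectivity, $\phi$ maps the invariant subspace of the source, spanned by $\bar 1_1,\ldots,\bar 1_t$, bijectively onto the invariant subspace of the target spanned by $v_1,\ldots,v_t$. The kernel of $\phi$ is a submodule of the isotypic semisimple module $(\Dc_B/\Dc_B \mf)^{\oplus t}$; if nonzero, it would contain a simple summand isomorphic to $\Dc_B/\Dc_B \mf$, whose one-dimensional invariant space would contradict injectivity on invariants. Hence $\phi$ is an isomorphism, giving the claimed decomposition $\Dc_B/\Dc_B J = \bigoplus_i \Dc_B v_i \cong \bigoplus_i \Dc_B/\Dc_B \mf$. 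The main obstacle is carrying out the surjectivity induction cleanly, but the explicit formula for $v(m)$ makes the inductive step elementary.
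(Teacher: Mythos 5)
Your proof is correct and takes a genuinely different route from the paper's.

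The paper's own argument cites Kashiwara's restriction theorem and \cite{kallstrom:arkiv} for the structure of $\Dc_B/\Dc_B J$ as a direct sum of copies of the simple module $\Dc_B/\Dc_B\mf$, and then separately proves the length is $t$ by applying the exact functor $\Dc_B\otimes_{\So(V)}(\cdot)$ (using freeness of $\Dc_B$ over $\So(V)$) to a composition series of $\So(V)/J$ and inducting on $t$. Your approach replaces the appeal to Kashiwara by an explicit PBW computation: you solve the recursion $x_i m_\alpha = (\alpha_i+1)m_{\alpha+e_i}$ to parametrize the invariant space by $\So(V)/J$ directly, construct the explicit invariant vectors $v(m_0)=\sum_\alpha \frac{1}{\alpha!}\partial^\alpha\otimes x^\alpha m_0$, and build the isomorphism $\phi$ by hand with a downward induction on the $\mf$-adic filtration for surjectivity and a socle argument for injectivity. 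This is more self-contained and also makes the count $\dim_k(\Dc_B/\Dc_B J)^\mf = t$ entirely constructive, which the paper treats only implicitly; the paper's proof is shorter because it outsources the structural statement.

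One caveat worth flagging: you normalize so that $\mf=(x_1,\dots,x_n)$ inside the function ring, in which case $\Dc_B/\Dc_B\mf \cong k[\partial]$ is the delta module at the origin, which is a torsion $\Oc_X$-module, not one of rank one. So Lemma~\ref{simple-1} does not apply as cited; simplicity there is Kashiwara's theorem. The lemma as stated (and as used in the proof of Theorem~\ref{semsimplemod}, where $\Dc_B = B\otimes_k\So(V)$ as a $(B,\So(V))$-bimodule) has $\So(V)$ playing the role of the constant-coefficient operators, with $B$ the ring of multiplication operators, so that $\Dc_B/\Dc_B\mf \cong E_\lambda$ really is invertible over $\Oc_X$ and Lemma~\ref{simple-1} does apply. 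Your computation works verbatim in either reading — the Fourier symmetry $x_i\leftrightarrow\partial_i$ of the Weyl algebra swaps the two — but the simplicity citation should match the coordinate choice.
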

It would be interesting to find explicit differential operators 
$P_i\in \Dc_B$ that represent the basis $v_i$ and thus an explicit
decomposition of $\Dc_B/(\Dc_B J)$ when $J= \So(V)\nf_\lambda$ and
$\nf_\lambda = \So(V)^{G_\lambda}\cap \mf_\lambda$. In
\cite{kallstrom:arkiv} this is accomplished in terms of ``Pochhammer''
differential operators for modules of the form $\Dc_B/(\Dc_B \mf^n)$.

\begin{proof} 
  The decomposition follows from Kashiwara's restriction theorem, see
  \cite{kallstrom:arkiv}. Letting $M$ be any $\mf$-primary
  $\So(V)$-module of finite dimension $t$ we prove by induction in $t$
  that the length of $\Dc_B\otimes_{\So(V)}M$ equals $t$. Let
  $\{M_i\}_{i=1}^t$ be a decomposition series of $M$, so that
  $\dim_k M_{i+1}/M_i =1$ (when $M$ is Gorenstein, as in our main
  example, then the first term is the socle $M_1= M^\mf$). The
  $\So(V)$-module $\Dc_B$ is free (see [loc. cit.]), so that if we
  apply the exact functor $\Dc_B\otimes_{\So(V)}\cdot $ to the exact
  sequence
  \begin{displaymath}
0 \to     M_i \to M_{i+1} \to \frac {M_{i+1}}{M_i} \to 0
  \end{displaymath}
  and notice that the modules
  $\Dc_B\otimes_{\So(V)}M_1 = \Dc_B \otimes_{\So(V)} M_{i+1}/M_i
  =\Dc_B/(\Dc_B\mf) $ are simple; since $\dim M_i < t$, the assertion
  $\ell(\Dc_B\otimes_{\So(V)}M) = \dim_k M$   follows by induction.
\end{proof}

\begin{lemma}\label{jacobson}
  Let $\{n_1, \ldots , n_r\} \in \So(V)^G$ be a linearly independent
  subset and $\{m_1, \ldots , m_r\}\subset \So(V)^G $ be another
  subset. Then there exists $a\in \So(V^*)^G$ such that $n_i(a)= m_i$,
  $i= 1, \ldots , r$.
\end{lemma}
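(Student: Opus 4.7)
The label \texttt{jacobson} signals that the lemma is a Jacobson-density-style assertion, whose proof I plan to base on the classical nondegenerate apolarity pairing
\[\langle\, \cdot\, , \,\cdot\, \rangle\colon \So(V)\otimes_k \So(V^*)\to k,\qquad \langle p,q\rangle \;=\; \bigl(q(\partial)\cdot p\bigr)\bigl|_{0},\]
together with its $G$-equivariant enhancement. Since $G$ is finite and $\operatorname{char} k = 0$, Reynolds averaging produces a pairing $\So^d(V)^G\otimes_k \So^d(V^*)^G\to k$ in each degree $d$ that remains perfect, so that $\So^d(V^*)^G$ is canonically identified with the $k$-linear dual of $\So^d(V)^G$. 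Via the same data, $\So(V^*)^G$ acts on $\So(V)^G$ by invariant constant-coefficient differential operators, and the two structures are intertwined by the adjointness identity $\langle a(n),b\rangle = \langle n, ab\rangle$ for $a\in\So^{d-e}(V^*)^G$, $b\in\So^e(V^*)^G$, $n\in\So^d(V)^G$.

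The strategy is first to reduce to the case where all $n_i$ and $m_i$ are homogeneous, by decomposing into graded components and exploiting linear independence to split the problem into finitely many separate ``matching problems,'' one in each relevant bidegree $(d,e)$. In the homogeneous case of bidegree $(d,e)$, I would like the linear map
\[\psi\colon \So^{d-e}(V^*)^G \longrightarrow \bigl(\So^e(V)^G\bigr)^{\oplus r},\qquad a\longmapsto \bigl(n_1(a),\ldots,n_r(a)\bigr),\]
to surject onto the prescribed tuple $(m_1^{(e)},\ldots,m_r^{(e)})$. Transposing against the perfect pairings in degrees $d-e$ and $e$, this is equivalent to the injectivity of
\[\phi\colon \bigl(\So^{e}(V^*)^G\bigr)^{\oplus r}\longrightarrow \So^{d-e}(V)^G,\qquad (b_1,\ldots,b_r)\longmapsto \sum_{i=1}^r b_i(n_i),\]
and so to the statement that the $n_i$ admit no nontrivial ``syzygy'' $\sum b_i(n_i)=0$ in the invariant coinvariant pairing.

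The main obstacle will be precisely this injectivity of $\phi$, because without the hypothesis that $G$ is a complex reflection group it can genuinely fail. Here the plan is to exploit the Chevalley-Shephard-Todd theorem and the resulting structure of $\So(V)$ as a free $\So(V)^G$-module of rank $|G|$, so that the coinvariant algebra $\Hc=\So(V)/\So(V)^G_+$ carries the regular representation of $G$ and is a graded Frobenius (Poincar{\'e} duality) algebra. Concretely, one translates the alleged relation $\sum b_i(n_i)=0$ through the pairing into an algebraic independence statement inside $\Hc$ (or, equivalently, inside its $\So(V^*)^G$-dual), where Poincar{\'e} duality of $\Hc$ — together with the fact that the $n_i$ remain linearly independent after projection to $\Hc$ once one restricts to bidegrees that matter — forces all $b_i$ to vanish. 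Once $\phi$ is injective in every bidegree, assembling the required $a\in\So(V^*)^G$ by summing contributions from the finitely many bidegrees supporting some $m_i$ is routine finite-dimensional linear algebra.
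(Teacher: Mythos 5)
The paper's proof of this lemma is far shorter and takes an essentially different route from yours. Instead of working entirely inside the invariant rings and confronting the perfect-pairing/injectivity question there, the paper first solves the problem \emph{without} the invariance constraint: because $\So(V^*)$ is a simple module over its ring of differential operators, the Jacobson density theorem (this is what the label refers to) produces some $b\in\So(V^*)$, not a priori $G$-invariant, with $n_i(b)=m_i$. It then applies the Reynolds operator, $a=\tfrac{1}{|G|}\sum_{g\in G}b^g\in\So(V^*)^G$; since the $n_i$ and $m_i$ are $G$-invariant, one has $n_i(b^g)=(n_i(b))^g=m_i^g=m_i$ for every $g$, so $n_i(a)=m_i$. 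That is the entire argument. No perfect pairing on invariants, no Chevalley--Shephard--Todd, no coinvariant algebra.

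Two specific problems with your proposal follow from missing this ``solve non-equivariantly, then average'' trick. First, your proposal is not a proof: you explicitly flag the injectivity of $\phi$ as ``the main obstacle'' and then outline an intended strategy (Poincar\'e duality of $\Hc$, linear independence of the projected $n_i$) without carrying it through. The paper's averaging argument makes that entire reduction unnecessary. Second, your claim that the lemma ``can genuinely fail'' without the complex reflection group hypothesis contradicts the paper's proof, which uses only that $G$ is finite and $\operatorname{char}k=0$; the reflection-group hypothesis plays no role in this lemma (it enters elsewhere in the section, e.g.\ to know that $\So(V)^G$ and $\So(V)^{G_\lambda}$ are polynomial rings). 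If you believe the lemma genuinely needs that hypothesis, that is a sign the approach is off track, not a fact to be absorbed into the proof. You should also double-check the bidegree bookkeeping in your transposition: if the $n_i$ have different degrees, the equations $n_i(a)=m_i$ do not live in a single bidegree $(d,e)$ and your reduction to ``one matching problem in each bidegree'' does not split as cleanly as claimed; the paper avoids this issue entirely because the density theorem handles inhomogeneous data directly.
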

\begin{proof}
  The action of constant coefficient differential operators $\So(V)$
  on $\So(V^*)$ also gives $\So(V)$ a structure as simple
  $\So(V^*)$-module.  Since  $\{n_i\}$ is linearly independent it
  follows from the density theorem that there exists $b\in \So(V^*)$
  such that $n_i(b)= m_i$.  Since $n_i, m_i \in \So(V)^G$ it follows that
$n_i(a^g)= m_i $. So that putting 
  $ a =\frac 1{|G|} \Tr_G(a)= \sum_{g\in G } a^g\in \So(V^*)$, we also
  have  $n_i(a)= m_i$.
\end{proof}

\begin{pfof}{\Theorem{\ref{semsimplemod}}}
  Since $G_\lambda$ is a complex reflection group $\So(V)^{G_\lambda}$
  is again a polynomial ring and
  $\nf_\lambda = \So(V)^{G_\lambda} \cap \mf_\lambda $ is a maximal
  ideal in $\So(V)^{G_\lambda}$ such that the fibre
  $\frac{\So(V)^{G_\lambda}}{\nf_\lambda}\otimes_{\So(V)^{G_\lambda}}
  \So(V) = k[G_\lambda]$. Moreover, the map
  $\So (V)^G \to \So(V)^{G_\lambda}$ is étale of degree
  $r= |G/G_\lambda|$ and
  $\So(V)^{G_\lambda}\pfr_\lambda = \prod_{g_i\in G/G_\lambda}
  \nf_{g_i \cdot \lambda}$, where $\nf_{g_i \cdot \lambda}$ are
  translations of the ideal
  $\nf_\lambda= \So(V)^{G_\lambda}\cap \mf_\lambda \in \Spec
  \So(V)^{G_\lambda}$ and the $g_i$ are $G_\lambda$ transversals in
  $G$. This implies
  \begin{align*}
    \frac{    \So(V)}{\So(V)\pfr_\lambda} &= \frac{\So(V)^G}{\pfr_\lambda} \otimes_{\So(V)^G} \So(V)^{G_\lambda}
                                            \otimes_{\So(V)^{G_\lambda}}    \So(V)
                                            =(\frac{\So(V)^{G_\lambda}}{
                                            \So(V)^{G_\lambda}\pfr_\lambda })
                                            \otimes_{\So(V)^{G_\lambda}}
                                            \So(V)\\
                                          & = \bigoplus_{g_i \in
                                            G/G_\lambda}
                                            (\frac{\So(V)^{G_\lambda}}{\nf_{g_i
                                            \cdot \lambda} }) \otimes_{\So(V)^{G_\lambda}}
                                            \So(V) =  \bigoplus_{g_i \in
                                            G/G_\lambda} \frac
                                            {\So(V)}{\So(V)\nf_{g_i\cdot
                                            \lambda}}. \tag{\&}
  \end{align*}
  
  Next, \Lemma{\ref{decomp-lemma}} implies, noting that
  $\Dc_B= B\otimes_k \So(V)$ as $(B, \So(V))$-bimodule,
  \begin{align}\notag
\Dc_L\otimes_{\So(V)}\frac{\So(V)}{\So(V)\nf_\lambda} &=  L\otimes _B
B\otimes_k \So(V)\otimes_{\So(V)}\frac{\So(V)}{\nf_\lambda}= L\otimes_B \frac{\Dc_B}{\Dc_B
  \nf_\lambda}\\ &=   \bigoplus_{i=1}^t  L\otimes_B \Dc_B v_i =
                   \bigoplus_{i=1}^t  L\otimes_BE_\lambda, \tag{\#}
\end{align}
where $t= \dim_k \So(V)/(\So(V)\nf_\lambda) \geq |G_\lambda|$ and
$v_1, \ldots, v_t$ is a $k$-basis of the $\mf$-invariant space
$(\Dc_B/\Dc_B \nf)^{\mf}$. Here $t= |G_\lambda| $ if and only if
$G_\lambda$ is a complex reflection group (see \cite{Kane}*{Sec.
  17.5}). Since by assumption $G$ is a complex reflection group,
$G_\lambda$ is a complex reflection group.
  
At the generic point $\eta$ in $\Spec \So(V)^G$ we have
$(M_\lambda)_\eta = \Dc_K/\Dc_K \pfr_\lambda$. By \thetag{\&} and
\thetag{\#} its inverse image to
$\Dc_L$-module is
  \begin{align} 
    \pi^!(\frac
    {\Dc_K}{\Dc_K \pfr_\lambda})&=     L\otimes_K\frac {\Dc_K}{\Dc_K \pfr_\lambda} = L\otimes_K\Dc_K \otimes_{\So(V)^{G}}
                                                                                    \frac{\So(V)^{G}}{\pfr_\lambda} \notag
                                                                                    = L \otimes_k \So(V) \otimes_{\So(V)^{G}}
                                                                                    \frac{\So(V)^{G}}{\pfr_\lambda}
    \\ &= L\otimes_k \So(V) \otimes_{\So(V)} \frac{\So(V)}{\So(V)\pfr_\lambda}
         =  \bigoplus_{g_i \in
         G/G_\lambda}  \Dc_L\otimes_{\So(V)}\frac
         {\So(V)}{\So(V)\nf_{g_i\cdot
         \lambda}} \notag \\ &= \bigoplus_{g_i \in
                        G/G_\lambda}\bigoplus_{i=1}^t
                               L\otimes_BE_{g_i\cdot \lambda}.     \tag{*}
  \end{align}
Together with \Theorem{\ref{inv-dir}} we get
  \begin{align*}
    \pi^!\pi_+(E_\lambda)&=  \bigoplus_{g\in G} (L\otimes_K
                           E_\lambda)_g = \bigoplus_{g_i  \in G/
                           G_\lambda}  \bigoplus_{j=1}^t (L\otimes_K
                           E_\lambda)_{g_i} = \bigoplus_{g_i  \in G/
                           G_\lambda} \bigoplus_{j=1}^t (L\otimes_K
                           E_{g_i \cdot \lambda})\\
    &=     \pi^!(\frac      {\Dc_K}{\Dc_K \pfr_\lambda}).
  \end{align*}
  Therefore, by adjointness,
  \begin{displaymath}
    Hom_{\Dc_K}(\pi_+(L\otimes_BE_\lambda), \pi_+(L\otimes_BE_\lambda)) =     Hom_{\Dc_K}(\pi_+(L\otimes_BE_\lambda), \frac {\Dc_K}{\Dc_K \pfr_\lambda}).
  \end{displaymath}
Since $K\otimes_A\pi_+(E_\lambda)= \pi_+(L\otimes_B E_\lambda)$ is semisimple this implies that
$K\otimes_A\pi_+(E_\lambda)$ can be identified with a submodule of $\Dc_K/\Dc_K \pfr_\lambda$.  Since  moreover
\begin{displaymath}
  \rank_K  K\otimes_A\pi_+(E_\lambda) = \rank_L  \pi^!\pi_+(L\otimes_BE_\lambda) = \rank_L
  \pi^!(\frac {\Dc_K}{\Dc_K \pfr_\lambda}) = \rank_K (\frac {\Dc_K}{\Dc_K \pfr_\lambda}),
\end{displaymath}
it follows that  $K\otimes_A\pi_+(E_\lambda) =\Dc_K/(\Dc_K \pfr_\lambda) $.

(2) $M_\lambda$ is torsion free: We need to prove that if
  $f\in A, P\in \Dc_A $ and $fP\in \Dc_A\nf_\lambda$, then
  $P\in \Dc_A\nf_\lambda$. In other words, if
  \begin{displaymath}
    fP = \sum_i Q_i s_i, \quad s_i \in \nf_\lambda, 
  \end{displaymath}
  where we can assume that $\{s_i\}$ is a linearly independent subset
  of $\So(V)$, we need to prove that $Q_i \in f\Dc_A$.

  By Chevalley's theorem there exists a subset
  $\{y_i\}\subset \So(V^*)^G$ such that $A= \So(V^*)^G\cong k[y_i]$;
  let $\{\partial_{y_i}\}\subset T_A$ such that
  $\partial_{y_i}(y_j) = \delta_{ij}$. When $\alpha$ and $\beta $ are
  multiindices $ [n]\to \Nb$ we write $\beta < \alpha$ when $\alpha $
  is greater in the lexicographic ordering, and put
  $y_1^\alpha \cdots y_n^{\alpha_n}\in A$,
  $\partial^\alpha
  = \partial_{y_1}^{\alpha_1}\cdots \partial_{y_n}^{\alpha_n}\in
  \Dc_A$, and $\alpha ! = \alpha_1 ! \cdots \alpha_n! $. Now expand
  \begin{displaymath}
    Q_i  = \sum_{\alpha \in \Ac_i} b_\alpha \partial^\alpha, 
  \end{displaymath}
  where $\Ac_i$ is a set of multiindices, and $b_{\alpha } \in A$. By
  \Lemma{\ref{jacobson}} there exist for each monomial
  $y^{\alpha} \in \So(V^*)^G=A$ an element
  $a_{\alpha}(i) \in \So(V^*)^G= A$ such that
  $s_j(a_{\alpha}(i))=y^{\alpha}\delta_{i,j} $. Let $\alpha$ be the
  minimal element of $\Ac_i$, so that $\alpha < \alpha'$ for all
  $\alpha \neq \alpha' \in \cup_i \Ac_i$. Then
  \begin{displaymath}
    f P (a_{\alpha (i)})=  \sum_{j} Q_j s_j(a_\alpha(i)) = \sum_{\beta
      \in \Ac_i} b_\beta\partial^\beta( y^\alpha) =
    b_\alpha \alpha !,
\end{displaymath}
and therefore $ b_{\alpha}\in A f$. Now assume that
$ b_{\alpha}= f \bar b_{\alpha} $ for some $\bar b_{\alpha} \in A$
when $\alpha < \beta \in \Ac_i$. As before select
$a_{\beta(i)} \in \So(V^*)^G= A$ such that
$s_j(a_{\beta}(i))=y^{\beta}\delta_{i,j} $. It follows that
  \begin{displaymath}
    fP (a_{\beta}(i))= \beta ! b_{\beta} + \sum_{ \alpha < \beta
    } f \bar b_{\alpha} \partial^\alpha (y^\beta),
\end{displaymath}
where $ \partial^\alpha (y^\beta) \in A$; hence $b_{\beta} \in f A$.
By induction it follows that $b_{\beta}\in A f$ for all
$\beta \in \Ac_i$, and therefore $Q_i \in f \Dc_A$.

(3): We have localization maps $u: M_\lambda \to K\otimes_AM_\lambda$
and $v: \pi_+(E_\lambda)\to K\otimes_ A\pi_+(E_\lambda)$. The map $u$
is injective by (2), and by \Corollary{\ref{supp-simples}}
$\pi_+(E_\lambda)$ is torsion free, hence $v$ is also injective. Put
$\mu = 1 \omod \Dc_A \pfr_\lambda$, so that $M_\lambda = \Dc_A \mu$
and $\Dc_K u(\mu) = K\otimes_A M_\lambda$. By (1) there exists an
isomorphism
\begin{displaymath}
  \psi_K : K\otimes_A M_\lambda \cong  K\otimes_A \pi_+(E_\lambda),
\end{displaymath}
so that $\Dc_K \psi_K(\mu) = K\otimes_A \pi_+(E_\lambda)$. Since $v$
is injective there exists a non-zero element $h\in A$ such that
$h\psi_K (\mu) \in \pi_+(E_\lambda)$ maps to $h\psi_K(\mu)$. Now since
$\Dc_A h\mu \subset M_\lambda$ and the latter module is semisimple, so
that $\mu$ has a non-zero projection to each simple module in a
semisimple decomposition, we get $\Dc_A h\mu = M_\lambda $, hence the
map that sends $h\mu $ to $ h \psi_K\circ u (\mu)$ defines an
isomorphism $M_\lambda \cong \pi_+(E_\lambda)$.
\end{pfof}

\subsection{Simple $\Dc$-modules for imprimitive complex reflection
  groups}\label{simple-imprimitive}
The irreducible imprimitive reflection subgroup are of the form
$G=G(de,e, n)= A(de,e,n)\rtimes S_n\subset \Glo(V) $, so that if
$(x_i)_{i=1}^n$ is an imprimitive basis of $V$ and $m= de$,
$A(de, e,n)$ is the group of diagonal matrices with entries in the
group $\mu_m$ of $m$th roots of unity whose determinant belongs to
$\mu_d$, while $S_n$ permutes the $x_i$. Letting as before $B= \So(V)$
and $A= B^G$, we aim to explicitly describe the semisimple
decomposition of $\pi_+(B)$ when $\pi : A \to B$ is the inclusion map,
and at the same time present an equally explicit description of
representatives of $\hat G$ as subrepresentations of $B$. 

Let $L$ be the fraction field of $B$, $L_1=L^{A(de,e,n)}$, and
$K= L^G$ be the fraction field of $A$.\footnote{ The invariant ring
  $B^{^{A(de,e,n)}}$ is singular when $e>1$ so it is here natural to
  work with the fraction fields.} We have a factorization
$\pi_0 = q\circ r$
\begin{displaymath}
 \Spec L \xrightarrow{r} \Spec L_1 \xrightarrow{q} \Spec K
\end{displaymath}
of the restriction of $\pi$ to the generic point in $X= \Spec B$.
There exists an exact sequence
$1 \to A(de,e,n) \to \Zb_m^n \to C_e\to 1$, where $m=de$, and
therefore, since $\hat C_e = C_e $ and $\widehat {\Zb_m^n}= \Zb_m^n$,
also the exact sequence
$1 \to C_e \xrightarrow {d} \Zb_m^n \to A(de,e,n)\to 0$, where
$d(1) = (d,\ldots, d)\in \Zb_m^n$. The irreducible
$A(de,e,n)$-representations are thus parametrized by elements
$\alpha \in \Zb_m^n$ modulo the relation
$\alpha \sim \alpha + j d(1)$, for some integer $j$; let $\bar \alpha$
denote the class of $\alpha$ in $\hat A(de,e,n)$. Regarding $\alpha$
as a function $\alpha : [n]\to \{0, 1, \ldots , m-1\}$ we get the
monomial $x^\alpha = \prod_{i=1}^n x_i^{\alpha (i)}$, and can define
the $\Dc_{L_1}$-module $\Lambda_\alpha = \Dc_{L_1} x^\alpha $. Then
$\Lambda_\alpha$ is simple and $\Lambda_\alpha \cong \Lambda_\beta $
if and only if $\beta \sim \alpha$. Concretely, the inclusion
$\Dc_{L_1} x^\beta= \Dc_{L_1} \Phi ^{dj} x^\alpha \subset \Dc_{L_1}
x^\alpha $
is an isomorphism, where $\Phi = \prod_{i=1}^n x_i$, so that
$\Phi^d \in L_1$. It is therefore unambiguous to write
$\Lambda_{\bar \alpha}= \Lambda_\alpha$. It follows that:

\begin{lemma}  
  \begin{displaymath}
    r_+(L) = \bigoplus_{\bar \alpha \in \hat A(de,e,n)} \Lambda_\alpha.
  \end{displaymath}
\end{lemma}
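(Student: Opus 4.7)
The plan is to apply Corollary~\ref{direct-L} (equivalently Theorem~\ref{galois-direct}) to the Galois map $r\colon \Spec L \to \Spec L_1$, whose Galois group $G = A(de,e,n)$ is abelian. Since every irreducible $k[G]$-module is one-dimensional, the general decomposition
\begin{displaymath}
r_+(L) = \bigoplus_{\chi \in \widehat G} V_\chi^*\otimes_k L_\chi, \qquad L_\chi = \Hom_{k[G]}(V_\chi^*, L),
\end{displaymath}
collapses to a multiplicity-free direct sum of simple $\Dc_{L_1}$-modules, each of rank $1$ over $L_1$, and each equal to the $\chi$-isotypical component of $L$ under the $G$-action.

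It then remains to identify $L_\chi$ with $\Lambda_\alpha$ when $\chi$ corresponds to the class $\bar\alpha \in \widehat G = \Zb_m^n/\Zb\cdot(d,\ldots,d)$. For $\alpha\colon [n]\to\{0,\ldots,m-1\}$ and $g = \on{diag}(\zeta_1,\ldots,\zeta_n) \in G$, a direct computation gives
\begin{displaymath}
g\cdot x^\alpha = \prod_i (\zeta_i x_i)^{\alpha(i)} = \Bigl(\prod_i \zeta_i^{\alpha(i)}\Bigr)\, x^\alpha,
\end{displaymath}
so $x^\alpha$ is a semi-invariant of weight $\bar\alpha$, and hence the submodule $\Lambda_\alpha = \Dc_{L_1}\cdot x^\alpha \subset L$ sits inside the isotypical piece $L_{\bar\alpha}$.

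To upgrade this inclusion to equality I would verify that $\Lambda_\alpha$ is of rank $1$ over $L_1$. Lift each $\partial\in T_{L_1}$ uniquely to a $G$-equivariant derivation $\tilde\partial \in T_L$; $G$-equivariance together with $g\cdot x_i = \zeta_i x_i$ forces $\tilde\partial(x_i)/x_i \in L^G = L_1$, so that $\tilde\partial(x^\alpha)/x^\alpha = \sum_i \alpha(i)\,\tilde\partial(x_i)/x_i \in L_1$, and $\Lambda_\alpha = L_1\cdot x^\alpha$ has rank $1$. Since $L_{\bar\alpha}$ is simple of the same rank and $\Lambda_\alpha \neq 0$, the inclusion $\Lambda_\alpha \subset L_{\bar\alpha}$ is an equality. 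Combined with the already-noted isomorphism criterion $\Lambda_\alpha \cong \Lambda_\beta \iff \bar\alpha = \bar\beta$, the family $\{\Lambda_\alpha\}_{\bar\alpha\in\widehat G}$ exhausts the simple constituents of $r_+(L)$, yielding the decomposition asserted in the lemma. The only genuine substance lies in this equivariance/rank check; everything else is a mechanical specialization of Corollary~\ref{direct-L} to the abelian setting.
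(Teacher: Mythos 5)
Your proposal is correct and follows essentially the same route the paper takes (the paper gives no written-out proof, just ``It follows that:'' after recalling Corollary~\ref{direct-L} and the basic facts about $\Lambda_\alpha$): you decompose $r_+(L)$ via Corollary~\ref{direct-L} for the abelian Galois group $A(de,e,n)$ and identify each rank-one isotypical piece $L_{\bar\alpha}$ with $\Lambda_\alpha$. The extra substance you add — that the unique lift $\tilde\partial$ of $\partial\in T_{L_1}$ is $G$-equivariant, hence $\tilde\partial(x^\alpha)/x^\alpha\in L_1$ and $\Lambda_\alpha=L_1 x^\alpha$ is rank one and simple — is exactly the detail the paper leaves implicit when it asserts ``$\Lambda_\alpha$ is simple,'' so it is a welcome filling-in rather than a divergence.
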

The map $q$ is also Galois, with Galois group $S_n$, so to compute
$(\pi_0)_+(L) = \oplus q_+(\Lambda_\alpha)$ we need the inertia group
$G_\alpha \subset S_n$ of $\Lambda_\alpha$. The symmetric group $S_n$
acts on the elements $\alpha$ by $(g\cdot \alpha)(i) = \alpha (g(i))$,
and $g\cdot \Lambda_\alpha = \Lambda_{g\cdot \alpha}$. Then
\begin{displaymath}
  G_\alpha = \{g \in S_n \ \vert \ \Lambda_{g\cdot\alpha} \cong
             \Lambda_\alpha \} = \{ g \in S_n \ \vert \ g\cdot \alpha - \alpha \in d(C_e)\}.
\end{displaymath}
Let
$Y_\alpha =S(\alpha^{-1}(0))\times S(\alpha^{-1}(1)) \times \cdots
\times S(\alpha^{-1}(m-1))$ be the product of the symmetric groups of
the sets
$\alpha^{-1}(i)= \{l \in [n]\ \vert \ \alpha(l)= i\}\subset [n]$
($Y_\alpha$ is a Young subgroup of $S_n$). Put
$n_i= |\alpha^{-1}(i)|$, so that $\sum n_i = n$.
\begin{lemma}\label{in-lemma}  Put $b_\alpha = \lcm (b_i)$,
  where $b_i$ is the smallest non-zero positive integer such that
  $n_{i + b_i d}= n_i$. There exists an exact sequence
  \begin{displaymath}
1 \to  Y_\alpha \to  G_\alpha   \to C^{\alpha} \to 1
\end{displaymath}
where $C^{\alpha } = <b_\alpha>$ is a subgroup of $ C_e$.
\end{lemma}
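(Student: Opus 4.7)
The plan is to construct an explicit surjective group homomorphism $\Phi : G_\alpha \to C^\alpha \subset C_e$ with kernel $Y_\alpha$; the construction of $\Phi$ and the identification of its kernel are routine, and the real content of the lemma lies in identifying the image as $\langle b_\alpha\rangle$.

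For the definition of $\Phi$: given $g \in G_\alpha$, the element $g\cdot\alpha - \alpha \in \Zb_m^n$ lies in the image of $d : C_e \to \Zb_m^n$ by definition of $G_\alpha$, so it can be written $j\, d(1)$ for an integer $j$ which is unique modulo $e$; set $\Phi(g) = j \bmod e$. Because $d(1) = (d,\dots,d)$ is $S_n$-invariant, the right-action identity $(g_1 g_2)\cdot\alpha = g_2\cdot(g_1\cdot\alpha)$ immediately yields $\Phi(g_1 g_2) = \Phi(g_1) + \Phi(g_2)$, so $\Phi$ is a group homomorphism. Its kernel consists of those $g \in S_n$ with $\alpha(g(i)) = \alpha(i)$ for all $i$, i.e., permutations preserving each fibre $\alpha^{-1}(k)$; this identifies $\ker\Phi$ with the Young subgroup $Y_\alpha = \prod_{k=0}^{m-1} S(\alpha^{-1}(k))$, which gives exactness on the left.

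The main step, and the main obstacle, is to identify the image. An integer $j \in C_e$ lies in $\mathrm{Im}\,\Phi$ iff some $g \in S_n$ sends each $\alpha^{-1}(k)$ bijectively onto $\alpha^{-1}(k+jd)$, which is possible iff the numerical condition $n_k = n_{k+jd}$ holds for every $k \in \Zb_m$. The set $H$ of such $j$ is a subgroup of $C_e$ (closure under addition follows because the condition is global in $k$: if both $j_1$ and $j_2$ equate all fibre sizes to their $d$-translates, so does $j_1 + j_2$), hence cyclic and equal to $\langle b\rangle$ for its smallest positive element $b$. To express $b$ in the form advertised, I would consider for each $i \in \{0,\ldots,d-1\}$ the function $\phi_i : C_e \to \Nb$, $\phi_i(\ell) = n_{i+\ell d}$: the elements of $H$ are precisely the common periods of the $\phi_i$, so the generator of $H$ is the least common multiple of the individual periods. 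The delicate point requiring most care in the write-up is matching this with the $b_i$ of the lemma statement: the defining condition $n_{i+b_id} = n_i$ is a priori only a first-return time for $\phi_i$, which need not coincide with the period in general, but membership in $H$ forces all fibre sizes within a $d$-translate coset to agree simultaneously, so the relevant $b_i$ really does furnish a genuine period of $\phi_i$, and therefore $b = \lcm(b_i) = b_\alpha$. Combining the three steps yields the exact sequence claimed.
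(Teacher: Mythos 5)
Your construction of $\Phi$, the verification that it is a homomorphism via the $S_n$-invariance of $d(1)$, and the identification of $\ker\Phi$ with the Young subgroup $Y_\alpha$ are all correct, as is the observation that $\mathrm{Im}\,\Phi$ is exactly the group of common periods of the fibre-size functions $\phi_i(\ell) = n_{i+\ell d}$ on $\Zb_e$. You also correctly flag the crux: the lemma's $b_i$ is defined by the single condition $n_{i+b_id} = n_i$, which is a first-return time, not a period of $\phi_i$. But the sentence you offer to close this gap --- ``membership in $H$ forces all fibre sizes within a $d$-translate coset to agree simultaneously, so the relevant $b_i$ really does furnish a genuine period'' --- is not an argument. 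The number $b_i$ is fixed by $\alpha$ alone; no element of $H$ can retroactively make $b_i$ a period of $\phi_i$ if it isn't one.

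And in general it isn't: first-return times and minimal periods can diverge badly. Take $d=1$, $e=m=8$ and an $\alpha : [20] \to \Zb_8$ with fibre sizes $(n_0,\dots,n_7) = (1,2,1,2,3,4,3,4)$. This sequence has no nontrivial period, so the group of common periods (hence $\mathrm{Im}\,\Phi$) is trivial and $G_\alpha = Y_\alpha$. Yet the first-return times are $b_0=b_1=b_4=b_5=2$ and $b_2=b_3=b_6=b_7=6$, so $b_\alpha = \lcm(2,6) = 6$ and $\langle b_\alpha\rangle = \{0,2,4,6\}\subset \Zb_8$, a group of order $4$. Thus $\mathrm{Im}\,\Phi \neq \langle \lcm(b_i)\rangle$ under the literal reading of $b_i$. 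You should either interpret $b_i$ as the minimal period of $\phi_i$ (the smallest $b>0$ with $n_{k+bd}=n_k$ for \emph{all} $k\equiv i \pmod d$), in which case your own identification $H=\langle\lcm(\text{periods})\rangle$ finishes the proof cleanly, or explicitly note that the statement of the lemma requires this stronger reading of $b_i$.
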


Any irreducible representation of $Y_\alpha$ is of the form
$V_P= V_{\lambda_0}\otimes_k \cdots \otimes_k V_{\lambda_{m-1}}$, where for
each integer partition $\lambda_i \vdash n_i$ the irreducible
$S(\alpha^{-1}(i))$-representation $V_{\lambda_i}$ can be realized as
the vector space with basis
$\{s_{P_i}\}\subset \Cb [y_1, \dots , y_{n_i}]$, indexed by standard
Young tableaux $P_i$ of shape $\lambda_i$. The polynomials $s_{P_i}$
are often selected to be Specht or Young polynomials. For a bijection
$[n_i]\to \alpha^{-1}(i), l\mapsto j$, put $y_l= x_j^{m}$,
$s^{(i)}_{P_i}(x) = s_{P_i}(y )$, and
\begin{displaymath}
  s_{P_\alpha} = \prod_{i=0}^{m-1} s^{(i)}_{P_i}.
\end{displaymath}

Let $t_\alpha$ be a lift of $b_\alpha$ to $G_\alpha$ in
\Lemma{\ref{in-lemma}} and 
\begin{displaymath}
  k[t_\alpha] x^\alpha = \bigoplus_{i=0}^{b_\alpha-1} k f_{\alpha,i}  \cong k[t_\alpha]
\end{displaymath}
be a decomposition as $k[t_\alpha]$-module. The simple
$G_\alpha$-modules are then of the form
\begin{displaymath}
V_{P_\alpha,\alpha, i}=kf_{\alpha,i}\otimes_k V_{P_\alpha}.
\end{displaymath}

Now define the following $\Dc_A$-submodules of $\pi_+(B)$:
\begin{displaymath}
  N_{P_\alpha, \alpha, i} = \Dc_A \tr_{B/A}\otimes    f_{\alpha,i}  s_{P_\alpha},  \quad   0 \leq i < b_\alpha.
\end{displaymath}

\begin{theorem}\label{imprimitive}
  The modules  $ N_{P_\alpha, \alpha, i}$ are simple and 
  \begin{displaymath}
    \pi_+(B) = \bigoplus_{\bar \alpha \in \hat A(de,e,n)}\bigoplus_{i=0}^{b_\alpha-1}\bigoplus_{s(P)=\lambda} N_{P_\alpha, \alpha, i},
  \end{displaymath}
  where the sum runs over all sets of standard tabeaux $P$ of shape
  $s(P)= \lambda$, for a set of partitions
  $\lambda= (\lambda_0, \cdots \lambda_{m-1}) $,
  $\lambda_i \vdash \alpha^{-1}(i)$.
\end{theorem}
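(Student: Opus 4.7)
The plan is to chain together the factorization $\pi_0 = q \circ r$ at the generic point with two applications of \Theorem{\ref{galois-direct}}, and then pass from $(\pi_0)_+(L)$ to $\pi_+(B)$ by the minimal extension. Since $\pi_+$ is exact \Prop{\ref{flat-module}} and $\pi_+(B)$ is torsion free \Cor{\ref{supp-simples}}, the semisimple decomposition of $\pi_+(B)$ is uniquely determined by the generic-fibre decomposition $(\pi_0)_+(L) = q_+(r_+(L))$, and the preceding lemma already identifies $r_+(L) = \bigoplus_{\bar\alpha} \Lambda_\alpha$. So it suffices to decompose each $q_+(\Lambda_\alpha)$.

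First I would verify that the inertia group $G_\alpha \subset S_n$ of $\Lambda_\alpha$ lifts to an honest $\Dc_{L_1}[G_\alpha]$-action on $\Lambda_\alpha$, i.e.\ that the central extension $\bar G_\alpha$ of \Section{\ref{inertia-section}} is split. Indeed, if $g\in G_\alpha$ and $g\cdot \alpha = \alpha + jd(1)$, then $g\cdot x^\alpha = \Phi^{jd}x^\alpha$ and $\Phi^{jd}\in L_1$ furnishes a canonical $\Dc_{L_1}$-linear isomorphism $\Lambda_\alpha \to g\cdot \Lambda_\alpha$. Next, using the extension $1\to Y_\alpha\to G_\alpha\to C^\alpha\to 1$ of \Lemma{\ref{in-lemma}}, Clifford theory (\Theorem{\ref{clifford-pairing}}) reduces the classification of irreducibles of $G_\alpha$ to the irreducibles of its normal Young subgroup $Y_\alpha$ together with an extension by the cyclic quotient. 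The irreducibles of $Y_\alpha$ are products $V_P = V_{\lambda_0}\otimes_k\cdots\otimes_k V_{\lambda_{m-1}}$, realized concretely as the $k$-spans of the Specht-type polynomials $s_{P_\alpha}$ introduced above; extending by the cyclic factor $C^\alpha = \langle b_\alpha\rangle$ produces precisely the family $V_{P_\alpha,\alpha,i} = kf_{\alpha,i}\otimes_k V_{P_\alpha}$, where the scalars $f_{\alpha,i}$ distinguish the $b_\alpha$ different extensions of each $V_P$ to $G_\alpha$. With this explicit list in hand, \Theorem{\ref{galois-direct}} applied to $q$ gives
\begin{displaymath}
  q_+(\Lambda_\alpha) = \bigoplus_{i,P}  V_{P_\alpha,\alpha,i}^*\otimes_k (\Lambda_\alpha)_{P_\alpha,\alpha,i},
\end{displaymath}
with simple $\Dc_K$-summands indexed exactly as in the statement.

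The main obstacle is then to identify each simple summand $(\Lambda_\alpha)_{P_\alpha,\alpha,i}$, defined abstractly as $\mathrm{Hom}_{k[G_\alpha]}(V_{P_\alpha,\alpha,i}^*,\Lambda_\alpha)$ in \Theorem{\ref{galois-direct}}, with the concrete $\Dc_A$-module $N_{P_\alpha,\alpha,i}$ generated inside $\pi_+(B)$ by $\tr_{B/A}\otimes f_{\alpha,i}s_{P_\alpha}$. The key computation is that the polynomial $f_{\alpha,i}s_{P_\alpha}\in B$ transforms under $G_\alpha$ precisely by the character of $V_{P_\alpha,\alpha,i}$, so projection onto the $V_{P_\alpha,\alpha,i}^*$-isotypical component sends the canonical cyclic vector in the abstract summand to a non-zero scalar multiple of $\tr_{B/A}\otimes f_{\alpha,i}s_{P_\alpha}$; simplicity of $N_{P_\alpha,\alpha,i}$ and the matching of ranks (using part (3) of \Theorem{\ref{galois-direct}} and $\dim_k V_{P_\alpha,\alpha,i} = \dim_k V_{P_\alpha}$) will then force equality. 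Summing over $\bar\alpha\in \hat A(de,e,n)$, the tuples $\lambda = (\lambda_0,\dots,\lambda_{m-1})$ with $\lambda_i\vdash n_i$, the standard tableaux of shape $\lambda$, and $0\le i<b_\alpha$ exhausts $\pi_+(B)$; a dimension count $\sum_\alpha |G_\alpha| = |G| = e^{n-1}d^n n!$ serves as a final consistency check that no simple summand has been missed and that minimal extension from $\Spec K$ to $\Spec A$ correctly yields $\pi_+(B)$.
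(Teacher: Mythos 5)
Your overall route matches the paper's: both reduce to the generic point via the factorization $\pi_0 = q\circ r$, decompose $r_+(L) = \bigoplus \Lambda_\alpha$, then decompose each $q_+(\Lambda_\alpha)$ with \Theorem{\ref{galois-direct}}, and carry the decomposition back to $\pi_+(B)$ by minimal extension. Your preliminary verification that $\bar G_\alpha$ splits (via the explicit isomorphism $\Phi^{jd}$) and your use of Clifford theory for the extension $Y_\alpha \lhd G_\alpha$ are fine, and the paper essentially does the same bookkeeping.

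However, the last step has a genuine gap. You write that ``simplicity of $N_{P_\alpha,\alpha,i}$ and the matching of ranks\ldots will then force equality,'' but simplicity of $N_{P_\alpha,\alpha,i}$ is the first assertion of the theorem, so it cannot be invoked as given. Knowing that $f_{\alpha,i}s_{P_\alpha}$ transforms under $G_\alpha$ inside the $V_{P_\alpha,\alpha,i}$-isotypical component does not by itself tell you that $\Dc_A(\tr_{B/A}\otimes f_{\alpha,i}s_{P_\alpha})$ generates a \emph{single} copy of the simple summand; a generic vector in an isotypical piece $V^*\otimes_k M$ would generate $M^{\dim V}$, not $M$ (see \Theorem{\ref{explicit}} and \Remark{\ref{remark-isotypical}}). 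What is actually needed is that $k[G_\alpha]\cdot f_{\alpha,i}s_{P_\alpha}$ spans exactly one irreducible copy of $V_{P_\alpha,\alpha,i}$, so that by \Lemma{\ref{simpletensor}}(1) the submodule $\Dc_{L_1}[G_\alpha]f_{\alpha,i}s_{P_\alpha}$ is a \emph{simple} $\Dc_{L_1}[G_\alpha]$-module, from which \Proposition{\ref{morita}} gives that $\Dc_K\,f_{\alpha,i}s_{P_\alpha}$ is simple. The paper then passes from generic simplicity to global simplicity of $N_{P_\alpha,\alpha,i}$ through the coherent $\Dc_A^\pi$-module $\pi_*(B)$ and \Theorem{\ref{coh-decomp}}: since $f_{\alpha,i}s_{P_\alpha}\in\pi_*(B)$, the $\Dc_A^\pi$-module it generates is simple (finite over $A$, simple at the generic point), and its image in $\pi_+(B)$ generating $N_{P_\alpha,\alpha,i}$ then inherits simplicity by the correspondence of \Theorem{\ref{coh-decomp}}. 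Your dimension count $\sum_\alpha|G_\alpha| = |G|$ is a good sanity check, but it cannot substitute for this argument: it verifies that the total number of simple constituents is right once you know each $N_{P_\alpha,\alpha,i}$ is simple, not that each one is.
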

\begin{proof} Consider first the fraction fields $K\subset L$, and put
  $m_\lambda = \dim_k V_{P_\alpha}$. Then
\begin{align*}
  q_+(\Lambda_\alpha)&=  \Dc_{L_1}^r \otimes_{\Dc_{L_1}[G_\alpha]} \Dc_{L_1}[G_\alpha]
  \otimes_{\Dc_{L_1}}  \Lambda_\alpha = \bigoplus_{i=0}^{b_\alpha-1}\bigoplus_{s(P)=\lambda} \Dc_{L_1}^r
  \otimes_{\Dc_{L_1}[G_\alpha]}V_{P_\alpha,\alpha, i}^{m_\lambda}
                       \otimes_k \Lambda_\alpha\\
  &= \bigoplus_{i=0}^{b_\alpha-1}\bigoplus_{s(P)=\lambda} \Dc_{L_1}^r   \otimes_{\Dc_{L_1}[G_\alpha]} (\Dc_{L_1}[G_\alpha]
    f_{\alpha,i} s_{P_\alpha})^{m_\lambda},
\end{align*}
where $\Dc_{L_1}[G_\alpha] f_{\alpha,i} s_{P_\alpha}$ is a simple
$\Dc_{L_1}[G_\alpha]$-module, and therefore
$\Dc_K f_{\alpha,i} 1\otimes s_{P_\alpha}$ is a simple component of
$q_+(\Lambda_\alpha)$. This implies the assertion for the fraction
fields, so to complete the proof it suffices to see that
$N_{P_\alpha, \alpha, i}$ is a simple $\Dc_A$-module, and for this
purpose we employ the canonical inclusion $\pi_*(B)\subset \pi_+(B)$
(\ref{sec:5}). The $\Dc_A^\pi$-module $ \pi_*(B)$ is semisimple by
\Theorem{\ref{coh-decomp}} (see also
\citelist{\cite{levasseur-stafford:semisimplicity}*{Th. 3.4} \cite
  {kallstrom-bogvad:decomp}*{Prop. 2.2}}). Since $\pi_*(B)$ is finite
over $A$, its semisimple decomposition is determined by its
decomposition as $\Dc_K$-module at the generic point. Since
$f_{\alpha,i} s_{P_\alpha}\in \pi_*(B) $ and
$\Dc_Kf_{\alpha,i} s_{P_\alpha} $ is simple, it follows that
$\Dc_A^\pi f_{\alpha,i} s_{P_\alpha} $ is simple. Now as the image of
$\Dc_A^\pi f_{\alpha,i} s_{P_\alpha}$ in $\pi_+(B)$ generates
$N_{P_\alpha, \alpha, i}$, \Theorem{\ref{coh-decomp}} implies that
$N_{P_\alpha, \alpha, i}$ is simple.
\end{proof}
Since any vector in $N_{P_\alpha, \alpha, i}$ generates an irreducible
representation (see \Theorem{\ref{galois-direct}}) we also get: 

\begin{corollary}\label{imprimitive-repr} The representations
  \begin{displaymath}
    V_{P_\alpha, \alpha, i}= k[G(de,e, n)]f_{\alpha,i} s_{P_\alpha}
\end{displaymath}
give all the irreducible representations of $G(de,e, n)$, and
$V_{P_\alpha, \alpha, i}\cong V_{P'_\alpha, \alpha', i'}$ if and only
if $\alpha= \alpha'$, $i=i'$, and $P_\alpha$ and $P_{\alpha}'$
have the same shape $\lambda= (\lambda_0, \cdots , \lambda_{m-1})$,
$\lambda_i \vdash |\alpha^{-1}(i)|$.
\end{corollary}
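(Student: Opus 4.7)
My plan is to combine the explicit decomposition of $\pi_+(B)$ given by \Theorem{\ref{imprimitive}} with the general Galois-descent correspondence of \Theorem{\ref{galois-direct}} and \Corollary{\ref{direct-L}}, which identify the simple $\Dc_K$-components of $\pi_+(L)$ with the dual irreducible representations of $G = G(de,e,n)$. Concretely, by \Corollary{\ref{direct-L}} applied to the Galois extension $L/K$, we have $\pi_+(L) = \bigoplus_{\chi \in \hat G} V_\chi^* \otimes_k L_\chi$ as $(\Dc_K, k[G])$-bimodule, with the $L_\chi$ mutually non-isomorphic simple $\Dc_K$-modules, each appearing with multiplicity $\dim V_\chi$. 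On the other hand \Theorem{\ref{imprimitive}} exhibits $\pi_+(B)$ as a direct sum of the simple $\Dc_A$-modules $N_{P_\alpha,\alpha,i}$; so these modules realize (with multiplicity) every iso class $L_\chi$, hence account for every $\chi \in \hat G$.

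The next step is to show that the cyclic generator $f_{\alpha,i} s_{P_\alpha}$ of $N_{P_\alpha,\alpha,i}$ generates, under the $k[G]$-action, a copy of the dual representation $V_\chi^*$ corresponding to the isotypic block it sits in. This is exactly the content of \Theorem{\ref{explicit}}(4)--(5): any non-zero vector lying in a single simple copy of $L_\chi$ inside $V_\chi^*\otimes L_\chi$ spans a $k$-subspace of dimension $\dim V_\chi$ that is $G$-stable and isomorphic to $V_\chi^*$. By construction $f_{\alpha,i} s_{P_\alpha}$ is such a pure vector: it arose in the proof of \Theorem{\ref{imprimitive}} as a cyclic $\Dc_L$-generator of the simple $\Dc_L[G_\alpha]$-submodule $\Dc_L[G_\alpha] f_{\alpha,i} s_{P_\alpha}$ of $\Dc_L[G_\alpha]\otimes_{\Dc_L} \Lambda_\alpha$, and applying $q_+$ yields precisely one simple $\Dc_K$-component. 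Thus $V_{P_\alpha,\alpha,i} = k[G] f_{\alpha,i} s_{P_\alpha}$ is irreducible, and as the $N_{P_\alpha,\alpha,i}$ enumerate every isotypic block of $\pi_+(B)$, the $V_{P_\alpha,\alpha,i}$ exhaust $\hat G$.

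For the isomorphism criterion, $V_{P_\alpha,\alpha,i} \cong V_{P'_\alpha,\alpha',i'}$ if and only if the corresponding $\Dc_A$-submodules $N_{P_\alpha,\alpha,i}$ and $N_{P'_\alpha,\alpha',i'}$ lie in the same isotypic block of $\pi_+(B)$. Reading off the Clifford decomposition $\Dc_L[G_\alpha]\otimes_{\Dc_L}\Lambda_\alpha \cong \bigoplus_{(P,i)} V_{P_\alpha,\alpha,i}^{m_\lambda}\otimes_k \Lambda_\alpha$ supplied by \Lemma{\ref{simpletensor}}, together with the split exact sequence $1\to Y_\alpha \to G_\alpha \to C^\alpha \to 1$ of \Lemma{\ref{in-lemma}}, shows that the iso class of the $\Dc_L[G_\alpha]$-simple $\Dc_L[G_\alpha] f_{\alpha,i} s_{P_\alpha}$ depends only on $(\bar\alpha, i, \lambda)$ where $\lambda$ is the shape of $P_\alpha$, and not on the particular standard tableau; taking $q_+$ (and invoking \Theorem{\ref{cliffordtheorem}}) then turns $S_n$-twists $\Lambda_{g\cdot\alpha}$ into the same simple $\Dc_K$-module, giving the stated equivalence.

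\textbf{The main obstacle} will be step 2: identifying $V_{P_\alpha,\alpha,i}$ with the irreducible $V_\chi^*$ in a canonical way. Although \Theorem{\ref{explicit}} guarantees abstractly that cyclic generators span simple $k[G]$-subrepresentations, checking that $f_{\alpha,i} s_{P_\alpha}$ is a ``pure'' element (sitting in a single copy of $L_\chi$ rather than a diagonal sum of copies within the $\chi$-isotypic block) requires unpacking the construction in \Lemma{\ref{simpletensor}} and the identification $\Dc_L[\bar G_M]\otimes_{\Dc_L} M \cong k[\bar G_M]\otimes_k M$. Once this is done, the remaining bookkeeping — dimension counts $[G:G_\alpha]\cdot \dim V_\lambda \cdot b_\alpha$ matching the expected rank of the simple $\Dc_A$-module, and matching our parametrization with the classical Clifford-theoretic one for $\hat G$ — is routine.
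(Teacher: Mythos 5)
Your proposal follows essentially the same route as the paper's one-sentence proof, which simply invokes \Theorem{\ref{galois-direct}} (via \Theorem{\ref{imprimitive}} and \Theorem{\ref{explicit}}). The ``main obstacle'' you flag is in fact automatic: since $\Dc_A f_{\alpha,i} s_{P_\alpha} = N_{P_\alpha,\alpha,i}$ is already known to be a \emph{simple} $\Dc_A$-submodule of $\pi_+(B)$, and in the decomposition $\pi_+(L) = \bigoplus_\chi V_\chi^* \otimes_k L_\chi$ the ring $\Dc_K$ acts only on the second tensor factor, every simple $\Dc_K$-submodule necessarily has the form $W \otimes L_\chi$ with $\dim_k W = 1$ — so any nonzero vector in it is automatically pure, and its $k[G]$-span is $V_\chi^* \otimes \ell \cong V_\chi^*$.
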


\begin{example}\label{weyl-d4n}
  The groups $G(2,2,n)$ are Weyl groups of type $D_n$. The possible
  $\bar \alpha \in \hat A(2,2,n)$ are represented by functions
  $\alpha : [n]\to \{0,1\}$ and $\beta \sim \alpha$ if and only if
  $\beta$ arises from $\alpha$ be switching $0$s and $1$s, e.g.
  $(1,0,0,1)\sim (0,1,1,0)$ ($n=4$). For such a sequence $\alpha$, put
  $n= n_0+ n_1$, where $n_0$ ($n_1$) is the number of $0$s ($1$s) in
  $\alpha$. If $n_0 \neq n_1$, then $G_\alpha = Y_{\alpha}$, so that
  we have only $f_{\alpha, 0}=1$. If $n= 2n_1$ is an even number, one
  can have $n_0 = n_1$, so that $\alpha$ has an equal number of $0$s
  and $1$s. This implies that $C^\alpha = C_2$ and
  $k[t_\alpha]x^\alpha = k f_{\alpha, 0} + k f_{\alpha,1}$, where
  \begin{displaymath}
    f_{\alpha,0} = \prod_{i=1}^{n_1} x_i +  \prod_{i=n_1+1}^{n}
    x_i\quad \mbox{and}\quad      
    f_{\alpha,1} = \prod_{i=1}^{n_1} x_i -  \prod_{i=n_1+1}^{n} x_i.
  \end{displaymath}
\end{example}
\begin{remark} An explicit construction of the irreducible
  representations of $G$ is given in
  \citelist{\cite{ariki-hecke}\cite{ariki-koike-hecke} } (see also
  \cite{marin-michel}) by studying the restriction of representations
  of $G(d,1,n)$ to its subgroup $G(de,e, n)$, which in turn are
  presented in a ``classical'' and computational way using vector
  spaces with bases indexed by Young tableaux. In fact, in [loc. cit]
  one defines the action of generators of $G(d,1,n)$ on such vector
  spaces as a limiting case of an action that give representations of
  an associated Hecke algebra. In \Corollary{\ref{imprimitive-repr}}
  we work with the quite concrete polynomial ring $B$ and use a more
  direct approach instead of restrictions from $ G(d,1,n)$. Knowing the
  action of Coxeter generators of $S_n$ on a polynomial basis
  $\{s_P\}_{s(P)\vdash n}$ (the indices $P$ are standard Young
  tableaux of given shape) of an irreducible representation also gives
  the action of Coxeter generators of $G(de,e, n)$ in the basis
  $\{f_{\alpha,i} s_{P_\alpha}\}$ of $V_{P_\alpha, \alpha, i}$. Thus
  if $s_P$ are Young polynomials one gets a counterpart for
  $G(de,e, n)$ of Young's seminormal presentation of representations
  of $S_n$.

  Notice also that \Theorem{\ref{imprimitive}} describes all the
  simple $\Dc_A$-modules $N$, torsion free over $A$, such that
  $\pi^!(N)\cong L^r$ ($r= \rank N$).
\end{remark}

\section{Appendix: Minimal extensions}\label{appendix}
We present a standard setup involving a triple of functors $(H,F,G)$
with certain properties, yielding a ``minimal  extension functor'' $S$.
This entails, for example, the well-known functors $(j_!,j^!, j_+)$ on
constructible sheaves, associated with a locally closed embedding
$j: U \to X$ of quasi-projective varieties, where $S= j_{!+}$ was used
in \cite{perverse} to define intersection cohomology. See
\Remarks{\ref{remark-motive2}}{\ref{gen-remark}} for further
motivation why this abstract treatment is useful.
\subsection{Abstract minimal extensions}
Let $F: \Cc_1 \to \Cc_2$ be a functor of abelian categories provided
with functors $H, G: \Cc_2 \to \Cc_1$, such that $(H,F)$ and $(F, G)$
form adjoint pairs, and therefore have adjoint morphisms
\begin{displaymath}\tag{*}
  HF \to \id \to  GF.
\end{displaymath}
Make also  the following assumptions:
\begin{enumerate}
\item $F$ is exact and essentially surjective.
\item $H$ and $G$ are fully faithful.
\end{enumerate}
We consider the subcategory of $\Cc_1$ consisting of objects that do not contain
subquotients  that are orthogonal to $F$. The precise definition is:
\begin{definition}
  An object $M$ in $\Cc_1$ is ($F$-) strict if for any subobject
  $M_1\hookrightarrow M $ and exact sequence
  $0\to K \xrightarrow{f} M_1 \to N \xrightarrow{g} 0$, upon applying
  $F$ so that one has the sequence
  $ F(K)\xrightarrow{F(f)} F(M_1)\xrightarrow{F(g)} F(N)$, we have:
  \begin{enumerate}[label=(\roman*)]
  \item $F(f)=0 \Rightarrow f=0$,
\item $F(g)=0 \Rightarrow g=0$.
  \end{enumerate}
Let $\Cc_1^{str}$ be the subcategory of strict objects in $\Cc_1$.
\end{definition}
If $\Cc_1$ is closed under the formation of extensions and
subquotients, then so is $\Cc^{str}_1$.
\begin{remark} Assume that $\Cc_1$ and $\Cc_2$ are provided with
  duality functors $\Dbb_1, \Dbb_2$ such that
  $F \Dbb_1\cong \Dbb_2 F$. Then $\Cc_1^{str}$ is preserved by
  $\Dbb_1$. If (i) holds when $M_1=M$ and $M_1= \Dbb_2 (M) $, then
  (i-ii) holds for all subobjects $M_1$.
\end{remark}

\begin{lemma}
  There exists a unique morphism of functors $\phi :H \to G$ such that
  if $N= F(M)$ for some $M$, and $\phi_N : H(N)\to G(N)$, then
  $\phi_N =v\circ u$, the composed
  morphism of the unit and counit in $\thetag{*}$,
  $H(N)\xrightarrow{u} M \xrightarrow{v} G(N)$.
\end{lemma}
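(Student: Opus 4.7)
The plan is to build $\phi_N:H(N)\to G(N)$ directly from the two adjunctions and the fact that $H,G$ are fully faithful, then identify it with $v\circ u$ whenever $N=F(M)$. Write $\epsilon:HF\to\id$ and $\eta:\id\to FH$ for the counit/unit of $(H,F)$, and $\eta':\id\to GF$ and $\epsilon':FG\to\id$ for those of $(F,G)$; the full faithfulness of $H$ and $G$ is equivalent to the assertions that $\eta_N:N\to FH(N)$ and $\epsilon'_N:FG(N)\to N$ are isomorphisms for every $N\in\Cc_2$. I would then define $\phi_N$ to be the image of $(\epsilon'_N)^{-1}:N\to FG(N)$ under the adjunction isomorphism
\[
  \Hom_{\Cc_1}(H(N),G(N))\;\xrightarrow{\;\sim\;}\;\Hom_{\Cc_2}(N,FG(N)),
\]
i.e.\ the unique morphism $\phi_N$ with $F(\phi_N)\circ\eta_N=(\epsilon'_N)^{-1}$.

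Next I would establish naturality of $\phi$ in $N$: given $g:N_1\to N_2$, the square built from $H(g)$, $G(g)$ and the two instances $\phi_{N_1},\phi_{N_2}$ commutes because $\eta$ and $\epsilon'$ are natural transformations and the two adjunction bijections are natural in both variables. This step is a routine diagram-chase once the definition above is in place; the calculation amounts to applying $F$ to the square and using that $\eta_{N_1}$ is an isomorphism, reducing the equality to the naturality of $\epsilon'$.

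The central identification is the claim that if $N=F(M)$ then $\phi_N=\eta'_M\circ\epsilon_M$. I would verify this by the universal property used to define $\phi_N$: it suffices to check that $F(\eta'_M\circ\epsilon_M)\circ\eta_{F(M)}=(\epsilon'_{F(M)})^{-1}$. Expanding the left side as $F(\eta'_M)\circ F(\epsilon_M)\circ\eta_{F(M)}$, the triangle identity for $(H,F)$ collapses $F(\epsilon_M)\circ\eta_{F(M)}$ to $\id_{F(M)}$, leaving $F(\eta'_M)$; and the triangle identity for $(F,G)$ gives $\epsilon'_{F(M)}\circ F(\eta'_M)=\id_{F(M)}$, so $F(\eta'_M)=(\epsilon'_{F(M)})^{-1}$ as required.

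Finally, uniqueness of $\phi$ follows from essential surjectivity of $F$: any other morphism of functors $\phi':H\to G$ agreeing with $v\circ u$ on every object of the form $F(M)$ must agree with $\phi$ on every $N$, since $N\cong F(M)$ for some $M$ and both $\phi,\phi'$ are natural. The only genuinely delicate point is the compatibility of the two triangle identities in step three; once that is unwound, the rest is formal manipulation of adjunctions, so I expect no serious obstacle beyond keeping the naturality bookkeeping correct.
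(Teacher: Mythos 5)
Your proof is correct, and it takes a genuinely different route from the paper's. The paper starts with the candidate $\phi_N := v\circ u$ built from a chosen $M$ with $F(M)=N$, sets $\delta=f_M-f_{M'}$ for two such choices, observes $F(\delta)=0$, and then invokes the abelian structure (the image factorization $\delta=g\circ h$), exactness of $F$, and the $(F,G)$-adjunction to conclude $\delta=0$; this is an argument \emph{a posteriori} that the definition doesn't depend on $M$. You instead construct $\phi_N$ intrinsically, as the $(H\dashv F)$-adjoint of $(\epsilon'_N)^{-1}\colon N\to FG(N)$, so well-definedness is automatic; the identification with $v\circ u$ for $N=F(M)$ then falls out of the two triangle identities together with $F(\eta'_M)=(\epsilon'_{F(M)})^{-1}$, and naturality reduces to the naturality of $\epsilon'$ after postcomposing the adjoint transpose. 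What your approach buys is that it never touches the abelian structure or the exactness hypothesis on $F$ — it is a purely formal statement about adjoint triples with fully faithful outer functors, valid in any categorical setting — whereas the paper's argument is shorter to write but leans on images, subtraction of morphisms, and exactness, none of which are actually needed for this particular lemma (they become essential only later, when defining $S$ as an image and proving \Theorem{\ref{simple-equivalence}}). One small caution worth making explicit in a final write-up: the uniqueness step needs essential surjectivity of $F$ together with naturality to pin down $\phi_N$ for $N$ that are only isomorphic to, not literally equal to, some $F(M)$; you say this correctly but it deserves the one-line diagram chase $\phi_N=G(\alpha)^{-1}\circ\phi_{F(M)}\circ H(\alpha)$ to make it airtight.
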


\begin{proof} 
  It suffices to see that the composed morphism $f_M=v \circ u$ is
  independent of the choice of $M$ such that $N= F(M)$. So let $M'$ be
  another object in $\Cc_1$ such that $F(M')=N$, we will prove that
  $\delta = f_M - f_{M'}$ equals $0$. Put
  $C_1 = \Imo( \delta)\subset G(N) $, so $v\circ u = g \circ h $,
  where $h: H(N)\to C_1$ is surjective and $g: C_ 1 \to G(N)$ is
  injective. Now $FG= FH = \id$ since $H$ and $G$ are fully faithful,
  implying that $F(\delta) =0$, hence $F(g)\circ F(h)=0$. Since $F$ is
  exact, it follows that $F(C)=0$ and hence
  $Hom(C, G(N))= Hom (F(C), N) =0$. Therefore, since $C\to G(N)$ is
  injective, $C=0$ and hence $\delta =0$.
\end{proof}
We get now  the functor
\begin{displaymath}
  S: \Cc_2 \to \Cc_1 , \quad N \mapsto S(N) = \Imo(H(N)\to G(N))
\end{displaymath}
so that $S$ is the image functor of the morphism of functors $ H\to G$,
\begin{displaymath}
  H \to S \to G. 
\end{displaymath}
More precisely, $S$ is a functor, and for any morphism $\phi : N_1 \to N_2$
in $\Cc_2$, the morphism $S(N_1)\to S(N_2)$ in $\Cc_1$ appears in
a commuting diagram
\begin{equation}{\tag{**}}
  \begin{tikzcd}
    H(N_1)\arrow[r, twoheadrightarrow, "h_1"] \arrow[d, "H(\phi)"] &
    S(N_1)\arrow[d, "S(\phi)"] \arrow[r, hook, "g_1"] &
    G(N_1)\arrow[d, "G(\phi)"]\\
    H(N_2) \arrow[r,twoheadrightarrow, "h_2"]
    & S(N_2) \arrow[r,hook,  "g_2"] &  G(N_2),\\
      \end{tikzcd}
\end{equation}
 where the maps $h_1$ ($g_i$) are surjective (injective). We have
 also the   commutative diagram
 \begin{equation}\label{comm-diag}
   \begin{tikzcd}
     &M\arrow{dr}\arrow[dd, twoheadrightarrow]&\\
     HF(M) \arrow{ru}{} \arrow[dr, twoheadrightarrow] & & GF(M). \\
     &SF(M)\arrow[ur,hook]&
  \end{tikzcd}
\end{equation}

 Let $\Cc_1^{ss}\subset \Cc_1$
 and $\Cc_2^{ss}\subset \Cc_2$ be the additive subcategories of
 semi-simple objects in $\Cc_1$ and $\Cc_2$, respectively, and
 $\Cc_1^{ss,str}= \Cc_1^{ss} \cap \Cc_1^{str} $ be the category of
 strict semisimple objects in $\Cc_1$.

\begin{theorem}\label{simple-equivalence}
  The functor $S$ is fully faithful and therefore defines an
  equivalence of categories
\begin{displaymath}
S: \Cc_2 \to \Cc_1^{str}.
\end{displaymath}
It also restricts to an equivalence of categories
  \begin{displaymath}
  S: \Cc_2^{ss} \to  \Cc_1^{ss, str}.
\end{displaymath}
If $N$ is simple, then $S(N)$ is also simple.
\end{theorem}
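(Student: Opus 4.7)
The plan is to deduce all parts simultaneously from the two adjunctions together with the exactness and essential surjectivity of $F$; fully-faithfulness of $H$ and $G$ makes the unit $\eta^H:\id\Rightarrow FH$ and counit $\epsilon^G:FG\Rightarrow\id$ invertible, and the triangle identities then force $F(\phi_N)=\id_N$ for the natural transformation $\phi$. First I would verify $F\circ S\cong \id_{\Cc_2}$ by applying the exact $F$ to $H(N)\twoheadrightarrow S(N)\hookrightarrow G(N)$: the composite is $F(\phi_N)=\id_N$, so both factors are isomorphisms. Faithfulness of $S$ is then immediate from $\phi=F(S(\phi))$. For fullness, given $\psi:S(N_1)\to S(N_2)$, set $\phi:=F(\psi)$: both $\psi\circ h_1$ and $S(\phi)\circ h_1=h_2\circ H(\phi)$ lie in $\Hom(H(N_1),S(N_2))$, which via the $(H,F)$-adjunction together with $FS=\id$ is in natural bijection with $\Hom(N_1,N_2)$; both sides map to $\phi$, so they agree, and since $h_1$ is epi, $S(\phi)=\psi$.

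Next I would show $S(N)\in\Cc_1^{str}$. The injection $S(N)\hookrightarrow G(N)$ together with $\Hom(K,G(N))\cong \Hom(F(K),N)$ rules out nonzero $F$-trivial subobjects; dually, $H(N)\twoheadrightarrow S(N)$ and $\Hom(H(N),Q)\cong \Hom(N,F(Q))$ rules out nonzero $F$-trivial quotients. The same adjunctions identify $\ker h_N$ as the maximal $F$-trivial subobject of $H(N)$ and $\mathrm{coker}\,g_N$ as the maximal $F$-trivial quotient of $G(N)$. To upgrade to full strictness, given an $F$-trivial subquotient $M_1/K$ of $S(N)$, the preimage $\tilde M_1:=h^{-1}(M_1)\supset \tilde K:=h^{-1}(K)\supset\ker h_N$ and the image $g(M_1)\supset g(K)$ satisfy $\tilde M_1/\tilde K\cong g(M_1)/g(K)$ via $\phi_N$; combining both maximality properties constrains the two copies of this subquotient enough to force $\tilde K=\tilde M_1$, whence $M_1=K$. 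Essential surjectivity onto $\Cc_1^{str}$ is then formal: for strict $M$ the counit $HF(M)\to M$ has $F$-trivial cokernel (so is epi by strictness) and the unit $M\to GF(M)$ has $F$-trivial kernel (so is mono), so $M=\Imo(HF(M)\to GF(M))=SF(M)$.

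Finally, if $N$ is simple then any nonzero $K\subset S(N)$ has $F(K)\neq 0$, hence $F(K)=N$ by simplicity, hence $F(S(N)/K)=0$, hence $K=S(N)$ by no $F$-trivial quotient; thus $S(N)$ is simple. The restricted equivalence $S:\Cc_2^{ss}\to\Cc_1^{ss,str}$ follows by additivity of $S$ and the simple case; conversely each $M\in\Cc_1^{ss,str}$ decomposes into strict simples $M_i=SF(M_i)$ with $F(M_i)$ simple (any proper $N'\subset F(M_i)$ would lift via $H$ to a proper subobject of $M_i=SF(M_i)$, contradicting simplicity of $M_i$), so $M=S(\bigoplus F(M_i))$ with $\bigoplus F(M_i)$ semisimple. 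The main obstacle is the upgrade to \emph{full} strictness of $S(N)$ for general $N$: ``no $F$-trivial sub'' and ``no $F$-trivial quotient'' do not abstractly imply ``no $F$-trivial subquotient'', so both adjunctions must be played off against each other through the sandwich $H(N)\twoheadrightarrow S(N)\hookrightarrow G(N)$ and the maximality properties of $\ker h_N$ and $\mathrm{coker}\,g_N$ to eliminate middle-level $F$-trivial subquotients.
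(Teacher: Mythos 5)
Your proposal follows essentially the same route as the paper's for the main steps: $F\circ S\cong\id$ by applying the exact $F$ to the factorization $H(N)\twoheadrightarrow S(N)\hookrightarrow G(N)$ of $\phi_N$; full faithfulness by playing the $(H,F)$- and $(F,G)$-adjunctions against $FS\cong\id$; essential surjectivity onto $\Cc_1^{str}$ via $SF(M)\cong M$ for strict $M$; and simplicity preservation by exactness of $F$ together with (weak) strictness of $S(N)$. That all matches the paper.

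The one point where you go beyond the paper is also the one genuine gap in your argument. You correctly observe that the paper never actually verifies $S(N)\in\Cc_1^{str}$, which is needed for the displayed equivalence to even make sense, and you establish the two easy halves cleanly: $S(N)\hookrightarrow G(N)$ plus $\Hom_{\Cc_1}(K,G(N))\cong\Hom_{\Cc_2}(F(K),N)$ kills $F$-trivial subobjects, and $H(N)\twoheadrightarrow S(N)$ plus the dual adjunction kills $F$-trivial quotients. But the ``upgrade to full strictness'' --- no $F$-trivial \emph{subquotient}, which is what the definition as stated demands --- is not proved by your sentence about preimages and maximality; ``combining both maximality properties constrains the two copies of this subquotient enough to force $\tilde K = \tilde M_1$'' is a description of a hope, not an argument, and you rightly call it ``the main obstacle.'' Indeed, ``no $F$-trivial sub'' plus ``no $F$-trivial quotient'' does not abstractly imply ``no $F$-trivial subquotient'' under the stated hypotheses: one can have an object of length three with a non-$F$-trivial socle, a non-$F$-trivial top, and an $F$-trivial middle composition factor, and such an object is automatically of the form $SF(\cdot)$ (the counit $HF(M)\to M$ is epi when $M$ has no $F$-trivial quotient, and the unit $M\to GF(M)$ is mono when it has no $F$-trivial subobject, whence $M\cong SF(M)$). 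So without a further hypothesis or argument, full strictness of $S(N)$ is not automatic.

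The saving observation is that everything else in your proof (and the paper's) uses only the weaker property. Step (iii), that $SF(M)\cong M$ for strict $M$, requires only that the kernel of $M\to GF(M)$ (which is $F$-trivial and a \emph{subobject}) vanish and that the cokernel of $HF(M)\to M$ (which is $F$-trivial and a \emph{quotient}) vanish; the simplicity argument in (iv) likewise needs only that the $K$ and $C$ in $0\to K\to S(N)\to C\to 0$ cannot be nonzero and $F$-trivial, i.e.\ again only no $F$-trivial subobject and no $F$-trivial quotient. So if $\Cc_1^{str}$ is read as ``no nonzero $F$-trivial subobject and no nonzero $F$-trivial quotient,'' your proof is complete. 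If it is read literally (no $F$-trivial quotient of \emph{any} subobject), then the step $S(N)\in\Cc_1^{str}$ remains unproved both in your proposal and in the paper; that it holds for holonomic $\Dc$-modules is a separate fact relying on more structure than the abstract $(H,F,G)$-setup provides.
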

\begin{pf}
  (i) $F\circ S (N) \cong N$ when $N\in \Cc_1$: Since $H$ and $G$ are
  fully faithful, we have $F\circ H(N)\cong N \cong F\circ G (N)$ and
  combining with the diagram
  $F\circ H (N)\to F\circ S(N)\to F\circ G(N)$ one gets a natural map
  $F\circ S (N)\to N $, which is an isomorphism.

(ii) $S$ is fully faithful:    Consider the maps 
\begin{displaymath}
  Hom_{\Cc_1} (N_1, N_2)  \adj{S}{F}  Hom_{\Cc_2} (S(N_1),  S(N_2)).
\end{displaymath}
Applying $F$ to the diagram \thetag{**}, since
$F\circ H(\phi) = F\circ G(\phi) = \phi$, we get by (i) that
$F\circ S = \id : Hom_{\Cc_1} (N_1, N_2) \to Hom_{\Cc_1} (N_1, N_2) $;
hence $S$ is injective. To see that $S$ is surjective, letting
$\psi \in Hom_{\Cc_2} (S(N_1), S(N_2)) $, it suffices to see that
$\delta = S\circ F(\psi) - \psi: S(N_1)\to S(N_2)$ equals $0$ and,
since $g_2$ is injective, this follows if
$\delta_1 = g_2\circ \delta : S(N_1)\to G(N_2)$ is $0$. Since
$F\circ S = \id$,
\begin{displaymath}
  F(\delta) = F\circ S\circ  F(\psi) - F(\psi)  = F(\psi)- F(\psi)=0,
\end{displaymath}
hence also $F(\delta_1)=0$. Now since $F\circ G = \id$ and
$(F,G)$ is an adjoint pair,  we have
\begin{align*}
  Hom_{\Cc_2}(S(N_1), G(N_2)) &= Hom_{\Cc_2}(S(N_1), G\circ F \circ G(N_2))\\
  &= Hom_{\Cc_1}(FS(N_1), F\circ G(N_2)).
\end{align*}
Therefore $\delta_1 =0$.

(iii) $S\circ F(M) \cong M$ when $M\in \Cc_1^{str}$: Let
$\phi : K\to M $ be the kernel of $M\xrightarrow{\psi} G\circ F(M)$.
Applying $F$ to these morphisms we get $F(\phi)=0$, hence $\phi=0$
since $M\in \Cc_2^{str}$. Hence $\psi$ can factorized over an
injective morphism $\bar \psi :M \to S\circ F(M)$. Since
$H\circ F(M)\to S\circ F(M)$ is surjective it follows that 
$\bar \psi$ is also surjective.

This completes the proof that $S$ defines an equivalence
$\Cc_2 \to \Cc_1^{str}$, with quasi-inverse $F$.

(iv) To see the remaining assertion it suffices to see that if $N$ is
simple, then $S(N)$ is simple. Let $0 \to K \to S(N)\to C\to 0$ be a
short exact sequence. Since $F$ is exact and $F(S(N))=N$ is simple, it
follows that $F(K)=0$ or $F(C)=0$. Since $S(N) $ belongs to $
\Cc_1^{str}$ it follows that $K=0$ or $C=0$. Therefore either $K=
S(N)$ or $C= S(N)$, and hence $S(N)$ is simple
\end{pf}

It is suggestive to call $G$ and $H$ the maximal and comaximal
extensions functors of $F$, respectively, and $S$ the minimal
extension functor. On  certain objects all three functors agree.

\begin{proposition}\label{max-ext} Let $\Dbb_i$ be duality functors on
  $\Cc_i$, $i=1,2$ such that $F\Dbb_1 = \Dbb_2 F$. Then
  $\Dbb_1 G \Dbb_2 = H$, and $\Dbb_1 S = S \Dbb_2$. For an object $N$
  in $\Cc_2$ the following are equivalent:
\begin{enumerate}
\item $G(N) = H(N)$,
\item $G(N) = S(N)$ and $G\Dbb_2(N) = S\Dbb_2 (N)$,
\item $H(N) = S(N)$ and $H\Dbb_2(N) = S\Dbb_2 (N)$.
\end{enumerate}
\end{proposition}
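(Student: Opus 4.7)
The plan is to first establish the two functorial identities $\Dbb_1 G\Dbb_2 \cong H$ and $\Dbb_1 S \cong S\Dbb_2$, and then deduce the equivalence (1)$\Leftrightarrow$(2)$\Leftrightarrow$(3) by a short diagram chase using that $\Dbb_i$ is an exact contravariant involution. None of the three parts should require more than formal manipulation with the adjoint triple $(H,F,G)$ and the compatibility $F\Dbb_1 = \Dbb_2 F$.

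First I would prove $\Dbb_1 G\Dbb_2 \cong H$ by uniqueness of adjoints. Starting from the $(F,G)$-adjunction, replacing $Y$ by $\Dbb_2(Y)$, and then dualising both sides using that $\Dbb_i^2 = \id$ reverses arrows and that $F\Dbb_1 = \Dbb_2 F$, one produces a natural isomorphism $\Hom_{\Cc_1}(\Dbb_1 G\Dbb_2(Y), X) \cong \Hom_{\Cc_2}(Y, F(X))$. Yoneda combined with the $(H,F)$-adjunction identifies this with $H$, giving $\Dbb_1 G\Dbb_2 \cong H$; applying $\Dbb_1$ on the left and $\Dbb_2$ on the right gives the twin identity $\Dbb_1 H \cong G\Dbb_2$.

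Next, for $\Dbb_1 S \cong S\Dbb_2$: since $\Dbb_1$ is exact it preserves images, so applying $\Dbb_1$ to the factorisation $H(N)\twoheadrightarrow S(N)\hookrightarrow G(N)$ yields that $\Dbb_1 S(N)$ is the image of the dualised map $\Dbb_1 G(N)\to \Dbb_1 H(N)$, which by the identities just obtained is precisely the canonical map $H\Dbb_2(N)\to G\Dbb_2(N)$ whose image is $S\Dbb_2(N)$. The one point that actually needs care is verifying that the dualised map agrees with the canonical natural transformation $H\to G$ evaluated at $\Dbb_2(N)$; I expect this to follow formally from tracking how the adjunction units/counits of \thetag{*} transform under $\Dbb_i$, and this is the step I see as the main (modest) obstacle.

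Finally, for the equivalence of (1), (2), (3): (1)$\Rightarrow$(2) is immediate, since if $H(N)\to G(N)$ is an isomorphism then its image $S(N)$ equals both endpoints, and applying $\Dbb_1$ with the identities above propagates the equality to $\Dbb_2(N)$. For (2)$\Rightarrow$(1), the first equality in (2) says $S(N)\hookrightarrow G(N)$ is an isomorphism, while dualising the second equality and using $\Dbb_1 S = S\Dbb_2$, $\Dbb_1 G = H\Dbb_2$ converts it to the statement that $H(N)\twoheadrightarrow S(N)$ is an isomorphism; composing gives $H(N)=G(N)$. The equivalence (1)$\Leftrightarrow$(3) is obtained by the same argument with the roles of $H$ and $G$ swapped. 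Thus once the two functorial identities are in hand, the final equivalences amount to a two-line diagram chase.
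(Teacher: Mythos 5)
Your proposal is correct and follows essentially the same route as the paper: establish the two functorial identities, then deduce the equivalences by a short diagram chase. The one substantive difference is that the paper takes $\Dbb_1 G\Dbb_2 = H$ for granted (treating it as a consequence of duality exchanging the left and right adjoints of $F$, given $F\Dbb_1 = \Dbb_2 F$) and only spells out why $\Dbb_1 S\Dbb_2 = S$, namely that $\Dbb_1 S\Dbb_2$ is again the image of the canonical map $H\to G$; you instead prove $\Dbb_1 G\Dbb_2 \cong H$ explicitly via uniqueness of adjoints and Yoneda, which is a valid and slightly more self-contained version of the same observation. Your worry about verifying that the dualised map agrees with the canonical transformation $H\to G$ evaluated at $\Dbb_2(N)$ is the right thing to be careful about, but it does go through formally exactly as you expect, and the paper's brief argument implicitly relies on the same point.
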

\begin{proof} We have the sequence of functors $H\to S \to G$, and
  therefore $\Dbb_1 G \Dbb_2 \to \Dbb_1 S \Dbb_2 \to \Dbb_1 H \Dbb_2$.
  Since $\Dbb_1 G\Dbb_2 = H$ and $\Dbb_1 H\Dbb_2 = G $, and
  $\Dbb_1 S \Dbb_2$ also is an image functor of $H\to G$, this gives
  $\Dbb_1 S \Dbb_2 = S$. That (1) implies (2) and (3) is evident, so
  it suffices to prove that (2) implies (1). Since
  $\Dbb_2\circ S = S\circ \Dbb_1$,
  \begin{displaymath}
    H  \Dbb_2\circ  (N) = \Dbb_1 \circ G (N) = \Dbb_1\circ S (N)  =
    S\circ  \Dbb_2 (N) = G\circ \Dbb_2 (N).
  \end{displaymath}
  Since $\Dbb_1 H \Dbb_2 = G$, this implies (1).
\end{proof}

\subsection{Holonomic minimal extensions}
This section is a complement to the discussion of the minimal
extension in \cite{perverse}*{Sec. 1.4} and
\cite{hotta-takeuchi-tanisaki}*{Sec. 3.4}, where the main model of the
above abstract setup is studied. 

Let $U$ be a locally closed smooth subscheme of a smooth $k$-scheme
$X$ and $j: U \to X$ is inclusion morphism. Let $ \hol (\Dc_U)$ be the
category of holonomic $\Dc_U$-modules and $ \hol_{\bar U} (\Dc_X)$,
the category of holonomic $\Dc_X$-modules whose support belongs to the
closure $\bar U$ of $U$ in $X$. The Poincaré duals $\Dbb_U$ and
$\Dbb_X$ act on $ \hol (\Dc_U)$ and $\hol_{\bar U}(\Dc_X)$,
respectively.

We have the inverse and direct image functors (detailed below)
\begin{align*}
  j^!&: \hol_{\bar U} (\Dc_X) \to \hol (\Dc_U), \quad M\mapsto j^!(M) \\
  j_+&: \hol (\Dc_U)\to \hol_{\bar U} (\Dc_X), \quad N\mapsto \Dc_{U
       \leftarrow X} \otimes_{\Dc_U}N
\end{align*}
This is done by considering open and closed embeddings separately.When
$j$ is closed embedding of a smooth variety $U$, with defining ideal
$I_U$, put
\begin{align*}
  j^!(M)&= M^{I_U} = \{m \ \vert \ I_U \cdot m =0\}\footnote{Here $j^!$ is to be considered an extraordinary inverse
  image, see \Remark{\ref{invim-remark}}.}\\ \intertext{and}  j_+(N)&=
  \Dc_{X \leftarrow U}\otimes_{\Dc_U}N.
\end{align*}
When $j$ is an open embedding, then $j^!(M)$ is the ordinary sheaf
inverse image and $j_+(M)$ is the ordinary sheaf direct image. In
general, $j$ can be factorized
$U \xrightarrow{j_1} \Omega\xrightarrow{j_0} X$, $j= j_0\circ j_1$,
such that $j_0$ is the inclusion of an open subset of $X$ and $j_1$ is
a closed embedding. Then put
\begin{align*}
j^!(M)&= j_1^! (j_0^!(M))\\  \intertext{and}  j_+(N) &= (j_1)_+(j_0)_+(N).
\end{align*}

\begin{lemma}
  The modules $j^!(M)$ and $j_+(M)$ do not depend on the choice of
  open set $\Omega$.
\end{lemma}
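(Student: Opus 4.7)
The plan is to reduce to the case of two nested open sets and check the two halves (direct and inverse image) separately, using functoriality of $(\cdot)_+$ and Kashiwara's equivalence between $\Dc_\Omega$-modules supported on $U$ and $\Dc_U$-modules for the closed-embedding part.

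First I would observe that if $\Omega$ and $\Omega'$ are two open subsets of $X$ in which $U$ is closed, then $\Omega'' = \Omega \cap \Omega'$ is again an open subset of $X$ containing $U$ as a closed subset (since $U$ is the intersection of its closure $\bar U^X$ with $\Omega''$). Hence, by comparing each of $\Omega$ and $\Omega'$ to $\Omega''$, one is reduced to showing independence in the case $\Omega' \subset \Omega$. In that situation, writing $k: \Omega' \to \Omega$ for the open inclusion and $j_1': U \to \Omega'$, $j_0': \Omega' \to X$ for the associated closed and open embeddings, one has $j_0' = j_0 \circ k$ and $j_1 = k \circ j_1'$.

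For the direct image, everything follows from functoriality of $(\cdot)_+$ under composition of morphisms:
\begin{displaymath}
(j_0)_+(j_1)_+(N) = (j_0)_+ (k \circ j_1')_+(N) = (j_0)_+ k_+ (j_1')_+(N) = (j_0 \circ k)_+ (j_1')_+(N) = (j_0')_+(j_1')_+(N).
\end{displaymath}
For the inverse image, the point to verify is that $I_U$-torsion commutes with restriction to an open set containing $U$. Concretely, for any $\Dc_\Omega$-module $N'$, the subsheaf $(N')^{I_U}$ has support in $U \subset \Omega'$, so the natural map $(N')^{I_U} \to (N'|_{\Omega'})^{I_U}$ is an isomorphism; equivalently, $j_1^!(N') = (j_1')^! k^!(N')$. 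Applying this to $N' = j_0^!(M)$ and using $k^! j_0^! = (j_0 \circ k)^! = (j_0')^!$, one obtains $j_1^! j_0^!(M) = (j_1')^! (j_0')^!(M)$.

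The steps are all routine once one has the reduction to $\Omega' \subset \Omega$; the only point that uses anything nontrivial is the identification of $I_U$-torsion with its restriction to any open neighbourhood of $U$, which is really just Kashiwara's equivalence. The main (minor) obstacle is bookkeeping of the various embeddings $j_0, j_0', j_1, j_1', k$ to avoid sign/order mistakes, rather than any real mathematical difficulty.
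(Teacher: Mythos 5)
Your argument is correct and complete. The paper states this lemma without proof, so there is no "paper's approach" to compare against; your strategy — reduce to nested open sets $\Omega' \subset \Omega$ by passing through $\Omega \cap \Omega'$, then split the check into the direct-image and inverse-image halves using compositionality of $(\cdot)_+$ (open$\circ$open and open$\circ$closed) and of $(\cdot)^!$ — is the natural one and the bookkeeping all checks out. Two small remarks: first, the line "the natural map $(N')^{I_U} \to (N'|_{\Omega'})^{I_U}$ is an isomorphism" is phrased loosely, since the two sides live on different spaces; what you really mean (and what the equivalent statement $j_1^! = (j_1')^!\circ k^!$ captures cleanly) is that $I_U$-torsion is a local construction near $U$ and hence unaffected by shrinking the ambient open set, which is exactly the composition formula for $k$ open and $j_1'$ closed. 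Second, the paper's displayed formula $j_+(N) = (j_1)_+(j_0)_+(N)$ has the order of the two functors reversed (it only typechecks as $(j_0)_+(j_1)_+(N)$); your proof uses the correct order.
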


\begin{lemma}\label{keylemma}
  The functor $j^! $ is exact, essentially surjective, and
  $\Dbb_U j^! \Dbb_X = j^!$. The functor $j_+$ is a right adjoint
  functor, and therefore $j_! = \Dbb_X j_+ \Dbb_U$ is a left adjoint
  functor to $j^!$.
\end{lemma}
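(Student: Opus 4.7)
The plan is to reduce all four claims to the case of a bare open embedding and a bare closed embedding, exploiting the factorization $j = j_0 \circ j_1$ given in the setup. Since $j^! = j_1^! \circ j_0^!$ and $j_+ = (j_0)_+ \circ (j_1)_+$ by construction, exactness, essential surjectivity and the $(j^!, j_+)$-adjunction all follow by composing the corresponding facts for $j_0$ and $j_1$, while the duality compatibility $\Dbb_U j^! \Dbb_X = j^!$ reduces to proving the analogous identities separately for $j_0$ and $j_1$ (inserting $\Dbb_\Omega \Dbb_\Omega = \id$ between them).

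For the open embedding $j_0$, the functor $j_0^!$ is the ordinary sheaf restriction to $\Omega$, hence manifestly exact; essential surjectivity is immediate from the identity $N = j_0^!(j_0)_+(N)$, which also exhibits any $N \in \hol(\Dc_\Omega)$ as $j_0^!$ of an object of $\hol_{\bar\Omega}(\Dc_X)$. The adjunction $(j_0^!, (j_0)_+)$ is the standard adjoint pair in sheaf theory, and the duality compatibility $\Dbb_\Omega j_0^! = j_0^! \Dbb_X$ is a local statement on $X$ that holds because the formula defining $\Dbb$ involves only the local structure of $\Oc_X$ and $\Dc_X$, which restricts correctly to $\Omega$.

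For the closed embedding $j_1 : U \to \Omega$, the key tool is Kashiwara's theorem: $(j_1)_+$ is an equivalence of categories $\hol(\Dc_U) \to \hol_{\bar U}(\Dc_\Omega)$ whose quasi-inverse is $M \mapsto M^{I_U} = j_1^!(M)$. This at once yields exactness, essential surjectivity, and a two-sided adjunction $(j_1^!, (j_1)_+)$, since an equivalence of abelian categories is simultaneously a left and a right adjoint to its quasi-inverse. A closed embedding is in particular a finite map, so Theorem~\ref{adj-triple-th} supplies the compatibility $(j_1)_+ \Dbb_U = \Dbb_\Omega (j_1)_+$. Writing any $M \in \hol_{\bar U}(\Dc_\Omega)$ as $M = (j_1)_+(N)$ with $N = j_1^!(M)$, one computes
\[\Dbb_U j_1^! \Dbb_\Omega(M) = \Dbb_U j_1^! (j_1)_+ \Dbb_U(N) = \Dbb_U \Dbb_U(N) = N = j_1^!(M),\]
which is the desired duality identity for $j_1$.

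With $\Dbb_U j^! \Dbb_X = j^!$ (equivalently $j^! \Dbb_X = \Dbb_U j^!$) and the $(j^!, j_+)$-adjunction in hand, the fact that $j_! = \Dbb_X j_+ \Dbb_U$ is a left adjoint to $j^!$ is formal: for $N \in \hol(\Dc_U)$ and $M \in \hol_{\bar U}(\Dc_X)$, the contravariance of $\Dbb_X$ gives $\Hom(j_!(N), M) = \Hom(\Dbb_X(M), j_+ \Dbb_U(N))$, the $(j^!, j_+)$-adjunction turns this into $\Hom(j^! \Dbb_X(M), \Dbb_U(N))$, and the duality identity followed by contravariance of $\Dbb_U$ yields $\Hom(N, j^!(M))$. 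The main point of delicacy is verifying that $(j_1)_+ \Dbb_U = \Dbb_\Omega (j_1)_+$ really produces $\Dbb_U j_1^! \Dbb_X = j_1^!$ rather than the codimension-shifted extraordinary inverse image $j_1^+$ that appears in the derived setting for general $\Dc$-modules; this is legitimate precisely because we are working on the subcategory $\hol_{\bar U}(\Dc_\Omega)$ where Kashiwara's equivalence forces the non-derived formula $j_1^!(M) = M^{I_U}$ to compute the correct quasi-inverse.
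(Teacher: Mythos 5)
Your proof is correct and follows the paper's route exactly: factor $j$ into a closed embedding $j_1$ followed by an open immersion $j_0$, invoke Kashiwara's equivalence (and its two-sided adjoint property) for the former and the standard $(j_0^!,(j_0)_+)$ adjunction for the latter, check duality compatibility factorwise, and compose. The one point the paper makes explicit that you pass over is that the holonomicity of $(j_0)_+(N)$, which you need for the essential-surjectivity step in the open case, is not automatic but rests on Bernstein's theorem about the existence of $b$-functions.
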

\begin{proof}
  We have $j= j_0\circ j_1$, where $j_0$ is an open inclusion and
  $j_1$ is a closed embdding of a smooth variety. Firstly, the
  restriction functor $j_0^+$ to an open subset is exact, and
  essentially surjective, since $N =j_0^+ (j_0)_+(N)$ and
  $(j_0)_+(N)\in \hol_{\bar U}(\Dc_X)$ (by Bernstein's theorem about
  and the existence of $b$-functions). Secondly, the functor
  $j_1^!: \hol_{\bar U\cap \Omega}(\Dc_\Omega) \to \hol(\Dc_U)$ is an
  equivalence by Kashiwara's theorem. Finally, it is well-known that
  $j_1^!$ and $j_0^!$ commute with Poincaré duality. It is also
  well-known that both $(j_0^!,(j_0)_+)$ and $(j_1^!, (j_1)_+)$ form
  adjoint pairs, which implies that that $(j^!, j_+)$ forms an adjoint
  pair. The fact that $(j_!, j^!)$ also forms an adjoint pair is a
  formal consequencce from the fact that $\Dbb_X$ and $\Dbb_U$ are
  duality functors and that $\Dbb_U j^! \Dbb_X = j^!$.
\end{proof}

Putting $F= j^!$, $G= j_+$ and $H= j_!$ we are therefore in the
situation of \Theorem{\ref{simple-equivalence}}, and we put also
$j_{!+}= S$. Let $\hol^{str}_{\bar U} (\Dc_X)$ be the subcategory of
$\hol_{\bar U} (\Dc_X)$ consisting of modules that do not contain
non-zero subquotients that have support contained in the boundary
$\partial U = \bar U \setminus U$. Let $\hol^{ss} (\Dc_U)$ be the
subcategory of $\hol (\Dc_U)$ consisting of semisimple modules, and
$\hol^{str, ss}_{\bar U} (\Dc_X) = \hol^{str}_{\bar U}(\Dc_X)\cap
\hol^{ss}_{\bar U}(\Dc_X)$ the subcategory of strict semisimple
modules in $\hol_{\bar U}(\Dc_X)$. In this notation
\Theorem{\ref{simple-equivalence}} implies:

\begin{corollary} The functor $j_{!+}$ is fully faithful and defines
  an equivalence of categories
  \begin{displaymath}
    j_{!+} : \hol(\Dc_U) \to \hol^{str}_{\bar U}(\Dc_X).
  \end{displaymath}
It also restricts to an equivalence
\begin{displaymath}
  j_{!+} : \hol^{ss}(\Dc_U) \to \hol^{ss,str}_{\bar U}(\Dc_X).
\end{displaymath}
If $N_U$ is simple, then $j_{!+}(N_U)$ is simple.
  
\end{corollary}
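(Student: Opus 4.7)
The plan is to derive this corollary as a direct application of the abstract \Theorem{\ref{simple-equivalence}} to the triple $(H,F,G)=(j_!, j^!, j_+)$ acting between the abelian categories $\Cc_1 = \hol_{\bar U}(\Dc_X)$ and $\Cc_2 = \hol(\Dc_U)$. So the proof essentially reduces to checking the abstract hypotheses in this concrete setting and then identifying the concrete notion of ``strict'' with the abstract one.

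First, I would verify the hypotheses of \Theorem{\ref{simple-equivalence}}. \Lemma{\ref{keylemma}} already gives most of what is needed: $j^!$ is exact, $(j^!, j_+)$ and $(j_!, j^!)$ form adjoint pairs, and $\Dbb_U j^! \Dbb_X = j^!$. Essential surjectivity of $j^!$ follows by factoring $j = j_0 \circ j_1$ and noting that for any $N \in \hol(\Dc_U)$ one has $N = j_1^! (j_1)_+(N)$ by Kashiwara's equivalence, and $N = j_0^! (j_0)_+ (N)$ for the open part. The fully faithfulness of $j_+$ (and then of $j_! = \Dbb_X j_+ \Dbb_U$ by duality) again reduces, via the factorization, to Kashiwara's theorem for the closed embedding $j_1$ and to the identity $j_0^! (j_0)_+ = \id$ for the open embedding.

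Next I would identify the abstract category $\Cc_1^{str}$ of strict objects with the concrete $\hol^{str}_{\bar U}(\Dc_X)$. The key observation is that a subquotient $K$ of $M \in \hol_{\bar U}(\Dc_X)$ satisfies $j^!(K) = 0$ if and only if $\supp K \subset \partial U = \bar U \setminus U$; this is immediate from the definition of $j^!$ via the factorization $j = j_0 \circ j_1$, since $j_0^!$ is just restriction to $U$ and $j_1^!$ is an equivalence on the category of modules supported on $\bar U \cap \Omega$. Thus the abstract condition that no nonzero subquotient be killed by $F=j^!$ becomes exactly the condition defining $\hol^{str}_{\bar U}(\Dc_X)$.

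With these identifications in place, \Theorem{\ref{simple-equivalence}} applies verbatim: $j_{!+} := S$ is fully faithful and induces equivalences $\hol(\Dc_U) \simeq \hol^{str}_{\bar U}(\Dc_X)$ and $\hol^{ss}(\Dc_U) \simeq \hol^{ss,str}_{\bar U}(\Dc_X)$, and sends simples to simples. The only mildly delicate step I anticipate is the identification of strictness in the concrete setting; in particular, one should be careful that ``no subquotient with support in $\partial U$'' matches the abstract two-sided condition (i) and (ii) on kernels and cokernels, which is handled by noting that $\hol_{\bar U}(\Dc_X)$ is stable under the duality $\Dbb_X$ and that $\Dbb_X$ preserves the class of modules supported on $\partial U$. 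This is the main (and really only) point deserving careful verification; everything else is a formal translation of the abstract theorem.
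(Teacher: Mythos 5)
Your proposal is correct and matches the paper's own treatment: the paper states this corollary as an immediate consequence of Theorem~\ref{simple-equivalence} after Lemma~\ref{keylemma} has verified the hypotheses for $(H,F,G)=(j_!,j^!,j_+)$, with the only additional work being the identification of the abstract strict subcategory $\Cc_1^{str}$ with $\hol^{str}_{\bar U}(\Dc_X)$. Your explicit unwinding of the strictness condition (via exactness of $j^!$ and stability under $\Dbb_X$) is exactly the kind of verification the paper leaves implicit.
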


\begin{remark}\label{gen-remark}
  Let $\hol^{reg}(\Dc_U)\subset \hol(\Dc_U) $ and
  $\hol^{reg}_{\bar U}(\Dc_X)\subset \hol_{\bar U}(\Dc_X)$ be the
  subcategories of regular holonomic modules, which are preserved by
  the duality functors $\Dbb_U$ and $\Dbb_X$, respectively. Then
  \Lemma{\ref{keylemma}} still applies to these subcategories, so that
  in \Theorem{\ref{simple-equivalence}} one can add the superindex
  $reg$ and get another theorem, e.g.
  $j_{!+}:\hol^{reg,ss}(\Dc_U)\cong \hol^{reg,ss, str}_{\bar
    U}(\Dc_X)$. Similarly, one can replace $\hol(\Dc_U)$ and
  $\hol_{\bar U}(\Dc_X)$ with categories of perverse sheaves on $U$
  and $\bar U$, respectively, resulting in an analogous result.
\end{remark}

The actual evaluation of $ j_{!+}(N)$ can be based on the observation
that it is a prolongation $M$ of $N$ from $U$ to $X$, $j^!(M) = N$,
such that if $M_1 $ is any other such prolongation, then
$\supp (M_1+ M)/M \subset \partial U$. Since $N$ and $j_+(N)$ are of
finite length, one gets local generators of $j_{!+}(N)$ from
generators of $N$ in the following way. Working locally near a point
in $\partial U$, if $f$ is a function in the ideal of $\partial U$,
$N_0$ any coherent $\Oc_X$-submodule of $j_+(N)$ whose restriction to
$U$ generates $N$, we have
\begin{displaymath}
  j_{!+}(N) = \Dc_X f^k N_0, \quad k \gg 1.
\end{displaymath}
If $N$ is simple a lower bound for $k$ kan be determined from the
degree of the $b$-function of $\mu f^s$ where $\mu$ is any non-zero
section of $ j_+(N)$.

\Theorem{~\ref{simple-equivalence}} also contains the following
result:

A coherent $\Dc_X$-module $M$ is {\it pure} if the dimension of the
support of all its non-zero coherent submodules $M_1$ are equal, i.e.
$\dim \supp M = \dim \supp M_1$.

\begin{corollary}\label{minimalext}
  Let $X/k$ be a smooth variety and $M$ be a holonomic $\Dc_X$-module. Let $\xi$ be a generic point
  of the support $\supp M$.  There exists a unique minimal holonomic $\Dc_X$-submodule $ M(\xi)
  \subset M$ such that $M_\xi = M(\xi)_\xi$.  The module $ M(\xi)$ is pure, and $ M(\xi)$ is
  semi-simple if and only if $M_\xi$ is semi-simple.
\end{corollary}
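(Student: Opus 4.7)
The plan is to reduce everything to the minimal extension functor $j_{!+}$ constructed above. Let $Z = \overline{\{\xi\}}$ and choose an open neighbourhood $V$ of $\xi$ in $X$ such that $V \cap \supp M = V \cap Z$ and $U := V \cap Z$ is smooth in $X$; this is possible because $\xi$ is a generic point of $\supp M$ and $X/k$ is smooth. Then $j : U \to X$ is a locally closed smooth inclusion with $\xi$ as generic point of $U$, and $N := j^!(M) \in \hol(\Dc_U)$ satisfies $N_\xi = M_\xi$. For existence and uniqueness of $M(\xi)$, I would observe that the family $\Lambda := \{M' \subset M : M'_\xi = M_\xi\}$ is nonempty and closed under finite intersections (since stalk commutes with intersection of subsheaves), and since $M$ has finite length as a holonomic module, $\Lambda$ admits a unique minimum, which we take as $M(\xi)$.

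To bring the minimal extension functor into play, let $N^{\mathrm{min}} \subset N$ be the analogous minimal $\Dc_U$-submodule whose stalk at $\xi$ equals $N_\xi$. The inclusion $N^{\mathrm{min}} \hookrightarrow N = j^!(M)$ corresponds under the $(j_!, j^!)$-adjunction to a morphism $\alpha : j_!(N^{\mathrm{min}}) \to M$; since $j^! j_! = \id$, one has $j^!(\Imo \alpha) = N^{\mathrm{min}}$, so $\Imo \alpha \in \Lambda$, and naturality of adjunction forces $\Imo \alpha$ to lie inside every element of $\Lambda$, which identifies $\Imo \alpha$ with $M(\xi)$. By the commutative triangle preceding Theorem~\ref{simple-equivalence}, the composition $j_!(N^{\mathrm{min}}) \to M \to j_+(N)$ factors as $j_!(N^{\mathrm{min}}) \twoheadrightarrow j_{!+}(N^{\mathrm{min}}) \hookrightarrow j_+(N^{\mathrm{min}}) \hookrightarrow j_+(N)$, so the image of the unit $M(\xi) \to j_+(N)$ is exactly $j_{!+}(N^{\mathrm{min}})$. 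The goal is to show this unit is injective when restricted to $M(\xi)$, yielding $M(\xi) \cong j_{!+}(N^{\mathrm{min}})$ as abstract $\Dc_X$-modules. Once this identification is in place, the strictness built into the image of $j_{!+}$ and the category equivalence of Theorem~\ref{simple-equivalence} transport purity and semi-simplicity from $N^{\mathrm{min}}$ to $M(\xi)$, while purity of $N^{\mathrm{min}}$ on $U$ can be obtained by applying the analogous minimality argument one dimension down, and semi-simplicity of $N^{\mathrm{min}}$ is equivalent to that of its generic stalk $N_\xi = M_\xi$.

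The main obstacle, as indicated above, lies in the injectivity of $M(\xi) \hookrightarrow M \to j_+(N)$: one must rule out nonzero subobjects of $M(\xi)$ supported on the boundary $\bar U \setminus U$. The minimality of $M(\xi)$ in $\Lambda$ is a condition only on the stalk at $\xi$, and does not by itself forbid boundary-supported subobjects, since removing such a subobject typically fails to produce a submodule of $M$. Overcoming this should require combining the minimality of $N^{\mathrm{min}}$ at the $\Dc_U$-level with a careful analysis of how extensions at the boundary interact with the $(j_!, j^!, j_+)$ adjoint triple, and most likely an induction on the codimension of the stratum $\bar U \setminus U$; this is the technical heart of the argument.
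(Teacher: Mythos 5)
Your concern about the injectivity of $M(\xi)\to j_+(N)$ is exactly the right thing to worry about, and you have correctly diagnosed why minimality of $M(\xi)$ with respect to the stalk condition does not by itself rule out boundary-supported subobjects. What you should know is that the paper's own proof has precisely the same gap: it asserts without justification that $j_{!+}(\bar M_U)\subset \bar M$ for every submodule $\bar M$ with $\bar M_\xi=M_\xi$, but this is not a formal consequence of the adjunctions. The minimal extension $j_{!+}(\bar M_U)$ lives naturally as a submodule of $j_+(\bar M_U)$, i.e.\ as a quotient of $j_!(\bar M_U)$; to place it inside $\bar M$ one needs the composite $\bar M \to j_+(\bar M_U)$ to be injective on a suitable piece, which fails as soon as $\bar M$ has a nonzero subobject supported on $X\setminus U$. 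In the abstract language of \Theorem{\ref{simple-equivalence}}, $SF(M)$ is a \emph{subquotient} of $M$ (a quotient of the image of $HF(M)\to M$), and it is a submodule only when $M$ is $F$-strict.

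This is not merely a presentational issue: the Corollary as stated is false without a strictness hypothesis. Take $X=\Ab^1_k$, $U=X\setminus\{0\}$, $j\colon U\to X$ the open inclusion, and $M=j_!(\Oc_U)=\Dbb_X(\Oc_X(*0))$, with $\xi$ the generic point of $X$. Then $\supp M=X$ and $M_\xi\cong K$ is simple, but the lattice of $\Dc_X$-submodules of $M$ is $0\subset\delta_0\subset M$ (the extension is non-split), and of these only $M$ itself has $\bar M_\xi=M_\xi$. Hence the minimal submodule with the prescribed stalk is $M(\xi)=M$, which is neither pure (it contains $\delta_0$, of $0$-dimensional support) nor semisimple, contradicting all three conclusions of the Corollary; meanwhile $j_{!+}(M_U)=\Oc_X$ is pure and simple but is not a submodule of $M$. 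The statement does hold, and the paper's argument does close, when $M$ is already strict along $\partial U$ (in particular when $M$ is simple), which is the case in the later applications; otherwise $j_{!+}(M_U)$ is the right object but must be regarded as a subquotient of $M$ rather than a submodule. As a minor point of comparison, the paper takes $U$ \emph{open} (so $j^!$ is restriction and purity of $M_U$ is achieved by shrinking), whereas you take $U=V\cap\bar\xi$ locally closed and invoke Kashiwara's theorem to pass between $\hol(\Dc_U)$ and $\hol_{\bar U\cap V}(\Dc_V)$; both are fine and neither affects the gap.
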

We call $ M(\xi) = j_{!+}(M_U)$ the{ \it minimal extension} of
$M_{\xi}$, where $U$ is any locally closed subset of $X$ that contains
$\xi$ but no other generic points in $\supp M$.
\begin{proof} 
  There exists an open neighbourhood $U$ of $\xi$ such that $M_U$ is
  pure; let $j: U \to X$ be the open inclusion. Then $M_U$ is
   (semi)simple if and only if $j_{!+}(M_U)$ is (semi)simple. Since
  $M_U$ is pure it follows that $j_+(M_U)$ is pure, and hence also its
  submodule $j_{!+}(M_U)$ pure. Moreover, if $ \bar M\subset M$ is any
  coherent $\Dc_X$-submodule such that $\bar M_\xi = M_\xi$, then
  $j_{!+}(\bar M_U) \subset \bar M$.
\end{proof}

\begin{remark}\label{remark-motive2}
  For the model $(H,F, G)= (j_!, j^!, j_+)$
  \Theorem{\ref{simple-equivalence}} is well-known when $U$ is
  quasi-projective. A rather involved proof is presented in
  \cite{hotta-takeuchi-tanisaki}*{Th. 3.2.16}, using a factorisation
  $ j = p \circ i \circ g$, where $g$ is a closed immersion, $i$ is
  open, and $p$ is a projective morphism, and the functor $S= j_{!+}$
  is determined by $i_{!+}$ (resolutions of singularities is required
  for the argument). A point of \Theorem{\ref{simple-equivalence}} is
  to show that the functor $S$ is built into the categorical context
  and that the above factorisation is not required. Moreover, as
  indicated in \Remark{\ref{gen-remark}}, one also gets analogous
  results in other contexts.
\end{remark}

There is also a point-wise setup, again using
\Theorem{\ref{simple-equivalence}}. Let $\xi$ be a point in the
scheme $X$ and $k_{X, \xi}$ its residue field. Let $\Dc_{k_{X, \xi}}$
be the ring of $k$-linear differential operators on $k_{X, \xi}$,
$\hol ( \Dc_{k_{X, \xi}})$ the category of holonomic
$\Dc_{k_{X, \xi}}$-modules. Let $\hol_\xi^{strict}(\Dc_X)$ be the
category of holonomic $\Dc_X$-modules $M$ such that all its coherent
subquotients have support equal to the closure $\xi^-$ of $\xi$. Then
$\hol_\xi^{strict}(\Dc_X)$ is a full subcategory of $\hol(\Dc_X)$. The
following theorem, which essentially is a reformulation of
\cite{hotta-takeuchi-tanisaki}*{Th. 3.4.2}, asserts that simple
holonomic $\Dc_X$-modules are determined by their generic structure.

\begin{corollary}
Let $\xi$ be a point in $X$. There exists an  equivalence of categories:
  \begin{displaymath}
j^{\xi}_{!+}:    \hol(\Dc_{k_{X, \xi}}) \cong  \hol^{strict}_{ \xi}(\Dc_X).
  \end{displaymath}
  The functor $j^\xi_{!+} $ induces a bijection between the
  isomorphism classes of simple holonomic $\Dc_{k_{X, \xi}}$-modules
  and simple holonomic $\Dc_X$-module whose support contains the
  generic point $\xi$.
\end{corollary}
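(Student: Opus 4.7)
The plan is to reduce to the equivalence $j_{!+}: \hol(\Dc_U) \cong \hol^{str}_{\bar U}(\Dc_X)$ for locally closed smooth embeddings (the preceding corollary based on \Theorem{\ref{simple-equivalence}}) by taking a ``limit'' over shrinking neighborhoods of $\xi$ inside its closure $Z = \xi^-$. Since $k$ has characteristic $0$, the regular locus $Z^{reg}$ is a dense open smooth subscheme of $Z$ containing $\xi$, and for any smooth open $U \subset Z^{reg}$ containing $\xi$, the inclusion $j_U: U \to X$ is a locally closed embedding of smooth schemes with $\bar U = Z$. First I would observe that $\hol^{strict}_\xi(\Dc_X) \subset \hol^{str}_Z(\Dc_X)$, because a module all of whose subquotients have support equal to $Z$ in particular has no subquotients supported on $\partial U = Z \setminus U$.

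Next I would define $j^\xi_{!+}$ on objects by spreading out: given $N \in \hol(\Dc_{k_{X,\xi}})$, which is finite-dimensional over $k_{X,\xi}$, present $N = \Dc_{k_{X,\xi}}/I$ with finitely many generators of $I$, lift these generating operators to $\Dc_{U}$ for a suitable smooth open neighborhood $U$ of $\xi$ in $Z^{reg}$, and set $\tilde N$ to be the resulting coherent $\Dc_U$-module. After shrinking $U$ one can ensure $\tilde N$ is holonomic (by generic freeness and the generic characterization of holonomicity). Put $j^\xi_{!+}(N) := j_{U,!+}(\tilde N)$. The key point is that this is independent of the choice of $U$ and $\tilde N$: two lifts $\tilde N_1, \tilde N_2$ over $U_1, U_2$ agree on some common smaller open $U'' \subset U_1 \cap U_2$ after possibly shrinking again, and by \Corollary{\ref{minimalext}} the minimal extension depends only on the generic stalk, so the images in $\hol^{str}_Z(\Dc_X)$ coincide.

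For morphisms, any $\phi: N_1 \to N_2$ lifts to a $\Dc_U$-linear morphism $\tilde \phi: \tilde N_1 \to \tilde N_2$ after restricting to a suitably small $U$, and well-definedness on homomorphisms follows by the same generic-structure argument. Fully faithfulness of $j^\xi_{!+}$ then combines (i) fully faithfulness of each $j_{U,!+}$ from \Theorem{\ref{simple-equivalence}}, with (ii) the fact that $\Hom_{\Dc_U}(\tilde N_1, \tilde N_2)$ stabilizes to $\Hom_{\Dc_{k_{X,\xi}}}(N_1, N_2)$ as $U$ shrinks. Essential surjectivity onto $\hol^{strict}_\xi(\Dc_X)$ follows because any such $M$ satisfies $M = j_{U,!+}(j_U^!(M))$ for small enough $U$ by \Theorem{\ref{simple-equivalence}}, so $M = j^\xi_{!+}(M_\xi)$ where $M_\xi$ denotes the generic stalk regarded as a $\Dc_{k_{X,\xi}}$-module.

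Finally, the bijection on isomorphism classes of simples is essentially automatic: the support of a simple $\Dc_X$-module is irreducible and closed, so containing $\xi$ forces it to equal $Z$, and any simple module is trivially strict (no proper subquotients exist), placing it in $\hol^{strict}_\xi(\Dc_X)$; conversely, $j^\xi_{!+}$ preserves simplicity by \Theorem{\ref{simple-equivalence}}. The main obstacle is the verification that the spreading-out procedure gives a well-defined functor, but this is precisely the content already encoded in \Corollary{\ref{minimalext}}, which states that the minimal extension of a holonomic module is determined by its restriction to a neighborhood of the generic point of its support --- so the descent to $\hol(\Dc_{k_{X,\xi}})$ requires no new ideas beyond careful bookkeeping of the colimit over shrinking neighborhoods.
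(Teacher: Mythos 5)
Your proof is correct and it fills in the content that the paper leaves implicit (the paper merely says the corollary is a point-wise setup using Theorem~\ref{simple-equivalence} and is essentially a reformulation of HTT~3.4.2, without giving details). The spreading-out/colimit argument is the right way to make the point-wise version precise, since the abstract functor triple $(H, F, G)$ of \Theorem{\ref{simple-equivalence}} is not directly available at the generic point: the stalk functor $F\colon M \mapsto M_\xi$ is exact and essentially surjective, but there is no clean maximal-extension adjoint $G$ at the level of $\Spec k_{X,\xi}$; one must pass through an actual locally closed embedding $j_U\colon U \to X$ with $U \subset Z^{\mathrm{reg}}$ and then argue the resulting minimal extension stabilizes as $U$ shrinks. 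Your appeal to \Corollary{\ref{minimalext}}, specifically to the remark there that $j_{!+}(M_U)$ is independent of the choice of locally closed $U$, is exactly the glue needed, and your observation that $\hol^{\mathrm{strict}}_\xi(\Dc_X)$ sits inside $\hol^{\mathrm{str}}_{\bar U}(\Dc_X)$ is correct and necessary for the essential surjectivity step.

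Two small points of bookkeeping are worth flagging. First, to conclude that $j_{U,!+}(\tilde N)$ lies in the stricter category $\hol^{\mathrm{strict}}_\xi(\Dc_X)$ (and not just in $\hol^{\mathrm{str}}_{\bar U}(\Dc_X)$), you should shrink $U$ until $\tilde N$ is an $\Oc_U$-coherent connection: then $\tilde N$ has no non-zero subquotients supported on proper closed subsets of $U$, and since $j_{U,!+}(\tilde N)$ also has no subquotients supported on $\partial U$, every non-zero subquotient has support exactly $Z$. You gesture at this with ``generic freeness'' but the downstream consequence for $\hol^{\mathrm{strict}}_\xi$ is what actually matters. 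Second, in the final paragraph the phrase ``containing $\xi$ forces it to equal $Z$'' is too strong as written — an irreducible closed set containing $\xi$ contains $Z$ but could be strictly larger. What is meant (and what makes the bijection work) is that $\xi$ is the \emph{generic} point of the support, which for an irreducible closed support forces equality with $Z$; this also matches the fact that otherwise $M_\xi$ would not be finite over $k_{X,\xi}$. This is essentially an imprecision inherited from the paper's own phrasing, but it is worth stating carefully.
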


Let us now consider connections on a smooth variety $X$, i.e.
$\Dc_X$-modules that are coherent over $\Oc_X$. When $j: X_0 \to X$ is
an open immersion we know that $j^+(M)$ is semisimple for any
semisimple $M$. Knowing that $M$ is coherent over $\Oc_X$ outside an
exceptional set we get a converse.
\begin{proposition}\label{simple-coh}
  \begin{enumerate}
  \item Let $M_1$ and $M_2$ be connections on $X$ such that
    $j^+(M_1)\cong j^+(M_2)$. Then $M_1 \cong M_2$.
  \item Put $S=X\setminus j(X_0) $ and assume that $\codim_X S\geq 2$.
    Let $M$ be a coherent $\Dc_X$-module which is torsion free along
    $S$ such that $j^!(M)$ is a (semi)simple module which is moreover
    coherent over $\Oc_{X_0}$. Then $M = j_+j^!(M) $, and $M$ is
    coherent as $\Oc_X$-module and (semi)simple over $\Dc_X$.
  \end{enumerate}

\end{proposition}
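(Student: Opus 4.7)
\smallskip

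The plan centers on the following standard principle: a coherent $\Dc_X$-module which is also coherent as an $\Oc_X$-module is locally free over $\Oc_X$, so if such a module has support contained in a proper closed subset of $X$ then it must be zero. Both parts reduce to this dichotomy applied inside $j_+ j^+(\cdot)$.

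For (1), I would take the given $\Dc_{X_0}$-linear isomorphism $\phi\colon j^+(M_1)\to j^+(M_2)$ and, using the $(j^+,j_+)$ adjunction, extend it to a $\Dc_X$-linear map $\tilde\phi\colon M_1\to j_+j^+(M_2)$. Working inside $j_+j^+(M_2)$, set $M'=M_2+\tilde\phi(M_1)$ and $Q=M'/M_2$. Then $Q$ is supported on $S=X\setminus j(X_0)$ and, being a quotient of $\tilde\phi(M_1)$, is coherent over $\Oc_X$; the locally-free principle forces $Q=0$, so $\tilde\phi(M_1)\subset M_2$. Applying the same argument to $\phi^{-1}$ yields the reverse containment, and hence $\tilde\phi$ is an isomorphism $M_1\cong M_2$.

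For (2), set $N:=j^!(M)=j^+(M)$, which by hypothesis is a connection on $X_0$. The adjunction map $M\to j_+(N)$ has kernel supported on $S$, and the torsion-freeness of $M$ along $S$ makes it injective. The crucial ingredient is that under $\codim_X S\geq 2$ the module $j_+(N)$ is coherent over $\Oc_X$, in fact a connection on $X$ (this is the reflexive-extension statement referenced in Remark~\ref{purity}(2)). Granting this, $M$ becomes a coherent $\Oc_X$-submodule of the connection $j_+(N)$, and the quotient $j_+(N)/M$ is a $\Dc_X$-module coherent over $\Oc_X$ and supported on $S$; the locally-free principle again forces $j_+(N)/M=0$, giving simultaneously that $M=j_+j^!(M)$ and that $M$ is a connection on $X$. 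To transfer (semi)simplicity, I would invoke (1): it renders $j^+$ fully faithful on connections, while the extension procedure just carried out provides essential surjectivity onto those connections restricting to $N$; a decomposition $N=\bigoplus N_i$ into simples thus lifts to $M=\bigoplus j_+(N_i)$, and simplicity of each $j_+(N_i)$ follows by reapplying the extension argument to any nonzero $\Dc_X$-submodule (it restricts to all of $N_i$, then extends back to all of $j_+(N_i)$).

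The main obstacle is the reflexive-extension step in (2), i.e.\ justifying that $j_+(N)$ remains $\Oc_X$-coherent when $\codim_X S\geq 2$. This combines Hartogs-type extension of sections of locally free sheaves across codimension-two loci on the smooth (hence $(S_2)$) variety $X$ with the observation that a reflexive coherent $\Oc_X$-sheaf carrying an integrable connection is locally free; once this is in place, every other step is a direct appeal to the $\Oc_X$-coherent $\Dc_X$-module = locally free $\Oc_X$-module principle together with the support dichotomy.
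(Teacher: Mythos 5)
Your argument is essentially correct, and it shares the same two load-bearing ingredients as the paper's proof: Grothendieck's finiteness theorem to obtain $\Oc_X$-coherence of $j_+ j^!(M)$ when $\codim_X S \geq 2$, and the dichotomy that a $\Dc_X$-module coherent over $\Oc_X$ is $\Oc_X$-locally free, hence either zero or of full support. But for the semisimplicity transfer in (2) you take a genuinely different route, and the comparison is instructive.

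The paper handles (1) by observing that for a connection $M_i$ the canonical surjection $M_i \twoheadrightarrow j_{!+}j^+(M_i)$ has kernel coherent over $\Oc_X$ and supported off $X_0$, hence zero, so $M_i \cong j_{!+}j^+(M_i)$ and the claim follows from functoriality of $j_{!+}$. For (2) the paper first establishes $M = j_+j^!(M)$ exactly as you do, via the local-cohomology sequence $0\to M\to j_+j^!(M)\to H^1_T(M)\to 0$ and the coherence dichotomy. For the (semi)simplicity it then chooses an open $X_{00}\subset X_0$ whose inclusion $i:X_{00}\to X$ is affine, shows $M=i_{!+}i^!(M)$, and invokes the abstract minimal-extension equivalence (the appendix's Theorem~\ref{simple-equivalence}/Corollary~\ref{minimalext}). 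You instead lean on part (1), which yields that $j^+$ is fully faithful on connections, decompose $N=\bigoplus N_i$ into simples, extend termwise to $M=\bigoplus j_+(N_i)$, and verify simplicity of each $j_+(N_i)$ by the same support dichotomy: any nonzero $\Dc_X$-submodule $K\subset j_+(N_i)$ is automatically $\Oc_X$-coherent (as an $\Oc_X$-submodule of a coherent sheaf on a Noetherian scheme), hence locally free and of full support, so $j^!(K)=N_i$ and then $j_+(N_i)/K$ is coherent, supported in $S$, hence zero. This bypasses both the $j_{!+}$ formalism and the affine-open reduction, which is a nice economy; the paper's route buys generality (the minimal-extension machinery is reused repeatedly elsewhere), while yours is more self-contained under the hypotheses of this particular proposition.

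One small gap in your exposition of (1): after deducing $\tilde\phi(M_1)\subset M_2$ and, symmetrically, $\widetilde{\phi^{-1}}(M_2)\subset M_1$, you assert ``hence $\tilde\phi$ is an isomorphism,'' but the two containments alone do not give this. You should additionally observe that the composites $M_1\to M_2\to M_1$ and $M_2\to M_1\to M_2$ restrict to the identity over $X_0$, so their difference from $\id$ has image coherent over $\Oc_X$ and supported in $S$, hence vanishes. This is exactly the dichotomy you already invoke, so it is a gap in exposition rather than substance, but it should be said.
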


\begin{proof}
  (1):   The assumption implies
  \begin{displaymath}
M_1 \cong     j_{!+}j^+(M_1) \cong j_{!+}j^+(M_2) \cong M_2,
  \end{displaymath}
where one sees that the first and last maps are isomorphisms as follows.
  The canonical surjective map $M_i\to j_{!+}j^+(M_i)$ (see
  (\ref{comm-diag})) is generically an isomorphisms and the kernel is
  coherent over $\Oc_X$; hence being a $\Dc_X$-module, it is also
  injective.

  (2): It follows from Grothendieck's finiteness theorem
  that $j_+j^!(M)$ is coherent over $\Oc_X$, hence we have the
  sequence of $\Dc_X$-modules
  $0 \to M \to j_+j^!(M)\to H^1_T(M)\to 0$, where $T= X\setminus X_0$,
  with two terms being coherent over $\Oc_X$, hence ($X$ being a
  noetherian space) $ H^1_T(M)$ is coherent over $\Oc_X$, and since
  $ \supp H^1_T(M)\subset T$, we get $H^1_T(M)=0 $; thus
  $M= j_+j^!(M)$. Let $X_{00}\subset X_0$ be an open subset such that
  the inclusion map $i: X_{00} \to X $ is affine. Since $M $ is a
  submodule of $i_+i^!(M)$ and $i_{!+}i^!(M)$ a submodule of $M$ such
  that $i^!(i_{!+}i^!(M)) = i^!(M)$, it follows that $M/i_{!+}i^!(M)$
  is a $\Dc_X$-module which is torsion and coherent over $\Oc_X$ and
  therefore $M/i_{!+}i^!(M)=0$; thus $M= i_{!+}i^!(M)$. By
  \Theorem{\ref{minimalext}} it follows that $M$ is (semi)simple if
  and only if $i^!(M)$ is (semi)simple.
\end{proof}

\begin{bibsection}
  \begin{biblist}
        \bib{isaacs:character}{book}{
  author={Isaacs, I. Martin},
  title={Character theory of finite groups},
  note={Corrected reprint of the 1976 original [Academic Press, New York; MR0460423]},
  publisher={AMS Chelsea Publishing, Providence, RI},
  date={2006},
  pages={xii+310},
  isbn={978-0-8218-4229-4},
  isbn={0-8218-4229-3},
  review={\MR {2270898}},
}

\bib{biswas-dossantos:ratcon}{article}{
  author={Biswas, Indranil},
  author={dos Santos, Jo\~ao Pedro P.},
  title={On the vector bundles over rationally connected varieties},
  language={English, with English and French summaries},
  journal={C. R. Math. Acad. Sci. Paris},
  volume={347},
  date={2009},
  number={19-20},
  pages={1173--1176},
  issn={1631-073X},
  review={\MR {2566998}},
}

\bib{bott-lefschetz}{article}{
  author={Bott, Raoul},
  title={On a theorem of Lefschetz},
  journal={Michigan Math. J.},
  volume={6},
  date={1959},
  pages={211--216},
  issn={0026-2285},
  review={\MR {0215323}},
}

\bib{curtis-reiner}{book}{
  author={Curtis, Charles W.},
  author={Reiner, Irving},
  title={Representation theory of finite groups and associative algebras},
  note={Reprint of the 1962 original},
  publisher={AMS Chelsea Publishing, Providence, RI},
  date={2006},
  pages={xiv+689},
  isbn={0-8218-4066-5},
  review={\MR {2215618}},
  doi={10.1090/chel/356},
}

\bib{debarre:higher}{book}{
  author={Debarre, Olivier},
  title={Higher-dimensional algebraic geometry},
  series={Universitext},
  publisher={Springer-Verlag, New York},
  date={2001},
  pages={xiv+233},
  isbn={0-387-95227-6},
  review={\MR {1841091}},
}

\bib{ariki-koike-hecke}{article}{
  author={Ariki, Susumu},
  author={Koike, Kazuhiko},
  title={A Hecke algebra of $({\bf Z}/r{\bf Z})\wr {\germ S}_n$ and construction of its irreducible representations},
  journal={Adv. Math.},
  volume={106},
  date={1994},
  number={2},
  pages={216--243},
  issn={0001-8708},
  review={\MR {1279219}},
  doi={10.1006/aima.1994.1057},
}

\bib{ariki-hecke}{article}{
  author={Ariki, Susumu},
  title={Representation theory of a Hecke algebra of $G(r,p,n)$},
  journal={J. Algebra},
  volume={177},
  date={1995},
  number={1},
  pages={164--185},
  issn={0021-8693},
  review={\MR {1356366}},
  doi={10.1006/jabr.1995.1292},
}

\bib{simpson-higgs}{article}{
  author={Simpson, Carlos T.},
  title={Higgs bundles and local systems},
  journal={Inst. Hautes \'Etudes Sci. Publ. Math.},
  number={75},
  date={1992},
  pages={5--95},
  issn={0073-8301},
  review={\MR {1179076}},
}

\bib{perverse}{article}{
  author={Beilinson, A. A.},
  author={Bernstein, J.},
  author={Deligne, P.},
  title={Faisceaux pervers},
  booktitle={Analysis and topology on singular spaces, I. (Luminy, 1981)},
  language={French},
  series={Ast\'erisque},
  volume={100},
  pages={5\ndash 171},
  publisher={Soc. Math. France},
  place={Paris},
  date={1982},
  review={MR 86g:32015},
}

\bib{lefschetz-selected}{book}{
  author={Lefschetz, S.},
  title={Selected papers},
  publisher={Chelsea Publishing Co., Bronx, N.Y.},
  date={1971},
  pages={639 pp. (1 plate)},
  review={\MR {0299447}},
}

\bib{cataldo-mogliorini:hodgetheory}{article}{
  author={de Cataldo, Mark Andrea A.},
  author={Migliorini, Luca},
  title={The Hodge theory of algebraic maps},
  language={English, with English and French summaries},
  journal={Ann. Sci. \'Ecole Norm. Sup. (4)},
  volume={38},
  date={2005},
  number={5},
  pages={693--750},
  issn={0012-9593},
  review={\MR {2195257}},
  doi={10.1016/j.ansens.2005.07.001},
}

\bib{bjork:analD}{book}{
  author={Bj{\"o}rk, Jan-Erik},
  title={Analytic {${\mathcal D}$}-modules and applications},
  series={Mathematics and its Applications},
  volume={247},
  publisher={Kluwer Academic Publishers Group},
  place={Dordrecht},
  date={1993},
  pages={xiv+581},
  isbn={0-7923-2114-6},
  review={MR 95f:32014},
}

\bib{abebaw-bogvad:arkiv}{article}{
  author={Abebaw, Tilahun},
  author={B{\o }gvad, Rikard},
  title={Decomposition of $D$-modules over a hyperplane arrangement in the plane},
  journal={Ark. Mat.},
  volume={48},
  date={2010},
  number={2},
  pages={211--229},
  issn={0004-2080},
  review={\MR {2672606 (2012a:14038)}},
  doi={10.1007/s11512-009-0103-7},
}

\bib{bogvad-kallstrom}{article}{
  author={B{\"o}gvad, Rikard},
  author={K{\"a}llstr{\"o}m, Rolf},
  title={Geometric interplay between function subspaces and their rings of differential operators},
  journal={Trans. Amer. Math. Soc.},
  volume={359},
  number={5},
  date={2007},
  pages={2075--2108},
}

\bib{borel:Dmod}{book}{
  author={Borel, A},
  author={Grivel, P.-P},
  author={Kaup, B},
  author={Haefliger, A},
  author={Malgrange, B},
  author={Ehlers, F},
  title={Algebraic $D$-modules},
  series={Perspectives in Mathematics},
  publisher={Academic Press Inc.},
  address={Boston, MA},
  date={1987},
  volume={2},
}

\bib{serre-unirational}{article}{
  author={Serre, J.-P.},
  title={On the fundamental group of a unirational variety},
  journal={J. London Math. Soc.},
  volume={34},
  date={1959},
  pages={481--484},
  issn={0024-6107},
  review={\MR {0109155}},
}

\bib{serre:lin-rep}{book}{
  author={Serre, Jean-Pierre},
  title={Linear representations of finite groups},
  note={Translated from the second French edition by Leonard L. Scott; Graduate Texts in Mathematics, Vol. 42},
  publisher={Springer-Verlag, New York-Heidelberg},
  date={1977},
  pages={x+170},
  isbn={0-387-90190-6},
  review={\MR {0450380 (56 \#8675)}},
}

\bib{serre:corps}{book}{
  author={Serre, Jean-Pierre},
  title={Corps locaux},
  language={French},
  note={Deuxi\`eme \'edition; Publications de l'Universit\'e de Nancago, No. VIII},
  publisher={Hermann},
  place={Paris},
  date={1968},
  pages={245},
  review={\MR {0354618 (50 \#7096)}},
}

\bib{bosch-lutkebohmert-raynaud:neron-models}{book}{
  author={Bosch, Siegfried},
  author={L{\"u}tkebohmert, Werner},
  author={Raynaud, Michel},
  title={N\'eron models},
  series={Ergebnisse der Mathematik und ihrer Grenzgebiete (3) [Results in Mathematics and Related Areas (3)]},
  volume={21},
  publisher={Springer-Verlag, Berlin},
  date={1990},
  pages={x+325},
  isbn={3-540-50587-3},
  review={\MR {1045822 (91i:14034)}},
  doi={10.1007/978-3-642-51438-8},
}

\bib{bruns-herzog}{book}{
  author={Bruns, W},
  author={Herzog, J},
  title={Cohen-Macauley rings},
  publisher={Cambridge university press},
  date={1993},
}

\bib{campana:twistor}{article}{
  author={Campana, F.},
  title={On twistor spaces of the class $\scr C$},
  journal={J. Differential Geom.},
  volume={33},
  date={1991},
  number={2},
  pages={541--549},
  issn={0022-040X},
  review={\MR {1094468}},
}

\bib{baldassarri2nd}{article}{
  author={Baldassarri, F.},
  title={On second-order linear differential equations with algebraic solutions on algebraic curves},
  journal={Amer. J. Math.},
  volume={102},
  date={1980},
  number={3},
  pages={517--535},
  issn={0002-9327},
  review={\MR {573101}},
  doi={10.2307/2374114},
}

\bib{dwork-baldassarri:2nd}{article}{
  author={Baldassarri, F.},
  author={Dwork, B.},
  title={On second order linear differential equations with algebraic solutions},
  journal={Amer. J. Math.},
  volume={101},
  date={1979},
  number={1},
  pages={42--76},
  issn={0002-9327},
  review={\MR {527825}},
  doi={10.2307/2373938},
}

\bib{dixmier}{book}{
  author={Dixmier, Jacques},
  title={Alg\`ebres enveloppantes},
  language={French},
  publisher={Gauthier-Villars \'Editeur, Paris-Brussels-Montreal, Que.},
  date={1974},
  pages={ii+349},
  review={MR 58 \#16803a},
}

\bib{esnault:flat-bund-charp}{article}{
  author={Esnault, H\'el\`ene},
  title={On flat bundles in characteristic 0 and $p>0$},
  conference={ title={European Congress of Mathematics}, },
  book={ publisher={Eur. Math. Soc., Z\"urich}, },
  date={2013},
  pages={301--313},
  review={\MR {3469128}},
}

\bib{esnault-hai:fund-group}{article}{
  author={Esnault, H\'el\`ene},
  author={Hai, Ph\`ung H\^o},
  title={The fundamental groupoid scheme and applications},
  language={English, with English and French summaries},
  journal={Ann. Inst. Fourier (Grenoble)},
  volume={58},
  date={2008},
  number={7},
  pages={2381--2412},
  issn={0373-0956},
  review={\MR {2498355}},
}

\bib{budur-liu-sau-bot:coh-supp-loc}{article}{
  author={Budur, Nero},
  author={Liu, Yongqiang},
  author={Saumell, Luis},
  author={Wang, Botong},
  title={Cohomology support loci of local systems},
  journal={Michigan Math. J.},
  volume={66},
  date={2017},
  number={2},
  pages={295--307},
  issn={0026-2285},
  review={\MR {3657220}},
  doi={10.1307/mmj/1490639819},
}

\bib{goodman:affine}{article}{
  author={Goodman, Jacob Eli},
  title={Affine open subsets of algebraic varieties and ample divisors},
  journal={Ann. of Math. (2)},
  volume={89},
  date={1969},
  pages={160--183},
  issn={0003-486X},
  review={\MR {0242843}},
}

\bib{griffithsharris:math-ann}{article}{
  author={Griffiths, Phillip},
  author={Harris, Joe},
  title={On the Noether-Lefschetz theorem and some remarks on codimension-two cycles},
  journal={Math. Ann.},
  volume={271},
  date={1985},
  number={1},
  pages={31--51},
  issn={0025-5831},
  review={\MR {779603}},
}

\bib{SGA2}{book}{
  author={Grothendieck, Alexander},
  title={Cohomologie locale de faisceaux coh{\'e}rents et th{\'e}or{\`e}mes de lefshetz locaux et globaux (sga 2)},
  edition={Advanced studies in pure mathematics},
  publisher={North-Holland},
  date={1962},
  volume={2},
}

\bib{EGA4:4}{article}{
  author={Grothendieck, A},
  title={\'El\'ements de g\'eom\'etrie alg\'ebrique. IV. \'Etude locale des sch\'emas et des morphismes de sch\'emas IV},
  language={French},
  journal={Inst. Hautes \'Etudes Sci. Publ. Math.},
  number={32},
  date={1967},
  pages={361},
}

\bib{SGA1}{book}{
  author={Grothendieck{, et al.}, Alexander},
  title={Rev\^etements \'etales et groupe fondamental},
  language={French},
  publisher={Springer-Verlag},
  place={Berlin},
  date={1971},
  pages={xxii+447},
  note={S{\'e}minaire de G{\'e}om{\'e}trie Alg{\'e}brique du Bois Marie 1960--1961 (SGA 1), Dirig{\'e} par Alexandre Grothendieck. Augment{\'e} de deux expos{\'e}s de M. Raynaud, Lecture Notes in Mathematics, Vol. 224},
  review={MR 50 \#7129},
}

\bib{hartshorne:ample}{book}{
  author={Hartshorne, Robin},
  title={Ample subvarieties of algebraic varieties},
  series={Lecture Notes in Mathematics, Vol. 156},
  note={Notes written in collaboration with C. Musili},
  publisher={Springer-Verlag, Berlin-New York},
  date={1970},
  pages={xiv+256},
  review={\MR {0282977}},
}

\bib{hartshorne-res}{book}{
  author={Hartshorne, Robin},
  title={Residues and duality},
  series={Lecture notes of a seminar on the work of A. Grothendieck, given at Harvard 1963/64. With an appendix by P. Deligne. Lecture Notes in Mathematics, No. 20},
  publisher={Springer-Verlag},
  place={Berlin},
  date={1966},
  pages={vii+423},
  review={MR 36 \#5145},
}

\bib{hotta-takeuchi-tanisaki}{book}{
  author={Hotta, Ryoshi},
  author={Takeuchi, Kiyoshi},
  author={Tanisaki, Toshiyuki},
  title={$D$-modules, perverse sheaves, and representation theory},
  series={Progress in Mathematics},
  volume={236},
  note={Translated from the 1995 Japanese edition by Takeuchi},
  publisher={Birkh\"auser Boston Inc.},
  place={Boston, MA},
  date={2008},
  pages={xii+407},
  isbn={978-0-8176-4363-8},
  review={\MR {2357361 (2008k:32022)}},
}

\bib{hotta-ka}{article}{
  author={Hotta, R.},
  author={Kashiwara, M.},
  title={The invariant holonomic system on a semisimple Lie algebra},
  journal={Invent. Math.},
  volume={75},
  date={1984},
  number={2},
  pages={327\ndash 358},
  issn={0020-9910},
  review={MR 87i:22041},
}

\bib{huneke-leuschke}{article}{
  author={Huneke, Craig},
  author={Leuschke, Graham J.},
  title={On a conjecture of Auslander and Reiten},
  journal={J. Algebra},
  volume={275},
  date={2004},
  number={2},
  pages={781--790},
  issn={0021-8693},
  review={\MR {2052636 (2005a:13033)}},
  doi={10.1016/j.jalgebra.2003.07.018},
}

\bib{kallstrom:arkiv}{article}{
  author={K{\"a}llstr{\"o}m, Rolf},
  title={$\scr {D}$-modules with finite support are semi-simple},
  journal={Ark. Mat.},
  volume={52},
  date={2014},
  number={2},
  pages={291--299},
  issn={0004-2080},
  review={\MR {3255141}},
  doi={10.1007/s11512-013-0186-z},
}

\bib{kallstrom:liftingder}{article}{
  author={K{\"a}llstr{\"o}m, Rolf},
  title={Liftable derivations for generically separably algebraic morphisms of schemes},
  journal={Trans. Amer. Math. Soc.},
  volume={361},
  date={2009},
  number={1},
  pages={495--523},
  issn={0002-9947},
  review={\MR {2439414}},
}

\bib{kallstrom-bogvad:decomp}{article}{
  author={K{\"a}llstr{\"o}m, Rolf},
  author={B{\"o}gvad, Rikard},
  title={Decomposition of modules over invariant differential operators },
  date={2015},
  eprint={math/1506.06229},
  url={http://arXiv.org/abs/1506.06229},
}

\bib{kallstrom-tadesse:liehilbert}{article}{
  author={K{\"a}llstr{\"o}m, Rolf},
  author={Tadesse, Yohannes},
  title={Hilbert series of modules over Lie algebroids},
  journal={J. Algebra},
  volume={432},
  date={2015},
  pages={129--184},
  issn={0021-8693},
  review={\MR {3334144}},
  doi={10.1016/j.jalgebra.2015.02.020},
}

\bib{kallstrom:preserve}{article}{
  author={K{\"a}llstr{\"o}m, Rolf},
  title={Preservation of defect sub-schemes by the action of the tangent sheaf},
  journal={J. Pure and Applied Algebra},
  volume={156},
  date={2005},
  number={2},
  pages={286\ndash 319},
  issn={0001-8708},
  review={MR 2001m:58078},
}

\bib{kashiwara:systemsmicrodifferential}{book}{
  author={Kashiwara, Masaki},
  title={Systems of microdifferential equations},
  series={Progress in Mathematics},
  volume={34},
  note={Based on lecture notes by Teresa Monteiro Fernandes translated from the French; With an introduction by Jean-Luc Brylinski},
  publisher={Birkh\"auser Boston Inc.},
  place={Boston, MA},
  date={1983},
  pages={xv+159},
  isbn={0-8176-3138-0},
  review={\MR {725502 (86b:58113)}},
}

\bib{kashiwara:semisimple}{article}{
  author={Kashiwara, Masaki},
  title={Semisimple holonomic $\scr D$-modules},
  booktitle={Topological field theory, primitive forms and related topics (Kyoto, 1996)},
  series={Progr. Math.},
  volume={160},
  pages={267\ndash 271},
  publisher={Birkh\"auser Boston},
  place={Boston, MA},
  date={1998},
  review={MR 99m:32013},
}

\bib{kashiwara-schapira}{book}{
  author={Kashiwara, Masaki},
  author={Schapira, Pierre},
  title={Sheaves on manifolds},
  series={Grundlehren der Mathematischen Wissenschaften [Fundamental Principles of Mathematical Sciences]},
  publisher={Springer-Verlag},
  address={Berlin},
  date={1994},
  volume={292},
  isbn={3-540-51861-4},
  note={With a chapter in French by Christian Houzel, Corrected reprint of the 1990 original},
}

\bib{knop:gradcofinite}{article}{
  author={Knop, Friedrich},
  title={Graded cofinite rings of differential operators},
  journal={Michigan Math. J.},
  volume={54},
  date={2006},
  number={1},
  pages={3--23},
  issn={0026-2285},
  review={\MR {2214785 (2007a:16035)}},
  doi={10.1307/mmj/1144437435},
}

\bib{derksen-kraft}{article}{
  author={Derksen, Harm},
  author={Kraft, Hanspeter},
  title={Constructive invariant theory},
  language={English, with English and French summaries},
  conference={ title={Alg\`ebre non commutative, groupes quantiques et invariants}, address={Reims}, date={1995}, },
  book={ series={S\'emin. Congr.}, volume={2}, publisher={Soc. Math. France, Paris}, },
  date={1997},
  pages={221--244},
  review={\MR {1601147}},
}

\bib{caviness-rothstein:liouville}{article}{
  author={Caviness, B. F.},
  author={Rothstein, Michael},
  title={A Liouville theorem on integration in finite terms for line integrals},
  journal={Comm. Algebra},
  volume={3},
  date={1975},
  number={9},
  pages={781--795},
  issn={0092-7872},
  review={\MR {0384764}},
  doi={10.1080/00927877508822073},
}

\bib{risch:integration}{article}{
  author={Risch, Robert H.},
  title={The problem of integration in finite terms},
  journal={Trans. Amer. Math. Soc.},
  volume={139},
  date={1969},
  pages={167--189},
  issn={0002-9947},
  review={\MR {0237477}},
  doi={10.2307/1995313},
}

\bib{lang:algebra}{book}{
  author={Lang, Serge},
  title={Algebra},
  series={Graduate Texts in Mathematics},
  volume={211},
  edition={3},
  publisher={Springer-Verlag, New York},
  date={2002},
  pages={xvi+914},
  isbn={0-387-95385-X},
  review={\MR {1878556}},
  doi={10.1007/978-1-4613-0041-0},
}

\bib{62282}{misc}{
  title={$\mathbb {P}^n$ is simply connected},
  author={Martin Brandenburg (https://mathoverflow.net/users/2841/martin-brandenburg)},
  note={URL: https://mathoverflow.net/q/62282 (version: 2011-04-21)},
  eprint={https://mathoverflow.net/q/62282},
  organization={MathOverflow},
}

\bib{120442}{misc}{
  title={Are rational varieties simply connected?},
  author={Mohammad F. Tehrani (https://mathoverflow.net/users/5259/mohammad-f-tehrani)},
  note={URL: https://mathoverflow.net/q/120442 (version: 2013-01-31)},
  eprint={https://mathoverflow.net/q/120442},
  organization={MathOverflow},
}

\bib{levasseur-stafford:invariantdiff}{article}{
  author={Levasseur, T.},
  author={Stafford, J. T.},
  title={Invariant differential operators and an homomorphism of Harish-Chandra},
  journal={J. Amer. Math. Soc.},
  volume={8},
  date={1995},
  number={2},
  pages={365--372},
  issn={0894-0347},
  review={\MR {1284849 (95g:22029)}},
  doi={10.2307/2152821},
}

\bib{levasseur-stafford:semisimplicity}{article}{
  author={Levasseur, T.},
  author={Stafford, J. T.},
  title={Semi-simplicity of invariant holonomic systems on a reductive Lie algebra},
  journal={Amer. J. Math.},
  volume={119},
  date={1997},
  number={5},
  pages={1095--1117},
  issn={0002-9327},
  review={\MR {1473070 (99g:17020)}},
}

\bib{levelt}{article}{
  author={Levelt, A. H. M.},
  title={Jordan decomposition for a class of singular differential operators},
  journal={Ark. Mat.},
  volume={13},
  date={1975},
  pages={1\ndash 27},
  review={MR 58 \#17962},
}

\bib{kumar:conner}{article}{
  author={Kumar, Shrawan},
  title={A generalization of the Conner conjecture and topology of Stein spaces dominated by ${\bf C}^n$},
  journal={Topology},
  volume={25},
  date={1986},
  number={4},
  pages={483--493},
  issn={0040-9383},
  review={\MR {862435}},
  doi={10.1016/0040-9383(86)90027-3},
}

\bib{gurjar:dominated}{article}{
  author={Gurjar, Rajendra V.},
  title={Varieties dominated by ${\bf C}^{n}$},
  journal={Math. Ann.},
  volume={258},
  date={1981/82},
  number={3},
  pages={225--228},
  issn={0025-5831},
  review={\MR {649195}},
  doi={10.1007/BF01450678},
}

\bib{gurjar:topaffine}{article}{
  author={Gurjar, R. V.},
  title={Topology of affine varieties dominated by an affine space},
  journal={Invent. Math.},
  volume={59},
  date={1980},
  number={3},
  pages={221--225},
  issn={0020-9910},
  review={\MR {579701}},
  doi={10.1007/BF01453236},
}

\bib{mackey:induced}{article}{
  author={Mackey, George W.},
  title={On induced representations of groups},
  journal={Amer. J. Math.},
  volume={73},
  date={1951},
  pages={576--592},
  issn={0002-9327},
  review={\MR {0042420 (13,106d)}},
}

\bib{kollar-miyoka-mori:rational}{article}{
  author={Koll\'ar, J\'anos},
  author={Miyaoka, Yoichi},
  author={Mori, Shigefumi},
  title={Rationally connected varieties},
  journal={J. Algebraic Geom.},
  volume={1},
  date={1992},
  number={3},
  pages={429--448},
  issn={1056-3911},
  review={\MR {1158625}},
}

\bib{kollar:rarionalcurves}{book}{
  author={Koll\'ar, J\'anos},
  title={Rational curves on algebraic varieties},
  series={Ergebnisse der Mathematik und ihrer Grenzgebiete. 3. Folge. A Series of Modern Surveys in Mathematics [Results in Mathematics and Related Areas. 3rd Series. A Series of Modern Surveys in Mathematics]},
  volume={32},
  publisher={Springer-Verlag, Berlin},
  date={1996},
  pages={viii+320},
  isbn={3-540-60168-6},
  review={\MR {1440180}},
  doi={10.1007/978-3-662-03276-3},
}

\bib{kolchin:diffalgebra}{book}{
  author={Kolchin, E. R.},
  title={Differential algebra and algebraic groups},
  note={Pure and Applied Mathematics, Vol. 54},
  publisher={Academic Press, New York-London},
  date={1973},
  pages={xviii+446},
  review={\MR {0568864}},
}

\bib{marin-michel}{article}{
  author={Marin, I.},
  author={Michel, J.},
  title={Automorphisms of complex reflection groups},
  journal={Represent. Theory},
  volume={14},
  date={2010},
  pages={747--788},
  issn={1088-4165},
  review={\MR {2746138}},
  doi={10.1090/S1088-4165-2010-00380-5},
}

\bib{malcev:matrix}{article}{
  author={Malcev, A.},
  title={On isomorphic matrix representations of infinite groups},
  language={Russian, with English summary},
  journal={Rec. Math. [Mat. Sbornik] N.S.},
  volume={8 (50)},
  date={1940},
  pages={405--422},
  review={\MR {0003420}},
}

\bib{grothendieck:profinie}{article}{
  author={Grothendieck, Alexander},
  title={Repr\'esentations lin\'eaires et compactification profinie des groupes discrets},
  language={French, with English summary},
  journal={Manuscripta Math.},
  volume={2},
  date={1970},
  pages={375--396},
  issn={0025-2611},
  review={\MR {0262386}},
  doi={10.1007/BF01719593},
}

\bib{wild-harmonic-wild-pure}{article}{
  author={Mochizuki, Takuro},
  title={Wild harmonic bundles and wild pure twistor $D$-modules},
  language={English, with English and French summaries},
  journal={Ast\'erisque},
  number={340},
  date={2011},
  pages={x+607},
  issn={0303-1179},
  isbn={978-2-85629-332-4},
  review={\MR {2919903}},
}

\bib{montgomery:fixed}{book}{
  author={Montgomery, Susan},
  title={Fixed rings of finite automorphism groups of associative rings},
  series={Lecture Notes in Mathematics},
  volume={818},
  publisher={Springer, Berlin},
  date={1980},
  pages={vii+126},
  isbn={3-540-10232-9},
  review={\MR {590245 (81j:16041)}},
}

\bib{rosenlicht}{article}{
  author={Rosenlicht, Maxwell},
  title={Liouville's theorem on functions with elementary integrals},
  journal={Pacific J. Math.},
  volume={24},
  date={1968},
  pages={153--161},
  issn={0030-8730},
  review={\MR {0223346}},
}

\bib{risch:bulletin}{article}{
  author={Risch, Robert H.},
  title={The solution of the problem of integration in finite terms},
  journal={Bull. Amer. Math. Soc.},
  volume={76},
  date={1970},
  pages={605--608},
  issn={0002-9904},
  review={\MR {0269635}},
}

\bib{mumford:redbook}{book}{
  author={Mumford, David},
  title={The red book of varieties and schemes},
  series={Lecture Notes in Mathematics},
  volume={1358},
  edition={Second, expanded edition},
  note={Includes the Michigan lectures (1974) on curves and their Jacobians; With contributions by Enrico Arbarello},
  publisher={Springer-Verlag},
  place={Berlin},
  date={1999},
  pages={x+306},
  isbn={3-540-63293-X},
  review={MR{1748380 (2001b:14001)}},
}

\bib{antei-mehta:vector-normal}{article}{
  author={Antei, Marco},
  author={Mehta, Vikram B.},
  title={Vector bundles over normal varieties trivialized by finite morphisms},
  journal={Arch. Math. (Basel)},
  volume={97},
  date={2011},
  number={6},
  pages={523--527},
  issn={0003-889X},
  review={\MR {2885519}},
}

\bib{biswas-dossantos:vector-fund}{article}{
  author={Biswas, Indranil},
  author={dos Santos, Jo\~ao Pedro P.},
  title={Vector bundles trivialized by proper morphisms and the fundamental group scheme, II},
  conference={ title={The arithmetic of fundamental groups---PIA 2010}, },
  book={ series={Contrib. Math. Comput. Sci.}, volume={2}, publisher={Springer, Heidelberg}, },
  date={2012},
  pages={77--88},
  review={\MR {3220514}},
}

\bib{nori:fundgrp}{article}{
  author={Nori, Madhav V.},
  title={On the representations of the fundamental group},
  journal={Compositio Math.},
  volume={33},
  date={1976},
  number={1},
  pages={29--41},
  issn={0010-437X},
  review={\MR {0417179}},
}

\bib{katz-cyclic}{article}{
  author={Katz, Nicholas M.},
  title={A simple algorithm for cyclic vectors},
  journal={Amer. J. Math.},
  volume={109},
  date={1987},
  number={1},
  pages={65--70},
  issn={0002-9327},
  review={\MR {878198}},
  doi={10.2307/2374551},
}

\bib{put-hoeij:descent-skew}{article}{
  author={van Hoeij, Mark},
  author={van der Put, Marius},
  title={Descent for differential modules and skew fields},
  journal={J. Algebra},
  volume={296},
  date={2006},
  number={1},
  pages={18--55},
  issn={0021-8693},
  review={\MR {2191716 (2006j:12010)}},
  doi={10.1016/j.jalgebra.2005.09.041},
}

\bib{put-singer}{book}{
  author={van der Put, Marius},
  author={Singer, Michael F.},
  title={Galois theory of linear differential equations},
  series={Grundlehren der Mathematischen Wissenschaften [Fundamental Principles of Mathematical Sciences]},
  volume={328},
  publisher={Springer-Verlag},
  place={Berlin},
  date={2003},
  pages={xviii+438},
  isbn={3-540-44228-6},
  review={\MR {1960772 (2004c:12010)}},
}

\bib{greb-kebekus-peternell:fundgroups}{article}{
  author={Greb, Daniel},
  author={Kebekus, Stefan},
  author={Peternell, Thomas},
  title={\'Etale fundamental groups of Kawamata log terminal spaces, flat sheaves, and quotients of abelian varieties},
  journal={Duke Math. J.},
  volume={165},
  date={2016},
  number={10},
  pages={1965--2004},
  issn={0012-7094},
  review={\MR {3522654}},
  doi={10.1215/00127094-3450859},
}

\bib{hwang-kebekus-peternell}{article}{
  author={Hwang, Jun-Muk},
  author={Kebekus, Stefan},
  author={Peternell, Thomas},
  title={Holomorphic maps onto varieties of non-negative Kodaira dimension},
  journal={J. Algebraic Geom.},
  volume={15},
  date={2006},
  number={3},
  pages={551--561},
  issn={1056-3911},
  review={\MR {2219848}},
  doi={10.1090/S1056-3911-05-00411-X},
}

\bib{sabbah:polarised}{article}{
  author={Sabbah, Claude},
  title={Polarizable twistor $\scr D$-modules},
  language={English, with English and French summaries},
  journal={Ast\'erisque},
  number={300},
  date={2005},
  pages={vi+208},
  issn={0303-1179},
  review={\MR {2156523 (2006d:32009)}},
}

\bib{m-saito:polarisable}{article}{
  author={Saito, Morihiko},
  title={Modules de Hodge polarisables},
  language={French},
  journal={Publ. Res. Inst. Math. Sci.},
  volume={24},
  date={1988},
  number={6},
  pages={849\ndash 995 (1989)},
  issn={0034-5318},
  review={MR 90k:32038},
}

\bib{saito-kyoji:log}{article}{
  author={Saito, Kyoji},
  title={Theory of logarithmic differential forms and logarithmic vector fields},
  journal={J. Fac. Sci. Univ. Tokyo Sect. IA Math.},
  volume={27},
  date={1980},
  number={2},
  pages={265- 291},
  issn={0040-8980},
  review={MR 83h:32023},
}

\bib{serre:alg-groups}{book}{
  author={Serre, Jean-Pierre},
  title={Algebraic groups and class fields},
  series={Graduate Texts in Mathematics},
  volume={117},
  note={Translated from the French},
  publisher={Springer-Verlag, New York},
  date={1988},
  pages={x+207},
  isbn={0-387-96648-X},
  review={\MR {918564}},
  doi={10.1007/978-1-4612-1035-1},
}

\bib{tate:genus}{article}{
  author={Tate, John},
  title={Genus change in inseparable extensions of function fields},
  journal={Proc. Amer. Math. Soc.},
  volume={3},
  date={1952},
  pages={400--406},
  issn={0002-9939},
  review={\MR {0047631 (13,905b)}},
}

\bib{tate}{article}{
  author={Tate, John},
  title={Residues of differentials on curves},
  journal={Ann. Sci. \'Ecole Norm. Sup. (4)},
  volume={1},
  date={1968},
  pages={149\ndash 159},
  review={MR 37 \#2756},
}

\bib{wallach:invariantdiff}{article}{
  author={Wallach, Nolan R.},
  title={Invariant differential operators on a reductive Lie algebra and Weyl group representations},
  journal={J. Amer. Math. Soc.},
  volume={6},
  date={1993},
  number={4},
  pages={779--816},
  issn={0894-0347},
  review={\MR {1212243 (94a:17014)}},
  doi={10.2307/2152740},
}

\bib{Kane}{book}{
  author={Kane, Richard},
  title={Reflection groups and invariant theory},
  series={CMS Books in Mathematics/Ouvrages de Math\'ematiques de la SMC, 5},
  publisher={Springer-Verlag, New York},
  date={2001},
  pages={x+379},
  isbn={0-387-98979-X},
  review={\MR {1838580 (2002c:20061)}},
  doi={10.1007/978-1-4757-3542-0},
}

\bib{orlik-terao:arrangements}{book}{
  author={Orlik, Peter},
  author={Terao, Hiroaki},
  title={Arrangements of hyperplanes},
  series={Grundlehren der Mathematischen Wissenschaften [Fundamental Principles of Mathematical Sciences]},
  volume={300},
  publisher={Springer-Verlag, Berlin},
  date={1992},
  pages={xviii+325},
  isbn={3-540-55259-6},
  review={\MR {1217488}},
  doi={10.1007/978-3-662-02772-1},
}

\bib{steinberg:differential}{article}{
  author={Steinberg, Robert},
  title={Differential equations invariant under finite reflection groups},
  journal={Trans. Amer. Math. Soc.},
  volume={112},
  date={1964},
  pages={392--400},
  issn={0002-9947},
  review={\MR {0167535 (29 \#4807)}},
}

\bib{toledo:nonres}{article}{
  author={Toledo, Domingo},
  title={Projective varieties with non-residually finite fundamental group},
  journal={Inst. Hautes \'Etudes Sci. Publ. Math.},
  number={77},
  date={1993},
  pages={103--119},
  issn={0073-8301},
  review={\MR {1249171}},
}

               \end{biblist}
             \end{bibsection}

\end{document}